\newcommand{\thesistitle}{
    On Integrable Systems\\ \& \\Rigidity for PDEs with Symmetry
}
\newcommand{\dutchtitle}{
    Over Integreerbare Systemen\\ \& \\Rigiditeit voor PDV's met Symmetrie
}
\newcommand{\thesisauthor}{
    Roy Wang
}
\newcommand{\thesisdate}{
    13 september 2017
}
\title{\thesistitle}
\author{\thesisauthor}
\date{\thesisdate}
\begin{document}

\frontmatter

\thispagestyle{empty}
\begin{center}
    \vspace*{0.2\textheight}
    \begin{minipage}{\textwidth}
        \begin{center}
            \renewcommand{\baselinestretch}{1.2}
            \bfseries\LARGE\thesistitle
        \end{center}
    \end{minipage}
\end{center}

\clearpage

\thispagestyle{empty}
\noindent $\phantom{0}$
\vfill

\noindent On Integrable Systems \& Rigidity for PDEs with Symmetry, \thesisauthor.\\
\noindent Ph.D. Thesis, Utrecht University, \thesisdate\\
\noindent Printed by Ipskamp Printing \\
\noindent ISBN: 978-90-393-6850-3\\
\vspace{0.2cm}

\clearpage

\thispagestyle{empty}
\begin{center}
    \vspace*{3em}
    
    \begin{minipage}{\textwidth}
    \begin{center}
        \renewcommand{\baselinestretch}{1.2}
        \bfseries\LARGE\thesistitle
    \end{center}
    \end{minipage}

    \vspace{5em}

    \begin{minipage}{\textwidth}
    \begin{center}
        \renewcommand{\baselinestretch}{1.2}
        \bfseries\large\dutchtitle
    \end{center}
    \end{minipage}

    \vspace{\baselineskip} (met een samenvatting in het Nederlands)
    \vfill
    {\Large Proefschrift}
    \vspace{3em}

    \begin{minipage}{0.95\textwidth}
    \begin{center}
        ter verkrijging van de graad van doctor aan de Universiteit Utrecht op gezag van
        de rector magnificus, prof.\,dr.\ G.J.\ van der Zwaan,
        ingevolge het  besluit van
        het college voor promoties in het openbaar te verdedigen op
        woensdag 13 september 2017 des ochtends te 10.30 uur
    \end{center}
    \end{minipage}
    \vspace{\baselineskip}
    door 
    \vspace{\baselineskip}
    {\Large Roy Lee Marvin Wang} 
    \vspace{\baselineskip}
    geboren op 20 november 1987 te Ede
    \vspace{\baselineskip}
\end{center}

\clearpage

\thispagestyle{empty}
\begin{tabular}{ll}
    Promotor:  & Prof.\ dr. M.N.~Crainic \\
    Copromotor: & Dr. I.T.~M\u{a}rcu\cb{t} \\
\end{tabular}

\vfill
\cleardoublepage

\setcounter{secnumdepth}{2}
\etocsettocdepth{1}
\tableofcontents
\etocsettocstyle{\subsubsection*{Local Contents}}{}

\mainmatter
\chapter{Background, motivation \& outline}
\label{chapter:Intr-and-outline}
\etocsettocdepth{2}
\localtableofcontents

{\color{white}.}
\\
\noindent
In this introductory chapter we explain some theory leading up to the motivation and outline of this thesis. 
In the first section we describe several different view points on completely integrable systems. 
In the second section we discuss Eliasson's theorem, 
which is one of many normal form theorems motivating our research. 
Most of this is standard, 
except for a minor new result (Proposition \ref{prop:nondeg_function}). 
In the last two sections we motivate and outline the rest of the thesis, 
as their titles suggest.

\section{Integrable systems and underlying structures}
Let $M^{2n}$ be a smooth connected manifold. For us, a manifold is always second countable and Hausdorff (unless stated otherwise).

\begin{definition}
A \textbf{completely integrable system}\index{integrable system!completely} on ${M^{2n}}$ is a pair ${(\omega,\mu)}$ consisting of a symplectic structure ${\omega}$, and a smooth map 
    \begin{equation}\label{def:cis}
        \mu
        =
        (\mu_1,\ldots, \mu_n)
        :
        M\to \R^n
    \end{equation}
such that the ${\mu_i}$ are pairwise in \emph{involution} with respect to the Poisson bracket ${\set{\,,\,}_\omega}$ of ${\omega}$:
\begin{equation}
    \set{ 
        \mu_i, \mu_j 
    }_\omega 
    =0,
    \quad 
    \forall\; i,j,
\end{equation}
and the ${\mu_i}$ are \emph{independent}, i.e.\  
$
    {{\dd\mu_1\wedge\ldots\wedge \dd\mu_n}}
$
is nonvanishing on a dense \emph{open} set.
Moreover, by an \textbf{integrable system} we mean a pair $(\omega, \mu)$ which satisfies the involution condition but may fail the independence condition.
\end{definition}
\noindent
\noindent
The second condition means that ${\dd_x\mu}$ is of maximal rank on a dense open set of points ${x}$. Such points ${x}$ are called regular (and the condition means that the set of regular points contains a dense open set).
Note that some authors \cite{PelaNgoc11, MN05} do not require the openess in the definition of completeness, but instead only require that the functions are independent on a dense set.
More classical works (such as Duistermaat \cite{Dui80}) study the case where $\mu$ is a submersion. 
We require the set of regular points to contain a dense open set, and the above definition is common in the literature (see for example \cite{CaMiVa11}).

The nonregular points are called singular. At the extreme case of singular points one finds the \textbf{fixed} points: the ones at which ${(\dd\mu)_x= 0}$.
A large part of the theory of integrable systems is devoted to the study of local normal forms around singular points \cite{Eli90, Zun96, Zun97, Mir03, Mir14, Cha13, NgocWa13}. We will stare at the definition of integrable systems, with the aim of revealing some of the hidden structure, and their relevance to normal forms: 
infinitesimal actions, Hamiltonian actions, Poisson structures, and singular foliations. 

Concerning the local study of integrable systems (neighborhoods of points), the Darboux-Carathéodory and Arnol'd-Liouville theorems \cite{Arnie} fully describe integrable systems near regular points by providing a normalized coordinate system (so-called `action-angle' coordinates).
Moreover, Duistermaat \cite{Dui80} generalized this result to the semi-local case (neighborhoods of compact orbits).
The local study near singular points is richer (there are multiple normal forms and equivalence relations), and we will focus on this.
Among singular points, there are the fixed points, at which all the differentials ${(d\mu_i)_x}$ vanish. Moreover, among fixed points, there 
are the so called nondegenerate ones, at which the second jet of ${\mu}$ has a rather rigid behavior (see Definition \ref{def-weired-nondegen}).

Here is the list of the main examples of such nondegenerate singularities of integrable systems,
as they arise from Williamson's classification \cite{williamson, williamson40} (for more details, see Remark \ref{rk:Williamson} below).

\begin{myenumerate}
\item The \textbf{elliptic}\index{normal model!elliptic} normal form on ${\R^2}$ with ${\omega_\text{can}}$ is   
    \begin{align*}
        \mu_\text{ell}(x,y)
        & \defeq
        \tfrac12 (x^2+y^2),
        \\
        X_\text{ell}
        & \defeq
        X^\omega_{\mu_\text{ell}}
        =
        y \tfrac{\partial}{\partial x}
        -
        x \tfrac{\partial}{\partial y}.
    \end{align*}
\item The \textbf{hyperbolic}\index{normal model!hyperbolic} normal form on ${\R^2}$ with ${\omega_\text{can}}$ is
    \begin{align*}
        \mu_\text{hyp}(x,y)
        & \defeq
        xy,
        \\
        X_\text{hyp}
        & \defeq
        X^\omega_{\mu_\text{hyp}}
        =
        x \tfrac{\partial}{\partial x}
        -
        y \tfrac{\partial}{\partial y}.
    \end{align*}
\item The \textbf{focus-focus}\index{normal model!focus-focus} normal form on ${\R^4}$ with ${\omega_\text{can}}$ is
    \begin{align*}
        \mu_{\text{ff},1}(x,y)
        & \defeq
        x_1y_1+x_2y_2,
        \\
        X_{\text{ff},1}
        & \defeq
        X_{\mu_{\text{ff},1}}^{\omega_\text{can}}
        =
        -
        x_1 \tfrac{\partial}{\partial x_1}
        +
        y_1 \tfrac{\partial}{\partial y_1}
        -
        x_2 \tfrac{\partial}{\partial x_2}
        +
        y_2 \tfrac{\partial}{\partial y_2},
        \\
        \mu_{\text{ff},2}(x,y)
        & \defeq
        x_1y_2 -x_2y_1,
        \\
        X_{\text{ff},2}
        & \defeq
        X_{\mu_{\text{ff},2}}^{\omega_\text{can}}
        =
        x_2 \tfrac{\partial}{\partial x_1}
        + 
        y_2 \tfrac{\partial}{\partial y_1}
        -
        x_1 \tfrac{\partial}{\partial x_2}
        -
        y_1 \tfrac{\partial}{\partial y_2}.
    \end{align*}
\end{myenumerate}

\noindent
The main point of this list is that, when working formally (i.e.\  with formal power series), around a nondegenerate point any integrable system is
formally equivalent to a direct product of the ones from the list. 
This result has also been proven in the analytic case \cite{Vey78}, where as usual the first step is proving the formal case.

Similar statements in the smooth context are much more subtle and harder to prove,
such as Eliasson's theorems mentioned at the end of this introductory chapter. These theorems will serve as motivation for a large part of this thesis.

\subsection{Infinitesimal Actions}
\label{ssec:InfActs}

We now start looking at some of the structures underlying the notion of integrable systems. We fix an integrable system ${\mu: (M, \omega)\rightarrow \R^n}$. To any function ${f}$ one associates the Hamiltonian vector field ${X_f^\omega}$, i.e.\  the vector field on ${M}$ defined by 
    \[
        i_{X_f}\omega 
        = 
        -df.
    \]
In particular, this gives the Hamiltonian vector fields of the ${\mu_i}$, denoted by the shorthand
    \[
        X_i
        \defeq
        X_{\mu_i}.
    \]
The involutivity condition thus implies
    \[
        [ X_i, X_j ]
        =
        0,
        \quad
        \forall\; i,j.
    \]
This is the first indication of the presence, and importance, of the abelian Lie algebra
    \[
        \gerh \defeq \R^n,
    \]
and of infinitesimal actions. More precisely, recall that given a Lie algebra ${\gerg}$, an \textbf{infinitesimal action}\index{infinitesimal action} of ${\gerg}$ on a manifold ${M}$ is a Lie algebra map
    \[
        \rho
        :
        \gerg
        \to
        \gerX(M)
    \]
from ${\gerg}$ to the Lie algebra of vector fields on ${M}$. For ${x\in M}$, we denote by 
    \[
        \rho_x
        :
        \gerg
        \to 
        T_xM
    \]
the linear map obtained by evaluating ${\rho}$ at ${x}$.  Of course, such infinitesimal actions usually arise by differentiating actions of a Lie group ${G}$ on ${M}$ (when ${\mathfrak{g}}$ is the Lie algebra of ${G}$). While if ${M}$ is compact any infinitesimal action integrates to (i.e.\  comes from) a Lie group action, such a property fails in the noncompact case.  

We see that any integrable system ${\mu:(M,\omega)\to \gerh^*}$ comes with an underlying infinitesimal action ${\rho_\mu=\rho_{\omega,\mu}}$ of ${\gerh}$ on ${M}$ given by
    \[
        \rho_\mu
        :
        \gerh
        \to
        \gerX(M),
        \quad
        e_i
        \mapsto
        X_{\mu_i}.
    \]
 In particular, it induces a partition of ${M}$: by the orbits of the action. The following is immediate:
 
\begin{lemma} 
    Each orbit is inside a fiber of ${\mu}$.
\end{lemma}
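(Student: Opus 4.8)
The plan is to show that every component function $\mu_k$ is constant along the orbits of $\rho_\mu$; this is equivalent to the statement, since it forces the orbit through a point $x$ to lie inside the fiber $\mu^{-1}(\mu(x))$.

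First I would make the notion of orbit explicit: the orbit of $\rho_\mu$ through $x$ is the set of points reachable from $x$ by finitely many (time-$t$) flows of vector fields in the image of $\rho_\mu$, and since $\rho_\mu$ is linear it suffices to use the flows of the basic fields $X_i = X_{\mu_i}$. So fix indices $i,k$ and a point $p$ at which the flow $\varphi^{X_i}_t$ is defined on some interval, and differentiate: $\tfrac{d}{dt}\bigl(\mu_k\circ\varphi^{X_i}_t\bigr)(p) = \bigl(X_i(\mu_k)\bigr)\bigl(\varphi^{X_i}_t(p)\bigr)$. By the definition of the Hamiltonian vector field and of the Poisson bracket, $X_i(\mu_k) = X_{\mu_i}(\mu_k) = \set{\mu_i,\mu_k}_\omega$, which vanishes identically by the involution hypothesis. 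Hence $\mu_k$ is constant along every integral curve of every $X_i$.

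To finish, I would run an easy induction on the number of flow segments needed to reach a point $y$ of the orbit from $x$: each segment preserves the value of every $\mu_k$, so $\mu(y)=\mu(x)$ for all $y$ in the orbit of $x$, i.e. the orbit is contained in the single fiber $\mu^{-1}(\mu(x))$. (Alternatively, since the tangent directions to the orbit are exactly those spanned by the $X_i$, and $d\mu$ annihilates each $X_i$ by the computation above, $\mu$ is locally constant along the orbit, hence constant because the orbit is connected.) There is no real obstacle: the entire content is the reformulation of involutivity as invariance, $X_{\mu_i}(\mu_k)=\set{\mu_i,\mu_k}_\omega=0$; the only mild care needed is in pinning down what ``orbit'' means and noting its connectedness so that ``locally constant'' upgrades to ``constant.''
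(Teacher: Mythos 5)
Your proof is correct and is precisely the "immediate" argument the paper has in mind when it states the lemma without proof: involutivity $\set{\mu_i,\mu_k}_\omega=0$ is just the statement that $X_{\mu_i}(\mu_k)=0$, so each $\mu_k$ is constant along integral curves of each $X_i$, hence constant on orbits. The care you take about what "orbit" means (finite concatenations of flow segments, or equivalently connectedness together with local constancy) is appropriate and complete.
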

 
However, it should serve as a warning that the precise relationship between the orbits and the (connected components of the) fibers is subtle. 
  
\begin{example}
Let us look at the list of the normal forms recalled above:
\begin{myenumerate}
\item  For the elliptic normal form on ${\R^2}$ the fibers of ${\mu_\text{ell}}$ and the orbits of ${X_\text{ell}}$ are the same. They consist of the origin, as a fixed point, and the concentric circles.
\item For the hyperbolic normal form on ${\R^2}$ the fibers of ${\mu_\text{hyp}}$ consist of the cross defined by the axis and the coaxal hyperbolas. The orbits of ${X_\text{hyp}}$ further divides the cross into the origin, as a fixed point, and the half axes.
\item For the focus-focus normal form on ${\R^4}$ one sees a mixture of the elliptic and hyperbolic behavior which cannot be `disentangled' over the real numbers.
\end{myenumerate}
\end{example}
\noindent
\noindent
Returning to our discussion, although the Lie algebra involved may look uninteresting, it is instructive to have a look at infinitesimal actions of a general Lie algebra ${\gerg}$. 

Here is one type of general result that is also interesting for integrable systems: the stability of fixed points. Recall that a \textbf{fixed point}\index{fixed point!of an infinitesimal action} of an infinitesimal action ${\rho}$ is a point ${x}$ at which
    \[
        \rho_x
        =
        0.
    \]
The question is: given a fixed point ${x}$, and deforming a bit the action, do the nearby actions still have fixed points close to ${x}$, and how many? We need the \textbf{normal representation}\index{normal representation} at the fixed point ${x}$, which is the representation of ${\gerg}$ on ${T_xM}$ characterized by:
    \begin{equation}
    \label{lin-isotrpy-reps}
        \rho^{\textrm{norm}}_x
        :
        \gerg
        \to
        \gergl(T_xM),
        \quad
        \rho^{\textrm{norm}}_x(\xi)v_x
        \defeq
        [\rho(\xi), v]_x, 
    \end{equation}
for all $\xi\in \gerg$ and $v\in \gerX(M)$.  
When the action comes from an action of a Lie group ${G}$, then one obtains a similar action of ${G}$ on ${T_xM}$ (inducing the previous one by differentiation). The normal representation at ${x}$ should be seen as the linearization of the action of ${\gerg}$ on ${M}$ near ${x}$ (linearization in the sense that the vector fields defining the action become linear). Therefore, the action of ${\gerg}$ on the vector space ${T_xM}$ should be seen as a normal model around ${x}$ for the action of ${\gerg}$ on ${M}$. 

Besides stability, but related to it, another interesting question is whether ${(M,x)}$ and ${(T_xM,0)}$ are locally isomorphic as germs of ${\gerg}$-spaces. Recall the standard case where this happens:

\begin{proposition}[Bochner \cite{Boc45}]
\label{slice-sing-pts}
If the infinitesimal action comes from the action of a compact Lie group ${G}$ on ${M}$, and ${x}$ is a fixed point, then a neighborhood of ${x}$ in ${M}$ is equivariantly diffeomorphic to a neighborhood of ${0}$ in ${T_xM}$ (endowed with the normal representation).
\end{proposition}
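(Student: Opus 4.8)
The plan is to construct the equivariant diffeomorphism by an averaging argument, exploiting the compactness of $G$ to make the averaging well-defined.

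First I would set up local coordinates: choose any chart identifying a neighborhood $U$ of $x$ in $M$ with a neighborhood of $0$ in $T_xM$, sending $x$ to $0$ (e.g.\ via the exponential map of an auxiliary Riemannian metric, or just any chart). Call this first-guess diffeomorphism $\phi: U \to T_xM$, with $d_0\phi = \mathrm{id}$. This $\phi$ need not be equivariant, so the second step is to average it over the group. Since $G$ is compact it carries a bi-invariant Haar measure of total mass $1$; shrinking $U$ to a $G$-invariant neighborhood $V$ of $x$ (possible because $x$ is fixed, so $G$ acts on a fundamental system of invariant neighborhoods — intersect the translates $g\cdot U$, or use a slice/tube), define
\[
    \Phi(p)
    \defeq
    \int_G g^{-1}\cdot \phi\bigl(g\cdot p\bigr)\,\dd g,
\]
where on the left $g^{-1}$ acts linearly on $T_xM$ via the normal representation of $G$ at $x$, and on the right $g$ acts on $M$. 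Here I use that $T_xM$ is a vector space, so the integral of $T_xM$-valued functions makes sense. The map $\Phi$ is smooth (differentiation under the integral sign, compactness of $G$), fixes $x$, and is equivariant by the translation-invariance of Haar measure: for $h\in G$, substituting $g\mapsto gh$ shows $\Phi(h\cdot p) = h\cdot \Phi(p)$.

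Third I would check that $\Phi$ is a local diffeomorphism near $x$, which by the inverse function theorem reduces to computing $d_x\Phi$. Differentiating under the integral at $x$: since $d_0\phi = \mathrm{id}$ and $g$ acts on $T_xM$ by $\rho^{\mathrm{norm}}_x(g)$ while its action on $M$ has differential $\rho^{\mathrm{norm}}_x(g)$ at the fixed point $x$ (this is exactly the definition of the normal representation as the linearization), each integrand contributes $\rho^{\mathrm{norm}}_x(g)^{-1}\circ \rho^{\mathrm{norm}}_x(g) = \mathrm{id}$, so $d_x\Phi = \mathrm{id}$ is invertible. Hence $\Phi$ restricts to a diffeomorphism from some neighborhood of $x$ onto a neighborhood of $0$ in $T_xM$. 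Finally, shrinking once more to a $G$-invariant neighborhood on which $\Phi$ is a diffeomorphism (again using invariance of the image under the linear $G$-action together with equivariance), we obtain the desired equivariant diffeomorphism of germs.

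The main obstacle is purely the bookkeeping around invariance of domains: one must make sure that at each shrinking step the neighborhood can be chosen $G$-invariant, which is where compactness is genuinely used (for a noncompact $G$ the invariant neighborhoods of a fixed point may all be too large, and indeed the statement fails, as the focus-focus example and the noncompact remarks above indicate). The computation of $d_x\Phi$ and the smoothness of $\Phi$ are routine once the setup is correct; the conceptual content is entirely in the averaging trick and in recognizing that the normal representation is precisely the linearized action, so that the two $G$-actions appearing in the definition of $\Phi$ have matching differentials at $x$.
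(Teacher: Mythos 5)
The paper states this result as a cited classical proposition of Bochner and supplies no proof, so there is nothing internal to compare against; your argument is precisely Bochner's original averaging proof and it is correct. Every step checks out: the tube-lemma (or closed-projection) argument gives a $G$-invariant shrinking of the initial chart domain, Haar-averaging produces a smooth equivariant map $\Phi$ with $\Phi(x)=0$, the substitution $g\mapsto gh$ together with linearity of the target action verifies equivariance, and the identification of the differential of the $g$-action at the fixed point with $\rho^{\mathrm{norm}}_x(g)$ makes the integrand's differential at $x$ collapse to $\mathrm{id}$, so $d_x\Phi=\mathrm{id}$ and the inverse function theorem finishes the job (with one more equivariant shrinking of the domain). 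Your remark that the only genuine content is the averaging trick plus recognizing the matching of the two linearized actions is exactly right; compactness is used both for the finite Haar measure and for the existence of invariant neighborhoods, and your aside that this is where the noncompact case breaks is a good sanity check.
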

\noindent
\noindent
Such a result is not so interesting when applied to integrable systems because of the compactness conditions on ${G}$, although so-called elliptic singularities seem to be related to actions of the ${n}$-torus. Moreover, this result does not consider the symplectic nature of the infinitesimal action defined by an integrable system. We will return to this aspect in a while.

Much more interesting is the question of the existence of fixed points mentioned above. 
The local model is still useful since it provides an indication of the answer: 
for a representation of a Lie algebra ${\gerg}$ on a vector space ${V}$, the fixed points of the associated (linear) infinitesimal action are precisely the ${\gerg}$-invariant elements of ${V}$, forming
    \[  
        V^\gerg
        =
        \set{
            v\in V
            : 
            \rho(\xi)v
            =0
            \quad\forall\;
            \xi \in \gerg
        }.
    \]
Or in fancier language, 
    $
        V^\gerg
        =
        H^0(\gerg,V),
    $
the zeroeth cohomology group of ${\gerg}$ with coefficients in ${V}$.  
Trying to deform the linear action in such a way that the fixed point ${0}$ is destroyed reveals the relevance of ${H^1(\gerg,V)}$ as well. With this insight one has:

\begin{proposition}
\label{stability-fixed-pts}
Assume that ${x}$ is a fixed point of the infinitesimal action of ${\mathfrak{g}}$ on ${M}$ and assume that, for the normal representation at ${x}$, 
\[
    H^1(\mathfrak{g}, T_xM)= 0.
\] 
Then ${x}$ is a stable fixed point. 
More precisely, any other infinitesimal action close to the original one admits at least a fixed point close to ${x}$. Moreover, one can find an entire family of fixed points parametrized by 
    \[
        H^0(\mathfrak{g}, T_xM)
        = 
        (T_xM)^{\mathfrak{g}}.
    \] 
In particular, the original infinitesimal action has at least a copy of ${(T_xM)^{\mathfrak{g}}}$ worth of fixed points, and if ${(T_xM)^{\mathfrak{g}}= \{0\}}$, then the fixed point must be isolated.
\end{proposition}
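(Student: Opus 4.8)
The plan is a Lyapunov--Schmidt reduction in which the vanishing of $H^1$ forces the resulting bifurcation equation to be trivially satisfied. Fix a chart identifying a neighbourhood of $x$ with an open set $U\subseteq V:=T_xM$ and $x$ with $0$, pick a basis $\xi_1,\dots,\xi_k$ of $\mathfrak g$ with structure constants $c^c_{ab}$, and encode an infinitesimal action $\sigma$ defined near $x$ by its generators $Y_a:=\sigma(\xi_a)$, so that the fixed points of $\sigma$ near $x$ are exactly the zeros of the map $f_\sigma\colon U\to C^1(\mathfrak g,V):=\operatorname{Hom}(\mathfrak g,V)$, $f_\sigma(p)(\xi_a):=Y_a(p)$, where tangent spaces are identified with $V$ through the chart. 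I will let $\sigma$ range over infinitesimal actions whose generators are $C^1$-close to those of $\rho$ on a fixed compact neighbourhood of $x$, so that $(\sigma,p)\mapsto f_\sigma(p)$ is $C^1$ and $f_\rho(0)=0$. Since $\rho(\xi_a)(0)=0$, the partial differential $\partial_p f_\rho(0)\colon V\to C^1(\mathfrak g,V)$ equals, up to an overall sign, the Chevalley--Eilenberg differential $d^0$ of the normal representation; in particular its kernel is $H^0(\mathfrak g,T_xM)=(T_xM)^{\mathfrak g}$ and its image is $\operatorname{im}d^0$.

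Next I would set up the reduction. Because $H^1(\mathfrak g,T_xM)=0$ we have $\ker d^1=\operatorname{im}d^0$ inside $C^1(\mathfrak g,T_xM)$, so there is a (finite-dimensional) complement $K$ of $\operatorname{im}d^0$ on which the next Chevalley--Eilenberg differential $d^1$ is injective; let $\pi$ be the projection onto $\operatorname{im}d^0$ along $K$, and split $V=H^0\oplus V_1$. The ``range'' equation $\pi f_\sigma(v_0+v_1)=0$ has, at $(\sigma,v_0,v_1)=(\rho,0,0)$, invertible partial derivative in $v_1$, namely $\pi\,\partial_p f_\rho(0)|_{V_1}\colon V_1\to\operatorname{im}d^0$, so the implicit function theorem produces a $C^1$ map $v_1=v_1(\sigma,v_0)$ with $v_1(\rho,0)=0$; differentiating the defining identity also gives $\partial_{v_0}v_1(\rho,0)=0$. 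Setting $p(\sigma,v_0):=v_0+v_1(\sigma,v_0)$, a point near $x$ is a fixed point of $\sigma$ precisely when it is of this form for some small $v_0\in H^0$ and, in addition, the reduced (bifurcation) equation $g(\sigma,v_0):=(1-\pi)f_\sigma\!\left(p(\sigma,v_0)\right)=0$ holds; note $g$ takes values in $K$.

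The crux --- and the step I expect to be the real obstacle --- is to prove that $g$ vanishes identically near $(\rho,0)$, using the action relations together with $H^1=0$. Write $p:=p(\sigma,v_0)$ and $\eta:=f_\sigma(p)\in C^1(\mathfrak g,T_xM)$; by construction $\eta\in K$. Starting from $(d^1\eta)(\xi_a,\xi_b)$ written out via the normal representation and substituting for the structure-constant term using the action relation $[Y_a,Y_b](p)=\sum_c c^c_{ab}Y_c(p)$, the contributions of the Jacobians of the $X_a$ at $0$ cancel against those of the Jacobians of the $Y_a$ at $p$, leaving an identity of the shape $(d^1\eta)(\xi_a,\xi_b)=\delta_a\,\eta(\xi_b)-\delta_b\,\eta(\xi_a)$, where $\delta_a:=DY_a(p)-DX_a(0)$ is small once $\sigma$ is $C^1$-close to $\rho$ and $p$ is close to $x$. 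Hence $\lVert d^1\eta\rVert\le C\varepsilon\lVert\eta\rVert$, while injectivity of $d^1$ on the finite-dimensional space $K$ gives $\lVert d^1\eta\rVert\ge c\lVert\eta\rVert$ for some $c>0$; for $\sigma$ close enough to $\rho$ (so that $C\varepsilon<c$) this forces $\eta=0$, i.e.\ $p(\sigma,v_0)$ is a genuine fixed point of $\sigma$ for every small $v_0\in H^0$.

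Finally I would read off the statement. Taking $v_0=0$ gives, for every nearby $\sigma$, a fixed point $p(\sigma,0)$ close to $x$, which is the stability assertion. Since $\partial_{v_0}p(\rho,\cdot)(0)$ is the inclusion $H^0\hookrightarrow T_xM$ (using $\partial_{v_0}v_1(\rho,0)=0$), the map $v_0\mapsto p(\sigma,v_0)$ is an injective immersion near $0$ for every $\sigma$ close to $\rho$, yielding a $\dim(T_xM)^{\mathfrak g}$-dimensional family of fixed points, and in particular (for $\sigma=\rho$) at least a copy of $(T_xM)^{\mathfrak g}$ worth of fixed points of the original action. When $(T_xM)^{\mathfrak g}=0$, the reduction shows the zero set of $f_\rho$ near $x$ is exactly $\{0\}$, so the fixed point is isolated.
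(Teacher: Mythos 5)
The paper itself gives no proof of this proposition; it only states it and refers the reader to Fernandes--Monnier \cite{FM04}. Your Lyapunov--Schmidt reduction is correct, and it is essentially the deformation-theoretic argument one would expect from that reference. I verified the step you rightly single out as the crux: with the normal representation $\xi\cdot v=[\rho(\xi),v]_x=-DX_\xi(0)\,v$ and the cocycle relation $[Y_a,Y_b](p)=\sum_c c^c_{ab}Y_c(p)$ substituted into the Chevalley--Eilenberg formula $(d^1\eta)(\xi_a,\xi_b)=\xi_a\cdot\eta(\xi_b)-\xi_b\cdot\eta(\xi_a)-\eta([\xi_a,\xi_b])$, the $DX_a(0)$- and $DY_a(p)$-terms do recombine exactly into $\delta_a\,\eta(\xi_b)-\delta_b\,\eta(\xi_a)$ with $\delta_a=DY_a(p)-DX_a(0)$, as you claim. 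Since $d^1$ is injective on the finite-dimensional complement $K$ (precisely where $H^1=0$ is used), and the $\delta_a$ are uniformly small once $(\sigma,v_0)$ is close to $(\rho,0)$ (because $p(\sigma,v_0)$ is continuous with $p(\rho,0)=0$), the pair of inequalities $c\|\eta\|\le\|d^1\eta\|\le C\varepsilon\|\eta\|$ forces $\eta=0$, i.e.\ the bifurcation map $g$ vanishes identically on a neighbourhood of $(\rho,0)$. The remaining deductions---stability, the $H^0$-parametrized family being an injective immersion via $\partial_{v_0}v_1(\rho,0)=0$, and isolatedness when $(T_xM)^{\mathfrak g}=0$---all read off cleanly. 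Two points worth making explicit in a polished write-up: that the implicit function theorem is applied with parameter ranging in a Banach space of tuples of $C^1$ vector fields on a fixed compact neighbourhood (so that $(\sigma,p)\mapsto f_\sigma(p)$ is genuinely $C^1$), and that the $v_0$-neighbourhood must be shrunk together with the $\sigma$-neighbourhood so that $p(\sigma,v_0)$, and hence each $\delta_a$, stays below the threshold.
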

\noindent
The previous cohomological condition can also be expressed using the one-connected Lie group ${G(\gerg)}$ integrating ${\gerg}$, because
    \[ 
        H^i(\mathfrak{g}, T_xM)
        = 
        H^i(G(\mathfrak{g}), T_xM)
        \quad 
        \textrm{for}\ i\in \{0, 1\}.
    \]
In particular, if ${\gerg}$ is semisimple of compact type (or equivalently if ${G(\gerg)}$ is compact) then ${H^1}$ vanishes automatically and the previous proposition applies. In this case a lot more is true:

\begin{local_theorem}[Fernandes \& Monnier \cite{FM04}]
\label{Lie-alg-lin-thm} 
If ${\gerg}$ is semisimple of compact type then any infinitesimal action of ${\gerg}$ on ${M}$ can be linearized around any singular point. 
\end{local_theorem}

Note that this proposition is related to, but it is not an immediate consequence of, proposition \ref{slice-sing-pts} since it is not clear at all that the infinitesimal action comes from a Lie group action. 
Returning to proposition \ref{stability-fixed-pts}, in comparison with the other propositions stated above, it is much more applicable to commutative Lie algebras ${\gerg}$, i.e.\  to integrable systems. See \cite{FM04} for more details on these statements.

\subsection{Poisson Brackets}

Of course, one of the important structures entering the definition of integrable systems is the symplectic structure. 
However, the only role of ${\omega}$ is to define the Poisson bracket ${\set{ -, -}_\omega}$, and this is the first indication of the Poisson nature of the definition. 
Here we recall some basic, Poisson-geometric notions and recast the notion of integrable systems in this framework. 

By a \textbf{Poisson bracket}\index{Poisson bracket} ${\set{ -, -}}$ on a Manifold ${M}$ we mean a Lie algebra structure on the algebra ${C^\infty(M)}$ of smooth, real-valued functions satisfying the Leibniz identity
    \[
        \set{f,g\, h}
        =
        \set{f,g}\, h
        +
        g\, \set{f,h},
        \quad
        \forall\; f, g, h \in C^\infty(M).
    \]
This definition makes sense on general algebras, giving rise to the notion of Poisson algebras. The Leibniz identity tells us that for each ${f\in C^\infty(M)}$ the operation ${\set{f, -}}$ is a derivation on ${C^\infty(M)}$, hence defines a vector field on ${M}$. This is called the \textbf{Hamiltonian vector field}\index{Hamiltonian vector field} associated to ${f}$ and is denoted by ${X_f=X_f^\pi}$.

The starting point of the bivector field formulation of Poisson structures is the simple remark that ${\R}$-bilinear, antisymmetric operations ${\set{ -, -}}$ on ${C^\infty(M)}$ satisfying the Leibniz identity are in one-to-one correspondence with bivector fields 
    \[
        \pi
        \in
        \gerX^2(M)
        =
        \Gamma(\wedge^2TM).
    \]
This correspondence is characterized by
    \[
        \pi(df,dg) = \set{f,g}.
    \]
The fact that ${\set{ -, -}}$ satisfies the Jacobi identity is equivalent to the involutivity condition 
    \[
        [\pi,\pi]=0,
    \]
where ${[ -, -]}$ is the Schouten-Nijenhuis bracket on multivector fields. The notation from the previous equation indicates that we think of (and use) multivector fields as multilinear maps whose arguments are one-forms. For a bivector ${\pi}$, fixing such a one-form ${\theta_1}$, the expression ${\pi(\theta_1, -)}$ is a linear map on ${T^*M}$ and hence a vector field. This defines
    \[
        \pi^\sharp
        :
        T^*M
        \to
        TM,
        \quad
        \theta_2(\pi^\sharp(\theta_1))
        = 
        \pi(\theta_1,\theta_2).
    \]
Of course, this is just the reinterpretation of antisymmetric, bilinear maps on a vector space ${V}$ as skew adjoint linear maps ${V\to V^*}$. Applied to an exact one-form ${df}$, one obtains the associated Hamiltonian vector field already described above
    \[
        X_f^\pi
        =
        \pi^\sharp (df).
    \] 
Here are two basic examples of Poisson brackets:

\begin{myenumerate}

\item \textit{Nondegenerate Poisson brackets:} A symplectic two -form ${\omega}$, since nondegenerate, can be inverted into a bivector
    \[ 
        \pi_{\omega}
        \defeq
        \omega^{ -1}.
    \]
To justify the notation, interpret ${\pi}$ and ${\omega}$ as skew adjoint linear maps between the cotangent bundle ${T^*M}$ and and tangent bundle ${TM}$. Via this correspondence the condition ${d\omega=0}$ corresponds to ${[\pi_\omega,\pi_\omega]=0}$.

\item \textit{Linear Poisson brackets:} For a Lie algebra ${\gerg}$, the dual ${\gerg^*}$ carries a canonical Poisson bracket uniquely characterized by the condition that 
    \[ 
        \{\mathrm{ev}_u, \mathrm{ev}_v\}
        = 
        \mathrm{ev}_{[u, v]}
        \quad\forall\ u, v\in \gerg,
    \]
where ${\mathrm{ev}_u}$ is the linear function on ${\gerg^*}$ that sends ${\xi}$ to ${\xi(u)}$. 
Or in coordinates, if ${c_{i, j}^{k}}$ are the structure constants of ${\gerg}$, then the components ${\pi_{i, j}}$ in the writing 
    $
        \pi= 
        \sum_{i, j} 
        \pi_{i, j} 
        \,\partial x_i \wedge \partial x_j
    $ 
are the linear functions
    \[ 
        \pi_{i, j}
        = 
        \sum\nolimits_{k} c_{i, j}^{k} x_k.
    \]
\end{myenumerate}

\noindent
The second class of examples shows that interesting Poisson structures can be quite degenerate. At the extreme, one has the zero Poisson structure on any manifold. The involutivity condition for the moment map ${\mu}$ of a completely integrable system can be interpreted as saying that
    \[ 
        \mu: (M, \omega^{ -1}) \rightarrow (\gerh^*, 0) 
    \]
is a Poisson map.

Of course, symplectic geometry comes with various basic results that exploit the nondegeneracy condition, or that are not so clear or easy to extend to general Poisson structures. 
One class of such results are the neighborhood theorems for various types of submanifolds of a symplectic manifold: Lagrangian, isotropic, coisotropic, or symplectic submanifolds.
Some of these results do extend to Poisson geometry, but the proofs are more involved and the conditions for the theorems are more subtle. 

Let us briefly mention here the case of isotropic submanifolds, i.e.\  submanifolds ${N}$ of a symplectic manifold ${(M, \omega)}$ satisfying the condition 
    \[ 
        T_xN \subset (T_xN)^{\perp_\omega},
        \quad\forall\ x\in N,
    \]
where ${\perp_\omega}$ denotes the symplectic orthogonal with respect to ${\omega}$. The key remark is that, while in classical geometry a neighborhood of ${N}$ is determined by its normal bundle ${\calN=\calN(N,M)}$, in symplectic geometry there is an adapted normal bundle that is relevant. In the case of isotropic manifolds, this so-called \textbf{symplectic normal bundle}\index{symplectic normal bundle} is given by
    \[ 
        \calN^{\omega}= \calN^\omega(N,M)
        \defeq
        (TN)^{\perp_\omega}/TN.
    \]
This is a vector bundle over ${M}$ and each fiber caries a canonical symplectic structure: the restriction of ${\omega}$ to ${(TN)^{\perp_\omega}}$ has ${TN}$ as its kernel, hence it descends to the quotient as a nondegenerate two -form. In other words, ${\NN^{\omega}}$ is a symplectic vector bundle. The main point is that, when ${N}$ is compact, an entire neighborhood of ${N}$ in ${M}$, together with its symplectic structure, is completely determined by the symplectic normal bundle.

Moving to general Poisson structures, there are various other local forms that become interesting. They continue to be related to symplectic geometry, but in a more subtle way. For instance, one can wonder about local forms around symplectic leaves. Here we mention one such result: the case of fixed points of Poisson structures for which one has Conn's linearization theorem \cite{Conn85}. Recall that a \textbf{fixed point of a Poisson manifold}\index{fixed point!of a Poisson manifold} ${(M, \pi)}$ is any ${x\in M}$ at which ${\pi_x= 0}$. In this case the derivative ${d_x\pi:T_xM\to \wedge^2 T_xM}$ defines a linear Poisson structure on ${T_xM}$, and hence a Lie algebra structure on ${\gerg_{\pi,x}\defeq T_x^*M}$.

\begin{local_theorem}[Conn's linearization]\label{conn-lin-thm}
If ${\gerg_{\pi,x}}$ is semisimple of compact type, then there are neighborhoods ${U\sub M}$ of ${x}$ and ${V\sub \gerg_{\pi,x}^*}$ of ${0}$, and a Poisson diffeomorphism
    \[
        \phi: (U,\pi) \xrightarrow{\,\simeq\,} (V,d_x\pi).
    \]
\end{local_theorem}

\section{Eliasson's linearization theorems}

\subsection{Local normal models around fixed points}

In this section we describe the local normal models for fixed points of integrable systems. Most of this can be found in the literature, see for example \cite{Eli90, Mir03}. The only original contributions by the author concerns the exposition through jet bundles, which easily generalizes to coordinate free descriptions of normal models along orbits, and proposition \ref{prop:nondeg_function}, which the author could not find in the literature.

If we are interested in the local behavior of an integrable system 
$\mu: (M, \omega)\rightarrow \mathbb{R}^n$ around a point ${x\in M}$, the story is interesting only when ${x}$ is a singular point, i.e.\  a point at which ${\mu}$ fails to be a submersion. There are several types of such singular points,  depending on the rank of ${(d\mu)_x}$. We now concentrate on the case when ${x}$ is a \textbf{fixed point}\index{fixed point!of an integrable system}, i.e.\  where ${(d\mu)_x= 0}$ (which is equivalent with the fact that ${x}$ is a fixed point for the induced action of ${\gerh= \R^n}$ on ${M}$). Even for fixed points, there are various possibilities depending on the order at which ${d\mu}$ vanishes at ${x}$.

\begin{definition} 
Given an integrable system $\mu: (M, \omega)\rightarrow \R^n$, a fixed point of order (at least) $k$ of $(\omega, \mu)$ is any point $x\in M$ at which $\mu$ vanishes up to order $x$ (i.e.\  all its partial derivatives at $x$ up to order $k$ vanish).
\end{definition}

\noindent
Around such points one can build a local, much simpler (namely polynomial) `local model', from the jet data of the original integrable system at ${x}$: the local model for ${\mu}$ around ${x}$ is basically the homogeneous polynomial of degree ${k+1}$ made out of the partial derivatives of ${\mu}$ of order ${k+1}$, viewed as the moment map of an integrable system on ${\mathbb{R}^{2n}}$. For a coordinate free description,
we will work on the symplectic vector space ${(T_xM, \omega_x)}$ (thought of as the linear approximation of ${(M, \omega)}$ around ${x}$). Then we obtain symmetric polynomials 
\[ 
h_x(\mu_i)\in S^{k+1}T^{*}_{x}M, \quad h_x(\mu)\in S^{k+1}T^{*}_{x}M\otimes \mathbb{R}^n. 
\]
Conceptually, these are just the ${(k+1)}$-st jets of the ${\mu_i}$ and ${\mu}$ combined with the remark that, since ${\mu}$ vanishes up to order ${k}$, the ${(k+1)}$-st jet lives in the symmetric algebra, which is usually illustrated by the standard short exact sequence
\[
        0
        \to
        S^{k+1} T^*M \otimes \R^n
        \to
        J^{k+1}(M,\R^n)
        \to
        J^k(M,\R^n)
        \to
        0.
\]
We interpret the ${h_x(\mu_i)}$ as polynomial functions, obtaining 
\begin{equation}\label{norm -fiorm -localmodel -is} 
h_x(\mu)\defeq (h_x(\mu_1), \ldots, h_x(\mu_n)): (T_xM, \omega_x)\rightarrow \R^n.
\end{equation}

\begin{lemma}
If ${\mu: (M, \omega)\rightarrow \R^n}$ is an integrable system and ${x}$ a fixed point of order ${k}$, then then 
(\ref{norm -fiorm -localmodel -is}) 
is an integrable system (but independence may fail).
\end{lemma}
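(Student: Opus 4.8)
The pair $(\omega_x, h_x(\mu))$ is plainly a smooth map out of a symplectic vector space — the $h_x(\mu_i)$ are polynomial functions on $T_xM$ — so the only thing to prove is that they pairwise Poisson-commute with respect to $\omega_x$; independence is not claimed and can genuinely fail (for instance the homogeneous parts of degree $k+1$ of the $\mu_i$ may be functionally dependent even when $\mu$ is not).

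The plan is to reduce everything to a degree count in a Darboux chart. By Darboux's theorem, pick coordinates $z = (q^1,\dots,q^n,p^1,\dots,p^n)$ on a neighbourhood of $x$, centred at $x$, in which $\omega = \sum_a dq^a \wedge dp^a$. Via the induced linear identification $T_xM \cong \R^{2n}$ these coordinates also present $\omega_x$ as the constant form $\sum_a dq^a \wedge dp^a$, so on polynomial functions on $T_xM$ the bracket $\{\,\cdot\,,\,\cdot\,\}_{\omega_x}$ is the constant-coefficient Poisson bracket $\sum_a (\partial_{q^a} f\,\partial_{p^a} g - \partial_{p^a} f\,\partial_{q^a} g)$; and, since $\mu_i$ vanishes to order $k$ at $x$, the polynomial $h_x(\mu_i)$ is precisely the leading (homogeneous, degree $k+1$) part $P_i$ of the Taylor expansion of $\mu_i$ at $0$.

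Now write $\mu_i = P_i + Q_i$, with $Q_i$ vanishing to order at least $k+2$ at $0$. As $\omega$ is constant in these coordinates, $\{\mu_i,\mu_j\}_\omega$ is computed by the same constant-coefficient bracket, and bilinearity gives
\[
\{\mu_i,\mu_j\}_\omega = \{P_i,P_j\}_{\omega_x} + \{P_i,Q_j\}_{\omega_x} + \{Q_i,P_j\}_{\omega_x} + \{Q_i,Q_j\}_{\omega_x}.
\]
Here $\{P_i,P_j\}_{\omega_x}$ is homogeneous of degree $2k$ (or zero), while each of the other three terms vanishes to order at least $2k+1$ at $0$, since a first partial of $P_\ell$ is homogeneous of degree $k$ and a first partial of $Q_\ell$ vanishes to order at least $k+1$. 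Thus the homogeneous degree-$2k$ component of the Taylor expansion of $\{\mu_i,\mu_j\}_\omega$ equals $\{P_i,P_j\}_{\omega_x}$. Since $(\omega,\mu)$ is an integrable system, $\{\mu_i,\mu_j\}_\omega \equiv 0$, so every homogeneous component of its Taylor expansion vanishes; in particular $\{h_x(\mu_i), h_x(\mu_j)\}_{\omega_x} = \{P_i,P_j\}_{\omega_x} = 0$ for all $i,j$, which is exactly what we want.

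I do not expect a genuine obstacle. The one delicate point is matching $\omega_x$ with the leading behaviour of $\omega$ in the second step — this is what the Darboux chart supplies, and it is what makes the constant-coefficient bracket the correct linear approximation. If one wishes to avoid Darboux entirely, one can instead use an arbitrary chart centred at $x$ and expand the Poisson tensor $\pi_\omega = \omega^{-1}$ as $\pi_\omega = \pi_{\omega_x} + O(|z|)$; the same degree bookkeeping isolates the leading term $\{P_i,P_j\}_{\omega_x}$ and yields the same conclusion.
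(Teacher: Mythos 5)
Your proof is correct and takes essentially the same route as the paper's: both pass to a Darboux chart centred at $x$ whose differential is the identity, identify $h_x(\mu_i)$ with the leading (degree $k{+}1$) homogeneous part of $\mu_i$ there, and then observe that $\{h_x(\mu_i),h_x(\mu_j)\}_{\omega_x}$ is the lowest-degree homogeneous piece of $\{\mu_i,\mu_j\}_\omega$, which vanishes identically. The paper phrases the degree count in terms of jets while you do it by explicitly splitting $\mu_i = P_i + Q_i$ and tracking orders of vanishing; your bookkeeping (the leading bracket has degree $2k$, the cross and remainder terms vanish to order $\geq 2k+1$) is in fact slightly more careful about the degrees than the paper's jet identity, but the underlying calculation is identical.
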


\begin{proof}
Let ${U\sub M}$ be an open neighborhood of ${x}$ and let ${\phi: U \to T_xM}$ be a chart that maps ${\omega\vert_U}$ to ${\omega_x}$, and for which ${\phi(x)=0}$ and ${\dd_x\phi = \id_{T_xM}}$. We first study what happens on the chart.
There, for any functions ${f,g \in C^\infty(T_xM)}$, we find
    \[
        j^k_0 \big(
            \big\{ 
                f, g 
            \big\}_{\omega_x}
        \big)
        =
        \big\{ 
            j^{k+1}_0(f), j^{k+1}_0(g) 
        \big\}_{\omega_x}.
    \]
Moreover, that ${j^k_x(\mu)=0}$ and ${j^1_x(\phi)=\id_{T_xM}}$ implies that 
    \[
        j^{k+1}_0( \mu\circ \phi^{ -1} )
        =
        j^{k+1}_x( \mu ).
    \]
Combining these observations gives us
    \begin{align*}
        \big\{ 
            j^{k+1}_x(\mu_i), j^{k+1}_x(\mu_j) 
        \big\}_{\omega_x}
        & =
        \big\{ 
            j^{k+1}_0(\mu_i\circ\phi^{ -1}),
            j^{k+1}_0(\mu_j\circ\phi^{ -1})
        \big\}_{\omega_x}
        \\
        & =
        j^k_0 \big(
            \big\{
                \mu_i \circ \phi^{ -1},
                \mu_j \circ \phi^{ -1}
            \big\}_{\omega_x}
        \big)
        \\
        & =
        j^k_0 \big(
            \big\{ 
                \mu_i, \mu_j 
            \big\}_\omega \circ \phi^{ -1}
        \big)
        =
        0
        \qedhere
    \end{align*}
\end{proof}
\noindent
Although it is interesting to think about the general case, we will restrict to the case ${k= 1}$ (just ordinary fixed points). Then ${h_x(\mu)}$ becomes the ordinary Hessian. 

\begin{definition}
\label{def-chi-lineariz} 

Given an integrable system ${\mu: (M, \omega)\rightarrow \R^n}$ and fixed point ${x\in M}$, the resulting integrable system 
\[ \hess_x(\mu)\defeq (\hess_x(\mu_1), \ldots, \hess_x(\mu_n)): (T_xM, \omega_x)\rightarrow \R^n\]
will be called the \textbf{normal model}\index{normal model} associated to ${(\omega, \mu)}$ around ${x}$. 

\end{definition}

\noindent
The constant symplectic form ${\omega_x}$ induces a Poisson bracket
\[ \{\cdot, \cdot\}_{\omega_x}: S^{\bullet}T^{*}_{x}M \otimes S^{\bullet}T^{*}_{x}M\rightarrow S^{\bullet}T^{*}_{x}M.\]
Actually, interpreting it as a (constant) symplectic form on the manifold ${T_xM}$, it induces a Poisson bracket on ${\calC^{\infty}(T_xM)}$ which preserves polynomials. Note that
the bracket between a polynomial of degree ${p}$ and one of degree ${q}$ is one of degree ${(p+q -2)}$. In particular, for ${p= q= 2}$, one obtains a Lie algebra
\[ (S^{2}T^{*}_{x}M, \{\cdot, \cdot\}_{\omega_x}).\]
It contains the Hessians ${\hess_x(\mu_i)}$, which span an abelian Lie subalgebra:

\begin{definition}
\label{def -hessian -lie -alg} 
The Hessian Lie algebra of the integrable system $\mu: (M, \omega)\rightarrow \mathbb{R}^n$ at the fixed point ${x}$ is the Lie \emph{sub}algebra 
\[
        \gerh_{x,\mu}
        \defeq
        \lspan_\R \big(
            \hh_x\mu_1, \ldots, \hh_x\mu_n
        \big)
        \subset (S^{2}T^{*}_{x}M, \{\cdot, \cdot\}_{\omega_x}).
\]
Let us emphasize: it is the Lie algebra ${\gerh_{x, \mu}}$, together with the inclusion.
\end{definition}
\noindent
It turns out that the behavior of the integrable system around ${x}$ can be controlled algebraically by this Lie algebraic inclusion. This fits in perfectly with the fact that, for an action of a Lie algebra ${\gerh}$ on a manifold ${M}$, with fixed point ${x}$, the behavior of ${M}$ around ${x}$ is controlled by the normal representation of ${\gerh}$ on ${T_xM}$ (see (\ref{lin-isotrpy-reps}) in subsection \ref{ssec:InfActs}):
\[ 
    \rho^{\textrm{norm}}_x: \gerh\rightarrow \gergl(T_xM).
\]
Actually, the Hessian Lie algebra is basically just this normal representation (or, more precisely, the inclusion ${\rho^{\textrm{norm}}_x(\gerh)\hookrightarrow \gergl(T_xM)}$) combined with the following remarks:

\begin{myenumerate}

\item 
The normal representation ${\rho^{\textrm{norm}}}$ acts by symplectic linear maps with respect to $\omega$, i.e.\  maps ${A: T_xM\rightarrow T_xM}$ 
satisfying 
\[ 
    \omega_x(A(X), Y)
    = 
    \omega_x(X, A(Y))\quad \forall\ X, Y\in T_xM.
\]
These form a Lie algebra, usually denoted ${\gersp(T_xM, \omega_x)}$, so that the normal representation becomes a Lie algebra map
\[  \rho^{\textrm{norm}}_x: \gerh\rightarrow  \gersp(T_xM, \omega_x).\]

\item 
There is a standard canonical identification, valid for all symplectic vector spaces (and used here for ${(T_xM, \omega_x)}$): 
\begin{equation}\label{eq:idtfct -S2 -sp} 
S^2T^{*}_{x}M\cong \gersp(T_xM, \omega_x), \quad h\longleftrightarrow A,
\end{equation}
characterized by 
\[ \omega(A(u), v)= h(u, v), \quad \forall\ u, v\in T_xM.\]

\end{myenumerate}

\noindent
In other words, what we are saying here is that ${\gerh_{x, \mu}}$ is just the image of
\[ \rho^{\textrm{norm}}_x: \gerh\rightarrow  \gersp(T_xM, \omega_x) \cong S^2T^{*}_{x}M.\]
To check this, one remarks that, since the action of ${\gerh}$ on ${M}$ is given by the vector fields ${X_{\mu_i}}$, we see that ${\rho^{\textrm{norm}}_x}$ is given by the operators ${D_x(X_{\mu_i})}$, the vertical derivative of ${X_{\mu_i}}$ at ${x}$ (see also  Definition \ref{vert -D -for -X} below). It is immediate to see that, under the identification (\ref{eq:idtfct -S2 -sp}), one has 
\[ S^2T^{*}_{x}M\ni \hess_x(\mu_k) \longleftrightarrow D_{x}(X_{\mu_k}) \in \gersp(T_xM, \omega_x),\]
so that everything can be arranged in a commutative diagram
\[
\begin{tikzcd}[row sep=small]
    \gerh \arrow{d}\arrow{r}{\rho^{\textrm{norm}}_x}&
    \gersp(T_xM, \omega_x) \arrow{d}{\sim}
    \\
    \gerh_{x,\mu}\arrow[hookrightarrow]{r}&
    S^2T^{*}_{x}M
     \end{tikzcd}
\]
\noindent
In particular, one can, and we will occasionally do without further notice, identify ${\gerh_{x,\mu}}$ with a subalgebra of the symplectic Lie algebra:
\[  
    \gerh_{x,\mu}\cong \lspan_\R \big(
    D_{x}(X_{\mu_1}), \ldots, D_{x}(X_{\mu_n})
    \big)
    \subset \gersp(T_xM, \omega_x).
\]
\noindent        
Since the notation ${D_x}$ for `the vertical derivative' as ${x}$ will be used several time, we want to make it more visible:

\begin{definition}
\label{vert -D -for -X} 
If ${X\in \gerX(M)}$ is a vector field vanishing at a point ${x}$, we denote by 
\[
    {D_x(X): T_xM\rightarrow T_xM}
\]
the induced vertical derivative of ${X}$ at ${x}$ (called the linearization of ${X}$ at ${x}$). Therefore, intrinsically
$
    {D_x(X)(Y)= [X, Y](x)}
$
while in local coordinates it is represented by the matrix ${\frac{\partial X_i}{\partial x_j}(x)}$, where ${X^i}$ are the components of ${X}$.
\end{definition}
\noindent
From the previous discussion combined with the last part of Proposition \ref{stability-fixed-pts}, we deduce:

\begin{corollary}
\label{cor:fixed-point-set}
For the fixed point set of the linear actions on ${T_xM}$,
\[ (T_xM)^{\gerh}= (T_xM)^{\gerh_{x,\mu}}= \bigcap\nolimits_i
        \ker( D_xX_{\mu_i} )\]
and, if this is zero, then ${x}$ is an isolated fixed point. 
\end{corollary}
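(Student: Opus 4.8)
The plan is to unwind the structures assembled just above, after which the assertion about isolated fixed points is read off from the last sentence of Proposition \ref{stability-fixed-pts}. There is no substantial obstacle here -- the content is essentially bookkeeping -- so I will instead flag the two places where a little care is needed.

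First I would isolate the purely algebraic fact that, for a representation $\rho\colon\gerg\to\gergl(V)$ of any Lie algebra $\gerg$, the invariant subspace $V^{\gerg}$ depends only on the image subalgebra $\rho(\gerg)\subseteq\gergl(V)$: a vector $v$ satisfies $\rho(\xi)v=0$ for all $\xi\in\gerg$ precisely when $Av=0$ for every $A\in\rho(\gerg)$. Applying this to the normal representation $\rho^{\textrm{norm}}_x\colon\gerh\to\gergl(T_xM)$, whose image is exactly $\gerh_{x,\mu}$ (this is what the commutative diagram preceding Definition \ref{vert -D -for -X} records, via the identification (\ref{eq:idtfct -S2 -sp})), yields the first equality $(T_xM)^{\gerh}=(T_xM)^{\gerh_{x,\mu}}$.

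For the second equality I would use the explicit spanning set of $\gerh_{x,\mu}$. Evaluating (\ref{lin-isotrpy-reps}) on the standard basis $e_1,\ldots,e_n$ of $\gerh=\R^n$ and using $\rho_\mu(e_i)=X_{\mu_i}$ gives $\rho^{\textrm{norm}}_x(e_i)v_x=[X_{\mu_i},v]_x=D_x(X_{\mu_i})(v_x)$ in the notation of Definition \ref{vert -D -for -X}; the one point worth noting is that $[X_{\mu_i},v]_x$ is independent of the chosen extension $v$ of $v_x$ precisely because $X_{\mu_i}$ vanishes at $x$. Since the operators $D_x(X_{\mu_i})$ span $\gerh_{x,\mu}$ (Definition \ref{def -hessian -lie -alg} and the discussion following it), a vector is $\gerh_{x,\mu}$-invariant if and only if it lies in every $\ker D_x(X_{\mu_i})$, which gives $(T_xM)^{\gerh_{x,\mu}}=\bigcap_i\ker(D_xX_{\mu_i})$.

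Finally, for the last claim: a point $y$ near $x$ is a fixed point of the $\gerh$-action exactly when $X_{\mu_i}(y)=0$ for all $i$, so if $(T_xM)^{\gerh}=\{0\}$ then, by the two equalities just proved, the linearization at $x$ of the ``combined'' map $(X_{\mu_1},\ldots,X_{\mu_n})$ (read in a chart centred at $x$) is injective; post-composing with a linear left inverse and invoking the inverse function theorem shows this map is injective near $x$, whence $x$ is isolated in the fixed-point set -- this is the content of the last sentence of Proposition \ref{stability-fixed-pts}. The only mild subtlety I would flag is that this isolated-point conclusion, unlike the stability statements in Proposition \ref{stability-fixed-pts}, requires no hypothesis on $H^1(\gerh,T_xM)$; together with the well-definedness and sign check behind $\rho^{\textrm{norm}}_x(e_i)=D_x(X_{\mu_i})$, that is all that demands attention.
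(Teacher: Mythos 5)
Your proof is correct and follows essentially the same route as the paper, which simply reads the corollary off the commutative-diagram discussion preceding Definition \ref{vert -D -for -X} (identifying $\gerh_{x,\mu}$ as the image of $\rho^{\textrm{norm}}_x$ and as the span of the $D_x(X_{\mu_i})$) together with the last sentence of Proposition \ref{stability-fixed-pts}. Your caveat about $H^1(\gerh,T_xM)$ is a legitimate one --- in the abelian case that cohomology will typically not vanish, so the proposition's stated hypothesis had better be irrelevant to the isolation claim --- and the direct inverse-function-theorem argument you supply confirms that this last clause of the proposition indeed holds unconditionally, so the citation is sound.
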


Note also that, for the dual action (on ${T^*M}$) we obtain 
\[ (T_x^*M)^{\gerh}= \{ \xi\in T_x^*M: \{\xi, \gerh_{x,\mu}\}_{\omega_x}= 0\}.\]
Moreover, the canonical isomorphism ${\omega^{\sharp}_{x}: T_xM\rightarrow T^*_xM}$ induced by ${\omega_x}$ intertwines the two linear actions, hence it induces a linear isomorphism
\begin{equation}\label{intertwine-fixed-points}
\omega^{\sharp}_{x}: (T_xM)^{\gerh} \stackrel{\sim}{\rightarrow} (T_x^*M)^{\gerh}.
\end{equation}

\begin{remark}\label{rk -hess -as -invariant} It is interesting to think of the inclusion 
\[ \gerh_{x,\mu}\subset \gersp(T_xM, \omega_x)\]
as an algebraic invariant of the integrable system ${(\omega, \mu)}$ at ${x}$. 
Combining with an identification of  ${(T_xM, \omega_x)}$ with ${(\mathbb{R}^{2n}, \omega_{\textrm{can}})}$, one obtains 
a Lie subalgebra of ${\gersp(2n)}$, which is uniquely defined up to conjugation. This can be thought of as a slightly weaker invariant. 
\end{remark}
\noindent
And here is a stronger illustration of the relevance of ${\gerh_{x,\mu}}$, as a Lie subalgebra of ${\gersp(T_xM, \omega_x)}$, on the behavior of the integrable system around ${x}$. While the conditions of type ${\{\mu^i, f\}= 0}$ or ${(d\mu)(X)= 0}$ are vacuous at the fixed point ${x}$ (and one may think that they do not provide any information on ${f}$ or ${X}$ at ${x}$), ${\gerh_{x,\mu}}$ captures higher order information. This is formulated in the following proposition, which will be used later on.

\begin{proposition}
\label{prop:nondeg_function} 
Assume that ${x}$ is a fixed point of the integrable system ${\mu: (M, \omega)\rightarrow \R^n}$ and denote by 
${C_{\gerh_{x,\mu}}}$ the centralizer of 
 \[{\gerh_{x,\mu}} \sub \gersp(T_xM, \omega_x),\] 
where $\gersp(T_xM, \omega_x)$ is the symplectic Lie algebra at ${x}$.

For local vector fields ${X}$ and functions ${f}$ defined near ${x}$, with 
    \[{X\in \gerX_{\mu}} \,\text{ and }\, {f\in \ucalC_{\mu}}\] 
i.e.\  ${X}$ is killed by ${d\mu}$ and ${f}$ commutes with all the ${\mu_i}$'s, one has:
\begin{itemize}
\item ${(df)_x\in (T^{*}_{x}M)^{\gerh}}$ and ${X_x\in (T_{x}M)^{\gerh}}$.
\item If ${(df)_x= 0}$, then ${h_{x}(f)\in C_{\gerh_{x,\mu}}}$.
\item If ${X_x= 0}$ and ${D_x(X)\in \gersp(T_xM, \omega_x)}$, then ${D_x(X)\in C_{\gerh_{x,\mu}}}$.
\end{itemize}
\end{proposition}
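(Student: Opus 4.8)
The plan is to reduce everything to a single computation with Poisson brackets of jets, exactly in the spirit of the lemma proved earlier in this section. First I would set up local coordinates: choose a symplectic chart $\phi: U \to T_xM$ with $\phi(x)=0$ and $\dd_x\phi = \id$, so that all objects can be transported to the constant symplectic manifold $(T_xM,\omega_x)$ and the Poisson bracket there preserves polynomials and degrades degree by $2$. Under this identification $\gerh_{x,\mu}$ sits inside $(S^2 T^*_xM,\{\cdot,\cdot\}_{\omega_x})\cong\gersp(T_xM,\omega_x)$ as the span of the Hessians $\hh_x\mu_k$, and the centralizer $C_{\gerh_{x,\mu}}$ is the set of quadratic polynomials (equivalently symplectic endomorphisms) that Poisson-commute with all $\hh_x\mu_k$.

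For the first bullet, the key point is the identity (used already in the lemma) relating the $k$-jet of a Poisson bracket to the brackets of $(k{+}1)$-jets. Since $f$ commutes with every $\mu_i$, we have $\{\mu_i,f\}_\omega = 0$; taking the $0$-jet at $x$ and using $j^1_x\mu_i = 0$ (so that the leading contribution comes from pairing the Hessian of $\mu_i$ with $(df)_x$) yields $\{\hh_x\mu_i, (df)_x\}_{\omega_x}=0$ for all $i$, which is precisely $(df)_x\in (T^*_xM)^{\gerh}$ by the description of the dual fixed-point set given just before the proposition. For $X\in\gerX_\mu$, i.e.\ $(\dd\mu)(X)=0$, one argues similarly: evaluating $\dd\mu_i(X)=0$ to first order at $x$ and using $(\dd\mu_i)_x=0$ gives $D_x(X_{\mu_i})\big(X_x\big)\in\ker$, hence $X_x\in\bigcap_i\ker D_xX_{\mu_i} = (T_xM)^{\gerh}$ by Corollary \ref{cor:fixed-point-set}. (Alternatively, since $X_{\mu_i}$ spans the $\gerh$-action and $(\dd\mu)X=0$ forces $X$ tangent to the fibers, one can invoke the Lemma that orbits lie in fibers together with a jet argument.)

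For the second and third bullets, which are really the same statement via $\omega^\sharp_x$, assume $(df)_x=0$, so the Hessian $\hh_x(f)$ is the well-defined leading jet. Now apply the jet identity to $\{\mu_i,f\}_\omega=0$ but one order higher: take $j^1_x$ of this vanishing function. Since $\mu_i$ and $f$ both vanish to first order at $x$, the bracket $\{\mu_i,f\}$ vanishes to second order, and its degree-$2$ part is exactly $\{\hh_x\mu_i,\hh_x f\}_{\omega_x}$ (the cross-terms involving the linear parts drop out because those linear parts are zero). Hence $\{\hh_x\mu_i,\hh_x f\}_{\omega_x}=0$ for all $i$, which says precisely that $\hh_x f$ centralizes $\gerh_{x,\mu}$ inside $(S^2T^*_xM,\{\cdot,\cdot\}_{\omega_x})\cong\gersp(T_xM,\omega_x)$, i.e.\ $\hh_x f\in C_{\gerh_{x,\mu}}$. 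The third bullet follows by the same computation after identifying $D_x(X)\in\gersp(T_xM,\omega_x)$ with an element of $S^2T^*_xM$ via \eqref{eq:idtfct -S2 -sp}, and observing that the condition $(\dd\mu)X=0$ with $X_x=0$ forces the linearizations to commute; alternatively transport $X$ to a Hamiltonian-type situation, or simply note $[X_{\mu_i},X]_x = 0$ from differentiating the fiber-tangency condition, and $[X_{\mu_i},X]_x = [D_xX_{\mu_i}, D_xX]$ is the bracket of linearizations.

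The step I expect to be the main obstacle is the bookkeeping in the jet computation for the second/third bullets: one must be careful that the degree-$2$ component of $\{\mu_i, f\}_\omega$ is genuinely $\{\hh_x\mu_i,\hh_xf\}_{\omega_x}$ and receives no contribution from the constant, linear, or cubic-and-higher parts of $\mu_i$ and $f$ — this relies essentially on $j^1_x\mu_i=0$ and $(df)_x=0$, and on the Leibniz/grading property that $\{S^p,S^q\}\subset S^{p+q-2}$. Making this precise (ideally by quoting the jet-bracket identity from the earlier Lemma and the exact sequence for $J^{k+1}$) is the crux; once it is in place, the identification of the vanishing condition with membership in the centralizer is immediate from the definitions and the isomorphism \eqref{eq:idtfct -S2 -sp}. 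The third bullet's hypothesis "$D_x(X)\in\gersp$" is what makes the $\omega^\sharp_x$-translation between the last two bullets clean, so I would state that reduction explicitly rather than redo the computation.
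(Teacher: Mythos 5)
Your jet--bracket argument for the first two bullets is sound and is a genuinely different route from the paper's.\ The paper does not invoke the jet identity $j^k_0\{f,g\}_{\omega_x}=\{j^{k+1}_0 f,j^{k+1}_0 g\}_{\omega_x}$ from the earlier lemma at all; instead it works directly with the vertical derivative $D_x$, setting $g_i = \dd\mu_i(X)\equiv 0$ and differentiating this identity once and twice at $x$.\ Your way is cleaner conceptually because it reuses the machinery that was already set up for showing the normal model is an integrable system; the paper's way has the advantage of handling the vector field case directly without any detour through Poisson brackets.

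The genuine gap is in your treatment of the third bullet, and it comes from trying to run the reduction in the wrong direction.\ The paper reduces bullet 2 to bullet 3 by taking $X = X_f$: a function $f\in\ucalC_\mu$ with $(\dd f)_x=0$ produces a Hamiltonian vector field $X_f\in\gerX_\mu$ with $X_{f,x}=0$ and $D_xX_f\in\gersp(T_xM,\omega_x)$, and $\hess_x f$ corresponds to $D_xX_f$ under \eqref{eq:idtfct -S2 -sp}.\ You instead try to reduce bullet 3 to bullet 2, but this does not go through: an arbitrary $X\in\gerX_\mu$ with $X_x=0$ and $D_xX\in\gersp(T_xM,\omega_x)$ need not be Hamiltonian, nor need it agree to second order with the Hamiltonian vector field of any $f\in\ucalC_\mu$, so there is no $f$ to which bullet 2 can be applied.\ The hypothesis $D_x(X)\in\gersp$ only constrains the \emph{linear} part of $X$; it says nothing about whether $X$ globally (or even to second order) comes from a function satisfying $\{\mu_i,f\}=0$.\ Your two offered alternatives do not close this gap either: ``observing that the condition $(\dd\mu)X=0$ with $X_x=0$ forces the linearizations to commute'' is exactly the statement to be proved, and ``note $[X_{\mu_i},X]_x=0$'' is vacuous since both vector fields vanish at $x$ (the identity you actually want is $D_x[X_{\mu_i},X]=[D_xX_{\mu_i},D_xX]$, and one would still have to argue why $D_x[X_{\mu_i},X]=0$, which does not follow merely from $[X_{\mu_i},X]\in\gerX_\mu$).\ To fix this you should prove bullet 3 directly --- e.g.\ compute $\hess_x(g_i)$ for $g_i=\dd\mu_i(X)\equiv 0$, obtaining $\hess_x(\mu_i)(D_xX\cdot, \cdot)+\hess_x(\mu_i)(\cdot, D_xX\cdot)=0$, then convert via $\omega_x$ and use $D_xX\in\gersp$ to move $D_xX$ across the pairing and conclude $[D_xX_{\mu_i},D_xX]=0$ --- and only then deduce bullet 2 as the special case $X=X_f$.

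One smaller slip in bullet 1: you say ``taking the $0$-jet at $x$'' of $\{\mu_i,f\}=0$ yields $\{\hh_x\mu_i,(\dd f)_x\}_{\omega_x}=0$, but since $(\dd\mu_i)_x=0$ the value of $\{\mu_i,f\}$ at $x$ is automatically zero, so the $0$-jet carries no information; you need the $1$-jet (equivalently, the degree-$1$ component of $\{j^2_x\mu_i,j^2_xf\}_{\omega_x}$).\ The conclusion is right, the wording of which jet you are extracting is not.
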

\noindent
\begin{proof} 
It suffices to prove the final statement (for the penultimate one takes ${X= X_f}$). One can write down a tedious but straightforward local computation. Here is a more formal argument, based on 
the properties of the vertical derivative ${D_x}$. The idea is obvious: instead of evaluating the condition ${\dd\mu_i(X)= 0}$ at ${x}$, one first takes derivatives and then evaluates at ${x}$. And similarly for second order derivatives. The undesired terms disappear because ${(d\mu)_x=0}$. The first derivative of ${g_i=\dd\mu_i(X)}$ at ${x}$ gives
    \begin{align*}
        0 
        =
        (d g_i)_x(Y_x)
        & =
        \hess_x(\mu)(X_x, Y_x) + \big((d\mu)_x \circ D_x(X)\big) (Y_x) 
        \\  
        & =
        \hess_x(\mu)(X_x, Y_x)  
    \end{align*}
for any ${Y \in \gerX_M}$. Using the identity 
    \[
        \hess_x(\mu_i)(Y_x, Z_x) 
        = 
        \omega_x(D_x(X_{\mu_i})(Y_x), Z_x),
    \]
one has ${D_x(X_{\mu_i})(X_x) = 0}$ i.e.\ , by the previous discussion, ${X(x)}$ is in the fixed point set. Assume now ${X(x)= 0}$. This also shows that the Hessians of the ${g_i}$ at ${x}$:
    \begin{align*}
        \hess_x(g_i)(Y_x,Z_x)
        & =
        \hess_x(\mu_i)( D_x(X)(Y_x), Z_x )
        \\
        & +
        \hess_x(\mu_i)( Y_x, D_x(X)(Z_x) ).
    \end{align*}
When ${g_i= 0}$, passing in the left hand side again from Hessians to vertical derivatives using ${\omega_x}$ (${h_x(g)}$ corresponds to ${D_x(X_g)}$) we find:
    \[
        0 = [ D_x(X_{\mu_i}), D_x(X) ].
    \]\
Since this holds for all ${i}$, the conclusion follows. 
\end{proof}
\noindent

\subsection{Nondegenerate fixed points \& Eliasson's theorem(s)}

We continue to look at an integrable system ${\mu: (M, \omega)\rightarrow \mathbb{R}^n}$ around a fixed point ${x}$. 
The previous proposition gives an indication that conditions on the commutator relations inside ${\gersp(T_xM,\omega_x)}$ involving ${\gerh_{x,\mu}}$ forces the original system to have a behavior similar to its linearization around ${x}$. This slowly brings us to the notion of nondegenerate fixed point \cite{Vey78}:

\begin{definition}
\label{def-weired-nondegen}
A fixed point ${x}$ of ${(\omega,\mu)}$ is called \textbf{nondegenerate}\index{fixed point!nondegenerate} if ${\gerh_{x,\mu}}$
is a Cartan subalgebra, i.e.\ it is abelian (which always holds), ${n}$-dimensional and self-normalizing:
\begin{quote}
    ${[X,Y]\in \gerh_{x,\mu}}$ for all ${Y\in \gerh_{x,\mu}}$ and ${X\in\gersp(T_xM,\omega_x)}$ $\implies$ ${X\in \gerh_{x,\mu}}$.
    \qedhere
\end{quote}
\end{definition}
\noindent
\begin{remark}\label{rk:Williamson} In the spirit of Remark \ref{rk -hess -as -invariant}, it is interesting to recall the result of Williamson \cite{williamson, williamson40} who classified all the 
Cartan subalgebras of ${\gersp(2n,\R)}$ up to conjugation: any Cartan subalgebra of ${\gersp(2n,\R)}$ is conjugate to an ${n}$-dimensional abelian subalgebra spanned by matrices of the following three types:
\begin{myitemize}
    \item 
Elliptic blocks ${A^i_\text{ell}}$. They are zero everywhere except on the ${(x_i,y_i)}$-plane, where
    \[
        A^i_\text{ell}
        =
        \big(
        \begin{smallmatrix}
            0 & -1 
            \\
            1 & 0
        \end{smallmatrix}
        \big).
    \]
They correspond to quadratic functions of the form 
        \[
            \hh^i_\text{ell}(x,y) = \tfrac12(x_i^2+y_i^2).
        \]
    \item 
Hyperbolic blocks ${A^j_\text{hyp}}$. They are zero everywhere except on the ${(x_j,y_j)}$-plane, where 
    \[
        A^i_\text{hyp}=
        \big(
        \begin{smallmatrix}
            -1 & 0
            \\
            0 & 1
        \end{smallmatrix}
        \big).
    \]
They correspond to quadratic functions of the form     
    \[
            \hh^j_\text{hyp}(x,y) = x_jy_j.
    \]
    
\item Focus -focus pairs ${A^k_\text{ff,1}}$ and ${A^k_\text{ff,2}}$. They are zero everywhere except on the ${(x_k,y_k,x_{k+1},y_{k+1})}$-plane, where
    \[
        A^k_\text{ff,1}=
        \left(
        \begin{smallmatrix}
            -1 & 0 & 0 & 0
            \\
            0 & 1 & 0 & 0
            \\
            0 & 0 & -1 & 0
            \\
            0 & 0 & 0 & 1
        \end{smallmatrix}
        \right),
        \quad
        A^k_\text{ff,2}=
        \left(
        \begin{smallmatrix}
            0 & 0 & 1 & 0
            \\
            0 & 0 & 0 & 1
            \\
            -1 & 0 & 0 & 0
            \\
            0 & -1 & 0 & 0
        \end{smallmatrix}
        \right).
    \]
They correspond to quadratic functions of the form
    \begin{align*}
        \hh^k_\text{ff,1}(x,y)
        & =
        x_ky_k + x_{k+1}y_{k+1},
        \\
        \hh^k_\text{ff,2}(x,y)
        & =   
        x_ky_{k+1} - x_{k+1}y_k.
    \end{align*}
\end{myitemize}

\noindent
Hence, the ${\gerh_{x, \mu}}$ up to conjugacy becomes a numeric invariant, called 
the \textbf{Williamson type}\index{Williamson type} of ${(\omega,\mu)}$ at the nondegenerate fixed point:
the triple ${(e,h,f)}$ of natural numbers that classifies the conjugacy class of ${\gerh_{x,\mu}}$. 
\end{remark}
\noindent
Let us point out (although this is likely not the best argument) that Williamson's classification directly implies that the fixed points of the linear action are isolated (see Corollary \ref{cor:fixed-point-set}).

\begin{corollary} 
\label{cor-no-fixed-from-willianson}
At nondegenerate fixed points ${x}$ one has ${(T_xM)^{\gerh}= 0}$. In particular, nondegenerate fixed points
are isolated. 
\end{corollary}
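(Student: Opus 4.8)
The plan is to reduce the statement to a fact about the defining representation of the symplectic Lie algebra and then read that fact off Williamson's normal form (Remark \ref{rk:Williamson}). By the last sentence of Corollary \ref{cor:fixed-point-set}, ``isolated'' follows automatically once $(T_xM)^{\gerh}=0$, so it suffices to prove the latter. By the first part of that corollary, $(T_xM)^{\gerh}=(T_xM)^{\gerh_{x,\mu}}=\bigcap_i\ker(D_xX_{\mu_i})$, i.e.\ it is the space of vectors in $T_xM$ annihilated by every element of the subalgebra $\gerh_{x,\mu}\subset\gersp(T_xM,\omega_x)$. Since $x$ is nondegenerate, $\gerh_{x,\mu}$ is a Cartan subalgebra (Definition \ref{def-weired-nondegen}); and for any subalgebra $\mathfrak{c}\subset\gersp(V,\omega)$ the invariant subspace $V^{\mathfrak{c}}=\{v:Av=0\ \forall A\in\mathfrak{c}\}$ depends only on the conjugacy class of $\mathfrak{c}$. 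So I would fix a symplectic isomorphism $(T_xM,\omega_x)\cong(\R^{2n},\omega_\text{can})$ carrying $\gerh_{x,\mu}$ to a Cartan subalgebra in Williamson normal form.

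I would then use that in this normal form $\R^{2n}$ decomposes as a symplectically orthogonal direct sum of coordinate blocks — $2$-dimensional for each elliptic and each hyperbolic summand, $4$-dimensional for each focus-focus pair — and every element of $\gerh_{x,\mu}$ is block diagonal for this decomposition. The key step is the observation that each block carries a generator of $\gerh_{x,\mu}$ that is \emph{invertible} on that block and zero on the others: $\big(\begin{smallmatrix}0&-1\\1&0\end{smallmatrix}\big)$ for an elliptic block, $\big(\begin{smallmatrix}-1&0\\0&1\end{smallmatrix}\big)$ for a hyperbolic block, and the diagonal generator $A^k_\text{ff,1}=\mathrm{diag}(-1,1,-1,1)$ for a focus-focus block are all invertible. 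Hence any vector $v\in\R^{2n}$ annihilated by all of $\gerh_{x,\mu}$ has, by applying the invertible generator of each block, every block component equal to $0$; so $v=0$, i.e.\ $(T_xM)^{\gerh_{x,\mu}}=0$, and then $x$ is isolated by Corollary \ref{cor:fixed-point-set}.

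I do not expect a genuine obstacle: the whole argument is packaged in Williamson's classification (Remark \ref{rk:Williamson}) together with Corollary \ref{cor:fixed-point-set}, and the only remaining input is the trivial linear-algebra remark that none of the three standard types of generators has a kernel on its own block — in particular the focus-focus case needs nothing extra, since one of its two generators is already invertible. The only things to be mildly careful about are that the invariant-vector assignment is conjugation-invariant (clear) and that the chosen splitting of $\R^{2n}$ into blocks is respected by all of $\gerh_{x,\mu}$. (One could equally argue without the explicit classification: complexifying, the defining representation of $\gersp(2n)$ has weights $\pm\epsilon_1,\dots,\pm\epsilon_n$ under any Cartan subalgebra, hence no zero weight, and $(T_xM)^{\gerh_{x,\mu}}$ is exactly its zero-weight space.)
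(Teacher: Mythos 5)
Your proof is correct and spells out exactly the argument the paper gestures at: it reduces the claim to Corollary \ref{cor:fixed-point-set} and then reads off Williamson's normal form (Remark \ref{rk:Williamson}) that each block supports a generator of $\gerh_{x,\mu}$ that is invertible on that block, so the common kernel is zero. Your parenthetical weight-theoretic remark (the defining representation of $\gersp(2n)$ has only the nonzero weights $\pm\epsilon_1,\dots,\pm\epsilon_n$ for any Cartan subalgebra, hence trivial zero-weight space) is arguably the ``better argument'' the paper alludes to when it calls the Williamson route ``likely not the best argument.''
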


Of course, nondegenerate fixed points of ${(\omega,\mu)}$ are not isolated from the \emph{singular set} at all. The first and simplest example is the normal form of Williamson type ${(2,0,0)}$: 
    \[
        \mu:(\R^4,\omega_\text{can})\to \R^2,
        \quad
        \mu_i(x,y) = \tfrac12 (x_i^2 + y_i^2).
    \]
It has two planes of singular points intersecting in a fixed point at the origin.

However, quite a lot more can be said about the local behavior of integrable systems around nondegenerate fixed points, and this brings us to Eliasson's normal form theorems (`linearization'), which compare ${(\omega, \mu)}$ around ${x}$ with the normal model around the origin (see Definition \ref{def-chi-lineariz}). The best scenario is when the Hessian Lie algebra ${\gerh_{x, \mu}}$ is \textbf{elliptic} in the sense that its Williamson type is of the form ${(e, h=0, f=0)}$. In this case, Eliasson proved the following \cite{Eli90}.

\begin{local_theorem}[Eliasson]
\label{thm -El1} 
Let ${\mu: (M, \omega)\rightarrow \mathbb{R}^n}$ be an integrable system and let ${x\in M}$ be a nondegenerate fixed point of elliptic type. Then, around ${x}$, ${(\omega, \mu)}$ is equivalent to its linearization in the following sense: there exists a symplectomorphism ${\Phi}$ from a neighborhood of ${0}$ in ${(T^*M, \omega_x)}$ and one of ${x}$ in ${(M, \omega)}$, 
\[ 
\Phi: (T_xM, \omega_x, 0)\rightarrow (M, \omega, x),
\]
and a diffeomorphism ${\phi}$ between open neighborhoods of ${0}$ and ${\mu(x)}$ in ${\mathbb{R}^n}$, 
\[ \phi: (\mathbb{R}^n, 0)\rightarrow (\mathbb{R}^n, \mu(x))\] 
such that:
\[
        \begin{tikzcd}
            (M, \omega, x)
            \arrow{d}{\mu}
            & 
            (T_xM, \omega_x, 0)
            \arrow{l}[swap]{\Phi}
            \arrow{d}{\hess_x(\mu)} \\
            (\mathbb{R}^n, \mu(x))
            & 
            (\mathbb{R}^n, 0)
            \arrow{l}{\phi}
        \end{tikzcd}
\]
\begin{align*}
    \Phi^*(\omega) &= \omega_x,
    \text{ and }
    \mu\circ \Phi = \phi\circ (\hess_x(\mu_1), \ldots, \hess_x(\mu_n)).
\end{align*}
\end{local_theorem}

\noindent
Note that this can be seen as a analogue for integrable systems of Conn's linearization theorem from Poisson Geometry (see Theorem \ref{conn-lin-thm}) hence, in particular, as an analogue of the linearizability of infinitesimal actions from Theorem \ref{Lie-alg-lin-thm}.
For more general nondegenerate fixed points the story is more subtle. However, it seems to be generally accepted (see for example \cite{PelaNgoc11}) that Eliasson's work implies the following `linearization result'. 

\begin{local_theorem}[implicit in Eliasson \cite{Eli84}]
\label{thm -El2} 
Let $\mu: (M, \omega)\rightarrow \mathbb{R}^n$ be an integrable system and let ${x\in M}$ be a nondegenerate fixed point.

Then, around ${x}$, the Lagrangian foliation of ${(\omega, \mu)}$ can be linearized around ${x}$ in the following sense:
there exists a symplectomorphism ${\Phi}$ between a neighborhood of ${0}$ in ${(T^*M, \omega_x)}$ and one of ${x}$ in ${(M, \omega)}$ such that 
\[ 
\big\{
    \mu_i, 
    \Phi_*(\hess_x(\mu_j))
\big\}_{\omega} = 0, 
\quad\forall\, i,j.
\]
\end{local_theorem}

\section{Motivation}
\label{sec:outline-thesis}

One may say that this thesis is motivated by understanding these theorems.
First, here is a sketch of the proof, as far as we understood it from the literature, with some remarks.
Afterwards, we will explain the proof we intended to find.

\begin{enumerate}

\item
Prove vanishing of the deformation cohomology associated with each of the normal models. 
This has been done by Vey \cite{Vey78, Vey79} for each model separately.
The idea behind this step is the following:
in deformation theory, one can associate a chain complex $\calC^*(\omega, \mu)$ to any integrable system $(\omega, \mu)$. Morally, if the analysis works out, vanishing of the first cohomology group $H^1(\calC^*(\omega, \mu))$ (i.e.\ infinitesimal rigidity) means that $(\omega,\mu)$ is rigid: any nearby integrable system is equivalent. A general philosophy for normal form theory is that (infinitesimal) rigidity of the normal form should lead to a normal form theorem.

\item
Prove the normal form theorem for each (Williamson) normal form separately. The elliptic case has been proven by Eliasson \cite{Eli90}, as stated in Theorem \ref{thm -El1}, for a much stronger equivalence relation than Theorem \ref{thm -El2} and for arbitrary dimension. In his thesis \cite{Eli84} he claims his method of proof also applies to the focus-focus case, although the analysis would be more involved, and he leaves several non-trivial gaps.

Vũ Ng\d oc and Wacheux \cite{NgocWa13} fill in all gaps in dimension four (i.e.\ exactly one copy of the focus-focus normal form) using several Moser path arguments and functional analysis on a space of flat functions. It is stated this generalizes to arbitrary dimension $4k$, but the proof is left to the reader. See also Chaperon \cite{Cha13} for another proof in dimension four.

The hyperbolic case is proven by Verdière \& Vey \cite{Vey79} for dimension two (i.e.\ exactly one copy of the hyperbolic normal form). 
Their argument concerns volume forms, hence does not directly apply to aribtrary dimension. 
A proof of the higher dimensional case is presented in Miranda \cite{Mir03}.

\item
Induction on the Williamson type $(e, h, f)$, which is done in two steps:

\begin{enumerate}

\item 
Decompose the integrable system into blocks corresponding to the Williamson type, i.e. find an integrable sytem $(\omega', \mu')$ on $\R^{2n}$ and a symplectomorphism $\phi$ between an open neighborhood of zero in $\R^{2n}$ and an open neighborhood of $x$ in $(M, \omega)$ such that 
    \[
        \big\{ \mu_i, \phi_*\mu'_j \big\} = 0 \quad\forall i,j,
    \]
and such that $(\omega',\mu')$ is the direct product of integrable systems of dimension two (for elliptic and hyperbolic) or four (for focus-focus) with a singularity at the origin of Williamson types $(1,0,0)$, $(0,1,0)$ or $(0,0,1)$.
The proof presented in \cite{Mir03} is not clear to the author. In particular,  this is due the fact that the theorem is formulated differently, using the expressions `singular (Lagrangian) foliation' and `$\phi$ preserves the foliation'. 
There are multiple interpretations of these concepts, which we study in detail in Chapter \ref{chapter:sheaves_behind}.

\item
Apply the results of step two to each factor of $(\omega', \mu')$ separately, and confirm that applying the normal form result to one factor does not affect the normal form for another factor. This has been proven by Miranda \cite{Mir03} and later generalized to the equivariant case \cite{Mir14}.
\end{enumerate}

\end{enumerate}

\noindent
We believe this is not the `best' proof and, in a way, this thesis is about searching for a more natural one. We hope our study of (singular) Lagrangian foliations, the Moser path method for integrable systems, and rigidity theorems for PDEs with symmetry will contribute to this.
More specifically, the proof of theorem \ref{thm -El2} can be improved upon in the following ways.

\begin{enumerate}

\item
Directly prove the vanishing of deformation cohomology of the nondegenerate normal form.
It has been proven by Miranda and Vũ Ng\d oc \cite{MN05} that the first cohomology group $H^1(C^*(\omega_\mathrm{can}, \mathrm{Hess}_x\mu))$ vanishes, but the proof depends on induction on the Williamson type. 
It would insightful to find a proof that is more canonical, without using the coordinate system from Williamson's theorem.

\item
Directly prove the normal form theorem \ref{thm -El2} using a Moser path argument, which would provide a purely geometric proof. We believe two small but important steps have been made towards this goal in Chapter \ref{sec:Equivalence of integrable systems}: a study of the different notions of equivalence between integrable systems in order to determine how the Moser path method manifests itself, and a more canonical choice of path connecting the integrable system to its normal form, inspired by \cite{CrFe} that seems absent from the literature.

The missing ingredient is leveraging the vanishing of the deformation cohomology of the normal form to prove vanishing of cohomology along the entire path. Once improvement 1.\ is understood, we should be able to adopt the arguments of \cite{CrFe}.

\item
Alternatively, or rather additionally, prove the normal form theorem directly using the Nash-Moser fast convergence argument from Chapter \ref{chapter:PDEs with Symmetry}. This requires specific estimates for the vanishing of the deformation cohomology of the nondegenerate normal form and possibly a slight generalization of the \nameref{General Main Theorem} on page \pageref{General Main Theorem}. We believe this result can be obtained in the near future for the purely elliptic case. This idea is inspired by \cite{Mar14}, and would show that the normal form is a consequence of rigidity.

\end{enumerate}

\section{Outline}
\label{sec:outline-thesis}

Again, one may say that this thesis is motivated by our desire to understand Eliasson's theorems.
Here are the main points:
\begin{enumerate}
\item 
\textbf{Foliations (Chapter 2):} the next chapter is, in large part,  
devoted to understanding the meaning of `the Lagrangian foliation of an integrable system' and why the linearization of `the Lagrangian foliation' is related to the commutativity relation from Theorem \ref{thm -El2}. One problem is, of course, the fact that ${\mu}$ may have singularities, hence one has to go beyond the well-understood case of regular foliations. The outcome will be that `the Lagrangian singular foliation' of ${(M, \mu)}$ should be interpreted as the sheaf ${\ucalC_{\mu}}$ on ${M}$ of functions that commute with the ${\mu_i}$'s. We will encounter quite a few different other interesting sheaves, but we will explain why ${\ucalC_{\mu}}$ is the best candidate for encoding `the Lagrangian foliation'. 

\item 
\textbf{Equivalences (Chapter 3):} the first part of the Chapter \ref{sec:Equivalence of integrable systems} (namely section \ref{ssec:equivalences -} on equivalences between integrable systems), starts from the desire to understand the equivalences that are hidden behind the previous two theorems, as well as a few more natural ones (most notably the equivalence  that is about preserving the orbits of the integrable system). We will see that each one of them will correspond to one of the sheaves that show up at the previous point. The sheaf ${\ucalC_{\mu}}$ encoding `the Lagrangian foliation' will be the one that allows us to make precise the fact that `the Lagrangian foliation is linearized'. The resulting notion of equivalence, called weak equivalence, is precisely the one that is implicit in Theorem \ref{thm -El2}. The equivalence that appears in \ref{thm -El1}, called `functional equivalence' will result from the (much more natural, but generally less well behaved) sheaf of functions on ${M}$ that are locally of type ${\phi\circ (\mu_1, \ldots, \mu_n))}$.

\item 
\textbf{Moser path method (Chapter 3, continued):} as we have already pointed out, we see Eliasson's theorems (or at least Theorem \ref{thm -El1}) as analogous to Conn's linearization theorem from Poisson Geometry. Originally proven by Conn \cite{Conn85} using hard analysis (Nash-Moser fast convergence method), later on a more geometric proof became available \cite{CrFe}: a Moser path argument that reduced the problem to the vanishing of certain cohomology group (Poisson cohomology), then a combination of standard results from Lie groups (such as averaging) extended to the realm of Lie groupoids. 

It is therefore natural to see whether such a proof can be found for the previous two theorems. 
The entire Chapter \ref{sec:Equivalence of integrable systems} is devoted to the first step of this plan: using the Moser path method to reduce the problem to the vanishing of certain cohomology groups. The relevant cohomology is `the deformation cohomology', which we discuss in section \ref{cis:cohomology}. For that we build on previous work of Miranda-Vu Ngoc \cite{MN05} who introduced the deformation complex adapted to weak equivalences. We should also mention here that, in this thesis, we do not carry out the second step in this plan, namely the vanishing of the cohomology, but we do believe that it is a matter of time until a proof will be found, and we do mean here a rather simple geometric proof.
 
\item 
\textbf{Rigidity theorem for PDEs with symmetries (Chapters 4--7):}
 the entire second half of the thesis
can be motivated by the search for an analytic proof, using the Nash-Moser techniques, for the two theorems. Note the analogy with the story of Conn's linearization theorem.

However, our actual motivation is wider: we would like to have a framework that can be applied not only to Eliasson's theorem, but also other local normal forms results in Differential Geometry (ranging from the classical Newlander-Nirenberg theorem, and including Conn's linearization theorem). We should mention here that one can find a functional analytic framework in the literature (due to Monnier-Zung \cite{MZ04} and then Miranda-Monnier-Zung \cite{MMZ12}), but what we have in mind here is a more geometric one, in which `the geometric structure' of interest are not abstract elements in a Hilbert space, but actual sections of a bundle which are a solution of a PDE (`integrability equation'). We believe that our resulting framework and theorem makes it easier to exploit Nash-Moser techniques (now absorbed inside our proof) to various geometric contexts, concentrating on the specifics of the context (e.g.\  without proving over and over again, in each case, some rather general estimates). 

We would also like to point out that we do pay special attention, and that complicates a bit the story, to achieving a framework that allows us to treat normal forms not only around points, but also around more general submanifolds (or just subspaces). Indeed, many geometric problems come with such normal forms (even for integrable systems, when passing from fixed points to more general singular points, one is looking at normal forms around the orbit through the singular point). 

All these subjects are discussed in Chapters \ref{chap:pseudogroups} and \ref{chapter:PDEs with Symmetry}, while the last chapter (Chapter \ref{chapter:Proof of the Main Theorem}) is devoted to the proof of the rigidity theorem. In Chapter \ref{chapter: Applications} we present some illustrations of the rigidity result: the baby example of Symplectic Geometry (Darboux theorem) and the Newlander-Nirenberg theorem. It is clear that another rather immediate application is Conn's linearization theorem (see also \cite{Mar14}).  We plan to treat the elliptic case of Eliasson's theorem in the article version of our work.    
\end{enumerate}

\chapter{The sheaves behind integrable systems}
\label{chapter:sheaves_behind}
\etocsettocdepth{2}
\localtableofcontents

\noindent
{\color{white}.}
\newline
In this chapter we discuss the sheaf theory that can be used to understand some aspects of integrable systems. The background idea is very simple: while an integrable systems comes with various different structures/aspects one can look at, and one sometimes concentrates only on some of those (e.g.\  the underlying linear action, or just the orbits, or `the Lagrangian foliation'), sheaf theory allows one to proceed in an unified manner: each such aspect can be encoded in (controlled by) a certain sheaf. Our concrete motivation for considering such sheaves is two folded.

\paragraph{Motivation 1: the \emph{`Lagrangian foliation'} of an integrable system.}
Another piece of structure that comes together with the notion of a completely integrable system ${(\omega,\mu)}$ on ${M}$ is that of `Lagrangian foliation'. Roughly speaking, that is the partition of ${M}$ into the level sets of ${\mu}$, and the statement that one often encounters in the literature is that this is `a Lagrangian {\it singular} foliation'. The main problem is that, while there is a well-established theory of regular foliations (i.e.\  for which all leaves have the same dimension), singular foliations are much more subtle; actually, there are several different (and unequivalent) ways to make sense of `singular foliations'. Here we insist on using the quotes since the name `the Lagrangian foliation of an integrable system' is often used in the existing literature in a rather vague way. 
\newline\newline
\noindent
The precise meanings of `singular foliations' that we will use are:
\begin{myenumerate}
\item partitions of ${M}$- where the leaves (${=}$ the equivalence classes) are required to be connected, but we do not even required them to be submanifolds. 
\item Stefan-Sussman foliations \cite{Stef, Sus} - which are partitions by immersed submanifolds, satisfying a smoothness condition (but without excluding different leaves having different dimensions).
\item Androulidakis-Skandalis foliations \cite{An-Sk} - which are defined by modules of vector fields which are involutive and locally finitely generated - see Remark \ref{Androulidakis and Skandalis} below. These are closely related to Stefan-Sussman foliations: any such foliation induces a Stefan-Sussman one. 
\item our framework (to be discussed below): a strongly involutive sheaf of vector fields or, equivalently, a closed sheaf of smooth function. These induce partitions, but they are not Stefan-Sussman foliations in general (the leaves may even fail to be smooth); they model connected (components of) fibers of smooth map (which, in general, do not fit in the Androulidakis-Skandalis framework). 
\end{myenumerate}

\noindent
Now, starting with an integrable system ${\mu: (M, \omega)\rightarrow \mathbb{R}^n}$ there are several, and different, interesting `singular foliation' to consider. It is important to keep in mind that the difference does not come only from the fact that they belong to different frameworks, but especially because they model different phenomena even from the intuitive point of view (it is that the different frameworks model different phenomena). Using our precise terminology, they are:

\begin{myenumerate}
\item[(O)] The `orbit foliation'. Intuitively, this corresponds to the partition of ${M}$ by the orbits of the induced infinitesimal action of ${\mathfrak{h}= \mathbb{R}^ n}$ on ${M}$. This is induced by a canonical local module of vector fields associated to ${(M, \omega, \mu)}$- the one spanned by the ${X_{\mu_i}}$'s. It makes sense as {\it a Androulidakis-Skandalis foliation}   (hence, in particular, as a {\it Stefan-Sussman foliation}).  
\item[(F)] The `fiber foliation'. This is about the partition of ${M}$ by the connected components of the fibers of ${\mu}$. This does not seem to fit in any of the existing frameworks - hence we will only be talking about it as {\it a partition}. It can be encoded/controlled by a sheaf (the sheaf ${\ucalC^{\textsc{fib}}}$ from Definition \ref{def-c-fib-sheaf}), but the main problem here is that this sheaf is not closed in general.
\item[(C)] The `central foliation'. This arises when trying to fix the previous problem, by passing to the closure ${\CM_{\mu}}$ or, equivalently, the commutant (all these will be discussed in detail in this section) of the sheaf induced by the ${\mu_i}$'s. This turns out to be {\it strongly involutive} - therefore it fits in the last framework mentioned above. 
However, one should be aware that this may fail to describe the fibers of ${\mu}$: there may exist two points ${x}$ and ${y}$ in a same (connected component of a) fiber of ${\mu}$, but such that ${f(x)\neq f(y)}$ for some ${f\in \CM_{\mu}}$, i.e.\  there may be two points that are in the same (connected component of the) fiber, without belonging to the same leaf.  We expect that this phenomena cannot occur under mild assumptions on ${\mu}$, but we believe it can occur in general. 
\end{myenumerate}
  
\noindent
The foliation from (C) is clearly the least intuitive one. However, it is the one that is used most of the times to (be able to) prove precise results; that may serve itself as (technical) motivation to consider it - but it still does not make it more intuitive. Our study from this chapter may be seen as an attempt to explain the foliation from (C) a bit more intuitively and to explain why and why it behaves best from a conceptual point of view, and the only one that deserves the name `the Lagrangian foliations of the integrable system'.

\paragraph{Motivation 2: equivalences of integrable systems} Another reason is that of understanding `equivalences' between integrable systems (to be discussed in detail in the first part of the next chapter); the question is how much of the underlying structure one would like to preserve. 

In principle, we would call two integrable systems
${\mu: (M, \omega) \rightarrow \mathbb{R}^n}$ and ${\mu': (M', \omega') \rightarrow \mathbb{R}^n}$ to be equivalent if there exists a symplectomorphism \[{\Phi: (M, \omega)\rightarrow (M', \omega')}\] such that 
\begin{equation}\label{gen -equiv -is} 
\mu'\circ \Phi= \phi\circ \mu.
\end{equation}
for `some bijection' ${\phi}$. Deciding how much of the structure underlying the integrable system one would preserve is, most of the times, about fixing the meaning of `some bijection'.  One could require it to be a diffeomorphism from ${\mathbb{R}^n}$ to itself (or from ${\mu(M)}$ to ${\mu(M')}$), or one could require it to be linear, or even the identity. Each such choice gives rise to another notion of equivalence. Moreover, it is desirable to eliminate the use of ${\phi}$. Sheaves of functions can sometimes be used to do so. This happens especially when the class of functions ${\phi}$ one is looking at is characterized by the fact that they preserve a certain sheaf ${\ucalA}$ on ${\mathbb{R}^n}$; in other words, one may say that each such sheaf on ${\mathbb{R}^n}$ gives rise to a notion of equivalence (for instance, for the sheaf of linear functions one is looking at ${\phi}$'s which are linear). In that case, one can associate to any integrable system ${\mu: (M, \omega) \rightarrow \mathbb{R}^n}$ `the ${\ucalA}$-span of ${\mu}$' or, more precisely, the pullback:
\[ \ucalA_{\mu}\defeq \mu^{*}\ucalA.\]
The functoriality of the pul -back construction implies that for any such equivalence ${(\Phi, \phi)}$ one must have
\[ \Phi^*(\ucalA_{\mu'})= \ucalA_{\mu}.\]
While this consequence of (\ref{gen -equiv -is}) does not make reference to ${\phi}$ anymore, in good cases it is actually equivalent to it.

\newpage
\section{Foliations via vector fields}

\etocsettocstyle{\subsubsection*{Local Contents}}{}
\etocsettocdepth{2}
\localtableofcontents

\noindent
{\color{white}.}
\newline
In this section we start from regular foliations (which, for integrable systems, corresponds to the situation when  ${\mu}$ has no singularities) and characterize them in terms of (sheaves of) vector fields in a way that can be used to model also singular foliations. The author has recently been made aware of similar discussions by Marco Zambon \cite{marco} and San Vũ Ng\d oc \cite{Ngoc06}.

\subsection{Regular foliations} 

We start by recalling the notion of (regular) foliations in terms of partitions by leaves. 

\begin{definition}\label{def -reg -foliations}
A \textbf{regular foliation}\index{foliation!regular} on ${M}$ is a partition ${\calP}$ into immersed connected submanifolds with the property that, locally on small enough open subsets ${U\sub M}$, the restriction ${\calP\vert_U}$ is given by the fibers of a submersion ${\pi: U\rightarrow \mathbb{R}^q}$ with connected fibers on ${U}$. Here ${\calP\vert_U}$ is the partition of ${U}$ by the connected components of all ${L\cap U}$ with ${L\in \calP}$. A submersion ${\pi}$ as in the definition is called a \textbf{trivializing submersion}\index{trivializing submersion} for ${\calF}$ over ${U}$. The foliation is called \textbf{simple}\index{foliation!simple} if it is globally induced by a submersion with connected fibers. 
\end{definition}
\noindent
The Frobenius theorem (recalled below) allows for an infinitesimal approach to foliations, via vector fields. Given a foliation ${\calP}$ on ${M}$, the the tangent bundle of ${\calP}$ on ${M}$ is the sub bundle 
\[ \calF\defeq T\calP \sub TM\]
consisting of vectors that are tangent to the leaves of ${\calP}$. The regularity of ${\calP}$ ensures that ${T\cal P}$ is a constant rank (smooth) sub bundle of ${TM}$. One should think of ${T\calP}$ as the infinitesimal counterpart of the foliation ${\calP}$. The Frobenius theorem identifies the main properties of ${T\calP}$ and allows one to study foliations via their infinitesimal counterpart:

\begin{local_theorem} T
he correspondence ${\calP\mapsto T\calP}$ defines a bijection between foliations on ${M}$ and vector sub bundles of ${\calF \subset TM}$ which are involutive (i.e.\  with the property that its sections give rise to a Lie subalgebra ${\gerX(M)}$).  
\end{local_theorem}

For this reason, from now on, when we will talk about foliations we will refer directly to the sub bundle ${\calF}$. 

\begin{remark} 
It is clear that singular foliations give rise to sub bundles of ${TM}$ which are far from being smooth; they deserve the name of `naive tangent bundles'. Think, for instance, of the partition of ${\mathbb{R}^2}$ by concentric circles (including the origin, viewed as a degenerate circle). 

Let us mention that one way to approach singular foliations is by modeling them via `generalized tangent bundle' or, more precisely, Lie algebroids. This notion encodes the main properties of ${\calF}$ (including the fact that it is a smooth vector bundle); the price to pay is that the bundle itself, call it ${A}$, is no longer a sub bundle of ${TM}$; instead, it is related to ${TM}$ via map ${\rho: A\rightarrow TM}$ (called the anchor of the algebroid). Its image ${\text{Im}(\rho)\subset TM}$ is usually singular and plays the role of the naive tangent bundle of the singular foliation.  From the point of view of partitions, the leaves are the maximal immersed submanifold ${L\subset M}$ which are tangent to ${\text{Im}(\rho)}$; their existence follow from the properties of ${A}$ (including its smoothness).

While the Frobenius theorem tells us that regular foliations can be viewed as Lie algebroids with injective anchor map, to model singular foliations it is natural to restrict to Lie algebroids whose anchor map is almost injective, in the sense that it is injective on a dense open. For instance, for the singular foliation of ${\mathbb{R}^2}$ by concentric circles, the natural Lie algebroid to consider is the one associated to the action of ${S^1}$ by rotations: as a vector bundle it is the trivial line bundle, and the anchor sends the trivializing section of ${A}$ to ${x\partial_y - y\partial_x}$. However, for the foliation of ${\mathbb{R}^3}$ by concentric spheres, while there is a similar Lie algebroid around (associated to the action of ${SO(3)}$ on ${\mathbb{R}^3}$), its anchor map is nowhere injective, and one can actually show that there is no Lie algebroid with almost injective anchor map that induces this foliation. 
\end{remark}

\noindent  
To find approaches to foliations that can be adapted to the singular case, the key is to encode them more algebraically. In the approach via vector fields, that means we should concentrate on 
the space of sections of ${\calF= T\calP}$:
    \[ \gerX_\calF(M)\defeq \Gamma(\calF) \]
and understand its main properties. 

\begin{local_theorem} 
For any (regular) foliation ${\calF}$ on ${M}$: 
\begin{myenumerate}  
    \item ${\gerX_\calF(M)}$ is a Lie subalgebra of ${\gerX(M)}$; in particular it is itself a Lie algebra. 
    \item ${\gerX_\calF(M)}$ is a ${\calC^{\infty}(M)}$-submodule of ${\gerX(M)}$ (so it is itself a ${\calC^{\infty}(M)}$-module). 
    \item as a ${\calC^{\infty}(M)}$-module, ${\gerX_\calF(M)}$ is finitely generated and projective. 
    \item If ${X\in \gerX_\calF(M)}$ vanishes at a point ${x_0\in M}$, then ${X}$ can be written as a sum of vector fields ${f\cdot X'}$ with ${X'\in  \gerX_\calF(M)}$ and ${f'}$ vanishing at ${x_0}$.
\end{myenumerate}  
Moreover, the construction 
\[ \calF\mapsto \gerX_\calF(M)\]
defines a one-to-one correspondence between regular foliation on ${M}$ and subspaces ${\calV\subset \gerX(M)}$ satisfying there conditions. 
\end{local_theorem}

\begin{proof} 
The main ingredient is Swan's theorem, which says that the functor of global sections, ${E\mapsto \Gamma(E)}$, defines an equivalence between the category of vector bundles over ${M}$ and the one of finitely generated projective ${\calC^{\infty}(M)}$-modules, but with one subtlety however: while morphisms, i.e.\  vector bundle maps ${f: E\rightarrow F}$ give rise to (and can be recovered from) morphisms of ${\calC^{\infty}(M)}$-modules, ${f_*: \Gamma(E)\rightarrow \Gamma(F)}$, the injectivity of ${f_*}$ is not equivalent to the injectivity of ${f}$, but to its almost injectivity. Hence, while a finitely generated projective ${\calC^{\infty}(M)}$ submodule ${X\subset \gerX(M)}$ must come from a a vector bundle ${A}$ together with a map ${\rho: A\rightarrow TM}$, to ensure that ${\rho}$ is injective (i.e.\  that ${X}$ comes from a regular foliation), we need the last condition in the previous list. 
\end{proof}

\subsection{Spaces of vector fields \& locality} 
To deal with singular foliations via spaces of vector fields ${\calV\subset \gerX(M)}$,  
we will have to give up on some of the properties listed above. Note that the first two exhibit the algebraic structure of ${\gerX_\calF(M)}$, while the last two reveal conditions on these structures. Therefore it is more natural to question the last two properties. On the other hand, giving up just the last condition, we end up with Lie algebroids with injective anchor map. As shown in the previous remark, this setting does not allow to handle even the foliation of ${\mathbb{R}^3}$ by concentric spheres (and even less so the ones coming from integrable systems). With these motivations in mind, we will only insist on the first two properties in the previous list. Let us fix the terminology:

\begin{definition} 
By a space of vector fields on ${M}$ \index{subspace of vector fields} we mean a linear subspace 
\[\calV \subset \gerX(M).\]
We say that:
\begin{myenumerate}
\item ${\calV}$ is a ${\calC^{\infty}(M)}$-module, or a \textbf{submodule}\index{subspace of vector fields!submodule} of ${\gerX(M)}$, if it is a ${\calC^{\infty}(M)}$-submodule of $\gerX(M)$.
\item ${\calV}$ is \textbf{involutive}\index{subspace of vector fields!involutive} if it is a Lie subalgebra of ${\gerX(M)}$.

\item 
${\calV}$ is of vector bundle type if it is the space of sections of a vector bundle over ${M}$. Swan's theorem states that any finitely generated projective ${\calC^{\infty}(M)}$-module is of vector bundle type (and of course also conversely). 

\end{myenumerate}
\end{definition}

\noindent
Here is one more interesting property that we like to point out: locality. This is the property that explains how such a global (algebraic) object such as ${\gerX_\calF(M)}$ encodes also local phenomena. 

\begin{definition} 
We say that a subspace ${\calV\subset \gerX(M)}$ satisfies the locality condition, or that ${\calV}$ is a \textbf{local subspace}\index{subspace of vector fields!local}, if: any ${X\in \gerX(M)}$ with the property that for each ${x\in M}$ there exists a vector field ${X^x\in \calV}$ such that ${X= X^x}$ in a neighborhood of ${x}$, must belong to ${\calV}$.
\end{definition}

\noindent
Any subset ${\calV\subset \gerX(M)}$ has an associated completion to a local one, ${\calV^{\textrm{loc}}\subset \gerX(M)}$, consisting of those ${X\in \gerX(M)}$ that satisfy the condition appearing in the previous definition. In general, ${\calV\subset \calV^{\textrm{loc}}}$, with equality if and only if ${\calV}$ is local.

Under the assumption that ${\calV}$ is a submodule (which, in principle, we want to keep), ${\calV^{\textrm{loc}}}$, and therefore the locality of ${\calV}$, has some more insightful descriptions:

\begin{lemma} 
For any  ${\calC^{\infty}(M)}$-submodule ${\calV\subset \gerX(M)}$, one has
\begin{eqnarray}
\calV^{loc} & = & \{ X\in \gerX(M): \phi\cdot X \in \calV,\quad\forall\, \phi\in \calC_{c}^{\infty}(M) \}\\
                 & = & \{ \textrm{locally finite sums} \sum\nolimits_i X_i: X_i\in \calV \}.
\end{eqnarray}

\end{lemma}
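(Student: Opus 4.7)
The plan is to establish the two equalities by a cyclic chain of inclusions, using a partition-of-unity argument at each step. Denote the three sets appearing in the lemma by
\[
    A = \mathcal{V}^{\text{loc}}, \quad
    B = \{ X \in \gerX(M) : \phi \cdot X \in \calV \text{ for all } \phi \in \calC_c^\infty(M)\}, \quad
    C = \{\text{locally finite sums } \textstyle\sum_i X_i : X_i \in \calV\}.
\]
I will prove $A \subset B \subset C \subset A$.

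For $A \subset B$, fix $X \in \mathcal{V}^{\text{loc}}$ and $\phi \in \calC_c^\infty(M)$. By the defining property of $\calV^{\text{loc}}$, each $x \in M$ has a neighborhood $U_x$ and a vector field $X^x \in \calV$ with $X|_{U_x} = X^x|_{U_x}$. Cover the compact set $\supp(\phi)$ by finitely many such $U_{x_1},\ldots,U_{x_N}$, choose a subordinate partition of unity $\{\rho_1,\ldots,\rho_N\}$ on a neighborhood of $\supp(\phi)$, and extend by zero. Then
\[
    \phi X = \sum\nolimits_{i=1}^N (\phi \rho_i) X = \sum\nolimits_{i=1}^N (\phi \rho_i) X^{x_i},
\]
and each summand lies in $\calV$ because $\calV$ is a $\calC^\infty(M)$-module; the finite sum lies in $\calV$ because it is a linear subspace. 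Hence $\phi X \in \calV$, so $X \in B$.

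For $B \subset C$, pick any $X \in B$ and choose a locally finite partition of unity $\{\phi_\alpha\}$ on $M$ with each $\phi_\alpha$ compactly supported. The decomposition $X = \sum_\alpha \phi_\alpha X$ is locally finite, and each summand $\phi_\alpha X$ lies in $\calV$ by definition of $B$, exhibiting $X$ as an element of $C$.

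For $C \subset A$, let $X = \sum_i X_i$ be a locally finite sum with $X_i \in \calV$. Around any point $x \in M$ only finitely many $X_i$ contribute, so on a small neighborhood $U$ of $x$ one has $X|_U = (\sum_{i \in I_x} X_i)|_U$ with $I_x$ finite; since $\calV$ is a linear subspace, $X^x := \sum_{i \in I_x} X_i \in \calV$ agrees with $X$ near $x$, proving $X \in \calV^{\text{loc}}$.

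No step is really an obstacle: the only subtle point is remembering that the module structure (not merely linearity) is what makes the first inclusion go through, and that one must use \emph{compactly supported} partitions of unity both for finiteness of the sum in $A \subset B$ and to ensure each $\phi_\alpha X$ is tested against a legitimate element of $\calC_c^\infty(M)$ in $B \subset C$.
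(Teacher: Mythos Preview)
Your proof is correct and uses essentially the same ideas as the paper: partition-of-unity arguments together with the module structure. The paper organizes things slightly differently (proving $A = B$ directly via both inclusions, then remarking that the equality with $C$ is immediate and similar), whereas you run a cyclic chain $A \subset B \subset C \subset A$; but the substance is the same.
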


\begin{proof} 
Assume first that ${X\in \calV^{loc}}$. Hence we find an open cover ${\{U_i\}_i}$ and vector fields ${X_i}$ such that, over each ${U_i}$, ${X= X_i}$. We may assume that the cover comes with a partition of unity ${\{\eta_i\}}$. Then, for any ${\phi\in \calC_{c}^{\infty}(M)}$, the support of ${\phi}$ will intersect only a finite number of ${U_i}$'s, so 
${\phi\cdot X= \sum_i \phi\eta_i  \cdot X_i}$ is a finite sum of elements in ${\calV}$. Hence ${\phi\cdot X\in \calV}$. Conversely now, assume that ${\phi\cdot X\in \calV}$ for any compactly supported ${\phi}$. Then for any ${x\in M}$ choose ${\phi}$ compactly supported that is ${1}$ in a neighborhood of ${x}$. Then ${X= \phi \cdot X}$ on that neighborhood, while ${\phi\cdot X\in \calV}$, and hence ${X\in \calV^{loc}}$. 
The last equality is immediate (and similar). 
\end{proof}
\noindent

\subsection{From locality to sheaves of vector fields}

Locality is intimately related to the notion of sheaf. Let us first fix the terminology. 

\begin{definition} 
By a \textbf{sheaf of vector fields}\index{sheaf of vector fields} on ${M}$, or subsheaf ${\ucalV}$ of ${\gerX_M}$, we mean any subsheaf ${\ucalV}$ of ${\gerX_M}$ in the category of sheaves of vector spaces over ${\mathbb{R}}$. 
We say that:
\begin{myenumerate}
\item ${\ucalV}$ is \textbf{involutive}\index{sheaf of vector fields!involutive} if it is closed under the Lie bracket of vector fields.
\item ${\ucalV}$ is a \textbf{sheaf of submodules}\index{sheaf of vector fields!submodules} (or ${\ucalC^{\infty}_{M}}$-submodules, if we want to be more precise) if it is closed under multiplication by smooth functions (over each open). 
\item ${\ucalV}$ is of vector bundle type if it is the sheaf of sections of a vector sub bundle of ${TM}$.
\end{myenumerate}
\end{definition}
\noindent
Of course, the basic example is the sheaf ${\gerX_{\calF}}$ of vectors tangent to a foliation ${\calF}$. Here is the relationship with locality:

\begin{proposition} \label{prop -basic -sheaves}There is a one-to-one correspondence between
\begin{myenumerate}
\item ${\calC^{\infty}(M)}$-submodules ${\calV\subset \gerX(M)}$ which are local. 
\item sheaves of ${\ucalC^{\infty}_{M}}$-submodules ${\ucalV\subset \gerX_M}$. 
\end{myenumerate}
In this correspondence 
\[ \calV= \Gamma(M, \ucalV) \]
and, conversely, for a local submodule ${\calV\subset \gerX(M)}$, ${\ucalV}$ is defined by:
\[ U\mapsto \Gamma(U, \ucalV)\defeq \{ X\in \gerX(U): \phi\cdot X\in \calV\quad\forall\, \phi\in \calC_{c}^{\infty}(U) \}.\]
\end{proposition}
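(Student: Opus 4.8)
The statement asserts a bijection between local $\calC^\infty(M)$-submodules $\calV \subset \gerX(M)$ and sheaves of $\ucalC^\infty_M$-submodules $\ucalV \subset \gerX_M$, with the two constructions $\calV = \Gamma(M,\ucalV)$ and the explicit formula for $\ucalV$ given in the statement. So the plan is to show these two constructions are mutually inverse, and that each lands in the correct class of objects.

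First I would verify that, starting from a local submodule $\calV$, the prescription
\[
\Gamma(U,\ucalV) \defeq \{ X\in \gerX(U) : \phi\cdot X\in\calV \ \forall\, \phi\in\calC^\infty_c(U)\}
\]
actually defines a sheaf of $\ucalC^\infty_M$-submodules. Each $\Gamma(U,\ucalV)$ is clearly a linear subspace of $\gerX(U)$ and closed under multiplication by functions in $\calC^\infty(U)$ (if $\psi\in\calC^\infty(U)$ and $\phi\in\calC^\infty_c(U)$, then $\phi\psi\in\calC^\infty_c(U)$, so $\phi\cdot(\psi X) = (\phi\psi)\cdot X\in\calV$). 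Restriction maps are the obvious ones: if $X\in\Gamma(U,\ucalV)$ and $V\subset U$ is open, then for $\phi\in\calC^\infty_c(V)$ we extend $\phi$ by zero to get $\tilde\phi\in\calC^\infty_c(U)$, and $\phi\cdot(X|_V) = (\tilde\phi\cdot X)|_V$, but I need this to lie in $\Gamma(V,\ucalV)$ — actually the definition only references membership in the fixed module $\calV\subset\gerX(M)$, so I should be careful: the natural reading is that $\phi\cdot X$ (extended by zero to all of $M$) lies in $\calV$. I would make this precise at the outset (extend compactly supported objects by zero), and then the sheaf axioms — locality of sections and gluing — follow because the condition ``$\phi\cdot X\in\calV$ for all compactly supported $\phi$'' is manifestly local in nature. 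The gluing axiom is where one genuinely uses that $\calV$ is a module closed under multiplication by bump functions together with the locally-finite-sum description from the preceding Lemma.

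Next I would check the two round trips. Going $\calV \rightsquigarrow \ucalV \rightsquigarrow \Gamma(M,\ucalV)$: by definition $\Gamma(M,\ucalV) = \{X\in\gerX(M) : \phi\cdot X\in\calV\ \forall\,\phi\in\calC^\infty_c(M)\}$, and by the Lemma quoted just before the Proposition this set equals $\calV^{\mathrm{loc}}$, which equals $\calV$ precisely because $\calV$ is assumed local — so this round trip is the identity. Going the other way, $\ucalV \rightsquigarrow \Gamma(M,\ucalV) =: \calV \rightsquigarrow \ucalV'$: I must show $\Gamma(U,\ucalV') = \Gamma(U,\ucalV)$ for every open $U$. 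The inclusion $\Gamma(U,\ucalV)\subseteq\Gamma(U,\ucalV')$ uses that for $X\in\Gamma(U,\ucalV)$ and $\phi\in\calC^\infty_c(U)$, the vector field $\phi\cdot X$ extended by zero is a global section of $\ucalV$ (here one uses that $\ucalV$ is a sheaf of submodules, so multiplication by a function vanishing outside a compact set in $U$ lands in $\Gamma(M,\ucalV)$ — this needs the standard bump-function argument across $\partial U$), hence lies in $\calV$, giving $X\in\Gamma(U,\ucalV')$. For the reverse inclusion, take $X\in\Gamma(U,\ucalV')$, so $\phi\cdot X\in\Gamma(M,\ucalV)$ for all $\phi\in\calC^\infty_c(U)$; restricting to $U$ and using that $\ucalV$ is a sheaf, plus a partition-of-unity argument exactly as in the proof of the preceding Lemma, recovers $X\in\Gamma(U,\ucalV)$.

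The main obstacle, and the step deserving the most care, is the bookkeeping around extension-by-zero and the passage between a section over $U$ and a global section: the formula defining $\ucalV$ tests membership in the \emph{global} module $\calV\subset\gerX(M)$, so one must repeatedly use that a vector field on $U$ multiplied by a function compactly supported inside $U$ extends to a genuine global vector field, and that the module/sheaf-of-modules structure is compatible with this extension. Once that is set up cleanly, every remaining verification reduces to the partition-of-unity manipulations already carried out in the Lemma on $\calV^{\mathrm{loc}}$, so I would invoke that Lemma rather than repeat the argument. I would also remark in passing that this correspondence restricts to the expected refinements — involutive submodules correspond to involutive sheaves, vector-bundle-type to vector-bundle-type — since involutivity and the module condition are tested open-by-open and are preserved in both directions, though strictly these refinements are not part of the stated Proposition.
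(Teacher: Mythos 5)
Your proposal is correct and follows essentially the same route as the paper's proof: set up the assignment $\calV\mapsto\ucalV$, check the sheaf-of-submodules axioms, and verify both round trips using the bump-function/extension-by-zero mechanics and the lemma characterizing $\calV^{\mathrm{loc}}$. One small point you leave implicit is the other well-definedness check, namely that $\Gamma(M,\ucalV)$ is itself a \emph{local} submodule for any sheaf $\ucalV$; the paper verifies this directly (this is what its opening ``general remark about subsheaves of $\gerX_M$'' is for), whereas in your account it has to be extracted from round trip 2 by setting $U=M$ and invoking the preceding Lemma to identify $\Gamma(M,\ucalV')$ with $(\Gamma(M,\ucalV))^{\mathrm{loc}}$ — worth spelling out, but not a real gap.
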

\noindent
\begin{proof} 
We first make a very general remark about subsheaves of sets, in the particular case ${\ucalV\subset \gerX_M}$: they have the property that if ${X\in \gerX(U)}$ and ${U}$ has an open cover ${\{U_i\}}$ such that ${X|_{U_i}\in \Gamma(U_i, \ucalV)}$ for all ${i}$, then ${X\in \Gamma(U, \ucalV)}$. 
Indeed, ${X^i\defeq X|_{U_i}}$ are local sections of ${\ucalV}$ that coincide on the overlaps, hence there is ${\tilde{X}\in \Gamma(U, \ucalV)}$ gluing them. At the level of ${\gerX_M}$, we have two sections ${X}$ and ${\tilde{X}}$ over ${U}$ that coincides over the ${U_i}$'s, hence they must coincide. Therefore ${X\in\Gamma(U, \ucalV)}$. 
Note also that, as in the previous lemma, the sheaf ${\ucalV}$ associated to ${\calV}$ can also be described without using the module structure:
\begin{equation}\label{general-associated-sheaf}
\Gamma(U, \ucalV)= \big\{X\in \gerX(U):  \forall x\in U  \ \exists X^x\in \calV:   X= X^x\ \textrm{near}\ x \big\}.
\end{equation}

Back to the proposition: we show that the two maps are well defined and are inverse to each other. First: ${\Gamma(M, \ucalV)}$ is a local submodule for any sheaf ${\ucalV}$ as in the statement. The main issue is locality. To check it, let ${X\in \gerX(M)}$
as in the definition of locality: ${X|_{U_x}= X^{x}|_{U_x}}$ with ${X^x\in \Gamma(M, \ucalV)}$. But then we have ${X|_{U_x}\in \Gamma(U_x, \ucalV)}$ and then the remark at the beginning implies that ${X\in \Gamma(M, \ucalV)}$. 

Second, one has to check that, starting with ${\calV}$, ${\ucalV}$ is a sheaf (clearly of submodules if ${\calV}$ is a submodule). But that is immediate. Also, it is clear that ${\calV \mapsto \ucalV \mapsto \Gamma(M, \ucalV)}$ gives back ${\calV}$, as ${\calV}$ is local. 

We are left with showing that the other composition is the identity. I.e., for a sheaf ${\ucalV}$ as above, the sheaf ${\ucalW}$ associated to ${\calW\defeq \Gamma(M, \ucalV)}$, coincides with ${\ucalV}$.  Using the description (\ref{general-associated-sheaf}) for ${\ucalW}$, the remark we started with immediately implies that
${\ucalW\subset \ucalV}$. For the converse, we start with ${X\in \Gamma(U, \ucalV)}$ and we have to show that for any ${x\in M}$ we find ${X^x\in \Gamma(M, \ucalV)}$ so that ${X= X^x}$ in a neighborhood of ${x}$. For that we choose a function ${\eta}$ supported in ${U}$ and that is ${1}$ near ${x}$, and we set ${X^x\defeq \eta\cdot X}$. 
\end{proof}
\noindent 

\begin{remark}
\label{lemma-with-strange-condition} 
The previous proof was written in such a way that the module structure is really used only at the end (multiplying by a bump function). All we need there is the that ${\ucalV}$ has \textbf{enough global sections}\index{enough global sections} in the sense that the canonical maps from global sections to germs 
\begin{equation}
\label{C0}
    (C0): \quad \textrm{germ}_x: \Gamma(M, \ucalV) \rightarrow \ucalV_x\quad \textrm{is surjective for all}\quad x\in M .
\end{equation}
Taking (\ref{general-associated-sheaf}) as definition of ${\ucalV}$, we find a one-to-one correspondence between:
\begin{myenumerate}
\item Local subspaces ${\calV\subset \gerX(M)}$.
\item subsheaves ${\ucalV\subset \gerX_M}$ with enough global sections. 
\end{myenumerate}
\end{remark}
\noindent
Of course, for our basic example of submodule, ${\calV= \gerX_{\calF}(M)}$, the previous result returns the sheaf ${\gerX_{\calF}}$ consisting of local sections of ${\calF}$. This indicates that, to handle foliations via vector fields, one may avoid using sheaves. The reason we use sheaves here is very simple: as soon as we will pass (in the next section) to the dual point of view (characterizing foliations via spaces of functions), sheaves will become inevitable. However, even when interested on the point of view of vector fields, we do believe that the sheaf-theoretic point of view is more insightful (see below) and powerful, and may allow for better frameworks.

\subsection{Finite generation: Androulidakis-Skandalis} 
Here is another basic property that is very common when dealing with sheaves modeling foliations (regular or not):

\begin{definition}
\label{sheaf-vf -finite -gen} 
A subsheaf ${\ucalV}$ of ${\gerX_M}$ is said to be \textbf{finitely generated}\index{sheaf of vector fields!finitely generated} if it is a sheaf of submodules and any point ${x\in M}$ admits a neighborhood ${U}$ and a finite number of sections ${X^1, \ldots, X^p\in \Gamma(U, \ucalV)}$ such that
\[ X|_{V}\in \textrm{Span}_{\calC^{\infty}(V)}\{ X^{1}|_{V}, \ldots,  X^{p}|_{V}\}\]
for any ${X\in \Gamma(U, \calV)}$ and any open neighborhood ${V}$ with ${\overline{V}\subset U}$.
\end{definition}
\noindent
One reason to further restrict to ${V}$'s (and not require only that ${\Gamma(U, \calV)}$ is generated as a module by ${X^1, \ldots, X^p}$) is to obtain a local notion. We can avoid using smaller ${V}$'s in the definition, provided we use compact supports:

\begin{lemma} 
For a sheaf ${\calV}$ of submodules of ${\gerX_M}$ the finite generation condition is equivalent to: any point ${x\in M}$ admits a neighborhood ${U}$ and a finite number of sections ${X^1, \ldots, X^p\in \Gamma(U, \calV)}$ such that any ${X\in \Gamma_{c}(U, \calV)}$ % with compact support 
is in the ${\calC^{\infty}_{c}(U)}$-span of ${X^1, \ldots, X^p}$.
\end{lemma}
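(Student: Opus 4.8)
The plan is to prove the two formulations of finite generation are equivalent by the usual trick of moving between ordinary generation and generation "up to shrinking" by means of bump functions, since the only difference between the two statements is whether one tests on sections over slightly smaller opens $V$ or on compactly supported sections over $U$.

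First I would prove the easy direction: assume the finite generation condition of Definition \ref{sheaf-vf -finite -gen}. Given $x$, take the neighborhood $U$ and generators $X^1,\ldots,X^p\in\Gamma(U,\calV)$ from that definition. Let $X\in\Gamma_c(U,\calV)$ and set $K\defeq\operatorname{supp}(X)$. Choose an open $V$ with $K\subset V$ and $\overline V\subset U$ (possible since $K$ is compact and contained in the open set $U$); then $X|_V$ lies in $\operatorname{Span}_{\calC^\infty(V)}\{X^1|_V,\ldots,X^p|_V\}$, say $X|_V=\sum_j f_j X^j|_V$ with $f_j\in\calC^\infty(V)$. Pick $\eta\in\calC_c^\infty(V)$ equal to $1$ on a neighborhood of $K$. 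Then on all of $U$ one has $X=\sum_j (\eta f_j) X^j$, where now $\eta f_j\in\calC_c^\infty(U)\subset\calC^\infty(U)$: indeed both sides vanish outside $K$ and agree on the neighborhood of $K$ where $\eta\equiv 1$. Hence $X$ is in the $\calC_c^\infty(U)$-span of $X^1,\ldots,X^p$, which is even stronger than required.

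For the converse, assume the condition of the lemma: $x$ has a neighborhood $U$ and sections $X^1,\ldots,X^p\in\Gamma(U,\calV)$ whose $\calC_c^\infty(U)$-span contains $\Gamma_c(U,\calV)$. I want to verify the condition of Definition \ref{sheaf-vf -finite -gen} for these same $X^j$ (possibly after shrinking $U$). Let $V$ be an open set with $\overline V\subset U$ and let $X\in\Gamma(U,\calV)$; I must show $X|_V\in\operatorname{Span}_{\calC^\infty(V)}\{X^1|_V,\ldots,X^p|_V\}$. The point is that $X|_V$ itself need not be compactly supported, so I cannot apply the hypothesis directly; instead I localize. Pick $\eta\in\calC_c^\infty(U)$ with $\eta\equiv 1$ on $\overline V$ (possible since $\overline V$ is compact and contained in the open $U$). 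Then $\eta X\in\Gamma(U,\calV)$ since $\ucalV$ is a sheaf of submodules, and $\operatorname{supp}(\eta X)\subset\operatorname{supp}(\eta)$ is compact in $U$, so $\eta X\in\Gamma_c(U,\calV)$. By hypothesis $\eta X=\sum_j g_j X^j$ with $g_j\in\calC_c^\infty(U)$. Restricting to $V$ and using $\eta|_V\equiv 1$ gives $X|_V=\sum_j (g_j|_V) X^j|_V$ with $g_j|_V\in\calC^\infty(V)$, as desired.

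The only mild subtlety — and the closest thing to an obstacle — is bookkeeping about supports and making sure the equalities hold on the correct opens: in the first direction one needs $\eta$ to be $1$ on a neighborhood of $\operatorname{supp}(X)$ (not merely on $\operatorname{supp}(X)$) so that $X$ and $\sum_j(\eta f_j)X^j$ genuinely agree as sections over all of $U$, including on the boundary of $V$; in the second direction one needs $\eta\equiv 1$ on the closed set $\overline V$, not just on $V$, so that $\eta X$ and $X$ restrict to the same section of $\ucalV$ over $V$. Both are handled by the standard existence of smooth bump functions separating a compact set from the complement of an open neighborhood, so no new ideas are needed. One should also note that the $X^j$ produced in the converse direction are exactly the ones given by the lemma's hypothesis, so no shrinking of $U$ is actually required, which makes the equivalence clean.
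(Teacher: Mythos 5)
Your proof follows the same bump-function route as the paper's, and both directions are, in substance, correct. The one real slip is in the converse direction: you construct $\eta\in\calC_{c}^{\infty}(U)$ with $\eta\equiv 1$ on $\overline V$ and justify its existence by asserting that $\overline V$ ``is compact.'' But $\overline V\subset U$ only makes $\overline V$ a \emph{closed} subset of $U$, not a compact one, unless $U$ itself is relatively compact. (Take $M=U=\R$, $x=-1$, $V=(-\infty,0)$: then $\overline V=(-\infty,0]\subset U$ is closed but not compact, and no compactly supported function on $U$ can equal $1$ on it.) This is precisely why the paper opens its proof with ``We may assume that the $U$'s appearing above are relatively compact'' and flags the corresponding step with ``(this is where we use $U$ relatively compact).'' Your earlier parenthetical ``possibly after shrinking $U$'' was the right instinct; your closing sentence that ``no shrinking of $U$ is actually required'' contradicts it and is false as stated. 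After shrinking to a relatively compact $U'$ with $\overline{U'}$ compact and contained in $U$, you should also note that a compactly supported section of $\calV$ over $U'$ extends by zero to a compactly supported section over $U$, so the lemma's hypothesis (given for $U$) still applies to $\eta X$; with that small addendum the argument closes.
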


\begin{proof} 
We may assume that the ${U}$'s appearing above are relatively compact. Then we claim that the same ${U}$'s and the ${X^i}$'s as in the definition also work in the lemma, and the other way around. For the direct implication, we assume that ${X|_{V}}$ is in the ${\calC^{\infty}(V)}$-span of ${X^{1}|_{V}, \ldots,  X^{p}|_{V}}$ for any ${X}$ and ${V}$ as in the definition. Starting now with ${X\in \calV(U)}$ with compact support, call it ${K\subset U}$ we first choose ${V}$ relatively compact with ${K\subset V\subset \overline{V}\subset U}$, Applying the hypothesis to ${X}$ over ${V}$, we can write
\[ X|_{V}= f_1 \cdot X^{1}|_{V}+ \ldots + f_p \cdot X^{p}|_{V}, \quad \textrm{with} \quad f_i\in C^{\infty}(V). \]
Choose now ${\eta\in C_{c}^{\infty}(U)}$ supported inside ${V}$ and with ${\eta= 1}$ on ${K}$. We set
\[ \tilde{f}_i= \eta\cdot f_i \in \calC^{\infty}(U) .\]
Looking at ${\sum_i \tilde{f}_i \cdot X^i}$ (over ${U}$) we see that inside ${V}$ it is ${\eta\cdot X= X}$ and outside ${V}$ it is zero (as ${X}$ is). Hence ${X=  \sum_i \tilde{f}_i \cdot X^i}$ over the entire ${U}$.

Conversely, assume that the condition in the lemma holds. Let ${X\in \Gamma(U, \ucalV)}$, and ${V\subset U}$ open with ${V\subset \overline{V}\subset U}$. Choose now ${\eta\in \calC_{c}^{\infty}(U)}$ any function with ${\eta= 1}$ on ${V}$ (this is where we use ${U}$ relatively compact). Since ${\eta\cdot X}$ has compact supports, we find ${\tilde{f}_i \in \calC_{c}^{\infty}(U)}$ so that ${\eta\cdot X= \sum_i \tilde{f}_i \cdot X^i}$ over the entire ${U}$. Restricting to ${V}$, we find that ${X=  \sum_i f_i \cdot X^{i}|_{V}}$ where the ${f_i}$'s are the restrictions of the ${\tilde{f}_i}$.
\end{proof}
\noindent
\begin{remark}
\label{Androulidakis and Skandalis} 
From the existing approaches to singular foliations, the one that seems to be most solid (at least in terms of results that can be proven) is the one used by Androulidakis and Skandalis \cite{An-Sk}: a singular foliation in their sense is an involutive  subspace ${\calM}$ of ${\calX_{c}(M)}$ (compactly supported vector fields on ${M}$) which is a ${\calC^{\infty}(M)}$-submodule and, as such, it is locally finitely generated. Since they choose to use compact supports, we want to clarify the relationship with our discussion in the following sequence of remarks:
\begin{myenumerate}
\item one has a one-to-one correspondence between:
   \begin{myenumerate}
   \item subspaces ${\calM \subset \calX_{c}(M)}$ which are ${\calC^{\infty}(M)}$-submodules (${\equiv}$ ${\calC^{\infty}_{c}(M)}$ ones). 
   \item local submodules ${\calV\subset \calX(M)}$
   \end{myenumerate}
   In this correspondence, ${\calV= \calM^{loc}}$ and ${\calM= \Gamma_c(M, \ucalV)}$. 
   \item ${\calM}$ is involutive if and only if ${\calM^{loc}}$ is. 
\item the definition of ${\calM}$ being locally finitely generated is identical with the condition appearing in the previous lemma, for 
${\calM^{loc}}$. 
\end{myenumerate}
Hence, in our terminology, having a singular foliation in the sense of Androulidakis and Skandalis is the same as having an involutive local ${\calC^{\infty}(M)}$-submodule ${\calV\subset \gerX(M)}$ which is locally finitely generated. Or, by Proposition \ref{prop -basic -sheaves}, involutive sheaves of submodules ${\ucalV\subset \gerX_M}$ which are locally finitely generated. We should also recall (as mentioned in \cite{An-Sk} that such a foliation induces a partition of ${M}$ into leaves, which forms a foliation in the sense of Stefan-Sussman \cite{Stef, Sus} (however, for this one needs to correct the original arguments of Stefan and Sussman, and change some of their main statements -- please see \cite{Balan}). 

Unfortunately, the framework of Androulidakis and Skandalis  does not seem to be general enough for handling the singular foliations that arise from integrable systems.
\end{remark}
\noindent

\subsection{Strong involutiveness}

And here is one more interesting property of ${\calV= \gerX_{\calF}(M)}$ which is meaningful in the more general setting corresponding to singular foliations.

\begin{definition} \label{def -closed -sheaf}
A sheaf of vector fields ${\ucalV\subset \gerX_M}$ is said to be 
    \textbf{strongly involutive}
    \index{sheaf of vector fields!strongly involutive} 
if around any point in ${M}$ one can find a collection ${\Theta}$ of closed 1-forms (all defined on an open neighborhood ${U}$ of the point) such that, at each ${y\in U}$, the stalk  
${\ucalV_y}$ coincides with the zero-set of ${\Theta}$:
\[  \ucalV_y = \{ X\in \gerX_{M, y}: \theta_y(X)= 0\quad\forall\, \theta\in \Theta,\, y\in U \}.\]
\end{definition}
 \noindent 

As the terminology indicates, this notion implies involutiveness. This follows from the fact that ${\theta([X, Y])= \gerL_X(\theta(Y)) - \gerL_{Y}(\theta(X))}$ for any closed one-form ${\theta}$. What is not indicated by the terminology but is important conceptually (though obvious) is the that this condition also implies that we deal with sheaves of submodules. As a summary:

\begin{lemma} \label{lemma-closed-sheaf}
Any strongly involutive subsheaf ${\ucalV\subset \gerX_M}$ is automatically involutive, and a sheaf of ${\ucalC^{\infty}_M}$-submodules. 
\end{lemma}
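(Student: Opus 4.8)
The plan is to unpack the definition of strong involutiveness and verify the two claimed consequences directly at the level of stalks, since both Lie-bracket closure and module closure are local conditions. Fix a point $p\in M$ and choose, as in Definition \ref{def -closed -sheaf}, an open neighborhood $U$ of $p$ together with a collection $\Theta$ of closed one-forms on $U$ such that for every $y\in U$ the stalk $\ucalV_y$ is exactly the set of germs of vector fields annihilated by all $\theta\in\Theta$ at $y$. It suffices to show that this annihilator condition is preserved under the Lie bracket and under multiplication by germs of smooth functions; then, since $p$ was arbitrary and $\ucalV$ is a subsheaf (so these stalk-wise closures assemble into sheaf-level closures), the conclusion follows.

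For involutiveness, let $X,Y$ be germs at $y\in U$ lying in $\ucalV_y$, so $\theta_y(X)=\theta_y(Y)=0$ for all $\theta\in\Theta$; I want $\theta_y([X,Y])=0$. Here I would use the Cartan-type identity already quoted in the text just before the lemma: for any closed one-form $\theta$ one has
\[
    \theta([X,Y]) = \gerL_X(\theta(Y)) - \gerL_Y(\theta(X)).
\]
(This follows from $\dd\theta=0$ and the formula $\dd\theta(X,Y)=\gerL_X(\theta(Y))-\gerL_Y(\theta(X))-\theta([X,Y])$.) The point is subtler than just evaluating at $y$: I need $\theta(X)$ and $\theta(Y)$ to vanish \emph{on a neighborhood} of $y$, not merely at $y$, so that their derivatives $\gerL_X(\theta(Y))$ and $\gerL_Y(\theta(X))$ vanish at $y$. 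But that is exactly what the definition grants: the characterization of $\ucalV_{y'}$ holds for \emph{all} $y'\in U$ simultaneously with the \emph{same} family $\Theta$, so a germ in $\ucalV_y$ is represented by a genuine vector field on a small neighborhood $W\subset U$ of $y$ that lies in $\ucalV_{y'}$ for every $y'\in W$, hence satisfies $\theta(X)\equiv 0$ and $\theta(Y)\equiv 0$ on $W$. Then both terms on the right vanish identically near $y$, so $\theta_y([X,Y])=0$, giving $[X,Y]\in\ucalV_y$.

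For the module property, let $X\in\ucalV_y$ and let $f$ be a germ of smooth function at $y$; then for each $\theta\in\Theta$ we have $\theta_y(fX)=f(y)\,\theta_y(X)=0$, so $fX\in\ucalV_y$. This is immediate and needs no neighborhood argument. Assembling: $\ucalV$ is closed under brackets and under multiplication by functions stalk-wise, hence (being a subsheaf, so sections are determined by and glued from germs) globally over every open set, which is precisely the assertion that $\ucalV$ is involutive and a sheaf of $\ucalC^\infty_M$-submodules. The only genuine subtlety — and the step I would flag as the crux — is the observation that in the definition of strong involutiveness the single family $\Theta$ controls the stalks at \emph{all} nearby points at once, which is what upgrades a pointwise annihilation statement to the neighborhood-wise vanishing needed to differentiate; without that uniformity the bracket computation would not close.
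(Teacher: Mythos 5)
Your proof is correct and follows exactly the line the paper indicates: the paper does not give a displayed proof, but the sentence preceding the lemma cites the identity $\theta([X,Y]) = \gerL_X(\theta(Y)) - \gerL_Y(\theta(X))$ for closed $\theta$ as the reason for involutiveness and dismisses the module property as obvious. You have correctly filled in the one genuine subtlety — that the uniform family $\Theta$ controlling $\ucalV_{y'}$ at all nearby $y'$ is what upgrades pointwise annihilation to neighborhood-wise vanishing, so that the Lie derivatives on the right-hand side actually vanish at the point.
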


The difference between involutivity and strong involutivity is subtle. The fact that, for a regular foliation ${\calF}$, ${\gerX_{\calF}}$ has this property follows immediately from the local triviality of foliations: if ${f: U\rightarrow \mathbb{R}^q}$ is a submersion inducing ${\calF|_{U}}$, with ${U}$ an open neighborhood of ${x}$, then the germs at ${x}$ of the 1-forms ${df_i}$ have as zero-set precisely ${\gerX_{\calF, x}}$. Therefore one may say that this condition is related to the very definition of a foliation (local triviality of the foliation) rather than to the involutivity of ${\calF}$. In some sense, the Frobenius theorem (and the infinitesimal approach to foliations) is possible precisely because the two notions coincide when dealing with sheaves of vector bundle type (i.e.\  of sections of a sub bundle of ${TM}$). Therefore we make a distinct statement.

\begin{proposition}
For subsheaves ${\ucalV\subset \gerX_M}$ which are sheaves of sections of a vector sub bundle of ${TM}$, strong involutivity is equivalent to involutivity. 
\end{proposition}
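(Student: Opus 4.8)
The plan is to establish the nontrivial direction: if $\ucalV = \gerX_{\calF}$ is the sheaf of sections of an involutive vector subbundle $\calF \subset TM$, then $\ucalV$ is strongly involutive, i.e. locally cut out by closed $1$-forms. The converse (strongly involutive $\Rightarrow$ involutive) is already recorded in Lemma \ref{lemma-closed-sheaf}, so there is nothing to do there. For the forward direction I would invoke the Frobenius theorem in its normal-form guise: given $x \in M$, choose a foliated chart $(x_1,\dots,x_q,y_1,\dots,y_{m-q})$ on a neighborhood $U$ of $x$ in which the leaves of $\calF$ are the slices $\{y_1 = c_1,\dots,y_{m-q} = c_{m-q}\}$, so that $\calF|_U$ is spanned by $\partial/\partial x_1,\dots,\partial/\partial x_q$.

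Next I would take $\Theta \defeq \{\dd y_1, \dots, \dd y_{m-q}\}$, a finite collection of closed (indeed exact) $1$-forms on $U$. The key verification is that at each $y \in U$ the stalk $\ucalV_y = \gerX_{\calF,y}$ coincides with $\{X \in \gerX_{M,y} : (\dd y_j)_y(X) = 0 \text{ for all } j\}$. One inclusion is clear: any germ of a section of $\calF$ is, near $y$, a $\calC^\infty$-combination of the $\partial/\partial x_i$, hence annihilated by every $\dd y_j$. For the reverse inclusion, a germ of vector field $X = \sum_i a_i \partial/\partial x_i + \sum_j b_j \partial/\partial y_j$ annihilated by all $\dd y_j$ has $b_j = 0$ for all $j$, hence $X$ is a germ of a section of $\calF$. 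This shows $\ucalV$ satisfies Definition \ref{def -closed -sheaf} at the (arbitrary) point $x$, so it is strongly involutive.

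The only subtlety worth flagging is that strong involutiveness is a condition on stalks at \emph{every} point of the neighborhood $U$, not just at the center $x$; but the Frobenius chart is adapted to the whole slice decomposition of $U$ simultaneously, so the same $\Theta$ works uniformly over $U$, and the stalkwise computation above goes through verbatim at each $y \in U$. The main (minor) obstacle is therefore purely bookkeeping: making sure one quotes the version of the Frobenius theorem that produces a single chart straightening the entire local foliation, rather than just integrating a single involutive distribution at one point; with that in hand the proof is immediate.
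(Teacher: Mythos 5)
Your proof is correct and follows essentially the same route the paper sketches (the paper gives no formal proof but motivates the proposition by the preceding paragraph). The paper phrases the key step in terms of a trivializing submersion $f\colon U\to \R^q$ and the exact $1$-forms $df_i$, which is the same as your Frobenius chart: the $y_j$'s are exactly the components of $f$, so $dy_j = df_j$. Your flagged subtlety — that the same $\Theta$ must work for every stalk in $U$, not just at $x$ — is handled correctly and is in fact implicit in the paper's invocation of "local triviality of foliations," which yields a chart adapted to the slice decomposition of the entire neighborhood at once.
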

\noindent
\begin{remark}[More insight into the strong involutiveness condition:] In the Androulidakis-Skandalis approach to singular foliations, when the sheaves are involutive and locally finitely generated, strong involutiveness does not hold even in some very simple examples (see e.g.\  Remark \ref{rk-str-inv-counter}). In some sense, that framework is suited to study nice geometric examples, which have smooth leaves. However, it is not suited to study examples coming from integrable systems or, more generally, from fibers of smooth maps (fibers that may fail to be smooth). The strong involutivity is precisely designed to take care of such examples (in particular, one should talk about non-smooth leaves).

For yet another insight, note that Lemma \ref{lemma-closed-sheaf} holds under a weaker `involutiveness assumption': it suffices to require that, for any ${x\in M}$, ${\ucalV_x}$ coincides with the zero-set of a collection ${\Theta_x}$ of germs at ${x}$ of 1-forms, which are not necessarily closed but satisfy the condition that 
\[ d \Phi_x \subset \Phi_x\wedge \Omega^{1}_{x},\]
i.e.\  the differentials of the 1-forms on ${\Phi_x}$ are inside the submodule of ${\Omega^{*}_{x}}$ generated by ${\Phi_x}$. Equivalently, the algebraic ideal of ${\Omega^{*}_{x}}$ generated by ${\Phi_x}$ is a differential ideal. This condition is more natural from the point of view of exterior differential systems (see \cite{Bryant}).  But please note that, while our discussion (and, in general, the study of `singular foliations') as well as the theory of EDS both depart from, and are generalizations of, regular foliations, they go into very different directions: while we are interested on allowing singularities but insist on having leaves that give rise to a partition of ${M}$, the EDS are objects with may have many leaves (or, better: solutions) through each point. 
\end{remark}

\subsection{Conclusion}
While it seems that the general frameworks devoted to singular foliations that can be found in the existing literature are not general enough to handle the ones arising from integrable systems, in order to model such foliations we should use local sub spaces ${\calV\subset \gerX(M)}$ or, more generally, subsheaves ${\ucalV}$ of ${\gerX_{M}}$, that satisfy some of the properties discussed above such as:
\begin{myenumerate}
\item[1.\ ] it is involutive (closed under the Lie bracket).
\item[2.\  ] is is a ${\calC^{\infty}}$- submodule. 
\item[3.] it is strongly involutive. 
\item[4.] it is locally finitely generated.
\end{myenumerate}
In this list, 1.\  is a must, 2.\   is strongly desirable, and 3. and 4. are desirable.

One should also keep in mind that, despite this rather abstract/algebraic approach, one can still consider induced (singular) distribution and leaves. More precisely, for any subsheaf ${\ucalV\subset \gerX_{M}}$, the evaluation maps ${\textrm{ev}_x: \ucalV_x\rightarrow T_xM}$ define 
\begin{equation}\label{FxV}
\calF_x(\ucalV)\defeq \textrm{ev}_x(\ucalV_x)\subset T_xM,
\end{equation}
i.e.\  a singular distribution on ${M}$. Moreover, using integral curves of local vector fields in ${\ucalV}$ one generates an equivalence relation on ${M}$ by starting from the condition that ${x\cong y}$ if there exists such an integral curve between ${x}$ and ${y}$. The resulting equivalence classes deserve to be called the leaves induced by ${\ucalV}$. 
It is not clear to us which are the reasonable conditions under which these leaves are (immersed) submanifolds. The results of Stefan-Sussman \cite{Stef, Sus} (with the statements corrected - see \cite{Balan}) implies that that is the case if ${\ucalV}$ is a sheaf of submodules which is  involutive and locally finitely generated. 
\newpage

\section{Foliations via functions} 
\etocsettocstyle{\subsubsection*{Local Contents}}{}
\etocsettocdepth{2}
\localtableofcontents

\subsection{Basic functions}

Another way of encoding foliations, somehow dual to the point of view of vector fields, is obtained by looking at smooth functions. Roughly speaking, we are looking at smooth functions on the leaf space ${M/\calF}$. As a topological space (with the quotient topology) this may be very pathological, but
one can still make sense of smooth functions by working up on ${M}$. 

\begin{definition} 
Given a (regular) foliation ${\calF}$ on ${M}$, the \textbf{algebra} ${\calC_{\calF}^{\infty}(M)}$ of \textbf{${\calF}$-basic functions} is the sub-algebra of ${\calC^{\infty}(M)}$ consisting of those smooth functions that are constant on the leaves of ${\calF}$. 
\end{definition}
\noindent
However, unlike the case of ${\gerX_\calF (M)}$, ${\calC_{\calF}^{\infty}(M)}$ can not be used to recover ${\calF}$. This happens especially when the leaf space is very pathological. For example, for a foliation which admits a dense leaf (e.g.\  a Kronecker foliation on the torus), any basic function must be constant. Hence ${\calC_{\calF}^{\infty}(M)= \mathbb{R}}$ and this does not distinguish ${\calF}$ from the trivial foliation (i.e.\  with a single leaf). It is for this reason that we have to pass to, and work with, sheaves.

\begin{definition} 
The \textbf{sheaf} ${\ucalC_\calF^{\infty}}$ of \textbf{${\calF}$-basic functions} is the sheaf on ${M}$: 
\[ 
    U\mapsto \ucalC_{\calF}^{\infty}(U)\defeq \calC_{\calF|_{U}}^{\infty}(U).
\]
The sections over an open set ${U\subset M}$ are the basic functions wrt the restriction of ${\calF}$ to ${U}$. 
\end{definition}
\noindent
We stress however that the sheaf  ${\calC_\calF}$ does have much weaker properties than the sheaf ${\gerX_\calF}$. Above all, it usually does not have enough global sections (in the precise sense of Remark \ref{lemma-with-strange-condition}), see e.g.\  the previous example. In contrast, ${\gerX_\calF}$ is soft, and even a fine sheaf (and the same is true for any sheaf ${\ucalV}$ induced by any ${\calC^{\infty}(M)}$-submodule ${\calV\subset \gerX(M)}$).

We now concentrate on making the duality between the spaces ${\calC^{\infty}_\calF(M)}$ and ${\gerX_\calF(M)}$ more precise. The first remark is the following:

\begin{lemma} 
For any regular foliation ${\calF}$ on ${M}$, one has 
\[ 
    \calC_{\calF}^{\infty}(M)
    = 
    \{ 
        f\in \calC^{\infty}(M): 
        \gerL_X(f)=0 \quad\forall\, X\in \gerX_\calF(M)
    \}.
\]
\end{lemma}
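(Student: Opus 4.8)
The plan is to prove the two inclusions of this set equality separately, working locally and then globalizing. Both directions are essentially immediate once one unpacks what it means for a function to be constant along the leaves of a regular foliation.

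First I would prove the inclusion $\subseteq$. Suppose $f \in \calC_{\calF}^{\infty}(M)$, i.e. $f$ is constant on each leaf of $\calF$. Take any $X \in \gerX_\calF(M) = \Gamma(\calF)$. Since $X$ is tangent to the leaves, its flow $\phi_t$ (defined locally) preserves leaves, so $t \mapsto f(\phi_t(x))$ is constant for each $x$; differentiating at $t=0$ gives $(\gerL_X f)(x) = 0$. Alternatively, and more directly: the question of whether $\gerL_X f = 0$ is local, so we may pass to a foliated chart $U$ with a trivializing submersion $\pi : U \to \mathbb{R}^q$ with connected fibers. On $U$ one has $f|_U = g \circ \pi$ for some smooth $g$ (here I use that $f$ is constant on the connected fibers of $\pi$, which are exactly the plaques, together with smoothness of $f$ descending along the submersion), while $X|_U$ is $\pi$-vertical, i.e. $d\pi(X) = 0$; hence $\gerL_X(g\circ\pi) = dg(d\pi(X)) = 0$.

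For the reverse inclusion $\supseteq$, suppose $\gerL_X f = 0$ for all $X \in \gerX_\calF(M)$; I want to conclude $f$ is constant on each leaf $L$. Fix a leaf $L$ and two points $p, q \in L$. Since $L$ is connected and (as an immersed submanifold) path-connected, and since locally $\calF$ is trivialized by a submersion, one can join $p$ to $q$ by a finite chain of flow segments of vector fields tangent to $\calF$: in a foliated chart the plaques are path-connected by such flows, and a connectedness argument patches the chart-local statements along a path from $p$ to $q$ inside $L$. Along each such flow segment of a vector field $X \in \gerX_\calF(M)$, the derivative of $f$ is $\gerL_X f = 0$, so $f$ is constant along it; chaining these gives $f(p) = f(q)$. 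The mild subtlety is ensuring the locally-defined tangent vector fields used are (or can be taken to be) global elements of $\gerX_\calF(M)$, or else replacing the global hypothesis by its local consequence — this is handled by multiplying a local section of $\calF$ by a bump function, which still lies in $\gerX_\calF(M)$ and agrees with the original near any given point, so $\gerL_X f = 0$ locally there as well; since the sheaf $\gerX_\calF$ has enough global sections (indeed is fine), this causes no trouble.

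The only step that requires a little care — and hence the main obstacle, such as it is — is the globalization of "constant along flows of tangent vector fields" to "constant on each leaf": one must argue that any two points of a leaf are connected by a finite concatenation of integral curves of vector fields in $\gerX_\calF(M)$, using the local triviality of $\calF$ and the path-connectedness of leaves, and must be slightly careful that these curves stay inside the leaf. Everything else is a routine local computation in a foliated chart. I would therefore structure the write-up as: (i) reduce to the local statement via locality of $\gerL_X f$ and the bump-function remark; (ii) in a foliated chart, identify $\calC_{\calF}^\infty$ with $\pi$-pullbacks and $\gerX_\calF$ with $\pi$-vertical fields, making both inclusions transparent; (iii) patch along paths in a leaf to upgrade the local conclusion of $\supseteq$ to the global one.
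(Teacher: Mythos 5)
Your proposal is correct and follows essentially the same two-step argument as the paper: the forward inclusion via flows of leafwise vector fields preserving leaves, and the reverse via connecting two points of a leaf by a chain of integral curves of vector fields in $\gerX_\calF(M)$. You actually fill in a detail the paper leaves implicit, namely that the locally-defined plaque-tangent vector fields can be cut off by bump functions to obtain global sections of $\gerX_\calF$ whose flows still trace the desired segments near the point in question.
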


\begin{proof}

For the direct inclusion note for any ${X\in \gerX_\calF(M)}$ each integral curve ${t\mapsto \phi_{X}^{t}(x)}$ of ${X}$ stays in one leaf. Therefore, if ${f}$ is basic and ${x\in X}$, then ${f \phi_{X}^{t}(x)}$ is constant and then ${\gerL_X(f)(x)= 0}$. Conversely, if ${f}$ is in the right hand side, it follows that it must be constant on all the integral 
curves of vector fields ${X\in \gerX_\calF(M)}$. But any two points that belong to the same lead can be connected by a sequence of paths that are integral curves of such vector fields. Hence ${f}$ must be constant on the leaves.
\end{proof}

\noindent
\begin{remark} 
It is interesting to switch the roles of ${\calC_\calF^{\infty}(M)}$ and ${\gerX_\calF(M)}$. The outcome is a bit different: 
it is not true that
\[ \gerX_\calF(M) = \{ X\in \gerX(M): \gerL_X(f)=0 \quad\forall\, f\in \calC_{\calF}^{\infty}(M)\}.\]
Think for instance of the Kronecker foliation mentioned above. 
Instead, 
\[ 
    \gerX_\calF(M) = \{ X\in \gerX(M): \gerL_{X_{|U}}(f)=0 \quad\forall\, U\subset M \,\,\text{open}, \quad\forall\, f\in \calC_{\calF}^{\infty}(U)\}.
\]
\end{remark}
\noindent
It is interesting to understand these phenomena in the more general context of spaces of vector fields/smooth functions. Given a subspace
$\calV\subset \gerX(M),$
the natural space (actually an algebra) of functions to consider is 
\[ \Fun^{\textrm{naive}}(\calV)\defeq \{ f\in \calC^{\infty}(M): \gerL_X(f)=0 \quad\forall\, X\in \calV \}.\]
As we shall explain below, it is better to use a sheaf-theoretic perspective. That means that we require the equations ${\gerL_X(f)=0}$ to hold only germ -wise. This leads to the following space of functions (see also below): 
\[ \Fun(\calV)\defeq \{f\in \calC^{\infty}(M): \gerL_{X}(f_{|V})=0 \quad\forall\, V\subset M \,\,\text{open}, \quad\forall\, X\in \Gamma(V, \ucalV)\},\]
where ${\ucalV}$ is the sheaf associated to ${\calV}$. 

\begin{remark} 
One does have ${\Fun^{\textrm{naive}}(\calV)= \Fun(\calV)}$ if ${\calV}$ is local. While the locality is an assumption that we make most of the time, we owe an explanation for the proceeding in this greater generality: in the dual version of this discussion (when the role of functions and vector fields are switched), the dual version of the locality is just too strong and cannot be ignored. Actually, we already know that in the dual picture we do have to use sheaves. From the sheaf theoretic point of view the problem with the naive spaces is that they do not defines sheaves (see below). 
\end{remark}

\noindent
 \subsection{Sheaves of functions \& duality with vector fields}
It is time to pass to general sheaves of functions. We first fix the terminology.

\begin{definition} 
By a sheaf of (smooth) functions on ${M}$ we mean a subsheaf ${\ucalC \subset \ucalC^{\infty}_{M}}$ in the category of sheaves of vector spaces over ${\mathbb{R}}$. 
\end{definition}

\noindent
Of course, we will mainly encounter subsheaves of rings (or, equivalently, of algebras) of ${\ucalC^{\infty}_{M}}$. Even more, many of the subsheaves ${\ucalC}$ we will encounter will be closed not only under the ring operations, but under all ${C^{\infty}}$-ring operations: for any ${f_1, \ldots, f_p\in \Gamma(U, \ucalC)}$ and any smooth functions ${F:\mathbb{R}^p \rightarrow \mathbb{R}}$, ${F\circ (f_1, \ldots, f_p)}$ is again a (local) section of ${\ucalC}$. In this case we will say that ${\ucalC\subset \ucalC_{M}^{\infty}}$ is \textbf{a subsheaf of ${C^{\infty}}$-rings}. To explain the terminology, recall that a ${C^{\infty}}$-ring $R$ comes not only with the usual ring operations, but with operations 
${\textrm{op}_F: R^p\rightarrow R}$ - one for each smooth function ${F:\mathbb{R}^p \rightarrow \mathbb{R}}$ (${F(x, y)= x+ y}$ induces the addition of ${R}$, and ${F(x, y)= xy}$ induces the multiplication). The natural axioms to require are:
\begin{myenumerate}
\item If ${F'= F\circ (F_1, \ldots, F_p)}$ with ${F_i: \mathbb{R}^k\rightarrow \mathbb{R}}$, the induced operation is 
\[ \textrm{op}_{F'}=  \textrm{op}_{F}\circ (\textrm{op}_{F_1}, \ldots, \textrm{op}_{F_p});\]
\item The operation induced by the projection ${\textrm{pr}_k: \mathbb{R}^p\rightarrow \mathbb{R}}$ on the ${k}$-th component is the similar projection ${R^p\rightarrow R}$.
\end{myenumerate}
We refer \cite{Joyce2} for more details. Of course, the basic example is ${\calC^{\infty}(M)}$, with 
\[
    {\textrm{op}_F(f_1, \ldots, f_p)= F\circ (f_1, \ldots, f_p)}.
\]
Hence, saying that ${\ucalC\subset \ucalC_{M}^{\infty}}$ is a subsheaf of ${C^{\infty}}$-rings can be thought of as saying that ${\ucalC}$ is closed under these ${C^{\infty}}$-ring operations. 

For us, the most interesting concept that arises from the ${C^{\infty}}$-ring point of view is that of `finite generation'. While it should be clear what it means for a subsheaf of rings ${\ucalC\subset \ucalC_{M}^{\infty}}$ to be finitely generated:

\begin{definition}
\label{functionally-fg} 
We say that ${\ucalC}$ is \textbf{functionally finitely generated} if any point ${x\in M}$ admits an neighborhood ${U\subset M}$ and sections \[{f_1, \ldots, f_p\in \Gamma(U, \ucalC)}\] such that, for any other local section ${g\in \Gamma(V, \ucalC)}$ defined over an open subset ${V\subset U}$, one has
\[ g= F\circ (f_1|_{V}, \ldots, f_p|_{V})\]
for some smooth function ${F: \mathbb{R}^p\rightarrow \mathbb{R}}$ (compare with Definition \ref{sheaf-vf -finite -gen}). 
\end{definition}
\noindent
We now return to the interplay between sheaves of functions and sheaves of vector fields.

\begin{definition}\label{def -Fun}
Given a subsheaf ${\ucalV}$ of ${\gerX_M}$, \textbf{the induced sheaf of functions ${\Fun(\ucalV)}$} is defined by 
\[ U\mapsto 
\{f\in \calC^{\infty}(U): \gerL_{X}(f_{|V})=0 \quad\forall\, V\subset U \,\,\text{open}, \quad\forall\, X\in \Gamma(V, \ucalV)\}.\]
\end{definition}
\noindent
\noindent
The problem with the naive version of this sheaf, 
\begin{equation}\label{naive -fund} 
U\mapsto \{ f\in \calC^{\infty}(U): \gerL_X(f)=0 \quad\forall\, X\in \Gamma(U, \ucalV)\}
\end{equation} 
is that, in general, it does not make sense even as a presheaf: starting with such a function ${f}$ over ${U}$, its restriction to a smaller ${U'\subset U}$ may fail to belong to satisfy the similar condition for all ${X\in \Gamma(U', \ucalV))}$. However, this situation does not arises under reasonable assumptions (however, the discussion is not as pedantic as it may seem at first, since we will soon pass to the dual point of view, and then the duals of the `reasonable assumptions' become too strong). Let us summarize the outcome:

\begin{proposition}
\label{lemma-with-strange-condition2} 
For any subsheaf ${\ucalV}$ of ${\gerX_M}$, ${\Fun(\ucalV)}$ is a subsheaf of ${C^{\infty}}$-rings of ${\ucalC^{\infty}_{M}}$. 

Its naive version (\ref{naive -fund}) 
is a subsheaf of ${\ucalC^{\infty}_{M}}$ if and only if it coincides with ${\Fun(\ucalV)}$. This happens automatically when ${\ucalV}$ has enough global sections (see Remark \ref{lemma-with-strange-condition}) - in particular when
${\ucalV}$ is a sheaf of ${\ucalC^{\infty}_{M}}$-modules (or just a soft sheaf). 
\end{proposition}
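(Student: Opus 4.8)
The plan is to verify the three assertions of Proposition~\ref{lemma-with-strange-condition2} in turn: (i) $\Fun(\ucalV)$ is a subsheaf of $\calC^\infty$-rings; (ii) the naive version \eqref{naive -fund} is a subsheaf of $\ucalC^\infty_M$ iff it coincides with $\Fun(\ucalV)$; (iii) this coincidence holds whenever $\ucalV$ has enough global sections, in particular when $\ucalV$ is a sheaf of $\ucalC^\infty_M$-modules or merely soft.

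For (i), I would first observe that $\Fun(\ucalV)$ is manifestly a presheaf: the defining condition on $f\in\calC^\infty(U)$ quantifies over all opens $V\subset U$, so if $f$ satisfies it over $U$ then $f|_{U'}$ satisfies it over any smaller $U'$ (the quantifier only shrinks). It is a sheaf because the condition is local in the strongest sense---being annihilated by $\gerL_X$ for $X\in\Gamma(V,\ucalV)$ over every small $V$ can be checked on any open cover, and $\calC^\infty_M$ is itself a sheaf, so a compatible family of local sections satisfying the condition glues to a global section that still satisfies it. That it is a subsheaf of $\calC^\infty$-rings follows from the Leibniz-type identity $\gerL_X(F\circ(f_1,\dots,f_p))=\sum_i (\partial_i F)\circ(f_1,\dots,f_p)\cdot\gerL_X(f_i)$, valid for any vector field $X$ and smooth $F:\R^p\to\R$: if each $\gerL_X(f_i|_V)=0$ then $\gerL_X\big(F\circ(f_1,\dots,f_p)|_V\big)=0$ as well, so $\calC^\infty$-ring operations preserve the defining condition. (Closure under the ring operations is the special case $F(x,y)=x+y$ and $F(x,y)=xy$.)

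For (ii), the point is almost tautological once phrased correctly. If the naive assignment \eqref{naive -fund} is a subsheaf of $\ucalC^\infty_M$, then in particular it is a presheaf, so restrictions of its sections again lie in it; hence a section $f\in\calC^\infty(U)$ of the naive sheaf has $\gerL_X(f|_V)=0$ for every open $V\subset U$ and every $X\in\Gamma(V,\ucalV)$, which is exactly the condition defining $\Fun(\ucalV)$. Conversely $\Fun(\ucalV)(U)\subset$ (naive)$(U)$ always, by taking $V=U$. So the two presheaves agree, and by (i) the naive one is then a sheaf. The converse direction is trivial. The content of the statement is really just the observation that the naive assignment fails to be even a presheaf precisely because its sections need not restrict correctly, and this failure is the only obstruction.

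For (iii), I would argue that the enough-global-sections condition $(C0)$ forces the two to coincide. Suppose $\ucalV$ has enough global sections and let $f\in\calC^\infty(U)$ satisfy the naive condition over $U$, i.e.\ $\gerL_X(f)=0$ for all $X\in\Gamma(U,\ucalV)$; I must show $\gerL_Y(f|_V)=0$ for an arbitrary open $V\subset U$ and $Y\in\Gamma(V,\ucalV)$. Fix $y\in V$. By $(C0)$ applied at $y$ (working inside $U$, which also has enough global sections restricting to it---softness/module structure is inherited) there is a section $X\in\Gamma(U,\ucalV)$ whose germ at $y$ equals that of $Y$; hence $\gerL_Y(f|_V)$ and $\gerL_X(f)$ have the same germ at $y$, and the latter vanishes. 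Since $y\in V$ was arbitrary, $\gerL_Y(f|_V)\equiv 0$, so $f\in\Fun(\ucalV)(U)$. The remark that this applies when $\ucalV$ is a sheaf of $\ucalC^\infty_M$-modules is exactly the bump-function argument from Remark~\ref{lemma-with-strange-condition}: given a germ at $y$ represented by $X$ on some small $W$, multiply by $\eta\in\calC^\infty_c(W)$ with $\eta\equiv1$ near $y$ and extend by zero to get a global section with the right germ; and soft sheaves satisfy $(C0)$ as well.

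The main obstacle here is not any single estimate but getting the quantifier-bookkeeping exactly right: the whole proposition is a careful disentangling of "local in the naive sense" versus "local in the sheaf sense", and the one genuinely substantive input is the germ-lifting step in (iii), where the enough-global-sections hypothesis is used to replace a locally-defined section of $\ucalV$ by a globally-defined one with the same germ. Everything else is a routine verification of sheaf axioms and the Leibniz rule.
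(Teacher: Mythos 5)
Your proposal is correct and proceeds by essentially the same argument as the paper: the same Leibniz-rule computation establishes the $\calC^{\infty}$-ring structure, and the same germ-lifting step (replacing a locally defined section of $\ucalV$ by one defined over all of $U$ with the same germ at a given point) is what carries the enough-global-sections case. The only difference is presentational: you cleanly separate the three claims and spell out the sheaf axioms for $\Fun(\ucalV)$, whereas the paper compresses this into a single paragraph about the restriction maps of the naive version.
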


\noindent
\begin{proof} 
For the sheaf part, we only have to check that the restriction map from an open set ${U}$ to a smaller set ${U'}$ works well (indeed, since we work with functions on ${M}$, the remaining sheaf conditions are immediate). Start with 
${f\in \calC^{\infty}(U)}$ belonging to (\ref{naive -fund}). To check that ${f|_{U'}}$ belongs to (\ref{naive -fund}) with ${U}$ replaced by ${U'}$, let ${X'\in \Gamma(U', \ucalV)}$ and we show that ${\gerL_{X'}(f_{|U'})}$ vanishes. 
Look at its germ at an arbitrary point ${x\in U'}$. Using the hypothesis, we find ${X\in \Gamma(U, \ucalV)}$ which has the same germ at ${x}$ as ${X'}$. But then the germ at ${x}$ of ${\gerL_{X'}(f_{|U'})}$ coincides with the one
of ${\gerL_{X}(f)= 0}$. 

To see that ${\Fun(\ucalV)}$ is closed under all the ${C^{\infty}}$-ring operations one remarks that, for ${F:\mathbb{R}^p \rightarrow \mathbb{R}}$ and (local) functions ${f_i}$ on ${M}$, one has
\[ 
    \gerL_X(F\circ (f_1, \ldots, f_p))
    = \sum\nolimits_k
    \frac{\partial F}{\partial x_k}\circ (f_1, \ldots, f_p) \gerL_X(f_k).
    \qedhere
\]
\end{proof}

\noindent
It should now be clear that we can also proceed dually, interchanging the role of the functions and of the vector fields. Therefore, one now starts with a subsheaf ${\ucalC}$ of the sheaf ${\ucalC^{\infty}_M}$ of smooth functions on ${M}$.
The `naive' way to induce a sheaf of vector fields on ${M}$ would be to consider 
\[ U\mapsto \{ X\in \gerX(U): \gerL_{X}(f)= 0 \quad\forall\, f\in \ \Gamma(U, \ucalC)\} .\]
As before, this is not well defined as a presheaf, unless we consider conditions like in the previous lemma (but for the sheaf ${\ucalC}$). However, such conditions are way too strong for our purpose: they are not satisfied even by
the sheaves ${\calC_{\calF}}$ of basic functions associated to regular foliations.  

\begin{definition}
\label{def -Vect}
For a subsheaf ${\ucalC}$ of the sheaf ${\ucalC^{\infty}_M}$ of smooth functions on ${M}$, \textbf{the sheaf of vector fields on ${M}$ induced by ${\ucalC}$}, denoted \textbf{${\Vect(\ucalC)}$}, is defined by 
\[ U\mapsto \{ X\in \gerX(U): \gerL_{X_{|U'}}(f)= 0 \quad\forall\, U'\subset U \quad\forall\, f\in \ \Gamma(U', \ucalC) \}. \]
\end{definition}
\noindent
\noindent
Again, the defining condition can be interpreted as saying that \[{\gerL_{\textrm{germ}_x(X)}(f_x)= 0}\] for all ${x\in U}$ and each germ ${f_x\in \ucalC_x}$, but one should be aware that, at each ${x}$, the solutions of these equations does not describe the stalk of 
${\Vect(\ucalC)}$ at ${x}$ (which is smaller).

\subsection{The strong involutive closure}
The previous constructions are related to strong involutiveness, e.g.\ the following is immediate: 

\begin{lemma} 
For any subsheaf ${\ucalC}$ of the sheaf ${\ucalC^{\infty}_M}$, ${\Vect(\ucalC)}$ is a subsheaf of ${\gerX_M}$ which is strongly involutive (in the sense of Definition \ref{def -closed -sheaf}). 
\\
In particular (cf. Lemma \ref{lemma-closed-sheaf}), it is an involutive sheaf of ${\ucalC^{\infty}_M}$-submodules. 
\end{lemma}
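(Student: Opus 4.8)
The plan is to show two things for $\ucalV \defeq \Vect(\ucalC)$: first, that each stalk $\ucalV_x$ is the common zero set of a collection of \emph{closed} $1$-forms defined near $x$; second, that this collection consists of germs that extend to an open neighborhood so that strong involutiveness (Definition \ref{def -closed -sheaf}) holds as literally stated. The natural candidate collection is $\Theta \defeq \{\, df : f \in \Gamma(U,\ucalC)\,\}$ for a suitable open neighborhood $U$ of $x$. These are manifestly closed $1$-forms, so the only real content is identifying the zero set of $\Theta$ at each $y \in U$ with the stalk $\ucalV_y$.

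First I would unwind the definition of $\Vect(\ucalC)$. A germ $X_y \in \gerX_{M,y}$ lies in $\ucalV_y$ exactly when it has a representative $X$ on some open $V \ni y$ with $\gerL_{X|_{V'}}(f) = 0$ for every open $V' \subset V$ and every $f \in \Gamma(V',\ucalC)$. Shrinking, this is the same as requiring $\gerL_{X}(f) = 0$ germ-wise near $y$ for all germs $f_y \in \ucalC_y$, i.e.\ $(df)_y(X_y) = 0$ for every $f$ whose germ at $y$ lies in $\ucalC_y$ — here I use that $\gerL_X(f) = df(X)$ for functions. So the condition "$X_y \in \ucalV_y$" is precisely "$\theta_y(X_y) = 0$ for all $\theta \in \{df : \textrm{germ}_y(f) \in \ucalC_y\}$". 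The subtlety is matching this pointwise-varying family of germs with a \emph{single} collection $\Theta$ of forms defined on one fixed neighborhood $U$: a priori the germs in $\ucalC_y$ for $y \neq x$ need not come from sections over $U$. This is the step I expect to be the main obstacle, but it is handled exactly as in the proof of Proposition \ref{lemma-with-strange-condition2}: take $U$ any open neighborhood of $x$ and let $\Theta = \{ df : f \in \Gamma(U,\ucalC)\}$; the inclusion $\ucalV_y \subseteq \{X : \theta_y(X) = 0 \ \forall \theta \in \Theta\}$ is clear since restrictions of sections over $U$ are sections over smaller opens, and the reverse inclusion follows because, for a germ $f_y \in \ucalC_y$ and $X_y$ killing all of $\Theta$ at $y$, one checks $\gerL_X(f)$ vanishes near $y$ by noting its germ at nearby points is controlled — more directly, one observes that $\ucalV$ is \emph{already} defined by the germ-wise vanishing conditions against $\ucalC$, and the collection $\Theta$ of differentials of $U$-sections, while possibly cutting out something slightly larger than $\ucalV_y$ at exotic points $y$, agrees with $\ucalV_y$ after possibly shrinking $U$ around each point; since strong involutiveness only requires such a collection to exist \emph{around any point}, shrinking is permitted.

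Once the strong involutiveness is established, the "in particular" clause is immediate: Lemma \ref{lemma-closed-sheaf} states that any strongly involutive subsheaf of $\gerX_M$ is automatically involutive and a sheaf of $\ucalC^{\infty}_M$-submodules, so there is nothing further to prove. I would therefore structure the write-up as: (1) recall $\gerL_X(f) = df(X)$ and unwind Definition \ref{def -Vect} into the germ-wise statement; (2) produce $\Theta$ as the differentials of local sections of $\ucalC$ near the chosen point, verify closedness trivially, and verify the zero-set identity with $\ucalV_y$, invoking the restriction argument of Proposition \ref{lemma-with-strange-condition2} to pass between $U$-sections and germs; (3) conclude with Lemma \ref{lemma-closed-sheaf}. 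The only place where care is genuinely needed is (2), and even there the flexibility built into Definition \ref{def -closed -sheaf} (``around any point one \emph{can find} a collection'') means we are free to shrink $U$ as much as the argument demands.
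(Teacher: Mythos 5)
Your candidate $\Theta = \{\,df : f \in \Gamma(U,\ucalC)\,\}$ matches what the paper asserts (it calls the lemma ``immediate'' and, in the combined proof of Propositions \ref{prop -closures -V}--\ref{prop -closures -C}, dispatches this direction in a single sentence). You are also right to flag the subtlety: at a point $y \in U$ distinct from $x$, the stalk $\Vect(\ucalC)_y$ is cut out by the differentials of \emph{all} germs in $\ucalC_y$, and germs at $y$ need not be restrictions of sections over the fixed $U$. But the resolution you propose---``shrinking $U$ around each point''---does not close this gap. Definition \ref{def -closed -sheaf} requires a \emph{single} open $U$ and a \emph{single} collection $\Theta$ for which $\ucalV_y = \{X : \theta_y(X) = 0 \ \forall\,\theta\in\Theta\}$ holds at \emph{every} $y \in U$ simultaneously; shrinking $U$ around the chosen $x$ merely relocates the difficulty to whichever $y$'s remain in the shrunk neighborhood. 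The bump-function trick that would fix this (multiply a local section of $\ucalC$ by a cutoff and extend by zero over $U$) is unavailable, because $\ucalC$ is only assumed to be a subsheaf of $\R$-vector spaces, not of $\ucalC^{\infty}_M$-modules, so $\chi\cdot f$ need not lie in $\ucalC$. And the appeal to Proposition \ref{lemma-with-strange-condition2} is misplaced: its argument passes from a larger open to a smaller one (the opposite of what you need) and relies on the ``enough global sections'' surjectivity $\Gamma(U,\ucalC)\to\ucalC_y$, which is exactly what is missing here.

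What you do establish cleanly is the forward inclusion $\ucalV_y \subseteq \{X : \theta_y(X)=0\ \forall\,\theta\in\Theta\}$, and the ``in particular'' clause is unproblematic once strong involutiveness is in hand, by Lemma \ref{lemma-closed-sheaf}. But the reverse inclusion at a general $y\in U$ is the real content, and it is a genuine obstacle, not a formality to be waved away by shrinking. A correct proof would have to either produce a more sophisticated $\Theta$ than the differentials of $U$-sections, or argue germ-wise that the pointwise annihilator distributions $\bigcap_{\theta\in\Theta}\ker\theta_z$ and $\bigcap_{h}\ker(dh)_z$ (the latter over all local sections $h$ of $\ucalC$ near $z$) agree for all $z$ near $y$, for a suitably chosen $U$. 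As written, step (2) of your outline does not accomplish this---though it is worth noting that the paper's own treatment glosses over precisely the same point.
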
 

However, the relationship with strong involutiveness is more substantial. 
To explain this, we consider the following operation:

\begin{definition} 
For a subsheaf ${\ucalV\subset \gerX_M}$, we define the \textbf{strong involutive closure}, or the \textbf{Inv-closure} of ${\ucalV}$, as the following sheaf of vector fields:
\[ \textrm{Inv}(\ucalV)\defeq \Vect(\Fun(\ucalV)).\]
\end{definition}
 \noindent 

As usual, the involutive closure of ${\ucalV}$ is the smallest involutive subsheaf containing ${\ucalV}$. This and the following proposition will be proved together with Proposition \ref{prop -closures -C} and their corollaries. 

\begin{proposition}\label{prop -closures -V}
For any subsheaf ${\ucalV\subset \gerX_M}$, the following are equivalent:
\begin{myenumerate}
\item ${\ucalV}$ is strongly involutive.
\item ${\ucalV= \textrm{Inv}(\ucalV)}$.
\item ${\ucalV= \Vect(\ucalC)}$ for some subsheaf ${\ucalC\subset\ucalC^{\infty}_M}$.
\end{myenumerate}
In general, ${\ucalV}$ is a subsheaf of ${\textrm{Inv}(\ucalV)}$ and ${\textrm{Inv}(\ucalV)}$ is always strongly involutive. 
\end{proposition}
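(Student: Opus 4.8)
The plan is to establish the statement together with the "dual" Proposition \ref{prop -closures -C} (as the excerpt itself announces), since the two constructions $\Fun$ and $\Vect$ are formally symmetric and one naturally proves the package at once. The backbone of the argument is a Galois-connection (adjunction) between the poset of subsheaves $\ucalV \subset \gerX_M$ and the poset of subsheaves $\ucalC \subset \ucalC^\infty_M$, both ordered by inclusion. First I would record the two elementary facts: (a) $\Fun$ and $\Vect$ are order-reversing, i.e.\ $\ucalV_1 \subset \ucalV_2 \implies \Fun(\ucalV_2) \subset \Fun(\ucalV_1)$ and similarly for $\Vect$; and (b) the unit inequalities $\ucalV \subset \Vect(\Fun(\ucalV)) = \textrm{Inv}(\ucalV)$ and $\ucalC \subset \Fun(\Vect(\ucalC))$ hold. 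Both (a) and (b) are immediate from the defining conditions: (b) for $\ucalV$ says that every local $X \in \Gamma(U,\ucalV)$ kills, germ-wise, every function that is germ-wise killed by all of $\ucalV$, which is a tautology. Given (a) and (b), the standard Galois-connection formalism yields the "triangle identities" $\Fun \circ \Vect \circ \Fun = \Fun$ and $\Vect \circ \Fun \circ \Vect = \Vect$; in particular $\textrm{Inv}(\textrm{Inv}(\ucalV)) = \textrm{Inv}(\ucalV)$, so $\textrm{Inv}$ is a closure operator, and its fixed points are exactly the sheaves in the image of $\Vect$. This already gives $(2) \Leftrightarrow (3)$ and, combined with (b), the final sentence ("$\ucalV \subset \textrm{Inv}(\ucalV)$, always strongly involutive" — the latter by the Lemma just stated that $\Vect(\ucalC)$ is always strongly involutive).

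The remaining, and genuinely geometric, content is $(1) \Leftrightarrow (3)$: a subsheaf of $\gerX_M$ is strongly involutive if and only if it is of the form $\Vect(\ucalC)$. The implication $(3) \Rightarrow (1)$ is exactly the Lemma preceding the proposition, so nothing new is needed. For $(1) \Rightarrow (3)$, suppose $\ucalV$ is strongly involutive. I would take, as the candidate sheaf of functions, $\ucalC \defeq \Fun(\ucalV)$ and show $\ucalV = \Vect(\Fun(\ucalV))$. One inclusion is (b). For the reverse, fix a point and, using strong involutiveness, choose on a neighborhood $U$ a collection $\Theta$ of closed $1$-forms whose common zero set at each $y \in U$ is $\ucalV_y$. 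The key step is a local Poincaré-type argument: shrinking $U$ to a ball, each $\theta \in \Theta$ is exact, $\theta = df_\theta$ for some $f_\theta \in \calC^\infty(U)$, and by construction $\gerL_X(f_\theta) = \theta(X) = 0$ for every germ of a vector field in $\ucalV$; hence $f_\theta \in \Gamma(U, \Fun(\ucalV))$. Now if $X_y \in \Vect(\Fun(\ucalV))_y$, then in particular $\gerL_{X_y}(f_\theta)_y = 0$, i.e.\ $\theta_y(X_y) = 0$ for all $\theta \in \Theta$, so $X_y \in \ucalV_y$. Since this holds at every point, $\Vect(\Fun(\ucalV)) \subset \ucalV$, completing $(1) \Rightarrow (3)$ and hence $(2)$ as well.

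The step I expect to be the main obstacle — or at least the one requiring care rather than routine — is the passage from "the common zero set of $\Theta$ is $\ucalV_y$ at every $y$" to actually producing enough exact defining forms $df_\theta$ that lie in $\Fun(\ucalV)$ and, crucially, whose common kernel is still all of $\ucalV_y$ after shrinking. Two subtleties lurk here: first, one must shrink $U$ to a contractible neighborhood so that every closed form in $\Theta$ becomes exact (harmless, as strong involutiveness is a local condition and the conclusion $\ucalV = \Vect(\ucalC)$ is local); second, one must check that replacing $\Theta$ by the exact forms $\{df_\theta\}$ does not enlarge the common zero set, which is automatic since $df_\theta = \theta$. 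I would also note in passing that the functions $f_\theta$ manifestly generate a subsheaf of $\ucalC^\infty_M$ contained in $\Fun(\ucalV)$, so one does not even need the full $\Fun(\ucalV)$ — any $\ucalC$ between this subsheaf and $\Fun(\ucalV)$ works — which is the cleanest way to see $(1) \Rightarrow (3)$ with an explicit $\ucalC$. Finally, the "always strongly involutive" clause of the last sentence is just the preceding Lemma applied to $\ucalC = \Fun(\ucalV)$, and "$\ucalV \subset \textrm{Inv}(\ucalV)$" is (b); these require no further work.
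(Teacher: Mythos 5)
Your proposal is correct and follows essentially the same two-stage argument as the paper: an abstract order-theoretic (Galois-connection/closure-operator) step to obtain the equivalence of (2) and (3), the inclusion $\ucalV\subset\textrm{Inv}(\ucalV)$, and strong involutiveness of $\textrm{Inv}(\ucalV)$; then the geometric step identifying $\textrm{Inv}$-closedness with strong involutiveness by shrinking to a contractible neighborhood so that the defining closed $1$-forms become exact with primitives in $\Fun(\ucalV)$. The only cosmetic difference is that the paper packages the first step by temporarily taking (2) as the definition of strong involutivity, whereas you keep the original definition throughout; the content is the same.
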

\noindent 

Of course, one can also interchange the roles of ${\ucalV}$ and ${\ucalC}$. For a sheaf ${\ucalC}$ of functions the condition ${\ucalC= \Fun(\ucalV)}$ for some sheaf of vector fields ${\ucalV}$ says that, locally, ${\ucalC}$ is defined as the common zero set of a of a collection of first order differential operators. For this reason, we use the name `closed' below.

\begin{definition}
\label{def -cl -sheaves -ftcs} 
For a subsheaf ${\ucalC \subset \ucalC^{\infty}_{M}}$ define its (differential) closure as
\[ \overline{\ucalC}\defeq \Fun(\Vect(\ucalC)).\]
We say that ${\ucalC}$ is \textbf{closed} if ${\overline{\ucalC}= \ucalC}$. 
\end{definition}
\noindent
In the spirit of the previous proposition, we have:

\begin{proposition}
\label{prop -closures -C} 
For any subsheaf ${\ucalC \subset \ucalC^{\infty}_{M}}$ , the following are equivalent:
\begin{myenumerate}
\item ${\ucalC}$ is closed.
\item ${\ucalC= \overline{\ucalC}}$,
\item ${\ucalC= \Fun(\ucalV)}$ for some subsheaf ${\ucalV\subset \gerX_M}$.
\end{myenumerate}
In general, ${\ucalC}$ is a subsheaf of ${\overline{\ucalC}}$ and ${\overline{\ucalC}}$ is always closed. 
\end{proposition}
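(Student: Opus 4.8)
The plan is to mirror the argument for Proposition \ref{prop -closures -V} (the two statements are literally dual under interchanging $\Fun$ and $\Vect$), so the key is to identify the general formal structure. The pair of operators $\Fun:\{\text{sheaves of vf}\}\to\{\text{sheaves of fns}\}$ and $\Vect:\{\text{sheaves of fns}\}\to\{\text{sheaves of vf}\}$ forms a (contravariant) Galois connection: both are inclusion-reversing, and for any $\ucalV$ and $\ucalC$ one has $\ucalV\subset\Vect(\ucalC)\iff\ucalC\subset\Fun(\ucalV)$, since each side just says ``$\gerL_X(f_{|W})=0$ germ-wise for every local pair $(X,f)$ from $\ucalV\times\ucalC$.'' Once this adjunction-type biconditional is in hand, everything else is the standard closure-operator yoga.

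First I would prove the biconditional above directly from Definitions \ref{def -Fun} and \ref{def -Vect}: unwinding both, $\ucalV\subset\Vect(\ucalC)$ means that for every open $U$, every $X\in\Gamma(U,\ucalV)$, every $U'\subset U$ and every $f\in\Gamma(U',\ucalC)$ we have $\gerL_{X_{|U'}}(f)=0$; and $\ucalC\subset\Fun(\ucalV)$ unwinds to the same quantified statement with the roles of the two sheaves cosmetically swapped, using that $\ucalV$ is a subsheaf so germs of its sections can be represented locally. From this I get the two elementary consequences: $\ucalV\subset\Vect(\Fun(\ucalV))=\overline{\phantom{x}}$-analogue, i.e.\ $\ucalC\subset\Fun(\Vect(\ucalC))=\overline{\ucalC}$ (take $\ucalV=\Vect(\ucalC)$ in the biconditional, which holds by reflexivity since $\Vect(\ucalC)\subset\Vect(\ucalC)$), and the order-reversing property of each operator, hence order-preservation of the composite $\ucalC\mapsto\overline{\ucalC}$. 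Combined with idempotency (see next), $\overline{(\cdot)}$ is a closure operator, so $\ucalC$ is a subsheaf of $\overline{\ucalC}$ always, giving the last sentence.

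Next, idempotency $\overline{\overline{\ucalC}}=\overline{\ucalC}$: applying $\ucalC\subset\overline{\ucalC}$ and order-preservation gives $\overline{\ucalC}\subset\overline{\overline{\ucalC}}$; for the reverse, apply the dual inequality $\ucalW\subset\Fun(\Vect(\ucalW))$ is not what I want — instead use that $\Vect$ reverses inclusions and the dual unit $\ucalV\subset\Fun(\Vect(\ucalV))$... more cleanly: from $\ucalC\subset\overline{\ucalC}$ apply $\Vect$ (order-reversing) to get $\Vect(\overline{\ucalC})\subset\Vect(\ucalC)$; apply $\Vect$ to $\Vect(\ucalC)\subset\Vect(\ucalC)$ trivially, then use the ``triangle identity'' $\Vect(\Fun(\Vect(\ucalC)))=\Vect(\ucalC)$, which itself follows by sandwiching: $\Vect(\ucalC)\subset\Vect(\Fun(\Vect(\ucalC)))$ by the dual unit, and $\Vect(\Fun(\Vect(\ucalC)))\subset\Vect(\ucalC)$ by applying order-reversing $\Vect$ to $\ucalC\subset\Fun(\Vect(\ucalC))$. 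Then $\overline{\overline{\ucalC}}=\Fun(\Vect(\Fun(\Vect(\ucalC))))=\Fun(\Vect(\ucalC))=\overline{\ucalC}$ using the analogous triangle identity on the $\Fun$ side. Finally, the equivalence (1)$\iff$(2)$\iff$(3): (1)$\iff$(2) is the definition of ``closed''; (2)$\Rightarrow$(3) takes $\ucalV=\Vect(\ucalC)$; (3)$\Rightarrow$(2): if $\ucalC=\Fun(\ucalV)$ then $\overline{\ucalC}=\Fun(\Vect(\Fun(\ucalV)))=\Fun(\ucalV)=\ucalC$ by the triangle identity.

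The only genuinely non-formal point — and the main obstacle — is the subtle presheaf/germ bookkeeping already flagged in the text before Definitions \ref{def -Fun} and \ref{def -Vect}: because $\Fun$ and $\Vect$ are defined with a germ-wise (all-$U'$) quantifier rather than the naive global one, one must be careful that the biconditional $\ucalV\subset\Vect(\ucalC)\iff\ucalC\subset\Fun(\ucalV)$ really is symmetric. This works precisely because $\ucalV$ is a \emph{sheaf} of vector fields (so a germ of a section of $\ucalV$ at $x$ extends to an actual section on some neighborhood), so the condition ``$\gerL_X(f_{|W})=0$ for all local sections'' is equivalent to its germ-wise version on the $\ucalV$-side; on the $\ucalC$-side the germ-wise quantifier is built into the definition of $\Fun$. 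So the argument is purely formal Galois-connection manipulation, provided this one symmetry lemma is established carefully at the outset.
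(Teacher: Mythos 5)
Your proof is correct and follows essentially the same route as the paper: the paper also abstracts the pair $(\Fun,\Vect)$ to a contravariant Galois connection between ordered sets and derives all three propositions (together with Corollary \ref{one-to-one -corr -sheaves}) from the standard closure-operator yoga, including the identity $\Fun = \Fun\circ\Vect\circ\Fun$ that you prove by sandwiching. The one place you are more explicit than the paper is in unwinding Definitions \ref{def -Fun} and \ref{def -Vect} to verify the adjunction biconditional $\ucalV\subset\Vect(\ucalC)\iff\ucalC\subset\Fun(\ucalV)$ (the paper asserts the setup without checking it for these particular operators), and your closing remark about why the germ-wise quantifiers make this symmetric is exactly the right thing to worry about.
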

\noindent

Restricting to strongly involutive/closed sheaves, we obtain: 

\begin{corollary}\label{one-to-one -corr -sheaves} The constructions 
\[ \ucalV\mapsto \Fun(\ucalC), \quad  \ucalC\mapsto \Vect(\ucalV)\]
define one-to-one correspondences (and are inverse to each other) between:
\begin{myenumerate}
\item strongly involutive subsheaves of ${\gerX_{M}}$.
\item closed subsheaves of ${\ucalC^{\infty}_{M}}$.
\end{myenumerate}
\end{corollary}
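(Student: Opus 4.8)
The plan is to deduce the corollary from the two preceding propositions (Proposition \ref{prop -closures -V} and Proposition \ref{prop -closures -C}), which are stated as proved (or to be proved) together with it. The corollary asserts that the two operations
\[
    \ucalV \longmapsto \Fun(\ucalV),
    \qquad
    \ucalC \longmapsto \Vect(\ucalC)
\]
restrict to mutually inverse bijections between strongly involutive subsheaves of $\gerX_M$ and closed subsheaves of $\ucalC^\infty_M$. So there are really only two things to check: (i) the operations land in the claimed target classes, and (ii) the two composites are the respective identities on those classes.

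For step (i): if $\ucalV$ is strongly involutive, then by Proposition \ref{prop -closures -C} the sheaf $\Fun(\ucalV)$ is of the form $\Fun(\text{something})$, hence closed — that is literally condition 3 $\Rightarrow$ 1 of that proposition. Symmetrically, if $\ucalC$ is closed, then $\Vect(\ucalC)$ is of the form $\Vect(\text{something})$, hence strongly involutive by condition 3 $\Rightarrow$ 1 of Proposition \ref{prop -closures -V} (this is also the content of the earlier lemma stating that $\Vect(\ucalC)$ is always strongly involutive). For step (ii): starting from a strongly involutive $\ucalV$, the composite $\Vect(\Fun(\ucalV))$ is by definition $\mathrm{Inv}(\ucalV)$, and Proposition \ref{prop -closures -V} (1 $\Leftrightarrow$ 2) gives $\mathrm{Inv}(\ucalV) = \ucalV$ precisely because $\ucalV$ is strongly involutive. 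Starting from a closed $\ucalC$, the composite $\Fun(\Vect(\ucalC))$ is by definition $\overline{\ucalC}$, and Proposition \ref{prop -closures -C} (1 $\Leftrightarrow$ 2) gives $\overline{\ucalC} = \ucalC$ because $\ucalC$ is closed. Thus each composite is the identity on the relevant class, and since the two maps are inverse to each other they are in particular bijections.

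There is essentially no obstacle at the level of the corollary itself — it is a formal consequence of the two propositions, of the same flavor as the classical statement that a Galois connection restricts to a bijection between the closed elements on each side. The only point requiring a word of care is making sure the maps are well-defined as maps \emph{between the two specified classes} (not merely between all subsheaves), which is exactly step (i) above; but this is handled verbatim by the "3 $\Rightarrow$ 1" implications. The genuine work is hidden upstream, in the proofs of Propositions \ref{prop -closures -V} and \ref{prop -closures -C} — in particular the implication that strong involutiveness of $\ucalV$ forces $\ucalV = \Vect(\Fun(\ucalV))$, i.e.\ that the first-order differential operators $\gerL_X$ for $X \in \Fun(\ucalV)$ do not cut out a strictly larger sheaf than $\ucalV$. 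That recovery statement is the substantive ingredient, and it is where the local description of strong involutiveness via collections of closed $1$-forms $\Theta$ (Definition \ref{def -closed -sheaf}) must be used, together with the Poincaré-lemma-type fact that a closed $1$-form is locally $df$ for a function $f$ that is automatically a section of $\Fun(\ucalV)$. Granting the two propositions, the proof of the corollary is the short two-paragraph bookkeeping above.
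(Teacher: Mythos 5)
Your proof is correct and takes essentially the same approach as the paper: the corollary is a formal consequence of the two propositions via the standard Galois-connection bookkeeping, with the substantive ingredient — the recovery $\ucalV = \Vect(\Fun(\ucalV))$ for strongly involutive $\ucalV$, proved using the local exactness of the closed $1$-forms of Definition \ref{def -closed -sheaf} — correctly located upstream in the propositions' proof (which the paper in fact proves together with the corollary, taking ``Inv-closed'' as the working definition of strong involutivity and only at the end matching it against the concrete definition). One small notational slip: ``$\gerL_X$ for $X\in\Fun(\ucalV)$'' should read ``the conditions $\gerL_X(f)=0$ for $f\in\Fun(\ucalV)$'', since $\Fun(\ucalV)$ is a sheaf of functions, not vector fields.
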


\begin{proof}  
({\it of Propositions \ref{prop -closures -V}, 
\ref{prop -closures -C} and the Corollary})
For the moment, let us take the equivalence between 1.\  and 2.\   of Proposition \ref{prop -closures -V} as definition of strong involutivity. 

We are then in a general setup that is very common and is probably well known in category theory. Since we will be using it later on, let us describe it here using the previous notations, but in greater generality. 
First of all, for an ordered set ${F}$, by a closure operator we mean an increasing function 
\[ \textrm{Cl}: F\to F\]
satisfying ${f\leq \textrm{Cl}(f)}$ for all ${f\in F}$ and which is a projection, i.e.\  \[{\textrm{Cl}\circ \textrm{Cl}(f)= \textrm{Cl}(f)}\] for all ${f}$. Any such operator gives rise to the notion of ${f\in F}$ being ${\textrm{Cl}}$-closed by the condition ${f= \textrm{Cl}(f)}$. With this we immediately see that 
${\textrm{Cl}(f)}$ appears as the smallest closed element that is larger than ${f}$.

Our closure operators arise from the following setting. We deal with two ordered sets ${F}$ and ${V}$ (in our case the ones of sheaves of functions, and of vector fields, respectively) together with two functions
\[ \Vect: F \rightarrow V, \quad \Fun: J\rightarrow I,\]
both decreasing, satisfying the adjunction condition: for ${v\in V}$ and ${f\in F}$,
\[ v\leq \Vect(f) \Longleftrightarrow f\leq \Fun(v).\]
The first claim is that 
\[ \textrm{Cl}\defeq \Fun\circ \Vect: F \to F, \quad \textrm{Inv}\defeq \Vect\circ \Fun: V\to V\]
are closure operators. For instance, for ${\textrm{Cl}}$, the condition ${f\leq \textrm{Cl}(f)}$ follows from the adjunction condition applied to ${v= \Vect(f)}$. To see that ${\textrm{Cl}}$ is a projection, we start from the basic inequalities:
\[ f\leq \Fun\circ \Vect(f), \quad v\leq \Vect\circ \Fun(v) .\]
In the first one we replace ${f}$ by ${\Vect(v)}$ and we apply ${\Vect}$ to the second one, to conclude that
\[ \Fun= \Fun\circ \Vect\circ \Fun .\]
This clearly implies that ${\textrm{Cl}}$ is a projection. Moreover, we also find that each element of type ${\Vect(v)}$ is ${\textrm{Cl}}$-closed, while any closed element is clearly of this type. The same apply to the other closure operator ${\textrm{Inv}}$ and we also see that ${\Vect}$ and ${\Fun}$ induces bijections, inverse to each other, between ${\textrm{Cl}= \overline{( -)}}$-closed and ${\textrm{Inv}}$-closed elements. These imply all the statements from the propositions and the corollary.

We are left with proving that, for a subsheaf ${\ucalV\subset \gerX_M}$, it is ${\textrm{Inv}}$-closed if and only if it is strongly involutive in the sense of Definition \ref{def -closed -sheaf}. For the direct implication, if we write ${\ucalV= \ucalC}$ for some sheaf of functions ${\ucalC}$, we see that ${\ucalV}$ is locally defined 
as the common zero set of the 1-forms which are the differentials of the functions in ${\ucalC}$. Hence ${\ucalV}$ is involutive. For the converse, we consider ${\ucalC\defeq \Fun(\ucalV)}$ and it suffices to show that ${\ucalV= \Vect(\ucalC)}$. By the general nonsense, `${\subset}$' is immediate. For the converse, let us assume that 
${X}$ is a section over ${U}$ of ${\Vect(\ucalC)}$ and we show that any ${x\in U}$ admits a neighborhood ${U_x}$ such that ${X|_{U_x}\in \Gamma(U_x, \ucalV)}$. Fix ${x}$, and consider a family of closed 1-forms ${\Theta}$ as in Definition \ref{def -closed -sheaf}, defined over some open neighborhood ${U'\subset U}$ of ${x}$. Choosing ${U'}$ contractible, the 1-forms will be exact hence we find a family ${\Psi}$ of functions (defined over ${U'}$) such that ${\calV_y}$ is the common zero-set of all ${\textrm{germ}_y(df)}$, for all ${y'\in U'}$. On one hand, this implies that ${\Psi\subset \Gamma(U', \ucalC)}$. Since ${X\in \Gamma(U, \Vect(\ucalC))}$, the restriction of ${X}$ to ${U'}$ will be killed (via Lie derivative) by all sections over ${U'}$ of ${\Gamma(U', \ucalC)}$, hence also by ${\Psi}$, This implies that for each ${y\in U'}$, ${\textrm{germ}_y(X)}$ is killed by ${\Theta_y}$, hence ${X|_{U'}\in \Gamma(U', \ucalV)}$ as desired.
\end{proof}
\noindent

\subsection{Conclusion} 
    To model singular foliations from using functions, we see that: 
\begin{myenumerate}

\item 
We should use subsheaves ${\ucalC}$ of ${\ucalC^{\infty}_{M}}$ and not just subspaces of ${\calC^{\infty}(M)}$. 

\item 
It is desirable that the sheaves ${\ucalC}$ are sheaves of algebras. 

\item 
Even more, it is desirable that they are closed. 

\item 
When we encounter subsheaves ${\ucalC}$ which do not satisfy the previous two properties, one should pass to their closures ${\overline{\ucalC}}$ (which do satisfy them). 

\item 
We should keep in mind the relationship with the point of view of vector fields (the constructions ${\ucalV\mapsto \Fun(\ucalV)}$ and ${\ucalC\mapsto \Vect(\ucalC)}$), which, at the level of strongly involutive subsheaves of vector fields and closed subsheaves of functions, becomes an equivalence. 
\end{myenumerate}
As at the end of the previous section, and inspired by the case of regular foliations, one can still talk about the `leaves' associated to a sheaf of functions ${\ucalC}$: these are the classes of the equivalence relation on ${M}$ given by: ${x\cong y}$ iff there exists a continuous path ${\gamma}$ from ${x}$ to ${y}$ with the property that for any local section ${f}$ of ${\ucalC}$, ${f\circ \gamma}$ is locally constant (on its maximal domain). However, the relationship between the leaves of ${\ucalC}$ and the ones of ${\Vect(\ucalC)}$ is not clear to us, even when ${\ucalC}$ is closed. 

Finally, one should also have in mind that closed sheaves of functions (and strongly involutive sheaves of vector fields) model `foliations that are induced by smooth maps' (and these are precisely the ones that arise from integrable systems). E.g., for a precise statement: for a strongly involutive sheaf of vector fields ${\ucalV= \Vect(\ucalC)}$, if ${\ucalC}$ is functionally finitely generated (cf. Definition \ref{functionally-fg}) then, locally, ${\ucalV}$ is given by kernels of smooth maps, i.e.\  any point in ${M}$ admits an open neighborhood ${U}$ and a smooth function ${\mu:U \rightarrow \mathbb{R}^k}$ with the property that ${\Gamma(U, \calV)}$ consists of the vector fields on ${U}$ that are killed by ${(d\mu)}$.  

\begin{remark}[Sheaves of functions are not always appropriate] 
\label{rk-str-inv-counter} One should also be aware that, although the approach via functions (and actually the entire discussion on the strong involutiveness) is appropriate when looking at foliations arising from integrable systems, it may be not so appropriate for other natural examples of singular foliations. For instance, for the `foliation' of ${\mathbb{R}}$ with leaves the origin and the intervals ${( -\infty, 0)}$ and ${(0, \infty)}$, there is an obvious choice of a space/sheaf ${\ucalV_0}$ of vector fields: the one consisting of vector fields that vanish at ${0}$. The induced singular distribution is trivial at the origin and it is the entire tangent space elsewhere, so the induced leaves are precisely the ones we wanted. On the other hand ${\ucalV_0}$ is not closed, and its closure is the entire ${\gerX(\mathbb{R})}$ (in particular there exists no ${\ucalC}$ such that ${\Fun(\ucalV)= \ucalV_0}$). It is hard to imagine a sheaf of functions ${\ucalC}$ that models this foliation. 
\end{remark}

\noindent 
\begin{remark}[Sheaves of functions are sometimes better] 
\label{rk -functions -better}
Finally, let us point out two advantages of the point of view of sheaves of functions. The first one is that there is a natural pullback operation along smooth maps ${\mu: M\rightarrow N}$: for a subsheaf ${\ucalC\subset \ucalC^{\infty}_{N}}$, there is a natural 
\textbf{pullback subsheaf} 
\[ \mu^*\ucalC \subset \ucalC^{\infty}_{M}.\] 
Its sections over an open subset ${U\subset M}$ are those functions ${f: U\rightarrow \mathbb{R}}$ with the property that 
for all ${a\in U}$ there are open neighborhoods ${U_a}$ of ${a}$ in ${M}$ and ${V_a}$ of ${\mu(a)}$ in ${N}$, and a smooth function
${h: V_a\rightarrow \mathbb{R}}$ that is a section of ${\ucalC}$, 
such that
\[ \mu(U_a)\subset V_a, \quad f|_{U_a}= h\circ \mu|_{U_a} .\]
One should be aware that this is not the usual pullback of abstract sheaves, denoted ${\mu^{ -1}\ucalC}$. There is a canonical map 
${\mu^*\ucalC \rightarrow \mu^{ -1}\ucalC}$, but it may fail to be injective. Alternatively, ${\mu^{\star}\ucalC}$ is the image sheaf of the canonical map ${\mu^{ -1}\ucalC \rightarrow \calC^{\infty}_{M}}$. On the other hand, the operation ${\ucalC\mapsto \mu^*\ucalC}$ has the usual functorial properties. 

A second advantage of using functions is that it applies to more general `smooth spaces' aside from smooth manifolds. 
In the literature there are many notions of `smooth spaces' (e.g.\  the differential spaces of \cite{Joao} or the smooth schemes of \cite{Joyce2}, see \cite{Joao-thesis} for a nice overview). Here is a simple example that is relevant when looking at integrable systems: any subspace ${A\subset \mathbb{R}^n}$ comes equipped with a `sheaf of smooth functions on ${A}$', denoted ${\ucalC_{A}^{\infty}}$- which can be thought of as the pullback, in the previous sense, of the sheaf of smooth functions on ${\mathbb{R}^n}$. Explicitly, a function ${f:A\rightarrow \mathbb{R}}$ (and similarly on opens inside ${A}$) will be called smooth if any ${a\in A}$ admits an open neighborhood ${V_a\subset \mathbb{R}^n}$ and a smooth function ${f_a: V_a\rightarrow \mathbb{R}^n}$ such that 
\[ f|_{A\cap V_a}= f_{a}|_{A\cap V_a}.\]
Of course, ${\ucalC_{A}^{\infty}}$ is a sheaf of ${C^{\infty}}$-rings. One can talk about subsheaves ${\ucalC\subset \ucalC^{\infty}_{A}}$ and any subsheaf of ${\ucalC^{\infty}_{\mathbb{R}^n}}$ restricts to a subsheaf of ${\ucalC^{\infty}_{A}}$. 
\end{remark}

\newpage
\section{Lagrangian foliations} 
\etocsettocstyle{\subsubsection*{Local Contents}}{}
\etocsettocdepth{2}
\localtableofcontents

\noindent
{\color{white}.}
\newline
Now we introduce the symplectic structure into our discussion. Our main purpose is to understand the slogan: integrable systems give rise to Lagrangian foliations. Hence, first of all, one has to make sense of `Lagrangian foliations' on a symplectic manifold ${(M, \omega)}$. As before, we start with regular foliations. Then the meaning is clear:

\subsection{The regular case}
 
\begin{definition} 
A regular foliation ${\calF}$ on a symplectic manifold ${(M, \omega)}$ is called \textbf{Lagrangian} if all the leaves of ${\calF}$ are Lagrangian submanifolds of ${(M, \omega)}$. The notions of isotropic and coisotropic foliation is defined similarly. 
\end{definition}
\noindent
Note that this notion is directly related to that of integrable systems:

\begin{proposition}\label{reg -fol -locally -is}
 A regular foliation ${\calF}$ on a symplectic manifold ${(M, \omega)}$ is Lagrangian if and only if is is locally induced by an integrable system without singularities, i.e.\  the trivializing submersions ${\pi: U\rightarrow \mathbb{R}^q}$ (cf. Definition \ref{def -reg -foliations}), endowed with ${\omega|_{U}}$, are integrable systems. 
\end{proposition}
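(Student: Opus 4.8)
The plan is to prove both directions by working locally with a trivializing submersion and translating "leaves are Lagrangian" into the involution condition for the associated submersion.

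First I would unwind the definitions. Let $\pi : U \to \mathbb{R}^q$ be a trivializing submersion for $\calF$ over some open $U$, with connected fibers, and write $\pi = (\pi_1,\ldots,\pi_q)$. Since $\calF$ is regular, $T\calF|_U = \ker(d\pi)$, which over each point is a subspace of dimension $\dim M - q$. I would first record the elementary linear-algebra fact: for a linear subspace $W \subset T_xM$ in a symplectic vector space, $W$ is Lagrangian iff $W = W^{\perp_\omega}$, and more generally $W^{\perp_\omega} = \omega_x^\sharp{}^{-1}(\mathrm{Ann}(W))$, where $\mathrm{Ann}(W) \subset T_x^*M$ is the annihilator. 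Applying this with $W = \ker(d_x\pi)$, whose annihilator is exactly $\lspan\{(d_x\pi_1),\ldots,(d_x\pi_q)\}$, gives $(\ker d_x\pi)^{\perp_\omega} = \lspan\{X_{\pi_1}(x),\ldots,X_{\pi_q}(x)\}$, using the definition $X_f^\omega = \pi_\omega^\sharp(df)$ of the Hamiltonian vector field. Hence each leaf of $\calF|_U$ is isotropic iff $X_{\pi_i}(x) \in \ker(d_x\pi)$ for all $i$ and all $x \in U$, i.e.\ iff $(d\pi_j)(X_{\pi_i}) = 0$ for all $i,j$, which is precisely $\{\pi_i,\pi_j\}_\omega = 0$: the involution condition. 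A leaf is Lagrangian iff it is isotropic and of half dimension; here $\dim M - q = \tfrac12 \dim M$, i.e.\ $q = n$ when $\dim M = 2n$, which matches the target space $\mathbb{R}^q = \mathbb{R}^n$ of an integrable system.

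From this the two implications are almost immediate. If $\calF$ is Lagrangian, then on each trivializing chart $U$ the above shows $\{\pi_i,\pi_j\}_{\omega|_U} = 0$, so $(\omega|_U,\pi)$ is an integrable system — independence is automatic (indeed generic) because $\pi$ is a submersion, so $d\pi_1\wedge\cdots\wedge d\pi_n$ is everywhere nonvanishing. Conversely, if every trivializing submersion is an integrable system, then on each $U$ the involution condition holds, hence (running the equivalence backwards) $\ker(d_x\pi)$ is isotropic for every $x$; since it also has dimension $n = \tfrac12\dim M$ it is Lagrangian, and the leaves of $\calF$, being locally the fibers of such $\pi$, have Lagrangian tangent spaces everywhere — so $\calF$ is Lagrangian. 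One should note that these properties are local and independent of the choice of trivializing submersion (any two are related by a local diffeomorphism of $\mathbb{R}^n$, which preserves both the submersion property and, after composing, the involution condition up to the chain rule identity $\gerL_X(F\circ(f_1,\ldots,f_n)) = \sum_k (\partial_k F)\,\gerL_X(f_k)$ already used in the excerpt), so covering $M$ by such charts suffices.

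The only genuinely delicate point — and the place I would be most careful — is bookkeeping the dimension count and the annihilator identification: making sure that "$q$" in Definition \ref{def -reg -foliations} equals $n$ for a Lagrangian foliation on $M^{2n}$, and that $(\ker d_x\pi)^{\perp_\omega}$ is computed correctly via $\omega^\sharp$ rather than with a stray sign or transpose. Everything else is a routine translation between the symplectic-orthogonal formulation of "Lagrangian" and the Poisson-bracket formulation of "involutive", which the excerpt has already set up. I do not expect any analytic or global obstruction: regularity of $\calF$ gives the constant-rank trivializing submersions for free, and the rest is pointwise linear symplectic algebra.
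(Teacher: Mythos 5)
Your proposal is correct in substance but takes a different route from the paper. The paper postpones this proof until after it has developed the sheaf-theoretic machinery of the chapter, and then argues via that machinery: it invokes the fact that a regular foliation is Lagrangian iff the sheaf $\ucalC_\calF^\infty$ of basic functions is self-centralizing (Theorem \ref{thm-the-tricky-one0}), observes that on a trivializing chart the basic functions are precisely the pullbacks of functions on $\R^k$, so the $\pi_i$ functionally generate them, and concludes that self-centralizing forces the $\pi_i$ to Poisson-commute; the dimension count $n - k = n/2$ then comes from the linear-algebra fact about Lagrangian subspaces. Your argument bypasses all of this with pointwise symplectic linear algebra: identify $(\ker d_x\pi)^{\perp_\omega}$ with $\lspan\{X_{\pi_i}(x)\}$ via the musical isomorphism and read the involution condition off directly. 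Your route is more elementary and entirely self-contained; the paper's is less direct but deliberately so, since this proposition is being used to illustrate how the abstract sheaf framework specializes to the regular case.

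One small but real slip you should fix: the condition $X_{\pi_i}(x)\in\ker(d_x\pi)$ for all $i$, i.e.\ $\lspan\{X_{\pi_i}\}\subset\ker d\pi$, is $(\ker d\pi)^{\perp_\omega}\subset\ker d\pi$, which is the \emph{coisotropic} inclusion, not the isotropic one. You wrote that the leaf is isotropic iff this holds, which is backwards. In your setup isotropic would read $\ker d\pi\subset\lspan\{X_{\pi_i}\}$. The overall biconditional you want survives anyway, because once you impose $q=n$ both subspaces have dimension $n$ and either inclusion is equivalent to equality, so coisotropic $+$ half-dimension is just as good as isotropic $+$ half-dimension. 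But as written the intermediate equivalence is stated incorrectly, and in general (without the dimension constraint) it would give the wrong conclusion, so it deserves the one-word correction. The paper itself, for what it is worth, phrases the converse via coisotropy.
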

\noindent  

The proof is postponed for the end of this section. Passing to the formulation in terms of tangent bundles (via the Frobenius theorem), we immediately find:

\begin{lemma} 
A regular foliation ${\calF}$ on a symplectic manifold ${(M, \omega)}$ is Lagrangian if and only if each tangent space ${T_x\calF}$ is a Lagrangian subspace of the symplectic vector space ${(T_xM, \omega_x)}$. 
\end{lemma}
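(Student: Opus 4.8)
The statement to prove is the pointwise characterization: a regular foliation $\calF$ on a symplectic manifold $(M,\omega)$ is Lagrangian if and only if $T_x\calF$ is a Lagrangian subspace of $(T_xM,\omega_x)$ for every $x\in M$. The plan is to observe that this is essentially a tautology once we recall what ``Lagrangian submanifold'' means and unwind the definitions, so the only real content is bookkeeping about tangent spaces. First I would recall that a submanifold $L\subset (M,\omega)$ is Lagrangian precisely when, at every point $x\in L$, the subspace $T_xL\subset T_xM$ is Lagrangian, i.e.\ $T_xL = (T_xL)^{\perp_{\omega_x}}$ (equivalently $\omega_x|_{T_xL}\equiv 0$ together with $\dim T_xL = \tfrac12\dim M$). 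This is just the definition of a Lagrangian submanifold read pointwise, so the equivalence ``$L$ Lagrangian $\iff$ $T_xL$ Lagrangian for all $x\in L$'' is immediate.

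Next I would use the Frobenius identification recalled earlier in the excerpt: the foliation $\calF$ is the same data as its tangent bundle $T\calF\subset TM$, and for a point $x$ lying on a leaf $L\in\calP$ we have $T_x\calF = T_xL$ (the fibre of the subbundle at $x$ is exactly the tangent space to the leaf through $x$). Combining the two observations: $\calF$ is Lagrangian, meaning every leaf $L$ is a Lagrangian submanifold, which by the previous paragraph means $T_xL$ is Lagrangian in $(T_xM,\omega_x)$ for every $x\in L$ and every leaf $L$; since the leaves partition $M$, this is the same as saying $T_x\calF$ is Lagrangian in $(T_xM,\omega_x)$ for every $x\in M$. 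For the converse, if $T_x\calF$ is Lagrangian at every point, then for a fixed leaf $L$ we get $T_xL = T_x\calF$ Lagrangian at every $x\in L$, hence $L$ is a Lagrangian submanifold; as this holds for all leaves, $\calF$ is Lagrangian.

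There is no genuine obstacle here — the lemma is a direct translation between the ``partition by leaves'' picture and the ``tangent subbundle'' picture afforded by the Frobenius theorem, and between the global notion of a Lagrangian submanifold and its pointwise linear-algebra avatar. The only point deserving a word of care is that the equivalence uses that $T\calF$ is a genuine (constant-rank smooth) subbundle, which is guaranteed by regularity of $\calF$ and was already emphasised in the text; this is what makes ``$T_x\calF$ Lagrangian for all $x$'' a meaningful and checkable condition. I would therefore present the proof as a two-line unwinding of definitions rather than anything requiring computation.
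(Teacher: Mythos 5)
Your proposal is correct and matches the paper's approach exactly: the paper states the lemma without proof, introducing it with ``Passing to the formulation in terms of tangent bundles (via the Frobenius theorem), we immediately find,'' which is precisely the two-step unwinding (Lagrangian submanifold $\iff$ pointwise Lagrangian tangent spaces, then $T_xL = T_x\calF$) that you spell out.
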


More generally, one can talk about isotropic or coisotropic foliations on a symplectic manifold, and they are relevant when studying more general moment maps. In general, for a foliation ${\calF}$ (understood as a sub bundle of ${TM}$) on a symplectic manifold ${(M, \omega)}$, its symplectic orthogonal is rarely a foliation. That is one more reason to consider sheaves of vector fields that are not necessarily involutive. The `good case scenario' is described by the following result which is well-known:

\begin{local_theorem}
\label{thm -orthog -fol} 
For a foliation ${\calF}$ on a symplectic manifold ${(M,\omega)}$, ${\calF^{\perp_{\omega}}}$ is a foliation if and only if the sheaf ${\ucalC_{\calF}^{\infty}}$ of ${\calF}$-basic functions on ${M}$ is involutive, i.e.\  it is closed under the Poisson bracket ${\{\cdot, \cdot\}_{\omega}}$ induced by ${\omega}$ on ${\ucalC^{\infty}_{M}}$. 
\end{local_theorem}

It will be particularly interesting to try to understand how this theorem can survive in more singular cases.

Next, we pass to interpretations in terms of the corresponding spaces of vector fields (and later using smooth functions), interpretations that can be used to handle also singular foliations. Assuming now that the symplectic manifold ${(M, \omega)}$ is fixed, for a sub space ${\calV\subset \gerX(M)}$ we define its symplectic orthogonal
\[ \calV^{\perp_\omega}\defeq \{X\in \gerX(M): \omega(X, V)= 0\quad\forall\, V\in \calV\}.\]
We say that ${\calV}$ is Lagrangian (w.r.t.\  ${\omega}$) if ${\calV^{\perp_\omega}= \calV}$. Similarly we define the notion of isotropic and coisotropic. Note that ${\calV^{\perp_\omega}}$ is always a ${\calC^{\infty}(M)}$-submodule (and local). Hence for ${\calV}$ to be Lagrangian (even if ${\calV}$ is not), it first has to be a (local) submodule. On the other hand, ${\calV^{\perp_\omega}}$ is in general not a Lie sub module (even if ${\calV}$ is). 

\begin{proposition} 
A regular foliation ${\calF}$ on a symplectic manifold ${(M, \omega)}$ is Lagrangian if and only if the subspace ${\gerX_{\calF}(M)\subset \gerX(M)}$ is Lagrangian.

More generally, for any regular foliation ${\calF}$, one has ${\gerX_{\calF}(M)^{\perp_{\omega}}= \Gamma(M, \calF^{\perp_{\omega}})}$.
\end{proposition}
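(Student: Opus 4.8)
The plan is to reduce both claims to the pointwise statement already recorded in the lemma above, namely that a regular foliation $\calF$ on $(M,\omega)$ is Lagrangian iff each tangent space $T_x\calF$ is a Lagrangian subspace of $(T_xM,\omega_x)$, and to the fact that the construction $\calF\mapsto\gerX_\calF(M)$ reads off the sections of the subbundle $\calF\subset TM$. I would begin with the second (more general) assertion, since the first follows from it: if $\gerX_\calF(M)^{\perp_\omega}=\Gamma(M,\calF^{\perp_\omega})$ in general, then $\gerX_\calF(M)$ is Lagrangian (i.e.\ $\gerX_\calF(M)^{\perp_\omega}=\gerX_\calF(M)$) iff $\Gamma(M,\calF^{\perp_\omega})=\Gamma(M,\calF)$, which, since both sides are spaces of sections of subbundles of $TM$, happens iff $\calF^{\perp_\omega}=\calF$ as subbundles, i.e.\ iff $\calF$ is Lagrangian by the lemma (the analogous sub-lemma for isotropic/coisotropic takes care of those variants). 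Note that $\calF^{\perp_\omega}$ is automatically a smooth subbundle here because $\omega$ is nondegenerate, so $x\mapsto(T_x\calF)^{\perp_{\omega_x}}$ has constant rank $2n-\operatorname{rk}\calF$ and varies smoothly; hence $\Gamma(M,\calF^{\perp_\omega})$ makes sense and is itself of vector bundle type.

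For the identity $\gerX_\calF(M)^{\perp_\omega}=\Gamma(M,\calF^{\perp_\omega})$ I would argue by double inclusion, working fibrewise. For ``$\supseteq$'': if $X$ is a section of $\calF^{\perp_\omega}$, then for each $x$ and each $V\in\gerX_\calF(M)$ we have $V_x\in T_x\calF$ and $X_x\in(T_x\calF)^{\perp_{\omega_x}}$, so $\omega(X,V)(x)=\omega_x(X_x,V_x)=0$; as $x$ is arbitrary, $\omega(X,V)=0$, so $X\in\gerX_\calF(M)^{\perp_\omega}$. For ``$\subseteq$'': suppose $X\in\gerX(M)$ satisfies $\omega(X,V)=0$ for all $V\in\gerX_\calF(M)$; I must show $X_x\in(T_x\calF)^{\perp_{\omega_x}}$ for every $x$. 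Fix $x$ and an arbitrary $v\in T_x\calF$; by local triviality of $\calF$ (and since $\gerX_\calF$ is the sheaf of local sections of the subbundle $\calF$, with enough global sections — it is a $\calC^\infty(M)$-submodule, hence the sheaf is soft/fine as noted in the excerpt), there is a global $V\in\gerX_\calF(M)$ with $V_x=v$: extend $v$ to a local section of $\calF$ and multiply by a bump function. Then $0=\omega(X,V)(x)=\omega_x(X_x,v)$. Since $v\in T_x\calF$ was arbitrary, $X_x\in(T_x\calF)^{\perp_{\omega_x}}$, i.e.\ $X$ is a section of $\calF^{\perp_\omega}$, as wanted.

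The only genuine subtlety — the step I'd flag as the ``hard part'', though it is more a matter of care than of difficulty — is the extension claim in the second inclusion: one must produce, for a prescribed tangent vector $v\in T_x\calF$, an honest \emph{global} element of $\gerX_\calF(M)$ hitting $v$ at $x$, because the orthogonality hypothesis is phrased against all of $\gerX_\calF(M)$, not against local sections. This is exactly where the good properties of $\gerX_\calF$ recorded earlier enter: it is a locally free $\calC^\infty(M)$-module (vector-bundle type), so it is fine, hence has enough global sections, and a partition-of-unity/bump-function argument globalizes a local section of $\calF$ through $v$ without changing its value at $x$. Once this is in hand the proof is a routine fibrewise comparison, and the two displayed statements drop out as explained above.
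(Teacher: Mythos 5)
Your proposal is correct and takes essentially the same route as the paper: the paper reduces everything to the observation that every vector in a fibre of a vector bundle extends to a global smooth section, which is precisely the bump-function extension step you carry out in the ``$\subseteq$'' inclusion. You have simply spelled out what the paper leaves as a one-line remark.
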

\noindent
\begin{proof}
It suffices to prove the last equality. But that is immediate since every vector in a fiber of a vector bundle can be extended to a smooth section of that bundle.\end{proof}
\noindent
 \subsection{Via sheaves of vector fields \& symplectic orthogonals}
We now pass to sheaves. For a subsheaf ${\ucalV\subset \gerX_M}$, we would like to consider `the sheaf ${U\mapsto \Gamma(U, \ucalV)^{\perp_\omega}}$'. However, as with the definition of ${\Fun(\ucalV)}$, in full generality one has to be a bit careful.

\begin{definition}
For a subsheaf ${\ucalV\subset \gerX_M}$, the orthogonal of ${\ucalV}$ with respect to ${\omega}$ is the new sheaf of vector fields ${\ucalV^{\perp_{\omega}}}$ defined by:
\[ 
    \Gamma(U, \ucalV^{\perp_{\omega}})
    \defeq 
    \{X\in \gerX(U): \omega(X|_{V}, Y)= 0 \quad\forall\, V\subset U
    \,\,\text{open}, \quad\forall\, Y\in \Gamma(V, \ucalV) \}.\]
We say that ${\ucalV}$ is Lagrangian if ${\ucalV^{\perp_{\omega}}= \ucalV}$. 
\end{definition}
\noindent
Similar to Proposition  \ref{lemma-with-strange-condition2}, we have:

\begin{lemma} 
For any subsheaf ${\ucalV}$ of ${\gerX_M}$, ${\ucalV^{\perp_{\omega}}}$ is a sheaf of ${\ucalC^{\infty}_{M}}$-submodules. Moreover, under the following condition on the evaluation maps from global sections to tangent vectors:
\begin{equation}\label{C1}
(C1): \quad \textrm{ev}_x: \Gamma(M, \ucalV) \rightarrow \calF_x(\ucalV)\quad \textrm{is surjective for all}\quad x\in M 
\end{equation}
(see \ref{FxV}), one has  ${\ucalV^{\perp_{\omega}}(U)= \ucalV(U)^{\perp_{\omega}}}$  for all ${U}$. 
\end{lemma}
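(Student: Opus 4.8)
The claim splits into two parts. The first is the unconditional statement that $\ucalV^{\perp_\omega}$ is a sheaf of $\ucalC^\infty_M$-submodules; this is essentially formal and parallels the argument of Proposition~\ref{lemma-with-strange-condition2}. For the presheaf/sheaf issue, one has to check that if $X\in\gerX(U)$ satisfies the defining germ-wise orthogonality condition and $U'\subset U$ is open, then $X|_{U'}$ satisfies the same condition over $U'$; but the defining condition quantifies over \emph{all} open subsets of $U$, so it trivially restricts. The submodule property is immediate from $\omega(fX|_V,Y)=f\,\omega(X|_V,Y)$. The gluing axiom is automatic because we are dealing with subsheaves of $\gerX_M$ (two sections of $\gerX_M$ agreeing on an open cover agree), exactly as in the opening remark of the proof of Proposition~\ref{prop -basic -sheaves}.

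\textbf{The substantive part} is the equality $\ucalV^{\perp_\omega}(U)=\ucalV(U)^{\perp_\omega}$ under hypothesis $(C1)$. The inclusion $\ucalV^{\perp_\omega}(U)\subset\ucalV(U)^{\perp_\omega}$ is trivial: taking $V=U$ in the definition of $\ucalV^{\perp_\omega}(U)$ forces $\omega(X,Y)=0$ for all $Y\in\Gamma(U,\ucalV)=\ucalV(U)$. So the whole content is the reverse inclusion: given $X\in\gerX(U)$ with $\omega(X,Y)=0$ for every \emph{global-over-}$U$ section $Y\in\ucalV(U)$, we must show $\omega(X|_V,Z)=0$ for every open $V\subset U$ and every \emph{local} section $Z\in\Gamma(V,\ucalV)$. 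The plan is to argue pointwise: fix $x\in V$ and show $\omega_x(X_x,Z_x)=0$. Since $Z_x\in\ucalV_x$ and hence $\mathrm{ev}_x(Z_x)\in\calF_x(\ucalV)$, hypothesis $(C1)$ provides a \emph{global} section $\widetilde Z\in\Gamma(U,\ucalV)=\ucalV(U)$ with $\mathrm{ev}_x(\widetilde Z)=\mathrm{ev}_x(Z_x)$, i.e.\ $\widetilde Z_x=Z_x$ as tangent vectors at $x$. Then $\omega_x(X_x,Z_x)=\omega_x(X_x,\widetilde Z_x)=\omega(X,\widetilde Z)(x)=0$ by the assumption on $X$. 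As $x\in V$ and $Z$ were arbitrary, $X\in\ucalV^{\perp_\omega}(U)$.

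\textbf{The one subtlety to watch.} The hypothesis $(C1)$ as stated concerns surjectivity of $\mathrm{ev}_x\colon\Gamma(M,\ucalV)\to\calF_x(\ucalV)$ onto the whole distribution $\calF_x(\ucalV)=\mathrm{ev}_x(\ucalV_x)$, which is exactly the property we need: it lets us realize the \emph{value at $x$} of an arbitrary local section by the value at $x$ of a global one. Note we do \emph{not} need to match the section $Z$ near $x$, only its value $Z_x\in T_xM$, which is why $(C1)$ (a condition on evaluations into tangent vectors) suffices rather than the stronger $(C0)$-type condition on germs. One should also state the argument for an open $U$ in place of $M$: apply $(C1)$ with $M$ replaced by $U$, which is legitimate since the restriction of $\ucalV$ to $U$ is again a subsheaf of $\gerX_U$ and $(C1)$ is a local-to-point statement, so if it holds on $M$ it holds on every open $U$ — or simply take $(C1)$ as a hypothesis imposed over each $U$ under consideration. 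There is no real obstacle here; the only thing to get right is the bookkeeping of quantifiers (global-over-$U$ vs.\ local-over-$V$ vs.\ germ-at-$x$ vs.\ value-at-$x$), and the observation that $(C1)$ is precisely the bridge from ``value of a local section'' to ``value of a global section.''
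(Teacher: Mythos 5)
Your proof is correct. The paper itself gives no proof of this lemma — it only says it is ``similar to Proposition~\ref{lemma-with-strange-condition2}'' — and what you have written is precisely the natural filling-in of that gap, following the same template as the proof of Proposition~\ref{lemma-with-strange-condition2}: reduce to a pointwise statement at $x\in V$, and use the surjectivity hypothesis to trade the local section for a global one whose contribution at $x$ agrees. You have also put your finger on the one genuine subtlety, namely why the hypothesis is $(C1)$ (surjectivity of evaluation onto tangent \emph{vectors}) rather than the stronger $(C0)$ (onto \emph{germs}) used in Proposition~\ref{lemma-with-strange-condition2}: the expression $\omega(X,Y)(x)=\omega_x(X_x,Y_x)$ is tensorial, so only the values $X_x,Y_x$ at $x$ enter, whereas $\gerL_X(f)(x)$ depends on the germ of $X$ at $x$. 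Your remark that $(C1)$ over $M$ implies the analogous surjectivity of $\mathrm{ev}_x\colon\Gamma(U,\ucalV)\to\calF_x(\ucalV)$ for any open $U\ni x$ (because the evaluation factors through restriction) is also correct and worth keeping explicit. Two very minor quibbles on the first (unconditional) half: the ``gluing is automatic because we are subsheaves of $\gerX_M$'' phrasing is slightly circular as worded, since being a subsheaf is what you are trying to establish; what actually does the work is the observation that the defining orthogonality condition is local in $U$ (it holds on $U$ iff it holds on the members of any open cover of $U$), and then the fact that $\gerX_M$ is a sheaf lets you glue. That is clearly what you meant, and it is the content of the ``very general remark'' at the start of the proof of Proposition~\ref{prop -basic -sheaves}, so just a matter of phrasing.
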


\begin{remark} 
\label{rk -C1}
Obviously the condition (C1) is satisfied if ${\ucalV}$ has enough global sections in the sense of Remark \ref{lemma-with-strange-condition}. In that case we know that ${\ucalV}$ corresponds to a local subspace ${\calV\subset \gerX(M)}$ 
and we see that the orthogonal sheaf ${\ucalV^{\perp_{\omega}}}$ corresponds to the submodule ${\calV^{\perp}}$. 
\end{remark}
\noindent
From the previous lemma we immediately conclude: 

\begin{proposition} 
A regular foliation ${\calF}$ on a symplectic manifold ${(M, \omega)}$ is Lagrangian if and only if the sheaf ${\gerX_\calF}$ is Lagrangian. 
More generally, for any regular foliation ${\calF}$, ${\gerX_{\calF}^{\perp_{\omega}}}$ is the sheaf of sections of ${\calF^{\perp_{\omega}}}$.
\end{proposition}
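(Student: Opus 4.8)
The claim is that a regular foliation $\calF$ on $(M,\omega)$ is Lagrangian if and only if the sheaf $\gerX_\calF$ is Lagrangian, and more generally that $\gerX_\calF^{\perp_\omega}$ equals the sheaf of sections of the sub-bundle $\calF^{\perp_\omega}\sub TM$. The plan is to deduce everything from the previous lemma (the one establishing $\ucalV^{\perp_\omega}(U)=\ucalV(U)^{\perp_\omega}$ under condition $(C1)$) together with the earlier \emph{Proposition} comparing $\gerX_\calF(M)^{\perp_\omega}$ with $\Gamma(M,\calF^{\perp_\omega})$ at the level of global sections. So the first thing I would do is check that $\gerX_\calF$ satisfies $(C1)$: since $\calF$ is a genuine sub-bundle, every tangent vector $v\in\calF_x=\calF_x(\gerX_\calF)$ extends to a global smooth section of $\calF$, hence lies in the image of $\textrm{ev}_x:\Gamma(M,\gerX_\calF)\to\calF_x(\gerX_\calF)$. (In fact $\gerX_\calF$ is the sheaf associated to the local submodule $\gerX_\calF(M)$, so it even has enough global sections in the sense of Remark \ref{lemma-with-strange-condition}; either way $(C1)$ holds.)

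Next I would identify the orthogonal sheaf fiberwise. By the previous lemma, for each open $U$ we have $\gerX_\calF^{\perp_\omega}(U)=\gerX_{\calF|_U}(U)^{\perp_\omega}$, and this is where the earlier global-sections proposition applies, but now applied on $U$ (with $\calF|_U$ in place of $\calF$): $\gerX_{\calF|_U}(U)^{\perp_\omega}=\Gamma(U,(\calF|_U)^{\perp_\omega})=\Gamma(U,\calF^{\perp_\omega})$, using that restriction of a sub-bundle commutes with taking symplectic orthogonals pointwise. The only thing that really needs checking here is the purely linear-algebraic, pointwise statement that $(\calF^{\perp_\omega})_x=(\calF_x)^{\perp_{\omega_x}}$ inside $T_xM$, i.e.\ that forming the symplectic orthogonal of a smooth sub-bundle is again a smooth sub-bundle whose fibers are the pointwise orthogonals — this is standard because $\omega$ is nondegenerate, so $\calF^{\perp_\omega}=(\omega^\flat)^{-1}(\mathrm{Ann}(\calF))$ is smooth of constant rank $2n-\mathrm{rk}\,\calF$. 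This establishes the ``more generally'' part: $\gerX_\calF^{\perp_\omega}=\Gamma(-,\calF^{\perp_\omega})$ as sheaves.

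The ``if and only if'' for the Lagrangian property then follows formally: $\gerX_\calF$ is Lagrangian means $\gerX_\calF^{\perp_\omega}=\gerX_\calF$, i.e.\ $\Gamma(-,\calF^{\perp_\omega})=\Gamma(-,\calF)$ as subsheaves of $\gerX_M$; since both are sheaves of sections of sub-bundles, this holds iff $\calF^{\perp_\omega}=\calF$ as sub-bundles, which — using the fiberwise identification $(\calF^{\perp_\omega})_x=(\calF_x)^{\perp_{\omega_x}}$ — is exactly the condition that each $\calF_x$ be a Lagrangian subspace of $(T_xM,\omega_x)$, i.e.\ (by the earlier Lemma characterizing Lagrangian regular foliations) that $\calF$ be Lagrangian. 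I expect no serious obstacle here; the only minor care needed is bookkeeping the distinction between the naive orthogonal $\Gamma(U,\ucalV)^{\perp_\omega}$ and the sheafified $\ucalV^{\perp_\omega}(U)$, which is precisely what condition $(C1)$ in the cited lemma is designed to neutralize, and which I have arranged to hold at the very first step.
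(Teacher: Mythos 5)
Your proposal is correct and follows essentially the same route the paper intends when it says ``from the previous lemma we immediately conclude'': you verify condition $(C1)$ for $\gerX_\calF$ via extension of fiber vectors to sections, apply the lemma to replace the orthogonal sheaf by the naive orthogonal over each open $U$, and then invoke the earlier global-sections proposition on $U$ together with the pointwise linear algebra $(\calF^{\perp_\omega})_x=(\calF_x)^{\perp_{\omega_x}}$. You have merely made explicit the bookkeeping the paper leaves implicit.
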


\noindent
Symplectic orthogonals gives rise to an interesting operation on sheaves.  
\begin{definition}
\label{def -omega -closed} 
For a sub sheaf ${\ucalV\subset \gerX_M}$, we define the closure of ${\ucalV}$ with respect to ${\omega}$ as:
\[ \overline{\ucalV}^{\omega}\defeq  (\ucalV^{\perp_{\omega}})^{\perp_{\omega}}.\]
We say that ${\ucalV}$ is ${\omega}$-closed if it coincides with its closure with respect to ${\omega}$. 
\end{definition}

\noindent
We now summarize the main properties of this operation:

\begin{lemma}
For any subsheaf ${\ucalV\subset \gerX_M}$:

\begin{myenumerate}

\item 
${\ucalV \subset \overline{\ucalV}^{\omega}}$.

\item 
${\overline{\ucalV}^{\omega}}$ is closed. Actually, the ${\omega}$-orthogonal of any subsheaf is ${\omega}$-closed. 

\item 
When ${\ucalV}$ is induced by a local subspace ${\calV\subset \gerX(M)}$ (cf. Proposition \ref{prop -basic -sheaves} and the remark following it), ${\overline{\ucalV}^{\omega}}$ is induced by the subspace ${(\calV^{\perp_{\omega}})^{\perp_{\omega}}}$.

\end{myenumerate}
\end{lemma}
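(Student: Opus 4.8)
The plan is to prove the three claimed properties of the operation $\ucalV \mapsto \overline{\ucalV}^{\omega} = (\ucalV^{\perp_\omega})^{\perp_\omega}$ by exploiting the fact that $\perp_\omega$ is an order-reversing map on subsheaves which, like the $\Fun$/$\Vect$ adjunction discussed above, satisfies a Galois-connection-type property.

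First I would record the two elementary facts that drive everything: (i) $\perp_\omega$ is inclusion-reversing, i.e. $\ucalV \subset \ucalW$ implies $\ucalW^{\perp_\omega} \subset \ucalV^{\perp_\omega}$ (immediate from the definition, since a section killing everything in $\ucalW$ kills everything in the smaller $\ucalV$); and (ii) $\ucalV \subset (\ucalV^{\perp_\omega})^{\perp_\omega}$ for every subsheaf $\ucalV$ (also immediate: any local section $X$ of $\ucalV$ satisfies $\omega(X|_V, Y) = 0$ for all $Y \in \Gamma(V, \ucalV^{\perp_\omega})$, by the very definition of $\ucalV^{\perp_\omega}$, and this identity is to be checked germwise/locally, which matches the sheaf-theoretic definition of the orthogonal given above). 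Fact (ii) is precisely item 1. For item 2, apply (i) to the inclusion in (ii) with $\ucalV$ replaced by $\ucalV^{\perp_\omega}$: one gets $((\ucalV^{\perp_\omega})^{\perp_\omega})^{\perp_\omega} \subset \ucalV^{\perp_\omega}$; on the other hand, applying (ii) to the subsheaf $\ucalV^{\perp_\omega}$ directly gives the reverse inclusion $\ucalV^{\perp_\omega} \subset ((\ucalV^{\perp_\omega})^{\perp_\omega})^{\perp_\omega}$. Hence $(\ucalW)^{\perp_\omega}$ is $\omega$-closed for $\ucalW = \ucalV^{\perp_\omega}$, and since $\overline{\ucalV}^{\omega}$ is by definition of this form, it is $\omega$-closed — this is exactly the ``three-step identity'' $\perp = \perp\perp\perp$ familiar from Galois connections, and it proves item 2 (both the statement about $\overline{\ucalV}^\omega$ and the more general assertion that any $\omega$-orthogonal is $\omega$-closed).

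For item 3, suppose $\ucalV$ is the sheaf associated (via Proposition \ref{prop -basic -sheaves} and the remark following it) to a local submodule $\calV \subset \gerX(M)$, so that $\calV = \Gamma(M, \ucalV)$. The key input is the previous lemma (the one just before Definition \ref{def -omega -closed}), which says that for any subsheaf $\ucalW$ of $\gerX_M$ the sheaf $\ucalW^{\perp_\omega}$ is a sheaf of $\ucalC^\infty_M$-submodules, and moreover if $\ucalW$ satisfies condition $(C1)$ then $\Gamma(U, \ucalW^{\perp_\omega}) = \Gamma(U,\ucalW)^{\perp_\omega}$ for all $U$. Now $\ucalV$, being induced by a local submodule, is a sheaf of $\ucalC^\infty_M$-submodules, hence soft, hence has enough global sections and in particular satisfies $(C1)$ (cf. Remark \ref{rk -C1}); therefore $\ucalV^{\perp_\omega}$ is the sheaf induced by the submodule $\calV^{\perp_\omega} \subset \gerX(M)$. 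But $\ucalV^{\perp_\omega}$ is itself a sheaf of $\ucalC^\infty_M$-submodules (first part of the lemma), so it too satisfies $(C1)$, and a second application of the lemma shows that $(\ucalV^{\perp_\omega})^{\perp_\omega}$ is the sheaf induced by $(\calV^{\perp_\omega})^{\perp_\omega}$. That is precisely the claim.

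I do not expect any genuine obstacle here; the whole lemma is formal once one has the inclusion-reversing property and the fixed-point identity of a Galois connection, together with the already-established compatibility of $\perp_\omega$ with the passage between local submodules of $\gerX(M)$ and sheaves of $\ucalC^\infty_M$-submodules. The only point requiring a moment's care is item 3, where one must verify that $\ucalV^{\perp_\omega}$ again meets the hypotheses ($(C1)$, or softness) needed to reapply the orthogonal-of-a-sheaf lemma — but this is handled by the observation that $\perp_\omega$ always produces a sheaf of submodules, which is soft. If one wanted to be maximally careful one could instead verify item 3 by a direct germ-wise argument: a germ of $X$ lies in $((\ucalV^{\perp_\omega})^{\perp_\omega})$ iff it is $\omega$-orthogonal to every local section of $\ucalV^{\perp_\omega}$, and using bump functions to promote germs to global sections one reduces this to the analogous statement at the level of $\gerX(M)$ — but routing through the already-proven lemma is cleaner.
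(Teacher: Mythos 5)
Your proof is correct and takes essentially the same approach as the paper: items 1 and 2 via the Galois-connection/closure-operator formalism with the single order-reversing self-adjoint functor $\ucalV\mapsto\ucalV^{\perp_\omega}$ (the paper refers back to the general argument it set up after Corollary \ref{one-to-one -corr -sheaves}, while you spell the triple-perp identity out explicitly), and item 3 via Remark \ref{rk -C1} together with the observation that $\ucalV^{\perp_\omega}$ is again a sheaf of submodules, hence satisfies $(C1)$, so the compatibility $\ucalW^{\perp_\omega}(U)=\ucalW(U)^{\perp_\omega}$ can be applied a second time.
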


 \begin{proof} 
The last part follows from the last part of Remark \ref{rk -C1}. The first two follow by the formal arguments mentioned already in the proof given after Corollary \ref{one-to-one -corr -sheaves}: this time the two functors are equal, and given by ${\ucalV\mapsto \ucalV^{\perp}}$. 
 The fact that this reverses the order of inclusions is clear, while the adjunction condition, i.e.\ 
\[ 
    \ucalV \subset \ucalW^{\perp} 
    \Longleftrightarrow 
    \ucalW \subset \ucalV^{\perp},
\]
 is immediate as well. 
\end{proof}
\noindent

\subsection{Via sheaves of functions \& commutants}

We now pass to the characterization of the Lagrangian condition in terms of (sheaves of) functions. The way to use the symplectic form ${\omega}$ on functions is via the induced Poisson bracket ${\{\cdot, \cdot\}}$. The basic operation, dual to that of taking symplectic orthogonals, is that of
commutants in Poisson algebras. 
Recall that, given a Poisson algebra ${(A, \{\cdot, \cdot\})}$, \textbf{commutant}\index{commutant} of a subspace ${C\subset A}$ is defined as
\[ C'\defeq \{a\in A: \{a, c\}= 0\quad\forall\, c\in C \}.\]
One says that ${C}$ is \textbf{self-centralizing} in the Poisson algebra $A$ if ${C= C'}$. In principle, we are interested in this property applied to ${\calC_{\calF}(M)}$ inside the Poisson algebra ${(\calC^{\infty}(M), \{\cdot, \cdot\})}$. However, as we have already seen, when working with functions we have to be more careful. First of all:

\begin{definition} 
For a subsheaf ${\ucalC\subset \ucalC_{M}^{\infty}}$, we define the commutant of ${\ucalC}$ w.r.t.\  ${\omega}$ as the subsheaf  ${\ucalC'\subset \ucalC_{M}^{\infty}}$ defined by
\[ \Gamma(U, \ucalC')= \{f\in \ucalC^{\infty}(U): \{f|_{V}, g\}= 0 \quad\forall\, V\subset U \,\,\text{open}, \quad\forall\, g\in \Gamma(V, \ucalC) \}.\]
We say that ${\calC}$ is \textbf{strongly ${\omega}$-involutive}\index{sheaf of functions!strongly involutive} if ${\ucalC= \ucalD'}$ for some subsheaf ${\ucalD\subset \ucalC_{M}^{\infty}}$.
\end{definition}
\noindent
Similar to previous remarks, the equation in the definition of the commutants is that ${\{\textrm{germ}_x(f), g_x\}= 0}$ for all ${x\in U}$ and ${g_x\in \ucalV_x}$. 
Continuing the reasoning from the previous discussions: ${\calC'(U)= \calC(U)'}$ if ${U}$ is an open cover which ${\calC|_{U}}$ has enough sections, i.e.\  satisfies (C0) from  Remark \ref{lemma-with-strange-condition} on ${U}$ (requiring (C0) over the entire ${M}$ would be too strong). 
But, again, the stalks of ${\calC'}$ are not the commutants of the stalks of ${\calC}$ (the latter ones may fail to form an étale space), even under the condition of having enough sections. 
In particular, the condition for a sheaf to be self-centralizing (see below) will not be equivalent to its stalks being self-centralizing.

\begin{definition} 
For a subsheaf ${\ucalC\subset \ucalC_{M}^{\infty}}$ we define its \textbf{${\omega}$-involutive closure} as
\[ \textrm{Inv}_{\omega}(\ucalC)\defeq (\ucalC')'.\]
We say that ${\ucalC}$ is \textbf{${\textrm{Inv}_{\omega}}$-closed} if ${\ucalC= \textrm{Inv}_{\omega}(\ucalC)}$. 
\end{definition}
\noindent
As before, and with the same arguments, one has that ${\textrm{Inv}_{\omega}}$ is a closure operator and being strongly ${\omega}$-involutive is equivalent to being ${\textrm{Inv}_{\omega}}$-closed . 
Also the following is immediate from the Jacobi identity of the Poisson bracket:

\begin{lemma} 
Any strongly ${\omega}$-involutive subsheaf ${\ucalC\subset \ucalC_{M}^{\infty}}$ is ${\omega}$-involutive in the sense that its local sections are closed under the Poisson bracket ${\{\cdot, \cdot\}_{\omega}}$.
\end{lemma}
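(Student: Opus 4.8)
The plan is to unwind the definition of strong $\omega$-involutiveness and reduce the claim to the Jacobi identity together with the fact that $\{\cdot,\cdot\}_{\omega}$ is a bidifferential operator, hence local (so that it commutes with restriction to opens). By definition, $\ucalC$ strongly $\omega$-involutive means $\ucalC = \ucalD'$ for some subsheaf $\ucalD \subset \ucalC^{\infty}_{M}$, so that for every open $U$,
\[
    \Gamma(U, \ucalC) = \{\, f \in \ucalC^{\infty}(U) : \{ f|_{V}, h \}_{\omega} = 0 \ \ \forall\, V \subset U \text{ open},\ \forall\, h \in \Gamma(V, \ucalD) \,\}.
\]
(One could equally take $\ucalD = \ucalC'$, using the already-noted equivalence of strong $\omega$-involutiveness with $\textrm{Inv}_{\omega}$-closedness.)

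First I would fix an open $U$ and two sections $f, g \in \Gamma(U, \ucalC)$, and reduce $\omega$-involutiveness to showing $\{f, g\}_{\omega} \in \Gamma(U, \ucalC)$. By the description above this amounts to: for every open $V \subset U$ and every $h \in \Gamma(V, \ucalD)$, one has $\{\, \{f, g\}_{\omega}|_{V},\, h \,\}_{\omega} = 0$. Locality of the Poisson bracket gives $\{f, g\}_{\omega}|_{V} = \{ f|_{V}, g|_{V} \}_{\omega}$, so the quantity to kill is $\{\, \{ f|_{V}, g|_{V} \}_{\omega},\, h \,\}_{\omega}$. Here I would invoke the Jacobi identity in its derivation form,
\[
    \{\, \{ f|_{V}, g|_{V} \}_{\omega},\, h \,\}_{\omega} = \{\, f|_{V},\, \{ g|_{V}, h \}_{\omega} \,\}_{\omega} + \{\, \{ f|_{V}, h \}_{\omega},\, g|_{V} \,\}_{\omega}.
\]
Since $f, g \in \Gamma(U, \ucalC) = \Gamma(U, \ucalD')$ and $h \in \Gamma(V, \ucalD)$ with $V \subset U$ open, the very definition of the commutant gives $\{ f|_{V}, h \}_{\omega} = 0$ and $\{ g|_{V}, h \}_{\omega} = 0$; hence both summands on the right vanish. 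As $U$, $V$ and $h$ were arbitrary, $\{f, g\}_{\omega} \in \Gamma(U, \ucalC)$, which is precisely $\omega$-involutiveness.

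There is no genuine obstacle here: the content is entirely the standard fact that $\{\cdot, h\}_{\omega}$ is a derivation of the Poisson bracket, applied over each open set. The only mild points to be careful about are the two bookkeeping issues flagged above — passing to the commutant presentation $\ucalC = \ucalD'$, and using locality of $\{\cdot,\cdot\}_{\omega}$ to move restrictions in and out of brackets — both of which are immediate.
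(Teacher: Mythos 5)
Your argument is correct and is exactly the one the paper has in mind: the paper dismisses this lemma with a one-line remark that it is "immediate from the Jacobi identity of the Poisson bracket," and your proof is simply the careful spelling-out of that remark (write $\ucalC = \ucalD'$, use locality of the bracket, apply the derivation form of Jacobi, and kill both terms using $f,g \in \ucalD'$). Nothing more to add.
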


We now pass to the property that is relevant for Lagrangian foliations. 

\begin{definition}
We say that a subsheaf ${\ucalC\subset \ucalC_{M}^{\infty}}$ is 
\textbf{self-centralizing}\index{sheaf of functions!self-centralizing} 
if ${\ucalC'= \ucalC}$. 
\end{definition}

\noindent
Now, the fact that a regular foliation is Lagrangian can be characterized as follows: 

\begin{local_theorem}
\label{thm-the-tricky-one0} 
A regular foliation ${\calF}$ on a symplectic manifold ${(M, \omega)}$ is Lagrangian if and only if the sheaf ${\ucalC_{\calF}^{\infty}}$ is self-centralizing. 
\end{local_theorem}

\subsection{Comparing the two approaches}
We now concentrate on the relationship between the Lagrangian condition in terms of sheaves of function and sheaves of vector fields. We start from the last theorem, restated for our purpose as:
 
\begin{local_theorem}
\label{thm-the-tricky-one} 
For a regular foliation ${\calF}$ on a symplectic manifold ${(M, \omega)}$, the sheaf ${\gerX_{\calF}}$ is Lagrangian if and only if the sheaf ${\ucalC_{\calF}^{\infty}}$ is self-centralizing. 
\end{local_theorem}

The rest of this section is motivated by understanding this theorem from a general perspective. Of course, it can be proven in a `down to earth fashion', making direct use of ${\calF}$ and its properties, but we would like to emphasize that the proof is not trivial (and, as the previous remark indicates, the main difficulty is to prove that, under the the self-centralizing condition, ${\calF^{\perp}}$ is isotropic). Moreover, it is interesting and even enlightening to see how much of such proofs hold in general (hence, in principle, for singular foliations as well) and pinpoint where/how the regularity is actually used.

\begin{remark}
\label{important -remark}
Here is one remark that one should keep in mind. It is natural to break the Lagrangian condition into:
\begin{myenumerate}
\item ${\calF}$ is coisotropic: i.e.\  ${\calF^{\perp}\subset \calF}$ or, using sheaves, ${\gerX_{\calF}^{\perp}\subset \gerX_{\calF}}$, 
\item ${\calF}$ is isotropic: i.e.\  ${\calF\subset \calF^{\perp}}$ or, using sheaves, ${\gerX_{\calF}\subset \gerX_{\calF}^{\perp}}$. 
\end{myenumerate}
And it is true (and will be proven below) that ${\calF}$ is coisotropic if and only if ${(\ucalC_{\calF}^{\infty})'\subset \ucalC_{\calF}^{\infty}}$. However, it is not true that ${\calF}$ being isotropic is equivalent to 
${\ucalC_{\calF}^{\infty}\subset (\ucalC_{\calF}^{\infty})'}$. What makes the Lagrangian case work (or what one may forget when passing to the general isotropic case) is that the condition ${\calF^{\perp}= \calF}$ includes the fact that
${\calF^{\perp}}$ is a foliation. Under this extra assumption, it is indeed true that ${\calF}$ is isotropic if and only if ${\ucalC_{\calF}^{\infty}\subset (\ucalC_{\calF}^{\infty})'}$. 

Even looking at simple isotropic foliations, i.e.\  at foliations induced by submersions 
    \[
        {\pi: (M, \omega)\rightarrow B}
    \]
with isotropic fibers, the fact that ${\calF^{\perp}}$ is again a foliation does not follow automatically, and is a very natural condition to impose. Actually, that is precisely the notion of 
\textbf{symplectically complete isotropic fibration} introduced by Dazord and Delzant \cite{Da-De} when extending the action-angle variables from Lagrangian fibrations. This notion is also very interesting point of view: for a fibration ${\pi: (M, \omega)\rightarrow B}$ with isotropic fibers, the condition that ${\calF^{\perp}}$ is again a foliation is equivalent to the fact that ${M}$ admits a Poisson structure (necessarily unique) such that ${\pi}$ becomes a Poisson map. Starting with a Poisson manifold, one then talks about \textbf{isotropic realizations} of it (see also section 7 of \cite{PMCT2} and the references therein).

In the line of the previous terminology, by a \textbf{symplectically complete isotropic foliation} on a symplectic manifold ${(M, \omega)}$ we mean any isotropic foliation ${\calF}$ with the property that its symplectic orthogonal ${\calF^{\perp}}$ is a foliation as well. 
\end{remark}
\noindent
\begin{proposition} \label{prop -gen -Lagr -sheaves} Let ${\ucalV\subset \gerX_M}$ be strongly involutive and let ${\ucalC\subset \ucalC_{M}^{\infty}}$ be the corresponding closed subsheaf in the one-to-one correspondence from Corollary \ref{one-to-one -corr -sheaves}. Then 
\begin{myenumerate}
\item ${\ucalV^{\perp}\subset \ucalV}$ if and only if ${\ucalC\subset \ucalC'}$.
\item ${\ucalV\subset \ucalV^{\perp}}$ implies that ${\ucalC'\subset \ucalC}$, and the converse is true under the hypothesis that also ${\ucalV^{\perp}}$ is strongly involutive. In general, ${\ucalC'\subset \ucalC}$ is equivalent to ${\ucalV\subset \textrm{Inv}(\ucalV^{\perp})}$. 
\end{myenumerate}
In particular, ${\ucalV}$ is Lagrangian implies that ${\ucalC}$ is self-centralizing, and the converse is true if also 
${\ucalV^{\perp}}$ is strongly involutive.   
\end{proposition}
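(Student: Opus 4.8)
The plan is to reduce the whole proposition to two facts tying the orthogonal $\ucalV^{\perp}$ to the commutant $\ucalC'$, and then let the formal closure-operator calculus from the joint proof of Propositions~\ref{prop -closures -V}, \ref{prop -closures -C} and Corollary~\ref{one-to-one -corr -sheaves} finish the job; throughout I write $\ucalC=\Fun(\ucalV)$ and $\ucalV=\Vect(\ucalC)$ for the pair in that correspondence. The first fact is an \emph{exchange lemma}: for a local function $f$,
\[ f\in\ucalC' \iff X_f\in\ucalV. \]
It is pure definition-chasing. With the convention $i_{X_f}\omega=-df$ one has $\{f,g\}_{\omega}=\gerL_{X_f}(g)$ and $X_{f|_V}=(X_f)|_V$, so the condition defining $f\in\Gamma(U,\ucalC')$ --- namely $\gerL_{(X_f)|_V}(g)=0$ for all open $V\subseteq U$ and all $g\in\Gamma(V,\ucalC)$ --- is verbatim the condition of Definition~\ref{def -Vect} for $X_f\in\Gamma(U,\Vect(\ucalC))=\Gamma(U,\ucalV)$. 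The second fact, built on the first together with the elementary identities $\gerL_X(f)=df(X)=\omega(X,X_f)$ and the equivalence $f\in\ucalC\iff X_f\in\ucalV^{\perp}$ (since $i_{X_f}\omega$ annihilates $\ucalV$ exactly when $df$ does), is
\[ \ucalC'=\Fun(\ucalV^{\perp}). \]
For ``$\subseteq$'': $f\in\ucalC'$ gives $X_f\in\ucalV$, so $\gerL_X(f)=\omega(X,X_f)=0$ for every local $X\in\ucalV^{\perp}$. For ``$\supseteq$'': $g\in\Fun(\ucalV^{\perp})$ is killed by every $X_f$ with $f\in\ucalC$ (these lie in $\ucalV^{\perp}$), so $\{f,g\}_{\omega}=0$ for all such $f$, i.e. $g\in\ucalC'$. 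The germ-wise bookkeeping in the definitions of $\ucalC'$, $\ucalV^{\perp}$, $\Vect$ and $\Fun$ matches throughout precisely because Hamiltonian vector fields and $\omega$ restrict correctly.

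With these in hand, Part~(1) I would prove directly. If $\ucalV^{\perp}\subseteq\ucalV$ and $f\in\ucalC$, then $X_f\in\ucalV^{\perp}\subseteq\ucalV$, so $f\in\ucalC'$ by the exchange lemma; hence $\ucalC\subseteq\ucalC'$. Conversely, if $\ucalC\subseteq\ucalC'$ and $X\in\ucalV^{\perp}$, then $X_f\in\ucalV$ for every local $f\in\ucalC$ (exchange lemma applied to $f\in\ucalC\subseteq\ucalC'$), so $\gerL_X(f)=\omega(X,X_f)=0$; this says precisely $X\in\Vect(\ucalC)=\ucalV$, hence $\ucalV^{\perp}\subseteq\ucalV$.

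Part~(2) I would run entirely through the adjunction formalism. Since $\ucalC=\Fun(\ucalV)$ and $\ucalC'=\Fun(\ucalV^{\perp})$, the inclusion $\ucalC'\subseteq\ucalC$ is $\Fun(\ucalV^{\perp})\subseteq\Fun(\ucalV)$. Applying the order-reversing operator $\Vect$ and using that $\ucalV$ is strongly involutive, so $\textrm{Inv}(\ucalV)=\ucalV$ by Proposition~\ref{prop -closures -V}, yields $\ucalV=\textrm{Inv}(\ucalV)\subseteq\textrm{Inv}(\ucalV^{\perp})$; conversely, applying $\Fun$ to $\ucalV\subseteq\textrm{Inv}(\ucalV^{\perp})=\Vect(\Fun(\ucalV^{\perp}))$ and using the identity $\Fun\circ\Vect\circ\Fun=\Fun$ recovers $\Fun(\ucalV^{\perp})\subseteq\Fun(\ucalV)=\ucalC$. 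This is the stated equivalence $\ucalC'\subseteq\ucalC\iff\ucalV\subseteq\textrm{Inv}(\ucalV^{\perp})$. The two special cases drop out: $\ucalV\subseteq\ucalV^{\perp}$ forces $\ucalV\subseteq\ucalV^{\perp}\subseteq\textrm{Inv}(\ucalV^{\perp})$, hence $\ucalC'\subseteq\ucalC$; and if $\ucalV^{\perp}$ is itself strongly involutive then $\textrm{Inv}(\ucalV^{\perp})=\ucalV^{\perp}$, so $\ucalV\subseteq\textrm{Inv}(\ucalV^{\perp})$ collapses to $\ucalV\subseteq\ucalV^{\perp}$. Finally, the ``in particular'' is the conjunction of the two parts: if $\ucalV=\ucalV^{\perp}$ then (1) and (2) give $\ucalC\subseteq\ucalC'$ and $\ucalC'\subseteq\ucalC$, so $\ucalC$ is self-centralizing; and if $\ucalC=\ucalC'$ with $\ucalV^{\perp}$ strongly involutive, then (1) gives $\ucalV^{\perp}\subseteq\ucalV$ and (2) gives $\ucalV\subseteq\ucalV^{\perp}$, so $\ucalV$ is Lagrangian.

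The only genuinely substantive step is establishing $\ucalC'=\Fun(\ucalV^{\perp})$ and the exchange lemma underneath it: this is where the symplectic structure actually enters --- through $\omega^{\sharp}$ turning $df$ into $X_f$ and turning ``$df$ annihilates $\ucalV$'' into ``$X_f\in\ucalV^{\perp}$'' --- and the only place where one has to verify that the various germ-wise conditions in the definitions line up. Everything afterward is the formal Galois-connection shuffle. One subtlety worth stating explicitly in the writeup: $\ucalC'=\Fun(\ucalV^{\perp})$ holds with no assumption that $\ucalV$ be $\omega$-closed --- the two-sided inclusion above never needs to compare $\ucalV$ with $(\ucalV^{\perp})^{\perp}$.
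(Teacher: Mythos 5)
Your proof is correct and essentially the same approach as the paper: establish that $\Fun(\ucalV^{\perp})$ coincides with $\ucalC'$ for strongly involutive $\ucalV$ (the paper does this via the equality case of its inclusion $\Fun(\ucalV^{\perp})\subset\Fun(\ucalV)'$, derived inside the proof of the $\overline{\ucalC'}=\overline{\ucalC}'=\ucalC'$ lemma, and you do it directly), and then run the order-reversing $\Fun$/$\Vect$ Galois connection together with $\textrm{Inv}(\ucalV)=\ucalV$. The one nice organizational improvement is isolating the two crisp equivalences $f\in\ucalC'\iff X_f\in\ucalV$ and $f\in\ucalC\iff X_f\in\ucalV^{\perp}$ as an explicit ``exchange lemma''; the paper proves the same facts in pieces (the observation ``$X_g$ is orthogonal to $\ucalV$'' inside the proof of its inclusion lemma, and the four-implications lemma) and then declares the proposition ``clear'', so your version is somewhat more self-contained and explicit about where the symplectic structure enters, but it is not a different route.
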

\noindent
As explained in Remark \ref{important -remark}, for the isotropic case, the condition on the involutivity of ${\ucalV^{\perp}}$ is necessary even in the case of regular foliations. 

Before proving the previous proposition, we collect some of the basic properties of the operations of taking orthogonals and commutants.

\begin{lemma}
For any subsheaf ${\ucalV\subset \gerX_M}$ one has
\begin{equation}\label{eq(2)} 
\Fun(\ucalV^{\perp})\subset \Fun(\ucalV)'
\end{equation}
and this becomes an equality if ${\ucalV}$ is strongly involutive.

On the other hand, for any subsheaf ${\ucalC\subset \ucalC_{M}^{\infty}}$ one has
\begin{equation}\label{eq(1)}
\Vect(\ucalC)^{\perp} \subset \Vect(\ucalC')
\end{equation}
and the right hand side is actually the involutive closure of the left one: ${\Vect(\ucalC')= \textrm{Inv}(\Vect(\ucalC)^{\perp})}$. 
Moreover, the commutant ${\ucalC'}$ is closed and ${\overline{\ucalC}}$ has the same commutant as ${\ucalC}$:
\begin{equation}\label{eq(lemma1)} 
\overline{\ucalC'}= \overline{\ucalC}'= \ucalC' .
\end{equation}
\end{lemma}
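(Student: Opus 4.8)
The plan is to isolate a single identity and read the three displayed assertions off from it together with the formal calculus of closure operators established in Propositions \ref{prop -closures -V}--\ref{prop -closures -C} and Corollary \ref{one-to-one -corr -sheaves}. The identity is
\[
    \ucalC' \;=\; \Fun\bigl(\Vect(\ucalC)^{\perp}\bigr),
    \qquad \text{for every subsheaf } \ucalC\subset\ucalC^{\infty}_{M}.
\]
The only non-formal input is the symplectic dictionary: since $i_{X_f}\omega=-df$, one has $\gerL_Z(f)=\omega(Z,X_f)$ for every vector field $Z$, and hence $\{f,g\}=\gerL_{X_f}(g)=\omega(X_f,X_g)$. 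I would first record two consequences. (a) For every open $U$, a function $f$ lies in $\Gamma(U,\ucalC')$ if and only if $X_f$ lies in $\Gamma(U,\Vect(\ucalC))$: indeed both conditions unwind to ``$\gerL_{X_f|_V}(g)=0$ for all open $V\subset U$ and all $g\in\Gamma(V,\ucalC)$'', using that $X_f|_V=X_{f|_V}$ (compare Definitions \ref{def -Fun}, \ref{def -Vect}). (b) If $g$ is a local section of $\ucalC$ then $X_g$ is a local section of $\Vect(\ucalC)^{\perp}$, because $\omega(X_g,Y)=-\gerL_{Y}(g)=0$ for every local section $Y$ of $\Vect(\ucalC)$.

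Granting (a) and (b), the identity is a short check in both directions. If $f\in\Gamma(U,\ucalC')$ then $X_f\in\Gamma(U,\Vect(\ucalC))$ by (a), so for any local section $X$ of $\Vect(\ucalC)^{\perp}$ one gets $\gerL_X(f)=\omega(X,X_f)=0$; thus $f\in\Gamma(U,\Fun(\Vect(\ucalC)^{\perp}))$. Conversely, if $f\in\Gamma(U,\Fun(\Vect(\ucalC)^{\perp}))$ and $g$ is a local section of $\ucalC$, then $X_g$ is a local section of $\Vect(\ucalC)^{\perp}$ by (b), so $\{f,g\}=-\gerL_{X_g}(f)=0$; thus $f\in\Gamma(U,\ucalC')$. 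The step that needs genuine care, and which I expect to be the main (if minor) obstacle, is checking that the ``for all opens $V$'' clauses occurring in the four definitions involved ($\Fun$, $\Vect$, the commutant, the symplectic orthogonal) match up under restriction; this is exactly what makes (a) and (b) hold over every open, so once they are in place there is no further difficulty.

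From the identity the remaining statements are formal. Since $\ucalC'=\Fun(\Vect(\ucalC)^{\perp})$ is $\Fun$ of a sheaf of vector fields, it is closed by Proposition \ref{prop -closures -C}, i.e.\ $\overline{\ucalC'}=\ucalC'$. Next, applying the identity to $\overline{\ucalC}=\Fun(\Vect(\ucalC))$ and using that $\Vect(\ucalC)$ is strongly involutive, hence $\textrm{Inv}$-closed (Proposition \ref{prop -closures -V}), we get $\Vect(\overline{\ucalC})=\Vect(\Fun(\Vect(\ucalC)))=\textrm{Inv}(\Vect(\ucalC))=\Vect(\ucalC)$, whence $\overline{\ucalC}'=\Fun(\Vect(\overline{\ucalC})^{\perp})=\Fun(\Vect(\ucalC)^{\perp})=\ucalC'$; together with $\overline{\ucalC'}=\ucalC'$ this is \eqref{eq(lemma1)}. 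Applying $\Vect$ to the identity gives $\Vect(\ucalC')=\Vect(\Fun(\Vect(\ucalC)^{\perp}))=\textrm{Inv}(\Vect(\ucalC)^{\perp})$, which is the stated description in \eqref{eq(1)}; and since $\textrm{Inv}$ is a closure operator, $\Vect(\ucalC)^{\perp}\subset\textrm{Inv}(\Vect(\ucalC)^{\perp})=\Vect(\ucalC')$, giving the inclusion in \eqref{eq(1)}.

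It remains to handle \eqref{eq(2)}. For an arbitrary subsheaf $\ucalV\subset\gerX_M$: if $f$ is a local section of $\Fun(\ucalV^{\perp})$ and $g$ one of $\Fun(\ucalV)$, then $\omega(X_g,Y)=-\gerL_{Y}(g)=0$ for every local section $Y$ of $\ucalV$, so $X_g$ is a local section of $\ucalV^{\perp}$ and therefore $\{f,g\}=-\gerL_{X_g}(f)=0$; this proves $\Fun(\ucalV^{\perp})\subset\Fun(\ucalV)'$. When $\ucalV$ is in addition strongly involutive, Proposition \ref{prop -closures -V} lets us write $\ucalV=\textrm{Inv}(\ucalV)=\Vect(\ucalC)$ with $\ucalC=\Fun(\ucalV)$, and the central identity then yields $\Fun(\ucalV^{\perp})=\Fun(\Vect(\ucalC)^{\perp})=\ucalC'=\Fun(\ucalV)'$, the desired equality. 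Aside from the quantifier bookkeeping flagged above, no genuinely hard step remains.
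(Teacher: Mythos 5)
Your proof is correct, and it does reorganize the argument around a different pivot than the paper's own proof. Both proofs rest on the same symplectic input: the equivalence ``$f\in\ucalC'$ iff $X_f\in\Vect(\ucalC)$'' and the fact that $X_g$ lands in $\Vect(\ucalC)^{\perp}$ for $g$ a section of $\ucalC$. But the paper deploys these only to establish the two \emph{inclusions} \eqref{eq(2)} and \eqref{eq(1)} directly, then squeezes: it forms the chain $\overline{\ucalC'}=\Fun(\Vect(\ucalC'))\subset\Fun(\Vect(\ucalC)^{\perp})\subset\Fun(\Vect(\ucalC))'=\overline{\ucalC}'$, observes $\overline{\ucalC}'\subset\ucalC'\subset\overline{\ucalC'}$ from $\ucalC\subset\overline{\ucalC}$ and the closure property, and concludes all intermediate inclusions collapse to equalities, from which \eqref{eq(lemma1)} and the two ``equality'' refinements are then extracted. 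You instead isolate the single identity $\ucalC'=\Fun(\Vect(\ucalC)^{\perp})$ and prove it outright; once it is in hand, \eqref{eq(lemma1)}, the inclusion \eqref{eq(1)}, the statement $\Vect(\ucalC')=\textrm{Inv}(\Vect(\ucalC)^{\perp})$, and equality in \eqref{eq(2)} for strongly involutive $\ucalV$ all drop out by applying $\Vect$, $\Fun$, and the closure calculus from Propositions \ref{prop -closures -V}--\ref{prop -closures -C}; the only remaining direct check is the general inclusion \eqref{eq(2)}, which you do the same way the paper does. The net effect is that you never prove \eqref{eq(1)} by hand and never need the squeeze. What your version buys is modularity and a cleaner logical spine: the identity is stated explicitly (it is essentially the second half of Lemma \ref{basic -Ham}, but derived from scratch, which is necessary since that lemma appears later in the chapter) and everything else is visibly formal. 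What the paper's version buys is slightly less up-front verification, at the cost of the equalities only emerging at the end of the sandwich argument rather than being read off. The ``quantifier bookkeeping'' concern you flag is real but unproblematic: the sheafy definitions of $\ucalC'$, $\Vect(\ucalC)$, $\Fun$, and $\perp_{\omega}$ all use the same ``for every open $V\subset U$'' template, and the identities $X_{f|_V}=X_f|_V$ and $\{f|_V,g\}=\gerL_{X_f|_V}(g)$ make your items (a) and (b) hold verbatim at every open, so no gap is hiding there.
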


\begin{proof}
The inclusions (\ref{eq(2)}) and (\ref{eq(1)}) follow by a direct check. For instance, for the first one (and without writing the domains of the sections), we have to show that if a function ${f}$ satisfies 
\[ L_X(f)= 0 \quad \forall \quad X\in \ucalV^{\perp}\]
then ${\{f, g\}= 0}$ for all functions ${g}$ with the property that
\[ L_Y(g)= 0, \quad \forall \quad X\in \ucalV.\]
For such a ${g}$ we note that ${X\defeq X_g}$ is orthogonal to ${\ucalV}$. Hence we can use it in the condition on ${f}$ to deduce that
${L_{X_g}(f)= 0}$, i.e.\  ${\{f, g\}= 0}$. Similarly the other inclusion.

We now prove the last part, i.e.\  (\ref{eq(lemma1)}). Applying ${\Fun}$ to (\ref{eq(1)}), and next to it 
using (\ref{eq(2)}) applied to ${\ucalV= \ucalV(\ucalC)^{\perp}}$ we deduce that  
\begin{equation}\label{strong -inclusion}
\Fun(\Vect(\ucalC')) \subset \Fun(\Vect(\ucalC)^{\perp})\subset \Fun(\Vect(\ucalC))'= \overline{\ucalC}'.
\end{equation}
In particular, ${\overline{\ucalC'}\subset \overline{\ucalC}'}$. But ${\ucalC\subset \overline{\ucalC}}$ implies that ${\overline{\ucalC}'\subset \ucalC'}$ hence 
\[ \overline{\ucalC'}\subset \overline{\ucalC}'\subset \ucalC' .\]
Using ${\ucalD\subset \overline{\ucalD}}$ for ${\ucalD= \ucalC'}$, we find that all these inclusions become equalities. 

Moreover, also the inclusions (\ref{strong -inclusion}) used along the way must become equalities. One of them is:
\[ \Fun(\Vect(\ucalC)^{\perp})= \Fun(\Vect(\ucalC))';\]
this holds for any ${\ucalC}$, i.e.\  one has equality in (\ref{eq(2)}) for any ${\ucalV}$ of type ${\Fun(\ucalC)}$, i.e.\  for any strongly involutive ${\ucalV}$. 

The other one is: 
\[ \Fun(\Vect(\ucalC')) = \Fun(\Vect(\ucalC)^{\perp}).\]
But, in general, ${\Fun(\ucalV)= \Fun(\textrm{Inv}(\ucalV))}$. Applying this to ${\ucalV\defeq \Fun(\ucalC)^{\perp}}$ and using 
that ${\Fun}$ is injective on strongly involutive sheaves, we deduce the equality from the statement related to (\ref{eq(1)}).
\end{proof}
\noindent
We now break Proposition \ref{prop -gen -Lagr -sheaves} into parts and prove some more general statements.

\begin{lemma} 
For any subsheaves ${\ucalC\subset \ucalC^{\infty}_M}$ and ${\ucalV\subset \gerX_M}$, one has the following implications
\begin{myenumerate}
\item ${\ucalV^{\perp}\subset V \Longrightarrow \Fun(\ucalV)\subset \Fun(\ucalV)'}$.
\item ${\ucalC \subset \ucalC'  \Longrightarrow \Vect(\ucalC)^{\perp}\subset \Vect(\ucalC)}$.
\item If ${\ucalV}$ is involutive: ${\ucalV\subset \ucalV^{\perp}   \Longrightarrow \Fun(\ucalV)'\subset \Fun(\ucalV)}$.
\item ${\ucalC'\subset \ucalC \Longrightarrow \Vect(\ucalC)\subset \textrm{Inv}(\Vect(\ucalC)^{\perp})}$.
\end{myenumerate}
\end{lemma}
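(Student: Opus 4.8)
The plan is to deduce the first, second and fourth implications from pure order theory, using that $\Fun$ and $\Vect$ reverse inclusions together with the inclusions $(\ref{eq(2)})$ and $(\ref{eq(1)})$ from the preceding lemma; the third implication is the one with content, and I would handle it by translating everything into a statement about the strong involutive closure.

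For the translation, note the two elementary identities $\gerL_X f = \omega(X, X_f)$ (which is just $i_{X_f}\omega = -df$ evaluated on $X$) and $\gerL_{X_f} g = \{f,g\}$, valid for all local functions $f,g$ and vector fields $X$. Reading them against the definitions of $\Fun(\ucalV)$, of the commutant, and of $\textrm{Inv}(\ucalV) = \Vect(\Fun(\ucalV))$ --- and using $X_{f|_V} = X_f|_V$ so that the ``for all $V$'' clauses match --- one obtains, for every open $U$,
\[
    f\in\Gamma(U,\Fun(\ucalV)) \iff X_f\in\Gamma(U,\ucalV^{\perp_\omega}),
    \qquad
    f\in\Gamma(U,\Fun(\ucalV)') \iff X_f\in\Gamma(U,\textrm{Inv}(\ucalV)).
\]
Given this, the four implications go as follows. \textbf{(1)} Since $\Fun$ is inclusion-reversing, $\ucalV^{\perp_\omega}\subset\ucalV$ gives $\Fun(\ucalV)\subset\Fun(\ucalV^{\perp_\omega})$, and $\Fun(\ucalV^{\perp_\omega})\subset\Fun(\ucalV)'$ is exactly $(\ref{eq(2)})$. \textbf{(2)} Since $\Vect$ is inclusion-reversing, $\ucalC\subset\ucalC'$ gives $\Vect(\ucalC')\subset\Vect(\ucalC)$, and $\Vect(\ucalC)^{\perp_\omega}\subset\Vect(\ucalC')$ is $(\ref{eq(1)})$; compose. \textbf{(4)} Likewise $\ucalC'\subset\ucalC$ gives $\Vect(\ucalC)\subset\Vect(\ucalC')$, and by the refinement of $(\ref{eq(1)})$ in the preceding lemma $\Vect(\ucalC') = \textrm{Inv}(\Vect(\ucalC)^{\perp_\omega})$. \textbf{(3)} By the two displayed equivalences, $\Fun(\ucalV)'\subset\Fun(\ucalV)$ is equivalent to $\textrm{Inv}(\ucalV)\subset\ucalV^{\perp_\omega}$, i.e., by antisymmetry of $\omega$, to $\ucalV\subset\textrm{Inv}(\ucalV)^{\perp_\omega}$. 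Isotropy of $\ucalV$ gives $\ucalV\subset\ucalV^{\perp_\omega}$, so the task is to inherit this to the strong involutive saturation, and this is where involutiveness of $\ucalV$ itself (not merely of $\textrm{Inv}(\ucalV)$) should enter.

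Concretely, fix a point and work on a small contractible $U$. For $Z\in\Gamma(U,\textrm{Inv}(\ucalV))=\Gamma(U,\Vect(\Fun(\ucalV)))$ one has $\gerL_Z h = 0$ for every $h\in\Gamma(U,\Fun(\ucalV))$; hence, for $Y\in\Gamma(U,\ucalV)$ it suffices to show that the $1$-form $i_Y\omega$ lies in the $\calC^{\infty}(U)$-span of $\{dh : h\in\Gamma(U,\Fun(\ucalV))\}$, because then $\omega(Z,Y)=\pm (i_Y\omega)(Z)=\pm\sum_j\phi_j\,\gerL_Z h_j=0$. Now isotropy of $\ucalV$ as a module forces $\textrm{ev}_x(\ucalV)\subset T_xM$ to be an isotropic subspace at every $x$, so $\omega^{\flat}(\textrm{ev}_x(\ucalV))$ is contained in the conormal $\textrm{ev}_x(\ucalV)^{\circ}$; thus the claim reduces to: \emph{the conormal of $\ucalV$ is, locally, spanned by the differentials of functions in $\Fun(\ucalV)$.} This last point is the main obstacle --- it is the singular analogue of the classical fact that for a regular Lagrangian foliation $\gerX_{\calF}$ is locally spanned by the Hamiltonian fields $X_{\mu_i}$ of a defining integrable system, and it should be here that involutiveness of $\ucalV$ is genuinely used (e.g.\ via the fact that the flows of sections of $\ucalV$ preserve $\ucalV$, hence preserve $\Fun(\ucalV)$, allowing one to upgrade the pointwise statement about conormals to a germ-wise one). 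An appealing alternative, which I would try first because it is the most structural, is to show directly that $\ucalV^{\perp_\omega}$ is \emph{strongly} involutive whenever $\ucalV$ is involutive --- using that for $Y,Y'\in\ucalV$ the pairing of $d(i_Y\omega)=\gerL_Y\omega$ with $Y'$ lands in the module generated by the $1$-forms $i_Z\omega$, $Z\in\ucalV$, so that the defining family of $\ucalV^{\perp_\omega}$ generates a differential ideal in the sense of the remark following Definition~\ref{def -closed -sheaf} --- after which $\textrm{Inv}(\ucalV)\subset\ucalV^{\perp_\omega}$ is automatic from minimality of the strong involutive closure (Proposition~\ref{prop -closures -V}).
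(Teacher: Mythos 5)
Your handling of items 1, 2 and 4 coincides exactly with the paper's proof: apply $\Fun$ or $\Vect$ to the hypothesis, use the inclusions (\ref{eq(2)}) resp. (\ref{eq(1)}), and for item 4 the refinement $\Vect(\ucalC')=\textrm{Inv}(\Vect(\ucalC)^{\perp})$. Your two displayed equivalences are also correct; they are a re-packaging of the identities $\Vect(\ucalC)=\textrm{Ham}(\ucalC)^{\perp}$ and $\ucalC'=\Fun(\textrm{Ham}(\ucalC))$ from Lemma~\ref{basic -Ham}. One small caveat: from these equivalences you get the implication $\textrm{Inv}(\ucalV)\subset\ucalV^{\perp_\omega}\Rightarrow\Fun(\ucalV)'\subset\Fun(\ucalV)$, but not the converse that you assert --- the equivalences constrain only Hamiltonian vector fields, and a section of $\textrm{Inv}(\ucalV)$ need not be Hamiltonian. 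This does not damage your plan since you use the valid direction.

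The genuine problem is in item 3. The paper's proof of this item reads the hypothesis as \emph{strong} involutivity of $\ucalV$, not mere involutivity: it applies $\Fun$ to $\ucalV\subset\ucalV^{\perp}$, obtaining $\Fun(\ucalV^{\perp})\subset\Fun(\ucalV)$, and then invokes the \emph{equality} case of~(\ref{eq(2)}), $\Fun(\ucalV^{\perp})=\Fun(\ucalV)'$, which the preceding lemma establishes precisely when $\ucalV$ is strongly involutive. (In Proposition~\ref{prop -gen -Lagr -sheaves}, the only place this item is used, $\ucalV$ is assumed strongly involutive, so this is all that is needed.) Under that hypothesis your own reduction closes in one line as well: strong involutivity means $\textrm{Inv}(\ucalV)=\ucalV$ by Proposition~\ref{prop -closures -V}, so $\textrm{Inv}(\ucalV)=\ucalV\subset\ucalV^{\perp_\omega}$ is literally the hypothesis, and the conclusion follows from your one-directional implication.

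Instead, you try to prove $\textrm{Inv}(\ucalV)\subset\ucalV^{\perp_\omega}$ under mere involutivity of $\ucalV$ --- a strictly stronger claim than the paper makes. Both routes you sketch for this (that the conormal of $\ucalV$ is locally spanned by $d(\Fun(\ucalV))$, or that $\ucalV^{\perp_\omega}$ is strongly involutive whenever $\ucalV$ is involutive) are exactly the kind of strengthening the paper flags as hard and quite possibly false even for regular foliations; see the discussion after Lemma~\ref{promissing -lemma}, and Remark~\ref{important -remark}, where the failure of the isotropic case without a strong-involutivity condition on $\ucalV^{\perp}$ is pointed out. So as written your proposal leaves item 3 open. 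Replacing ``involutive'' by ``strongly involutive'' in the hypothesis repairs it immediately, and matches both the paper's intent and the rest of your argument.
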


\begin{proof}
For 1, apply ${\Fun}$ to the given inclusion, followed by (\ref{eq(2)}). 
For 2, apply ${\Vect}$ to the given inclusion and put (\ref{eq(1)}) in front.
For 3, apply ${\Fun}$ to the given inclusion and then use the equality in (\ref{eq(2)}) (due to the fact that ${\ucalV}$ is strongly involutive). 
For 4, apply ${\Vect}$ to the given inclusion and use ${\Vect(\ucalC')= \textrm{Inv}(\Vect(\ucalC)^{\perp})}$ (from the previous lemma).
\end{proof}
\noindent
It is clear that the previous lemma implies Proposition \ref{prop -gen -Lagr -sheaves}.

We see that, to prove Theorem \ref{thm-the-tricky-one} using the general setting, we are left with clarifying whether 
${\ucalV^{\perp}}$ is strongly involutive, under the assumption that ${\ucalV}$ is (which, by Remark \ref{important -remark}, is not enough even for regular foliations) together with ${\ucalC= \ucalC'}$. This should remind us also of Theorem \ref{thm -orthog -fol} which answers precisely this type of question, for regular foliations. Inspired by the case of regular foliations, one natural attempt is to show that the (strong) involutivity of ${\ucalV^{\perp}}$ is related to the (strong) involutivity of ${\ucalC}$ (which is automatic when ${\ucalC= \ucalC'}$). The start looks promising:

\begin{lemma}\label{promissing -lemma} 
For any subsheaf ${\ucalV\subset \gerX_M}$, if ${\ucalV^{\perp}}$ is strongly involutive, then so is ${\Fun(\ucalV)}$.
\end{lemma}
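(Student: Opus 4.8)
The plan is to realise $\Fun(\ucalV)$ explicitly as a Poisson commutant. The only natural candidate is $\ucalD \defeq \Fun(\ucalV^{\perp})$, so the whole statement reduces to the identity
\[
    \Fun(\ucalV) \;=\; \Fun(\ucalV^{\perp})' .
\]
I would prove this in two steps, arranged so that the hypothesis on $\ucalV^{\perp}$ is used only in the second one.

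In the first step --- valid for \emph{any} subsheaf $\ucalV \subset \gerX_M$ --- I would establish that $\Fun(\ucalV) = \Fun(\overline{\ucalV}^{\omega})$, where $\overline{\ucalV}^{\omega} = (\ucalV^{\perp})^{\perp}$. The inclusion $\Fun(\overline{\ucalV}^{\omega}) \subset \Fun(\ucalV)$ is formal: $\ucalV \subset \overline{\ucalV}^{\omega}$ and $\Fun$ reverses inclusions. For the reverse inclusion one exploits $i_{X_f}\omega = -df$: if $f$ is a local section of $\Fun(\ucalV)$ over an open $U$, then for every open $W \subset U$ and every $X \in \Gamma(W, \ucalV)$ one has $\omega(X_f|_W, X) = -\gerL_X(f|_W) = 0$; since the Hamiltonian vector field is compatible with restriction, this says precisely that $X_f|_V \in \Gamma(V, \ucalV^{\perp})$ for every open $V \subset U$. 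Pairing against such an $X_f|_V$ then gives $\gerL_Y(f|_V) = \omega(Y, X_f|_V) = 0$ for every $Y \in \Gamma(V, (\ucalV^{\perp})^{\perp})$, so $f$ is a section of $\Fun((\ucalV^{\perp})^{\perp})$.

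In the second step I would invoke the equality case of (\ref{eq(2)}), applied with $\ucalV^{\perp}$ playing the role of $\ucalV$: since $\ucalV^{\perp}$ is strongly involutive by hypothesis, that lemma gives $\Fun((\ucalV^{\perp})^{\perp}) = \Fun(\ucalV^{\perp})'$. Combining with the first step yields $\Fun(\ucalV) = \Fun(\ucalV^{\perp})'$, which exhibits $\Fun(\ucalV)$ as a commutant and hence as a strongly $\omega$-involutive sheaf of functions.

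The step I expect to be delicate is the nontrivial inclusion in the first step. The subtlety is that the defining conditions of $\Fun(\ucalV)$, of $\ucalV^{\perp}$ and of $(\ucalV^{\perp})^{\perp}$ are all tested germ-wise, i.e.\ over all smaller opens, so one must verify on that level that $X_f$ really is a section of the \emph{sheaf} $\ucalV^{\perp}$ and not merely of its naive version over $U$. This is exactly where the compatibility of $X_f$ with restriction to arbitrary $W \subset U$ matches up with the fact that membership in $\Fun(\ucalV)$ already quantifies over all such $W$; once that bookkeeping is done the rest is the formal adjunction calculus between $\Fun$ and $\Vect$, together with the analogous adjunction for symplectic orthogonals, set up earlier in this section.
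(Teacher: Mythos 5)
Your proof is correct, and it takes a genuinely different route from the paper's. Both arguments start by applying the equality case of (\ref{eq(2)}) to $\ucalV^{\perp}$ to get $\Fun((\ucalV^{\perp})^{\perp}) = \Fun(\ucalV^{\perp})'$, but they diverge after that. The paper stays entirely within the formal adjunction calculus: applying (\ref{eq(2)}) once more to $\ucalV$ and passing to commutants gives $\Fun(\ucalV)'' \subset \Fun(\ucalV^{\perp})' = \Fun(\overline{\ucalV}^{\omega})$, and combining with the \emph{trivial} inclusion $\Fun(\overline{\ucalV}^{\omega}) \subset \Fun(\ucalV)$ yields $\Fun(\ucalV)'' \subset \Fun(\ucalV)$, hence equality and $\mathrm{Inv}_\omega$-closedness. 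You instead establish the nontrivial inclusion $\Fun(\ucalV) \subset \Fun(\overline{\ucalV}^{\omega})$ directly (by tracking $X_f$ through the definitions and showing it is a section of the \emph{sheaf} $\ucalV^{\perp}$, not merely of its naive version), so that you get the exact identity $\Fun(\ucalV) = \Fun(\ucalV^{\perp})'$ exhibiting $\Fun(\ucalV)$ as a commutant. The paper's route is a shade more abstract and never needs the direct inclusion you prove; your route is a shade more concrete and shows that $\Fun(\ucalV^{\perp})'$ literally equals $\Fun(\ucalV)$ rather than only containing $\Fun(\ucalV)''$. As a small remark, the identity $\Fun(\ucalV) = \Fun(\overline{\ucalV}^{\omega})$ you prove by hand can also be obtained abstractly: $\ucalV \subset \Vect(\Fun(\ucalV)) = \Ham(\Fun(\ucalV))^{\perp}$ (by Lemma \ref{basic -Ham}) gives $\overline{\ucalV}^{\omega} \subset \Vect(\Fun(\ucalV))$, and then $\Fun(\ucalV) = \Fun(\Vect(\Fun(\ucalV))) \subset \Fun(\overline{\ucalV}^{\omega})$ by the Galois connection; but your explicit derivation is perfectly fine and arguably clearer.
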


\begin{proof} 
Since ${\ucalV}$ is strongly involutive, we know that the inclusion (\ref{eq(2)}), when applied to ${\ucalV^{\perp}}$, becomes equality. For the resulting term ${\Fun(\ucalV^{\perp})'}$ we apply again (\ref{eq(2)}) (to ${\ucalV}$ now) and then the commutant. We find (remembering that twice the orthogonal gives ${\overline{V}}$):
\[ \Fun(\overline{\ucalV})= \Fun(\ucalV^{\perp})'\supset \Fun(\ucalV)''\]
Applying now ${\Fun}$ to the basic inclusion ${\ucalV\subset \overline{\ucalV}}$ we find
\[ \Fun(\ucalV) \supset \Fun(\overline{\ucalV}).\]
Combining the two centered equations, we find that ${\Fun(\ucalV)''\subset \Fun(\ucalV)}$, which is the reverse of the fundamental inclusion for the double commutant. Hence equality must hold, and ${\Fun(\ucalV)}$ will be strongly involutive. 
\end{proof}
\noindent
\subsection{Intermezzo: the Hamiltonian envelope}

Although the previous lemma looks promising, we would need the converse of it, and that seems very hard to prove directly even in the case of regular foliations (and may very well fail in general). 
This discussion 
brings us to another basic construction that applies to general sheaves on symplectic manifolds. 

\begin{definition}
\label{def -Ham -env} 
For a subsheaf ${\ucalC\subset \ucalC^{\infty}_{M}}$, we define the \textbf{Hamiltonian envelope} of ${\ucalC}$ as the sheaf of vector fields 
\[ \textrm{Ham}(\ucalC)\subset \gerX_M\] 
which is the local ${\ucalC^{\infty}_{M}}$ span of the Hamiltonian vector fields ${X_f}$ with ${f\in \ucalC}$. More precisely, sections of ${\textrm{Ham}(\ucalC)}$ over an open set ${U}$ are the vector fields ${X\in \gerX(U)}$ with the property that any point ${x\in U}$ admits a neighborhood ${U_x\subset U}$ such that 
\[ X|_{U_x}\in \textrm{Span}_{\calC^{\infty}(U_x)}\{X_f: f\in \Gamma(U_x, \ucalC)) \}.\]
\end{definition}
 \noindent

The Hamiltonian envelope is more appropriate (than ${\Vect(\ucalC}$) when trying to obtain foliations in the Androulidakis-Skandalis sense - see Proposition \ref{AS-fol-curious} below. 
On the other hand, the construction of the Hamiltonian envelope is conceptually interesting because it show that the basic operations we discussed (the general ones ${\ucalV\mapsto \Fun(\ucalV)}$ and ${\ucalC\mapsto \Vect(\ucalC)}$, as well as the one induced, ${\omega}$- ${\ucalV\mapsto \ucalV^{\perp}}$ and ${\ucalC\mapsto \ucalC'}$) admit certain factorization. 
One discovers this construction also when wondering whether strongly involutive sheaves are ${\omega}$-closed: trying to write ${\Vect(\ucalC)}$ as the symplectic orthogonal of another sheaf, one finds ${\textrm{Ham}(\ucalC)}$. Here is the summary:

\begin{lemma}
\label{basic -Ham} 
For any subsheaf ${\ucalC\subset \ucalC^{\infty}_{M}}$, one has
\[ \Vect(\ucalC)= \textrm{Ham}(\ucalC)^{\perp},\]
\[ \ucalC'= \Fun(\textrm{Ham}(\ucalC)).\]
\end{lemma}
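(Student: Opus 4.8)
The statement to prove is Lemma \ref{basic -Ham}: for any subsheaf $\ucalC \subset \ucalC^{\infty}_M$, one has $\Vect(\ucalC) = \textrm{Ham}(\ucalC)^{\perp}$ and $\ucalC' = \Fun(\textrm{Ham}(\ucalC))$.

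The plan is to prove both equalities by unwinding the definitions and exploiting the single point where the symplectic form enters, namely the identity $\omega(X_f, Y) = -\gerL_Y(f)$ (equivalently $\{f, g\} = \gerL_{X_f}(g) = -\gerL_{X_g}(f)$), valid for the Hamiltonian vector field $X_f = \pi_\omega^\sharp(df)$ of any local function $f$. Both claimed equalities are ``germ-wise'' statements (the sheaves $\Vect(\ucalC)$, $\ucalC'$, $\textrm{Ham}(\ucalC)^{\perp}$ are all defined by conditions that must hold on every open subset), so it suffices to check the equalities of sections over an arbitrary open $U$, and in fact it is cleanest to argue with germs.

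For the first equality, let $X \in \gerX(U)$. On the one hand, $X \in \Gamma(U, \textrm{Ham}(\ucalC)^{\perp})$ means: for every open $V \subset U$ and every $Y \in \Gamma(V, \textrm{Ham}(\ucalC))$ one has $\omega(X|_V, Y) = 0$. Since $\textrm{Ham}(\ucalC)$ is, locally, the $\calC^\infty$-span of the $X_f$ with $f \in \ucalC$, and since $\omega(X, \cdot)$ is $\calC^\infty$-linear in the second slot, this condition is equivalent to requiring $\omega(X|_V, X_f) = 0$ for all $V \subset U$ and all $f \in \Gamma(V, \ucalC)$. By the identity above, $\omega(X, X_f) = -\gerL_X(f)$, so this is exactly the condition defining $\Gamma(U, \Vect(\ucalC))$. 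One small point to spell out: the ``locally spanned'' character of $\textrm{Ham}(\ucalC)$ means a section $Y$ over $V$ is only \emph{locally} a finite $\calC^\infty$-combination of Hamiltonian vector fields of local sections of $\ucalC$; but since the condition $\omega(X|_V, Y) = 0$ is itself checked germ-wise (it is a statement about the value of a $2$-form at each point), passing to a cover of $V$ on which $Y$ is an honest finite combination causes no trouble. The second equality is entirely parallel: $f \in \Gamma(U, \Fun(\textrm{Ham}(\ucalC)))$ means $\gerL_Y(f|_V) = 0$ for all $V \subset U$ and all $Y \in \Gamma(V, \textrm{Ham}(\ucalC))$; again by $\calC^\infty$-linearity of $\gerL_{(\cdot)}(f)$ in the vector-field argument and the locally-spanned structure of $\textrm{Ham}(\ucalC)$, this reduces to $\gerL_{X_g}(f|_V) = 0$ for all $g \in \Gamma(V, \ucalC)$, i.e.\ $\{f|_V, g\} = 0$ for all such $g$, which is precisely the defining condition of $\Gamma(U, \ucalC')$.

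There is no serious obstacle here; the only thing requiring a little care is bookkeeping around the word ``locally'' in the definition of $\textrm{Ham}(\ucalC)$ — one must make sure that the orthogonality/commutation conditions, which are already local (germ-wise) by design, are compatible with replacing $U$ by a smaller open on which a section of $\textrm{Ham}(\ucalC)$ is a genuine finite $\calC^\infty$-combination. Once that is noted, both equalities are immediate from the single pairing identity $\omega(X_f, Y) = -\gerL_Y(f)$ together with the $\calC^\infty(U)$-bilinearity of $\omega$ and of the Lie derivative in the vector-field slot.
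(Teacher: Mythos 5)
Your proof is correct and takes essentially the same approach as the paper: unwind the definitions and reduce everything to the single pairing identity between $\omega(X_f, \cdot)$ and $\gerL_{(\cdot)}(f)$, taking care that the conditions in the definitions of $\textrm{Ham}(\ucalC)^{\perp}$, $\Vect(\ucalC)$, $\ucalC'$, and $\Fun(\cdot)$ are all imposed germ-wise and that sections of $\textrm{Ham}(\ucalC)$ are only \emph{locally} $\calC^\infty$-combinations of Hamiltonian vector fields. (There is a harmless sign slip in the first-equality paragraph: with the convention $i_{X_f}\omega = -df$, one has $\omega(X,X_f) = +\gerL_X(f)$, not $-\gerL_X(f)$; this does not affect the vanishing condition.) The paper elects to spell out only the second equality in full, for exactly the ``subtleties'' you flag, so your treatment of both in parallel is a fine, equivalent variant.
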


\begin{proof} 
In principle, these follow by a straightforward check, using \[ {\omega(X_f, X)= \gerL_X(f)}.\] However, given the subtleties involved in the definitions (e.g.\  the stalks of ${\ucalC'}$ are not the commutants of the stalks of ${\ucalC}$, etc), let us give here all the details for the more involved one: the second equality. 

For the direct inclusion, start with ${f}$ a section of ${\ucalC'}$ over ${U}$. Hence ${f\in \calC^{\infty}(U)}$ and ${\{f|_V, g\}= 0}$ for all ${V\subset U}$ open and ${g\in \Gamma(V, \ucalC)}$. To prove that it is a section over ${U}$ of the other sheaf, we have to show that ${\gerL_{X}(f|_V)= 0}$ 
for all ${V\subset U}$ open and ${X\in \Gamma(V, \textrm{Ham}(\ucalC))}$. Fix such a ${V}$ and an ${X}$. To prove the last equation it suffices to show that each ${x\in V}$ has an open neighborhood ${U_x\subset V}$ so that ${\gerL_{X}(f)= 0}$ on ${U_x}$. Choose ${U_x}$ as in the definition of the Hamiltonian envelope, i.e.\  so that ${X|_{U_x}}$ is in the ${\calC^{\infty}(U_x)}$-span of ${\{X_g: g\in \Gamma(U_x, \ucalC)\}}$. Hence, to prove our equation over ${U_x}$, it suffices to show that it is satisfied by ${X= X_g}$ for any ${g\in \Gamma(U_x, \ucalC)}$ but, for those, ${\gerL_{X}(f)}$ (over ${U_x}$) gives the brackets ${\{f|_{U_x}, g\}}$ which are zero because ${f\in \Gamma(U, \ucalC')}$, ${U_x\subset U}$ open and ${g\in \Gamma(U_x, \ucalC)}$. 

For the reverse inclusion, we start with ${f}$ in the right hand side, over some open ${U}$ and we have to show that ${\{ f|_{V}, g\}= 0}$
for all ${V\subset U}$ open and ${g\in \Gamma(V, \ucalC)}$. Fix such a ${V}$ and ${g}$. We now use that ${f\in \Gamma(U, \Fun(\textrm{Ham}(\ucalC))}$, which implies that ${\gerL_{X}(f|_{V})= 0}$ for all ${X\in \Gamma(V, \textrm{Ham}(\ucalC))}$. Note that ${X\defeq X_g}$ is such a section, and then we obtain the desired equation.
\end{proof}
\noindent
We now claim that, for Theorem \ref{thm-the-tricky-one}, the most important point about regular foliations that is missing from the general discussion is the following:

\begin{local_theorem}
\label{thm -main -property -fols} 
For any regular foliation ${\calF}$ on a symplectic manifold ${(M, \calF)}$, locally, the vectors tangent to the orthogonal of ${\calF}$ are spanned by Hamiltonian vector fields ${X_f}$ induced by ${\calF}$-basic functions ${f}$. More precisely:
\begin{equation}\label{main -sheaves -fol} 
\gerX_{\calF^{\perp}}= \textrm{Ham}(\ucalC_{\calF}^{\infty}).
\end{equation} 
In contrast, for strongly involutive subsheaves ${\ucalV\subset \gerX_M}$, one always has
\begin{equation}\label{main -sheaves -fol2}
\ucalV^{\perp} \subset \textrm{Ham}(\ucalC),
\end{equation}
where ${\ucalC= \Fun(\ucalV)}$, but the inclusion may be strict. Actually, the equality is equivalent to the fact that 
${\textrm{Ham}(\ucalC)}$ is ${\omega}$-closed (as in Definition \ref{def -omega -closed}).  
\end{local_theorem} 

Of course, it is tempting the prove the statement about regular foliations as a consequence of the general part of the theorem: all we would have to show is be that
${\textrm{Ham}(\ucalC_{\calF}^{\infty})}$ is ${\omega}$-closed, e.g.\  proving that it comes from a vector bundle. But that looks difficult 
without proving first the entire theorem.

Note that the first part of the theorem also implies Theorem \ref{thm -orthog -fol}.

\begin{proof} 
First we show that any local Hamiltonian vector field ${X_f}$ with ${f}$ a basic function belongs to ${\gerX_\calF^{\perp_\omega}}$. Indeed, by definition a vector field ${X}$ belongs to ${\gerX_\calF^{\perp_\omega}}$ if and only if
    $
        \omega(X,Y)=0
    $
for all local ${Y\in \gerX_\calF}$, whereas a local smooth function ${f}$ lies in ${\ucalC_\calF^{\infty}}$ if and only if 
    \[
        \omega(X_f,Y)
        = 
        \gerL_Y(f) 
        = 
        0,
        \quad\forall\,
        Y\in\gerX_\calF.
    \]
The last part of the theorem just remarks that this part of the proof is completely general. The fact that equality hold only when ${\textrm{Ham}(\ucalC)}$ is closed follows from the first equality in Lemma \ref{basic -Ham} (if we take its orthogonal we find that the closure of ${\textrm{Ham}(\ucalC)}$ is precisely ${\Fun(\ucalV)}$. 

We are left with showing that these Hamiltonian vector fields locally generate ${\gerX_\calF^{\perp_\omega}}$ as a sheaf of ${C^\infty}$-modules. We show that any ${x\in M}$ admits a neighborhood ${U}$ and a finite number of functions
${f^1, \ldots, f^k\in C^{\infty}(U)}$ such that any ${X\in \gerX_{\calF}^{\perp_{\omega}}(U)}$ is a ${\calC^{\infty}(U)}$-linear combination of ${X_{f^1}, \ldots, X_{f^k}}$.  

Since ${\calF}$ is a \textit{regular foliation}, near any point ${x\in M}$ there is an open neighborhood ${U}$ and a submersion with connected fibers 
    \[
        \pi
        :
        U   \to \R^{2n -k}
    \]
that defines ${\calF\vert_U}$ as its fibers. In particular, any ${X\in\gerX_\calF}$ is ${\pi}$-vertical, i.e.\  ${d\pi(X)=0}$. Hence ${\pi_i\in \calC_\calF}$ for all ${i}$. By the previous, the Hamiltonian vector fields ${X_{\pi_i}}$ belong to ${\gerX_\calF^{\perp_\omega}}$. The ${X_{\pi_i}}$ are everywhere ${\R}$-linearly independent, since ${\pi}$ is a submersion, so they define a regular vector bundle ${V_\pi}$ over ${U}$ with fibers
    \[
        (V_\pi)_x
        \defeq
        \lspan_\R \big(
            (X_1)_x,
            \ldots,
            (X_{2n -k})_x
        \big).
    \]  
This is a subbundle of ${T\calF^{\perp_\omega}\vert_U}$ of constant rank equal to ${2n -k}$. Note that ${T\calF^{\perp_\omega}\vert_U}$ itself has rank ${2n -k}$, so the two vector bundles are identical. We are left with showing that any local section of ${V_\pi}$ is a ${C^\infty}$-linear combination of the ${X_i}$. For this, define a vector bundle map by
    \[
        U \times \R^k 
        \to
        V_\pi,
        \quad
        (x,r^1,\ldots,r^k)
        \mapsto
        r^i \, (X_i)_x.
    \]
This map is an isomorphism on each fiber. Hence, by the inverse function theorem, it has a smooth inverse. It is clear that any smooth map ${U\to \R^k}$ is a ${C^\infty}$-linear combination of the standard basis of ${\R^k}$, and this clearly implies the desired conclusion. 
\end{proof}
\noindent
Therefore, we are left with showing the relevance of the condition (\ref{main -sheaves -fol}) to the proof of Theorem  \ref{thm-the-tricky-one}. Again, we try to understand the general situation, but being aware now that the condition ${\ucalV^{\perp}= \textrm{Ham}(\ucalC)}$ may be imposed. Therefore, given Proposition \ref{prop -gen -Lagr -sheaves}, {\it all that is left to be understood is: if ${\ucalC'= \ucalC}$ and we have the extra condition ${\ucalV^{\perp}= \textrm{Ham}(\ucalC)}$, can we prove that ${\ucalV^{\perp}}$ is (strongly) involutive?}

Trying to prove this in full generality, one remarks the following:

\begin{lemma} 
Let ${\ucalV\subset \gerX_M}$ be a strongly involutive subsheaf, with associated sheaf of functions ${\ucalC= \Fun(\ucalV)}$. Assume that ${\ucalC= \ucalC'}$. Then the following are equivalent:
\begin{myenumerate}
\item ${\textrm{Ham}(\ucalC)}$ is Lagrangian (or just coisotropic),
\item ${\textrm{Ham}(\ucalC)}$ is strongly involutive. 
\end{myenumerate}
Moreover, these do imply both ${\ucalV= \ucalV^{\perp}}$, as well as ${\ucalV^{\perp}= \textrm{Ham}(\ucalC)}$. 
\end{lemma}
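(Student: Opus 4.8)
The plan is to transport everything to the side of sheaves of vector fields and then run the adjunction bookkeeping already in place. Write $\ucalW \defeq \textrm{Ham}(\ucalC)$. By Lemma~\ref{basic -Ham} one has $\ucalW^{\perp} = \Vect(\ucalC)$ and $\Fun(\ucalW) = \ucalC'$; since $\ucalV$ is strongly involutive with $\ucalC = \Fun(\ucalV)$, Proposition~\ref{prop -closures -V} gives $\ucalV = \Vect(\Fun(\ucalV)) = \Vect(\ucalC)$, hence $\ucalW^{\perp} = \ucalV$. Because $\ucalW \subset (\ucalW^{\perp})^{\perp}$ always and $(\ucalW^{\perp})^{\perp} = \ucalV^{\perp}$, this gives $\ucalW \subset \ucalV^{\perp}$; the reverse inclusion $\ucalV^{\perp} \subset \ucalW = \textrm{Ham}(\ucalC)$ is precisely (\ref{main -sheaves -fol2}) of Theorem~\ref{thm -main -property -fols}, applied to our strongly involutive $\ucalV$ with $\ucalC = \Fun(\ucalV)$. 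So $\ucalV^{\perp} = \textrm{Ham}(\ucalC)$ holds unconditionally under the standing hypotheses --- this is already the second assertion of the ``moreover'', and equivalently it says that $\textrm{Ham}(\ucalC)$ is automatically $\omega$-closed here.

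Next I would harvest the facts forced by $\ucalC = \ucalC'$. From Proposition~\ref{prop -gen -Lagr -sheaves}(1), the inclusion $\ucalC \subset \ucalC'$ gives $\ucalV^{\perp} \subset \ucalV$, i.e.\ $\ucalV$ is automatically coisotropic. Feeding in $\ucalW = \ucalV^{\perp}$ and $\ucalW^{\perp} = \ucalV$, this reads $\ucalW = \ucalV^{\perp} \subset \ucalV = \ucalW^{\perp}$, so $\textrm{Ham}(\ucalC)$ is automatically isotropic. Consequently, for the sheaf $\ucalW$ the conditions ``coisotropic'', ``Lagrangian'' and ``$\ucalV^{\perp} = \ucalV$'' all coincide: $\ucalW$ coisotropic means $\ucalW^{\perp} \subset \ucalW$, i.e.\ $\ucalV \subset \ucalV^{\perp}$, which with the automatic $\ucalV^{\perp} \subset \ucalV$ forces $\ucalV = \ucalV^{\perp}$, whence $\ucalW = \ucalW^{\perp}$. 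This settles the parenthetical ``(or just coisotropic)'' in~(1).

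It remains to link these with~(2) and close the ``moreover''. For (1)$\,\Rightarrow\,$(2): if $\ucalW$ is Lagrangian (equivalently, coisotropic), the previous paragraph gives $\ucalV = \ucalV^{\perp}$, so $\ucalW = \ucalV^{\perp} = \ucalV$ is strongly involutive, being literally $\ucalV$; moreover $\ucalV = \ucalV^{\perp}$ is the remaining claim. For (2)$\,\Rightarrow\,$(1): if $\ucalW = \ucalV^{\perp}$ is strongly involutive, I would invoke the converse half of Proposition~\ref{prop -gen -Lagr -sheaves}(2) --- valid exactly because $\ucalV^{\perp}$ is strongly involutive --- together with $\ucalC' \subset \ucalC$ (true since $\ucalC = \ucalC'$), to obtain $\ucalV \subset \ucalV^{\perp}$; combined with the automatic $\ucalV^{\perp} \subset \ucalV$ this gives $\ucalV = \ucalV^{\perp}$, hence $\ucalW = \ucalV^{\perp} = \ucalV = \ucalW^{\perp}$ is Lagrangian. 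I do not expect a genuine obstacle: the only non-formal ingredient is the identity $\ucalV^{\perp} = \textrm{Ham}(\ucalC)$ coming from Theorem~\ref{thm -main -property -fols}, and everything else is adjunction calculus with orthogonals and commutants together with Proposition~\ref{prop -gen -Lagr -sheaves}; the single point needing care is that all the inclusions above are to be read at the level of sheaves, not merely of spaces of global sections, and that $\Vect$, $\Fun$ and $\textrm{Ham}$ are consistently applied to honest subsheaves.
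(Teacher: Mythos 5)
There is a genuine gap in your opening paragraph, and it propagates through the rest of the argument.

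You claim that $\ucalV^{\perp}=\textrm{Ham}(\ucalC)$ holds unconditionally (under the standing hypotheses), citing (\ref{main -sheaves -fol2}) for the inclusion $\ucalV^{\perp}\subset\textrm{Ham}(\ucalC)$. This is not correct. First, the displayed inclusion (\ref{main -sheaves -fol2}) appears to have the arrow reversed: from the first identity of Lemma~\ref{basic -Ham} and $\ucalV=\Vect(\ucalC)$ one gets $\ucalV=\textrm{Ham}(\ucalC)^{\perp}$, hence $\ucalV^{\perp}=\overline{\textrm{Ham}(\ucalC)}^{\omega}$ and therefore $\textrm{Ham}(\ucalC)\subset\ucalV^{\perp}$ --- the inclusion runs the other way, consistent with the proof of Theorem~\ref{thm -main -property -fols} (which shows that Hamiltonian vector fields of basic functions lie in $\gerX_{\calF}^{\perp_\omega}$). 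Second, even taking (\ref{main -sheaves -fol2}) at face value, the theorem immediately says ``the inclusion may be strict'' and identifies equality with $\textrm{Ham}(\ucalC)$ being $\omega$-closed, so equality is precisely what is not free. Third, had $\ucalV^{\perp}=\textrm{Ham}(\ucalC)$ been automatic, the second assertion of the ``moreover'' in the present lemma would be vacuous, since it would hold without assuming (1) or (2) at all; the lemma is exactly asserting that (1)/(2) are what upgrade $\textrm{Ham}(\ucalC)\subset\ucalV^{\perp}$ to an equality.

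Because you build on this false equality, several downstream steps are not really established as written: your passage from ``$\ucalW$ coisotropic'' to ``$\ucalV\subset\ucalV^{\perp}$'' and the reduction of (2) to ``$\ucalV^{\perp}$ strongly involutive'' both substitute $\ucalW$ by $\ucalV^{\perp}$, which is what needs proving. The repair is the chain the paper actually establishes. Your correct identification $\ucalW^{\perp}=\ucalV$, together with (\ref{eq(1)}) and the second identity of Lemma~\ref{basic -Ham}, gives $\textrm{Ham}(\ucalC)\subset\ucalV^{\perp}\subset\Vect(\ucalC')$, and moreover $\textrm{Inv}(\textrm{Ham}(\ucalC))=\Vect(\Fun(\textrm{Ham}(\ucalC)))=\Vect(\ucalC')$. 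Only after feeding in $\ucalC=\ucalC'$ does $\Vect(\ucalC')$ collapse to $\ucalV$, giving the sandwich $\textrm{Ham}(\ucalC)\subset\ucalV^{\perp}\subset\ucalV$ with common $\textrm{Inv}$-closure $\ucalV$. From this: (1) $\textrm{Ham}(\ucalC)$ coisotropic means $\ucalV=\textrm{Ham}(\ucalC)^{\perp}\subset\textrm{Ham}(\ucalC)$, forcing the whole sandwich to collapse to $\textrm{Ham}(\ucalC)=\ucalV^{\perp}=\ucalV$; and (2) $\textrm{Ham}(\ucalC)$ strongly involutive means $\textrm{Ham}(\ucalC)=\textrm{Inv}(\textrm{Ham}(\ucalC))=\ucalV$, forcing the same collapse. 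Each of (1) and (2) is thus equivalent to $\textrm{Ham}(\ucalC)=\ucalV^{\perp}=\ucalV$, which delivers the equivalence and both halves of the ``moreover'' simultaneously. Your final conclusions are all correct once restated this way, but as written the proof rests on an identity that is neither given by the cited references nor true in general.
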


\begin{proof}
We claim that, without the condition ${\ucalC= \ucalC'}$, one has:
\[ \textrm{Ham}(\ucalC)\subset \ucalV^{\perp}\subset \Fun(\ucalC') = \textrm{Ham}(\ucalC')^{\perp}\]
and the ${\textrm{Inv}}$-closure of the first two terms coincides with the last two. 

The first inclusion follows from the remark made in the previous proof - that ${\ucalV^{\perp}}$ is the closure of ${\textrm{Ham}(\ucalC)}$. The second one is the basic inclusion (\ref{eq(1)}). The last part (the equality) 
is the first equality of Lemma \ref{basic -Ham}  applied to ${\ucalC'}$. Moreover, computing the ${\textrm{Inv}}$-closure of ${\textrm{Ham}(\ucalC)}$ we find, by applying ${\textrm{Inv}= \Vect\circ \Fun}$ and then the second equation from Lemma \ref{basic -Ham}, 
\[ \textrm{Inv}(\textrm{Ham}(\ucalC))= \Vect( \Fun(\textrm{Ham}(\ucalC)))= \Vect(\ucalC').\]
Since ${\ucalV^{\perp}}$ is in between, it must have the same ${\textrm{Inv}}$-closure. These prove the original claim. 

Assume now that ${\ucalC= \ucalC'}$. Using the claim, we see that ${\textrm{Hom}(\ucalC)}$ being strongly involutive (i.e.\  ${\textrm{Inv}}$-closed) is equivalent to ${\textrm{Hom}(\ucalC)= \ucalV^{\perp}= \ucalV}$. 
The same claim also implies that ${\textrm{Hom}(\ucalC)}$ is isotropic, hence the condition of being Lagrangian is equivalent to that of being coisotropic, as well as to ${\textrm{Hom}(\ucalC)= \ucalV^{\perp}= \ucalV}$. 
\end{proof}
\noindent
\subsection{End of proof of Theorem \ref{thm-the-tricky-one}}
Although the last lemma may sound promising, let us note that checking any of those conditions on ${\textrm{Ham}(\ucalC)}$ is usually hard: even for foliations when we can use ${\textrm{Ham}(\ucalC)= \gerX_{\calF^{\perp}}}$, they actually amount to proving the remaining step of Theorem \ref{thm-the-tricky-one}. Instead we remark that the idea behind Lemma \ref{promissing -lemma} has an even simpler version (and even an `if and only if' statement), provided we use the Hamiltonian envelope.  More precisely, since for sheaves of vector fields coming from regular foliations involutive is the same as strongly involutive, the missing step in the proof of Theorem \ref{thm-the-tricky-one} is provided by the following:

\begin{proposition} \label{last-step-vf-funct-thm}
A subsheaf ${\ucalC\subset \ucalC^{\infty}_{M}}$ is involutive if and only if 
${\textrm{Ham}(\ucalC)}$ is involutive.
For a strong involutive sheaf ${\ucalV\subset \gerX_M}$ with ${\ucalV^{\perp}= \textrm{Ham}(\ucalC)}$, the condition  
${\ucalC= \ucalC'}$ implies that ${\ucalV^{\perp}}$ is involutive, where ${\ucalC= \Fun(\ucalV)}$.
\end{proposition}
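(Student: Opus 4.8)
The engine of the whole argument is the identity $[X_f,X_g]=X_{\{f,g\}_\omega}$, combined with the Leibniz rule $[aX,bY]=ab\,[X,Y]+a\,\gerL_X(b)\,Y-b\,\gerL_Y(a)\,X$ and the fact that, $\omega$ being nondegenerate, $\iota_{(-)}\omega$ is injective on vector fields (so $X_h=0$ forces $dh=0$). I will prove the two implications of the first sentence separately; only the forward one is used for the second sentence. For "$\ucalC$ $\omega$-involutive $\Rightarrow$ $\textrm{Ham}(\ucalC)$ involutive", note that $\textrm{Ham}(\ucalC)$ is a sheaf of $\ucalC^{\infty}_M$-submodules and involutivity of a sheaf of vector fields is a local condition, so it suffices to bracket two local sections $X=\sum_i a_iX_{f_i}$ and $Y=\sum_j b_jX_{g_j}$ over an open $U$, with $a_i,b_j\in\calC^{\infty}(U)$ and $f_i,g_j\in\Gamma(U,\ucalC)$. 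Expanding by the Leibniz rule,
\[
    [X,Y]=\sum_{i,j}\Big(a_ib_j\,[X_{f_i},X_{g_j}]+a_i\,\gerL_{X_{f_i}}(b_j)\,X_{g_j}-b_j\,\gerL_{X_{g_j}}(a_i)\,X_{f_i}\Big),
\]
the last two families of terms are smooth multiples of Hamiltonian vector fields of sections of $\ucalC$, hence lie in $\textrm{Ham}(\ucalC)$; and $[X_{f_i},X_{g_j}]=X_{\{f_i,g_j\}_\omega}$ lies in $\textrm{Ham}(\ucalC)$ precisely because $\{f_i,g_j\}_\omega\in\Gamma(U,\ucalC)$ by $\omega$-involutivity. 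Thus $[X,Y]\in\Gamma(U,\textrm{Ham}(\ucalC))$.

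For the converse I would run this backwards, carefully. Given $f,g\in\Gamma(V,\ucalC)$, the sections $X_f,X_g$ lie in $\textrm{Ham}(\ucalC)$, so $X_{\{f,g\}_\omega}=[X_f,X_g]\in\Gamma(V,\textrm{Ham}(\ucalC))$. Unwinding Definition \ref{def -Ham -env}, each point of $V$ has a neighbourhood $U$ on which $X_{\{f,g\}_\omega}=\sum_i a_iX_{h_i}$ with $a_i$ smooth and $h_i\in\Gamma(U,\ucalC)$; contracting with $\omega$ and using nondegeneracy yields $d\{f,g\}_\omega=\sum_i a_i\,dh_i$ on $U$. Since any $X\in\Vect(\ucalC)$ kills every germ of $\ucalC$ via $\gerL$, it kills each $dh_i$, so $\gerL_X(\{f,g\}_\omega)=0$; as the $U$'s cover $V$, this holds over all of $V$, i.e. $\{f,g\}_\omega\in\Gamma(V,\Fun(\Vect(\ucalC)))=\Gamma(V,\overline{\ucalC})$ in the notation of Definition \ref{def -cl -sheaves -ftcs}. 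When $\ucalC$ is closed — the case relevant to Theorem \ref{thm-the-tricky-one}, where $\ucalC=\ucalC^{\infty}_{\calF}$ — this says exactly that $\ucalC$ is $\omega$-involutive, which completes the first sentence.

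For the second sentence only the forward implication is needed. The hypothesis $\ucalC=\ucalC'$ exhibits $\ucalC$ as strongly $\omega$-involutive (take the auxiliary sheaf in the definition to be $\ucalC$ itself), hence $\ucalC$ is $\omega$-involutive by the lemma to that effect; by the forward implication above $\textrm{Ham}(\ucalC)$ is involutive, and since $\ucalV^{\perp}=\textrm{Ham}(\ucalC)$ by assumption, $\ucalV^{\perp}$ is involutive. The remaining data ($\ucalV$ strongly involutive, $\ucalC=\Fun(\ucalV)$) serves only to make the hypotheses meaningful and does not enter the argument.

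The main obstacle is the backward implication of the first sentence: the Hamiltonian envelope $\textrm{Ham}(\ucalC)$ remembers $\ucalC$ only through the $\calC^{\infty}_M$-span of the differentials $df$, $f\in\ucalC$, so it coincides with $\textrm{Ham}(\overline{\ucalC})$ and can be strictly larger than one expects (for instance all of $\gerX_M$ for a small but non-closed $\ucalC$). Consequently the honest output of reversing the forward argument is $\omega$-involutivity of $\overline{\ucalC}$, not of $\ucalC$, and one must be explicit that the clean equivalence of the first sentence is to be read at the level of closed sheaves of functions — which is all that is required, since Theorem \ref{thm-the-tricky-one} concerns the closed sheaf $\ucalC^{\infty}_{\calF}$. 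None of this affects the second sentence, which uses only the elementary forward implication.
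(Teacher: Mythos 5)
Your forward implication and your handling of the second sentence are exactly the paper's argument: the Leibniz expansion of $[\sum_i a_iX_{f_i},\sum_j b_jX_{g_j}]$ plus $[X_f,X_g]=X_{\{f,g\}_\omega}$, and then the observation that the second sentence only needs this direction (the hypotheses $\ucalC=\ucalC'$ and $\ucalV^\perp=\Ham(\ucalC)$ are used exactly as you use them).

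Your scepticism about the backward implication is well placed, and is in fact sharper than the paper. The paper's proof of ``$\Ham(\ucalC)$ involutive $\Rightarrow \ucalC$ $\omega$-involutive'' rests on the asserted equivalence $f\in\ucalC\Longleftrightarrow X_f\in\Ham(\ucalC)$, and the $\Leftarrow$ half of that equivalence is simply false without further hypotheses on $\ucalC$ — which is precisely the point you raise when you say $\Ham(\ucalC)$ cannot distinguish $\ucalC$ from a larger sheaf with the same Hamiltonian vector fields. A concrete counterexample to the first sentence as stated: on $(\R^4,\omega_{\text{can}})$ take $\ucalC$ to be the subsheaf with sections the $\R$-linear combinations of $x_1$ and $y_1$. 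Then $\Ham(\ucalC)$ is the $\ucalC^\infty_M$-span of $\partial_{x_1}$ and $\partial_{y_1}$, which is involutive, yet $\{x_1,y_1\}_\omega=\pm1\notin\ucalC$, so $\ucalC$ is not $\omega$-involutive. (Consistently with your analysis, $\overline{\ucalC}$ is the sheaf of smooth functions of $(x_1,y_1)$, which is $\omega$-involutive.) So the first sentence of the proposition is really an equivalence for closed $\ucalC$ only; your argument proves exactly that, and correctly notes that this is all the downstream application needs since $\ucalC^\infty_\calF$ is closed.
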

\noindent
\begin{proof} The key remark is that the map that associates Hamiltonian vector fields with functions is a morphism of sheaves of Lie algebras:
\[ \textrm{Ham}: \ucalC^{\infty}_{M} \rightarrow \gerX_M \]
which takes ${\ucalC}$ to ${\textrm{Ham}(\ucalC)\subset \ucalV(\ucalC)^{\perp}}$ and has the property that, for any local function ${f}$,
\[ f\in \ucalC \Longleftrightarrow \textrm{Ham}(f)\in \textrm{Ham}(\ucalC).\]
This clearly implies that ${\ucalC}$ is involutive if ${\textrm{Ham}(\ucalC)}$ is. Also the converse is immediate since, locally, the elements of ${\textrm{Ham}(\ucalC)}$ are ${\ucalC_{M}^{\infty}}$-linear combinations of ${X_f}$'s with ${f}$ local sections of ${\ucalC}$ and 
\[ \left[\sum_i \phi_i X_{f_i}, \sum_j \psi_j X_{g_j}\right] = \sum_{i, j} \left( \phi_i\psi_j X_{\{f_i, g_j\}}+ \phi_i \{f_i, \psi_j\} X_{g_j} - \psi_j \{g_j, \phi_i\} X_{f_i}\right)\]
are again such combinations. 
\end{proof}

\noindent
 \subsection{Conclusion}
Therefore, a large part of Theorem \ref{thm-the-tricky-one} holds for general sheaves of vector fields, and that is the content of Proposition \ref{prop -gen -Lagr -sheaves}. The only thing we were not able to prove in general (and is probably not true in full generality) is that the condition ${\ucalC= \ucalC'}$ implies that ${\ucalV^{\perp}}$ is strongly involutive (which would then imply the problematic inclusion, namely ${\ucalV\subset \ucalV^{\perp}}$). What we found out is what makes the case of regular foliations work, namely the following two properties:
\begin{myenumerate}
\item the fact that ${\ucalV^{\perp}= \textrm{Ham}(\ucalC)}$ is automatic for (strongly) involutive sheaves coming from regular foliations. 
\item for sheaves of sections of a sub bundle of ${TM}$ the involutivity is equivalent to strong involutivity.
\end{myenumerate}
These are the points that have to be addressed separately when dealing with singular foliations. 
As a curiosity (for now), let us point out that the Hamiltonian envelope is more appropriate (than ${\Vect(\ucalC}$) when trying to obtain foliations in the Androulidakis-Skandalis sense (see Remark \ref{Androulidakis and Skandalis}). More precisely, one has:
 
 \begin{proposition}
\label{AS-fol-curious}  
For any involutive subsheaf ${\ucalC\subset \ucalC^{\infty}_{M}}$ which is functionally finitely generated (cf. Definition \ref{functionally-fg}), ${\Ham(\ucalC)}$ defines a foliation in the sense of Androulidakis and Skandalis.
 \end{proposition}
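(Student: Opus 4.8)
The plan is to verify the three properties that, by Remark \ref{Androulidakis and Skandalis}, characterize a singular foliation in the sense of Androulidakis and Skandalis: that $\Ham(\ucalC)$ is (a) a sheaf of $\ucalC^{\infty}_M$-submodules of $\gerX_M$, (b) involutive, and (c) locally finitely generated. For (a) there is nothing to do: by the very definition of the Hamiltonian envelope (Definition \ref{def -Ham -env}), being a section of $\Ham(\ucalC)$ over $U$ is a condition that is local on $U$ and stable under $\calC^\infty(U)$-linear combinations, so $\Ham(\ucalC)$ is automatically a subsheaf of $\gerX_M$ closed under multiplication by smooth functions --- and hence, via Proposition \ref{prop -basic -sheaves}, corresponds to a local $\calC^\infty(M)$-submodule of $\gerX(M)$, which is the form in which the Androulidakis--Skandalis condition is often phrased. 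For (b) I would just invoke Proposition \ref{last-step-vf-funct-thm}: since $\ucalC$ is involutive by hypothesis, $\Ham(\ucalC)$ is involutive.

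The content is therefore (c), and here is how I would argue it. Fix a point of $M$; by functional finite generation of $\ucalC$ (Definition \ref{functionally-fg}) there is an open neighborhood $U$ and sections $f_1,\dots,f_p\in\Gamma(U,\ucalC)$ such that every local section $g\in\Gamma(W,\ucalC)$ over an open $W\subseteq U$ has the form $g=F\circ(f_1|_W,\dots,f_p|_W)$ for some smooth $F\colon\R^p\to\R$. I claim $X_{f_1},\dots,X_{f_p}$ generate $\Ham(\ucalC)$ over $U$. The key input is the chain-rule identity already used in the proof of Proposition \ref{lemma-with-strange-condition2}: since $\{\,\cdot\,,h\}$ is a derivation on functions for each $h$, one has $\{F\circ(f_1,\dots,f_p),h\}=\sum_k\big(\partial_k F\circ(f_1,\dots,f_p)\big)\{f_k,h\}$, and as this holds for every $h$ it yields
\[
    X_g \;=\; X_{F\circ(f_1,\dots,f_p)} \;=\; \sum\nolimits_k\big(\partial_k F\circ(f_1,\dots,f_p)\big)\,X_{f_k}
\]
as vector fields on $W$. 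Consequently, the Hamiltonian vector field of any section of $\ucalC$ over an open subset of $U$ is a $\calC^\infty$-linear combination of the $X_{f_k}$ there; and since a section of $\Ham(\ucalC)$ over an open $V\subseteq U$ is, near each point, a finite $\calC^\infty$-combination of Hamiltonian vector fields $X_g$ with $g$ a local section of $\ucalC$, it follows that such a section is locally a $\calC^\infty$-combination of $X_{f_1},\dots,X_{f_p}$.

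The one step that needs real care --- and the only place I expect any obstacle --- is upgrading these local expressions to a single expression valid over all of $V$. Because we are in the singular situation, the $X_{f_k}$ may be linearly dependent at some points, so the coefficients in the local expressions are not uniquely determined and need not agree on overlaps; one cannot simply glue them. The fix is the usual partition-of-unity argument: cover $V$ by opens $U_y$ on each of which $X|_{U_y}=\sum_k\psi^y_k\,X_{f_k}$, choose a locally finite partition of unity $\{\eta_y\}$ subordinate to $\{U_y\}$ (available since $M$ is second countable and Hausdorff), and set $\psi_k\defeq\sum_y\eta_y\psi^y_k\in\calC^\infty(V)$ with each summand extended by zero; then $X=\sum_y\eta_y X=\sum_k\psi_k\,X_{f_k}$ over all of $V$. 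This shows $\Gamma(V,\Ham(\ucalC))$ is generated as a $\calC^\infty(V)$-module by $X_{f_1}|_V,\dots,X_{f_p}|_V$ for every open $V\subseteq U$, in particular the finite-generation condition of Definition \ref{sheaf-vf -finite -gen} holds. Together with (a) and (b), Remark \ref{Androulidakis and Skandalis} then gives that $\Ham(\ucalC)$ is a foliation in the sense of Androulidakis and Skandalis. Everything apart from the partition-of-unity step is either immediate from the definitions or a direct appeal to Propositions \ref{last-step-vf-funct-thm} and \ref{lemma-with-strange-condition2}.
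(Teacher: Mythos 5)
Your proposal is correct and takes essentially the same route as the paper: both rely on Proposition \ref{last-step-vf-funct-thm} for involutivity, on Remark \ref{Androulidakis and Skandalis} for the formulation of the Androulidakis--Skandalis condition, and above all on the chain-rule identity $X_{F\circ(f_1,\dots,f_p)}=\sum_k(\partial_kF\circ(f_1,\dots,f_p))\,X_{f_k}$ to reduce local finite generation of $\Ham(\ucalC)$ to functional finite generation of $\ucalC$. You merely spell out the partition-of-unity gluing that the paper's one-line proof treats as immediate.
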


\noindent
\begin{proof} With the discussion from Proposition \ref{last-step-vf-funct-thm} and Remark \ref{Androulidakis and Skandalis} in mind, we are left with remarking that, if ${\ucalC}$ is functionally finitely generated, then ${\Ham(\ucalC)}$ is finitely generated, but that follows immediately using that, for ${g= F\circ (f_1, \ldots, f_p)}$ (as in Definition \ref{functionally-fg}), one has
\[ 
    X_g = \sum\nolimits_k 
    \frac{\partial F}{\partial x_k}\circ (f_1, \ldots, f_p) \, X_{f_k}\in   \textrm{Span}_{\ucalC^{\infty}}(X_{f_1}, \ldots, X_{f_p}).
    \qedhere
\]
\end{proof}

\noindent
Note that our discussion also proves the intermediate statements described in Remark \ref{important -remark}. Moreover, as a generalization of the fact that a fibration ${\pi: (M, \omega)\rightarrow B}$ with isotropic fibers is a symplectically complete isotropic fibration if and only if ${B}$ admits a Poisson structure such that ${\pi}$ becomes a Poisson map, we obtain:

\begin{corollary} 
An isotropic foliation ${\calF}$ on a symplectic manifold is symplectically complete if and only if the sheaf ${\ucalC_{\calF}^{\infty}}$ of ${\calF}$-basic functions is closed under the Poisson bracket.
\end{corollary}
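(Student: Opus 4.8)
The plan is to read off the corollary from Theorem \ref{thm -main -property -fols} together with Proposition \ref{last-step-vf-funct-thm}, after translating ``symplectically complete'' into the language of sheaves of vector fields. Recall from Remark \ref{important -remark} that, for an isotropic regular foliation $\calF$ on $(M,\omega)$, being symplectically complete means precisely that the symplectic orthogonal $\calF^{\perp}$ is again a foliation. Since $\calF$ is regular and $\omega$ is nondegenerate, $\calF^{\perp}\subset TM$ is automatically a smooth subbundle of constant rank (equal to $2n-\operatorname{rank}\calF$), so by the Frobenius theorem $\calF^{\perp}$ is (the tangent distribution of) a foliation if and only if the distribution $\calF^{\perp}$ is involutive, equivalently if and only if the sheaf $\gerX_{\calF^{\perp}}$ of its local sections is closed under the Lie bracket. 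This first step is where the regularity of $\calF$ is genuinely used; everything afterwards is formal.

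Now I would plug in the first part of Theorem \ref{thm -main -property -fols}, namely the identity \eqref{main -sheaves -fol}: $\gerX_{\calF^{\perp}}=\textrm{Ham}(\ucalC_{\calF}^{\infty})$. Hence $\calF^{\perp}$ is a foliation if and only if $\textrm{Ham}(\ucalC_{\calF}^{\infty})$ is an involutive subsheaf of $\gerX_M$. By Proposition \ref{last-step-vf-funct-thm}, applied to $\ucalC=\ucalC_{\calF}^{\infty}$, the Hamiltonian envelope $\textrm{Ham}(\ucalC_{\calF}^{\infty})$ is involutive if and only if $\ucalC_{\calF}^{\infty}$ itself is involutive, i.e.\ closed under the Poisson bracket $\{\cdot,\cdot\}_{\omega}$. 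Chaining these equivalences gives exactly the statement: the isotropic foliation $\calF$ is symplectically complete $\iff$ $\ucalC_{\calF}^{\infty}$ is closed under the Poisson bracket.

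I expect no real obstacle here: the corollary is essentially a repackaging of Theorem \ref{thm -orthog -fol} (indeed the excerpt already notes that the first part of Theorem \ref{thm -main -property -fols} implies Theorem \ref{thm -orthog -fol}), now phrased with the Dazord--Delzant terminology. The only point worth stating carefully is the one above, that $\calF^{\perp}$ is a bona fide constant-rank distribution so that ``foliation'' is synonymous with ``involutive sheaf of sections''; once that is in place, the two cited results do all the work and there is nothing left to compute.
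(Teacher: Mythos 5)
Your proof is correct and takes the same route the paper implicitly relies on: translate symplectic completeness into $\calF^{\perp}$ being a foliation (Remark \ref{important -remark}), observe that $\calF^{\perp}$ is a constant-rank distribution so Frobenius makes ``foliation'' equivalent to ``involutive sheaf of sections,'' and then chain the identity $\gerX_{\calF^{\perp}}=\textrm{Ham}(\ucalC_{\calF}^{\infty})$ from Theorem \ref{thm -main -property -fols} with Proposition \ref{last-step-vf-funct-thm}. The paper states the corollary without a displayed proof, saying only that it follows from the preceding discussion, and your write-up is an accurate unpacking of that discussion.
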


\begin{proof} 
({\it of Proposition \ref{reg -fol -locally -is}}) For completeness, we also include the proof of Proposition \ref{reg -fol -locally -is}. 
Since this is a local statement we may assume that we deal with a foliation given by a submersion ${\pi: U\to \mathbb{R}^k}$, where ${k}$ is the codimension of the foliation. Let also ${n}$ be the dimension of ${M}$. Note that the basic functions are precisely the ones that are pull backs via ${\pi}$ of functions on ${\mathbb{R}^k}$. 

We have to show that ${\calF}$ is Lagrangian if and only if ${n= 2k}$ and ${\{\pi, \pi_j\}= 0}$ for all ${i}$ and ${j}$. For the direction implication, since ${\ucalC^{\infty}_{\calF}}$ coincides with it commutant, it must be abelian - hence the ${\pi_i}$'s must commute. Since in a symplectic vector space Lagrangian subspaces have half the dimension of the space, we must have ${(n - k)= \frac{1}{2}n}$.

For the converse, a linear algebra argument (and the assumption ${n= 2k}$) shows that it suffices to show that ${\calF}$ is coisotropic or, equivalently, that ${\ucalC^{\infty}_{\calF}}$ is abelian. But since the ${\pi_i}$ functionally generate ${\ucalC^{\infty}_{\calF}}$ and they commute, the statement follows.
\end{proof}

\newpage
\section{The `singular foliations' of an integrable system}
\label{sub:sec:sing -Lag -fol}
\etocsettocstyle{\subsubsection*{Local Contents}}{}
\etocsettocdepth{2}
\localtableofcontents

\subsection{The sheaves ${\CM_{\mu}}$ and ${\XX_{\mu}}$ (the Lagrangian foliation)}
In this section we finally discuss the sheaves of functions and of vector fields associated to an integrable system. Please see again the two motivations mentioned at the beginning of this chapter. We start with the Lagrangian foliation (the central foliation appearing in Motivation 1), and here are the sheaves which encode it:

\begin{definition}
\label{def -central -sheaf} 
Given a completely integrable system ${\mu: (M, \omega)\rightarrow \mathbb{R}^n}$, one defines the following sheaves on ${M}$:
\begin{myenumerate}
\item 
${\CM_\mu}$, \textbf{the central sheaf of ${\mu}$}, is the sheaf of functions
\[
        \CM_\mu(U)
        \defeq
        \big\{
            f \in \calC^\infty(U)
            :
            \{ \mu_i|_{U}, f \}_\omega
            = 0
            \;\forall\;
            i
        \big\}.
    \]

\item 
${\XX_{\mu}}$, \textbf{the Lagrangian sheaf of ${\mu}$}, is the sheaf of vector fields
\[
\XX_{\mu}(U) \defeq
        \big\{
            X\in \gerX(U)
            :
            (d\mu)(X)= 0        
            \big\}.
\]
\end{myenumerate}
\end{definition}
\noindent
We will prove that, while an integrable system comes with various other sheaves of functions (built in various manners out of the ${\mu_i}$'s) and sheaves of vector fields (built in various manners out of the ${X_{\mu_i}}$'s), ${\CM_\mu}$ and ${\XX_{\mu}}$ are the best possible ones. More precisely, we will show:

\begin{proposition}
\label{main -prop -sheavs -int -syst} 
For any completely integrable system ${\mu: (M, \omega)\rightarrow \mathbb{R}^n}$, the central sheaf ${\CM_\mu}$ is closed and self-centralizing (hence also ${\textrm{Inv}_{\omega}}$-closed), ${\XX_{\mu}}$ is strongly involutive and Lagrangian (hence, in particular, also ${\omega}$-closed), and the two correspond to each other via the one-to-one correspondence from Corollary \ref{one-to-one -corr -sheaves}. 
In more detail:
\begin{myenumerate}

\item
${\CM_\mu}$ is the only self-centralizing sheaf of smooth functions on ${M}$ which contains all the ${\mu_i}$'s. 

\item 
${\CM_\mu}$ is the smallest closed sheaf of smooth functions on ${M}$ with the ${\mu_i}$'s. 

\item
${\XX_{\mu}}$ is the only Lagrangian sheaf of vector fields on ${M}$ with the ${X_{\mu_i}}$'s.

\item
${\XX_{\mu}}$ is the smallest strongly involutive sheaf of vector fields with the ${X_{\mu_i}}$'s. 

\item 
${\CM_{\mu}}$ and ${\XX_{\mu}}$ correspond via the bijection between closed sheaves of smooth functions and strongly involutive sheaves of vector fields:
\[ \XX_{\mu}= \Vect(\CM_\mu),\quad   \CM_\mu = \Fun(\XX_{\mu}).\]

\end{myenumerate}
\end{proposition}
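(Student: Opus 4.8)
The plan is to exploit the general machinery of Corollary \ref{one-to-one -corr -sheaves} together with the two characterizing properties already developed for Lagrangian sheaves (Proposition \ref{prop -gen -Lagr -sheaves} and Theorem \ref{thm -main -property -fols}), so that most of the five claims reduce to a single core fact, namely that $\XX_\mu$ is strongly involutive and Lagrangian. First I would observe that $\XX_\mu = \Vect(\CM_\mu)$ essentially by unwinding the definitions: the defining condition $(d\mu)(X)=0$ over $U$ is, germwise, the statement that $\gerL_X(\mu_i)=0$ for all $i$ near each point, and since $\CM_\mu$ is generated (as a $C^\infty$-ring, even just functionally via $F\circ(\mu_1,\dots,\mu_n)$) by the $\mu_i$ over small opens, killing the $\mu_i$ germwise via Lie derivative is the same as killing all of $\Gamma(V,\CM_\mu)$; the Leibniz-type identity $\gerL_X(F\circ(\mu_1,\dots,\mu_n)) = \sum_k \frac{\partial F}{\partial x_k}\circ(\mu)\,\gerL_X(\mu_k)$ makes this precise. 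Hence $\XX_\mu = \Vect(\CM_\mu)$; from Lemma \ref{lemma-closed-sheaf} this already shows $\XX_\mu$ is strongly involutive and a sheaf of $\ucalC^\infty_M$-submodules.

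Next I would establish that $\XX_\mu$ is Lagrangian. Since $\omega$ is nondegenerate, $\XX_\mu(U) = \{X : \omega(X, X_{\mu_i}) = 0\ \forall i\}$ locally (because $i_{X_{\mu_i}}\omega = -d\mu_i$, so $(d\mu_i)(X) = 0 \iff \omega(X_{\mu_i}, X) = 0$); germwise this says $\XX_\mu = (\Ham(\CM_\mu))^{\perp_\omega}$, which by Lemma \ref{basic -Ham} (first equality, applied to $\CM_\mu$) equals $\Vect(\CM_\mu)$ again — consistent. For the Lagrangian claim I would argue pointwise on the dense open set of regular points, where $\XX_\mu$ is the tangent sheaf of the regular Lagrangian foliation given by the submersion $\mu|_U$, so $\calF_x(\XX_\mu)$ is an honest Lagrangian subspace; then extend to all of $M$ using that $\XX_\mu^{\perp_\omega}$ is a sheaf of submodules (Lemma after Definition \ref{def -omega -closed}) and a closure/density argument, or more cleanly: the involution condition $\{\mu_i,\mu_j\}=0$ says $X_{\mu_i}\in\XX_\mu$, so $\XX_\mu \subset \XX_\mu^{\perp_\omega}$ globally once we check it pointwise; and $\XX_\mu^{\perp_\omega}\subset\XX_\mu$ follows from a dimension count that holds at regular points and propagates because both sides are strongly involutive sheaves agreeing on a dense open, hence equal. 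Once $\XX_\mu$ is Lagrangian and strongly involutive, Proposition \ref{prop -gen -Lagr -sheaves} forces the corresponding closed sheaf $\ucalC \defeq \Fun(\XX_\mu)$ to be self-centralizing; and since $\XX_\mu = \Vect(\CM_\mu)$ with $\CM_\mu$ a sheaf of functions, $\CM_\mu$ is contained in its double-commutant-closure and in fact $\Fun(\Vect(\CM_\mu)) = \overline{\CM_\mu}$; I would then check directly that $\CM_\mu$ is already closed (a function $f$ with $\gerL_X f = 0$ for all local $X$ killed by $d\mu$ must in particular satisfy $\{\mu_i, f\} = -\gerL_{X_{\mu_i}} f = 0$, and conversely — this is the content of $\CM_\mu = \Fun(\XX_\mu)$). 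This gives parts (5), and self-centralizedness of $\CM_\mu$ gives the existence half of (1) and (3).

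For the uniqueness statements (1)--(4): if $\ucalD$ is any self-centralizing sheaf of functions containing all $\mu_i$, then $\ucalD' \supset \CM_\mu$ is impossible to shrink — more precisely $\ucalD \subset \{\mu_i\}' = \CM_\mu$ since $\ucalD = \ucalD'$ and the $\mu_i \in \ucalD$ force everything in $\ucalD$ to commute with the $\mu_i$; conversely $\CM_\mu \subset \ucalD'' $, and combining with $\ucalD=\ucalD'$ and the basic inclusions of the closure operator $\mathrm{Inv}_\omega$ yields $\ucalD=\CM_\mu$. For (2), $\CM_\mu$ is closed and contains the $\mu_i$; any closed $\ucalC$ containing the $\mu_i$ contains $\Fun(\Vect(\CM_\mu))$... actually contains $\mathrm{Vect}$ of less, so $\Vect(\ucalC)\subset\XX_\mu$, hence $\ucalC = \Fun(\Vect(\ucalC)) \supset \Fun(\XX_\mu) = \CM_\mu$. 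Parts (3) and (4) are the mirror images for vector fields, using that $\XX_\mu$ contains the $X_{\mu_i}$ (involutivity) and the same closure-operator formalism, plus the one-to-one correspondence of Corollary \ref{one-to-one -corr -sheaves} to transport uniqueness from one side to the other.

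The main obstacle I anticipate is the Lagrangian claim away from regular points: strong involutivity of $\XX_\mu$ gives local defining closed $1$-forms, but one must rule out that $\XX_\mu^{\perp_\omega}$ is strictly larger than $\XX_\mu$ at singular points. The clean way around this is precisely the density of regular points combined with the fact — which must be invoked carefully — that a strongly involutive sheaf of vector fields is determined by its behaviour on a dense open set, or alternatively a pointwise symplectic-linear-algebra argument at each $x$ using that $\XX_\mu$ locally equals the kernel of a smooth map $\mu$ (so $\calF_x(\XX_\mu) = \ker(d_x\mu)$ and its $\omega_x$-orthogonal is the span of the $(X_{\mu_i})_x$, which lies inside $\ker(d_x\mu)$ by involution, giving isotropy; equality of dimensions at regular $x$ then propagates). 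I would spend the most care making that propagation rigorous; everything else is formal manipulation of the adjunction $(\Fun,\Vect)$ and of $(-)^{\perp_\omega}$, already set up in the preceding subsections.
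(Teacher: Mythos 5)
There are two genuine gaps.

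First, your argument for $\XX_\mu = \Vect(\CM_\mu)$ rests on the claim that $\CM_\mu$ is locally generated, functionally, by the $\mu_i$ (``$\CM_\mu$ is generated \dots functionally via $F\circ(\mu_1,\ldots,\mu_n)$''). That would make $\CM_\mu$ equal to $\ucalC_\mu^\infty$, but the whole point of the surrounding section is that these are \emph{different} sheaves in general --- $\ucalC_\mu^\infty \subset \CM_\mu$ with potentially strict inclusion at singular points. The inclusion $\Vect(\CM_\mu)\subset\XX_\mu$ is indeed immediate (the $\mu_i$ lie in $\CM_\mu$), but the converse $\XX_\mu\subset\Vect(\CM_\mu)$ is not a ``Leibniz plus local generation'' unwinding. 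The paper's Corollary \ref{cor -X -mu -Lagr} gets it via a density argument on the \emph{vector field} side: for $X\in\XX_\mu$, one writes $X|_{M_\textsc{reg}} = f^i X_{\mu_i}$ over the regular locus, computes $\gerL_X(f) = f^i\{\mu_i,f\} = 0$ there for any $f\in\CM_\mu$, and then concludes $\gerL_X(f)=0$ everywhere by continuity and density of $M_\textsc{reg}$. This is exactly the mechanism of the \nameref{cis:lem:main}, and cannot be avoided.

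Second, you have the two inclusions of the Lagrangian claim the wrong way round. The inclusion $\XX_\mu^{\perp_\omega}\subset\XX_\mu$ --- which you flag as the main obstacle and propose to handle by a ``propagation'' principle --- is in fact immediate and needs no density or dimension count: if $V\in \XX_\mu^{\perp_\omega}$ then in particular $\omega(V,X_{\mu_i})=0$, i.e.\ $(d\mu_i)(V)=0$, for every $i$, so $V\in\XX_\mu$. The direction that requires density is the isotropy $\XX_\mu\subset\XX_\mu^{\perp_\omega}$, but there the density argument is the elementary one about a \emph{continuous function}: for $X,Y\in\XX_\mu$, write both as $C^\infty$-combinations of the $X_{\mu_i}$ on $M_\textsc{reg}$, get $\omega(X,Y)=f^ig^j\{\mu_i,\mu_j\}=0$ on $M_\textsc{reg}$, and conclude $\omega(X,Y)\equiv 0$ by continuity. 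Your proposed alternatives are both problematic: (a) the principle ``strongly involutive sheaves agreeing on a dense open are equal'' is nowhere established in the paper, it is not obviously true, and you would additionally need $\XX_\mu^{\perp_\omega}$ to be strongly involutive a priori --- but the text explicitly warns that $\ucalV^{\perp_\omega}$ need not even be involutive; (b) the pointwise identification $\calF_x(\XX_\mu)=\ker(d_x\mu)$ fails at singular points (at a fixed point of the elliptic model, $\calF_0(\XX_{\mu_\text{ell}})=\{0\}$ while $\ker(d_0\mu_\text{ell})=T_0M$), so the ``pointwise symplectic linear algebra'' route does not go through either. Your closure-operator formalism for the uniqueness/minimality statements (1)--(4), on the other hand, matches the paper's argument and is fine.
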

\noindent
The proofs are postponed to the end of this section. For now, we prove the following consequence of the independence condition, which is key to many of the arguments to follow. 

\begin{NamedTheorem}[Main Lemma]\label{cis:lem:main}
If ${f,g \in C^\infty}$ are such that ${\set{\mu_i,f}_\omega=\set{\mu_i,g}_\omega = 0}$ for all ${i}$, then ${\set{f,g}_\omega=0}$. 
In other words, ${\ucalC_{\mu}}$ is abelian. 
\end{NamedTheorem}

\begin{proof}
The Hamiltonian vector fields ${X_i\defeq X_{\mu_i}^\omega}$ span the kernel of $d_x\mu$ at each regular point ${x}$. The equations
    \[
        \gerL_{X_i}(f) = \{ \mu_i, f \} = 0
        \quad\forall\, i 
    \]
thus tell us that ${f}$ is constant on the connected components of the regular fibers of ${\mu}$. Since ${\mu:M\to \R^n}$ is locally a submersion near ${x}$, there is a neighborhood ${U\sub M}$ of ${x}$, a neighborhood ${V\sub \R^n}$ of ${\mu(x)}$ and a smooth map ${F:V\to \R^n}$ such that 
    \[
        f\vert_U  
        = 
        F\circ \mu\vert_U.
    \]
From this it follows that ${f}$ and ${g}$ Poisson commute in a neighborhood of ${x}$, i.e.\ 
    \[
        \{ f, g \}_\omega
        =
        \{ F\circ \mu, g \}_\omega
        =
        \sum\nolimits_i
        \frac{ \partial F }{ \partial \mu_i }(\mu)
        \,
        \{ \mu_i , g \}_\omega
        =
        0
    \]
on ${U}$. Since the regular points lie dense in ${M}$, it follows that ${\{f,g\}_\omega=0}$ holds everywhere.
\end{proof}
\noindent
\subsection{The orbit sheaf ${\gerX^\textsc{orb}_{\mu}}$}
Next to ${\XX_{\mu}}$, another interesting sheaf of vector fields is the following:

\begin{definition}
Define the {\bf orbit sheaf ${\gerX^\textsc{orb}_\mu}$} of ${\mu}$ as the sheaf of vector fields on ${M}$ given by
\[
        \gerX^\textsc{orb}_\mu (U)
        \defeq 
        \big\{
            \sum\nolimits_i f^i\, X_{\mu_i}|_U \in \gerX(U)
            :
            f^i \in \calC^\infty(U)
        \big\}.
\]  
\end{definition}
\noindent
The fact that this is indeed a sheaf is based on the remark that the Hamiltonian vector fields ${X_{\mu_i}}$ are linearly independent over ${\ucalC^{\infty}}$, over any open (see below). 
A slightly different way to obtain ${\gerX^\textsc{orb}_\mu}$ is by starting with the ${\calC^{\infty}(M)}$-span of the ${X_{\mu_i}}$'s, remarking that it is a local module, and then considering the associated sheaf (cf. Proposition \ref{prop -basic -sheaves}).
One may say that this is the sheaf associated to the action of ${\mathbb{R}^n}$ on ${M}$: such a sheaf can be defined whenever we have a Lie algebra action or, more generally, a Lie algebroid ${A}$ over ${M}$ (the local module being the image of the anchor map
${\rho: \Gamma(A)\rightarrow \gerX(M)}$). It is well-known that this defines a singular foliation whose leaves are the orbits of the algebroid. For our sheaf ${\gerX^\textsc{orb}_\mu}$, we have:

\begin{proposition}
\label{prop -X -orb -pps} 
For any completely integrable system ${\mu: (M, \omega)\rightarrow \mathbb{R}^n}$, ${\gerX^\textsc{orb}_\mu}$ is a subsheaf of modules of ${\XX_{\mu}}$, it is finitely generated and involutive,  % (hence it defines a foliation on ${M}$ in the Androulidakis-Skandalis sense) 
and it is isotropic, with:
\[ (\gerX^\textsc{orb}_\mu)^{\perp} = \XX_{\mu} .\]
In particular, ${\gerX^\textsc{orb}_{\mu}}$ defines a foliation on ${M}$ in the Androulidakis-Skandalis sense, and the leaves are the orbits of the action of ${\mathbb{R}^n}$ (and they are included in the fibers of ${\mu}$). 

Moreover, 
$
        \gerX_\mu^\textsc{orb}\vert_{M_\textsc{reg}}
        =
        \gerX_\mu\vert_{M_\textsc{reg}}.
    $
\end{proposition}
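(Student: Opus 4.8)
The plan is to establish the listed properties in the following order, deriving each from the previous where possible. First I would check that $\gerX^\textsc{orb}_\mu$ is a sheaf of $\ucalC^\infty_M$-submodules: this is built into the definition, the only subtlety being the linear independence of the $X_{\mu_i}$ over $\ucalC^\infty$, which follows because $\dd_x\mu$ has rank $n$ at regular points, so any relation $\sum f^i X_{\mu_i}=0$ forces $f^i\equiv 0$ on the dense open regular set $M_\textsc{reg}$, hence everywhere by continuity. Finite generation is then immediate with generators $X_{\mu_1},\dots,X_{\mu_n}$ on any open set. Involutivity follows from $[X_{\mu_i},X_{\mu_j}]=0$ (the involution condition) together with the Leibniz-type expansion $[fX_{\mu_i},gX_{\mu_j}] = fg[X_{\mu_i},X_{\mu_j}] + f(\gerL_{X_{\mu_i}}g)X_{\mu_j} - g(\gerL_{X_{\mu_j}}f)X_{\mu_i}$, which stays inside the module. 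That $\gerX^\textsc{orb}_\mu \subset \XX_\mu$ is the (already stated) Lemma that each orbit lies in a fiber, rephrased infinitesimally: $(\dd\mu)(X_{\mu_i}) = \dd\mu_i(X_{\mu_j})_j = (\{\mu_j,\mu_i\}_\omega)_j = 0$.

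Next I would identify the orthogonal. For the isotropy, note $\omega(X_{\mu_i},X_{\mu_j}) = \gerL_{X_{\mu_j}}(\mu_i) = \{\mu_j,\mu_i\}_\omega = 0$, and $\ucalC^\infty$-bilinearity of $\omega$ extends this to all of $\gerX^\textsc{orb}_\mu$, so $\gerX^\textsc{orb}_\mu \subset (\gerX^\textsc{orb}_\mu)^{\perp}$. For the identification $(\gerX^\textsc{orb}_\mu)^{\perp} = \XX_\mu$: a local vector field $X$ lies in $(\gerX^\textsc{orb}_\mu)^\perp$ iff $\omega(X, X_{\mu_i}) = 0$ for all $i$ on every open subset, i.e.\ iff $\gerL_X(\mu_i) = -\omega(X_{\mu_i},X) = 0$, i.e.\ iff $\dd\mu_i(X) = 0$ for all $i$, which is exactly the condition $(\dd\mu)(X) = 0$ defining $\XX_\mu$. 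Here I would invoke the lemma following the definition of $\ucalV^{\perp_\omega}$ (together with the fact that $\gerX^\textsc{orb}_\mu$, being a sheaf of modules, is soft and hence has enough global sections, so condition (C1) holds and the sheaf orthogonal is computed openwise). Once $(\gerX^\textsc{orb}_\mu)^\perp = \XX_\mu$ is known, the statement that $\gerX^\textsc{orb}_\mu$ defines an Androulidakis-Skandalis foliation follows from Remark \ref{Androulidakis and Skandalis}, since we have checked it is an involutive, finitely generated sheaf of submodules; and its leaves are by construction the orbits of the infinitesimal $\R^n$-action, which sit inside the fibers of $\mu$ by the earlier lemma.

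For the final equality $\gerX^\textsc{orb}_\mu\vert_{M_\textsc{reg}} = \gerX_\mu\vert_{M_\textsc{reg}}$: over a regular point $x$, $\mu$ is a submersion on a neighborhood $U$, so $\XX_\mu\vert_U = \gerX_{\calF}\vert_U$ is the sheaf of sections of the (constant rank $n$) subbundle $\ker \dd\mu \subset TU$. The $X_{\mu_i}$ span this kernel fiberwise (rank count: $n$ independent vectors in an $n$-dimensional space), so by the same inverse-function-theorem argument used at the end of the proof of Theorem \ref{thm -main -property -fols} — build the bundle map $U\times\R^n \to \ker\dd\mu$, $(x,r)\mapsto r^i(X_{\mu_i})_x$, which is a fiberwise isomorphism hence has smooth inverse — every local section of $\ker\dd\mu$ is a $\ucalC^\infty$-combination of the $X_{\mu_i}$. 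Thus $\gerX^\textsc{orb}_\mu\vert_U = \XX_\mu\vert_U$, and these glue over $M_\textsc{reg}$.

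I do not expect a serious obstacle here: the proposition is essentially a repackaging of the involution and independence conditions through the sheaf formalism already set up. The one place requiring mild care is the passage between sheaf-orthogonals and openwise orthogonals in identifying $(\gerX^\textsc{orb}_\mu)^\perp$, where one must explicitly invoke that $\gerX^\textsc{orb}_\mu$ has enough global sections (softness), so that the lemma on $\ucalV^{\perp_\omega}$ applies; and the regular-point statement reuses verbatim the local trivialization argument from Theorem \ref{thm -main -property -fols}, so no new analysis is needed.
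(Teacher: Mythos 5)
Your proof is correct. The overall structure matches the paper's, but you make one genuinely useful organizational choice worth noting: for the identification $(\gerX^\textsc{orb}_\mu)^{\perp}=\XX_\mu$, the paper only sketches the Hamiltonian-envelope route, writing $\gerX^\textsc{orb}_\mu=\textrm{Ham}(\ucalC_\mu^\textsc{lin})$, applying Lemma \ref{basic -Ham} to get $(\gerX^\textsc{orb}_\mu)^\perp=\Vect(\ucalC_\mu^\textsc{lin})$, and then \emph{deferring} the equality $\Vect(\ucalC_\mu^\textsc{lin})=\XX_\mu$ to the later Proposition \ref{pro -all -many -props -sheaves}. The paper itself flags that ``one can proceed directly'' but does not carry this out. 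You carry out the direct computation: $X\in(\gerX^\textsc{orb}_\mu)^{\perp}$ iff $\omega(X,X_{\mu_i})=0$ for all $i$ iff $\dd\mu_i(X)=0$ for all $i$. This is self-contained and avoids the forward reference. (One tiny point: the invocation of condition (C1) / enough global sections is not actually needed here, since membership in $\Gamma(U,(\gerX^\textsc{orb}_\mu)^\perp)$ already reduces openwise to annihilating the finitely many explicit generators $X_{\mu_i}|_V$ and $\ucalC^\infty$-bilinearity of $\omega$ handles the rest — the direct check goes through without the softness lemma.) Similarly, you check isotropy explicitly and invoke Remark \ref{Androulidakis and Skandalis} directly, whereas the paper routes this through Proposition \ref{AS-fol-curious} via the Hamiltonian envelope; both routes land in the same place, and yours removes a layer of indirection. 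For the final equality over $M_\textsc{reg}$ the paper argues by rank-counting that the subbundles $T^\textsc{orb}$ and $\ker\dd\mu$ coincide and then identifies the sheaves of sections; your frame/inverse-function-theorem argument makes the last step (every local section of $\ker\dd\mu$ is a $\ucalC^\infty$-combination of the $X_{\mu_i}$) explicit, which is a welcome bit of rigor the paper leaves tacit.
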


\noindent
\begin{proof} 
To see that ${\gerX^\textsc{orb}_\mu}$ is indeed a sheaf, the question is whether a (local) vector field that is locally of this type, is of this type on the entire domain of definition. It suffices to remark that
\[  
    \sum\nolimits_i 
    f^i\, X_{\mu_i}|_U= 0 \Longrightarrow f^i= 0.
\]
To check this, one applies ${\omega(\cdot, V)}$ to the right hand side (for an arbitrary vector field ${V}$) to deduce that 
${\sum_i f_i\cdot d\mu_i= 0}$. If some ${f_i}$, say ${f_1}$, is not identically zero, we find on open ${U_0}$ on which ${f_1\neq 0}$. On that open we can them write ${d\mu_1= \sum_{i\geq 2} g_j \cdot d\mu_j}$ for some smooth functions ${g_i}$, and then the wedge product of all the ${(d\mu)_i}$'s would be zero on ${U_0}$. That would contradict the fact that ${\mu}$ is a submersion almost everywhere.

It should be clear now that this is a subsheaf of modules, finitely generated and involutive. Actually, it is of type ${\textrm{Ham}(\ucalC)}$ for some subsheaf ${\ucalC\subset \ucalC^{\infty}_{M}}$- namely the ${\mathbb{R}}$-linear span of the ${\mu_i}$'s that will be discussed in more detail below and denoted ${\ucalC_\mu^\textsc{lin}}$, and we see that we are in the situation of Proposition \ref{AS-fol-curious}. 

While ${\gerX^\textsc{orb}_\mu\subset \XX_{\mu}}$ is clear, to show that the symplectic orthogonal of the first equal the second  one can proceed directly or, again, one can make use of the basic properties of the Hamiltonian envelope (see Lemma \ref{basic -Ham}) to write the left hand side as ${\ucalC_\mu^\textsc{lin}}$. The fact that this equals to ${\XX_{\mu}}$ will be proven in our discussion below about sheaves of functions. 

We now prove the final statement. By definition, ${\mu}$ is a submersion on ${M_\textsc{reg}}$. This implies that ${\ker(\dd\mu)}$ defines a vector bundle of constant rank ${n}$ on ${M_\textsc{reg}}$, and ${\gerX_\mu\vert_{M_\textsc{reg}}}$ is the sheaf of sections of this vector bundle. On the other hand, the Hamiltonian vector fields ${X_{\mu_i}}$ are ${\R}$-linear independent at regular points, so they span a vector bundle ${T^\textsc{orb}}$  of constant rank ${n}$ on ${M_\textsc{reg}}$, and ${\gerX_\mu^\textsc{orb}\vert_{M_\textsc{reg}}}$ is the sheaf of sections of this vector bundle. It is clear that ${T^\textsc{orb}\sub \ker(\dd\mu)}$. They are equal on ${M_\textsc{reg}}$ because they have the same rank. This proves the statement.
\end{proof}

\noindent
\begin{corollary} \label{cor -X -mu -Lagr} One has 
    $
        \gerX_\mu
        =
        \big(
            \gerX_\mu
        \big)^{\perp_\omega}
    $
and ${\gerX_\mu= \Vect(\ucalC_{\mu})}$. 
\end{corollary}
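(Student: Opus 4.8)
The plan is to deduce the Corollary directly from Proposition \ref{main -prop -sheavs -int -syst} and Proposition \ref{prop -X -orb -pps}, so no genuinely new work is needed; the Corollary is just a repackaging of facts already in hand. First I would recall that by Proposition \ref{main -prop -sheavs -int -syst}, the Lagrangian sheaf $\XX_\mu$ is strongly involutive and Lagrangian, which literally means $\XX_\mu = \XX_\mu^{\perp_\omega}$; this gives the first asserted identity immediately. For the second identity, Proposition \ref{main -prop -sheavs -int -syst}(5) states $\XX_\mu = \Vect(\CM_\mu)$, where $\CM_\mu$ is the central sheaf, so it would suffice to match the notation $\ucalC_\mu$ used in the Corollary with $\CM_\mu$ (they denote the same sheaf of functions commuting with all the $\mu_i$).

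Alternatively, if one wishes to avoid forward-referencing the postponed proofs of Proposition \ref{main -prop -sheavs -int -syst}, I would give a short self-contained argument. For the Lagrangian claim: by Proposition \ref{prop -X -orb -pps} one has $(\gerX^\textsc{orb}_\mu)^{\perp_\omega} = \XX_\mu$, and since taking symplectic orthogonals twice is a closure operator (the lemma on $\overline{(\cdot)}^\omega$ in the section on sheaves of vector fields), the orthogonal of any sheaf is $\omega$-closed; applying this to $\gerX^\textsc{orb}_\mu$ gives that $\XX_\mu$ is $\omega$-closed, i.e. $\XX_\mu = (\XX_\mu^{\perp_\omega})^{\perp_\omega}$. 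To upgrade this to $\XX_\mu = \XX_\mu^{\perp_\omega}$ one uses the Main Lemma: since $\gerX^\textsc{orb}_\mu \subset \XX_\mu$ and $\gerX^\textsc{orb}_\mu$ is isotropic with $(\gerX^\textsc{orb}_\mu)^{\perp_\omega} = \XX_\mu$, we get $\XX_\mu = (\gerX^\textsc{orb}_\mu)^{\perp_\omega} \supset \XX_\mu^{\perp_\omega}$; conversely $\XX_\mu \subset \XX_\mu^{\perp_\omega}$ is exactly the statement that $\XX_\mu$ is isotropic, which is where the Main Lemma enters — two local vector fields killed by $d\mu$ have Hamiltonians (of their defining functions, or rather: any $X, Y$ with $d\mu(X) = d\mu(Y) = 0$ satisfy $\omega(X,Y) = 0$ because at regular points this follows from $\XX_\mu$ being Lagrangian in each $T_xM$, and regular points are dense).

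For the identity $\XX_\mu = \Vect(\ucalC_\mu)$, I would argue both inclusions. The inclusion $\XX_\mu \subset \Vect(\ucalC_\mu)$ is the Main Lemma unwound germ-wise: if $d\mu(X) = 0$ near $x$ and $f$ is a local section of $\ucalC_\mu$, then writing $f = F\circ\mu$ on a smaller neighborhood of any regular point (as in the proof of the Main Lemma) gives $\gerL_X(f) = \sum_i \frac{\partial F}{\partial\mu_i}(\mu)\, d\mu_i(X) = 0$, hence $\gerL_X(f) = 0$ on a dense open set and so everywhere. For the reverse inclusion $\Vect(\ucalC_\mu) \subset \XX_\mu$, one notes that each $\mu_i$ is a local section of $\ucalC_\mu$, so any $X \in \Vect(\ucalC_\mu)$ satisfies $\gerL_X(\mu_i) = d\mu_i(X) = 0$ for all $i$, i.e. $d\mu(X) = 0$, which is the defining condition of $\XX_\mu$.

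The only "obstacle" is bookkeeping: checking that $\ucalC_\mu$ in the statement is the same object as $\CM_\mu$ from Definition \ref{def -central -sheaf}, and deciding whether to cite the postponed Proposition \ref{main -prop -sheavs -int -syst} or reprove the needed pieces on the spot. Since the Corollary is stated right after Proposition \ref{prop -X -orb -pps} and before those postponed proofs, the cleanest route is the self-contained one above, using only Proposition \ref{prop -X -orb -pps}, the Main Lemma, and the general closure-operator lemma for $\perp_\omega$; there is no hard analytic content, just assembling the inclusions in the right order.
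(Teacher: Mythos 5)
Your first instinct to cite Proposition \ref{main -prop -sheavs -int -syst} would indeed be circular: the ``End of proof of Proposition \ref{main -prop -sheavs -int -syst}'' explicitly derives part (5) from the present Corollary, so you were right to switch to a self-contained argument. That argument is correct and runs on the same engine as the paper's proof --- density of $M_{\textsc{reg}}$, where $\ker(d\mu)$ is spanned by the $X_{\mu_i}$ and is a Lagrangian subspace. Your $f = F\circ\mu$ computation near a regular point for $\gerX_\mu\subset\Vect(\ucalC_\mu)$ is a harmless local variant of the paper's $X = f^iX_{\mu_i}$ decomposition; both show $\gerL_X f = 0$ on a dense open and conclude by continuity.

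One thing to be careful about: you take $(\gerX^\textsc{orb}_\mu)^{\perp_\omega} = \gerX_\mu$ from Proposition \ref{prop -X -orb -pps} as given, but the paper's proof of that proposition defers this equality to the ``discussion below about sheaves of functions,'' and that discussion (Proposition \ref{pro -all -many -props -sheaves}) in turn cites the very Corollary you are proving. Invoking Proposition \ref{prop -X -orb -pps} as a black box therefore risks closing a circle. The remedy is exactly the coisotropy step the paper gives: $V\perp_\omega X_{\mu_i}$ for all $i$ is literally $d\mu_i(V)=0$ for all $i$, so $(\gerX_\mu)^{\perp_\omega}\subset\gerX_\mu$ (and the analogous inclusion for $\gerX^\textsc{orb}_\mu$) is proven directly with no detour. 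The paper just does this for $\gerX_\mu$ itself rather than routing through $\gerX^\textsc{orb}_\mu$. Relatedly, the intermediate observation that $\gerX_\mu$ is $\omega$-closed is extraneous to your chain of inclusions and can be dropped without loss.
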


\begin{proof} The inclusion $\big(
            \gerX_\mu
        \big)^{\perp_\omega}\sub \gerX_\mu${ is immediate: if }$V${ is in the left hand side, then }$\omega(V, X)= 0${ for all }$X\in \textrm{Ker}(d\mu)${ hence, in particular, for }$X= X_{\mu_i}${ that means that }$(d\mu_i)(V)= L_V(\mu_i)= \omega(V, X_{\mu_i})= 0${ for all }$i${, i.e.\  }$V\in \gerX_{\mu}$.
Hence we are left with showing that ${\omega}$ vanishes when restricted to ${\gerX_\mu}$. Suppose that ${X,Y\in\gerX_\mu}$. Then ${X\vert_{M_\textsc{reg}}}$ and ${Y\vert_{M_\textsc{reg}}}$ lie in the orbit foliation, so they are of the form
    \[
        X\vert_{M_\textsc{reg}}
        =
        f^i \, X_{\mu_i},
        \quad
        Y\vert_{M_\textsc{reg}}
        =
        g^j \, X_{\mu_j},
        \quad
        f^i,g^j \in C^\infty\vert_{M_\textsc{reg}}.
    \]
We plug this into ${\omega}$ and make the obvious computation:
    \[
        \omega( X, Y )\vert_{M_\textsc{reg}}
        =
        \omega( f^i X_{\mu_i}, g^j X_{\mu_j} )
        =
        f^i g^j \{ \mu_i, \mu_j \}_\omega
        =
        0.
    \]
This shows that ${\omega(X,Y)=0}$ everywhere, since the set of regular points lies dense in ${M}$.

For the last part we have to show that, locally, for a vector field ${X}$, ${X\in\gerX_\mu}$ if and only if ${\gerL_X(f)=0}$ for all ${f\in \CM_\mu}$.
Since ${\{\mu_i,\mu_j\}_\omega=0}$ for all ${i,j}$, the converse implication is clear. For the direct one, one writes again  ${X=f^iX_{\mu_i}}$ on the regular part. Then for all ${f\in \CM_\mu}$, i.e.\ , such that ${\{\mu_i,f\}_\omega=0}$ for all ${i}$,
    \[
        \gerL_Xf 
        = 
        f^i \gerL_{X_{\mu_i}}f 
        = 
        f^i\{ f, \mu_i \}
        = 0
    \]
on ${M_\textsc{reg}}$. Using again that ${M_\textsc{reg}}$ is an open and dense subset of ${M}$, one obtains ${\gerL_X(f)=0}$ everywhere. 
\end{proof}
\noindent
\begin{example}
We give an example of a completely integrable system for which ${\XX_{\mu}}$ is not finitely generated. A notable feature of this example is that it relies on flat functions, so this particular issue takes place outside the category of analytic manifolds.

Let ${\phi:\R\to\R}$ be ${\phi(x)=e^{ -1/x^2}}$ and define a completely integrable system ${\mu:\R^2\to \R}$ by 
    \[
        \mu(x,y)
        =
        \begin{cases}
            \phi(x)y
            &
            x\leq 0,
            \\
            \phi(x)
            &
            x\geq 0.
        \end{cases}
    \]
Let ${X=a(x,y)\,\partial x + b(x,y)\,\partial y}$ be a vector field on ${\R^2}$ such that ${\dd\mu(X)=0}$, i.e.\  ${X\in\gerX_\mu^\textsc{lag}}$.

For negative ${x}$, this means that 
    $
        \phi'ya+\phi b =0.
    $
Since ${\phi'(x) = 2x^{ -3} \phi(x)}$ and since ${\phi(x)}$ is nonvanishing for (strictly) negative ${x}$, this condition is equivalent to 
    $
        2ya+x^3b=0.
    $
Let     
    \[
        Y
        \defeq
        x^3\,\partial x + 2y\,\partial y.
    \]
It follows that ${X=f\, Y}$ for some smooth function ${f}$ and for negative ${x}$. 

For positive ${x}$, ${\dd\mu(X)=0}$ means that ${a(x,y)=0}$. Hence the function ${f}$ must extend by zero to the whole of ${\R^2}$. So far this tells us that
    \[
        X = f\, Y + b \,\partial y.
    \]
where ${f}$ vanishes for ${x}$ positive. But for ${x}$ negative we already concluded that ${X=f\, Y}$, so ${b}$ must vanish there. It follows that ${\gerX_\mu^\textsc{lag}}$ is isomorphic to the submodule of ${C^\infty(\R^2)}$ of functions that are flat along the ${y}$-axis via the map ${\gerX_\mu^\textsc{lag}\to C^\infty : (f\, Y+ b\,\partial y) \mapsto f+b}$. This ${C^\infty(\R^2)}$-module is known to not be finitely generated near the origin.
\end{example}
\noindent
\begin{example}
Even in very simple examples, the orbit and Lagrangian sheaves can be different at singular points, even in cases where both are locally finitely generated. E.g. consider ${\mu(x,y)=x^2}$ as a function ${\R^2\to \R}$. Then the orbit foliation is generated by ${x\,\partial y}$ as a ${C^\infty(\R^2)}$-module, and the Lagrangian foliation is generated by ${\partial y}$. 
\end{example}
\noindent
They are however equal for the normal forms: the elliptic model ${\mu_\text{ell}}$, the hyperbolic model ${\mu_\text{hyp}}$ and focus-focus model pair ${\mu_\text{ff}}$. The computations are worked out in the examples below. For a fixed point of ${\mu}$, i.e.\  a point ${x}$ where ${d_x\mu=0}$, the notion of nondegeneracy basically means that the quadratic terms of ${\mu}$ are up to conjugation only of these three types. These examples suggest that it should be possible to use perhaps the Malgrange preparation theorem to prove that the orbit, and Lagrangian sheaves agree for completely integrable systems with only such nondegenerate singularities.

\begin{example}
Recall the \textbf{elliptic}\index{normal model!elliptic} normal model on ${\R^2}$ with ${\omega_\text{can}}$:
    \[
        \mu_\text{ell}(x,y)
        \defeq
        \tfrac12 \big(
            x^2+y^2
        \big),
        \quad
        X_\text{ell}
        \defeq
        X_{\mu_\text{ell}}
        =
        y \tfrac{\partial}{\partial x}
        -
        x \tfrac{\partial}{\partial y}.
    \]
By definition, the orbit sheaf is generated by ${X_\text{ell}}$. On the other hand, a vector field 
    $
        X
        =
        a\,\partial x 
        + 
        b\,\partial y
    $ 
lies in ${\gerX_{\mu_\text{ell}}}$ if and only if 
    $
        x\, a + y\, b=0.
    $ 
It follows that ${a}$ is divisible by ${y}$, let's say ${a=y\, f}$, and that ${b= -x\, a/y = -x\, f}$. Hence ${X= f\, X_\text{ell}}$, and thus ${\gerX_{\mu_\text{ell}}^\textsc{lag} = \gerX_{\mu_\text{ell}}^\textsc{orb}}$.
\end{example}
\noindent
\begin{example}
Recall the \textbf{hyperbolic}\index{normal model!hyperbolic} normal form on ${\R^2}$ with ${\omega_\text{can}}$:
    \begin{align*}
        \mu_\text{hyp}(x,y)
        \defeq
        x y,
        \quad
        X_\text{hyp}
        \defeq
        X_{\mu_\text{hyp}}
        =
        x \tfrac{\partial}{\partial x}
        -
        y \tfrac{\partial}{\partial y}.
    \end{align*}
Of course the orbit sheaf is generated by ${X_\text{hyp}}$. On the other hand, a vector field 
    $
        X = a\,\partial x + b\, \partial y
    $ 
lies in ${\gerX_{\mu_\text{hyp}}}$ if and only if 
    $
        y\, a + x\, b = 0.
    $ 
Hence in this case ${a}$ is divisible by ${x}$, let's say ${a=x\, f}$, and now ${ b= -y\, a/x = -y\, f}$. Hence ${ X= f\, X_\text{hyp}}$, and thus ${\gerX_{\mu_\text{hyp}}^\textsc{lag} = \gerX_{\mu_\text{hyp}}^\textsc{orb}}$.
\end{example}
\noindent
\begin{example}
Recall the \textbf{focus-focus}\index{normal model!focus-focus} normal form on ${\R^4}$ with ${\omega_\text{can}}$:
    \begin{align*}
        \mu_{\text{ff},1}(x,y)
        &\defeq
        x_1y_1+x_2y_2,
        \\ &
        \quad
        X_{\text{ff},1}
        \defeq
        X_{\mu_{\text{ff},1}}
        =
        -
        x_1 \tfrac{\partial}{\partial x_1}
        +
        y_1 \tfrac{\partial}{\partial y_1}
        -
        x_2 \tfrac{\partial}{\partial x_2}
        +
        y_2 \tfrac{\partial}{\partial y_2},
        \\
        \mu_{\text{ff},2}(x,y)
        &\defeq
        x_1y_2 -x_2y_1,
        \\ &
        \quad
        X_{\text{ff},2}
        \defeq
        X_{\mu_{\text{ff},2}}
        =
        +
        x_2 \tfrac{\partial}{\partial x_1}
        + 
        y_2 \tfrac{\partial}{\partial y_1}
        -
        x_1 \tfrac{\partial}{\partial x_2}
        -
        y_1 \tfrac{\partial}{\partial y_2}.
    \end{align*}
Clearly the orbit sheaf is generated by ${X_{\text{ff},1}}$ and ${X_{\text{ff},2}}$. On the other hand, a vector field 
    $
        X
        =
        \sum\nolimits_{i=1,2}
        \big(
            a_i\, \partial x_i
            +
            b_i\, \partial y_i
        \big)
    $
lies in ${\gerX_{\mu_\text{ff}}^\textsc{lag}}$ if and only if 
    \[
        y_1\, a_1
        +
        x_1\, b_1
        +
        y_2\, a_2
        +
        x_2\, b_2
        =
        0,
        \quad
        y_2\, a_1
        +
        x_1\, b_2
        -
        y_1\, a_2
        -
        x_2\, b_1
        =
        0.
    \]
A simple computation shows that this implies
    \[
        \big(
            y_1^2+y_2^2
        \big)\, a_1
        +
        x_1 
        \big(
            y_1\, b_1 + y_2\, b_2 
        \big)
        +
        x_2 
        \big( 
            y_1\, b_2 - y_2\, b_1 
        \big)
        =
        0.
    \]
Hence ${a_1= -x_1\, f + x_2\, g}$ with
    \[
        f
        \defeq
        \frac{
            y_1\, b_1 + y_2\, b_2
        } {
            y_1^2 + y_2^2
        },
        \quad
        g
        \defeq
        \frac{
            y_2\, b_1 - y_1\, b_2
        } {
            y_1^2 + y_2^2
        }.
    \]
Likewise we find 
    $
        a_2
        =
        - x_2\, f
        - x_1\, g,
    $
and we find 
    $
        b_1
        =
        y_1\, \tilde f + y_2\, \tilde g
    $ 
and 
    $
        b_2
        =
        y_2\, \tilde f 
        -
        y_1\, \tilde g$ 
with
    \[
        \tilde f  
        \defeq
        -
        \frac{x_1a_1+x_2a_2}
        {x_1^2+x_2^2},
        \quad
        \tilde g
        \defeq
        \frac{
            x_2\, a_1 - x_1\, a_2
        } {
            x_1^2 + x_2^2
        }.
    \]
Then we check by hand that ${f=\tilde f}$ and ${g=\tilde g}$, so that ${ X = f\, X_{\text{ff},1} + g\, X_{\text{ff},2}}$.
\end{example}

\noindent
\subsection{More sheaves of vector fields: ${\gerX^\textsc{lin}_\mu}$ and ${\gerX^{\text{ab}\textsc{orb}}_\mu}$}
Recall that the ${X_{\mu_i}}$'s and their restrictions to opens are linearly independent over ${\ucalC^{\infty}_{M}}$, because $M_\textsc{reg}$ is open and dense. So we see that any sheaf of functions ${\ucalC}$ on ${M}$ gives rise to a sheaf of vector fields on ${M}$, the ${\ucalC}$-span of the ${X_{\mu_i}}$, given by:
\begin{equation}\label{new-sheaf-gen-vfds} 
U\mapsto \big\{\sum\nolimits_i f^i\, X_{\mu_i}|_U \in \gerX(U), f^i \in \ucalC(U)\big\}.
\end{equation}
One should be aware however that this is not a sheaf of modules hence, unlike ${\gerX^\textsc{orb}_\mu}$, it can not be defined via the space of global sections (it may even fail to have enough global sections). Various sheaves ${\ucalC}$ will be mentioned below. For now we point out two interesting choices:

\begin{definition}
We define:
\begin{myenumerate}
\item The \textbf{linear sheaf} 
${\gerX^\textsc{lin}_\mu}$ as (\ref{new-sheaf-gen-vfds}) with ${\ucalC}$ being the sheaf of locally constant functions.
\item The \textbf{abelian orbit sheaf}
${\gerX^{\text{ab}\textsc{orb}}_\mu}$ as (\ref{new-sheaf-gen-vfds}) obtained with ${\ucalC}$ being the sheaf  ${\CM_{\mu}}$.
\end{myenumerate}
\end{definition}

\noindent
\begin{proposition} 
\label{Lagr -sheaves -mai -prop} 
One has
\begin{itemize}

\item
$
\gerX_\mu^\textsc{lin} \sub \gerX_\mu^{\text{ab}\textsc{orb}} \sub \gerX_\mu^\textsc{orb} \sub \XX_\mu.
$

\item
${\gerX_\mu^\textsc{lin}}$ and ${\gerX_\mu^{\text{ab}\textsc{orb}}}$ are abelian.

\item
${\gerX_\mu^{\text{ab}\textsc{orb}}}$ is self-centralizing in ${\gerX_\mu^\textsc{orb}}$.

\item    
    $
        \big(
            \gerX_\mu^\textsc{lin}
        \big)^{\perp_\omega}
        =
        \big(
            \gerX_\mu^{\text{ab}\textsc{orb}}
        \big)^{\perp_\omega}
        =
        \big(
            \gerX_\mu^\textsc{orb}
        \big)^{\perp_\omega}
       =
        \XX_\mu.
    $
\end{itemize}
(In particular, since ${\XX_{\mu}}$ is Lagrangian, it is the ${\omega}$-closure of both ${\gerX_\mu^\textsc{lin}}$ and ${\gerX_\mu^{\text{ab}\textsc{orb}}}$ and, as we shall see, it is also the strong involutive closure of the two). Moreover, the sheaves of functions induced by all these sheaves (cf. Definition \ref{def -Fun}) coincides with ${\CM_{\mu}}$: 
\[ \Fun(\gerX^\textsc{lin}_\mu)= \Fun(\gerX^{\text{ab}\textsc{orb}}_\mu)= \Fun(\gerX^\textsc{orb}_\mu)= \Fun(\XX_{\mu})= \CM_{\mu}.\]
\end{proposition}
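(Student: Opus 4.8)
\textbf{Proof plan for Proposition \ref{Lagr -sheaves -mai -prop}.}

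The plan is to establish the four bulleted claims in order, leaning heavily on the \nameref{cis:lem:main} (the Main Lemma) and on the general machinery from earlier in the chapter: the basic inclusions \eqref{eq(1)}, \eqref{eq(2)}, the Hamiltonian envelope lemma (Lemma \ref{basic -Ham}), and Corollary \ref{cor -X -mu -Lagr} which already identifies $\XX_\mu$ with $(\XX_\mu)^{\perp_\omega} = \Vect(\CM_\mu)$. First I would note that $\gerX_\mu^\textsc{orb} = \textrm{Ham}(\ucalC_\mu^\textsc{lin})$, where $\ucalC_\mu^\textsc{lin}$ is the sheaf spanned $\R$-linearly by the $\mu_i$, and that $\gerX_\mu^{\text{ab}\textsc{orb}} = \textrm{Ham}(\CM_\mu)$ — here using the product rule $X_{fg} = f X_g + g X_f$ together with the fact that $\CM_\mu$ is a sheaf of $C^\infty$-rings to see that the $\CM_\mu$-span of the $X_{\mu_i}$ really is the $C^\infty$-span of $\{X_f : f \in \CM_\mu\}$. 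The chain of inclusions in the first bullet then follows from $\ucalC_\mu^\textsc{lin} \subset \CM_\mu$ and $\textrm{Ham}(\CM_\mu) \subset \Vect(\CM_\mu)^{\perp_\omega \perp_\omega}$... more directly: $\gerX_\mu^{\text{ab}\textsc{orb}} \subset \gerX_\mu^\textsc{orb}$ is clear since a $\CM_\mu$-combination is in particular a $C^\infty$-combination, and $\gerX_\mu^\textsc{orb} \subset \XX_\mu$ is Proposition \ref{prop -X -orb -pps}.

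For the abelianness claims: $\gerX_\mu^\textsc{lin}$ is abelian because the $X_{\mu_i}$ commute pairwise (involutivity), and for $\gerX_\mu^{\text{ab}\textsc{orb}}$ I would compute, for $f, g \in \CM_\mu$, that $[fX_{\mu_i}, gX_{\mu_j}]$ is a combination of $X_{\{f,g\}}$-type terms and terms $\{f,g\}X_{(\cdot)}$, $X_{\{\mu_i,g\}}$, etc.; the brackets $\{\mu_i,g\}$ and $\{f,\mu_j\}$ vanish by definition of $\CM_\mu$, and — crucially — $\{f,g\} = 0$ by the \nameref{cis:lem:main}. Hence $\gerX_\mu^{\text{ab}\textsc{orb}}$ is abelian. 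For the self-centralizing statement ($\gerX_\mu^{\text{ab}\textsc{orb}}$ self-centralizing in $\gerX_\mu^\textsc{orb}$): given $Y = \sum g^j X_{\mu_j} \in \gerX_\mu^\textsc{orb}$ commuting with every element of $\gerX_\mu^{\text{ab}\textsc{orb}}$, in particular with each $X_{\mu_i}$, one gets $0 = [X_{\mu_i}, \sum g^j X_{\mu_j}] = \sum (\gerL_{X_{\mu_i}} g^j) X_{\mu_j} = \sum \{\mu_i, g^j\} X_{\mu_j}$, and since the $X_{\mu_j}$ are $C^\infty$-independent (Proposition \ref{prop -X -orb -pps}, or rather the argument reproduced in its proof) this forces $\{\mu_i, g^j\} = 0$ for all $i,j$, i.e.\ each $g^j \in \CM_\mu$, so $Y \in \gerX_\mu^{\text{ab}\textsc{orb}}$.

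The orthogonal chain is the heart of the matter. Since taking $\perp_\omega$ reverses inclusions, the first bullet gives $\XX_\mu^{\perp_\omega} \subset (\gerX_\mu^\textsc{orb})^{\perp_\omega} \subset (\gerX_\mu^{\text{ab}\textsc{orb}})^{\perp_\omega} \subset (\gerX_\mu^\textsc{lin})^{\perp_\omega}$, and by Corollary \ref{cor -X -mu -Lagr} the leftmost term is $\XX_\mu$ itself. So it suffices to show $(\gerX_\mu^\textsc{lin})^{\perp_\omega} \subset \XX_\mu$: if $\omega(V, X_{\mu_i}) = 0$ for all $i$ locally, then $(d\mu_i)(V) = -\omega(V, X_{\mu_i})$... actually $\gerL_V \mu_i = \omega(X_{\mu_i}, V) = 0$, hence $(d\mu)(V) = 0$ germ-wise, so $V \in \XX_\mu$. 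Combined with the inclusion chain all four orthogonals collapse to $\XX_\mu$. Finally, for the last displayed equation: applying $\Fun$ to the first bullet and using that $\Fun$ reverses inclusions gives $\Fun(\XX_\mu) \subset \Fun(\gerX_\mu^\textsc{orb}) \subset \Fun(\gerX_\mu^{\text{ab}\textsc{orb}}) \subset \Fun(\gerX_\mu^\textsc{lin})$; by Corollary \ref{cor -X -mu -Lagr} (equivalently Proposition \ref{main -prop -sheavs -int -syst}) the leftmost is $\CM_\mu$, and $\Fun(\gerX_\mu^\textsc{lin}) \subset \CM_\mu$ is immediate since killing the $X_{\mu_i}$ via Lie derivative is exactly the defining condition $\{\mu_i, f\} = 0$. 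So all four $\Fun$'s equal $\CM_\mu$. I expect the only genuinely delicate point to be the bookkeeping in the bracket computation for abelianness of $\gerX_\mu^{\text{ab}\textsc{orb}}$ and making sure the $C^\infty$-independence of the $X_{\mu_i}$ is invoked correctly on small opens (it holds because $M_\textsc{reg}$ is dense and open); everything else is formal manipulation with the adjunction already set up in the chapter.
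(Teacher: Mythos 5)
Your proof follows essentially the same path the paper takes, and your critical line of argument is correct. Two peripheral slips are worth flagging, though neither derails the proof.

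First, your preliminary claim that $\gerX_\mu^{\text{ab}\textsc{orb}} = \textrm{Ham}(\CM_\mu)$ is false in general. The Hamiltonian envelope $\textrm{Ham}(\CM_\mu)$ contains $X_f$ for every local section $f$ of $\CM_\mu$, and such $X_f$ need not be a smooth combination of the $X_{\mu_i}$: take $\mu(x,y) = x^2$ on $(\R^2, \omega_\text{can})$, where $x \in \CM_\mu$ but $X_x = -\partial_y$ is not a smooth multiple of $X_\mu = -2x\,\partial_y$ near $x = 0$. The correct identity (Proposition \ref{pro -all -many -props -sheaves}) is $\textrm{Ham}(\ucalC_\mu^\textsc{lin}) = \textrm{Ham}(\ucalC_\mu^\infty) = \gerX_\mu^\textsc{orb}$, and $\gerX_\mu^{\text{ab}\textsc{orb}}$ is in general strictly smaller than $\textrm{Ham}(\CM_\mu)$. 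You immediately fall back on the direct inclusion checks, so the argument survives, but you should drop that sentence.

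Second, for the abelianness of $\gerX_\mu^{\text{ab}\textsc{orb}}$ you invoke the \nameref{cis:lem:main} via a claimed $\{f,g\}$-term, but no such term appears. For vector fields $X, Y$ and functions $f, g$ one has $[fX, gY] = fg[X,Y] + f(\gerL_X g)\,Y - g(\gerL_Y f)\,X$, so with $X = X_{\mu_i}$, $Y = X_{\mu_j}$ and $[X_{\mu_i}, X_{\mu_j}] = X_{\{\mu_i,\mu_j\}} = 0$, the bracket is $f\{\mu_i, g\}X_{\mu_j} - g\{\mu_j, f\}X_{\mu_i}$, and this already vanishes from $f, g \in \CM_\mu$. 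This is exactly what the paper's computation makes explicit; the \nameref{cis:lem:main} is what makes $\CM_\mu$ an abelian Poisson-algebra sheaf, but it is not used for the abelianness of $\gerX_\mu^{\text{ab}\textsc{orb}}$. Everything else — the self-centralization argument via density of $M_\textsc{reg}$, the reversal under $\perp_\omega$ anchored to Corollary \ref{cor -X -mu -Lagr}, and $\Fun$ applied to the inclusion chain — matches the paper's proof.
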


\noindent
\begin{proof} 
In the sequence of inclusions, the only one that, perhaps, needs a (small) argument is the last one: 
${\gerX_\mu^\textsc{orb} \sub \XX_\mu}$: we check for any ${X=f^i \, X_{\mu_i}}$ that
    \[
        \dd\mu_j(X)
        =
        f^i \, 
        \dd \mu_j
        \big(
            X_{\mu_i}
        \big)
        =
        f^i \,
        \{ \mu_i, \mu_j \}
        = 0,
        \quad\forall\, j.
    \]

We will show that ${\gerX_\mu^\textsc{lin}}$, ${\gerX_\mu^{\text{ab}\textsc{orb}}}$ and ${\gerX_\mu^\textsc{orb}}$ are closed under the Lie bracket. Consider local vector fields ${X=f^i\, X_{\mu_i}}$ and ${Y=g^i\, X_{\mu_i}}$. Note that 
    \[
        [X_{\mu_i},X_{\mu_j}]
        =
        X_{ \{ \mu_i, \mu_j \}_\omega }
        =
        0,
    \]
hence the Lie bracket of ${X}$ and ${Y}$ is given by
    \begin{align*}
        [X,Y]
        &=
        [f^i X_{\mu_i}, g^j X_{\mu_j} ]
        \\
        &=
        f^i \gerL_{X_{\mu_i}}(g^j) X_j
        -
        g^j \gerL_{X_{\mu_j}}(f^i) X_i
        +
        f^ig^j [X_{\mu_i},X_{\mu_j}]
        \\
        &=
        \big(
            f^i \{ g^j, \mu_i \}_\omega
            -
            g^i \{ f^j, \mu_i \}_\omega
        \big)
        X_{\mu_j}.
    \end{align*}
This clearly shows that all three sheaves are closed under the Lie bracket. Moreover, if both the functions ${f^i}$ and ${g^i}$ are constants of motion, as is the case for ${\gerX_\mu^\textsc{lin}}$ and ${\gerX_\mu^{\text{ab}\textsc{orb}}}$, then the terms
    $
        f^i \{ g^j, \mu_i \}_\omega
        -
        g^i \{ f^j, \mu_i \}_\omega
    $
vanish, hence the linear and the abelian orbit sheaves are abelian.

For the statement that ${\gerX_\mu^{\text{ab}\textsc{orb}}}$ is self-centralizing, suppose that \[{X= \sum f_i X_{\mu_i}\in \gerX_{\mu}^\textsc{orb}}\] such that ${[X,Y]=0}$ for any ${Y\in \gerX_\mu^{\text{ab}\textsc{orb}}}$. This means that
    $
        g^i \{ f^j, \mu_i \}_\omega X_{\mu_j} =0
    $
for any choice of ${g^1,\ldots, g^n \in \CM_\mu}$, so in particular ${\{ f^j, \mu_i \}_\omega X_{\mu_j}=0}$ for all ${i}$. At any regular point ${x}$ of ${\mu}$ this means that ${\{ f^j, \mu_i \}_\omega(x)=0}$ for all ${i}$ and ${j}$. Since the regular points of ${\mu}$ are dense in ${M}$, we conclude that ${Y\in\gerX_\mu^{\text{ab}\textsc{orb}}}$.

From the first statement of the proposition we obtain:
    \[
        \big(
            \gerX_\mu
        \big)^{\perp_\omega}
        \sub
        \big(
            \gerX_\mu^\textsc{orb}
        \big)^{\perp_\omega}
        \sub
        \big(
            \gerX_\mu^{\text{ab}\textsc{orb}}
        \big)^{\perp_\omega}
        \sub
        \big(
            \gerX_\mu^\textsc{lin}
        \big)^{\perp_\omega}.
    \]
 Hence, to obtain the first sequence of equalities in the statement, using the fact that ${\gerX_{\mu}}$ is Lagrangian (Corollary \ref{cor -X -mu -Lagr}), it suffices to remark that 
$ 
        \big(
            \gerX_\mu^\textsc{lin}
        \big)^{\perp_\omega}
        \sub
        \gerX_\mu
    ${. Indeed, if     }$
        X\in 
        \big(
            \gerX_\mu^\textsc{lin}
        \big)^{\perp_\omega},
    $
then, in particular 
    \[
        \dd\mu_i(X)=\omega(X,X_{\mu_i})=0,
        \quad\forall\, i.
    \]
    One proceeds similarly for the last part: applying ${\Fun}$ to the original sequence of inclusions we obtain
\[
        \Fun(\gerX_\mu)
        \sub
        \Fun(\gerX_\mu^\textsc{orb})
        \sub
        \Fun(\gerX_\mu^{\text{ab}\textsc{orb}})
        \sub
        \Fun(\gerX_\mu^\textsc{lin});
    \]
it is also clear that ${\Fun(\gerX_\mu^\textsc{lin})\subset \ucalC_{\mu}}$ (even equality), hence it suffices to remark that
${\ucalC_{\mu}\subset  \Fun(\gerX_\mu)}$, but using the second part of Corollary \ref{cor -X -mu -Lagr}, this is the fact that
${\ucalC_{\mu}}$ is contained in its closure (see Definition \ref{def -cl -sheaves -ftcs}). 
\end{proof}
\noindent
\subsection{Sheaves of smooth functions: ${\ucalC_\mu^\textsc{lin}, \ucalC_\mu^\textsc{aff}, \ldots,  \ucalC_\mu^\textsc{fib}}$}
We now pass to sheaves of smooth functions. We start with the question: which are the natural sheaves associated to ${\mu}$? First of all, there are the ones that are `generated' by the ${\mu_i}$'s, where `generation' can be ${\mathbb{R}}$-linear, ringwise, or even ${\calC^{\infty}}$-ringwise. These can be defined all at once, using the pullback construction described in Remark \ref{rk -functions -better}.

\begin{definition} 
We defined the linear, the affine, the polynomial, and the functional span of the ${\mu_i}$'s as the sheaves of functions on ${M}$, denoted 
\[ \ucalC_\mu^\textsc{lin}, \ucalC_\mu^\textsc{aff}, \ucalC_\mu^\textsc{pol}, \ucalC_\mu^{\infty},\]
and defined as the pullbacks via ${\mu}$ (as sheaves of functions - see Remark \ref{rk -functions -better}) of the sheaves  
\[ \ucalC_{\mathbb{R}^n}^{\textsc{lin}},  \ucalC_{\mathbb{R}^n}^{\textsc{aff}},  \ucalC_{\mathbb{R}^n}^{\textsc{pol}},  \ucalC_{\mathbb{R}^n}^{\infty}\]
of linear, affine, polynomial and smooth functions on ${\mathbb{R}^n}$, respectively. 
\end{definition}
\noindent
According to the definition of pullbacks, with $\mathrm{Pol}(\mu)$ the set of polynomials in the $\mu_i$,  we have
\begin{align*} 
\ucalC_\mu^\textsc{pol}(U) &\defeq
            \big\{
                f\in \calC^{\infty}(U): \,\forall\, x\in U, 
                \,\exists\, P \in \mathrm{Pol}(\mu), 
                \\ & \qquad \qquad\qquad
                \,\,\text{s.t.}\,\, f = P(\mu_1, \ldots, \mu_n) 
                \,\,\text{near $x$}
            \big\}
\end{align*}
and similarly for \textbf{${\ucalC_\mu^{\infty}}$} using smooth functions ${P}$. On the other hand, since the restrictions of the ${\mu_i}$'s to any open are linearly independent over ${\mathbb{R}}$, we see that 
\[ \ucalC_\mu^\textsc{lin}(U) \defeq
            \big\{
                   f\in \calC^{\infty}(U): f= \sum\nolimits_i c^i\, \mu_i|_{U}, c^i - \text{ locally constant on}\, U            
            \big\}
\]
(and similarly for the affine sheaf). 

It is instructive to look back at our `Motivation 1'. There, one of the extreme possibilities for defining the notion of equivalence is to require that the symplectomorphism ${\Phi: (M, \omega)\rightarrow (M', \omega'))}$ takes the fibers of ${\mu}$ to the fibers of ${\mu'}$. That means that the base map ${\phi}$ is just a set theoretical bijection. Of course, such bijections do not preserve the sheaf of smooth functions, but the one of all set theoretical functions. Although this sheaf does not quite fit our previous discussions, one can adapt the definition of its pullback. We get yet another sheaf on ${M}$, denoted  
${\ucalC_\mu^\textsc{fib}}$ which is relevant to equivalences that take fibers to fibers. Explicitly:

\begin{definition}
\label{def-c-fib-sheaf} 
Define the sheaf ${\ucalC_\mu^\textsc{fib}}$ by:
\begin{align*} 
    \ucalC_\mu^\textsc{fib}(U) 
    & \defeq
    \big\{
    f\in \calC^{\infty}(U): 
    \textrm{on small enough}\, V\subset U \,\text{open}\, , 
    \\ & \qquad\qquad\qquad
    f|_{V} \,\textrm{is constant on the fibers of}\ \mu|_{V} 
    \big\}.
\end{align*}
(hence each ${x}$ in ${U}$ admits a neighborhood ${V\subset U}$ as above). 
\end{definition}
\noindent
Finally, we also consider the sheaf of function that is closely related to the orbit sheaf ${\gerX_\mu^\textsc{orb}}$; this sheaf arises, and will be relevant, when discussing the orbit equivalence of integrable systems. 

 \begin{definition}\label{def -C -orbit--sheaf} 
Define the sheaf ${\ucalC_\mu^\textsc{orb}}$ by:
\[ \ucalC_\mu^\textsc{orb}(U) \defeq
\big\{
                   f\in \calC^{\infty}(U): X_{f}\in \gerX_\mu^\textsc{orb}(U)\} \big\}.
\]
\end{definition}
\noindent
Hence, a section over ${U}$ is a smooth function ${f}$ on ${U}$ with the property that
\begin{equation}
\label{rk -orbit-sheaf-C} 
X_f= \sum\nolimits_j C^j X_{\mu_j} \quad \textrm{for\ some}\ C^j\in \calC^{\infty}(M).
\end{equation}
This is a sheaf because of the ${\ucalC^{\infty}}$-linear independence of the ${X_{\mu_i}}$'s.

In some sense, `${\ucalC_\mu^\textsc{orb}= \textrm{Ham}^{ -1}(\gerX_\mu^\textsc{orb})}$', and an operation ${\textrm{Ham}^{ -1}}$ can probably be discussed in full generality. 
An interesting fact, which will be useful later on, is that this construction applied to ${\gerX_\mu^\text{ab}\textsc{orb}}$ would give the same result. More precisely:

\begin{lemma}\label{lemma -rk -orbit-sheaf-C} 
If a smooth function ${f\in \ucalC^{\infty}(M)}$ satisfies (\ref{rk -orbit-sheaf-C}), then ${f, C^j\in \ucalC_{\mu}(U)}$.
\end{lemma}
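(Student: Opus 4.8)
The plan is to exploit the $\mathcal{C}^\infty$-linear independence of the Hamiltonian vector fields $X_{\mu_j}$ together with the \nameref{cis:lem:main} (``$\ucalC_\mu$ is abelian''). Suppose $f\in \ucalC^\infty(U)$ satisfies $X_f=\sum_j C^j X_{\mu_j}$ for some $C^j\in\calC^\infty(U)$; we must show that each $C^j$ is a constant of motion, i.e. $C^j\in\ucalC_\mu(U)$, and that $f\in\ucalC_\mu(U)$. For the latter: pairing the relation $X_f=\sum_j C^j X_{\mu_j}$ against $\omega$ with $X_{\mu_i}$ gives $\{\mu_i,f\}_\omega=\omega(X_f,X_{\mu_i})=\sum_j C^j\,\omega(X_{\mu_j},X_{\mu_i})=\sum_j C^j\{\mu_j,\mu_i\}_\omega=0$ for all $i$, so indeed $f\in\ucalC_\mu(U)$. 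That part is immediate from involutivity and requires nothing more.

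The substantive part is showing $C^j\in\ucalC_\mu(U)$, i.e. $\{\mu_i,C^j\}_\omega=0$ for all $i,j$. First I would note that, since $f\in\ucalC_\mu(U)$ by the previous paragraph, $X_f\in\gerX_\mu^{\text{ab}\textsc{orb}}(U)$ only once we know the coefficients are in $\ucalC_\mu$; but a priori $X_f\in\gerX_\mu^\textsc{orb}(U)$. The idea is to apply a Hamiltonian vector field to the defining relation. Compute $[X_{\mu_i},X_f]=X_{\{\mu_i,f\}_\omega}=X_0=0$ (using $f\in\ucalC_\mu$), while on the other hand
\[
    [X_{\mu_i},X_f]
    =
    \Big[X_{\mu_i},\sum\nolimits_j C^j X_{\mu_j}\Big]
    =
    \sum\nolimits_j \big(\gerL_{X_{\mu_i}}(C^j)\big)X_{\mu_j}
    +
    \sum\nolimits_j C^j [X_{\mu_i},X_{\mu_j}]
    =
    \sum\nolimits_j \{\mu_i,C^j\}_\omega\,X_{\mu_j},
\]
where the last bracket term vanishes by involutivity. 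Hence $\sum_j\{\mu_i,C^j\}_\omega X_{\mu_j}=0$. By the $\ucalC^\infty$-linear independence of the $X_{\mu_j}$ over $U$ (the same fact already invoked to see that $\gerX^\textsc{orb}_\mu$ and $\ucalC_\mu^\textsc{orb}$ are sheaves, cf. the proof of Proposition \ref{prop -X -orb -pps}), each coefficient vanishes: $\{\mu_i,C^j\}_\omega=0$ for all $i,j$. This is exactly the assertion $C^j\in\ucalC_\mu(U)$.

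The one place to be careful — and what I expect to be the only real obstacle — is the very first step, namely that $f\in\ucalC_\mu(U)$ is genuinely available before one starts differentiating, so that $[X_{\mu_i},X_f]=0$ is legitimate; but this follows from the elementary pairing argument above and uses only involutivity of the $\mu_i$, not the Main Lemma. (The Main Lemma is what is needed elsewhere to conclude $\gerX_\mu^\textsc{orb}$ behaves well, but here the linear-independence of the $X_{\mu_j}$ does all the work.) Thus the proof assembles from three ingredients already in the text: the identity $X_{\{g,h\}}=[X_g,X_h]$, involutivity $\{\mu_i,\mu_j\}_\omega=0$, and $\ucalC^\infty$-linear independence of $X_{\mu_1},\dots,X_{\mu_n}$. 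No density argument is even needed, since the independence of the $X_{\mu_j}$ holds over every open $U$.
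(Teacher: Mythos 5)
Your proof is correct and follows essentially the same route as the paper's: first evaluate the relation $X_f=\sum_j C^j X_{\mu_j}$ against the $\mu_k$'s (via $\omega$ or $d\mu_k$, which is the same thing) to get $f\in\ucalC_\mu(U)$, then bracket the relation with $X_{\mu_k}$ and invoke $\ucalC^\infty$-linear independence of the $X_{\mu_j}$ to kill the coefficients $\{\mu_k,C^j\}_\omega$. Your side remark that the Main Lemma is not actually needed here — only involutivity and linear independence — is accurate and a nice clarification of what is being used.
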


\begin{proof}
Applying the equation to the ${\mu_k}$'s to obtain ${f\in \ucalC_{\mu}(U)}$. Applying the bracket operation 
${[ -, X_{\mu_k}]}$ to the same (\ref{rk -orbit-sheaf-C}) we obtain ${0= \sum_{j} \{\mu_k, C_{j}\} \cdot X_{\mu_{j}})}$; using the again the linear independence of the vector fields ${X_{\mu_j}}$, 
we deduce that ${C^j\in \ucalC_{\mu}(U)}$.
\end{proof}
\noindent
Here are the main properties of these sheaves:

\begin{proposition}\label{pro -all -many -props -sheaves} One has:
\begin{equation}\label{longs -seq -incl -sh -fcts}  
        \ucalC_{\mu}^\textsc{lin}
        \sub
        \ucalC_{\mu}^\textsc{pol}
        \sub
        \ucalC_{\mu}^{\infty}
        \sub
        \ucalC_{\mu}^\textsc{orb}
        \sub
        \CM_\mu, \quad  \ucalC_{\mu}^\textsc{pol}
        \sub
        \ucalC_{\mu}^{\infty}
        \sub
        \ucalC_{\mu}^\textsc{fib}
        \sub
        \CM_\mu, 
        \end{equation}
and all are abelian sheaves (w.r.t.\  ${\{\cdot,\cdot\}_\omega}$). Moreover, their commutators coincide with ${\CM_{\mu}}$:
\[  \big( \ucalC_\mu^\textsc{lin} \big)'
        =
        \big( \ucalC_\mu^\textsc{pol} \big)'
        =
        \big( \ucalC_\mu^{\infty} \big)'
        =
        \big( \ucalC_\mu^\textsc{orb} \big)'
        =
        \big( \ucalC_\mu^\textsc{fib} \big)'
        =
        \big( \ucalC_\mu\big)'
        =
        \CM_\mu.\]
In particular, since ${\CM_{\mu}}$ is self-centralizing, ${\CM_{\mu}}$ is the ${\textrm{Inv}_{\omega}}$-closure of all  the sheaves in (\ref{longs -seq -incl -sh -fcts}).

Next, the sheaves of vector fields induced by them (cf. Definition \ref{def -Vect}) coincide with ${\XX_{\mu}}$:
\begin{align*}
 \Vect(\ucalC_{\mu}^\textsc{lin}) &= \Vect(\ucalC_{\mu}^\textsc{pol})= \Vect(\ucalC_{\mu}^{\infty}) \\ &= \Vect(\ucalC_{\mu}^\textsc{orb})  = \Vect(\ucalC_{\mu}^\textsc{fib})= \Vect(\CM_\mu)= \XX_{\mu}.
\end{align*} 
In particular, since ${\CM_{\mu}}$ is closed, ${\CM_{\mu}}$ is the closure of all the sheaves in (\ref{longs -seq -incl -sh -fcts}). Finally, for the Hamiltonian envelopes (cf. Definition \ref{def -Ham -env}), one also has:
\begin{equation}\label{eqs:Ham -env -sheaves} 
\textrm{Ham}(\ucalC_{\mu}^\textsc{lin})= \textrm{Ham}(\ucalC_{\mu}^\textsc{pol})= \textrm{Ham}(\ucalC_{\mu}^{\infty})= \gerX_\mu^\textsc{orb}.
\end{equation}
\end{proposition}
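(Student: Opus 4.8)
The plan is to prove the three chains of equalities in Proposition \ref{pro -all -many -props -sheaves} by exploiting the formal adjunction machinery established in the proof following Corollary \ref{one-to-one -corr -sheaves}, together with the two-sided ``sandwich'' inclusions \eqref{longs -seq -incl -sh -fcts} and the fact (Proposition \ref{main -prop -sheavs -int -syst}) that $\CM_\mu$ is already closed, self-centralizing and corresponds to $\XX_\mu$. The strategy is the usual one: prove the extreme members of each chain are equal to the middle object, then monotonicity of the operations squeezes all intermediate terms.

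First I would fix notation: all the sheaves in \eqref{longs -seq -incl -sh -fcts} lie between $\ucalC_\mu^\textsc{lin}$ and $\CM_\mu$. Since $\ucalC_\mu^\textsc{lin}$ contains all the $\mu_i$'s and $\CM_\mu$ is the smallest closed sheaf containing them (Proposition \ref{main -prop -sheavs -int -syst}(2)), applying the closure operator $\overline{(-)}=\Fun\circ\Vect$ to the chain $\ucalC_\mu^\textsc{lin}\subset\cdots\subset\CM_\mu$ gives $\CM_\mu=\overline{\ucalC_\mu^\textsc{lin}}\subset\overline{(\text{any intermediate sheaf})}\subset\overline{\CM_\mu}=\CM_\mu$, so every sheaf in the chain has closure $\CM_\mu$; this is the ``$\CM_\mu$ is the closure of all of them'' assertion. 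The same argument with the $\textrm{Inv}_\omega$-closure $(-)''$ in place of $\overline{(-)}$, using that $\CM_\mu$ is self-centralizing and is the \emph{only} self-centralizing sheaf containing the $\mu_i$'s (Proposition \ref{main -prop -sheavs -int -syst}(1)), yields the chain of commutants: $\CM_\mu = (\ucalC_\mu^\textsc{lin})''$ sits inside each $(\text{intermediate})''$ which sits inside $(\CM_\mu)''=\CM_\mu$; combined with the general inclusion $\ucalC_\mu^\textsc{lin}\subset\CM_\mu$ forcing $(\CM_\mu)'\subset(\ucalC_\mu^\textsc{lin})'$ and monotonicity, one squeezes all the single commutants to agree. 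For this I need $(\CM_\mu)'=\CM_\mu$, which is exactly self-centralizingness, and I need to know the commutant of a sheaf is closed, which is \eqref{eq(lemma1)}. The statement $\Vect(\ucalC_\mu^{\bullet})=\XX_\mu$ follows by applying $\Vect$ to the chain and using $\Vect(\ucalC_\mu^\textsc{lin})\supset\XX_\mu^{\text{(lin)}}$ together with $\Vect(\CM_\mu)=\XX_\mu$ (Proposition \ref{main -prop -sheavs -int -syst}(5)) and the observation that $\Vect\circ\overline{(-)}=\Vect$, so $\Vect$ of any sheaf with closure $\CM_\mu$ equals $\Vect(\CM_\mu)=\XX_\mu$.

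For the Hamiltonian-envelope equalities \eqref{eqs:Ham -env -sheaves} the situation is slightly different because $\textrm{Ham}(-)$ is not one of the adjoint operations. Here the plan is: $\textrm{Ham}(\ucalC_\mu^\textsc{lin})$ is by definition the $\ucalC^\infty_M$-span of the $X_{\mu_i}$, which is precisely $\gerX_\mu^\textsc{orb}$. So it suffices to show the reverse inclusions $\textrm{Ham}(\ucalC_\mu^\textsc{lin})\supset\textrm{Ham}(\ucalC_\mu^\textsc{pol})\supset\textrm{Ham}(\ucalC_\mu^{\infty})$, i.e.\ that enlarging the sheaf of functions from linear to smooth does not enlarge the Hamiltonian envelope. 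This is a local computation of the kind already carried out in the proof of Proposition \ref{AS-fol-curious}: if $g=F\circ(\mu_1,\dots,\mu_n)$ locally then $X_g=\sum_k (\partial_k F)(\mu)\,X_{\mu_k}$ lies in $\textrm{Span}_{\ucalC^\infty}(X_{\mu_1},\dots,X_{\mu_n})=\gerX_\mu^\textsc{orb}$, and since every section of $\ucalC_\mu^{\infty}$ (hence of $\ucalC_\mu^\textsc{pol}$) is, near each point, of this form, its Hamiltonian vector field is an $\ucalC^\infty$-combination of the $X_{\mu_i}$; passing to $\ucalC^\infty$-spans and using that $\gerX_\mu^\textsc{orb}$ is already a sheaf of submodules, the envelope does not grow.

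The main obstacle I anticipate is purely bookkeeping rather than conceptual: one must be careful that the several ``closure'' operations ($\overline{(-)}$, $\textrm{Inv}_\omega$, $(-)^{\perp\perp}$, and the interplay $\Fun/\Vect/\textrm{Ham}$) interact correctly with the sandwich inclusions, in particular that monotonicity of commutants reverses inclusions while closures preserve them, and that one genuinely uses the \emph{uniqueness} clauses in Proposition \ref{main -prop -sheavs -int -syst} (self-centralizing, Lagrangian, smallest closed/strongly involutive) rather than just existence. The one place where a small honest argument is needed is the $\textrm{Ham}$ chain, where the adjunction formalism does not directly apply and one falls back on the explicit chain-rule identity $X_{F\circ(f_1,\dots,f_p)}=\sum_k (\partial_k F)\circ(f_1,\dots,f_p)\,X_{f_k}$, exactly as in Proposition \ref{AS-fol-curious}; this is routine but is the only computational step.
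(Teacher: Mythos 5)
Your overall strategy (prove the two extreme members of each chain equal $\CM_\mu$, then squeeze by monotonicity) is the same as the paper's, and your Hamiltonian-envelope argument via the chain-rule identity from Proposition \ref{AS-fol-curious} is exactly what the paper does. However, there is a genuine circularity in the way you supply the ``extreme'' equalities.

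You anchor the closure chain via Proposition \ref{main -prop -sheavs -int -syst}(2) (that $\CM_\mu$ is the \emph{smallest} closed sheaf with the $\mu_i$'s) and the commutant chain via Proposition \ref{main -prop -sheavs -int -syst}(1) (that $\CM_\mu$ is the \emph{unique} self-centralizing sheaf with the $\mu_i$'s). But parts (1)--(4) of Proposition \ref{main -prop -sheavs -int -syst} are proved in the ``End of proof'' block \emph{after}, and \emph{using}, the very proposition you are proving. Only part (5) (i.e.\ $\gerX_\mu=\Vect(\CM_\mu)$ and $\Fun(\gerX_\mu)=\CM_\mu$) is established earlier, in Corollary \ref{cor -X -mu -Lagr} and Proposition \ref{Lagr -sheaves -mai -prop}. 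Your proposal is therefore assuming what it should be delivering.

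The missing input is the \nameref{cis:lem:main}. For the commutants, the two facts you actually need at the extremes are: (a) $(\ucalC_\mu^\textsc{lin})'=\CM_\mu$, which is tautological (commuting with $\R$-linear combinations of the $\mu_i$'s is by definition commuting with each $\mu_i$), and (b) $\CM_\mu\subset(\CM_\mu)'$, which is precisely the \nameref{cis:lem:main}. Combining (a), (b) with the chain $\ucalC_\mu^\textsc{lin}\subset\cdots\subset\CM_\mu$ and order-reversal of commutants squeezes everything to $\CM_\mu$. For the $\Vect$ chain you need: $\Vect(\ucalC_\mu^\textsc{lin})\subset\gerX_\mu$ (clear: a vector field annihilated by $\R$-spans of the $\mu_i$ is annihilated by $d\mu$) and $\Vect(\CM_\mu)=\gerX_\mu$ (Corollary \ref{cor -X -mu -Lagr}); these give the extremes without any appeal to Proposition \ref{main -prop -sheavs -int -syst}. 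Finally, note that your display ``$\Vect(\ucalC_\mu^\textsc{lin})\supset\XX_\mu^{(\text{lin})}$'' refers to an undefined object; the relevant statement is $\Vect(\ucalC_\mu^\textsc{lin})\subset\gerX_\mu$, not a containment involving $\gerX_\mu^\textsc{lin}$ (the linear span of the $X_{\mu_i}$), which is a quite different sheaf. With these repairs the formal squeeze machinery you set up goes through.
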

\noindent
\begin{proof} This is analogous to Proposition \ref{Lagr -sheaves -mai -prop} and, as there, the sequence of inclusion is immediate. And, continuing the analogy, one now applies the commutant operation to obtain a sequence of reverse inclusions; to show that first sequence of equalities, it suffices to show that 
${\ucalC_{\mu}\sub \big( \ucalC_\mu\big)'}$ and ${\big( \ucalC_\mu^\textsc{pol} \big)'\sub \ucalC_{\mu}}$; the first one is the \nameref{cis:lem:main} on page~\pageref{cis:lem:main} while the second one is obvious. 

And similarly applying the operation ${\Vect( -)}$, where we already know that ${\Vect(\ucalC_{\mu})= \gerX_{\mu}}$ (Corollary \ref{cor -X -mu -Lagr}). Hence we are left with showing that ${\Vect(\ucalC_{\mu}^\textsc{lin})\sub \gerX_{\mu}}$ which is again clear. 

For the last part it is clear (and was also remarked in the proof of Proposition \ref{prop -X -orb -pps}) that ${\textrm{Ham}(\ucalC_{\mu}^\textsc{lin})=  \gerX_\mu^\textsc{orb}}$. One is left with showing that ${\textrm{Ham}(\ucalC_{\mu}^{\infty})\sub \gerX_\mu^\textsc{orb}}$. 
That follows from the formula used to
prove that the Hamiltonian envelope is finitely generated (see the proof of Proposition \ref{AS-fol-curious}). 
\end{proof}
\noindent
\begin{proof}[End of proof of Proposition \ref{main -prop -sheavs -int -syst}]
Part 5 was proven in Corollary \ref{cor -X -mu -Lagr} and Proposition \ref{Lagr -sheaves -mai -prop};  we have also seen that ${\gerX_{\mu}}$ is Lagrangian and ${\ucalC_{\mu}}$ is self-centralizing. These imply in particular (by the general discussion on sheaves) that 
${\gerX_{\mu}}$ is closed and strongly involutive and ${\ucalC_{\mu}}$ is closed and ${\textrm{Inv}_{\omega}}$-invariant. Now, 1--4 follow from the formal properties of taking orthogonals, commutants, etc. For instance, if ${\ucalC}$ is a self-centralizing sheaf containing the ${\mu_i}$'s,
we will have ${\ucalC_{\mu}^\textsc{lin}\subset \ucalC}$ and, passing to commutants, ${\ucalC\subset \ucalC_{\mu}}$. Passing again to commutants, we find that ${\ucalC= \ucalC_{\mu}}$. This proves 1, and 3 is completely similar. For 2 (and then 4 analogously) assume that 
${\ucalC}$ is a closed sheaf containing the ${\mu_i}$'s; taking the closures in ${\ucalC_{\mu}^\textsc{lin}\subset \ucalC}$, we find that ${\ucalC_{\mu}\subset \ucalC}$. 
\end{proof}
\noindent
\chapter{The Moser path method for integrable systems}
\label{sec:Equivalence of integrable systems}
\etocsettocdepth{2}
\localtableofcontents

\newpage
\section{Equivalences}
\label{ssec:equivalences -}
\etocsettocstyle{\subsubsection*{Local Contents}}{}
\etocsettocdepth{2}
\localtableofcontents

{\color{white}.}
\newline
In this section we discuss various notions of equivalences between integrable systems. Why several (different) ones?
Start with two integrable systems
\[ \mu: (M, \omega)\rightarrow \mathbb{R}^n, \ \mu': (M', \omega')\rightarrow \mathbb{R}^n.\]
There is no doubt on the very minimal data one should require for the two systems to be equivalent: there should be a symplectomorphism
\[ \Phi: (M, \omega)\rightarrow (M', \omega').\]
But what more? That really depends on which part of the structure induced by the moment maps one is interested in - hence one would like to preserve. Perhaps the most natural scenario is the existence of a base diffeomorphism ${\phi: \mathbb{R}^n\rightarrow \mathbb{R}^n}$, or just from ${\mu(M)}$ to ${\mu'(M')}$, such that 
\[ \mu'\circ \Phi= \phi\circ \mu.\]
However, this turns out to be too restrictive even for proving some basic normal forms. Of course, the existence of ${\phi}$ has several consequences - such as the fact that ${\Phi}$ preserves the orbits of the action of ${\mathbb{R}^n}$, or the fibers of ${\mu}$, or the Lagrangian foliation, etc. Therefore, for other interesting notions of equivalence, one has to decide which of such structures one would like to preserve. Here are some possibilities:
\begin{myenumerate}
\item one requires that the induced actions of ${\mathbb{R}^n}$ are isomorphic (in particular, the orbits will be preserved). 
\item if one is interested in the orbits, remembering that they do not depend so much on the action but on the action algebroids at hand, one requires that the induced action algebroids are preserved.
\item if one is interested only in the `Lagrangian foliations', then one requires that the sheaves encoding the Lagrangian foliation (${\ucalC_{\mu}}$ or ${\gerX_{\mu}}$ from the previous chapter) are preserved.
\end{myenumerate}
One obtains several notions of equivalence, generically called ${\chi}$-equivalences, with 
$ \chi \in \set{ \text{act}, \text{fct}, \text{orb}, \text{fib}, \text{wk}, \text{alg}}, $
discussed below: action, (weak) functional, orbit, fiber, weak and algebraic, respectively. We shall also see that each ${\chi}$-equivalence is characterized by the invariance of certain ${\chi}$-sheaves (of course, the ones already discussed in the previous chapter). 
At one extreme end, one finds the notion of `isomorphism' - which preserve everything (e.g.\ , all the sheaves ${\ucalA_{\mu}}$ for any ${\ucalA}$- see Motivation 1 in section \ref{sub:sec:sing -Lag -fol}):

\begin{definition}
We say that ${(M, \omega, \mu)}$ and ${(M', \omega', \mu')}$ are \textbf{isomorphic} if there exists a symplectomorphism 
${\Phi:(M,\omega)\to (M,\omega')}$ such that ${\mu'\circ \Phi = \mu}$. 
\end{definition}
\noindent
However, we will concentrate on the more interesting relations \[\chi \in \set{ \text{act}, \text{fct}, \text{orb}, \text{fib}, \text{wk}, \text{alg}}.\] 
Here is the way they interact: 

\[ 
\begin{tikzcd}[row sep=small]
    \text{Action}
    \arrow[Rightarrow]{d} & & 
    \text{Fiber} 
    \arrow[Rightarrow]{dr}
    & & \\
    \text{Funct}
    \arrow[Rightarrow]{r}
    & 
    \text{weak Funct}
     \arrow[Rightarrow]{ur} 
    \arrow[Rightarrow]{dr}
    & &
    \text{Weak}
    &    
     \text{Algebraic} 
    \arrow[Rightarrow]{l}
    \\  & &
    \text{Orbit}
    \arrow[Rightarrow]{ur}
    & &
    \end{tikzcd}
\]

\subsection{Action-equivalence}

We start with two integrable systems 
${(M, \omega, \mu)}$ and ${(M', \omega', \mu')}$ as above and we now concentrate on the induced actions of the abelian Lie algebra ${\mathbb{R}^n}$, 
\begin{equation}\label{2 -in -syst -for -reference} 
\rho: \mathbb{R}^n\rightarrow \gerX(M), \ \rho': \mathbb{R}^n\rightarrow \gerX(M').
\end{equation}

\begin{definition}
We say that ${(M, \omega, \mu)}$ and ${(M', \omega', \mu')}$ are \textbf{action equivalent}\index{equivalences of integrable systems!action} if there exists a symplectomorphism 
${\Phi:(M,\omega)\to (M,\omega')}$ and a linear isomorphism ${B: \mathbb{R}^n\rightarrow \mathbb{R}^n}$ such that 
\begin{equation}\label{phi -rho -B}
 \begin{tikzcd}
            \mathbb{R}^n
            \arrow{r}{\rho}
            &
            \gerX(M)
            \\
            \mathbb{R}^n
            \arrow{u}{B}
            \arrow{r}{\rho'}
            &
            \gerX(M') 
            \arrow{u}{\Phi^*}
\end{tikzcd}
\quad \quad \Phi^*\circ \rho'= \rho\circ B .
\end{equation}

\end{definition}
\noindent
Of course, this definition makes sense more generally, 
for arbitrary Lie algebras ${\mathfrak{g}}$ and ${\mathfrak{g}'}$ and Hamiltonian spaces 
\[ \mu: (M, \omega)\rightarrow \mathfrak{g}, \ \mu': (M', \omega')\rightarrow \mathfrak{g}';\]
and it is actually instructive/clarifying to think about this more general setting. Then ${B}$ will be required to be a Lie algebra isomorphism
\[ B: \mathfrak{g}'\rightarrow \mathfrak{g},\]
still satisfying (\ref{phi -rho -B}).

\begin{lemma}
\label{lemma -affine -equivalence} 
Assume that ${M}$ is connected. Considering ${A= B^*: \mathfrak{g}^*\rightarrow \mathfrak{g}^{'\, *}}$, the equation (\ref{phi -rho -B}) is equivalent to the fact that there exists ${\gamma\in \mathfrak{g}^{'\, *}}$ such that the following diagram is commutative:
\[
        \begin{tikzcd}
            M
            \arrow{r}{\mu}
            \arrow{d}{\phi}
            &
            \mathfrak{g}^*
            \arrow{d}{A(\cdot)+\gamma}
            \\
            M'
            \arrow{r}{\mu'}
            &
            \mathfrak{g}^{'\, *}
        \end{tikzcd}
\]        
Moreover, it follows that the element ${\gamma\in \mathfrak{g}^{'\, *}}$ must vanish on ${[\mathfrak{g}', \mathfrak{g}']}$. 

Finally, the condition on ${\gamma}$ together with the fact that ${B}$ is a Lie algebra isomorphism is equivalent to the fact that ${A+ \gamma}$ is a Poisson diffeomorphism.
\end{lemma}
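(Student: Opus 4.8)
The plan is to translate all three assertions into elementary identities between pulled-back functions on $M$ and linear functions on $\mathfrak g^*$ and $\mathfrak g'^*$, using three standard facts. First, a symplectomorphism $\Phi\colon(M,\omega)\to(M',\omega')$ intertwines Hamiltonian vector fields and Poisson brackets: $\Phi^*\big(X^{\omega'}_h\big)=X^{\omega}_{h\circ\Phi}$ and $\{h\circ\Phi,k\circ\Phi\}_\omega=\{h,k\}_{\omega'}\circ\Phi$ for $h,k\in C^{\infty}(M')$. Second, since $M$ is connected, $X^\omega_f=X^\omega_g$ if and only if $f-g$ is a global constant. Third, $\mu$ being a moment map means precisely the infinitesimal equivariance $\{\langle\mu,u\rangle,\langle\mu,v\rangle\}_\omega=\langle\mu,[u,v]\rangle$ for all $u,v\in\mathfrak g$ (equivalently, $\mu\colon(M,\omega^{-1})\to(\mathfrak g^*,\{\cdot,\cdot\})$ is a Poisson map), and likewise for $\mu'$; moreover a map into $(\mathfrak g^*,\{\cdot,\cdot\})$ is Poisson as soon as it is so on the linear functions $\mathrm{ev}_w$, since these generate $C^{\infty}(\mathfrak g^*)$ as a $C^{\infty}$-ring and the bracket identity propagates through the chain rule. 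Throughout write $A=B^*\colon\mathfrak g^*\to\mathfrak g'^*$, so that $\langle A\eta,\xi'\rangle=\langle\eta,B\xi'\rangle$ for $\eta\in\mathfrak g^*$, $\xi'\in\mathfrak g'$.

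\paragraph*{The equivalence, and that $\gamma$ kills brackets.}
First I would prove that (\ref{phi -rho -B}) is equivalent to commutativity of the square. Evaluating (\ref{phi -rho -B}) at $\xi'\in\mathfrak g'$ and using the first fact, the left side is $\Phi^*\rho'(\xi')=\Phi^*\big(X^{\omega'}_{\langle\mu',\xi'\rangle}\big)=X^\omega_{\langle\mu'\circ\Phi,\,\xi'\rangle}$ and the right side is $\rho(B\xi')=X^\omega_{\langle\mu,\,B\xi'\rangle}=X^\omega_{\langle A\circ\mu,\,\xi'\rangle}$. By the second fact, (\ref{phi -rho -B}) holds iff for each $\xi'$ the function $\langle\mu'\circ\Phi,\xi'\rangle-\langle A\circ\mu,\xi'\rangle$ is a constant $c(\xi')$; evaluating at a single point $x_0\in M$ gives $c(\xi')=\langle\mu'(\Phi(x_0))-A\mu(x_0),\,\xi'\rangle$, so $c$ is pairing with $\gamma\defeq\mu'(\Phi(x_0))-A\mu(x_0)\in\mathfrak g'^*$, which is exactly $\mu'\circ\Phi=A\circ\mu+\gamma$. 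Conversely, pairing $\mu'\circ\Phi=A\circ\mu+\gamma$ with an arbitrary $\xi'$ and differentiating (the constant $\langle\gamma,\xi'\rangle$ drops) recovers (\ref{phi -rho -B}) by the same chain of equalities. For the ``moreover'': rewrite the square as $\langle\mu'\circ\Phi,u'\rangle=\langle\mu,Bu'\rangle+\langle\gamma,u'\rangle$ and apply $\{\cdot,\cdot\}_\omega$ to the pair indexed by $u',v'$. On one side, constants drop and equivariance of $\mu$ together with $B$ a Lie algebra homomorphism give $\{\langle\mu,Bu'\rangle,\langle\mu,Bv'\rangle\}_\omega=\langle\mu,[Bu',Bv']\rangle=\langle\mu,B[u',v']\rangle$; on the other side, the symplectomorphism property of $\Phi$ and equivariance of $\mu'$ give $\{\langle\mu'\circ\Phi,u'\rangle,\langle\mu'\circ\Phi,v'\rangle\}_\omega=\langle\mu'\circ\Phi,[u',v']\rangle=\langle\mu,B[u',v']\rangle+\langle\gamma,[u',v']\rangle$. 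Comparing forces $\langle\gamma,[u',v']\rangle=0$.

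\paragraph*{Poisson-diffeomorphism characterization.}
Finally, $\eta\mapsto A\eta+\gamma$ is a diffeomorphism precisely when $A$ (equivalently $B$) is invertible. For the Poisson part, compute $(A+\gamma)^*\mathrm{ev}_{u'}=\mathrm{ev}_{Bu'}+\langle\gamma,u'\rangle$, whence (constants drop) $\{(A+\gamma)^*\mathrm{ev}_{u'},(A+\gamma)^*\mathrm{ev}_{v'}\}=\{\mathrm{ev}_{Bu'},\mathrm{ev}_{Bv'}\}=\mathrm{ev}_{[Bu',Bv']}$, while $(A+\gamma)^*\{\mathrm{ev}_{u'},\mathrm{ev}_{v'}\}=(A+\gamma)^*\mathrm{ev}_{[u',v']}=\mathrm{ev}_{B[u',v']}+\langle\gamma,[u',v']\rangle$. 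By the third fact, $A+\gamma$ is Poisson iff $\mathrm{ev}_{[Bu',Bv']}=\mathrm{ev}_{B[u',v']}+\langle\gamma,[u',v']\rangle$ for all $u',v'$; evaluating at $0\in\mathfrak g^*$ forces $\langle\gamma,[u',v']\rangle=0$, and then injectivity of $w\mapsto\mathrm{ev}_w$ forces $[Bu',Bv']=B[u',v']$, i.e.\ $B$ is a Lie algebra homomorphism. Assembling both halves: $A+\gamma$ is a Poisson diffeomorphism iff $B$ is a Lie algebra isomorphism and $\gamma$ annihilates $[\mathfrak g',\mathfrak g']$.

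\paragraph*{Main obstacle.}
None of the steps is computationally hard; the points requiring care are (a) having available the infinitesimal-equivariance property of a moment map --- this is the only input beyond the commuting square used in the second assertion, and it is where ``Hamiltonian space'' must be read in the equivariant sense; (b) the reduction ``Poisson on the linear functions $\Rightarrow$ Poisson'' for the linear Poisson structure, which legitimizes checking everything on the $\mathrm{ev}_w$; and (c) the use of connectedness of $M$: without it, (\ref{phi -rho -B}) only yields a locally constant $c(\cdot)$, hence possibly different shifts on different components, so connectedness is exactly what makes a single global $\gamma$, and thus the square, available.
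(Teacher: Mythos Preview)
Your proof is correct and follows essentially the same approach as the paper: reduce (\ref{phi -rho -B}) to an equality of Hamiltonian vector fields (equivalently, of differentials), invoke connectedness for the constant $\gamma$, and then compare the Poisson bracket of the two expressions for $\langle\mu'\circ\Phi,u'\rangle$ to force $\gamma$ to kill commutators. The paper's proof is the same, only phrased via $d(\mu'_v\circ\Phi)=d(\mu_{Bv})$ rather than $X^\omega_{\langle\mu'\circ\Phi,\xi'\rangle}=X^\omega_{\langle A\circ\mu,\xi'\rangle}$; for the final Poisson-diffeomorphism claim the paper merely says it ``follows similarly (or even by a straightforward local computation)'', whereas you spell out the check on the linear functions $\mathrm{ev}_{u'}$, which is exactly the natural way to make that remark precise.
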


\begin{proof} 
We write the condition (\ref{phi -rho -B}) on an arbitrary element ${v\in \mathfrak{g}}$
\[ \rho(B(v)= \Phi^*(\rho'(v))\]
and we will rewrite it using the moment map condition:
\[ \omega(\rho(u), -)= d\mu_u\]
for all ${u\in \mathfrak{g}}$, where ${\mu_u= \langle \mu, u\rangle}$. For ${u= B(v)}$, our condition gives us
\[ \omega(\Phi^*(\rho'(v)), -)= d\mu_{B(v)}.\]
Since ${\Phi}$ is a symplectomorphism, the first term is ${\omega'(\rho'(v), \Phi_*( -))}$ i.e.\ , using the moment map condition for ${\mu'}$, it is ${\Phi^*(d\mu'_{v})}$. Hence 
\[ d(\mu'_{v}\circ \Phi)= d(\mu_{Bv}). \]
(equality of exact 1-forms on ${M}$). It is also clear that ${\mu_{Bv}= \langle A\circ \mu, v\rangle}$. Since ${M}$ is connected, we find that
\[ \mu'\circ \Phi= A\circ \mu+ \gamma \]
for some constant ${\gamma_v}$ (one for each ${v\in \mathfrak{g}}$); of course, this defines ${\gamma}$, and it is clearly linear. Note that our argument was written so that it can be red also backwards - proving therefore 
the required equivalence. 

We now check that ${\gamma}$ vanishes on an arbitrary bracket ${[u, v]}$ with ${u, v\in \mathfrak{g}}$. We use the defining formula for ${\gamma}$, ${\gamma_{\bullet}= \mu'_{\bullet}\circ \Phi - \mu_{B(\bullet)}}$, and the fact that 
${\{\mu_{u_1}, \mu_{u_2}\}_{\omega}= \mu_{[u_1, u_2]}}$ and similarly for ${\mu'}$. We find
\[ \gamma_{[u, v]}= \{\mu'_{u}, \mu'_{v}\}_{\omega'}\circ \Phi - \mu_{B([u, v])}.\]
Using now that ${\Phi}$ is itself a Poisson map, and ${B}$ is a Lie algebra map we find
\[ \gamma_{[u, v]}= \{\mu'_{u}\circ \Phi, \mu'_{v}\circ \Phi\}_{\omega} - \{\mu_{B(u)}, \mu_{B(v)}\}_{\omega}.\]
Replacing ${\mu'_{u}\circ \Phi}$ by ${\mu_{B(u)}+ \gamma_u}$ and the similar expression for ${v}$, and using that ${\gamma_u}$ and ${\gamma_v}$ are constant, we immediately obtain that, indeed, the right hand side vanishes.
The last part follows similarly (or even by a straightforward local computation). 
\end{proof}
\noindent
\begin{proposition} If ${M}$ is connected, two integrable systems (\ref{2 -in -syst -for -reference}) are action-equivalent if and only if there exists a
symplectomorphism \[{\Phi:(M,\omega)\to (M,\omega')}\] and an affine isomorphism ${\phi: \mathbb{R}^n\rightarrow \mathbb{R}^n}$ such that the following square commutes:
\[
        \begin{tikzcd}
            M
            \arrow{r}{\mu}
            \arrow{d}{\Phi}
            &
            \R^n
            \arrow{d}{\phi}
            \\
            M'
            \arrow{r}{\mu'}
            &
            \R^n.
        \end{tikzcd}
\]
\end{proposition}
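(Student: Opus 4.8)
The plan is to deduce this from Lemma \ref{lemma -affine -equivalence} with ${\gerg = \gerg' = \R^n}$ abelian, together with the observation that an affine isomorphism of ${\R^n}$ is precisely a map of the form ${v \mapsto A(v) + \gamma}$ with ${A \in GL(\R^n)}$ and ${\gamma \in \R^n}$. First I would unwind the definitions: action-equivalence means there is a symplectomorphism ${\Phi}$ and a linear isomorphism ${B : \R^n \to \R^n}$ making the square (\ref{phi -rho -B}) commute. Since ${\R^n}$ is abelian, the only extra conditions that appeared in the general Lemma (that ${B}$ be a Lie algebra map, that ${\gamma}$ vanish on ${[\gerg', \gerg']}$) are automatic: every linear isomorphism is a Lie algebra automorphism of the abelian ${\R^n}$, and ${[\R^n, \R^n] = 0}$ so the constraint on ${\gamma}$ is vacuous.

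Next I would apply Lemma \ref{lemma -affine -equivalence} directly. Identifying ${\gerg^* \cong \R^n}$ via the standard basis (so that ${\mu : M \to \R^n}$ is literally the moment map and ${A = B^*}$), the Lemma says that the commutativity of (\ref{phi -rho -B}) for some linear ${B}$ is equivalent to the existence of ${\gamma \in \R^n}$ with ${\mu' \circ \Phi = A \circ \mu + \gamma}$, i.e. ${\mu' \circ \Phi = \phi \circ \mu}$ where ${\phi(v) \defeq A(v) + \gamma}$. Since ${A = B^*}$ is a linear isomorphism exactly when ${B}$ is, the map ${\phi}$ is an affine isomorphism of ${\R^n}$, and conversely any affine isomorphism ${\phi}$ arises this way from a unique pair ${(A, \gamma)}$ with ${A}$ invertible, hence from a unique invertible ${B = (A)^*}$ (using the canonical identification ${(\R^n)^{**} = \R^n}$). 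This gives both implications: an action-equivalence produces the commuting square with ${\phi}$ affine, and conversely a commuting square with ${\phi}$ affine produces ${B}$ and (reading the Lemma's proof backwards, as the author notes it can be) the commuting square (\ref{phi -rho -B}).

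There is essentially no serious obstacle here; the work has all been done in Lemma \ref{lemma -affine -equivalence}. The only points requiring a line of care are: (i) checking that the identification of ${\gerg^*}$ with ${\R^n}$ is compatible, so that `the induced action ${\rho}$' in the sense of subsection \ref{ssec:InfActs} matches `${\rho}$' as it enters the Lemma — this is immediate since ${\rho(e_i) = X_{\mu_i}}$ by definition and the moment map condition ${\omega(\rho(u), -) = d\mu_u}$ holds by construction; and (ii) making sure the connectedness hypothesis on ${M}$, which is exactly the hypothesis under which the constant ${\gamma}$ is well-defined in the Lemma, is carried along — it is assumed in the statement. So the proof is just: "By Lemma \ref{lemma -affine -equivalence}, since ${\R^n}$ is abelian so that the Lie-algebra and ${\gamma}$-constraints are automatic, the commutativity of (\ref{phi -rho -B}) for some linear isomorphism ${B}$ is equivalent to ${\mu' \circ \Phi = \phi \circ \mu}$ for the affine isomorphism ${\phi = B^*(-) + \gamma}$; conversely every affine isomorphism of ${\R^n}$ is of this form."
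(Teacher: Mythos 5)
Your proof is correct and matches the approach the paper intends: the paper states this proposition immediately after Lemma \ref{lemma -affine -equivalence} without further proof, treating it as the direct specialization of that lemma to the abelian case ${\gerg = \gerg' = \R^n}$, which is exactly what you spell out. Your added observations — that the Lie-algebra condition on ${B}$ and the constraint ${\gamma|_{[\gerg',\gerg']} = 0}$ are both vacuous for abelian ${\R^n}$, and that the argument is reversible — are the right points to make explicit.
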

\noindent
Hence action-equivalence fits into the framework discussed at the beginning of Section \ref{sub:sec:sing -Lag -fol} when we motivated the various sheaves of functions (Motivation 1 there). Hence, the following should not come as a surprise:

\begin{proposition}\label{cis:prop:aff_eq}
For two completely integrable systems (\ref{2 -in -syst -for -reference}) the following are equivalent:
\begin{myitemize}
\item ${(\omega,\mu)}$ and ${(\omega',\mu')}$ are action-equivalent.
\item There is a symplectomorphism ${\Phi:(M,\omega)\to(M,\omega')}$ s.t.\ ${\ucalC_\mu^\textsc{aff}=\Phi^*\ucalC_{\mu'}^\textsc{aff}}$.
\item There is a symplectomorphism ${\Phi:(M,\omega)\to(M,\omega')}$ s.t.\ ${\gerX_\mu^\textsc{lin}=\Phi^*\gerX_{\mu'}^\textsc{lin}}$.
\end{myitemize}
\end{proposition}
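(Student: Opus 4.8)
The plan is to run everything through the reformulation of action-equivalence supplied by the preceding proposition: for connected $M$, $(\omega,\mu)$ and $(\omega',\mu')$ are action-equivalent if and only if there exist a symplectomorphism $\Phi:(M,\omega)\to(M',\omega')$ and an affine isomorphism $\phi:\R^n\to\R^n$ with $\mu'\circ\Phi=\phi\circ\mu$. Given this, I would prove the two implications out of action-equivalence and the two back into it, which closes the triangle of equivalences.

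First, from action-equivalence to the sheaf statements. Since $\ucalC_\mu^\textsc{aff}=\mu^*\ucalC_{\R^n}^\textsc{aff}$ (Remark \ref{rk -functions -better}) and an affine isomorphism satisfies $\phi^*\ucalC_{\R^n}^\textsc{aff}=\ucalC_{\R^n}^\textsc{aff}$, functoriality of the pullback gives
\[
\Phi^*\ucalC_{\mu'}^\textsc{aff}=\Phi^*\mu'^*\ucalC_{\R^n}^\textsc{aff}=(\mu'\circ\Phi)^*\ucalC_{\R^n}^\textsc{aff}=(\phi\circ\mu)^*\ucalC_{\R^n}^\textsc{aff}=\mu^*\phi^*\ucalC_{\R^n}^\textsc{aff}=\ucalC_\mu^\textsc{aff},
\]
which is the second bullet. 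For the third, write $\mu'_i\circ\Phi=b_i+\sum_j a_i^j\mu_j$ with $A=(a_i^j)$ an invertible matrix (the linear part of $\phi$). Because $\Phi$ is a symplectomorphism one has $\Phi^*X^{\omega'}_{\mu'_i}=X^\omega_{\mu'_i\circ\Phi}=\sum_j a_i^j X^\omega_{\mu_j}$, the constants $b_i$ being killed by the Hamiltonian operator. Since $A$ is invertible and the coefficients in the defining spans of $\gerX_\mu^\textsc{lin}$ and $\gerX_{\mu'}^\textsc{lin}$ are locally constant, transporting these span sheaves along $\Phi$ yields $\Phi^*\gerX_{\mu'}^\textsc{lin}=\gerX_\mu^\textsc{lin}$; this is a direct check.

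For the converse directions I would pass to global sections, using connectedness of $M$ and density of the regular points. On any connected open set a section of $\ucalC_\mu^\textsc{aff}$ is globally of the form $c_0+\sum_i c^i\mu_i$ with unique coefficients: two local representations agree on an overlap, which contains a regular point, where the $d\mu_i$ are independent; hence $\Gamma(M,\ucalC_\mu^\textsc{aff})=\R\cdot 1\oplus\bigoplus_i\R\mu_i$. Likewise $\Gamma(M,\gerX_\mu^\textsc{lin})=\bigoplus_i\R X^\omega_{\mu_i}$, using the $\calC^{\infty}$-independence of the $X^\omega_{\mu_i}$ recorded in the proof of Proposition \ref{prop -X -orb -pps}. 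Since $\Phi$ is a diffeomorphism, $\Gamma(M,\Phi^*\ucalC)=\Gamma(M',\ucalC)\circ\Phi$ for each of these sheaves. Thus $\ucalC_\mu^\textsc{aff}=\Phi^*\ucalC_{\mu'}^\textsc{aff}$ forces $\R\cdot1\oplus\bigoplus_i\R\mu_i=\bigl(\R\cdot1\oplus\bigoplus_i\R\mu'_i\bigr)\circ\Phi$ inside $\calC^{\infty}(M)$; both sides contain the constant $1$, so comparing the quotients by $\R\cdot1$ gives $\mu'_i\circ\Phi=b_i+\sum_j a_i^j\mu_j$ with $A=(a_i^j)$ invertible, i.e. $\mu'\circ\Phi=\phi\circ\mu$ for the affine isomorphism $\phi(y)=b+Ay$. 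Similarly $\gerX_\mu^\textsc{lin}=\Phi^*\gerX_{\mu'}^\textsc{lin}$ gives $\bigoplus_i\R X^\omega_{\mu_i}=\bigoplus_i\R X^\omega_{\mu'_i\circ\Phi}$, so $X^\omega_{\mu'_i\circ\Phi}=\sum_j a_i^j X^\omega_{\mu_j}$ with $A$ invertible; then $\mu'_i\circ\Phi-\sum_j a_i^j\mu_j$ has vanishing Hamiltonian vector field, hence is locally constant, hence constant since $M$ is connected, and again $\mu'\circ\Phi=\phi\circ\mu$ with $\phi$ affine. In both cases the preceding proposition delivers action-equivalence.

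The only genuinely delicate point is recovering the base map $\phi$ from the bare invariance of a sheaf: this requires knowing that inside $\ucalC_\mu^\textsc{aff}$ (resp. $\gerX_\mu^\textsc{lin}$) the distinguished functions $\mu_i$ (resp. fields $X^\omega_{\mu_i}$) are determined up to an affine (resp. linear) change, which is exactly where density of regular points and connectedness of $M$ enter. Everything else is formal: functoriality of $\mu^*$ and the identity $\Phi^*X^{\omega'}_f=X^\omega_{f\circ\Phi}$ for symplectomorphisms. I would also spend a line confirming that $\Gamma(M,\Phi^*\ucalC)=\Gamma(M',\ucalC)\circ\Phi$ really holds for the $\mu^*$-pullback of Remark \ref{rk -functions -better} when $\Phi$ is a diffeomorphism, since that pullback is a priori not the abstract inverse-image sheaf.
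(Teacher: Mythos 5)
Your proposal is correct and follows essentially the same path as the paper's proof: both pass through the reformulation of action-equivalence in terms of a symplectomorphism covering an affine isomorphism, and both use the $\calC^{\infty}$-linear independence of the $X_{\mu_i}$ (equivalently, independence of the $d\mu_i$ at regular points, which are dense) together with connectedness of $M$ to upgrade the local formulas coming from the sheaf equalities to global ones with constant coefficients. The only difference is presentational: you compute the global-sections spaces $\Gamma(M,\ucalC_\mu^\textsc{aff})=\R\cdot 1\oplus\bigoplus_i\R\mu_i$ and $\Gamma(M,\gerX_\mu^\textsc{lin})=\bigoplus_i\R X^\omega_{\mu_i}$ explicitly and argue by comparing bases, whereas the paper phrases the converse more tersely as "the formulas hold locally, hence globally"; and your forward direction for the affine sheaf runs through pure functoriality of $\mu^*$, which is a touch slicker than the paper's direct observation that $\mu'_p\circ\Phi$ is a global section of $\ucalC_\mu^\textsc{aff}$. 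Your closing concern about $\Gamma(M,\Phi^*\ucalC)=\Gamma(M',\ucalC)\circ\Phi$ is harmless: when $\Phi$ is a diffeomorphism the pullback of Remark \ref{rk -functions -better} agrees with the ordinary pullback, and the identity is immediate.
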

\noindent
\begin{proof}
We can assume that ${M}$ is connected. 
Assume the equivalence first, and write ${B= (b_{i}^{p})}$ so that, on the canonical basis, ${B(e_p)= \sum_i b_{p}^{i} e_i}$. Then the condition (\ref{phi -rho -B}) reads
\[ 
    \Phi^*(X_{\mu^{\, '}_{p}})
    = 
    \sum\nolimits_i 
    b_{p}^{i} X_{\mu_i}.
\]
Similarly, the commutativity of the diagram from Lemma \ref{lemma -affine -equivalence} (i.e.\  the equations ${\gamma_{\bullet}+ \mu_{B(\bullet)}= \mu'_{\bullet}\circ \Phi}$ from the previous proof) reads:
\[ \Phi^*(\mu^{\, '}_{p})= \sum\nolimits_i b_{p}^{i} \mu_i+ \gamma_p,\]
where ${\gamma_p}$ are constants (the components of ${\gamma}$). With these two formulas, one for vector fields and one for functions, holding globally on ${M}$, the statements about the sheaves are immediate. 
For the converses, we find that the previous formulas hold locally. But, as we have already used, the restrictions of the ${X_{\mu_i}}$'s to any open are linearly independent (even over smooth functions), hence the first formula will hold globally. 
One also needs to remark that, by looking at a regular point, the matrix ${(b_{i}^{p})}$ is, indeed, invertible.
\end{proof}

\subsubsection{Orbit equivalence}

As we have mentioned, if one is interested only on the orbits of the action of ${\mathbb{R}^n}$ rather than the action itself, then the relevant object is the induced action algebroid.

\begin{definition} 
We say that two integrable systems ${(M, \omega, \mu)}$ and ${(M', \omega', \mu')}$ are \textbf{orbit equivalent}\index{equivalences of integrable systems!orbit} if there exists an isomorphism between the action Lie algebroids ${\mathbb{R}^n\ltimes M}$ and ${\mathbb{R}^n\ltimes M'}$ induced, covering a symplectomorphism ${\Phi: (M, \omega)\rightarrow (M', \omega')}$.
\end{definition}

\noindent
An isomorphism between the action algebroids is, before anything, an isomorphism of (trivial) vector bundles (covering ${\Phi}$):
\begin{equation}\label{eq -vbisom} 
(\Phi, C): M\times \mathbb{R}^n\rightarrow M'\times \mathbb{R}^n,\ (x, v)\mapsto (\Phi(x), C(x, v));
\end{equation}
hence ${\Phi: M\rightarrow M'}$ is a diffeomorphism and ${C}$ is a smooth map
\[ C: M\rightarrow GL_n .\]
Next, the isomorphism (\ref{eq -vbisom}) should be compatible with the anchor maps (themselves just the infinitesimal actions) - which translate into the following commutative diagram
\[
        \begin{tikzcd}
            M\times \mathbb{R}^n
            \arrow{r}{\rho}
            \arrow{d}{(\Phi, C)}
            &
            TM
            \arrow{d}{d\Phi}
            \\
            M'\times \mathbb{R}^n
            \arrow{r}{\rho'}
            &
            TM'.
        \end{tikzcd}
\]
Finally, (\ref{eq -vbisom}) should be compatible with the Lie algebroid brackets; but this follow automatically from the compatibility with the anchors and the fact that the algebroids under discussion have anchors that are injective almost everywhere. Putting everything together, and using also Lemma \ref{lemma -rk -orbit-sheaf-C}, we are led to the following:

\begin{corollary}
\label{cor -orbut -equiv} 
Two integrable systems ${(M, \omega, \mu)}$ and ${(M', \omega', \mu')}$ are orbit equivalent if and only if there exists a symplectomorphism ${\Phi: (M, \omega)\rightarrow (M', \omega')}$ and a smooth map
${C: M\rightarrow GL_n}$
such that 
\begin{equation}\label{eqorbit-equiv} 
X_{\mu_i}= \sum_{j} C^{j}_{i} \cdot X_{\Phi^*\mu_{j}'}
\end{equation}
(where all the Hamiltonian vector fields are w.r.t.\  ${\omega}$). Moreover, it follows that ${\{\mu_i, \Phi^*\mu_j\}_{\omega}= 0}$ and ${C^{j}_{i}\in \ucalC_{\mu}(M)}$ for all ${i}$ and ${j}$. 
\end{corollary}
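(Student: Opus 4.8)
The plan is to continue the unraveling of orbit equivalence begun in the paragraphs preceding the statement. Recall that an orbit equivalence is a vector bundle isomorphism $\psi: M\times\mathbb{R}^n\to M'\times\mathbb{R}^n$ covering a symplectomorphism $\Phi:(M,\omega)\to(M',\omega')$, necessarily of the form $\psi(x,v)=(\Phi(x),C(x)v)$ with $C:M\to GL_n$ smooth, which is compatible with the anchors; compatibility with the algebroid brackets is then automatic, since the anchors of $\mathbb{R}^n\ltimes M$ and $\mathbb{R}^n\ltimes M'$ are injective on a dense open set. The anchor of $\mathbb{R}^n\ltimes M$ sends $(x,e_i)$ to $X_{\mu_i}(x)$, and similarly for $M'$, so anchor-compatibility of $\psi$ is precisely the family of identities $d_x\Phi\big(X_{\mu_i}(x)\big)=\sum\nolimits_j C^j_i(x)\,X_{\mu'_j}(\Phi(x))$, for all $x\in M$ and all $i$. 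Since $\Phi$ is a symplectomorphism one has $\Phi_*X^\omega_{\Phi^*\mu'_j}=X^{\omega'}_{\mu'_j}$, so the right-hand side equals $d_x\Phi\big(\sum\nolimits_j C^j_i(x)\,X^\omega_{\Phi^*\mu'_j}(x)\big)$; cancelling the isomorphism $d_x\Phi$ converts this into (\ref{eqorbit-equiv}). Every step being reversible, a pair $(\Phi,C)$ satisfying (\ref{eqorbit-equiv}) produces an anchor-compatible vector bundle isomorphism, hence an isomorphism of action algebroids. This settles the `if and only if'.

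For the `moreover' part I would mimic the proof of Lemma \ref{lemma -rk -orbit-sheaf-C}, applied now to the completely integrable system $\Phi^*\mu'=\mu'\circ\Phi$ on $(M,\omega)$ (completely integrable because $\mu'$ is and $\Phi$ is a diffeomorphism). First, using (\ref{eqorbit-equiv}), the $\calC^\infty$-linearity of the map $X\mapsto\gerL_X(\Phi^*\mu'_j)$, that $\Phi$ is a Poisson map, and that the $\mu'_k$ are in involution,
\[
 \{\mu_i,\Phi^*\mu'_j\}_\omega
 =\gerL_{X_{\mu_i}}(\Phi^*\mu'_j)
 =\sum\nolimits_k C^k_i\,\Phi^*\{\mu'_k,\mu'_j\}_{\omega'}
 =0.
\]
Next, apply the operator $[-,X_{\mu_k}]$ to (\ref{eqorbit-equiv}): its left-hand side is $[X_{\mu_i},X_{\mu_k}]=X_{\{\mu_i,\mu_k\}_\omega}=0$, while on the right the Leibniz rule for the Lie bracket of vector fields, together with $[X_{\Phi^*\mu'_j},X_{\mu_k}]=X_{\{\Phi^*\mu'_j,\mu_k\}_\omega}=0$ (the first identity just proved), leaves
\[
 0=-\sum\nolimits_j\{\mu_k,C^j_i\}_\omega\,X_{\Phi^*\mu'_j}.
\]
Since the Hamiltonian vector fields $X_{\Phi^*\mu'_j}$ are linearly independent over $\calC^\infty$ — the argument in the proof of Proposition \ref{prop -X -orb -pps} applies verbatim to $\Phi^*\mu'$ — one gets $\{\mu_k,C^j_i\}_\omega=0$ for every $k$, i.e.\ $C^j_i\in\ucalC_\mu(M)$. (Alternatively, Lemma \ref{lemma -rk -orbit-sheaf-C} applied to $\Phi^*\mu'$ gives $\mu_i,C^j_i\in\ucalC_{\Phi^*\mu'}(M)$ directly; the first identity above then forces $\ucalC_{\Phi^*\mu'}=\ucalC_\mu$, using that $\ucalC_\mu$ is self-centralizing.)

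There is no serious obstacle here: the statement essentially repackages facts about action algebroids and about the sheaves $\ucalC_\mu$, $\gerX_\mu$ that are already established. The two places that need care — and that explain why the statement is not entirely formal — are the appeal to the fact that anchor-compatibility of the bundle map automatically yields compatibility with the Lie algebroid brackets, which rests on the almost-injectivity of the anchors of the action algebroids $\mathbb{R}^n\ltimes M$ and $\mathbb{R}^n\ltimes M'$, and the $\calC^\infty$-linear independence of the Hamiltonian vector fields of a completely integrable system, invoked here for $\Phi^*\mu'$ in place of $\mu$.
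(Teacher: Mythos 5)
Your proof is correct and follows exactly the paper's intended route: unpack the Lie algebroid isomorphism into a vector bundle isomorphism $(\Phi, C)$ covering a symplectomorphism, observe that anchor compatibility is precisely equation (\ref{eqorbit-equiv}) once one uses $\Phi_*X^\omega_{\Phi^*\mu'_j} = X^{\omega'}_{\mu'_j}$, that bracket compatibility is automatic by almost-injectivity of the anchors, and deduce the `moreover' clause from Lemma \ref{lemma -rk -orbit-sheaf-C} applied to the completely integrable system $\Phi^*\mu'$. The paper states this corollary as a direct consequence of the preceding discussion plus that lemma; your direct computation (applying $[-,X_{\mu_k}]$ to (\ref{eqorbit-equiv}) and using $\calC^\infty$-linear independence of the $X_{\Phi^*\mu'_j}$) simply makes explicit the content of the lemma's proof, so there is no substantive difference.
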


The linear independence over smooth functions of the 
Hamiltonian vector fields gives, as remarked in the proof of Proposition \ref{cis:prop:aff_eq}:

\begin{proposition}\label{cis:prop:orb_eq}
For two completely integrable systems (\ref{2 -in -syst -for -reference}) the following are equivalent:
\begin{myitemize}
\item ${(\omega,\mu)}$ and ${(\omega',\mu')}$ are orbit equivalent.
\item There is a ${\Phi:(M,\omega)\to(M,\omega')}$ s.t.\ ${\gerX_\mu^\textsc{orb}=\Phi^*\gerX_{\mu'}^\textsc{orb}}$.
\item There is a ${\Phi:(M,\omega)\to(M,\omega')}$ s.t.\ ${\gerX_\mu^{\textrm{ab}\textsc{orb}}=\Phi^*\gerX_{\mu'}^{\textrm{ab}\textsc{orb}}}$.
\item There is a ${\Phi:(M,\omega)\to(M,\omega')}$ s.t.\ ${\ucalC_\mu^\textsc{orb}=\Phi^*\ucalC_{\mu'}^\textsc{orb}}$.
\end{myitemize}
\end{proposition}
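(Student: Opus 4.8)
The plan is to establish the chain of equivalences in Proposition~\ref{cis:prop:orb_eq} by combining the characterization of orbit equivalence from Corollary~\ref{cor -orbut -equiv} with the formal properties of the sheaf-level operations established in Chapter~\ref{chapter:sheaves_behind}. The key structural input is that all four conditions are really statements about the single sheaf of vector fields $\gerX_\mu^\textsc{orb}$, once one knows (Proposition~\ref{prop -X -orb -pps}) that $\gerX_\mu^\textsc{orb}$ is a locally finitely generated sheaf of $\ucalC^\infty_M$-modules determined globally by the $\calC^\infty(M)$-span of the $X_{\mu_i}$, and (Proposition~\ref{Lagr -sheaves -mai -prop}, Lemma~\ref{lemma -rk -orbit-sheaf-C}) that $\gerX_\mu^{\textrm{ab}\textsc{orb}}$, $\ucalC_\mu^\textsc{orb}$ and $\gerX_\mu^\textsc{orb}$ all mutually determine one another.

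First I would prove (orbit equivalent) $\iff$ ($\Phi^*\gerX_{\mu'}^\textsc{orb}=\gerX_\mu^\textsc{orb}$). For the forward direction, Corollary~\ref{cor -orbut -equiv} gives a symplectomorphism $\Phi$ and a smooth $C:M\to GL_n$ with $X_{\mu_i}=\sum_j C^j_i\,X_{\Phi^*\mu'_j}$; since $\Phi^*\gerX_{\mu'}^\textsc{orb}$ is exactly the $\calC^\infty(M)$-span of the $X_{\Phi^*\mu'_j}=\Phi^*X_{\mu'_j}$, invertibility of $C$ shows the two spans coincide, and passing to the associated sheaves (Proposition~\ref{prop -basic -sheaves}) gives the sheaf equality. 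Conversely, if $\Phi^*\gerX_{\mu'}^\textsc{orb}=\gerX_\mu^\textsc{orb}$ then each $X_{\mu_i}$ lies in the $\calC^\infty(M)$-span of the $X_{\Phi^*\mu'_j}$, which produces the matrix $C$; one recovers a genuine algebroid isomorphism because, as noted before Corollary~\ref{cor -orbut -equiv}, compatibility with the brackets is automatic for algebroids whose anchors are injective almost everywhere, and one reads off invertibility of $C$ at a regular point. I would then handle the equivalence with the condition on $\ucalC_\mu^\textsc{orb}$ using the definition $\ucalC_\mu^\textsc{orb}(U)=\{f: X_f\in\gerX_\mu^\textsc{orb}(U)\}$ directly: a symplectomorphism $\Phi$ intertwines Hamiltonian vector fields, $\Phi^*X_f = X_{\Phi^*f}$, so $\Phi^*\gerX_{\mu'}^\textsc{orb}=\gerX_\mu^\textsc{orb}$ forces $\Phi^*\ucalC_{\mu'}^\textsc{orb}=\ucalC_\mu^\textsc{orb}$, and conversely the latter equality recovers the former after applying $\textrm{Ham}$ (using that $\textrm{Ham}(\ucalC_\mu^\textsc{orb})$ recovers $\gerX_\mu^\textsc{orb}$, which follows from $\textrm{Ham}(\ucalC_\mu^\textsc{lin})=\gerX_\mu^\textsc{orb}$ in~\eqref{eqs:Ham -env -sheaves} together with the inclusions $\ucalC_\mu^\textsc{lin}\subset\ucalC_\mu^\textsc{orb}$ and the evident $\textrm{Ham}(\ucalC_\mu^\textsc{orb})\subset\gerX_\mu^\textsc{orb}$).

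The remaining link, the equivalence with $\Phi^*\gerX_{\mu'}^{\textrm{ab}\textsc{orb}}=\gerX_\mu^{\textrm{ab}\textsc{orb}}$, I would deduce from the previous ones by a purely internal argument: $\gerX_\mu^{\textrm{ab}\textsc{orb}}$ is intrinsically recoverable from $\gerX_\mu^\textsc{orb}$ because (Proposition~\ref{Lagr -sheaves -mai -prop}) it is the self-centralizing subsheaf of $\gerX_\mu^\textsc{orb}$, a condition phrased entirely in terms of the Lie bracket and hence preserved by any isomorphism of sheaves of Lie algebras such as $\Phi^*$; conversely $\gerX_\mu^{\textrm{ab}\textsc{orb}}$ contains $\gerX_\mu^\textsc{lin}$, whose symplectic orthogonal is $\XX_\mu$ and whose Hamiltonian-type span generates $\gerX_\mu^\textsc{orb}$, so preserving $\gerX_\mu^{\textrm{ab}\textsc{orb}}$ under $\Phi^*$ forces preservation of $\gerX_\mu^\textsc{orb}$ as well. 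I expect the main obstacle to be bookkeeping rather than depth: one must be careful that all the sheaf identities from Chapter~\ref{chapter:sheaves_behind} transport correctly along $\Phi^*$ (which is legitimate precisely because $\Phi$ is a symplectomorphism, so it intertwines $\{\cdot,\cdot\}_\omega$, $\textrm{Ham}$, $(\cdot)^{\perp_\omega}$ and $(\cdot)'$ with their primed counterparts), and that the step "each $X_{\mu_i}$ in the $\calC^\infty$-span of the $X_{\Phi^*\mu'_j}$ $\Rightarrow$ a \emph{smooth invertible} matrix $C$" genuinely uses the $\ucalC^\infty$-linear independence of the $X_{\mu_j}$ (openness and density of $M_\textsc{reg}$) together with evaluation at a regular point — the same mechanism already invoked in the proof of Proposition~\ref{cis:prop:aff_eq}.
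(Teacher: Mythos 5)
Your overall structure is sound and essentially matches what the paper intends (the paper states this proposition without a written proof, viewing it as a direct consequence of Corollary~\ref{cor -orbut -equiv} and the $\ucalC^\infty$-linear independence of the $X_{\mu_i}$, as in Proposition~\ref{cis:prop:aff_eq}). Your treatment of the equivalences involving $\gerX_\mu^\textsc{orb}$ and $\ucalC_\mu^\textsc{orb}$, and your argument that $\Phi^*\gerX_{\mu'}^{\textrm{ab}\textsc{orb}}=\gerX_\mu^{\textrm{ab}\textsc{orb}}$ implies $\Phi^*\gerX_{\mu'}^\textsc{orb}=\gerX_\mu^\textsc{orb}$ by passing to $\ucalC^\infty$-spans, are all correct.

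However, there is a genuine gap in your argument for the implication $\Phi^*\gerX_{\mu'}^\textsc{orb}=\gerX_\mu^\textsc{orb}\Rightarrow\Phi^*\gerX_{\mu'}^{\textrm{ab}\textsc{orb}}=\gerX_\mu^{\textrm{ab}\textsc{orb}}$. You claim that $\gerX_\mu^{\textrm{ab}\textsc{orb}}$ is intrinsically recoverable from $\gerX_\mu^\textsc{orb}$ because it is \emph{the} self-centralizing subsheaf. But Proposition~\ref{Lagr -sheaves -mai -prop} only asserts that $\gerX_\mu^{\textrm{ab}\textsc{orb}}$ is \emph{a} self-centralizing subsheaf, and self-centralizing abelian subalgebras are not unique in general, so the property is not preserved-to-the-right-target under an abstract Lie-algebra-sheaf isomorphism. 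A concrete counterexample to uniqueness already occurs in the simplest case $M=\R^2$, $\mu=x$, $\omega=dx\wedge dy$, where $\gerX_\mu^\textsc{orb}$ is the $\calC^\infty$-span of $\partial_y$, $\gerX_\mu^{\textrm{ab}\textsc{orb}}=\{g(x)\partial_y\}$, but $A=\{g(x)e^{-y}\partial_y\}$ is another abelian subsheaf which one checks is self-centralizing in $\gerX_\mu^\textsc{orb}$ and distinct from $\gerX_\mu^{\textrm{ab}\textsc{orb}}$. So ``self-centralizing'' does not pin down $\gerX_\mu^{\textrm{ab}\textsc{orb}}$.

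The repair is to argue directly from orbit equivalence rather than via an abstract invariant. Having established that condition (2) is equivalent to orbit equivalence, Corollary~\ref{cor -orbut -equiv} provides a smooth $C:M\to \GL_n$ with $X_{\mu_i}=\sum_j C^j_i X_{\Phi^*\mu'_j}$ and, crucially, $C^j_i\in\ucalC_\mu(M)$. Moreover orbit equivalence implies weak equivalence (as established in the comparison proposition), so $\Phi^*\ucalC_{\mu'}=\ucalC_\mu$. Then a section $\sum_i g^iX_{\mu_i}$ of $\gerX_\mu^{\textrm{ab}\textsc{orb}}$, with $g^i\in\ucalC_\mu$, equals $\sum_j\big(\sum_i g^iC^j_i\big)X_{\Phi^*\mu'_j}$, and the coefficients $\sum_i g^iC^j_i$ lie in $\ucalC_\mu=\Phi^*\ucalC_{\mu'}$ because $\ucalC_\mu$ is closed under products; hence this section lies in $\Phi^*\gerX_{\mu'}^{\textrm{ab}\textsc{orb}}$, and the reverse inclusion follows by the same computation using $C^{-1}$. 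This closes the gap and gives exactly the kind of bookkeeping the paper's terse proof alludes to.
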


\subsection{Fiber-Equivalence}

We now concentrate on the fibers of the moment map. 

\begin{definition} 
We say that two integrable systems ${(M, \omega, \mu)}$ and ${(M', \omega', \mu')}$ are \textbf{fiber equivalent}\index{equivalences of integrable systems!fiber} if there exists a ${\Phi: (M, \omega)\rightarrow (M', \omega')}$ which takes the fibers of ${\mu}$ to the fibers of ${\mu'}$.
\end{definition}
\noindent
Equivalently, there one has a commutative diagram
\[
        \begin{tikzcd}
            M
            \arrow{r}{\mu}
            \arrow{d}{\Phi}
            &
            \mu(M)
            \arrow{d}{\phi}
            \\
            M'
            \arrow{r}{\mu'}
            &
            \mu'(M').
        \end{tikzcd}
\]
for some \emph{set theoretical} bijection ${\phi}$. Given our intuition when introducing the sheaves ${\ucalC_{\mu}^\textsc{fib}}$, the following should not come as a surprise:

\begin{proposition}\label{prop -fiber -equiv -sheaves} Two completely integrable systems ${(M,\omega)}$ and ${(M,\omega')}$ are fiber-equivalent if and only if there exists a symplectomorphism \[{\Phi:(M,\omega)\to (M,\omega')}\] such that
\[ \Phi^*(\ucalC^{\textsc{fib}}_{\mu'})= \ucalC^{\textsc{fib}}_{\mu}.\]
\end{proposition}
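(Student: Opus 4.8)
The plan is to prove both implications by translating "$\Phi$ takes fibers to fibers" into the statement "$\Phi^*$ identifies the sheaves $\ucalC^{\textsc{fib}}_{\mu}$ and $\ucalC^{\textsc{fib}}_{\mu'}$", using the local description of $\ucalC^{\textsc{fib}}_\mu$ from Definition \ref{def-c-fib-sheaf} (a function is a section near $x$ precisely when it is constant on the fibers of $\mu$ on a small neighborhood of $x$). First I would observe that this is a purely local/topological statement about level sets: the symplectic form plays no role beyond providing the a priori ambient symplectomorphism, so the whole argument reduces to a statement about smooth maps and the partition of $M$ into fibers.

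For the forward direction, suppose $\Phi$ takes fibers of $\mu$ to fibers of $\mu'$, i.e.\ there is a set-theoretical bijection $\phi$ with $\mu'\circ \Phi = \phi\circ \mu$. Let $f\in \ucalC^{\textsc{fib}}_{\mu'}(\Phi(U))$; I want to show $\Phi^* f = f\circ \Phi \in \ucalC^{\textsc{fib}}_{\mu}(U)$. Pick $x\in U$; by definition of $\ucalC^{\textsc{fib}}_{\mu'}$ there is a neighborhood $W$ of $\Phi(x)$ on which $f$ is constant on the fibers of $\mu'|_W$. Set $V \defeq \Phi^{-1}(W)\cap U$; then on $V$, if $\mu(y_1) = \mu(y_2)$ with $y_1,y_2\in V$, applying $\phi$ gives $\mu'(\Phi(y_1)) = \mu'(\Phi(y_2))$, so $\Phi(y_1),\Phi(y_2)$ lie in the same fiber of $\mu'|_W$ (they lie in $W$ by construction), hence $f(\Phi(y_1)) = f(\Phi(y_2))$. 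Thus $\Phi^* f$ is constant on fibers of $\mu|_V$, so $\Phi^* f \in \ucalC^{\textsc{fib}}_{\mu}(U)$. This shows $\Phi^*(\ucalC^{\textsc{fib}}_{\mu'})\subseteq \ucalC^{\textsc{fib}}_{\mu}$; applying the same argument to $\Phi^{-1}$ (which takes fibers of $\mu'$ to fibers of $\mu$ via $\phi^{-1}$) gives the reverse inclusion, hence equality of sheaves.

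For the converse, suppose $\Phi$ is a symplectomorphism with $\Phi^*(\ucalC^{\textsc{fib}}_{\mu'}) = \ucalC^{\textsc{fib}}_{\mu}$; I must produce the set-theoretic bijection $\phi$, equivalently show that $\Phi$ maps each fiber of $\mu$ into a single fiber of $\mu'$ (the symmetric statement for $\Phi^{-1}$ then forces "onto a fiber", yielding $\phi$ and its inverse). Fix $x\in M$ and let $y$ be any point in the same fiber of $\mu$ as $x$, so $\mu(x) = \mu(y)$. I want $\mu'(\Phi(x)) = \mu'(\Phi(y))$; it suffices to show $(\mu'_k\circ\Phi)(x) = (\mu'_k\circ\Phi)(y)$ for each component $\mu'_k$. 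The key point is that each $\mu'_k$ is a \emph{global} section of $\ucalC^{\textsc{fib}}_{\mu'}$ over $M'$: indeed $\mu'_k$ is constant on the fibers of $\mu'$, so it lies in $\ucalC^{\textsc{fib}}_{\mu'}(M')$. Hence $\Phi^*\mu'_k = \mu'_k\circ\Phi$ is a global section of $\Phi^*(\ucalC^{\textsc{fib}}_{\mu'}) = \ucalC^{\textsc{fib}}_{\mu}$ over $M$. Therefore, by definition of $\ucalC^{\textsc{fib}}_{\mu}$, every point of $M$ has a neighborhood on which $\mu'_k\circ\Phi$ is constant on the fibers of $\mu$. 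This is a priori only a local statement, and here is where the main obstacle lies: I need to upgrade "constant on fibers locally near every point" to "constant on the whole (connected component of a) fiber". This is handled by a standard connectedness/covering argument: if $y$ is in the same fiber of $\mu$ as $x$, then $x$ and $y$ need not lie in one connected component a priori, but one first reduces to the connected-component statement and then, since $\ucalC^{\textsc{fib}}_\mu$ genuinely controls only \emph{connected components} of fibers (see Definition \ref{def-c-fib-sheaf}), one must argue that the fibers of $\mu$ have at most the "expected" number of components — or, more honestly, one should reformulate fiber-equivalence from the outset in terms of connected components of fibers, matching how $\ucalC^{\textsc{fib}}$ is defined. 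Assuming the connected-fiber (or connected-component) formulation, the local constancy, chained along a path inside a connected component of the fiber and covered by the finitely many local neighborhoods, gives $(\mu'_k\circ\Phi)(x) = (\mu'_k\circ\Phi)(y)$ as desired. Running the symmetric argument with $\Phi^{-1}$ and the global sections $\mu_i$ produces the inverse set map, so $\Phi$ induces a bijection $\phi$ on fibers and the two systems are fiber-equivalent.

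\textbf{Expected main obstacle.} The delicate point is the passage from the local (germ-wise) constancy encoded in $\ucalC^{\textsc{fib}}_\mu$ to a statement about whole fibers, since $\ucalC^{\textsc{fib}}_\mu$ by design only sees connected components of fibers, and a fiber of a smooth map can be topologically complicated (disconnected, non-manifold). The cleanest fix is to phrase fiber-equivalence via connected components of fibers throughout, so that the sheaf and the equivalence relation speak about the same objects; with that bookkeeping in place the argument is a routine connectedness/covering chase as sketched above.
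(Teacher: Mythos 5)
Your proof follows essentially the same route as the paper's. The forward direction is identical: translate "$\Phi$ takes fibers to fibers" through the definition of $\ucalC^{\textsc{fib}}$ by observing that constancy on the local fibers of $\mu$ and of $\mu'\circ\Phi$ are the same condition once $\mu'\circ\Phi = \phi\circ\mu$. For the converse, both you and the paper use that the components of the moment map are global sections of their own fiber-sheaf to extract, from the sheaf equality, the statement that around each point there is a neighborhood $V$ on which the level sets of $\mu|_V$ and of $(\mu'\circ\Phi)|_V$ coincide — the paper doing this with $\mu_i$ and, symmetrically via $\Phi^{-1}$, with the $\mu'_j$, just as you do. The only real difference is in how the local statement is assembled into a global $\phi$: the paper builds local bijections $\phi_V:\mu(V)\to\mu'(\Phi(V))$ and asserts they glue because each $\phi_V$ is determined by the rest of the diagram; you instead argue constancy of $\mu'_k\circ\Phi$ along connected components of fibers via a covering/connectedness chase and then worry about whole (possibly disconnected) fibers.

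Your concern about that last step is legitimate, and you should not feel the paper sidesteps it any more rigorously than you do: the paper's "they glue together to a global $\phi$" only verifies agreement of $\phi_V$ and $\phi_W$ on $\mu(V\cap W)$, which can be strictly smaller than $\mu(V)\cap\mu(W)$ when fibers are disconnected, which is exactly the gap you identify. The remark the paper makes immediately after the proof — that the argument is really about the \emph{local} notion of fiber-equivalence — implicitly concedes your point. Your proposed fix (phrase fiber-equivalence in terms of connected components so that the sheaf and the equivalence relation quantify over the same objects) is the natural way to close this, and is consistent with the way $\ucalC^{\textsc{fib}}_\mu$ is actually defined and with the paper's own characterization of the "fiber foliation" as a partition by connected components of fibers. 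In short: correct, same approach, and you have identified a genuine (if quietly unaddressed) subtlety in the paper's gluing step.
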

\noindent
\begin{proof}
Given a diffeomorphism ${\Phi}$, when is a smooth function ${f \in C^\infty(U)}$, defined on an open subset ${U\subset M}$, a section of ${\Phi^*(\ucalC^{\textsc{fib}}_{\mu'})}$? Of course, \[{f\circ \Phi^{ -1}: \Phi(U)\rightarrow \mathbb{R}}\] must be in ${\ucalC^{\textsc{fib}}_{\mu'}}$; applying the definition of ${\ucalC^{\textsc{fib}}_{\mu'}}$ but writing the opens in ${M'}$ as ${\Phi(V)}$ with ${V}$-open in ${M}$, we see that the condition is: each point ${x\in M}$ has a neighborhood ${V}$ such that ${f|_{V}}$ is constant on the fibers of ${\mu^{\, '}\circ \Phi|_{V}}$. 

So, in the case of a fiber -equivalence, when ${\mu^{\, '}\circ \Phi|_{V}= \phi\circ \mu|_{V}}$, we see that the condition is that there are such neighborhoods ${V}$ such that ${f|_{V}}$ is constant on the fibers of ${\mu|_{V}}$- hence the equality of the two sheaves.

For the converse, we use the remark we started with for ${f= \mu_i}$- which are clearly sections of ${\ucalC^{\textsc{fib}}_{\mu}}$. We find that any point ${x\in M}$ has a neighborhood ${V}$ such that ${\mu_{i}|_{V}}$ is constant on the fibers of ${\mu^{\, '}\circ \Phi|_{V}}$. Of course, this property is preserved when passing to a smaller open; also, we can apply it to each component ${\mu_i}$ of ${\mu}$. Therefore one can find ${V}$'s such that ${\mu|_{V}}$ is constant on the fibers of ${\mu^{\, '}\circ \Phi|_{V}}$. Using the similar argument but with ${\mu}$ and ${\mu^{\, '}}$ interchanged (and using ${\Phi^{ -1}}$), we see that we can ensure that ${\mu|_{V}}$ and ${\mu^{\, '}\circ \Phi|_{V}}$ have the same fibers, hence a commutative diagram
\[
        \begin{tikzcd}
            V
            \arrow{r}{\mu}
            \arrow{d}{\Phi}
            &
            \mu(V)
            \arrow{d}{\phi_V}
            \\
            \Phi(V)
            \arrow{r}{\mu'}
            &
            \mu'(\Phi(V)).
        \end{tikzcd}
\]
for some set theoretical bijection ${\phi_V}$. Again, such a ${V}$ (and a ${\phi_V}$) can be found around any point in ${M}$. Since ${\phi_V}$ is uniquely determined by the rest of the diagram, we see that they glue together to a global ${\phi}$. 
\end{proof}
\noindent
The previous proposition probably has a more conceptual proof, where the key remark is that the notion of fiber -equivalence is local.

\subsection{Functional Equivalence}

We now come to the equivalence relation which, as we have already mentioned, is probably the most natural one (but too strong for various purposes). 

\begin{definition}\label{def -ftc -equivalence}
Call two integrable systems $(M, \omega, \mu)$ and $(M',\omega',\mu')$ \textbf{weakly functionally equivalent}\index{equivalences of integrable systems!weak functional} if there exists a ${\Phi: (M, \omega)\rightarrow (M', \omega')}$ and a diffeomorphism ${\phi: \mu(M)\rightarrow \mu'(M')}$ such that ${\psi\circ \mu= \mu'\circ \Phi}$, i.e.\ , the following square commutes:
\[
        \begin{tikzcd}
            M
            \arrow{r}{\mu}
            \arrow{d}{\Phi}
            &
            \mu(M)
            \arrow{d}{\phi}
            \\
            M'
            \arrow{r}{\mu'}
            &
            \mu'(M').
        \end{tikzcd}
\]
We say that the systems are \textbf{functionally equivalent}\index{equivalences of integrable systems!functional} if ${\phi: \mu(M)\rightarrow \mu'(M')}$ can be extended to a smooth diffeomorphism between open neighborhoods of ${ \mu(M)}$ and ${\mu'(M')}$.
\end{definition}
\noindent
Here, as before (see e.g.\  the second part of Remark \ref{rk -functions -better}), by a smooth function ${\phi: A\rightarrow B}$ between two subspaces of ${\mathbb{R}^n}$ we mean a map which, around each point in ${a}$, is the restriction of a smooth map defined on an open neighborhood of ${a}$ in ${\mathbb{R}^n}$. Of course, ${\phi}$ is called a diffeomorphism if it is bijective and both ${\phi}$ and ${\phi^{ -1}}$ are smooth.
Recall here that, for {\it closed} subset ${A\subset \R^n}$, any function ${f A\rightarrow \R^n}$ which is smooth in this sense admits an smooth extension to an open neighborhood of ${A}$ (just use a partition of unity argument). However, even in this case, if ${f}$ is a diffeomorphism onto its image it is not clear whether the smooth extension can be realized by a diffeomorphism. 

The nature of this equivalence relation becomes clear if we see ${\R^n}$ as a manifold with the trivial Poisson structure, and ${\Phi}$ and ${\phi}$ as Poisson diffeomorphisms. Functional equivalence tells us that the systems are isomorphic as Poisson maps.

\begin{proposition}\label{prop:funct -equiv}
Two completely integrable systems ${(M,\omega)}$ and ${(M',\omega')}$ are weakly functionally equivalent if and only if there exists a symplectomorphism ${\Phi:(M,\omega)\to (M',\omega')}$ such that
\[ \Phi^*(\ucalC^{\infty}_{\mu'})= \ucalC^{\infty}_{\mu}.\]
\end{proposition}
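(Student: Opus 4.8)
The statement to prove is Proposition \ref{prop:funct -equiv}: two completely integrable systems are weakly functionally equivalent if and only if there is a symplectomorphism $\Phi$ with $\Phi^*(\ucalC^{\infty}_{\mu'}) = \ucalC^{\infty}_{\mu}$. The plan is to run essentially the same two-sided argument that worked for fiber-equivalence in Proposition \ref{prop -fiber -equiv -sheaves}, but now tracking the extra smoothness of the base map. Recall that $\ucalC^{\infty}_{\mu} = \mu^*\ucalC^{\infty}_{\R^n}$, so a section of $\ucalC^{\infty}_{\mu}$ over $U$ is precisely a smooth function that near each point agrees with $F\circ\mu$ for some locally defined smooth $F$.

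First I would prove the easy direction. Given a weak functional equivalence $(\Phi,\phi)$ with $\phi\colon\mu(M)\to\mu'(M')$ a diffeomorphism (in the sense of restrictions of smooth maps on neighbourhoods) and $\phi\circ\mu = \mu'\circ\Phi$, I want to show $\Phi^*(\ucalC^{\infty}_{\mu'}) = \ucalC^{\infty}_{\mu}$. A local section $f$ of $\ucalC^{\infty}_{\mu}$ is locally $F\circ\mu$; then $f\circ\Phi^{-1}$ is locally $F\circ\mu\circ\Phi^{-1} = F\circ\phi^{-1}\circ\mu'$, and since $\phi^{-1}$ extends to a smooth map on a neighbourhood, $F\circ\phi^{-1}$ extends to a smooth function on a neighbourhood in $\R^n$, so $f\circ\Phi^{-1}$ is a local section of $\ucalC^{\infty}_{\mu'}$. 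The reverse inclusion is symmetric using $\Phi^{-1}$ and $\phi$. (One only has to be slightly careful about shrinking the open sets so that the relevant smooth extensions of $F\circ\phi^{-1}$ exist; this is routine.)

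For the converse, suppose $\Phi^*(\ucalC^{\infty}_{\mu'}) = \ucalC^{\infty}_{\mu}$. Each $\mu_i$ is a global section of $\ucalC^{\infty}_{\mu}$, hence $\mu_i$ is a local section of $\Phi^*(\ucalC^{\infty}_{\mu'})$: near every point $x\in M$ there is an open $V$, an open $W\supset \mu'(\Phi(V))$ in $\R^n$, and a smooth $h_i^V\colon W\to\R$ with $\mu_i|_V = h_i^V\circ\mu'\circ\Phi|_V$. Assembling the components gives a locally defined smooth map $h^V = (h_1^V,\ldots,h_n^V)$ with $\mu|_V = h^V\circ\mu'\circ\Phi|_V$. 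Running the same argument with the roles of $\mu$ and $\mu'$ interchanged (using $\Phi^{-1}$ and the equality $\Phi_*(\ucalC^{\infty}_{\mu}) = \ucalC^{\infty}_{\mu'}$) produces a locally defined smooth $g^V$ with $\mu'\circ\Phi|_V = g^V\circ\mu|_V$. On the overlaps of their domains the compositions $h^V\circ g^V$ and $g^V\circ h^V$ are the identity \emph{when restricted to $\mu(V)$ and $\mu'(\Phi(V))$ respectively}; in particular, at a regular point of $\mu$ in $V$ the differential of $g^V$ is invertible, so after shrinking $V$ we may take $g^V$ (and $h^V$) to be genuine diffeomorphisms between open neighbourhoods of $\mu(V)$ and $\mu'(\Phi(V))$ in $\R^n$. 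The local maps $g^V|_{\mu(V)}\colon \mu(V)\to\mu'(\Phi(V))$ agree on overlaps because each is uniquely determined by $\mu'\circ\Phi = g^V\circ\mu$ and $\mu$ is surjective onto its image, so they glue to a well-defined bijection $\phi\colon\mu(M)\to\mu'(M')$ which is smooth in the subspace sense and whose inverse is the glued version of the $h^V|_{\mu'(\Phi(V))}$; thus $\phi$ is a diffeomorphism and $(\Phi,\phi)$ is a weak functional equivalence.

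\textbf{Main obstacle.} The genuinely delicate point — and the reason the statement is about \emph{weak} functional equivalence rather than functional equivalence — is the passage from the local smooth maps $h^V, g^V$ to \emph{diffeomorphisms} on neighbourhoods that are compatible on overlaps and glue. Invertibility of the differential at regular points gives local diffeomorphism behaviour, but the image $\mu(M)$ may be a complicated (non-open, possibly non-manifold) subset of $\R^n$, and as the excerpt itself warns (in the discussion after Definition \ref{def -ftc -equivalence}), a smooth bijection between such subsets with smooth inverse need not extend to a global diffeomorphism of neighbourhoods. So the argument can only deliver a globally defined $\phi$ on $\mu(M)$ that is a diffeomorphism in the restricted sense, which is exactly weak functional equivalence; upgrading to honest functional equivalence would require an additional extension hypothesis and is deliberately not claimed here. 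I would make sure the write-up isolates this point and does not accidentally overclaim.
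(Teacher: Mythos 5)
Your argument follows essentially the same route as the paper: characterize sections of $\Phi^*(\ucalC^{\infty}_{\mu'})$, apply this to $f=\mu_i$ to get local factorizations $\mu|_V = h^V\circ\mu'\circ\Phi|_V$, glue them using uniqueness, run the symmetric argument via $\Phi^{-1}$, and observe that the two resulting global maps are inverse bijections between $\mu(M)$ and $\mu'(M')$ that are smooth in the restricted sense. That is correct and is the paper's proof.

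One intermediate step you include is unnecessary and, as stated, not actually justified: the claim that ``at a regular point of $\mu$ in $V$ the differential of $g^V$ is invertible, so after shrinking $V$ we may take $g^V$ (and $h^V$) to be genuine diffeomorphisms between open neighbourhoods of $\mu(V)$ and $\mu'(\Phi(V))$ in $\R^n$.'' The inverse function theorem gives you a local diffeomorphism near the image of one regular point, but $\mu(V)$ contains images of singular points as well, and shrinking $V$ does not control the geometry of $\mu'(\Phi(V))$ inside the domain where $g^V$ is invertible; there is no reason the extensions should be honest diffeomorphisms of open sets. Fortunately none of this is needed: what the definition of weak functional equivalence requires is only that $\phi$ and $\phi^{-1}$ be smooth in the subset sense (restrictions of locally defined smooth maps), and that is automatic from the construction of $h^V$, $g^V$ once you have checked they glue and that $h^V\circ g^V$, $g^V\circ h^V$ restrict to the identity on $\mu(V)$ and $\mu'(\Phi(V))$. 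If you delete that one sentence, the proof is exactly the paper's, and your ``Main obstacle'' remark correctly identifies why the conclusion is weak functional equivalence rather than functional equivalence.
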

\noindent
\begin{proof}
We follow the same line of proof as for Proposition \ref{prop -fiber -equiv -sheaves}. We start with a similar remark, to describe which 
smooth functions ${f: U\rightarrow \mathbb{R}}$, defined on an open subset ${U\subset M}$, a section of ${\Phi^*(\ucalC^{\infty}_{\mu'})}$ or, equivalently when is ${f\circ \Phi^{ -1}: \Phi(U)\rightarrow \mathbb{R}}$ a section of ${\ucalC^{\infty}_{\mu'}}$. Writing out the definition of the last sheaf and expressing the opens in ${M}$ as images by ${\Phi}$ of opens in ${M}$ we find the condition: for each point ${x\in U}$ there is a neighborhood ${V\subset U}$ and a smooth function ${g: \mu^{\, '}(\Phi(V))\rightarrow \mathbb{R}}$ such that 
\[ f|_{V}= g\circ \mu^{\, '}\circ \Phi|_{V}.\]

When the two systems are equivalent, since ${\mu^{\, '}\circ \Phi= \phi\circ \mu}$ and since ${g\mapsto g\circ \phi}$ is a bijection between smooth functions ${g}$ on ${\mu^{\, '}(\Phi(V))= \phi(\mu(V))}$ and smooth functions on ${\mu(V)}$, we obtain the desired equality of sheaves.

For the converse, we apply the original remark to ${f= \mu_i}$ (which is clearly in ${\ucalC^{\infty}_{\mu}}$). Then we find, around each point ${x}$, a neighborhood ${V}$ and a smooth map ${g_i: \mu^{\, '}(\Phi(V))\rightarrow \mathbb{R}}$ such that 
${\mu_i|_{V}= g_i\circ \mu^{\, '}\circ \Phi|_{V}}$. We do this for any ${i}$ hence, after eventually shrinking ${V}$, we find a smooth map ${g_V: \mu^{\, '}(\Phi(V))\rightarrow \mathbb{R}}$ fitting into a commutative diagram:
\[
        \begin{tikzcd}
            V
            \arrow{r}{\mu}
            \arrow{d}{\Phi}
            &
            \mu(V)
            \\
            \Phi(V)
            \arrow{r}{\mu'}
            &
            \mu'(\Phi(V))
            \arrow{u}{g_V}
        \end{tikzcd}
\]
Again, the ${g_V}$ is uniquely determined by the rest of the diagram, hence they glue to a global smooth map ${g: \mu^{\, '}(M')\rightarrow \mu(M)}$ so that the following diagram is commutative:
\[
        \begin{tikzcd}
            M
            \arrow{r}{\mu}
            \arrow{d}{\Phi}
            &
            \mu(M)
            \\
            \Phi(M)
            \arrow{r}{\mu'}
            &
            \mu'(\Phi(M))
            \arrow{u}{g}
        \end{tikzcd}
\]
Similarly, interchanging the roles of ${\mu}$ and ${\mu'}$ (and using ${\Phi^{ -1}}$), we find a similar smooth map ${g'}$ going backwards. From the commutativity of the two diagrams, i.e.\  from ${\mu= g\circ \mu'\circ \Phi}$ and ${\mu'= g'\circ \mu\circ \Phi^{ -1}}$, 
one immediately obtains that ${g\circ g'= \textrm{Id}}$ on ${\mu(M)}$ and ${g'\circ g= \textrm{Id}}$ on ${\mu'(M')}$, hence ${g}$ is a diffeomorphism in the sense described above. 
\end{proof}

\subsection{Weak Equivalence}

We now come to the equivalence relation which, when one first encounters, seems to be the least intuitive one. We hope that our discussion (see Remark \ref{rk:our -expl -wek -equiv} below) is more clarifying.

\begin{definition} 
We say that two integrable systems ${(M, \omega, \mu)}$ and ${(M', \omega', \mu')}$ are \textbf{weakly equivalent}\index{equivalences of integrable systems!weak} if there is a symplectomorphism ${\Phi:(M,\omega)\to (M',\omega')}$ such that the ${\mu_i}$ and ${\mu_j'\circ\Phi}$ Poisson commute, i.e.\ , 
\begin{equation}\label{eq -comm -weak-equiv}
        \{
            \mu_i,
            \mu_j' \circ \Phi
        \}_\omega
        =
        0,
        \quad\forall\; i,j.
\end{equation}
\end{definition}
\noindent
This definition is motivated by Eliasson's local form theorems which, in general, provide only such weak equivalences. 

\begin{example}
\label{ex:weak-equiv -strange}  
One should be aware however that weak equivalences do not preserve the singular points. For instance, w.r.t.\  the canonical symplectic structure on ${\R^2}$, both 
${\mu(x,y)\defeq x}$ and ${\mu'(x,y)\defeq x^2}$ are completely integrable systems, the first one regular everywhere and the second one singular at every point on the ${y}$-axis. However,
they are weakly equivalent (with ${\Phi= \textrm{Id}}$):
    \[
        \{
            \mu,\mu'
        \}_{\omega_\text{can}}
        =
        \frac{\partial \mu}{\partial x}
        \frac{\partial \mu'}{\partial y}
        -
        \frac{\partial \mu}{\partial y}
        \frac{\partial \mu'}{\partial x}
        =
        0,
    \]
since neither function depends on the ${y}$-variable. 

It is instructive to try to construct a similar example with ${\mu'= x^2+ y^2}$ (the elliptic case). That does not work, and this is just an indication of the fact that the phenomena mentioned above
cannot occur for nondegenerate singularities. Even more, for such nondegenerate integrable systems, the Williamson type of a singularity is invariant under weak equivalences (see Theorem \ref{weak preserves nondegenerate} below). 
\end{example}
\noindent
The fact that weak equivalence is an equivalence relation is not directly apparent from the definition, but it will follow from the \nameref{cis:lem:main} on page~\pageref{cis:lem:main}.

\begin{lemma}
Weak equivalence is an equivalence relation for completely integrable systems.
\noindent
Moreover, if ${(\omega,\mu)}$ is a completely integrable system and ${\mu':M\to \R^n}$ is a smooth map such that
    $
        \{
            \mu_i,
            \mu_j'
        \}_\omega
        =
        0
    $
for all ${i}$ and ${j}$, then ${(\omega,\mu')}$ is also an integrable system.
\end{lemma}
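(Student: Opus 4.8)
The plan is to prove the two assertions in turn, the first being the more substantive one. For the claim that weak equivalence is an equivalence relation, reflexivity is immediate (take $\Phi = \mathrm{Id}$, and $\{\mu_i,\mu_j\}_\omega = 0$ by involutivity). For symmetry, suppose $\Phi:(M,\omega)\to(M',\omega')$ witnesses weak equivalence, so $\{\mu_i, \mu_j'\circ\Phi\}_\omega = 0$ for all $i,j$. I want to show $\Psi := \Phi^{-1}$ witnesses weak equivalence in the other direction, i.e.\ $\{\mu_i', \mu_j\circ\Phi^{-1}\}_{\omega'} = 0$. Since $\Phi$ is a symplectomorphism, pulling back by $\Phi$ preserves Poisson brackets, so $\{\mu_i, \mu_j'\circ\Phi\}_\omega = 0$ is equivalent to $\{\mu_i\circ\Phi^{-1}, \mu_j'\}_{\omega'} = 0$, which is exactly the required relation with the roles of $i,j$ swapped. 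The key point here is just the naturality of the Poisson bracket under symplectomorphisms, applied on the manifold $M'$.

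Transitivity is where the \nameref{cis:lem:main} enters. Suppose $\Phi:(M,\omega)\to(M',\omega')$ and $\Phi':(M',\omega')\to(M'',\omega'')$ are weak equivalences, so $\{\mu_i,\mu_j'\circ\Phi\}_\omega = 0$ and $\{\mu_i',\mu_j''\circ\Phi'\}_{\omega'} = 0$ for all $i,j$. I want $\{\mu_i, \mu_j''\circ\Phi'\circ\Phi\}_\omega = 0$. Pull the second relation back by $\Phi$: it becomes $\{\mu_i'\circ\Phi, \mu_j''\circ\Phi'\circ\Phi\}_\omega = 0$. Now fix $j$ and set $g := \mu_j''\circ\Phi'\circ\Phi \in C^\infty(M)$. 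The hypothesis $\{\mu_i,\mu_j'\circ\Phi\}_\omega = 0$ for all $i,j$ says precisely that each $\mu_j'\circ\Phi$ lies in $\CM_\mu$, and the pulled-back second relation says $g$ Poisson-commutes with every $\mu_i'\circ\Phi$. But this does not yet directly say $g \in \CM_\mu$; instead I apply the \nameref{cis:lem:main} on the manifold $M'$: since $(\omega',\mu')$ is a completely integrable system and both $\mu_i'$ (trivially, being a $\mu'$-component) and $\mu_j''\circ\Phi'$ lie in $\CM_{\mu'}$, the Main Lemma gives $\{\mu_i'\circ\Phi', \mu_j''\circ\Phi'\}_{\omega'}$-type conclusions — more carefully, the cleanest route is: on $M'$, the functions $h_k := \mu_k''\circ\Phi'$ satisfy $\{\mu_i', h_k\}_{\omega'} = 0$, and the $\mu_i'$ themselves satisfy $\{\mu_i',\mu_l'\}_{\omega'}=0$; I want to conclude $\{f,g\}$-vanishing for appropriate combinations. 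Actually the crucial application is: to show $g = \mu_j''\circ\Phi'\circ\Phi \in \CM_\mu$, note that pulling back by $\Phi$ it suffices to show $\mu_j''\circ\Phi' \in \CM_{\mu'}$ where now we use $\{\mu_i',\mu_j''\circ\Phi'\}_{\omega'}=0$ — this is given. Hence $g\in\CM_\mu$, i.e.\ $\{\mu_i,g\}_\omega = 0$ for all $i$, which is transitivity. (The Main Lemma is what is genuinely needed to handle the subtler situation where one only knows pairwise commutation with the $\mu'$ and wants commutation across the composite; I would write this out carefully rather than rely on the shortcut above, as the shortcut may be circular.)

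For the second assertion, let $(\omega,\mu)$ be a completely integrable system and $\mu':M\to\R^n$ smooth with $\{\mu_i,\mu_j'\}_\omega = 0$ for all $i,j$. I must show $(\omega,\mu')$ is an integrable system, i.e.\ the $\mu_j'$ are pairwise in involution: $\{\mu_j',\mu_k'\}_\omega = 0$ for all $j,k$. This is an immediate application of the \nameref{cis:lem:main}: the hypothesis says that for each fixed $j$, the function $\mu_j'$ satisfies $\{\mu_i,\mu_j'\}_\omega = 0$ for all $i$, i.e.\ $\mu_j' \in \CM_\mu$; likewise $\mu_k'\in\CM_\mu$. Since $\CM_\mu$ is abelian (the content of the Main Lemma), we get $\{\mu_j',\mu_k'\}_\omega = 0$ for all $j,k$, as required. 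Note the conclusion is only that $(\omega,\mu')$ is an integrable system, not a complete one — as Example \ref{ex:weak-equiv -strange} illustrates, independence can fail.

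The main obstacle, and the only place requiring care, is transitivity: one must resist the temptation to argue naively, and instead correctly identify which functions one knows lie in the abelian sheaf $\CM_{\mu'}$ (namely $\mu_j''\circ\Phi'$, from the second weak equivalence) so that pulling back by the symplectomorphism $\Phi$ transports this into $\CM_\mu$. The Main Lemma's role is to guarantee the relevant sheaves are abelian, which is exactly what closes the composition. Everything else — reflexivity, symmetry, and the second statement — follows formally from the naturality of the Poisson bracket and the Main Lemma.
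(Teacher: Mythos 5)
Your reflexivity, symmetry, and the final assertion (that $(\omega,\mu')$ is integrable) are correct and follow the same route as the paper. Transitivity, however — which you rightly identify as the crux — has a genuine gap, which you flag but do not close. Your claimed ``cleanest route'' says: to show $g := \mu_j''\circ\Phi'\circ\Phi \in \CM_\mu$, it suffices (after pulling back by $\Phi$) to show $\mu_j''\circ\Phi' \in \CM_{\mu'}$, which is given. This inference is false. Conjugating by the symplectomorphism $\Phi$ carries $\CM_{\mu'}$ to $\Phi^*\CM_{\mu'} = \CM_{\mu'\circ\Phi}$, \emph{not} to $\CM_\mu$. Identifying $\CM_{\mu'\circ\Phi}$ with $\CM_\mu$ is precisely what weak equivalence of $(\omega,\mu)$ and $(\omega',\mu')$ would give you — so the argument is circular, exactly as you suspected when you hedged at the end of that paragraph. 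The application of the Main Lemma on $M'$ to $(\omega',\mu')$ with the two functions $\mu_i'$ and $\mu_j''\circ\Phi'$ is also useless: it only reproduces the hypothesis $\{\mu_i',\mu_j''\circ\Phi'\}_{\omega'}=0$.

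The step you are missing — and the one the paper's proof makes — is to apply the Main Lemma to the \emph{auxiliary} completely integrable system $(\omega,\,\mu'\circ\Phi)$ on $M$. From the first weak equivalence, every $\mu_i$ Poisson-commutes with all the $\mu_k'\circ\Phi$; from the pulled-back second weak equivalence, $g = \mu_j''\circ\Phi'\circ\Phi$ also Poisson-commutes with all the $\mu_k'\circ\Phi$. So both $\mu_i$ and $g$ lie in $\CM_{\mu'\circ\Phi}$, and since $(\omega,\mu'\circ\Phi)$ is complete, the Main Lemma yields $\{\mu_i, g\}_\omega = 0$ directly. Equivalently you could stay on $M'$ with the system $(\omega',\mu')$, but then you must observe that \emph{both} $\mu_i\circ\Phi^{-1}$ \emph{and} $\mu_j''\circ\Phi'$ lie in $\CM_{\mu'}$ — your write-up records only the second and silently drops the first, which is precisely what the first weak equivalence gives you (pull $\{\mu_i,\mu_k'\circ\Phi\}_\omega=0$ back by $\Phi^{-1}$ to get $\{\mu_i\circ\Phi^{-1},\mu_k'\}_{\omega'}=0$). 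The Main Lemma needs both functions in the \emph{same} centralizer; identifying that centralizer as $\CM_{\mu'\circ\Phi}$ (or $\CM_{\mu'}$ on $M'$, with both pulled-back functions) is the step that needs to be said explicitly.
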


\begin{proof}
Only transitivity is nontrivial, and it uses that the integrable systems are complete. For suppose that there are integrable systems ${(\omega,\mu)}$, ${(\omega',\mu')}$ and ${(\omega'',\mu'')}$ on ${M}$, ${M'}$ and ${M''}$, respectively, and suppose that there symplectomorphisms ${\Phi:(M,\omega)\to (M',\omega')}$ and ${\Phi':(M',\omega')\to (M'',\omega')}$ such that
    \[
        \{
            \mu_i,
            \mu_j' \circ \Phi
        \}_\omega
        =
        \{
            \mu_i',
            \mu_j'' \circ \Phi'
        \}_{\omega'}
        =
        0,
        \quad\forall\; i,j.
    \]
Since ${\Phi}$ is a symplectomorphism, it follows that also
    $
        \{
            \mu_i'\circ\Phi,
            \mu_j''\circ\Phi'\circ\Phi
        \}_\omega
        =
        0
    $
for all ${i}$ and ${j}$. Since ${(\omega,\mu'\circ\Phi)}$ is also a completely integrable system, it follows from the \nameref{cis:lem:main} on page~\pageref{cis:lem:main} that
    \[
        \{
            \mu_i,
            \mu_j''\circ\Phi'\circ\Phi
        \}_\omega
        =
        0,
        \quad\forall\; i,j.
    \]
This shows that ${(\omega,\mu)}$ and ${(\omega'',\mu'')}$ are weakly equivalent, so weak equivalence is transitive.
The second statement also follows directly from the \nameref{cis:lem:main} on page~\pageref{cis:lem:main}: if 
\[
    {\{\mu_i,\mu'_j \}_\omega=\{\mu_i,\mu'_k\}_\omega=0}
\]
for all ${i}$, then ${\{\mu'_j,\mu'_k\}_\omega=0}$.
\end{proof}
\noindent
\begin{remark}\label{rk:our -expl -wek -equiv}
The intuition for the notion of weak equivalence is that ${\Phi}$ `preserves the Lagrangian foliations'. We hope that our discussion on the various sheaves (and their properties) of an integrable system makes it clear and more intuitive why `the Lagrangian foliation' is encoded by the sheaf ${\ucalC_{\mu}}$ (which is of most importance to this thesis). 
Therefore the condition that ${\Phi}$ preserves the Lagrangian foliation takes the precise form:
\[ \Phi^*\ucalC_{\mu'}= \ucalC_{\mu}.\]
The next proposition shows that this is indeed equivalent to the previous definition; this will also make the fact that weak equivalence is an equivalence relation more transparent. 
\end{remark}
\noindent
\begin{proposition}\label{cis:prop:weak_eq}
For two completely integrable systems (\ref{2 -in -syst -for -reference}) the following are equivalent:
\begin{myitemize}
\item ${(\omega,\mu)}$ and ${(\omega',\mu')}$ are weakly-equivalent.
\item There is a symplectomorphism ${\Phi:(M,\omega)\to(M',\omega')}$ such that ${\ucalC_\mu=\Phi^*\ucalC_{\mu'}}$.
\item There is a symplectomorphism ${\Phi:(M,\omega)\to(M,\omega')}$ such that ${\gerX_\mu=\Phi^*\gerX_{\mu'}}$.
\end{myitemize}
\end{proposition}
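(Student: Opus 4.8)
The plan is to prove the equivalences in Proposition~\ref{cis:prop:weak_eq} by going around the cycle: weak equivalence $\Rightarrow$ $\gerX_\mu = \Phi^*\gerX_{\mu'}$ $\Rightarrow$ $\ucalC_\mu = \Phi^*\ucalC_{\mu'}$ $\Rightarrow$ weak equivalence. Throughout, the symplectomorphism $\Phi$ will be the same in all three statements, so each implication amounts to translating the defining condition on this fixed $\Phi$ into the next one. The key structural inputs are: the \nameref{cis:lem:main} (abelianness of $\ucalC_\mu$), Corollary~\ref{cor -X -mu -Lagr} (which gives $\gerX_\mu = \Vect(\ucalC_\mu)$ and $\gerX_\mu^{\perp_\omega} = \gerX_\mu$), and the one-to-one correspondence from Corollary~\ref{one-to-one -corr -sheaves} between closed sheaves of functions and strongly involutive sheaves of vector fields, under which $\CM_\mu$ and $\XX_\mu$ correspond (Proposition~\ref{main -prop -sheavs -int -syst}).

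First I would observe that, since $\Phi$ is a symplectomorphism, pullback $\Phi^*$ intertwines all the constructions in sight: $\Phi^*$ commutes with $\Fun$, $\Vect$, taking Poisson brackets, Hamiltonian vector fields, and $\dd$. Concretely, $\Phi^*\{f,g\}_{\omega'} = \{\Phi^*f,\Phi^*g\}_\omega$ and $\Phi^*(X_f^{\omega'}) = X_{\Phi^*f}^\omega$. For weak $\Rightarrow$ $\gerX$: the condition $\{\mu_i,\Phi^*\mu_j'\}_\omega = 0$ says exactly that $\Phi^*\mu_j' \in \CM_\mu$ for every $j$, hence (by Proposition~\ref{main -prop -sheavs -int -syst}(2), $\CM_\mu$ is the smallest closed sheaf containing the $\mu_i$, combined with symmetry) that $\Phi^*\CM_{\mu'} \subseteq \CM_\mu$ and, running the argument with $\Phi^{-1}$, the reverse inclusion — so actually $\Phi^*\CM_{\mu'} = \CM_\mu$ already. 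Applying $\Vect$ and Corollary~\ref{cor -X -mu -Lagr} gives $\Phi^*\gerX_{\mu'} = \gerX_\mu$. The implication $\gerX_\mu = \Phi^*\gerX_{\mu'}$ $\Rightarrow$ $\ucalC_\mu = \Phi^*\ucalC_{\mu'}$ is then immediate by applying $\Fun$ (which commutes with $\Phi^*$) and using $\Fun(\XX_\mu) = \CM_\mu$ from Proposition~\ref{main -prop -sheavs -int -syst}(5). Finally, $\ucalC_\mu = \Phi^*\ucalC_{\mu'}$ $\Rightarrow$ weak: since $\mu_j' \in \ucalC_{\mu'}$ as a local section (it Poisson-commutes with itself and the other $\mu_k'$), we get $\Phi^*\mu_j' \in \ucalC_\mu$, i.e.\ $\{\mu_i, \Phi^*\mu_j'\}_\omega = 0$ for all $i,j$.

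Honestly, there is not much of an obstacle here: the heavy lifting was done in the previous chapter (Proposition~\ref{main -prop -sheavs -int -syst} and its supporting lemmas, especially the \nameref{cis:lem:main}), and this proposition is essentially a bookkeeping corollary. The one place to be slightly careful is the direction weak $\Rightarrow$ $\Phi^*\CM_{\mu'} = \CM_\mu$: one needs that $\{\mu_i,\Phi^*\mu_j'\}_\omega = 0$ for all $i,j$ forces $\Phi^*$ of the \emph{whole} sheaf $\CM_{\mu'}$ into $\CM_\mu$, not just the generators. This is where Proposition~\ref{main -prop -sheavs -int -syst}(2) (``$\CM_\mu$ is the smallest closed sheaf containing the $\mu_i$'s'') does the work: $(\Phi^*)^{-1}\CM_\mu = \Phi_*\CM_\mu$ is a closed sheaf (pullback of a closed sheaf along a symplectomorphism is closed, since $\Phi^*$ commutes with $\Fun$ and $\Vect$) containing all $\mu_j'$, hence contains $\CM_{\mu'}$; equivalently $\Phi^*\CM_{\mu'} \subseteq \CM_\mu$, and symmetry of the weak-equivalence condition (applying it with $\Phi^{-1}$, which requires the \nameref{cis:lem:main} to know $\Phi^{-1}$ again gives a weak equivalence — or one simply notes the relation $\{\mu_i,\Phi^*\mu_j'\}=0$ is symmetric in the two systems after pulling back by $\Phi^{-1}$) gives equality. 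I would present the proof as the three implications above, roughly one short paragraph each, invoking the cited results rather than reproving anything.
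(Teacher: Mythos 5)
Your proof is correct and reaches the same conclusion, but the two inclusions in the key step (weak equivalence $\Rightarrow$ $\ucalC_\mu=\Phi^*\ucalC_{\mu'}$) are dispatched somewhat differently. The paper applies the \nameref{cis:lem:main} twice: once directly to get $\CM_\mu\subseteq\Phi^*\CM_{\mu'}$ (since any $f\in\CM_\mu$ and each $\Phi^*\mu_j'$ both lie in $\CM_\mu$, they Poisson-commute), and once more, after introducing the auxiliary integrable system $(\omega,\mu'\circ\Phi)$, to get $\Phi^*\CM_{\mu'}\subseteq\CM_\mu$. You instead prove one inclusion by combining the ``smallest closed sheaf'' characterization, Proposition~\ref{main -prop -sheavs -int -syst}(2), with the observation that $\Phi^*$ commutes with $\Fun$ and $\Vect$ (so $\Phi_*\CM_\mu$ is a closed sheaf containing the $\mu_j'$), and then obtain the reverse inclusion from the manifest symmetry of $\{\mu_i,\Phi^*\mu_j'\}_\omega=0$ under $\Phi\leftrightarrow\Phi^{-1}$; you are also right that this symmetry requires no appeal to the \nameref{cis:lem:main}. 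Your route makes the symmetry explicit, which is pleasant, at the cost of an auxiliary commutation fact about $\Phi^*$ that, while true and easy, the paper never states; the paper's argument sidesteps it by working with the \nameref{cis:lem:main} directly. Both versions ultimately rest on the same underlying ingredient, since Proposition~\ref{main -prop -sheavs -int -syst} is itself proved from the \nameref{cis:lem:main}.
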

\noindent
\begin{proof}
Suppose that ${(\omega,\mu)}$ and ${(\omega',\mu')}$ are weakly equivalent, and choose a symplectomorphism ${\Phi}$ that realizes this. Now if ${f\in \CM_\mu}$, then 
    \[
        \{f\circ\Phi^{ -1},\mu_j'\}_{\omega'}
        =
        \{f,\mu_j'\circ\Phi \}_{\omega}\circ\Phi^{ -1}
        =
        0,
        \quad\forall\, j,
    \]
by the \nameref{cis:lem:main} on page~\pageref{cis:lem:main} and the assumption that ${\phi}$ is a symplectomorphism. This shows that \[\CM_\mu\sub\Phi^*\CM_{\mu'}.\] For the opposite inclusion, if ${g\in \CM_{\mu'}}$, then
    \[
        \{ g\circ\Phi, \mu_j'\circ\Phi \}_\omega
        =
        \{ g, \mu_j' \}_{\omega'}\circ\Phi
        =
        0,
        \quad\forall\, j.
    \]
Since ${(\omega,\mu'\circ\Phi)}$ is also a completely integrable system on ${M}$, and  ${\mu_i\in\CM_{\mu'\circ\Phi}}$, it follows again from the \nameref{cis:lem:main} on page~\pageref{cis:lem:main} that
    $
        \{ g\circ\Phi, \mu_i  \}_\omega
        =
        0
    $
for all $i$. This shows that ${\Phi^*\CM_{\mu'}\sub\CM_\mu}$.

Conversely, the equations (\ref{eq -comm -weak-equiv}) follow right away from any of the 
    \[  
    {\CM_\mu\sub\phi^*\CM_{\mu'}} \quad\text{or}\quad \CM_\mu\supseteq\phi^*\CM_{\mu'}.\] 
The fact that the any of these inclusions must be equality is
a manifestation of the fact ${\CM_{\mu}}$ is the only self-centralizing sheaf that contains the ${\mu_i}$'s. 

Finally, the equivalence between the last two statements is clear given the relationship between ${\CM_{\mu}}$ and ${\gerX_{\mu}}$ described in Proposition \ref{main -prop -sheavs -int -syst} (part 5) and the functoriality of ${\Fun}$ and ${\Vect}$. 
\end{proof}

\subsection{Algebraic Equivalence}

The notions of equivalence that we have discussed so far had a somewhat geometric/intuitive motivation. They are closely related to each other and, as we shall soon see, they all imply weak equivalence. However, to see how much weak equivalence is different from the rest, we are now
describing anther equivalence relation, more algebraic, which still implies weak equivalences but for which the interaction with the other equivalence relations is not so close. 

This notion starts from the remark that, if ${\mu: (M, \omega)\rightarrow \R^n}$ is an integrable system and one changes the components of ${\mu}$ linearly but in a nonconstant fashion, then one may still end up with an integrable system. More precisely, 
defining
$ \mu'_{k}= \sum_i a_{k}^{i} \cdot \mu_i $
where ${A= (a_{k}^{i}): M\rightarrow GL_n(\R)}$ is a smooth map with components ${a_{k}^{i}\in \ucalC_{\mu}(M)}$, then 
$ \mu': (M, \omega)\rightarrow \mathbb{R}^n $
is still an integrable system - as it follows immediately from the \nameref{cis:lem:main} on page~\pageref{cis:lem:main}. 
It is also clear that ${(\omega, \mu)}$ and ${(\omega, \mu')}$ are weakly equivalent. And, as for weak equivalences, singularities may disappear or appear. But, algebraically, 
${(\omega, \mu)}$ and ${(\omega, \mu')}$ look very similar, and they may be called equivalent. We are led to the following:

\begin{definition} 
Call two integrable systems ${(M, \omega, \mu)}$ and ${(M', \omega', \mu')}$ \textbf{algebraically equivalent}\index{equivalences of integrable systems!algebraic} if there is a ${\Phi:(M,\omega)\to (M',\omega')}$ and 
a smooth map ${A= (a_{k}^{i}):M\to\GL(\R^n)}$ with components ${a_{k}^{i}\in \ucalC_{\mu}(M)}$ 
such that ${\mu'\Phi= A\cdot\mu}$, i.e.\ , for each ${k}$:
\[ \Phi^*(\mu'_k)= \sum\nolimits_i a_{k}^{i} \cdot \mu_i \]
\end{definition}
\noindent
\begin{lemma}
Algebraic equivalence is an equivalence relation for completely integrable systems.

\end{lemma}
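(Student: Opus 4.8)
The plan is to verify the three defining properties of an equivalence relation — reflexivity, symmetry, transitivity — for algebraic equivalence, using the \nameref{cis:lem:main} on page~\pageref{cis:lem:main} as the key technical input, exactly as in the proofs for weak and functional equivalence above.

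First I would dispatch reflexivity: given $(M,\omega,\mu)$, take $\Phi = \mathrm{Id}_M$ and $A = \mathrm{Id}_{\R^n}$, whose (constant) entries obviously lie in $\ucalC_{\mu}(M)$; then $\Phi^*(\mu_k) = \mu_k = \sum_i \delta_k^i\mu_i$, so the system is algebraically equivalent to itself. For symmetry, suppose $\mu'\circ\Phi = A\cdot\mu$ with $\Phi$ a symplectomorphism and $A = (a_k^i): M \to \GL(\R^n)$ having entries in $\ucalC_{\mu}(M)$. Then $\Phi^{-1}$ is a symplectomorphism and $\mu\circ\Phi^{-1} = (A\circ\Phi^{-1})^{-1}\cdot\mu'$. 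The point to check is that the entries of $B \defeq (A\circ\Phi^{-1})^{-1}: M' \to \GL(\R^n)$ lie in $\ucalC_{\mu'}(M')$: since $\Phi$ is a symplectomorphism, $a_k^i \in \ucalC_{\mu}(M)$ implies $a_k^i\circ\Phi^{-1} \in \ucalC_{\mu'}(M')$ (the condition $\{\mu_j, f\}_\omega = 0$ transports along $\Phi$ to $\{\mu'_j, f\circ\Phi^{-1}\}_{\omega'}=0$ because $\mu_j = \mu'_j\circ\Phi$ up to the $A$-change — more carefully, one uses that $\ucalC_\mu = \Phi^*\ucalC_{\mu'}$, which holds since algebraic equivalence implies weak equivalence and Proposition \ref{cis:prop:weak_eq} applies); and since $\ucalC_{\mu'}(M')$ is a ring (indeed a $C^\infty$-ring) closed under the smooth operation of matrix inversion on $\GL(\R^n)$, the matrix $B$ again has entries in $\ucalC_{\mu'}(M')$. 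Here I would invoke the fact that by Cramer's rule the entries of $C^{-1}$ are polynomials in the entries of $C$ divided by $\det C$, and $1/\det C$ is a smooth function of the entries on $\GL(\R^n)$, so closure under $C^\infty$-operations of $\ucalC_{\mu'}$ (Proposition \ref{main -prop -sheavs -int -syst} gives $\ucalC_{\mu'}$ closed, and closed sheaves here are $C^\infty$-rings) does the job.

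For transitivity, suppose $(\omega,\mu)$ on $M$, $(\omega',\mu')$ on $M'$, $(\omega'',\mu'')$ on $M''$ with $\Phi: M\to M'$, $\Phi': M'\to M''$ symplectomorphisms and smooth maps $A: M\to\GL(\R^n)$ with $a_k^i\in\ucalC_\mu(M)$, $A': M'\to\GL(\R^n)$ with $a_k'^i\in\ucalC_{\mu'}(M')$, such that $\mu'\circ\Phi = A\cdot\mu$ and $\mu''\circ\Phi' = A'\cdot\mu'$. Then $\Phi'\circ\Phi$ is a symplectomorphism and
\[
    \mu''\circ\Phi'\circ\Phi
    =
    (A'\circ\Phi)\cdot(\mu'\circ\Phi)
    =
    \big((A'\circ\Phi)\cdot A\big)\cdot\mu.
\]
So the candidate transition matrix is $A'' \defeq (A'\circ\Phi)\cdot A$, and the only thing to verify is that its entries lie in $\ucalC_\mu(M)$. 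The entries of $A$ already are, by hypothesis. For $A'\circ\Phi$: the entries $a_k'^i$ of $A'$ lie in $\ucalC_{\mu'}(M')$, and since $\Phi$ is a symplectomorphism with $(\omega,\mu'\circ\Phi)$ a completely integrable system (weak equivalence, Proposition \ref{cis:prop:weak_eq} again, so $\ucalC_\mu = \Phi^*\ucalC_{\mu'}$), we get $a_k'^i\circ\Phi \in \ucalC_\mu(M)$. Finally a product of matrices with entries in the ring $\ucalC_\mu(M)$ again has entries in $\ucalC_\mu(M)$, since $\ucalC_\mu$ is a sheaf of rings. This closes transitivity.

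The main obstacle — and really the only nontrivial point — is the bookkeeping around the sheaf $\ucalC_\mu$: one must be careful that "$a_k^i \in \ucalC_\mu(M)$" transports correctly along symplectomorphisms, and that $\ucalC_\mu(M)$ is genuinely closed under the ring operations and, for symmetry, under matrix inversion. Both follow cleanly from Proposition \ref{main -prop -sheavs -int -syst} (the central sheaf is closed, hence a $C^\infty$-ring) together with the \nameref{cis:lem:main}, which underpins the identification $\ucalC_\mu = \Phi^*\ucalC_{\mu'}$ whenever a weak equivalence $\Phi$ is present — and algebraic equivalence visibly produces a weak equivalence, as already noted in the text. Everything else is a routine composition-of-matrices check and can be stated without grinding through coordinates.
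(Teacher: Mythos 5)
Your proof is correct and rests on the same engine as the paper's -- the commutative Poisson-algebra structure of $\ucalC_\mu$, ultimately backed by the \nameref{cis:lem:main} -- but it is packaged differently. The paper carries out the argument by hand: for symmetry it shows $\{\mu_i,\mu'_j\circ\Phi\}_\omega = 0$ directly from the Leibniz rule, then invokes the \nameref{cis:lem:main} to conclude $\{\mu'_j\circ\Phi, A^k_l\}_\omega=0$, transports this across $\Phi$, and argues that the entries of the inverse matrix still commute; transitivity is handled with a similar chain of explicit bracket computations. You instead observe that algebraic equivalence implies weak equivalence and then appeal to Proposition \ref{cis:prop:weak_eq} to get the sheaf identity $\ucalC_\mu=\Phi^*\ucalC_{\mu'}$, which lets you transport the transition matrix entries across $\Phi$ in one stroke; the remaining closure under matrix multiplication and inversion you delegate to the fact that $\ucalC_\mu$ is a subsheaf of $C^\infty$-rings (Proposition \ref{main -prop -sheavs -int -syst} plus Proposition \ref{lemma-with-strange-condition2}). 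Your route is more modular, and it repairs a small imprecision in the paper, which states that the entries of $A(\phi^{-1})^{-1}$ are a \emph{polynomial} expression in those of $A(\phi^{-1})$ -- Cramer's rule in fact produces a rational expression involving $1/\det$, and the correct justification is exactly the $C^\infty$-ring closure you invoke. You also explicitly dispatch reflexivity, which the paper omits as trivial. One caveat on presentation: the implication ``algebraic $\Rightarrow$ weak'' that you lean on appears in the paper only in the later Comparison subsection; this creates no circularity (it is a two-line Leibniz computation independent of the equivalence-relation claim, and the paper in fact reproduces that same computation inside its own proof of the lemma), but for a self-contained argument you should either cite it forward explicitly or reproduce the two lines rather than just gesture at the proposition.
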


\begin{proof}
Suppose that ${(\omega,\mu)}$ and ${(\omega',\mu')}$ are completely integrable systems on ${M}$ which are strongly equivalent, i.e.\ , ${A\cdot \mu = \mu'\phi}$ and ${\set{\mu,A}_\omega=0}$. By rewriting the first identity as 
    \[
        \mu\phi^{ -1} 
        = 
        A(\phi^{ -1})^{ -1}\, \mu'
    \]
one sees that, to prove symmetry, it suffices to show that ${A(\phi^{ -1})^{ -1}}$ Poisson commutes with ${\mu'}$. Now observe that
    \[
        \{
            \mu_i,\mu'_j\phi
        \}_\omega
        =
        \{
            \mu_i,A^k_j\,\mu_k
        \}_\omega
        =
        A^k_j\, 
        \{
            \mu_i,\mu_k
        \}_\omega
        +
        \mu_k\,
        \{
            \mu_i,A^k_j
        \}_\omega
        =
        0,
        \quad\forall\, i,j,k.
    \]
Hence by the \nameref{cis:lem:main}  on page~\pageref{cis:lem:main} we conclude that ${\{\mu'\phi,A\}_\omega=0}$. Since ${\phi}$ is a symplectomorphism it follows that 
    $
        \{\mu',A(\phi^{ -1})\}_{\omega'}=0.
    $
The inverse ${A(\phi^{ -1})^{ -1}}$ is a polynomial expression in the components of ${A(\phi^{ -1})}$, hence it also Poisson commutes with ${\mu'}$.

Now suppose that ${(\omega'',\mu'')}$ is another completely integrable system on ${M}$ which is strongly equivalent to ${(\omega',\mu')}$, i.e.\ , ${B\cdot\mu'=\mu''\psi}$ and ${\set{\mu',B}_{\omega'}=0}$. Since
    \[
        B(\phi)\cdot A\cdot \mu
        =
        B(\phi)\cdot \mu'\phi
        =
        \mu''\psi\phi,
    \]
to prove transitivity, we only have to show that ${\{\mu,B(\phi) A\}_\omega=0}$. By the \nameref{cis:lem:main}, using that ${(\omega,\mu'\phi)}$ is also a completely integrable system on ${M}$, and that
    $
        \{\mu'_j\phi, \mu_i \}_\omega=0
    $ 
and 
    $
        \{\mu'\phi, B(\phi)\}_\omega
        =
        \{\mu',B\}_{\omega'}\phi=0,
    $ 
we conclude that ${\{\mu,B(\phi)\}_\omega=0}$. Hence
    \[
        \{ \mu, B(\phi) A \}_\omega
        =
        B(\phi) \{ \mu, A \}_\omega
        +
        \{ \mu, B(\phi) \}_\omega A
        =
        0.
    \]
This shows that strong equivalence is an equivalence relation.
\end{proof}
\noindent
\begin{remark}
Lemma \ref{prop:nondeg_function} suggests yet another interesting sheaf of vector fields associated with an integrable system:
    \[
        \gerY_\mu
        \defeq
        \big\{
            X \in \gerX_M
            \;\;\text{s.t.}\;\;
            \gerL_X(\mu_i) \in \CM_\mu
            \;\forall\; 
            i
        \big\}.
    \]
It is easy to show that ${X \in \gerY_\mu}$ if and only if ${\gerL_X(f)\in \CM_\mu}$ for all ${f \in \CM_\mu}$. This means that ${\gerY_\mu}$ are the derivations ${\CM_\mu}$, and that they form a Lie algebra sheaf. The above proposition shows that also any ${X\in \gerY_\mu}$ has ${X_x=0}$ and ${\dd_xX \in \gerh_{x,\mu}}$ at a nondegenerate fixed point ${x}$.
\end{remark}

\subsection{The local version of equivalences}
\label{The local version of equivalences; invariance under equivalences}

For the local normal forms we look at germs of integrable systems $\mu$ around a point ${x\in M}$. Such a germ will be called a \textbf{local integrable system near ${x}$}\index{integrable system!local} and will be referred to as \textbf{the local integrable system ${(\omega, \mu, x)}$}; so it is defined on a manifold ${M}$ containing ${x}$, but we will not distinguish it from the restrictions to an open neighborhood ${U}$ of ${x}$. 

It is clear that the various notions of equivalences that we have discussed, generically referred to as ${\chi}$-equivalences, with 
\[ \chi \in \set{ \text{act}, \text{orb}, \text{fib},  \text{fct}, \text{wk},  \text{alg}},\]
have an obvious local version.

\begin{definition}
\label{def:local -chi -equivlces} 
Let ${\chi \in \set{ \text{act}, \text{orb}, \text{fib},  \text{fct}, \text{wk},  \text{alg}}}$. Two local integrable systems ${(\omega, \mu, x)}$ and ${(\omega', \mu', x')}$ are \textbf{(locally) ${\chi}$-equivalent} if, when restricted to certain open neighborhoods of ${x}$ and ${x'}$, respectively, the resulting integrable systems are ${\chi}$-equivalent, with an equivalence whose underlying symplectomorphism takes ${x}$ to ${x'}$. 
\end{definition}
\noindent
As alluded to before, the main interest is on comparing an integrable systems, around a fixed point ${x}$, to its normal form around ${x}$. This deserves a definition on its own.

\begin{definition} \label{def:chi -linearizability}
Let 
${\chi \in \set{ \text{act}, \text{fct}, \text{orb},  \text{wk},  \text{alg}}}$. We say that an integrable system ${(\omega, \mu)}$ is \textbf{${\chi}$-linearizable around ${x}$} (or that the local integrable system ${(\omega, \mu, x)}$ is ${\chi}$-linearizable) if
${(\omega, \mu, x)}$ is ${\chi}$-equivalent to its normal model around (cf. Definition \ref{def-chi-lineariz}), interpreted as a local integrable system around the origin. 
\end{definition}
\noindent
The notions of (local) ${\chi}$-equivalences rise the question of \textbf{(local) invariance under ${\chi}$-equivalences}: whenever we have construction that applies to (local) integrable systems, the question is to see how much that construction depends on the ${\chi}$-equivalence class of the system. 

A very good example is that of the hessian Lie algebra ${\gerh_{x, \mu}}$ (as a Lie subalgebra of ${\gersp(T_xM, \omega_x)}$), at a fixed point (see Definition \ref{def -hessian -lie -alg}). Here are the details. Assume that ${(\mu, \omega, x)}$ and ${(\mu', \omega', x')}$ are two local integrable systems around fixed points. A ${\chi}$-equivalence between them is, first of all, 
a symplectomorphism ${\Phi: (M, \omega)\rightarrow (M', \omega')}$ taking ${x}$ to ${x'}$. Of course, ${\Phi}$ induces a canonical isomorphism
\[ \Phi_*: \gersp(T_xM, \omega_x)\rightarrow \gersp(T_{x'}M, \omega'_{x'})\]
and the question is whether this takes ${\gerh_{x,\mu}}$ to ${\gerh_{x',\mu'}}$ or, slightly weaker, to a conjugate of ${\gerh_{x',\mu'}}$ . 
In the first case we say that ${\gerh_{x,\mu}}$ is \textbf{invariant w.r.t.\  ${\chi}$-equivalences}, while in the second case that it is \textbf{ ${\chi}$-invariant up to conjugation w.r.t.\  ${\chi}$-equivalences}. 

It is immediate to see that ${\gerh_{x,\mu}}$ is invariant w.r.t.\  functional equivalences; using the interpretation in terms of the normal representation described above, and the fact 
that the normal representation is a natural construction in the world of Lie algebroids, it follows that ${\gerh_{x,\mu}}$ is invariant also w.r.t.\  
orbit equivalence. The tricky part is for weak equivalences, and that sends us right from the start to the first part of Lemma \ref{prop:nondeg_function} (which, in turn, indicates the necessary conditions).

\begin{proposition} 
Consider only local integrable systems ${(\omega, \mu, x)}$ with the property that
the centralizer of ${\gerh_{x, \mu}}$ is contained in ${\gerh_{x, \mu}}$, and  
the fixed point set of the linear action on ${T_xM}$ is trivial (see (\ref{intertwine-fixed-points})). 
For such integrable system,  ${\gerh_{x, \mu}}$ is a invariant up to conjugation w.r.t.\  weak equivalences. 
\end{proposition}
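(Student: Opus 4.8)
The plan is to fix a weak equivalence, given by a symplectomorphism $\Phi : (M,\omega) \to (M',\omega')$ with $\Phi(x) = x'$ and $\{\mu_i, \mu_j' \circ \Phi\}_\omega = 0$ for all $i,j$, and to show that $\Phi_*(\gerh_{x,\mu})$ is conjugate to $\gerh_{x',\mu'}$ inside $\gersp(T_{x'}M', \omega'_{x'})$. By replacing $\mu'$ with $\mu' \circ \Phi$ (which is again a completely integrable system, by the \nameref{cis:lem:main}, and has the same Hessian Lie algebra transported by $\Phi_*$), we may assume $M' = M$, $\omega' = \omega$, $x' = x$ and $\Phi = \id$; so we are reduced to the following purely local situation: $\mu, \mu' : (M,\omega) \to \R^n$ are two completely integrable systems with a common fixed point $x$, and $\{\mu_i, \mu_j'\}_\omega = 0$ for all $i,j$. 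We must show $\gerh_{x,\mu}$ and $\gerh_{x,\mu'}$ are conjugate.

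The key input is the first part of Proposition \ref{prop:nondeg_function}, applied symmetrically. Each component $\mu_j'$ commutes with all $\mu_i$, so $\mu_j' \in \ucalC_\mu$; since $(d\mu_j')_x = 0$ (as $x$ is also a fixed point of $\mu'$), the second bullet of that proposition gives $\hh_x(\mu_j') \in C_{\gerh_{x,\mu}}$, the centralizer of $\gerh_{x,\mu}$ in $\gersp(T_xM,\omega_x)$. Thus $\gerh_{x,\mu'} \subseteq C_{\gerh_{x,\mu}}$, and by the standing hypothesis $C_{\gerh_{x,\mu}} \subseteq \gerh_{x,\mu}$, so $\gerh_{x,\mu'} \subseteq \gerh_{x,\mu}$. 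The roles of $\mu$ and $\mu'$ are symmetric in all of this: $\{\mu_i', \mu_j\}_\omega = 0$, each $\mu_i \in \ucalC_{\mu'}$ with $(d\mu_i)_x = 0$, so $\hh_x(\mu_i) \in C_{\gerh_{x,\mu'}} \subseteq \gerh_{x,\mu'}$ provided the same two hypotheses hold for $\mu'$. Hence $\gerh_{x,\mu} \subseteq \gerh_{x,\mu'}$ as well, and therefore $\gerh_{x,\mu} = \gerh_{x,\mu'}$ — in fact they are \emph{equal}, not merely conjugate, which is even stronger than claimed.

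The main obstacle is a bookkeeping point about the hypotheses: the proposition is stated for a single integrable system satisfying "centralizer of $\gerh_{x,\mu}$ contained in $\gerh_{x,\mu}$" and "trivial fixed point set", but in the symmetric argument I need the analogous properties for $\mu'$ too. Here I would argue that these properties are themselves invariant under the reduction: since $\Phi_*$ is a Lie algebra isomorphism of $\gersp$, it intertwines centralizers and fixed-point sets, so the hypothesis on $(\omega,\mu,x)$ transfers to $(\omega',\mu'\circ\Phi^{-1},x)$; and the containment $\gerh_{x,\mu'} \subseteq \gerh_{x,\mu}$ established in the first half, combined with $C_{\gerh_{x,\mu'}} \supseteq C_{\gerh_{x,\mu}}$ (centralizer reverses inclusions) and the hypothesis $C_{\gerh_{x,\mu}} \subseteq \gerh_{x,\mu}$, does not by itself give $C_{\gerh_{x,\mu'}} \subseteq \gerh_{x,\mu'}$ — this is where one genuinely uses that $\mu'$ is in the same restricted class, or alternatively one first proves $\dim \gerh_{x,\mu'} = \dim \gerh_{x,\mu}$ (via the intertwining isomorphism $\omega_x^\sharp$ of fixed-point sets in \eqref{intertwine-fixed-points} together with Corollary \ref{cor:fixed-point-set}, forcing both to be $n$-dimensional), so that $\gerh_{x,\mu'} \subseteq \gerh_{x,\mu}$ is an inclusion of equidimensional spaces and hence an equality. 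I expect the cleanest route is the dimension count: both Hessian Lie algebras have dimension $n$ when the fixed-point set is trivial and they are both abelian of the expected rank, so one inclusion suffices. I would write the argument that way and relegate the "conjugation" language to a remark that in this class the stronger statement $\gerh_{x,\mu} = \gerh_{x,\mu'}$ actually holds.
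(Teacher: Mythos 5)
Your proposal follows essentially the same core route as the paper: apply Proposition \ref{prop:nondeg_function} to the pulled-back functions $\mu'_j \circ \Phi$ to conclude their Hessians at $x$ lie in $C_{\gerh_{x,\mu}}$, then use the hypothesis $C_{\gerh_{x,\mu}}\subseteq\gerh_{x,\mu}$ to get an inclusion of Hessian Lie algebras up to conjugation by $A = d_x\Phi$. The reduction to $\Phi = \id$ is a purely cosmetic variant. One small difference: you take $d_x(\mu'\circ\Phi)=0$ as given (from $\Phi(x)=x'$ being a fixed point of $\mu'$), whereas the paper \emph{derives} it from the first bullet of Proposition \ref{prop:nondeg_function} together with the triviality of the fixed-point set; both are legitimate, and the paper's route has the side benefit of showing that fixed points are preserved under weak equivalence within this class.

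Where you genuinely add value: you notice that the argument as written only delivers the inclusion $\gerh_{x',\mu'}\subseteq\Ad_A\gerh_{x,\mu}$, while the paper simply asserts equality at the end. Your symmetric argument — running the same reasoning with the roles of $\mu$ and $\mu'$ swapped, which is justified because the proposition restricts to the class where both systems satisfy the hypotheses — is the correct way to upgrade this to equality, and you set it up correctly.

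However, your stated preference for the dimension-count alternative over the symmetry argument is the wrong call, because the dimension count as you sketch it has a gap. The references you cite, namely Corollary \ref{cor:fixed-point-set} and the isomorphism \eqref{intertwine-fixed-points}, establish that $(T_xM)^{\gerh}=\bigcap_i\ker(D_xX_{\mu_i})=0$, but this does \emph{not} imply that the Hessians $\hess_x(\mu_i)$ are linearly independent, hence does not give $\dim\gerh_{x,\mu}=n$: a proper subset of the $D_xX_{\mu_i}$ could already have trivial joint kernel. Nor does the centralizer hypothesis alone rescue this — it only says $\gerh_{x,\mu}$ is maximal abelian in $\gersp(T_xM,\omega_x)$, and maximal abelian subalgebras of $\gersp(2n,\R)$ come in various dimensions (unlike Cartan subalgebras, where elements are additionally required to be semisimple). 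So the claim "both are $n$-dimensional" does not follow from the hypotheses by the route you indicate. Drop the dimension-count and commit to the symmetric argument: the hypotheses hold for both $(\omega,\mu,x)$ and $(\omega',\mu',x')$ by fiat, so you get both inclusions and hence equality, which is what the paper tacitly does.
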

\noindent
\begin{proof}
Hence ${\{\mu_i,\mu'_j\circ\Phi\}=0}$ for all ${i,j}$, with ${\Phi:(M,\omega)\to (M,\omega')}$.
From Lemma \ref{prop:nondeg_function} and the first hypothesis (see also (\ref{intertwine-fixed-points}) we deduce that \[{\dd_x(\mu'\circ\Phi) = 0}.\] Using also the second hypothesis, the Hessians
    \[
        \hess_x(\mu_j'\circ\Phi)
        =
        j_x^2(\mu_j'\circ\Phi)
        =
        \hess_{\Phi(x)}(\mu'_j)
        \circ 
        (\dd_x\Phi)^{\dtensor 2}
    \]
are ${\R}$-linear combinations of the ${\hh_x\mu_i}$. Here we will write
\[
    {\hh_i= \hess_x(\mu_i)},
    \quad 
    {\hh_j'= \hess_{\Phi(x)}(\mu_j')},
    \quad 
    {A= \dd_x\Phi}
\]
We use again the relation between a symmetric bilinear form ${h}$ and the associated element ${A_h\in \gersp(T_xM,\omega_x)}$: 
    $
        h(v) =  \omega_x(A_h v, v).
    $
Since \[{A:(T_xM,\omega_x)\to (T_{\Phi(x)}M,\omega_{\Phi(x)})}\] is a linear symplectic isomorphism, this means that
    \[
        \hh_x(\mu_j'\circ\Phi)(v)
        =
        \hh_j'\circ A^{\dtensor 2}(v)
        =
        \omega_{\Phi(x)}
            \big(
                A_{\hh_j'}A v, A v
            \big)
        =
        \omega_{x}
            \big(
                (A^{ -1} A_{\hh_j'} A) v, v
            \big)
    \]
Hence the ${A^{ -1}A_{\hh_j'}A}$ are an ${\R}$-linear combination of the ${\hh_i}$. We conclude that
    \[
        \gerh_{\Phi(x), \mu'}
        =
        \Ad_A \gerh_{x, \mu}.
        \qedhere
    \]
\end{proof}
\noindent
Of course, the most interesting case when the previous conditions are satisfied is that of nondegenerate fixed points (see also Corollary \ref{cor-no-fixed-from-willianson}). Remembering that the Cartan subalgebras up to conjugation are classified precisely by their Williamson type (see Remark \ref{rk:Williamson}), we deduce:

\begin{local_theorem}
\label{weak preserves nondegenerate}
Local weak equivalence preserves nondegenerate fixed points and also their Williamson type.
\end{local_theorem}

\subsection{Comparison}

\begin{proposition}
Between completely integrable systems, there are the following implications with respect to the equivalence relations:
\[ 
\begin{tikzcd}[row sep=small]
    \text{Action}
    \arrow[Rightarrow]{d} & & 
    \text{Fiber} 
    \arrow[Rightarrow]{dr}
    & & \\
    \text{Funct}
    \arrow[Rightarrow]{r}
    & 
    \text{weak Funct}
     \arrow[Rightarrow]{ur} 
    \arrow[Rightarrow]{dr}
    & &
    \text{Weak}
    &    
     \text{Algebraic} 
    \arrow[Rightarrow]{l}
    \\  & &
    \text{Orbit}
    \arrow[Rightarrow]{ur}
    & &
    \end{tikzcd}
\]
\end{proposition}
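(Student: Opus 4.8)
The plan is to verify each of the six implications in the displayed diagram separately, using in most cases the characterizations of the various equivalences in terms of the invariance of the associated sheaves (Propositions \ref{cis:prop:aff_eq}, \ref{cis:prop:orb_eq}, \ref{prop -fiber -equiv -sheaves}, \ref{prop:funct -equiv}, \ref{cis:prop:weak_eq}), together with the inclusions between those sheaves proved in Propositions \ref{Lagr -sheaves -mai -prop} and \ref{pro -all -many -props -sheaves}. The common strategy: all of these equivalences are witnessed by a symplectomorphism $\Phi$, and in each case the stronger equivalence forces $\Phi^*$ to intertwine certain sheaves of functions or vector fields; one then observes that this intertwining, combined with the chain of inclusions ending in $\ucalC_\mu$ (resp.\ $\gerX_\mu$), already implies the weaker condition.

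First I would handle the three implications going \emph{into} weak functional equivalence and the two going \emph{out} of it. Action $\Rightarrow$ functional is immediate from the definitions: an action-equivalence provides an \emph{affine} base isomorphism $\phi$, which is in particular a diffeomorphism of $\R^n$, and functional equivalence only asks for a diffeomorphism between neighborhoods of $\mu(M)$ and $\mu'(M')$. Functional $\Rightarrow$ weak functional is trivial (the latter merely drops the smooth-extension requirement on $\phi$). For weak functional $\Rightarrow$ fiber and weak functional $\Rightarrow$ orbit: a weak functional equivalence gives a commuting square with $\phi$ a diffeomorphism $\mu(M)\to\mu'(M')$; such a $\phi$ is in particular a set-theoretic bijection taking fibers to fibers, giving fiber equivalence; and via Proposition \ref{prop:funct -equiv} it gives $\Phi^*\ucalC^\infty_{\mu'}=\ucalC^\infty_\mu$, whence (applying $\mathrm{Vect}$ and using $\mathrm{Vect}(\ucalC^\infty_\mu)=\gerX_\mu$ and, more to the point, the relation $\Fun(\gerX^\textsc{orb}_\mu)=\CM_\mu$ together with $\ucalC^\infty_\mu\subset\ucalC^\textsc{orb}_\mu\subset\CM_\mu$ from Proposition \ref{pro -all -many -props -sheaves}) one recovers $\gerX^\textsc{orb}_\mu=\Phi^*\gerX^\textsc{orb}_{\mu'}$, i.e.\ orbit equivalence by Proposition \ref{cis:prop:orb_eq}. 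Actually the cleanest route here is: a base diffeomorphism $\phi$ with $\mu'\circ\Phi=\phi\circ\mu$ transports the $\ucalC^\textsc{orb}$-sheaf correctly because $\phi$ intertwines smooth functions, and $\ucalC^\textsc{orb}_\mu$ is defined via $\Ham^{-1}$ of a sheaf pulled back from $\R^n$.

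Then I would treat the three implications \emph{into} weak equivalence. Fiber $\Rightarrow$ weak: by Proposition \ref{prop -fiber -equiv -sheaves} a fiber equivalence gives $\Phi^*\ucalC^\textsc{fib}_{\mu'}=\ucalC^\textsc{fib}_\mu$; since $\ucalC^\textsc{fib}_\mu\subset\CM_\mu$ and $\CM_\mu$ is the closure (indeed the $\mathrm{Inv}_\omega$-closure) of $\ucalC^\textsc{fib}_\mu$ by Proposition \ref{pro -all -many -props -sheaves}, applying the (functorial) closure operation to $\Phi^*\ucalC^\textsc{fib}_{\mu'}=\ucalC^\textsc{fib}_\mu$ yields $\Phi^*\CM_{\mu'}=\CM_\mu$, which is weak equivalence by Proposition \ref{cis:prop:weak_eq}. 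Orbit $\Rightarrow$ weak is identical in spirit, using $\ucalC^\textsc{orb}_\mu\subset\CM_\mu$ and that $\CM_\mu$ is the closure of $\ucalC^\textsc{orb}_\mu$ (equivalently, from Corollary \ref{cor -orbut -equiv}, an orbit equivalence already gives $\{\mu_i,\Phi^*\mu'_j\}_\omega=0$, which is literally the definition of weak equivalence). Algebraic $\Rightarrow$ weak: from the definition, $\Phi^*\mu'_k=\sum_i a^i_k\,\mu_i$ with $a^i_k\in\ucalC_\mu$, so $\{\mu_j,\Phi^*\mu'_k\}_\omega=\sum_i\bigl(a^i_k\{\mu_j,\mu_i\}_\omega+\mu_i\{\mu_j,a^i_k\}_\omega\bigr)=0$, using $\{\mu_j,\mu_i\}_\omega=0$ and $a^i_k\in\ucalC_\mu=\CM_\mu$ — this is exactly the computation already performed in the proof of transitivity of algebraic equivalence.

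The only genuinely non-formal point — hence the step I expect to be the main obstacle — is making precise the two implications out of weak functional equivalence (to fiber and to orbit) and into weak equivalence from orbit, namely checking carefully that the base maps $\phi$ appearing in the fiber/orbit/functional pictures really do transport the relevant pullback sheaves $\ucalC^\textsc{fib}$, $\ucalC^\textsc{orb}$, $\ucalC^\infty$ on the nose; this is where one must be slightly careful about the distinction between $\mu^*$ of a sheaf and the abstract inverse image, and about the fact that $\phi$ is only a bijection (for fiber) or a diffeomorphism of $\mu(M)$ rather than of all of $\R^n$ (for functional). In each case the resolution is the local argument already used in the proofs of Propositions \ref{prop -fiber -equiv -sheaves} and \ref{prop:funct -equiv}: pass to small enough opens $V$ on which $\mu$ is well-behaved, transport the defining condition of the sheaf across the commuting square, and glue. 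Once these sheaf-transport facts are in hand, every implication in the diagram reduces to one of the inclusion/closure statements from Propositions \ref{Lagr -sheaves -mai -prop} and \ref{pro -all -many -props -sheaves} combined with the functoriality of $\Phi^*$, $\Fun$, $\Vect$, and the closure operators, and the proof is complete.
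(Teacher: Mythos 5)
Your proposal follows essentially the same route as the paper: the direct implications (action $\Rightarrow$ functional, functional $\Rightarrow$ weak functional, weak functional $\Rightarrow$ fiber) are immediate from the definitions; orbit $\Rightarrow$ weak is Corollary \ref{cor -orbut -equiv}; fiber $\Rightarrow$ weak uses Propositions \ref{prop -fiber -equiv -sheaves}, \ref{cis:prop:weak_eq} and the fact that $\CM_\mu$ is the closure of $\ucalC_\mu^\textsc{fib}$ from Proposition \ref{pro -all -many -props -sheaves}; algebraic $\Rightarrow$ weak is the same Leibniz computation; and weak functional $\Rightarrow$ orbit hinges on the equality $\gerX_\mu^\textsc{orb}=\Ham(\ucalC_\mu^\infty)$ from (\ref{eqs:Ham -env -sheaves}). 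The one step where your write-up wobbles is your first pass at weak functional $\Rightarrow$ orbit: applying $\Vect$ to $\Phi^*\ucalC^\infty_{\mu'}=\ucalC^\infty_\mu$ only yields $\gerX_\mu=\Phi^*\gerX_{\mu'}$, i.e.\ weak equivalence, and the chain of inclusions you cite does not by itself recover $\Phi^*\gerX^\textsc{orb}_{\mu'}=\gerX^\textsc{orb}_\mu$; the correct lever — which you gesture at in your ``cleanest route'' but state slightly inaccurately (it is $\gerX^\textsc{orb}_\mu$, not $\ucalC^\textsc{orb}_\mu$, that equals $\Ham$ of a pullback sheaf) — is that $\Ham$ is natural under symplectomorphisms and $\Ham(\ucalC^\infty_\mu)=\gerX^\textsc{orb}_\mu$, which is exactly the paper's argument.
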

\noindent

\begin{proof}
It should be clear that the action equivalence implies the functional one and that the weak functional implies the fiber one. That orbit equivalence implies the weak one was noted in Corollary \ref{cor -orbut -equiv}. 

The fact that weak functional equivalence implies orbit equivalence follows from the sheaf characterizations of the two (Propositions \ref{prop:funct -equiv} and \ref{cis:prop:orb_eq}) and the fact that 
the sheaf ${\gerX_\mu^\textsc{orb}}$ can be recovered from ${\textrm{Ham}(\ucalC_{\mu}^{\infty})}$ (see (\ref{eqs:Ham -env -sheaves}) in Proposition \ref{pro -all -many -props -sheaves}).  

Similarly for showing that fiber equivalence implies the weak one: one uses their characterizations in term of sheaves (Proposition \ref{prop -fiber -equiv -sheaves} and \ref{cis:prop:weak_eq}) combined with the
fact that ${\ucalC_\mu}$ (or ${\gerX_{\mu}}$) can be recovered from ${\ucalC_{\mu}^\textsc{fib}}$ (cf. Proposition \ref{pro -all -many -props -sheaves}). 

Finally, algebraic equivalence implies the weak one: assume algebraic equivalence, so that ${A\mu=\mu'\phi}$ with ${\{\mu, A\}_\omega=0}$. Then
    \[
        \{ \mu_i, \mu'_j \phi \}_\omega
        =
        \{ \mu_i, A_j^k \mu_k \}_\omega
        =
        A_j^k \{ \mu_i, \mu_k \}_\omega
        +
        \{ \mu_i, A_j^k \}_\omega \mu_k
        =
        0,
        \quad\forall\; i,j.
    \]
So weak equivalence holds as well. 
\end{proof}
\noindent
The reverse implications do not hold in general.

\begin{example}
The fact that weak equivalence does not imply any of the others follows from the fact that weak equivalences do not preserve singularities, and one can just use the ${\mu}$ and ${\mu'}$ from Example \ref{ex:weak-equiv -strange}; they are not even algebraically equivalent because, 
solving ${a\, \mu = \mu'}$ (and similarly when we have a symplectomorphism) we obtain ${a(x, y)= x}$- which is not invertible at ${x= 0}$. It is actually interesting to think a bit what the general problem is, when trying to achieve algebraic equivalence: while solving ${A\cdot\mu=\mu'}$, the 
discrepancy between the singular points of ${\mu}$ and ${\mu'}$ forces ${A}$ to be a degenerate matrix at the singular points. As we have mentioned, this behavior is radically different at nondegenerate singular points; this suggest that a division algorithm such as the Malgrange preparation theorem might allow us to construct a nondegenerate matrix ${A}$. 
\end{example}
\noindent
\begin{example}
We show that neither functional nor algebraic equivalence imply action equivalence. Consider the elliptic model ${\mu_\text{ell}(x,y)=(x^2+y^2)/2}$ on ${(\R^2,\omega_\text{can})}$, and define 
    \[
        \mu_f 
        \defeq 
        f(\mu_\text{ell})\,\mu_\text{ell}
    \]
for a smooth function ${f\in C^\infty(\R)}$. Note that ${\mu_\text{ell}}$ and ${\mu_f}$ are algebraic equivalent if ${f(z)\neq 0}$ for all ${z\geq 0}$. The function ${f}$ changes the periods of each orbit of ${X_{\mu_\text{ell}}}$.

On the other hand, the map ${\psi: z \mapsto zf(z)}$ is a diffeomorphism if and only if 
    \[
        g(z)
        \defeq
        f(z)
        + 
        z f'(x)
        \neq 
        0
    \]
for all ${z\geq 0}$. Hence, even though ${\mu_\text{ell}}$ and ${\mu_f}$ may not be functionally equivalent for all ${f: \R\to \R^\times}$, there is always a neighborhood of the origin on which they are. 

In any case, the period of the Hamiltonian vector field 
    \[
        X_{\mu_f}
        =
        g(\mu_\text{ell})
        \,
        X_\text{ell}
    \]
at a point ${p\in \R^2}$ is given by ${2\pi/g(\mu_\text{ell}(p))}$, whereas the period of ${X_\text{ell}}$ is always ${2\pi}$. Hence ${\mu_\text{ell}}$ and ${\mu_f}$ are action equivalent if and only if ${g(z)}$ is constant.
\end{example}
\noindent
Finally we have a look at possible relations between functional and strong equivalence.

\begin{example}
Here is an example that shows that algebraic equivalence does not imply fiber or orbit equivalence. Consider the hyperbolic normal form ${\mu_\text{hyp}(x,y) = xy}$ on ${(\R^2,\omega_\text{can})}$ and define
    \[
        \mu_f(x,y) \defeq f(x,y)\, \mu_\text{hyp}(x,y)
    \]
for a nowhere vanishing smooth function ${f\in C^\infty(\R^2)}$ such that ${\set{\mu_\text{hyp},f}=0}$. In other words, look at all integrable systems that are algebraically equivalent to ${\mu_\text{hyp}}$. It follows that ${f}$ is constant along connected components of the level sets of ${\mu_\text{hyp}}$, hence the connected components of ${\mu_\text{hyp}}$ are contained in those of ${\mu_f}$. Yet ${f}$ can be chosen such that ${f(x,y)\neq f( -x, -y)}$, so that ${\mu_f}$ is not the pullback of a function on ${\R}$ along ${\mu_\text{hyp}}$. In that case ${\mu_\text{hyp}}$ and ${\mu_f}$ are not fiber equivalent. 

For orbit equivalence, trying to write ${X_{\mu_f}= C\cdot X_{\mu}}$ one finds \[{C= f+ x \partial_x(f)= f+ y \partial_y(f)},\] but this may fail to be everywhere invertible (e.g.\  for ${f(x, y)= e^{xy}}$).  
\end{example}

\newpage
\section{Deformation cohomology \& Moser path method}
\etocsettocstyle{\subsubsection*{Local Contents}}{}
\etocsettocdepth{2}
\localtableofcontents

\subsection{Deformation cohomology} 
\label{cis:cohomology}

Before going into the details on deformations of integrable systems, and the related topic of normal forms, we will first discuss the relevant algebraic aspects: the deformation complex and its cohomology. 
The deformation complex for integrable systems first appears in the PhD thesis of Miranda \cite{Mir03}, and was later worked out in more detail by her in collaboration with Vũ Ng\d oc \cite{MN05}.

First we recall the \emph{standard} deformation complex from \cite{MN05}, and show the basic fact that the differential squares to zero. After doing these necessary computations we introduce several variations on the standard complex; one for each equivalence relation.

Conceptually, a deformation of an integrable system gives a cocycle in the deformation complex, and the cocycles of two equivalent deformations differ by an exact cocycle. So an equivalence class of deformations gives a class in the deformation cohomology. To make these concepts precise (and true), we have to specify both the equivalence relation and the chain complex. Furthermore, these concepts are only interesting if some (partial) converse holds. In this section we define a chain complex for each equivalence relation and show that we made the `right' choices.
\\

\noindent
Let ${(\omega, \mu)}$ be an integrable system on a manifold ${M}$. Given the Lie algebraic nature present on the deformation complexes, we will identify ${\R^n}$ with the dual of the abelian Lie algebra ${\gerh}$ of dimension ${n}$, writing ${\mu}$ as a moment map 
\[ \mu:M\to \gerh^*.\] 
For ${h\in \gerh}$, we will use the notation 
\[ \mu(h)= \mu_h= \langle \mu, h\rangle \in \calC^{\infty}(M)\]
for the function ${x\mapsto \mu(x)(h)}$. In this way ${\mu}$ becomes a Lie algebra map from ${\gerh}$ to  ${(\calC^{\infty}(M), \{\cdot, \cdot\}_{\omega})}$. Also, with this, the infinitesimal action of ${\gerh}$ on ${M}$ is 
\[ \rho: \gerh\rightarrow \gerX(M), \quad \rho(X)= X^{\omega}_{\mu(h)}= \{\mu(h), \cdot\}_{\omega}.\]
This makes ${\calC^{\infty}(M)}$ into a representation of ${\gerh}$, with action
\[ \gerh\otimes \calC^{\infty}(M)\rightarrow \calC^{\infty}(M), \quad (v, f)\mapsto \gerL_{\rho(h)}(f)= X_{\mu(v)}^{\omega}(f)= \{\mu(v), f\}_{\omega}.\]

The deformation (cochain) complex is built out of two complexes and a map between them:
\begin{myitemize}
\item the first complex is the DeRham complex  ${(\Omega^{\bullet}(M), \dd_\text{dR})}$.
\item the second complex is the Chevalley-Eilenberg complex ${C^{\bullet}(\gerh, \calC^{\infty}(M))}$ of the Lie algebra ${\gerh}$ with coefficients in  ${\calC^{\infty}(M)}$. Hence, in degree ${k}$, 
\[ C^k(\gerh, \calC^{\infty}(M))= \Lambda^k\gerh^*\oplus \calC^{\infty}(M)\]
with its elements interpreted as ${p}$-multilinear antisymmetric maps on ${\gerh}$ with values in ${\calC^{\infty}(M)}$. The Chevalley-Eilenberg  differential, denoted ${\delta}$, is given by
\[ \delta(\beta)(h_0, \ldots, h_k)= \sum_{i= 0}^{k} ( -1)^i\gerL_{\rho(h_i)}(\beta(h_0,\ldots,\widehat{h_i},\ldots, h_k)) .\]
\item the map of complexes
\[ \rho^*: \Omega^{\bullet}(M)\rightarrow C^{\bullet}(\gerh, \calC^{\infty}(M))\]
which, as the notation suggest, comes from composing with the infinitesimal action ${\rho}$:
\[ \rho^*(\alpha)(h_0, h_1, \ldots)= \alpha(\rho(h_0), \rho(h_1), \ldots).\]
\end{myitemize}
By a standard construction in homological algebra, we associate to this datum a new complex ${\calC^\bullet(\omega,\mu)}$, called the mapping cone of ${\rho^*}$, as described in the following diagram:
\[ 
\begin{tikzcd}[row sep=small]
\ldots \arrow{r}{\dd_{\omega,\mu}} & 
    \calC^{k -1}(\omega,\mu) \arrow[equal]{d}\arrow{r}{\dd_{\omega,\mu}}&
    \calC^k(\omega,\mu) \arrow[equal]{d}\arrow{r}{\dd_{\omega,\mu}}&
    \calC^{k+1}(\omega,\mu)\arrow[equal]{d} \arrow{r}{\dd_{\omega,\mu}}
    & \ldots\\
\ldots \arrow{r}{ -\dd_\text{dR}}\arrow{ddr}{\rho^*} & 
    \Omega^{k}(M)
    \arrow{r}{ -\dd_\text{dR}}
    \arrow{ddr}{\rho^*}
   &
    \Omega^{k+1}(M)
    \arrow{r}{ -\dd_\text{dR}}
    \arrow{ddr}{\rho^*}
    &
     \Omega^{k+2}(M)\arrow{r}{ -\dd_\text{dR}} \arrow{ddr}{\rho^*}  
& \ldots\\
&    \bigoplus & \bigoplus & \bigoplus & \\
\ldots \arrow{r}{\delta} & 
    C^{k -1}(\gerh, \calC^{\infty})
    \arrow{r}{\delta}
    &
    C^{k}(\gerh, \calC^{\infty})
    \arrow{r}{\delta}
     &
     C^{k+1}(\gerh, \calC^{\infty})\arrow{r}{\delta}
    & \ldots
     \end{tikzcd}
\]
Remark that the mapping cone only depends on the action ${\rho^*}$.
In degree ${k}$,
\[ \calC^{k}(\omega,\mu)= \Omega^{k+1}(M)\oplus C^{k}(\gerh, \calC^{\infty}),\]
with differential ${\dd_{\omega,\mu}}$ given by
\[\dd_{\omega,\mu}(\alpha, \beta)= ( -\dd_\text{dR}(\alpha), \rho^*(\alpha)+ \delta(\beta)).\]

\begin{definition}\label{def:deformation -complex}
The \textbf{standard deformation complex}\index{deformation complex!standard} of the integrable system ${(\omega, \mu)}$ is the complex
${(\calC^\bullet(\omega,\mu), \dd_{\omega,\mu})}$ described above. Its cohomology is denoted by ${H^k(\omega,\mu)}$.  
\end{definition}
\noindent
To handle the explicit formulas, we will denote the elements of ${\gerh^k}$ by a single letter ${h}$ and we will write ${h_i}$ for its ${i}$-th component.  Accordingly, ${\rho(h)}$ is the ${k}$-multivector field obtained by applying ${\rho}$ to each component. 
Also, for a ${(k+1)}$-tuple ${h \in \gerh^{k+1}}$ we will use the notation 
\[
        h^{(i)}
        \defeq
        (h_0,\ldots,\hat h_i,\ldots,h_k). 
\]
In this way the Chevalley-Eilenberg differential and the chain map ${\rho^*}$ are written:
\[ \delta(\beta)(h)= \sum_{i= 0}^{k} \{\mu(h_i), \beta(h^{(i)})\}_{\omega} ,\quad  \rho^*(\alpha)(h)= \alpha(\rho(h))= \alpha(X^{\omega}_{\rho(h)}).\]

If we write 
\[
        \dd_{\omega,\mu}
        =
        ( -\dd_\text{dR}) \oplus \partial_{\omega,\mu},
 \]
then we obtain the following explicit formula for the second component:
    \[
        \partial_{\omega,\mu}(\alpha,\beta)h
        \defeq
        \sum\nolimits_{i=0}^k
        ( -1)^i
        \big\{ 
            \mu(h_i), 
            \beta (
                h^{(i)}
            )
        \big\}_\omega
        +
        \alpha \big( 
            X^\omega_{\mu(h_0)}, 
            \ldots,
            X^\omega_{\mu(h_k)} 
        \big).
    \]

A standard remark from homological algebra is that one has a short exact sequence of complexes
\[
        0
        \to
        \calC^\bullet(\gerh, \calC^{\infty})
        \to
        \calC^\bullet(\omega,\mu)
        \to
        \Omega^{\bullet+1}(M)
        \to
        0
    \]
which gives rise to a long exact sequence in cohomology:
\[ \ldots \to 
   H^{\bullet}(M) 
   \to 
   H^{\bullet}(\gerh, \calC^{\infty})
   \to
   H^{\bullet}(\omega, \mu)
   \to 
   H^{\bullet+1}(M)
   \to \ldots 
\]
In particular:

\begin{corollary}
\label{cor -def -cplx -contr -base} 
If ${M}$ is contractible then ${H^{\bullet}(\omega, \mu)\cong H^{\bullet}(\gerh, \calC^{\infty})}$ in all positive degrees.
\end{corollary}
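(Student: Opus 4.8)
The plan is to read off the result directly from the long exact sequence in cohomology that precedes it. Recall that the mapping cone construction gives a short exact sequence of cochain complexes
\[
0 \to \calC^\bullet(\gerh,\calC^\infty) \to \calC^\bullet(\omega,\mu) \to \Omega^{\bullet+1}(M) \to 0,
\]
whose associated long exact sequence reads
\[
\ldots \to H^{k}(M) \xrightarrow{\ \rho^*\ } H^{k}(\gerh,\calC^\infty) \to H^{k}(\omega,\mu) \to H^{k+1}(M) \xrightarrow{\ \rho^*\ } H^{k+1}(\gerh,\calC^\infty) \to \ldots,
\]
where the connecting map is (up to sign) induced by $\rho^*$ since that is the only component of $\dd_{\omega,\mu}$ mixing the two summands. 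I would simply invoke this sequence as already established in the excerpt.

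First I would observe that if $M$ is contractible, then its de Rham cohomology is trivial in positive degrees: $H^k(M) = 0$ for all $k \geq 1$, while $H^0(M) = \R$. Now fix $k \geq 1$. In the segment of the long exact sequence
\[
H^{k}(M) \to H^{k}(\gerh,\calC^\infty) \to H^{k}(\omega,\mu) \to H^{k+1}(M)
\]
both outer terms vanish, since $k \geq 1$ forces $H^k(M) = 0$ and $k+1 \geq 2 \geq 1$ forces $H^{k+1}(M) = 0$. Exactness then says the middle arrow $H^{k}(\gerh,\calC^\infty) \to H^{k}(\omega,\mu)$ is both injective (its kernel is the image of the zero group on the left) and surjective (its cokernel injects into the zero group on the right), hence an isomorphism. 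This is exactly the claimed statement $H^k(\omega,\mu) \cong H^k(\gerh,\calC^\infty)$ for $k \geq 1$.

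There is essentially no obstacle here: the only mild point worth a sentence is that one should be slightly careful about degree $k=1$, where $H^{k}(M) = H^1(M) = 0$ still holds for contractible $M$ but the left neighbor in degree $0$ would not vanish — however, the relevant map out of $H^0$ lands in $H^{1}(\gerh,\calC^\infty)$ only via the sequence in degree $0$, which does not affect the isomorphism in degree $k \geq 1$ as written. So the argument is a one-line diagram chase once the long exact sequence is in hand. I would present it as a corollary with a two-sentence proof invoking the preceding long exact sequence and the contractibility of $M$.
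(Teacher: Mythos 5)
Your proof is correct and is precisely the argument the paper intends: the corollary is stated immediately after the long exact sequence, and the diagram chase you give — both flanking de Rham terms vanish for $k \geq 1$ when $M$ is contractible, forcing the middle map to be an isomorphism — is the standard deduction. Your closing caveat about degree $k=1$ and the $H^0$ term is unnecessary (and slightly muddled): for $k=1$ the relevant exact segment is $H^1(M) \to H^1(\gerh,\calC^\infty) \to H^1(\omega,\mu) \to H^2(M)$, and both outer terms already vanish, so nothing from degree $0$ enters; but this does not affect the correctness of your argument.
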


This is relevant when considering a local deformations and a local version of this complex - when one can restrict to contractible neighborhoods. For this reason, 
${(C^{\bullet}(\gerh, \calC^{\infty}), \delta)}$ is sometimes called the \textbf{reduced deformation complex}\index{deformation complex!reduced} \cite{MN05}.

Next we need to adapt the deformation complex to the various notion of equivalences. The reason is the following: while, as we shall soon see, a smooth deformation of an integrable system defines a degree one class in the deformation cohomology,
we would like to make sure that a `trivial deformation' gives rise to the zero class (with the hope for some converse). The problem is inside the quotes: triviality of a deformation will depend on the equivalence we use. While this discussion brings us to the degree zero part of the deformation, what we have to do is to adapt this degree zero part to each case. Hence one has to look at the lower part of ${C^{\bullet}(\omega, \mu)}$:
\[
\begin{tikzcd}[row sep=small]
    \calC^{0}(\omega,\mu) \arrow[equal]{d}\arrow{r}{\dd_{\omega,\mu}}&
    \calC^1(\omega,\mu) \arrow[equal]{d}\arrow{r}{\dd_{\omega,\mu}}
    & \ldots\\
    \Omega^{1}(M)
    \arrow{r}{ -\dd_\text{dR}}
    \arrow{ddr}{\rho^*}
   &
    \Omega^{2}(M)
    \arrow{r}{ -\dd_\text{dR}}
    \arrow{ddr}{\rho^*}
    & \ldots\\
\bigoplus & \bigoplus &  \\
    \boldsymbol{C^{0}(\gerh, \calC^{\infty})}
    \arrow{r}{\boldsymbol{\delta^0}}
    &
    C^{1}(\gerh, \calC^{\infty})
    \arrow{r}{\delta}
     & \ldots
     \end{tikzcd}
\]
What we will have to change/adapt is the lower corner that is emphasized in the previous diagram - namely ${C^0(\gerh, \calC^{\infty})= \calC^{\infty}(M)}$- hence also the differential going out of it, 
\begin{equation}\label{eq:addapt -def -complex}
\delta^{0}: \calC^{\infty}(M)\rightarrow \gerh^*\otimes \calC^{\infty}(M).
\end{equation}

\begin{definition}
\label{definitions -adapted -def -cplxs} 
We define the deformation complex of ${(\omega, \mu)}$ adapted to action, functional, weak and algebraic equivalence, respectively, by changing 
the part (\ref{eq:addapt -def -complex}) of the standard deformation complex as follows
\begin{myitemize}
\item 
 For functional equivalence, (\ref{eq:addapt -def -complex}) is replaced by 
\[  \delta^{0}_\text{fct}: \calC^\infty(\gerh^*,\gerh^*)\rightarrow \gerh^*\otimes \calC^{\infty}(M), \quad \delta^{0}_\text{fct}(F)= F\circ \mu.\]
\item 
For action equivalence, (\ref{eq:addapt -def -complex}) is replaced by 
\[  \delta^{0}_\text{act}: \gergl(\gerh^*)\oplus \gerh^*\rightarrow \gerh^*\otimes \calC^{\infty}(M), \quad \delta^{0}_\text{act}(A, \gamma)= A\circ \mu+ \gamma .\]
(think of the domain as a subspace of ${\calC^\infty(\gerh^*,\gerh^*)}$ consisting of affine functions). 
\item For weak equivalence, , (\ref{eq:addapt -def -complex}) is replaced by the inclusion
\[  \delta^{0}_\text{wk}: \gerh^* \otimes \ucalC_{\mu}(M)\hookrightarrow \gerh^*\otimes \calC^{\infty}(M) .\]
\item
For algebraic equivalence, (\ref{eq:addapt -def -complex}) is replaced by 
\[  \delta^{0}_\text{alg}: \gergl(\gerh^*) \otimes \ucalC_{\mu}(M)\rightarrow \gerh^*\otimes \calC^{\infty}(M), \quad \delta^{0}_\text{alg}(B)= B\cdot \mu .\]
(where ${(B\cdot \mu)(x)= B(x)(\mu(x))}$- see  below for notations). 
\item
For orbit equivalence, (\ref{eq:addapt -def -complex}) is replaced by 
\[  \delta^{0}_\text{orb}: \gergl(\gerh^*) \otimes \ucalC^\textsc{orb}_{\mu}(M)\rightarrow \gerh^*\otimes \calC^{\infty}(M), \quad \delta^{0}_\text{orb}(B)= B\cdot \mu .\]
\end{myitemize}
For each of the cases 
$
        \chi 
        \in
        \set{ \text{act}, \text{fct}, \text{orb}, \text{wk}, \text{alg} },
    $
we will denote by ${H^{\bullet}_{\chi}(\omega, \mu)}$ the resulting cohomology. 
\end{definition}
\noindent
Recall here that
\begin{myitemize}
\item ${\ucalC_{\mu}(M)}$ consists of the functions that Poisson commute with all the ${\mu_i}$'s (the global sections of the
central sheaf from Definition \ref{def -central -sheaf}) 
\item ${\ucalC^\textsc{orb}_{\mu}(M)}$ consists of the functions ${f}$ with the property that 
${X_{f}}$ is in the ${\ucalC^{\infty}(M)}$-span of the vector fields ${X_{\mu_i}}$  (the global sections of the
central sheaf from Definition \ref{def -C -orbit--sheaf}). 
\end{myitemize}
Also, in the previous formulas, we interpret the 
elements of the tensor product ${V\otimes \calC^{\infty}(M)}$ as smooth ${V}$-valued functions on ${M}$, for any finite dimensional vector space ${V}$. 
Therefore, also the elements of ${\gerh^* \otimes \ucalC_{\mu}(M)}$ are viewed as functions from ${M}$ to ${\gerh^*}$- the ones 
whose components are in ${\ucalC_{\mu}(M)}$. The notation ${B\cdot \mu}$ suggest to think of the matrix multiplication (with entries smooth functions).

\subsection{The Moser argument (globally)}

In this subsection we focus on the geometric aspects of the deformation theory of integrable systems and we exploit the `Moser path method' or `Moser trick' for integrable systems.
The use of the Moser path method in the context of integrable systems was first exploited by Miranda in her PhD thesis \cite{Mir03}. 

After defining deformations of integrable systems we will prove in detail that a deformation gives rise to a class in the first deformation cohomology group. 
This follows from correctly interpreting the time-derivative of the deformation.
Then we define what it means for a deformation to be (smoothly) trivial, and show that in this case its cohomology class vanishes. 
Conversely, if its cohomology class vanishes for \emph{all} time, then the deformation is smoothly trivial. 
Note that it is not sufficient that the cohomology class, or seemingly even the entire cohomology group, vanishes for only one $t$.
But in the next chapter we show that for normal forms, locally near nondegenerate fixed points, and for \emph{one particular} choice of deformation, it does suffice to prove vanishing of the class for any one small value of $t$.

Many aspects of this story are standard, and have been applied to a variety of other settings, but we think it might interest the reader to find all details in a self-contained story.

\begin{definition} 
A \textbf{deformation}\index{deformation} of a completely integrable system ${(\omega,\mu)}$ on a manifold ${M}$ is a family 
${(\omega_t, \mu_t)}$ of \emph{completely} integrable system on ${M}$ smoothly parametrized by ${t}$ belonging to an open interval 
of type ${( -\epsilon, \epsilon)}$ such that, at ${t= 0}$, ${(\omega^0,\mu^0)=(\omega,\mu)}$.
\end{definition}
\noindent
Of course, smoothly parametrized by ${t}$ means that 
    \[
        \omega
        :
        ( -\epsilon,\epsilon) \times M
        \to
        \wedge^2 T^*M,
        \quad \text{and}\quad 
        \mu
        :
        ( -\epsilon,\epsilon) \times M
        \to
        \R^n,
    \]

\begin{remark}
We want to stress that we require that ${\mu^t}$ is complete for all small ${t}$. This requirement is essential when dealing with equivalences, and it does not follow automatically. Indeed, consider the smooth function ${\mu^\epsilon:\R^2\to\R}$ which vanishes on the set of elements with at most distance ${\epsilon}$ from the union of the axes and is otherwise given by the function 
    \[
        \mu^\epsilon(x,y)
        =
        e^{ -1/(x -\epsilon)^2} e^{ -1/(y -\epsilon)^2}.
    \]   
In this family only ${\mu^0}$ is complete, but the other ${\mu^\epsilon}$ come arbitrarily close to ${\mu^0}$.

Completeness is also not a closed condition. This makes the deformation theory of integrable systems somewhat awkward: it does not completely fit in the framework of `objects satisfying the Maurer-Cartan equation.'
\end{remark}
\noindent
Next we define trivial deformations. As we have already mentioned, this notion depends on a choice of equivalence relation ${\chi}$. In short, a deformation ${(\omega^t,\mu^t)}$ is ${\chi}$-trivial if there is an ${\epsilon > 0}$ such that
${(\omega^t, \mu^t)}$ is ${\chi}$-equivalent to ${(\omega^0, \mu^0)}$, smoothly with respect to ${t}$. Here, smoothness w.r.t.\  ${t}$ means that the data entering ${\chi}$-equivalences (one for each ${t}$) can be chosen to be smooth in ${t}$. Let us spell this out in more detail; for simplicity, we will consider here only
action, functional and weak equivalence. 

\begin{definition}\label{def:chi -trivial -defs}
Let 
${\chi \in \set{ \text{act}, \text{orb}, \text{fib},  \text{fct}, \text{wk},  \text{lag}}}$. A deformation ${(\omega^t,\mu^t)}$ of a completely integrable system on ${M}$ is said to be a \textbf{${\chi}$-trivial deformation}\index{deformation!trivial} if there is an ${\epsilon > 0}$
and a family of ${\chi}$-equivalences between ${(\omega^t, \mu^t)}$ and ${(\omega^0, \mu^0)}$, smoothly parametrized by ${t\in ( -\epsilon, \epsilon)}$.
\end{definition}
 \noindent        

The meaning of ${\chi}$-equivalences smoothly parametrized by ${t}$ is rather obvious. For completeness, here are the details. In all cases of ${\chi}$, we must have a family of symplectomorphisms ${\Phi^t:(M,\omega^0)\to (M,\omega^t)}$; the extra data depends on each case. For \textbf{fiber triviality} and \textbf{weak triviality}, there is no extra data, just the requirement that ${\Phi^t}$ takes the fibers of ${\mu^0}$ to the ones of ${\mu^t}$, or that   ${\{\mu_i^0, \mu_j^t \circ \Phi^t\}= 0}$ for all ${i, j, t}$, respectively. 
For \textbf{action triviality} one requires smooth maps ${A: ( -\epsilon,\epsilon) \to \GL(\R^n)}$ and ${\gamma: ( -\epsilon,\epsilon) \to \R^n}$ such that 
    \[
        A^t \cdot \mu^0 + \gamma^t = \mu^t \circ \Phi^t,
        \quad \forall\;\; t\in ( -\epsilon,\epsilon).
    \]
Similarly for \textbf{orbit triviality} and \textbf{algebraic triviality}. For \textbf{weak functional triviality} one requires a family of diffeomorphisms ${\phi_t : \mu_0(M)\rightarrow \mu_t(M)}$ as in Definition \ref{def -ftc -equivalence} which is required to be smooth in the sense that 
the resulting map ${( -\epsilon,\epsilon)\times \mu_0(M)\rightarrow \mathbb{R}^n}$ is smooth (see the comments after the mentioned definition). \textbf{Functional triviality} should be clear now. 

\begin{remark} 
The previous definition can be slightly generalized, 
so that one talks about ${\chi}$-equivalences between two deformations ${(\omega^t, \mu^t)}$ and ${(\omega'^{,t}, \mu'^{,t})}$ of the same ${(\omega, \mu)}$. 
With this, ${(\omega^t, \mu^t)}$ is ${\chi}$-trivial if and only if it is ${\chi}$-equivalent to the trivial (constant) deformation. 

In such definitions, while the deformations ${(\omega^t, \mu^t)}$ and ${(\omega'^{,t}, \mu'^{,t})}$ coincide at ${t= 0}$ (they are both ${(\omega, \mu)}$), one sometimes requires that the isomorphism between ${(\omega^t, \mu^t)}$ and ${(\omega'^{,t}, \mu'^{,t})}$ is ${\textrm{Id}}$ at ${t= 0}$. In particular, for ${\chi}$-triviality, one sometimes requires that ${\Phi^0= \textrm{Id}}$ in all case, while for action triviality one also requires ${A^0= \textrm{Id}}$, ${\gamma^0= 0}$, etc. 

The reason for this requirement is the following. Starting with a deformation ${(\omega^t, \mu^t)}$ and interpreting it as an element in the deformation complex, we will see in the next proposition that its variation at ${t= 0}$,
\[ \left.\frac{d}{d t}\right|_{t=0} (\omega^t, \mu^t) \in \calC^{1}(\omega, \mu)\]
induces an element in the cohomology ${H^{1}_{\chi}(\omega, \mu)}$ (and similarly for other ${t}$'s). If we want that two ${\chi}$-equivalent deformations induce the same class, it is clear that we have to insist on the previous condition (otherwise the class should be invariant under conjugations induced by self ${\chi}$-equivalences). For simplicity, the next proposition avoids this discussion by looking at the vanishing of the cohomology class (property which is invariant under conjugation). The more general properties of these classes that we have just pointed out have a completely similar proof.
\end{remark}
\noindent
\begin{proposition}\label{prop_families}
Let ${(\omega^t,\mu^t)}$ be a deformation of a completely integrable system ${(\omega, \mu)}$ on a compact manifold ${M}$, and let ${\chi \in \set{ \text{act}, \text{orb}, \text{fib},  \text{fct}, \text{wk},  \text{alg}}}$.

\begin{itemize}
\item
For all ${t\in ( -\epsilon,\epsilon)}$, 
\[
        c^t
        \defeq
       \frac{d}{dt} 
        (\omega^t, \mu^t)
        = ( \frac{d}{dt} \omega^t,  \frac{d}{dt} \mu^t)
        \quad\in \calC^1(\omega^t,\mu^t).
\]
is a closed cochain. 
\item If the deformation is ${\chi}$-trivial, then the induced cohomology classes ${[c^t]\in H^{1}_{\chi}(\omega^t, \mu^t)}$ vanish for small ${t}$.   
\item
Conversely: if ${[c^t]\in H^{1}_{\chi}(\omega^t, \mu^t)}$ is zero smoothly in ${t}$, i.e.\  if the equations
\[ c^t=\dd_{\omega^t,\mu^t}(b^t)\]
 have solutions ${b^t\in \calC^0_\chi(\omega^t, \mu^t)}$ varying smoothly in  ${t}$, then the deformation is ${\chi}$-trivial.
\end{itemize}
\end{proposition}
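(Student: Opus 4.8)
\textbf{Proof proposal for Proposition \ref{prop_families}.}

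The plan is to treat the three bullets in turn, with the bulk of the work going into the first (the cocycle condition) and the third (the converse, i.e.\ the Moser argument proper). Throughout I would write $c^t = (\dot\omega^t, \dot\mu^t)$ with the dot denoting $\frac{d}{dt}$, so $\dot\omega^t \in \Omega^2(M)$ and $\dot\mu^t \in \gerh^*\otimes \calC^\infty(M) = C^1(\gerh,\calC^\infty)$, and check directly that $\dd_{\omega^t,\mu^t}(c^t) = 0$. First, $-\dd_\text{dR}(\dot\omega^t) = -\frac{d}{dt}\dd_\text{dR}(\omega^t) = 0$ since $\omega^t$ is closed for every $t$. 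Second, I would compute the $C^2$-component: evaluating $\partial_{\omega^t,\mu^t}(c^t)$ on a pair $(h_0,h_1)\in\gerh^2$ and differentiating the involutivity relation $\{\mu^t(h_0),\mu^t(h_1)\}_{\omega^t}=0$ in $t$. The key point is the identity $\frac{d}{dt}\{f,g\}_{\omega^t} = \{\dot f,g\}_{\omega^t} + \{f,\dot g\}_{\omega^t} - \dot\omega^t(X^{\omega^t}_f, X^{\omega^t}_g)$, which follows from $\pi^t = (\omega^t)^{-1}$ and $\frac{d}{dt}\pi^t = -\pi^t \circ \dot\omega^t \circ \pi^t$ (interpreting the forms as bundle maps); feeding in $f=\mu^t(h_0)$, $g=\mu^t(h_1)$ reproduces exactly $\partial_{\omega^t,\mu^t}(c^t)(h_0,h_1)$ up to sign, so the derivative of $0$ gives the cocycle condition. (Compactness is not needed here; it is needed for the flows below.)

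For the second bullet I would assume the deformation is $\chi$-trivial, so there are smooth families of symplectomorphisms $\Phi^t:(M,\omega^0)\to(M,\omega^t)$ with $\Phi^0 = \id$ plus the $\chi$-dependent extra data, and pull everything back: $(\Phi^t)^*\omega^t = \omega^0$ and $(\Phi^t)^*\mu^t = \delta^0_\chi(\text{data}^t)$ in the notation of Definition \ref{definitions -adapted -def -cplxs} (e.g.\ $A^t\mu^0 + \gamma^t$ for $\chi=\text{act}$, a function of $\mu^0$ for $\chi=\text{fct}$, something with components in $\ucalC_\mu$ for $\chi=\text{wk}$, etc.). Differentiating the first equation at a given time $t$ and using the standard Moser/Cartan computation, $\frac{d}{dt}\big((\Phi^t)^*\omega^t\big) = (\Phi^t)^*\big(\dot\omega^t + \gerL_{Z^t}\omega^t\big)$ where $Z^t$ is the time-dependent generator of $\Phi^t$ transported to time $t$; since this vanishes and $\gerL_{Z^t}\omega^t = \dd_\text{dR}(i_{Z^t}\omega^t)$, we get $\dot\omega^t = -\dd_\text{dR}(i_{Z^t}\omega^t)$, i.e.\ $\dot\omega^t$ is $\dd_\text{dR}$-exact with a canonical primitive. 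Differentiating the second equation produces $\dot\mu^t(h) = \gerL_{Z^t}\mu^t(h) + (\text{derivative of }\delta^0_\chi(\text{data}^t))$ after the appropriate pushing forward by $\Phi^t$; the first term is $\{\mu^t(h), -i_{Z^t}\omega^t\}$-type data matching $\rho^*(i_{Z^t}\omega^t)$, and the second term is precisely $\delta^0_\chi$ applied to $\frac{d}{dt}(\text{data}^t)\in \calC^0_\chi$. Packaging $\alpha^t := i_{Z^t}\omega^t$ and $b^t_0 := \frac{d}{dt}(\text{data}^t)$, one reads off $c^t = \dd_{\omega^t,\mu^t}(\alpha^t, b^t_0)$ with $(\alpha^t,b^t_0)\in\calC^0_\chi(\omega^t,\mu^t)$, so $[c^t]=0$. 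I would carry this out case by case only to the extent of checking that the $\chi$-data's $t$-derivative lands in the domain of $\delta^0_\chi$ — e.g.\ for weak equivalence that $\frac{d}{dt}$ of a family of functions Poisson-commuting with $\mu^t\circ\Phi^t$ lies in $\gerh^*\otimes\ucalC_\mu$, which follows from the \nameref{cis:lem:main}.

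For the converse I would run the Moser argument in reverse. Suppose $c^t = \dd_{\omega^t,\mu^t}(b^t)$ with $b^t = (\alpha^t, b^t_0)\in\calC^0_\chi(\omega^t,\mu^t)$ smooth in $t$; by definition of $\dd_{\omega^t,\mu^t}$ this says $\dot\omega^t = -\dd_\text{dR}(\alpha^t)$ and $\dot\mu^t(h) = \rho^*_t(\alpha^t)(h) + \delta^0_\chi(b^t_0)(h)$. Define the time-dependent vector field $Z^t$ by $i_{Z^t}\omega^t = \alpha^t$ (here $\omega^t$ nondegenerate is what makes $Z^t$ well-defined and smooth in $t$; compactness of $M$ is what guarantees the flow $\Phi^t$ of $Z^t$ with $\Phi^0=\id$ exists for all $t\in(-\epsilon,\epsilon)$, shrinking $\epsilon$ if necessary). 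The Cartan computation gives $\frac{d}{dt}\big((\Phi^t)^*\omega^t\big) = (\Phi^t)^*(\dot\omega^t + \dd_\text{dR}(i_{Z^t}\omega^t)) = (\Phi^t)^*(\dot\omega^t + \dd_\text{dR}\alpha^t) = 0$, so $(\Phi^t)^*\omega^t = \omega^0$ for all $t$. For the moment map part, one computes $\frac{d}{dt}\big((\Phi^t)^*\mu^t(h)\big) = (\Phi^t)^*\big(\dot\mu^t(h) - \gerL_{Z^t}\mu^t(h)\big) = (\Phi^t)^*\big(\delta^0_\chi(b^t_0)(h)\big)$, since $\gerL_{Z^t}\mu^t(h) = \{\mu^t(h),-\}\cdot Z^t$-type term exactly cancels $\rho^*_t(\alpha^t)(h)$. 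Thus $(\Phi^t)^*\mu^t = \int_0^t (\Phi^s)^*\delta^0_\chi(b^s_0)\,\dd s + \mu^0$; the hard part is checking, case by case, that this integral is again of the form $\delta^0_\chi(\text{data}^t)$ for a smooth family $\text{data}^t\in\calC^0_\chi$, so that $(\Phi^t,\text{data}^t)$ is a $\chi$-triviality. For $\chi=\text{fct}$ and $\chi=\text{act}$ this is immediate because $\delta^0_\chi$ is essentially composition with (or affine precomposition by) $\mu^0$ and $(\Phi^s)^*$ intertwines it correctly; for $\chi\in\{\text{wk},\text{alg},\text{orb}\}$ one must verify the integrand's components stay in $\ucalC_\mu$ (resp.\ $\ucalC_\mu^\textsc{orb}$), which again reduces to the \nameref{cis:lem:main} and the fact that $\Phi^t$ preserves the relevant central sheaf (Proposition \ref{main -prop -sheavs -int -syst}, Corollary \ref{cor -X -mu -Lagr}). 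I expect this final bookkeeping — tracking that the primitive data at each $\chi$-level is genuinely a $0$-cochain of the adapted complex and depends smoothly on $t$ — to be the main obstacle, since the naive integral lives a priori only in $\gerh^*\otimes\calC^\infty(M)$ and one needs the algebraic structure of $\delta^0_\chi$ to pull it back into $\calC^0_\chi$; everything else is the classical Moser trick.
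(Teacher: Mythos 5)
Your outline closely parallels the paper's proof: compute the time-derivative of the integrability relation via $\pi^t=(\omega^t)^{-1}$ and $\frac{d}{dt}\pi^t = -\pi^t\,\dot\omega^t\,\pi^t$ for the cocycle condition, set $\alpha^t=\iota_{X^t}\omega^t$ and differentiate $(\Phi^t)^*\omega^t$, $\mu^t\circ\Phi^t$ for the common part of the other two bullets, and then treat the $\chi$-cases separately. The sign discrepancies you flag as ``up to sign'' are real (in particular the $\pm\gerL_{Z^t}$ and the sign of $\alpha^t(X_{\mu^t})$ depend on the paper's convention for $\pi^\sharp$) but do not affect the structure.

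Where you are imprecise is the converse. You write $(\Phi^t)^*\mu^t - \mu^0 = \int_0^t(\Phi^s)^*\delta^0_\chi(b^s_0)\,\dd s$ and then claim that for $\chi=\text{act},\text{fct}$ it is ``immediate'' that this integral is of the form $\delta^0_\chi(\text{data}^t)$. It is not: for $\chi=\text{act}$ the integrand is $(\Phi^s)^*(B^s\cdot\mu^s+\delta^s)=B^s\cdot(\Phi^s)^*\mu^s+\delta^s$, which involves the unknown $(\Phi^s)^*\mu^s$ itself — so you have a Volterra integral equation, not an explicit primitive. The paper resolves this by instead writing an auxiliary system of linear ODEs for the $\chi$-data, e.g.\ $\partial_t A^t = B^t\cdot A^t$, $\partial_t\gamma^t = \delta^t + B^t(\gamma^t)$ with $A^0=\id$, $\gamma^0=0$, and then verifying $\partial_t\big(\mu^t\circ\Phi^t - A^t\cdot\mu^0 - \gamma^t\big)=0$. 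Your integral identity is equivalent in the end, but the ODE viewpoint is what actually produces the $\chi$-data; calling it ``immediate'' hides the step that makes the argument close. For the $\chi\in\{\text{wk},\text{alg},\text{orb}\}$ cases you describe the task as ``verify the integrand's components stay in $\ucalC_\mu$''; what the paper actually does for wk is show $\partial_t\{\mu^0,\mu^t\circ\Phi^t\}_{\omega^0}=0$ directly, applying the \nameref{cis:lem:main} to the completely integrable system $(\omega^t,\mu^0\circ(\Phi^t)^{-1})$. Your phrasing carries a subtle bootstrap — the integrand a priori lies in $\ucalC_{\mu^t\circ\Phi^t}$, not $\ucalC_{\mu^0}$, and these are equal only once you know the triviality you are proving — and you should say explicitly how that circle is broken, which the paper also leaves somewhat implicit.
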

\noindent
The smooth variation in ${t}$, the smoothness as a map 
    \[
        {b: ( -\epsilon,\epsilon)\to \calC_\chi^0(\omega^t, \mu^t)},
    \]
makes sense either because the last space does not depend on ${(\omega^t,\mu^t)}$
(e.g.\  for ${\text{act}}$) or is inside a space that does not depend on ${t}$ (e.g.\  for ${\calC^{0}_{\text{wk}}\subset \gerh^*\otimes \calC^{\infty}(M)}$). 

Note that, in the proof below, we will see that the vanishing of the class in ${H^{1}_{\text{fct},}(\omega^t, \mu^t)}$, one only needs that the deformation is weak functionally trivial.

\begin{proof}
We have see that ${c^t}$ is closed in ${\calC^1(\omega^t,\mu^t)}$. Regarding its first component, observe that ${\dd\omega^t=0}$ implies that indeed
    \[
        \dd(\partial_t\omega^t)=0.
    \]
For its second we component derive the integrability condition for ${(\omega^t,\mu^t)}$ with respect to time. Observe that the Poisson bracket 
    $
        \set{\cdot,\cdot}_{\omega}
    $ 
does not depend linearly on the symplectic structure, but it does depend linearly on the Poisson structure ${\pi=\omega^{ -1}}$. It is an instance where the language of Poisson geometry is more convenient than symplectic geometry. Let
    \[
        \pi^t \defeq (\omega^t)^{ -1}
    \] 
be the deformation of Poisson structures induced by ${\omega^t}$.
The integrability condition ${\{ \mu^t_i, \mu^t_j \}_{\pi^t}=0}$ gives us
    \[
        \big\{ 
            \partial_t\mu^t_i, \mu^t_j 
        \big\}_{\pi^t}
        +
        \big\{ 
            \mu^t_i, \partial_t\mu^t_j 
        \big\}_{\pi^t}
        +
        \big\{ 
            \mu^t_i, \mu^t_j 
        \big\}_{\partial_t \pi^t}
        =
        \partial_t \big(
        \big\{
            \mu_i^t, \mu_j^t
        \big\}_{\pi^t}
        \big)
        =
        0
    \]
for all ${i,j}$. This starts to look like the correct equation, but we still have to understand the relation between ${\partial_t\pi^t}$ and ${\partial_t\omega^t}$. Any ${2}$-form ${\omega}$ defines an endomorphism 
    \[
        \omega^\sharp: TM\to T^*M,
    \]
and likewise any bivector ${\pi}$ defines an endomorphism 
    \[
        \pi^\flat:T^*M\to TM.
    \]
Note that the assignments ${\omega\mapsto \omega^\sharp}$ and ${\pi\mapsto \pi^\flat}$ are linear and injective, so we may as well compute the relation between ${\partial_t(\pi^{t})^{\flat}}$ and ${\partial_t(\omega^{t})^{\sharp}}$ instead. Since ${(\pi^t)^\flat}$ is by definition the inverse of ${(\omega^t)^\sharp}$, we find
    \[
        \big( 
            \partial_t (\omega^{t})^{\sharp} 
        \big)
        (\pi^{t})^{\flat}
        +
        (\omega^{t})^{\sharp}
        \big( 
            \partial_t (\pi^{t})^{\flat} 
        \big)
        =
        \partial_t \big(
            (\omega^t)^\sharp
            (\pi^t)^\flat
        \big)
        =
        \partial_t(\id)
        = 0.
    \]
From this we derive the formula
    \[
        \partial_t (\pi^{t})^{\flat}
        =
        - (\pi^{t})^{\flat}
        \big(
            \partial_t (\omega^{t})^{\sharp} 
        \big)
        (\pi^{t})^{\flat}.
    \]
The left -hand -side corresponds to the bivector ${\partial_t\pi^t}$, whereas the right -hand -side (which we will denote by ${\varpi^\flat}$ for the moment) corresponds to the bivector that maps a pair of smooth functions ${f}$ and ${g}$ to
    \begin{align*}
        \varpi( \dd f, \dd g )
        & =
        \dd f 
        \big(
            \varpi^\flat ( \dd g )
        \big)
        \\
        & =
        - \dd f 
        \Big(
            \pi^{t\flat}
            \big(
                \partial_t \omega^{t\sharp} 
            \big)
            \pi^{t\flat}
            (\dd g)
        \Big)
        \\
        & =
        \pi^{t\flat}(\dd f)
        \big(
            \partial_t \omega^{t\sharp}
            \pi^{t\flat}(\dd g)
        \big)
        \\
        & =
        \partial_t\omega^{t}
        \big(
            \pi^{t\flat}(\dd f), \pi^{t\flat}(\dd g)
        \big)
        \\
        & =
        \partial_t \omega^t
        \big(
            X^{\omega^t}_f, X^{\omega^t}_g
        \big),
    \end{align*}
where for the third equality we us the antisymmetry of ${\pi^t}$. We conclude that
    \[
        \dd_{\omega^t,\mu^t} c^t
        =
        \big\{ 
            \mu^t_i, \partial_t\mu^t_j 
        \big\}_{\omega^t}
        -
        \big\{ 
            \mu^t_j, \partial_t\mu^t_i 
        \big\}_{\omega^t}
        +
        \partial_t \omega^t(
            X^{\omega^t}_{\mu^t_i},
            X^{\omega^t}_{\mu^t_j}
        )
        = 0.
    \]
This shows that ${c^t = (\partial_t\omega^t,\partial_t\mu^t)}$ is closed in ${\calC^1(\omega^t,\mu^t)}$.

For the second statement we have to discuss each equivalence relation separately. The equivalence relations have some aspects in common however: a smooth family of symplectomorphisms ${\Phi^t: (M,\omega^0)\to (M,\omega^t)}$ and expressions of the form ${\mu^t\circ \Phi^t}$. 
Let ${X^t \in \gerX(M)}$ be the infinitesimal generator of ${\Phi^t}$ and define
    \[
        \alpha^t \defeq \iota_{X^t}(\omega^t).
    \]  
Then deriving ${\omega^0=(\Phi^t)^*\omega^t}$ with respect to time gives
    \[
        0
        =
        \partial_t 
        \big(
            (\Phi^t)^*\omega^t
        \big)
        =
        (\Phi^t)^* \gerL_{X^t}(\omega^t)
        +
        (\Phi^t)^* \partial_t\omega^t
        =
        (\Phi^t)^* \big(
            \dd \iota_{X^t}(\omega^t) + \partial_t\omega^t
        \big).
    \]
This shows that ${\partial_t\omega^t = -\dd \alpha^t}$. Before we get to the case -specific part of the proof, note that the term ${\mu^t \circ \Phi^t}$ appears in each of equivalence relations. Deriving it with respect to time gives 
    \begin{align*}
        \partial_t
        \big(
            \mu^t \circ \Phi^t
        \big)
        & =
        \big(
            \partial_t \mu^t
            +
            \dd \mu^t (X^t)
        \big) \circ \Phi^t
        \\
        & =
        \big(
            \partial_t \mu^t
            +
            \dd\mu^t ( (\pi^t)^\flat  \alpha^t )
        \big) \circ \Phi^t
        \\
        & =
        \big(
            \partial_t \mu^t
            -
            \alpha^t(X_{\mu^t}^{\omega^t})
        \big) \circ \Phi^t.
    \end{align*}

The various ${\chi}$ are treated roughly the same, but each has its own quirks. We give here the details for ${\chi \in \set{ \text{act}, \text{fct}, \text{wk}}}$, which cover all the types of arguments used.

\begin{itemize}

\item
If the deformation is action trivial, so that
    $
        A^t\cdot \mu^0 + \gamma^t = \mu^t\circ \Phi^t,
    $
one finds that
    \begin{align*}
        \partial_t\mu^t \circ \Phi^t
        & =
        \partial_tA^t\cdot \mu^0 
        + 
        \partial_t\gamma^t 
        + 
        \alpha^t(X_{\mu^t}^{\omega^t}) \circ \Phi^t
        \\
        & =
        \partial_t A^t \cdot
        (A^t)^{ -1} \cdot
        \big(
            \mu^t \circ \Phi^t - \gamma^t
        \big)
        + 
        \partial_t\gamma^t 
        + 
        \alpha^t(X_{\mu^t}^{\omega^t}) \circ \Phi^t
        \\
        & =
        \big(
            B^t \cdot \mu^t
            +
            \alpha^t( X_{\mu^t}^{\omega^t} )
            +
            \delta^t
        \big) \circ \Phi^t,
    \end{align*}
where we choose
    \[
        B^t 
        \defeq
        \partial_tA^t \cdot (A^t)^{ -1},
        \quad
        \delta^t
        \defeq
        \partial_t \gamma^t
        -
        B^t (\gamma^t).
    \]
Hence ${(\alpha^t, B^t, \delta^t)}$ is a primitive of ${(\partial_t\omega^t,\partial_t\mu^t)}$ in ${\calC^\bullet_\text{act}(\omega^t,\mu^t)}$. Conversely, if the ${\alpha^t}$, ${B^t}$, and ${\delta^t}$ are given, solve the ordinary differential equations
    \[
        \partial_t\Phi^t = X^t \circ \Phi^t,
        \quad
        \partial_tA^t = B^t \cdot A^t,
        \quad
        \partial_t\gamma^t = \delta^t + B^t(\gamma^t),
    \]
with ${\Phi^0=\id}$, ${A^0=\id}$, ${\gamma^0=0}$. Then the above computations tell us that
    \[
        \partial_t
        \big(
            \mu^t \circ \Phi^t
            -
            A^t \cdot \mu^0
            -
            \gamma^t
        \big)
        = 0.
    \]
Since ${A^0\cdot \mu^0 + \gamma^0 = \mu^0\circ \Phi^0}$, this shows that the deformation is action trivial.

\item
If the deformation is functionally trivial, so that
    $
        \phi^t \circ \mu^0
        =
        \mu^t \circ \Phi^t.
    $
    While each ${\phi^t: \mu^0(M)\rightarrow \mathbb{R}^n}$ is smooth in the sense we discussed above, since ${M}$ is compact ${\mu^0(M)}$ will be closed in ${\R^n}$ hence each
    ${\phi^t}$ has a smooth extension ${\R^n}$. After eventually making ${\epsilon}$ smaller, one can arrange that these extensions vary themselves smoothly also w.r.t.\  ${t}$ (in particular, they have a common domain). For that one uses the  
    smoothness of the family ${\{\phi^t\}}$ and we apply the same extension argument to the entire family viewed as a smooth map defined on ${I\times \mu^0(M)}$- a closed subset of ${\mathbb{R}^n}$,
    where ${I=[ -\epsilon', \epsilon']}$ is a closed interval inside ${( -\epsilon, \epsilon)}$. From now on we use such a smooth family of extensions, still denoted by ${\{\phi^t\}}$. Denoting by ${Y^t}$ 
    the resulting tame -dependent vector field (`infinitesimal generator'), the previous equations give us:
    One finds that
    \begin{align*}
        \partial_t\mu^t \circ \Phi^t
        & =
        Y^t\circ (\phi^t \circ \mu^0)
        +
        \alpha^t( X_{\mu^t}^{\omega^t} )
        \circ \Phi^t
        \\
        & =
        \big(
            Y^t \circ \mu^t 
            + 
            \alpha^t ( X_{\mu^t}^{\omega^t} )
        \big) \circ \Phi^t, 
    \end{align*}
Hence ${(\alpha^t, Y^t)}$ is a primitive of ${(\partial_t\omega^t,\mu^t)}$ in ${\calC^\bullet_\text{fct}(\omega^t,\mu^t)}$. Conversely, if ${\alpha^t}$ and ${Y^t}$ are given, solve the ordinary differential equations
    \[
        \partial_t\Phi^t = X^t\circ \Phi^t,
        \quad
        \partial_t\phi^t = Y^t \circ \phi^t,
    \]
with ${\Phi^0=\id}$ and ${\phi^0=\id}$. We assumed that ${M}$ is compact, hence so is the image ${\mu^0(M)}$. This means that one can choose a small neighborhood ${U\sub \gerh^*}$ of ${\mu^0(M)}$ and a small ${\epsilon >0}$ such that the flow ${\phi^t}$ exists on ${U}$ for all ${t\in ( -\epsilon,\epsilon)}$. The remainder of the argument is as above.

\item
If the deformation is weakly trivial, so that 
    $
        \{
            \mu^0, \mu^t\circ \Phi^t 
        \}_{\omega^0}
        = 0,
    $
one finds that
    \begin{align*}
        0
        & =
        \big\{
            \mu^0, 
            \big(
                \partial_t \mu^t
                -
                \alpha^t( X_{\mu^t}^{\omega^t})
            \big)
            \circ \Phi^t
        \big\}_{\omega^0}
        \\
        & =
        \big\{
            \mu^0 \circ (\Phi^t)^{ -1},
            \partial_t\mu^t
            -
            \alpha^t( X_{\mu^t}^{\omega^t} )
        \big\}_{\omega^t} \circ \Phi^t.
    \end{align*}
Note that we also have 
    $
        \{
            \mu^0 \circ (\Phi^t)^{ -1},
            \mu^t
        \}_{\omega^t} = 0.
    $
The pair ${(\omega^t,\mu^0\circ (\Phi^t)^{ -1})}$ is a completely integrable system for all time ${t}$, so from the \nameref{cis:lem:main} on page~\pageref{cis:lem:main} we conclude that
    \[
        \big\{
            \mu^t,
            \partial_t\mu^t 
            - 
            \alpha^t( X_{\mu^t}^{\omega^t} )
        \big\}_{\omega^t} 
        = 
        0.
    \]
Hence ${(\alpha^t, \partial_t\mu^t + \alpha^t( X_{\mu^t}^{\omega^t} ) )}$ is a primitive of ${(\partial_t\omega^t, \partial_t \mu^t )}$ in ${\calC^\bullet_\text{wk}(\omega^t,\mu^t)}$. Conversely, if ${\alpha^t}$ is given, solve the ordinary differential equation
    \[
        \partial_t\Phi^t = X^t \circ \Phi^t,
    \]
with ${\Phi^0 = \id}$. Retracing the above computations tells us that 
    $
        \partial_t 
        \{ \mu^0, \mu^t \circ \Phi^t \}_{\omega^0}
        = 0. 
    $
Combined with ${\{ \mu^0, \mu^0 \circ \Phi^0 \}_{\omega^0} = 0}$, we conclude that the deformation is weakly trivial. Note that in this argument we have only used the completeness of ${\mu^0}$.
\qedhere

\end{itemize}
\end{proof}

\subsection{The local story}

We briefly look at how to adapt the above deformation theory to the local context, i.e.\  for germs of integrable ${(\omega, \mu)}$ systems around a point $x$. This local version of the theory appears when studying normal forms of integrable systems around singular points (and this will be explained in the next section). 
We start by discussing the local version of the deformation complex. We will also be using the terminology from subsection \ref{The local version of equivalences; invariance under equivalences}: the germ of an integrable system ${(\omega, \mu)}$ around a point ${x}$ will be called \textbf{local integrable system near ${x}$} and will be denoted ${(\omega, \mu, x)}$.

\begin{definition}
Let ${(\omega,\mu, x)}$ be a local integrable system. The \textbf{local}\index{deformation complex!local} standard deformation complex of ${(\omega,\mu, x)}$, denoted ${\calC^\bullet(\omega, \mu, x)}$ is the germ at ${x}$ of the standard deformation.
\end{definition}
\noindent
Therefore, denoting by ${\ucalC^\infty_{x}}$ the spaces of germs at ${x}$ of smooth functions (the stalk at ${x}$ of the sheaf of smooth function) defined on the domain of ${(\omega, \mu)}$, and similarly by ${\Omega^{\bullet}_x}$ for germs of differential forms, we have
\[
        \calC^\bullet(\omega, \mu, x)
        \defeq
        \Omega^{k+1}_{x}
        \oplus
        \big(
            \wedge^k\gerh^*
            \otimes
            \ucalC^\infty_{x}
        \big),
\]
with the differential induced from the one on ${\calC^{\bullet}(\omega, \mu)}$.  

For the various ${\chi}$-version of the deformation complex, one localizes Definition \ref{definitions -adapted -def -cplxs}. More precisely, one proceeds as there but considering now
the following ${\chi}$-modifications (in degree zero) of the Chevalley-Eilenberg complex of ${\gerh}$ with coefficients in ${\ucalC_{x}^{\infty}}$ (for the notations, see also the comments after that definition)
\begin{myitemize}
\item 
 For local functional equivalence we use
\[  \delta^{0}_\text{fct}: \calC^\infty_{\mu(x)}(\gerh^*,\gerh^*)\rightarrow \gerh^*\otimes \ucalC_x, \quad \delta^{0}_\text{fct}(F)= F\circ \mu .\]
(defined on the space of germs around ${\mu(x)\in \gerh^*}$ of smooth ${\gerh^*}$-valued functions). 
\item 
For local action equivalence use:
\[  \delta^{0}_\text{act}: \gergl(\gerh^*)\oplus \gerh^*\rightarrow \gerh^*\otimes \calC^{\infty}_x, \quad \delta^{0}_\text{act}(A, \gamma)= A\circ \mu+ \gamma .\]
\item For local weak equivalence use:
\[  \delta^{0}_\text{wk}: \gerh^* \otimes \ucalC_{\mu, x}\hookrightarrow \gerh^*\otimes \calC^{\infty}_{x} .\]
\item
For local algebraic equivalence use
\[  \delta^{0}_\text{alg}: \gergl(\gerh^*) \otimes \ucalC_{\mu, x}\rightarrow \gerh^*\otimes \calC^{\infty}_{x}, \quad \delta^{0}_\text{alg}(B)= B\cdot \mu .\]
\item 
For local orbit equivalence use
\[  \delta^{0}_\text{orb}: \gergl(\gerh^*) \otimes \ucalC^\textsc{orb}_{\mu, x}\rightarrow \gerh^*\otimes \calC^{\infty}_{x}, \quad \delta^{0}_\text{alg}(B)= B\cdot \mu .\]
\end{myitemize}
The resulting cohomologies will be denoted ${H^{\bullet}_{\chi}(\omega, \mu, x)}$.

\begin{remark} 
When studying local integrable systems ${(\omega, \mu, x)}$, one may impose the rather harmless restriction that ${\mu(x)= 0}$. The only effect that would have on the complexes is only in the case of 
the action equivalence: one can drop the translational term ${\gamma}$ above, hence ${\delta^{0}_\text{act}}$ would be defined on ${\gergl(\gerh^*)\oplus \gerh^*}$. 
\end{remark}
\noindent
The short exact sequence argument used for Corollary \ref{cor -def -cplx -contr -base} can now be used over contractible neighborhoods. 
Paying attention to the induced long exact sequence in low degrees, we find the following:

\begin{corollary} 
For all ${k\geq 2}$ one has ${H^{\bullet}_{\chi, x}(\omega, \mu)\cong H^k(\gerh, \ucalC_{x}^{\infty})}$, the Lie algebra cohomology with coefficients in the germs of smooth functions. 

In low degrees, one has and exact sequence 
\[ 0\rightarrow  H^0_{\chi}(\gerh, \ucalC_{x}^{\infty}) \rightarrow H^{0}_{\chi}(\omega, \mu, x) \rightarrow \ucalC^{\infty}_{x}/\mathbb{R} \rightarrow H^1_{\chi}(\gerh, \ucalC_{x}^{\infty}) \rightarrow H^{1}_{\chi}(\omega, \mu, x) \rightarrow 0,\]
where ${H^{i}_{\chi}(\gerh, \ucalC_{x}^{\infty})}$ is defined by the ${\chi}$-modification (in degree zero) of the Chevalley-Eilenberg complex, as explained above. 
\end{corollary}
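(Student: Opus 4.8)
The plan is to derive this from the short exact sequence of complexes, exactly as in the global case (Corollary \ref{cor -def -cplx -contr -base}), but now being careful about degree zero where the complex has been $\chi$-modified. First I would set up the short exact sequence of cochain complexes
\[
    0
    \to
    C^\bullet_\chi(\gerh, \ucalC^\infty_x)
    \to
    \calC^\bullet_\chi(\omega, \mu, x)
    \to
    \Omega^{\bullet+1}_x
    \to
    0,
\]
where $C^\bullet_\chi(\gerh, \ucalC^\infty_x)$ denotes the Chevalley--Eilenberg complex with its degree-zero term replaced by $\delta^0_\chi$ as in the list above. This is a genuine short exact sequence of complexes: it is so in each positive degree because it is the mapping-cone sequence used for the standard complex, and in degree zero because the only modification is to enlarge/replace the source $C^0$ and the map $\delta^0$, which does not change the fact that the subcomplex in degree zero is all of $\calC^0_\chi(\omega,\mu,x)$ that maps to $0 \in \Omega^1_x$ (recall $\calC^0_\chi = \Omega^1_x \oplus C^0_\chi$ with $\dd = (-\dd_{\text{dR}})\oplus(\rho^* + \delta^0_\chi)$).

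Next I would pass to the associated long exact sequence in cohomology. The key input is that the integrable system is local, so I may replace its domain by a contractible (e.g.\ star-shaped) neighborhood of $x$; by the Poincar\'e lemma the de Rham cohomology $H^k(\Omega^{\bullet+1}_x) = H^{k+1}_{\text{dR}}(\text{contractible})$ vanishes for all $k \geq 1$ and equals $\ucalC^\infty_x / \R$ for $k = 0$ (that is, $H^0(\Omega^{\bullet+1}_x)$ is the cokernel of $\dd_{\text{dR}}\colon \ucalC^\infty_x \to \Omega^1_x$, i.e.\ closed one-forms modulo exact ones; over a contractible open every closed one-form is exact, but the relevant term in the long exact sequence is $H^0$ of the complex $\Omega^{\bullet+1}_x$ which is $\ker(\dd_{\text{dR}}\colon \Omega^1 \to \Omega^2)$ — so I should instead say $H^0(\Omega^{\bullet+1}_x) = Z^1_{\text{dR}} = B^1_{\text{dR}} \cong \ucalC^\infty_x/\R$ via $f \mapsto \dd f$). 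Feeding the vanishing $H^{k}_{\text{dR}} = 0$ for $k \geq 2$ into the long exact sequence immediately gives the isomorphism $H^k_\chi(\omega,\mu,x) \cong H^k(\gerh, \ucalC^\infty_x)$ for all $k \geq 2$, since this Lie-algebra cohomology agrees with $H^k$ of the $\chi$-modified complex $C^\bullet_\chi$ in degrees $\geq 2$ (the modification only touches degree zero).

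For the low-degree portion I would simply write out the tail of the long exact sequence:
\[
    0 \to H^0(C^\bullet_\chi) \to H^0(\calC^\bullet_\chi) \to H^0(\Omega^{\bullet+1}_x) \to H^1(C^\bullet_\chi) \to H^1(\calC^\bullet_\chi) \to H^1(\Omega^{\bullet+1}_x) \to \cdots,
\]
identify $H^0(C^\bullet_\chi) = H^0_\chi(\gerh, \ucalC^\infty_x)$, $H^0(\calC^\bullet_\chi) = H^0_\chi(\omega,\mu,x)$, $H^i(C^\bullet_\chi) = H^i_\chi(\gerh,\ucalC^\infty_x)$, $H^0(\Omega^{\bullet+1}_x) \cong \ucalC^\infty_x/\R$, and $H^1(\Omega^{\bullet+1}_x) = H^2_{\text{dR}} = 0$, which yields precisely the claimed five-term exact sequence. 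The only genuinely delicate point — and the one I expect to be the main obstacle to state cleanly — is the identification of the connecting map $H^0(\Omega^{\bullet+1}_x) \to H^1(C^\bullet_\chi)$ and verifying that the term $\ucalC^\infty_x/\R$ appears with the right meaning: one must trace through the snake lemma, lifting a closed one-form $\dd f$ (representing the class of $f \in \ucalC^\infty_x/\R$) to $(\dd f, 0) \in \calC^0_\chi$, applying the differential to land in $C^1_\chi$, and observing the image is $\rho^*(\dd f) = \delta^0(\text{something})$ only after the $\chi$-modification — so one should check case by case (for $\chi \in \{\text{act},\text{fct},\text{wk},\text{alg},\text{orb}\}$) that this connecting map is well-defined into $H^1_\chi(\gerh,\ucalC^\infty_x)$, but this is exactly the bookkeeping already implicit in Definition \ref{definitions -adapted -def -cplxs} and requires no new idea.
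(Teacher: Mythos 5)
Your proposal is correct and matches the paper's proof, which simply invokes the same short exact sequence of complexes used for Corollary \ref{cor -def -cplx -contr -base}, restricted to contractible neighborhoods, and reads off the low-degree part of the long exact sequence. The concern in your final paragraph about the connecting map being well-defined is unnecessary: well-definedness is automatic from the snake lemma, and the lift of a closed germ $\dd f$ to $(\dd f, 0)$ followed by the differential simply yields $[\rho^*(\dd f)] \in H^1_\chi(\gerh, \ucalC^\infty_x)$, with no case-by-case bookkeeping required.
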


One should be aware that, although this corollary may allow us (at least in some cases) to work with smaller complexes to compute ${H^{\bullet}_{\chi, x}(\omega, \mu)}$, for deformations (i.e.\  in degrees ${0}$ and ${1}$) it is still the original complex that one has to use in order to exhibit the controlling cohomology classes.

We now move to deformations of local integrable systems. Instead of trying to develop the language to talk about smooth deformations of germs (and then equivalences between them), we take the direct, explicit (but rather obvious) approach.

\begin{definition} 
Let ${(\omega,\mu, x)}$ be a local completely integrable system, say defined on a manifold ${M}$.
A \textbf{(local) deformation}\index{deformation!local} of ${(\omega,\mu, x)}$ is a 
deformation ${\{(\omega^t, \mu^t)\}}$ of ${(\omega|_{U}, \mu|_{U})}$, for some neighborhood ${U}$ of ${x}$. 
\end{definition}
\noindent
\begin{remark} Hence one has smooth maps
 \[
        \omega
        :
        ( -\epsilon,\epsilon) \times U
        \to
        \wedge^2 T^*M,
        \quad
        \mu
        :
        ( -\epsilon,\epsilon) \times U
        \to
        \R^n.
    \]
It might seem more natural to define local deformations as smooth maps on open neighborhoods ${V\sub \R\times M}$ of ${\{ 0 \}\times \{ x\}}$, instead of the `square' neighborhood ${( -\epsilon,\epsilon)\times U}$. One reason could be that one typically thinks of the domain of a flow like that. However, we think of ${\epsilon}$ as arbitrarily small, so we can always shrink it to get ${V=( -\epsilon,\epsilon)\times U}$.

A more interesting possibility is that allowing the point ${x}$ to carry with respect to ${t}$. We chose not to consider this more general situation for simplicity. Therefore, in the situation of the definition, we will say that ${\{(\omega^t, \mu^t, x)\}}$
is a (local) deformation of ${(\omega, \mu, x)}$. 
\end{remark}
\noindent
Using the notion of local ${\chi}$-equivalence introduced in subsection \ref{The local version of equivalences; invariance under equivalences} (Definition \ref{def:local -chi -equivlces}), 
that of 
${\chi}$-triviality of local deformations, is rather straightforward:

\begin{definition} 
Let ${\chi \in \set{ \text{act}, \text{orb}, \text{fib},  \text{wk},  \text{fct}, \text{alg}}}$. Let ${(\omega^t,\mu^t, x)}$ be a deformation of ${(\omega, \mu, x)}$, assumed to be defined over a manifold ${M}$. We say that the deformation is ${\chi}$-trivial if there exist
\begin{itemize}
\item[1.\ ] an open neighborhood ${U}$ of ${x}$ in ${M}$. 
\item[2.\  ] a smooth map 
\[ \Phi: ( -\epsilon, \epsilon)\times U\rightarrow M \]
inducing symplectomorphisms ${\Phi^t: (U, \omega)\rightarrow (U^t, \omega^t)}$ sending ${x}$ to ${x}$, where ${U^t= \Phi^t(U)}$.
\item[3.] one can complete each ${\Phi^t}$ to a ${\chi}$-equivalence between ${(U, \omega, \mu)}$ and ${(U^t, \omega^t, \mu^t)}$, depending smoothly on the time ${t}$.
\end{itemize}
\end{definition}
\noindent
As in Definition \ref{def:chi -trivial -defs}, the smooth dependence w.r.t.\  ${t}$ of the extra data needed to complete each ${\Phi^t}$ into a ${\chi}$-equivalence (item 3) is the obvious one in each case (see the explanations after the cited definition):
for \textbf{fiber triviality} and \textbf{weak triviality} there is no extra data (just the usual conditions required by the definition of equivalence); 
for \textbf{action triviality}, \textbf{orbit triviality} and \textbf{algebraic triviality} the story is the exactly the same as in the nonlocal case; for \textbf{functional triviality} the extra data is a family of diffeomorphisms 
${\phi^t: \mu^0(U)\rightarrow \mu^t(U^t)\subset \mathbb{R}^n}$ and then the smoothness with respect to ${t}$ is again the smoothness of the resulting map ${( - \epsilon, \epsilon)\times \mu^0(U)\rightarrow \mathbb{R}^n}$). 

By the same argument as before (where the existence of the flows based on the compactness of ${M}$ is replaced by the local existence - which is enough since we work germwise) we deduce:

\begin{proposition}
\label{cis:prop:local_deformations}
Let ${\chi\in\{\text{act},\text{fct},\text{wk} \}}$ and let ${(\omega^t,\mu^t, x)}$ be a deformation of ${(\omega, \mu, x)}$. Then the derivatives of ${(\omega^t, \mu^t)}$ with respect to ${t}$ induce cohomology classes 
\[ [c^t_x]\in H^{1}_{\chi, x}(\omega^t, \mu^t).\]
These classes vanish if the deformation is ${\chi}$-trivial. For the converse, if we assume that:
\begin{itemize}
\item[1.\ ] ${[c^t_x]\in H^{1}_{\chi, x}(\omega^t)}$ is zero smoothly w.r.t.\  ${t}$ in the sense that ${c^t=\dd_{\omega^t,\mu^t}(b^t)}$ have solutions ${b^t\in \calC^0_\chi(\omega^t, \mu^t)}$ that are all defined over a neighborhood of ${x}$, and which vary smoothly in  ${t}$,
 \item[2.\  ] the solution can be chosen s.t.\ the one-form part of ${b^t}$ vanishes at ${x}$,
 \end{itemize}
 then the deformation must be ${\chi}$- trivial. 
 \end{proposition}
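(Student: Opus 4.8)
The strategy is to localize the global Moser argument from Proposition \ref{prop_families}, being careful about the two extra subtleties that arise in the local setting: the flows only exist for small time near $x$, and one must keep $x$ fixed throughout, which is exactly what hypothesis (2) provides. First I would record the easy direction: if the deformation is $\chi$-trivial, then exactly as in the proof of Proposition \ref{prop_families} one differentiates the equivalence relations at each time $t$ and extracts a primitive $b^t\in\calC^0_\chi(\omega^t,\mu^t)$ of $c^t_x$; the only change is that everything now happens over a shrinking neighborhood of $x$, and since all the relevant maps ($\Phi^t$, $\phi^t$, $A^t$, $\gamma^t$) are required to fix $x$ (resp.\ are normalized), the one-form part $\alpha^t=\iota_{X^t}\omega^t$ of the primitive automatically vanishes at $x$ because the generating vector field $X^t$ vanishes at $x$. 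This shows the classes $[c^t_x]$ vanish and, moreover, that (2) is a natural normalization rather than an extra assumption.

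For the converse — the content of the statement — I would run the Moser reconstruction in reverse. Given smoothly varying primitives $b^t=(\alpha^t,\ldots)$ with $\alpha^t_x=0$, one first extracts from $\alpha^t$ the time-dependent vector field $X^t\defeq (\pi^t)^\flat(\alpha^t)$, where $\pi^t=(\omega^t)^{-1}$; since $\alpha^t$ vanishes at $x$, so does $X^t$, and therefore the flow equation
\[
    \partial_t\Phi^t = X^t\circ\Phi^t,\qquad \Phi^0=\id,
\]
has a solution defined on a (possibly smaller) neighborhood $U$ of $x$ for $t$ in a (possibly smaller) interval $(-\epsilon,\epsilon)$, fixing $x$ for all $t$ because $x$ is a stationary point of every $X^t$. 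This is where local existence of flows replaces the global-existence-via-compactness step used in Proposition \ref{prop_families}; it is harmless precisely because we work germwise. The computation $\partial_t\big((\Phi^t)^*\omega^t\big)=(\Phi^t)^*(\dd\alpha^t+\partial_t\omega^t)=0$ then gives $(\Phi^t)^*\omega^t=\omega^0$, i.e.\ $\Phi^t$ is a symplectomorphism fixing $x$. The remaining case-specific data ($A^t,\gamma^t$ for \text{act}; $\phi^t$ for \text{fct}; nothing for \text{wk}) are reconstructed by solving the corresponding auxiliary ODEs exactly as in the proof of Proposition \ref{prop_families} (for \text{fct} one again needs the flow of $Y^t$ on a neighborhood of $\mu^0(U)$, which exists for small $\epsilon$), and one checks that the relevant quantity — $\mu^t\circ\Phi^t-A^t\mu^0-\gamma^t$, or $\mu^t\circ\Phi^t-\phi^t\circ\mu^0$, or $\{\mu^0,\mu^t\circ\Phi^t\}_{\omega^0}$ — has vanishing $t$-derivative and vanishes at $t=0$; for the weak case this last step invokes the \nameref{cis:lem:main} applied to the complete integrable system $(\omega^t,\mu^0\circ(\Phi^t)^{-1})$, using only completeness of $\mu^0$.

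The main obstacle, and the reason hypothesis (2) is listed separately, is ensuring that the reconstructed symplectomorphisms genuinely fix the base point $x$: without $\alpha^t_x=0$ the vector field $X^t$ would not vanish at $x$, the flow would move $x$, and one would only obtain a $\chi$-equivalence in the sense of Definition \ref{def:local -chi -equivlces} whose underlying symplectomorphism drifts — not a $\chi$-triviality in the strict sense of the local definition, which demands $\Phi^t(x)=x$. A secondary, purely bookkeeping point is the repeated shrinking of $U$ and $\epsilon$: each ODE solved (the Moser flow, the $\phi^t$-flow, the $A^t$ and $\gamma^t$ equations) may force a smaller domain, but since there are finitely many such steps and we are free to shrink, this causes no difficulty. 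I expect no new estimates are needed beyond those already implicit in Proposition \ref{prop_families}; the whole argument is that proof transplanted into the stalk at $x$, with "compact $M$ $\Rightarrow$ global flows" replaced by "germs $\Rightarrow$ local flows" and with the vanishing-at-$x$ condition propagating the fixed point through every construction.
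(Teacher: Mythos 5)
Your proposal is correct and follows essentially the same route as the paper: the paper explicitly presents this proposition as Proposition \ref{prop_families} "transplanted to germs," with the compactness-based global existence of flows replaced by local existence near $x$, and with hypothesis (2) ensuring the reconstructed Moser flow has $x$ as a stationary point (cf.\ the discussion in Remark \ref{proofremark}). Your observations — that $\alpha^t_x=0$ propagates automatically in the forward direction because the generating vector fields vanish at $x$, and that in the converse it is precisely what keeps the base point fixed through the flow — are the same considerations the paper leaves implicit in its two-line reduction to the global case.
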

\noindent
 
 Note that the smooth variation with respect to ${t}$ has the same meaning as in Proposition \ref{prop_families} (see the comments following it). Actually, condition 1 here just means that there is a neighborhood ${U}$ of ${x}$ on which all
 ${(\omega^t, \mu^t)}$ are defined and such that ${[c^t]\in H^{1}_{\chi}(\omega^t|_{U}, \mu^t|_{U})}$ vanishes smoothly with respect to ${t}$. The extra condition 2 is due to our choice to fix ${x}$. When these conditions are satisfied we say that 
 \textbf{the deformation is cohomologically ${\chi}$-trivial}.

\begin{remark}
\label{proofremark} 
Let us spell out the cohomological triviality condition for ${(\omega^t,\mu^t, x)}$. First of all, there should exist
an open neighborhood ${U\sub M}$ of ${x}$, an ${\epsilon > 0}$, and a smooth family
    \[
        \alpha^t\in \Omega^1(U)
    \]
such that, for all ${t\in ( -\epsilon, \epsilon)}$, 
 \[
\partial_t \omega^t= - \dd \alpha^t, \quad \textrm{and} \quad \alpha^t_x = 0.
\]
Then, case by case, there should further exist: 
\begin{itemize}
\item To be cohomologically action -trivial near ${x}$: also smooth families
    \[
        A^t\in \gergl(\gerh^*), \quad \gamma_t\in \gerh^*
    \]
parametrized by ${t\in ( -\epsilon, \epsilon)}$, such that
\[        
\partial_t\mu^t
= A^t \cdot \mu^t + \gamma_t+ \alpha^t( X_{\mu^t}^{\omega^t}).
\]

\item
To be cohomologically functionally trivial near ${x}$: also an open neighborhood ${V\sub \gerh^*}$ of ${\mu^0(x)}$ and a smooth family 
    \[
        Y^t\in \calC^{\infty}(V, \gerh^*)
    \]
parametrized by ${t\in ( -\epsilon, \epsilon)}$, such that
\[ \partial_t\mu^t = Y^t\circ \mu^t + \alpha^t( X_{\mu^t}^{\omega^t}).\]

\item
To be cohomologically weakly trivial near ${x}$: no further data, just the extra condition: 
\[ 
 \big\{
            \mu_i^t,
           \partial_t\mu^t_j
            +
            \alpha^t(X_{\mu_j^t}^{\omega^t}
        \big\}_{\omega^t}= 0.
\]
for all ${t\in ( -\epsilon, \epsilon)}$ and for all ${i}$ and ${j}$.

\item
To be cohomologically algebraic -trivial near ${x}$: also a smooth family
   \[
        B^t\in \calC^{\infty}(U, \gerh^*)
    \]
parametrized by ${t\in ( -\epsilon, \epsilon)}$, such that
\[ \partial_t\mu^t = B^t\cdot \mu^t + \alpha^t(X_{\mu^t}^{\omega^t}).\]
And completely similarly for orbit equivalence. 
\qedhere
\end{itemize}
\end{remark}

\newpage
\section{Our Main Linearization Theorem}
\etocsettocstyle{\subsubsection*{Local Contents}}{}
\etocsettocdepth{2}
\localtableofcontents

{\color{white}.}
\newline
We now use the cohomological techniques to obtain normal form results for integrable systems. 
The main reason we can do this is the fact that, around a fixed point, any integrable system can be smoothly deformed into the local normal form around the point.
More importantly, once a coordinate chart is chosen, the deformation will be completely canonical and is particularly well adapted to the correspondance of the previous chapter. 

This is a common strategy in the literature, usually done in local coordinates. As we mentioned, it depends on the choice of a chart around $x$. But as we shall show, the resulting cohomology class is canonical. In particular, for a general integrable system ${(\omega, \mu)}$ with fixed point ${x}$, we will construct canonical classes 
   \[
        \text{Lin}_{\chi}(\omega,\mu, x) \in H^1_{\chi}(\omega,\mu, x),
    \]
one for each equivalence relation    $
        \chi 
        \in
        \set{ \text{act}, \text{fct}, \text{orb}, \text{wk}, \text{alg} }
${, which control the }$\chi${-linearizability  of }$(\omega, \mu)${ around }$x$ (see Definition \ref{def-chi-lineariz}). 

Note that in Miranda and Vũ Ng\d oc \cite{MN05} one can find that $H^1_\text{wk}(\omega_\text{can}, \mu_0)$ vanishes. In Chapter \ref{chapter:sheaves_behind} we argue that weak equivalence is the central notion of equivalence for completely integrable systems (in particular for those with only nondegenerate singularities, see also \ref{weak preserves nondegenerate}).
In future work, by drawing inspiration from \cite{CM12, CrFe}, we intend to combine the statement from \cite{MN05} with the methods presented here to provide a concise proof of Theorem \ref{thm -El2}.

\subsection{The linearization class}

We start with the construction of the local deformation of ${(\omega, \mu)}$ into its normal model around ${x}$. For that we fix a chart around ${x}$:
\[
        \Phi: U\to T_xM
    \]
for which ${\Phi(x)=0}$. The chart allows us to define scalar multiplication on ${U}$:
    \[
        m : 
        (0,1] \times U \to U,
        \quad
        m^r(x) 
        \defeq 
        \Phi^{ -1} \big(
            r \cdot\Phi(x)
        \big).
    \]
We now define 
\[ \mu^r: (U, \omega^r)\rightarrow \R^n \]
smoothly parametrized by ${r\in (0, 1]}$  by:
    \[
        \omega^r
        \defeq
        \tfrac{1}{r^2}
        \,
        ( m^{r})^* \omega\in \Omega^2(U), \quad 
        \mu^r
         \defeq
        \tfrac{1}{r^2}
        \,
        ( m^{r})^*\mu 
        =
        \tfrac{1}{r^2}
        \,
        \mu 
        \circ
        m^r \in \calC^{\infty}(U, \R^n). 
    \]
At ${r= 1}$ we obtain the original integrable system. The following is immediate:

\begin{lemma}
${(\omega^r,\mu^r)}$ extends smoothly to ${r=0}$ and ${\mu^0: (U, \omega^0)\rightarrow \mathbb{R}^n}$ is isomorphic, via ${\Phi}$, to the normal model of ${(\omega, \mu)}$ at ${x}$, restricted to ${\Phi(U)}$.
\end{lemma}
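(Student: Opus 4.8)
The plan is to analyze the behavior of $\omega^r$ and $\mu^r$ as $r\to 0^+$ by expressing them in the chart coordinates via Taylor expansion, which is where the rescaling by $1/r^2$ does its work. First I would transport everything to $T_xM$ through $\Phi$, so that we may assume $M=U\subset T_xM$ is an open neighborhood of the origin, $\Phi=\mathrm{id}$, $x=0$, and $m^r$ is literally scalar multiplication $v\mapsto rv$. Then $\mu^r = \tfrac{1}{r^2}\,\mu\circ m^r$ and $\omega^r=\tfrac{1}{r^2}(m^r)^*\omega$.

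The key computation is the Taylor expansion of $\mu$ at the origin. Since $x=0$ is a fixed point, $\mu(0)=(d\mu)_0=0$ (modulo the harmless normalization $\mu(x)=0$; if $\mu(x)\ne 0$ one first subtracts the constant, which does not affect the deformation), so the first nonzero term is the quadratic one, and by Taylor's theorem with integral remainder one writes $\mu(v) = \tfrac12 \hess_0(\mu)(v,v) + R(v)$ where $R$ vanishes to third order at $0$. Substituting $v = rw$ and dividing by $r^2$ gives
\[
    \mu^r(w) = \tfrac12\hess_0(\mu)(w,w) + \tfrac{1}{r^2}R(rw),
\]
and the standard estimate on the flat remainder (write $R(rw)=r^3\tilde R(r,w)$ with $\tilde R$ smooth in $(r,w)$, again by Hadamard's lemma applied three times) shows that $\tfrac{1}{r^2}R(rw) = r\,\tilde R(r,w)$ depends smoothly on $(r,w)$ jointly, including at $r=0$, where it vanishes. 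Hence $\mu^r$ extends smoothly to $r=0$ with $\mu^0(w) = \tfrac12\hess_0(\mu)(w,w) = \hess_0(\mu)(w)$, i.e.\ $\mu^0 = \hess_0(\mu)$ is precisely the normal model of Definition \ref{def-chi-lineariz} (restricted to $\Phi(U)$). The same argument applied to the $\gerh^n$-valued function $\mu=(\mu_1,\ldots,\mu_n)$ handles all components simultaneously.

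For the symplectic form, the analogous point is that $\omega$ is a smooth $2$-form with constant leading term $\omega_0$ at the origin: in coordinates $\omega = \sum a_{ij}(v)\,dv^i\wedge dv^j$ with $a_{ij}$ smooth and $a_{ij}(0)$ the coefficients of $\omega_0$. Pulling back along $m^r$ multiplies each $dv^i\wedge dv^j$ by $r^2$ and replaces $a_{ij}(v)$ by $a_{ij}(rv)$, so $\omega^r = \tfrac{1}{r^2}(m^r)^*\omega = \sum a_{ij}(rv)\,dv^i\wedge dv^j$, which is manifestly smooth in $(r,v)$ jointly and equals $\sum a_{ij}(0)\,dv^i\wedge dv^j = \omega_0$ at $r=0$. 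Thus $\omega^0=\omega_0$, the constant symplectic form $\omega_x$ on $T_xM$, and $(\omega^0,\mu^0)$ is, via $\Phi$, exactly the normal model $\hess_x(\mu):(T_xM,\omega_x)\to\R^n$.

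The main (and only real) obstacle is bookkeeping the smoothness of the rescaled remainders uniformly down to $r=0$: one must be a little careful that the factors of $1/r^2$ are genuinely absorbed, which is why it is essential that $\mu$ vanishes to second order (two powers of $r$ come for free from $m^r$ on functions, matching the $1/r^2$) and that $\omega$ has order-zero leading behavior (the $r^2$ from pulling back a $2$-form matches the $1/r^2$). The cleanest way to make this rigorous is the repeated-Hadamard-lemma argument sketched above, which produces the joint smoothness in $(r,w)$ directly; alternatively one can invoke the standard fact that for a function flat to order $k$ at $0$, the map $(r,w)\mapsto r^{-k}f(rw)$ is smooth. Everything else — that $(\omega^r,\mu^r)$ is a completely integrable system for each $r\in(0,1]$, which is clear since $m^r$ is a diffeomorphism and rescaling preserves involutivity and the dense-open regularity condition — is immediate and needs no comment beyond what is already in the text.
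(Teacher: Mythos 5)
Your proof is correct. The paper gives no argument for this lemma at all — it simply writes ``The following is immediate'' before the statement — so there is nothing to compare against; your Taylor-expansion-plus-Hadamard argument is the natural filling-in of the details, and the $\omega$-side computation (pulling out two powers of $r$ from the $2$-form to cancel the $1/r^2$) is exactly right. Your parenthetical remark about needing $\mu(x)=0$ is a genuine observation rather than mere pedantry: as written, if $\mu(x)\neq 0$ then $\mu^r(x)=\mu(x)/r^2$ diverges as $r\to 0$, so the lemma implicitly assumes the moment map has been normalized to vanish at the fixed point — a normalization the paper only makes explicit later, in the proof of the main linearization theorem.
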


Therefore we obtain a class in deformation cohomology for every ${r\in [0,1]}$:
    \[
        c^r(\omega,\mu)
        \defeq
        \big[            
            \tfrac{ \partial } { \partial r }
            (\omega^r,\mu^r)
        \big]
        \quad
        \in
        H^1(\omega^r,\mu^r, x).
    \]
Likewise, we obtain cohomology classes ${c_{\chi}^{r}(\omega,\mu, x)}$ in ${H^1_{\chi}(\omega^r,\mu^r, x)}$ for each equivalence relation
    $
        \chi
        \in
        \big\{
            \text{act},
            \text{fct},
            \text{orb}, 
            \text{wk},
            \text{alg}
        \big\}.
    $ 
We now show that, at ${r= 1}$, i.e.\  for the original system, the resulting class is intrinsic.

\begin{proposition}
For any local integrable system ${(\omega, \mu, x)}$, the cohomology class 
${c^{1}(\omega,\mu, x)\in H^1(\omega,\mu, x)}$ (and similarly ${c_{\chi}^1(\omega,\mu, x)\in H^1_{\chi}(\omega,\mu, x)}$)
is independent of the chosen chart.
\end{proposition}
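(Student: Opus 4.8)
The plan is to show that the dependence on the chart is washed out in cohomology. Let $\Phi: U \to T_xM$ and $\Phi': U' \to T_xM$ be two charts around $x$, both sending $x$ to $0$, and write $m^r$, $(m')^r$, $\omega^r, \mu^r$, $(\omega')^r, (\mu')^r$ for the associated scalings and deformed systems. First I would observe that, without loss of generality, one may reduce to the case where $d_x\Phi = d_x\Phi' = \mathrm{id}_{T_xM}$: any linear change is realized by composing with a linear symplectomorphism of $(T_xM, \omega_x)$ (after a preliminary linear adjustment, which can be chosen in $\mathrm{Sp}(T_xM,\omega_x)$ since both charts are required to pull $\omega$ back appropriately — or, if one does not require this, one absorbs the discrepancy into the isomorphism of normal models, which does not affect the class). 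The key is then the transition diffeomorphism $\psi \defeq \Phi' \circ \Phi^{-1}$, fixing $0$ with $d_0\psi = \mathrm{id}$. The two scaling families are conjugate: $(m')^r = \Phi'^{-1}\circ (r\cdot) \circ \Phi' = \psi \circ m^r \circ \psi^{-1}$ near $x$ for small $r$, so $(\omega')^r$ and $(\mu')^r$ are, up to the diffeomorphism $\psi$, a reparametrization-free modification of $(\omega^r, \mu^r)$.

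Next I would build an $r$-dependent isotopy interpolating between the two families. The natural candidate is $\Psi^r \defeq m^{1/r}\circ \psi \circ m^r$ (extended smoothly to $r = 0$, where it converges to $d_0\psi = \mathrm{id}$ because $\psi$ has trivial $1$-jet at $0$ — this is the standard "blow-up of a diffeomorphism tangent to the identity" argument, and it is smooth in $r$ including at $r=0$). One checks that $(\Psi^r)^*(\omega')^r = \omega^r$ and $(\mu')^r\circ \Psi^r = \mu^r$, i.e. $\Psi^r$ is an \emph{isomorphism} (in the strongest sense) of the two deformations, smooth in $r$, and equal to the identity at $r = 0$ after identifying both limits with the normal model. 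Then I would apply exactly the mechanism of Proposition \ref{prop_families} / Proposition \ref{cis:prop:local_deformations}: a family of ($\chi$-)equivalences between two deformations that is the identity at one endpoint forces the two families of cohomology classes $[\tfrac{\partial}{\partial r}(\omega^r,\mu^r)]$ and $[\tfrac{\partial}{\partial r}(\omega'^r,\mu'^r)]$ to agree for all $r$ — in particular at $r = 1$. Concretely, differentiating $(\Psi^r)^*(\omega')^r = \omega^r$ and $(\mu')^r\circ\Psi^r = \mu^r$ in $r$ produces a primitive, in $\calC^0_\chi(\omega^r,\mu^r)$, of the difference of the two cocycles, built from the infinitesimal generator of $\Psi^r$ and its associated one-form $\iota_{X^r}\omega^r$ (which vanishes at $x$ since $\Psi^r$ fixes $x$) — this is the same computation as in the proof of Proposition \ref{prop_families}, now with $\mu^0$ replaced throughout by the common normal model.

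I expect the main obstacle to be the regularity of $\Psi^r = m^{1/r}\circ\psi\circ m^r$ at $r = 0$: one must verify it extends smoothly, with the correct limit, which is where the hypothesis $d_0\psi = \mathrm{id}$ (equivalently, that both charts are genuine first-order-trivialized coordinates) is essential, and one should be careful that in general only $d\Phi_x = d\Phi'_x$ up to $\mathrm{Sp}(T_xM,\omega_x)$ can be arranged, so the reduction step must be handled cleanly first. A secondary (but routine) point is checking that the resulting primitive lands in the correct $\chi$-modified degree-zero space $\calC^0_\chi$ for each $\chi$ — but this follows verbatim from the case analysis already carried out in Proposition \ref{prop_families}, since $\Psi^r$ preserves the $\mu_i$ on the nose (so the "extra data" entering action-, orbit-, functional-, algebraic-triviality is just the trivial/identity data, and for weak equivalence there is no extra data at all).
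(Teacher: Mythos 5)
Your proposed isotopy $\Psi^r$ does not do what you claim. You assert that $(\Psi^r)^*(\omega')^r = \omega^r$ and $(\mu')^r\circ\Psi^r = \mu^r$, but unwinding the definitions gives
\[
(\Psi^r)^*(\omega')^r = \tfrac{1}{r^2}(m^r)^*\psi^*\omega,
\qquad
(\mu')^r\circ\Psi^r = \tfrac{1}{r^2}\,\mu\circ\psi\circ m^r,
\]
so the identities hold if and only if $\psi^*\omega=\omega$ and $\mu\circ\psi=\mu$. Neither is true in general: $\Phi$ and $\Phi'$ are arbitrary charts sending $x$ to $0$ (the construction does not require them to be symplectic or to commute with $\mu$), so $\psi=(\Phi')^{-1}\circ\Phi$ is just a generic diffeomorphism fixing $x$ with $d_x\psi=\mathrm{id}$; it does not preserve $\omega$ or $\mu$. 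The fact that $(m')^r=\psi\circ m^r\circ\psi^{-1}$ conjugates the \emph{scalings} is true, but it does not make $\psi$ an equivalence of integrable systems, so $\Psi^r$ is not an isomorphism of the two deformations at any $r$ (already at $r=1$, $\Psi^1=\psi$ fails to preserve $\omega$). The conjugation $m^{1/r}\circ\cdot\circ m^r$ is indeed the mechanism used in the proof of Theorem \ref{thm -is -mai1}, but there the conjugated map is a genuine $\chi$-equivalence by hypothesis, which is exactly what $\psi$ is missing here. (Your side concern about reducing to $d_x\Phi=d_x\Phi'$ is actually a non-issue: $m^r$ is unchanged if $\Phi$ is postcomposed with any linear map, so the linear parts never matter — but that does not help with the real gap.)

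The paper's argument is much more direct and sidesteps all of this. It uses the Euler vector field identity
\[
\frac{\partial}{\partial r}\Big|_{r=1}\tfrac{1}{r^2}(m^r_\Phi)^*\eta = \gerL_{\Phi^*E}\eta - 2\eta,
\]
so the two cocycles at $r=1$ differ by $c_1-c_0 = (-\gerL_X\omega, -\gerL_X\mu)$ where $X=\Phi_1^*E-\Phi_0^*E$. Setting $\alpha=\iota_X\omega$, one gets $c_1-c_0=(-d\alpha,\,\alpha(X_\mu))$, which is exactly $d_{\omega,\mu}(\alpha,0)$. No isotopy, no extension at $r=0$, no comparison of deformation families at all $r$. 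The primitive has vanishing second component, which is what makes it lie in \emph{every} $\chi$-adapted degree-zero space at once; your claimed primitive would require $\Psi^r$ to actually preserve the systems, which it does not.
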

\noindent
\begin{proof}
Let us denote, inside this proof, by ${m^r}$ the rescale by ${r}$ on a vector space ${V}$ (the relevant one being ${T_xM}$ here) and, for a chart ${\Phi}$ as above, use the notation ${m^r_{\Phi}}$ for the induced rescale: 
\[ m^r_{\Phi}(x) 
        \defeq 
        \Phi^{ -1} \big(
            r \cdot\Phi(x)).\]
We first remark that ${m^r}$ is related to the flow of the standard Euler vector field on ${E}$ (via ${r= e^t}$); the precise form of this relationship that will be useful for us is the remark that, 
over ${V}$, one has 
\[
    \Omega^*(V)\rightarrow \Omega^*(V), 
    \quad
    \eta\mapsto \frac{\partial}{\partial r}\Big\vert_{r=1} \tfrac{1}{r^2}(m^r)^*(\eta)= \gerL_E(\eta) - 2\eta,
\]
for all $\eta\in \Omega^*(V)$
Since ${m^r_{\Phi}= \Phi^{ -1}\circ m_r\circ \Phi}$, we obtain for any form ${\theta}$ on ${M}$:
\begin{align*}
    \frac{\partial}{\partial r}\Big\vert_{r=1} 
    \tfrac{1}{r^2}(m^r_{\Phi})^*(\theta)
    &= \Phi^*\Big( 
        \frac{\partial}{\partial r}\Big\vert_{r=1} 
        \tfrac{1}{r^2}(m^r)^*(\Phi^{ -1})^*(\theta)
    \Big)
    \\
    &= \Phi^*(\gerL_E((\Phi^{ -1})^*\theta) - 2(\Phi^{ -1})^*(\theta))
\end{align*}
Using that ${\Phi^*\gerL_E= \gerL_{\Phi^*E}\phi^*}$ we deduce the working formula
\[ 
    \frac{\partial}{\partial r}\Big\vert_{r=1} 
    \tfrac{1}{r^2}(m^r_{\Phi})^*(\theta)
    =\gerL_{\Phi^*E}(\theta) - 2\theta.
\]
We will apply when ${\eta}$ is the two -form ${\omega}$ and the zero-forms ${\mu^i}$ - so that we get expressions for the resulting cocycles.

Assume now that we have two charts ${\Phi_0,\Phi_1:U\to T_xM}$ as before and consider the two induced cocycles:
\[ c_i= \frac{\partial}{\partial r}\Big\vert_{r=1} \Big( \tfrac{1}{r^2}(m^{r}_{\Phi_i})^*(\omega), \tfrac{1}{r^2}(m^{r}_{\Phi_i})^*(\mu)\Big),\]
with ${i\in \{0, 1\}}$. We compute their difference using the working formula described above. Considering the vector field
${X\defeq \Phi_{1}^{*}(E) - \Phi_{0}^{*}(E)}$ we find:
\[ c_1 - c_0= ( -\gerL_X(\omega), -\gerL_X(\mu)).\]
Hence, considering the one-form ${\alpha= i_X\omega}$ we see that
\[ c_1 - c_0= ( - d\alpha, - i_X(d\mu))= ( - d\alpha, \alpha(X_\mu)),\]
where  for the last equality we used the moment map condition ${d\mu= \omega(X_{\mu}, -)}$. But this precisely shows that ${(\alpha, 0)}$ is a primitive of ${c_1 - c_0}$ in the deformation complex. Since
the second component is zero, it applies to all the other deformation complexes.
\end{proof}

\begin{definition} 
For an integrable system ${\mu: (M, \omega)\rightarrow \mathbb{R}^n}$ \textbf{the linearization class}\index{linearization class} of ${(\omega, \mu)}$ at a fixed point ${x}$, denoted
\[ \textrm{Lin}(\omega,\mu, x)\in H^1(\omega,\mu, x),\]
is defined as the cohomology class from the previous proposition. Similarly we define the ${\chi}$-versions of the linearization class, denoted
\[ \textrm{Lin}_{\chi}(\omega,\mu, x)\in H^1_{\chi}(\omega,\mu, x).\]
\end{definition}

\begin{local_theorem}\label{thm -is -mai1}
Let ${(\omega,\mu)}$ be an integrable system on ${M}$, let ${x\in M}$ be a fixed point, and let 
    $
        {
            \chi \in \{
                \text{act},
                \text{fct},
                \text{wk},
                \text{orb},
                \text{alg}
            \}
        }
    ${. If the integrable system is }$\chi${-linearizable around }$x$ then
    \[
        \textrm{Lin}_{\chi}(\omega,\mu, x)=0
        \,\in
        H_\chi^1(\omega,\mu, x).
    \]
\end{local_theorem}

\begin{proof}
By Proposition \ref{cis:prop:local_deformations}, it suffices to show that if ${(\omega, \chi, x)}$ is ${\chi}$-equivalent to its normal form then the deformation ${(\omega^r,\mu^r)}$ will be ${\chi}$-trivial. We should construct 
a smooth family of local ${\chi}$-equivalences between ${(\omega^r,\mu^r)}$ and ${(\omega^0,\mu^0)}$. 

Without loss of generalities we may assume that ${M=\R^{2n}}$, that ${x=0}$ and ${\mu^r(x)=0}$ for all ${r\in [0,1]}$. In this setting, ${m^r}$ means the scalar multiplication of ${\R^k}$ for some ${k}$.

Before specializing to each of the equivalence relations, we will first discuss the parts they have in common. First suppose there is an open neighborhood ${U\sub M}$ of ${x}$ and a local diffeomorphism ${\Phi:U\to M}$ such that ${\Phi(x)=x}$, and
    \[
        \omega
        =
        \Phi^*\omega^0,
        \quad
        \mu
        =
        \mu^0\circ \Phi.
    \]
This implies that
    \[
        \omega^r
        =
        \tfrac1{r^2} (m^r)^*\omega
        =
        ( \Phi \circ m^r )^* \big( 
            \tfrac1{r^2} \omega^0 
        \big)
        =
        ( m^{1/r} \circ \Phi \circ m^r )^* \omega^0,
    \]
where we used that ${\omega^0}$ is constant, so that 
    $
        \tfrac{1}{r^2}
        \omega^0
        =
        (m^{1/r})^*\omega^0.
    $ 
Likewise,
    \[
        \mu^r
        =
        \tfrac1{r^2} \mu \circ m^r
        =
        \tfrac1{r^2}
        \mu^0 \circ \Phi \circ m^r         
        =
        \mu^0 
        \circ 
        (m^{1/r} \circ \Phi \circ m^r),
    \]
where we used that ${\mu^0}$ is a homogeneous polynomial of degree ${2}$. This suggests we define
    \[
        \Phi^r
        \defeq
        m^{1/r} \circ \Phi \circ m^r,
        \quad
        \forall\; r \in (0,1].
    \]
Since ${\Phi(x)=x}$, the family ${\Phi^r}$ extends smoothly to ${r=0}$ and ${\Phi^0= (d\Phi)_x}$. It gives a smooth local isotopy between ${(\omega^r,\mu^r)}$ and ${(\omega^0,\mu^0)}$. Of course, we are not interested in finding an isotopy relating ${(\omega,\mu)}$ and ${(\omega^0,\mu^0)}$, but this argument is the blue-print for the different cases we will discuss now.

\begin{itemize}

\item
Suppose that ${(\omega,\mu)}$ and ${(\omega^0,\mu^0)}$ are locally action equivalent, i.e.\  there is an open neighborhood ${U\sub M}$ of ${x}$, a local diffeomorphism ${\Phi:U\to M}$ with ${\Phi(x)=x}$, and a automorphism ${A\in \GL(\R^n)}$ such that
    \[
        \omega = \Phi^*\omega^0,
        \quad
        A\cdot \mu = \mu^0\circ\Phi.
    \]
We define ${\Phi^r}$ as above and ${A^r\defeq A}$, so that ${\omega^r = (\Phi^r)^*\omega^0}$ and
    \[
        A^r \cdot \mu^r
        =
        A\cdot m^{1/r^2} \circ\mu\circ m^r
        =
        m^{1/r^2} A\cdot\mu\circ m^r
        =
        \mu^0 \circ \Phi^r.
    \]

\item
Suppose that ${(\omega,\mu)}$ and ${(\omega^0,\mu^0)}$ are locally functionally equivalent, i.e.\  there are open neighborhoods ${U\sub M}$ of ${x}$ and ${V\sub \R^n}$ of ${\mu(x)}$, a local diffeomorphism ${\Phi:U\to M}$ with ${\Phi(x)=x}$, and a local diffeomorphism ${\phi:V\to \R^n}$ such that 
    \[
        \omega = \Phi^*\omega^0,
        \quad
        \phi\circ\mu = \mu^0\circ\Phi.
    \]
Similar to ${\Phi^r}$ we define
    \[
        \phi^r = m^{1/r^2} \circ \phi \circ m^{r^2},
        \quad\forall\; r \in (0,1],
    \]
where the two appearances of ${m_r}$ refers to the rescale centered at the origin and ${\mu(x)}$, respectively. Since ${\phi(\mu(x))= \mu^0(\Phi(x))= 0}$, this family ${\phi^r}$ extends smoothly to ${r=0}$ and ${\phi^0= (d\phi)_x}$. We conclude that ${\omega^r=(\Phi^r)^*\omega^0}$ and
    \[
        \phi^r \circ \mu^r
        =
        m^{1/r^2} \circ \phi\circ\mu \circ m^r
        =
        \mu^0 \circ \Phi^r.
    \]

\item
Suppose that ${(\omega,\mu)}$ and ${(\omega^0,\mu^0)}$ are locally weakly equivalent, i.e.\  there is an open neighborhood ${U\sub M}$ of ${x}$ and a local diffeomorphism ${\Phi:U\to M}$ with ${\Phi(x)=x}$  such that
    \[
        \omega = \Phi^*\omega^0,
        \quad
        \big\{
            \mu_i, \mu_j^0\circ \Phi
        \big\}_{\omega}
        = 0
        \quad\forall\; i,j.
    \]
We define ${\Phi^r}$ as above, so that ${\omega^r=(\Phi^r)^*\omega^0}$ and 
    \begin{align*}
        \big\{
            \mu^r_i, \mu^0_j\circ \Phi^r 
        \big\}_{\omega^r}
        & =
        \big\{
            \tfrac{1}{r^2}
            \mu_i \circ m^r,
            \tfrac{1}{r^2}
            \mu^0_j \circ \Phi\circ m^r
        \big\}_{
            \tfrac{1}{r^2}
            (m^r)^*\omega
        }
        \\
        & =
        \tfrac{1}{r^6}
        \big\{
            \mu_i,
            \mu_j^0 \circ \Phi
        \big\}_{\omega}\circ m^r 
        =0,
        \quad\forall\;
        i,j.
    \end{align*}

\item
Suppose that ${(\omega,\mu)}$ and ${(\omega^0,\mu^0)}$ are locally algebraically equivalent, i.e.\  there is an open neighborhood ${U\sub M}$ of ${x}$, a local diffeomorphism ${\Phi:U\to M}$ with ${\Phi(x)=x}$, and a map ${A: U\to \GL(\R^n)}$ such that
    \[
        \omega= \Phi^*\omega^0,
        \quad
        A\cdot \mu = \mu_0\circ \Phi,
        \quad
        \{
            \mu, A 
        \}_\omega = 0.
    \]
We define ${\Phi^r}$ as above and ${A^r = A\circ m^r}$, so that ${\omega^r=(\Phi^r)^*\omega^0}$ and
    \begin{align*}
        A^r\cdot \mu^r
        & =
        (A\circ m^r) \cdot m^{1/r^2}\circ \mu\circ m^r
        \\
        & =
        m^{1/r^2}\circ (A\circ m^r)\cdot (\mu\circ m^r)
        =
        \mu^0 \circ \Phi^r.
    \end{align*}
Moreover, this satifies
    \begin{align*}
        \{
            \mu^r,
            A^r
        \}_{\omega^r}
        & =
        \{
            \tfrac{1}{r^2}
            \mu \circ m^r,
            A\circ m^r
        \}_{
            \tfrac{1}{r^2}
            (m^r)^*\omega
        }
        \\
        & =
        \tfrac{1}{r^4}
        \{
            \mu,
            A
        \}_{\omega}\circ m^r
        = 0
    \end{align*}
\end{itemize}
The case of orbit equivalence is completely similar. 
\end{proof}

\subsection{The Main Linearization Theorem}

Given the second part of Proposition \ref{cis:prop:local_deformations}, the converse of the last theorem is within reach and quite clear how to attack: given the fact that the family 
${(\omega^r, \mu^r)}$ was obtained from ${(\omega, \mu)}$ by rescaling, it is likely that a solution of the cohomological equations at ${r= 1}$ will give, again using rescaling, a solution for each ${r}$.
This works without much problem - except for one `little thing': we have to make sure that everything can be made smooth at ${r= 0}$. And this will send us right away to the nondegeneracy
condition from Definition \ref{def-weired-nondegen}. While that definition may seem strange at first, or if we were not even aware of it, we would recover it as precisely the condition that is needed 
for smoothness at ${r= 0}$.
We obtain that, except for the case of action-equivalence, the converse of Theorem \ref{thm -is -mai1} holds at nondegenerate fixed points:

\begin{local_theorem}\label{thm -is -mai2}
For any ${{\chi \in \{\text{fct}, \text{wk}, \text{orb}, \text{alg}\}}}$, if ${\mu: (M, \omega)\rightarrow \R^n}$ is a completely integrable system with nondegenerate fixed point ${x}$ at which
\[ \textrm{Lin}_{\chi}(\omega,\mu, x)= 0,\]
then ${(\omega, \mu)}$ is ${\chi}$-linearizable around ${x}$. 
\end{local_theorem}

\begin{proof}

First we fix some notations. For various families ${\{\gamma^r\}}$ of forms, functions, etc, an important role will be played by their derivatives at ${r= 1}$; for that we will use a distinct notation:
\begin{equation}\label{common -eq} 
\dot{\gamma}^1\defeq \frac{\partial}{\partial r}\Big\vert_{r=1}\gamma^r
\end{equation}
Also, as for vector fields (Definition \ref{vert -D -for -X}), for 1-forms ${\alpha}$ we will denote by 
\[ D_{x}(\alpha): T_xM\rightarrow T_{x}^{*}M\]
the vertical derivative - intrinsically defined if ${\alpha(x)= 0}$ (in local coordinates, this just amounts to taking the partial derivatives at ${x}$ of the components of ${\alpha}$). For uniformity, we will also denote by 
\[ D_x(f)\in T_{x}^{*}M\]
the differential at ${x}$ of a function ${f}$- so that one can write
\begin{equation}\label{eq:basic -Dx} 
D_x(\alpha(X))= D_x(\alpha)(X_x)+ \alpha_x\circ D_{x}(\alpha) .
\end{equation}
Note also that, for a one-form ${\alpha}$ with ${\alpha_x= 0}$ and for which the usual differential ${(d\alpha)_x}$ is zero, the operator ${D_x(\alpha)}$ is self-adjoint, hence can be interpreted as an element in the symmetric algebra. It will be denoted 
\[ \sigma_x(\alpha)\in S^2(T^*M).\]
Hence ${\sigma_x(\alpha)(X_x, Y_x)= D_x(\alpha)(X_x)(Y_x)}$ (and is symmetric because ${(d\alpha)_x= 0}$). For instance, when ${\alpha= df}$ for a function ${f}$ (so that ${d\alpha= 0}$ everywhere and the only condition that is left if ${(df)_x= 0}$), one has
\[ \sigma_x(df)= \hess_x(f).\]
While it is clear from the general discussion that for any such ${f}$ one has
\[ \hess_x(\{f, \mu_i\}_{\omega})= \{ \hess_x(f), \hess_x(\mu_i)\}_{\omega_x},\]
let us point out that, more generally:

\begin{lemma}
\label{lemma:tricky -one} 
For any one-form ${\alpha}$ with ${\alpha_x= 0}$ and ${(d\alpha)_x= 0}$ one has
\[ \hess_x(\alpha(X_{\mu_i}))= \{\sigma_x(\alpha), \hess_x(\mu_i)\}_{\omega_x}.\]
\end{lemma}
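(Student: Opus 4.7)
The plan is to reduce the identity to the case $\alpha = df$ for a smooth function $f$, where it becomes the well-known compatibility of the Hessian with the Poisson bracket at a point where both arguments have vanishing differential. The reduction proceeds via a decomposition $\alpha = df + \beta$ with $\beta$ vanishing to order $2$ at $x$.

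To obtain this decomposition I would fix a chart centered at $x$ and examine the Taylor expansion of $\alpha$. The hypothesis $\alpha_x = 0$ kills the constant term, so the leading contribution is a $1$-form with linear coefficients; the hypothesis $(d\alpha)_x = 0$ makes this linear part closed on the vector space $T_xM$, hence exact. One can therefore find a homogeneous quadratic polynomial $f$ (in the chart) with $df$ agreeing with $\alpha$ to first order at $x$, and set $\beta \defeq \alpha - df$. Then $\beta$ vanishes to order $2$ at $x$; in particular $D_x(\beta) = 0$, and so $\sigma_x(\alpha) = \sigma_x(df) = \hess_x(f)$, using the identity for exact forms recalled in the excerpt.

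Next I would compute $\hess_x(\alpha(X_{\mu_i}))$ using the decomposition. Since $X_{\mu_i}$ vanishes to order $\geq 1$ at the fixed point $x$, the contraction $\beta(X_{\mu_i})$ vanishes to order $\geq 3$ and hence contributes nothing to the Hessian at $x$. The remaining term is $df(X_{\mu_i}) = \gerL_{X_{\mu_i}}(f) = \{\mu_i, f\}_\omega$ (up to the sign fixed by the paper's convention $i_{X_f}\omega = -df$). Since $(d\mu_i)_x = 0 = (df)_x$, one has the standard identity
\[
    \hess_x\big(\{\mu_i, f\}_\omega\big)
    =
    \{\hess_x(\mu_i), \hess_x(f)\}_{\omega_x},
\]
whose proof in Darboux coordinates is immediate: in expanding $\hess_x(\{\mu_i, f\}_\omega)$ every term containing a derivative of order $\neq 2$ of $\mu_i$ or of $f$ is killed by one of the vanishing conditions $(d\mu_i)_x = 0$ or $(df)_x = 0$, and what survives is exactly the pointwise Poisson bracket of the quadratic $2$-jets. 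Substituting $\hess_x(f) = \sigma_x(\alpha)$ and invoking the antisymmetry of the bracket produces the stated formula.

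No serious obstacle is expected. The geometric input is the degree count behind the assertion that $\beta(X_{\mu_i})$ vanishes to order $\geq 3$, and the algebraic input is the short Darboux-coordinate identity displayed above. The main care required is in tracking sign conventions, which cascade from the choice $i_{X_f}\omega = -df$ through the Poisson bracket and the identification $S^2 T^{*}_{x}M \cong \gersp(T_xM, \omega_x)$; once these are matched to the sign appearing in the statement, the identity falls out.
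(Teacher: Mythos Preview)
The paper does not actually supply a proof of this lemma: it is stated immediately after the ``clear'' identity $\hess_x(\{f,\mu_i\}_\omega) = \{\hess_x(f),\hess_x(\mu_i)\}_{\omega_x}$ as a more general fact, and the text then resumes the proof of the surrounding theorem. Your argument---decomposing $\alpha = df + \beta$ with $\beta$ vanishing to second order, so that $\beta(X_{\mu_i})$ vanishes to third order and the statement reduces to the exact case---is correct and is precisely the natural way to fill in this omitted step. Your caveat about sign conventions is well placed: the paper's conventions ($i_{X_f}\omega = -df$, $\{f,g\} = X_f(g)$, and the identification $h \leftrightarrow A$ via $\omega(A(u),v) = h(u,v)$) need to be traced carefully to match the sign in the displayed formula, but the argument itself is sound.
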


We now return to the proof of the theorem. Assuming that ${\textrm{Lin}_{\chi}(\omega,\mu, x)}$ vanishes, we will show that a primitive ${(\alpha,\beta)}$ at ${r=1}$ can be extended to a smooth path of (local) primitives ${(\alpha^r,\beta^r)}$, i.e.\  satisfying
    \[
        \frac{\partial}{\partial r}(\omega^r,\mu^r)
        =
        \dd_{\omega^r,\mu^r}(\alpha^r,\beta^r),
        \quad\forall\; r \in [0,1].
\] 
If we can arrange that ${\alpha^{r}_{x}= 0}$, proposition \ref{cis:prop:local_deformations} takes care of the rest. 

We may, and we will, assume that ${\mu(x)= 0}$. We could also assume that ${M= \mathbb{R}^{2n}}$ and ${x= 0}$ (and one should whenever one of the formulas used below is not transparent to the reader); but that would give rise to local formulas whose meaning is not so clear and are hard to manipulate. So, to make the arguments more intuitive, we will work with ${(M, x)}$ as an intrinsic object.

We now discuss the parts of the proof that is common to all the equivalences ${\chi}$. A solution at ${r= 1}$ of our equation has a first component that is the same for all equivalence relations: it is a one-form ${\alpha}$ satisfying
    \[
        \dot{\omega}^1
        =
        - \dd \alpha.
    \]
This implies that
    \begin{align*}
        \frac{\partial}{\partial r} \omega^r
        & = 
        \frac{\partial}{\partial r} \big(
            \tfrac{1}{r^2} (m^r)^*\omega
        \big)
        =
        \tfrac{1}{r^3} (m^r)^* \big(
            \gerL_\zeta\omega - 2 \omega
        \big)
        \\
        & =
        \tfrac{1}{r^3} (m^r)^* \big(
            \frac{\partial}{\partial r}\Big\vert_{r=1}
            \omega^r
        \big)
        = 
        - \dd \big(
            \tfrac{1}{r^3} (m^r)^*\alpha
        \big).
    \end{align*}
This suggests we define
    \[
        \alpha^r
        \defeq
        \tfrac{1}{r^3} (m^r)^*\alpha,
        \quad \forall\; r \in (0,1].
    \]
The `only' problem is to to extend this family smoothly to ${r= 0}$- and this is possible if and only if
\begin{equation}
\label{eq -problem -to -solve -1}
        \alpha_x=0,
        \quad
        D_x(\alpha)=0.
\end{equation}
As we have mentioned, it is this problem that will bring in the conditions on ${\gerh_{x, \mu}}$ (nondegeneracy).

The other component in the deformation complex (the derivatives of ${\mu^r= \tfrac{1}{r^2} \mu\circ m_r}$) is
    \[
        \frac{\partial}{\partial r}\mu^r
        =
        \frac{\partial}{\partial r} \big(
            \tfrac{1}{r^2} 
            \mu \circ m^r
        \big)
        =
        \tfrac{1}{r^3} 
        \big(
            \gerL_\zeta\mu - 2\mu
        \big)\circ m^r
        =
        \tfrac{1}{r^3}
        \big(
            \frac{\partial}{\partial r}\Big\vert_{r=1}
            \mu^r
        \big) \circ m^r.
    \]
Before treating each of the equivalence relations separately, we should understand the relation between ${X_\mu^\omega}$ and ${X_{\mu^r}^{\omega^r}}$, since the latter is a recurring term in the deformation complex of ${(\omega^r,\mu^r)}$. Note that by definition ${X_{\omega^r}^{\mu^r}}$ is the unique vector field ${X^r}$ such that
    \[
        \tfrac{1}{r^2}
        (m^r)^*\big(
            \iota_{(m^r)_*X^r}(\omega)
        \big)
        =
        \iota_{X^r}(\omega^r)
        =
        - \dd\mu^r
        =
        - \tfrac{1}{r^2} (m^r)^*\dd\mu.
    \]
So we find that ${X^{\omega^r}_{\mu^r}=(m^r)^*X_\mu^\omega}$. In particular, this means that
    \[
        \alpha^r(X_{\mu^r}^{\omega^r})
        =
        \tfrac{1}{r^3} \alpha(X_\mu^\omega) \circ m^r.
    \]
This formula already lines up well with the formula for ${\frac{\partial}{\partial r}\mu^r}$ we found above.

\begin{itemize}

\item
Suppose that the class corresponding to weak equivalence near ${x}$ vanishes. Pick a primitive, i.e.\  a one-form ${\alpha}$ as above, defined in an open neighborhood ${U\sub M}$ of ${x}$ and satisfying 
\begin{equation}\label{orig -eq -mthm -wk}
        \dot{\omega}^1 = - \dd\alpha,
        \quad
        \big\{
            \mu_i,
            \dot{\mu}^1_j +\alpha(X_{\mu_j}^{\omega})
        \big\}_{\omega} 
        = 0,
        \quad\forall\; i,j.
\end{equation}
Define ${\alpha^r}$ as above and define
    \[
        \beta^r
        \defeq
        \frac{\partial}{\partial r}\Big\vert_{r=1}
        \mu^r 
        +
        \alpha(X_\mu^\omega).
    \]
At all nonzero ${r}$'s, next to ${- \dd \alpha^r= \omega^r}$ (already proven), we find
    \begin{align*}
        \big\{
            \mu_i^r,
            \frac{\partial}{\partial r}\mu^r_j
            & +
            \alpha^r(X_{\mu_j^r}^{\omega^r}
        \big\}_{\omega^r}
        \\
        & =
        \tfrac{1}{r^7}
        \big\{
            \mu_i\circ m^r,
            \big(
                \frac{\partial}{\partial r}\Big\vert_{r=1}
                \mu_j^r
                +
                \alpha(X_{\mu_j}^\omega)
            \big) \circ m^r
        \big\}_{(m^r)^*\omega}
        \\
        & =
        \tfrac{1}{r^7}
        \big\{
            \mu_i,
            \frac{\partial}{\partial r}\Big\vert_{r=1} 
            \mu_j^r 
            +
            \alpha(X_{\mu_j}^\omega)
        \big\}_\omega \circ m^r
        =
        0.
    \end{align*}
In other words, $
        (
            \alpha^r,
            \beta^r
        )
    $
would be the desired family of primitives. But we still have to ensure the smoothness at ${r= 0}$, i.e.\  the vanishing conditions  (\ref{common -eq}) for ${\alpha}$. First, we claim that any solution ${\alpha}$ of the original equation (\ref{orig -eq -mthm -wk}) must satisfy ${\alpha_x= 0}$. Indeed, the second part of the original equations say that, for all ${j}$, 
\[ \dot{\mu}^1_j + \alpha(X_{\mu_j}^{\omega})\in \ucalC_{\mu}\]
hence, by Proposition \ref{prop:nondeg_function} (since the fixed point set is trivial), the derivative ${D_x}$ at ${x}$ of this function vanishes. The derivative for ${\dot{\mu}^1_j}$ is easily seen to be zero, hence we are left with the derivative of the second term. Using the basic formula (\ref{eq:basic -Dx}) and the fact that ${X_{\mu_j, x}= 0}$, we find that
\[ \alpha_x\circ D_x(X_{\mu_j})= 0.\]
In other words, ${\alpha_x}$ vanishes on the images of all the ${D_x(X_{\mu_j})}$, i.e.\  
\[ \alpha_x\in (T_{x}^{*}M)^{\gerh},\]
which is zero by assumption. Hence ${\alpha_x= 0}$. So far, we have not used the entire statement of Proposition \ref{prop:nondeg_function}: it also tells us that the hessian of ${\dot{\mu}^1_j - \alpha(X_{\mu_j}^{\omega})}$ is in the centralizer of ${\gerh_{x, \mu}}$ hence, by hypothesis, it is in ${\gerh_{x, \mu}}$. Again, the hessian of ${\dot{\mu}^1_j}$ at ${x}$ is easily seen to vanish, hence we are left with the hessian of ${\alpha(X_{\mu_j}^{\omega})}$. For that we use Lemma \ref{lemma:tricky -one} which can be applied since ${(d\alpha)_x= 0}$ (indeed, this is the ${\dot{\omega}_{x}^{1}}$ which is ${0}$ because ${\omega_{x}^{r}}$ is clearly constant w.r.t.\  ${r}$). Hence we obtain
\[ \{ \sigma_x(\alpha),\hess_x(\mu_j)\}_{\omega_0} \in \gerh_{x, \mu}\]
for all ${j}$. Hence, by hypothesis, ${\sigma_x(\alpha)}$ belongs to ${\gerh_{x, \mu}\subset S^2T^{*}_{x}M}$. Hence we can write 
\[ \sigma_x(\alpha)= \sum\nolimits_{i} \lambda_i \hess_x(\mu_i)\]
for real numbers ${\lambda_i}$. Subtracting from ${\alpha}$ the one-form ${\sum_i \lambda_i d\mu_i}$ we now obtain a new solution of (\ref{orig -eq -mthm -wk}) for which also the derivative ${D_x}$ is zero.

\item
Suppose that the class w.r.t.\  functional equivalence vanishes. Pick a primitive, i.e.\  open neighborhoods ${U\sub M}$ of ${x}$ and ${V\sub \R^n}$ of ${0}$ and
    \[
        (\alpha,\beta) \in \Omega^1(U)\oplus C^\infty(V,\R^n),
    \] 
and such that
    \begin{equation}\label{wk -case -starting -eq}
        \dot{\omega}^1
        =
        - \dd\alpha,
        \quad
        \dot{\mu}^1
        =
        \beta \circ \mu 
        +
        \alpha(X_\mu^\omega).
    \end{equation}
Define ${\alpha^r}$ as above and 
    \[
        \beta^r
        \defeq 
        m^{1/r^3}\circ \beta \circ m^{r^2}.
    \]
At all ${r\neq 0}$, it satisfies 
    \begin{align*}
        \frac{\partial}{\partial r}\omega^r 
        & = 
        - \dd\alpha^r,
        \\
        \frac{\partial}{\partial r} \mu^r
        & =
        \tfrac{1}{r^3} \big(
            \frac{\partial}{\partial r}\Big\vert_{r=1}
            \mu^r
        \big) \circ m^r
        \\
        & =
        \tfrac{1}{r^3} \big(
            \beta \circ \mu
            +
            \alpha(X_\mu^\omega)
        \big) \circ m^r
        \\
        & = 
        \beta^r \circ \mu^r
        +
        \alpha^r(X_{\mu^r}^{\omega^r}).
    \end{align*}
i.e.\   
    $
        (
            \alpha^r,
            \beta^r
        )
    $
is a primitive of ${\frac{\partial}{\partial r}(\omega^r,\mu^r)}$. Again, the problem is to make sure that ${\alpha}$ and ${\beta}$ are so that ${\alpha^r}$ and ${\beta^r}$ extend smoothly at ${r= 0}$. That means that, next to the conditions (\ref{eq -problem -to -solve -1}), we have to ensure the similar conditions for ${\beta}$: ${\beta_{0}=0}$, and ${D_{0}(\beta) = 0}$ (recall that we assumed that ${\mu(x)= 0}$). To achieve the conditions on ${\alpha}$ we just use the weak equivalences case discussed above. It can be applied because ${\beta\circ \mu\in \ucalC_{\mu}}$. We conclude that we must have ${\alpha_x= 0}$ and, after changing ${\alpha}$ to another one-form of type ${\alpha - \sum_i \lambda_i d\mu_i}$, one also realizes ${D_x(\alpha)= 0}$. We do have to make sure that this change does not affect the second set of equations in (\ref{wk -case -starting -eq}), but that is clear since ${(d\mu_i)(X_{\mu_j})= 0}$. 

Therefore we may assume we have a solution ${(\alpha, \beta)}$ of (\ref{wk -case -starting -eq}) with ${\alpha_x= 0}$, ${D_x(\alpha)= 0}$. From the equation and the fact that ${\alpha_x= 0}$ it follows right away that ${\beta_x= 0}$. For the other condition on ${\beta}$ we use that 
\[ 0= \hess_x(\beta^i\circ \mu)= \sum\nolimits_k \frac{\partial \beta^i}{\partial y^k}(0) \hess_x(\mu_k). \]
Since the Hessians ${\hess_x(\mu_k)}$ are linearly independent (which is equivalent to the fixed point set being zero) we deduce that ${\frac{\partial \beta^i}{\partial y^k}(0)= 0}$.

\item
Suppose now that the class with respect to algebraic equivalence vanishes. Pick a primitive, i.e.\  an open neighborhood ${U\sub M}$ of ${x}$ and
    \[
        (\alpha,\beta)  
        \in 
        \Omega^1(U) \oplus \calC^\infty(U,\gergl(\R^n)),
    \]
with the components of ${\beta}$ in ${\ucalC_{\mu}}$, and such that
    \[
        \frac{\partial}{\partial r}\Big\vert_{r=1}
        \omega^r
        =
        - \dd\alpha,
        \quad
        \frac{\partial}{\partial r}\Big\vert_{r=1}
        \mu^r
        =
        \beta\cdot \mu
        +
        \alpha(X_\mu^\omega).
    \]
Define ${\alpha^r}$ as above and define
    \[
        \beta^r
        \defeq
        \tfrac{1}{r}
        \beta \circ m^r.
    \]
Again, at ${r\neq 0}$,
    \begin{align*}
        \frac{\partial}{\partial r} \omega^r
        & = 
        - \dd\alpha^r,
        \\
        \frac{\partial}{\partial r} \mu^r
        & =
        \tfrac{1}{r^3} \big(
            \frac{\partial}{\partial r}\Big\vert_{r=1}
            \mu^r
        \big) \circ m^r
        \\
        & =
        \tfrac{1}{r^3} \big(
            \beta \cdot \mu
            +
            \alpha(X_\mu^\omega)
        \big)\circ m^r
        \\
        & =
        \beta^r \cdot \mu^r
        +
        \alpha^r(X_{\mu^r}^{\omega^r}).
    \end{align*}
Hence    $
        (
            \alpha^r,
            \beta^r
        )
    $
is a primitive of ${\frac{\partial}{\partial r}(\omega^r,\mu^r)}$. Again, we have to achieve (\ref{eq -problem -to -solve -1}), as well as ${\beta(x)= 0}$ (for extending ${\beta^r}$ at ${r= 0}$). 
One can apply once more  the discussion from weak equivalences because the components of ${\beta\cdot \mu}$, i.e.\  ${\sum_j \beta^{j}_{i}(x) \mu_{j}(x)}$, are in ${\ucalC_{\mu}}$-
and then we achieve (\ref{eq -problem -to -solve -1}); the argument there also tells us that the Hessian of the components of ${\beta\cdot \mu}$ is equal to ${[\sigma_x(\alpha), \hess_x(\mu_j)\}_{\omega_0}= 0}$. But, since ${\mu(x)= 0}$ and ${(d\mu)_x= 0}$, the Hessian of (the components of) ${\beta\cdot \mu}$ is the (components of) ${\beta(x)\cdot \hess_x(\mu)}$. The resulting equation ${\beta(x)\cdot \hess_x(\mu)}$ implies, as in the functional case, that ${\beta(x)= 0}$.

\item The case of the orbit equivalence follows similarly, by the same type of arguments. 
\qedhere

\end{itemize}

\begin{remark}[conclusion] As mentioned in our thesis outline in Section \ref{sec:outline-thesis} of Chapter \ref{chapter:Intr-and-outline} (item 3), this theorem is Step 1 in a proof to Eliasson's theorems (Theorem \ref{thm -El1} and \ref{thm -El2} from the end of the first chapter) similar to the geometric proof of Conn's linearization theorem from \cite{CrFe}. The other step is the actual vanishing of cohomology. We expect that also this second part can be carried out by standard `soft' methods and we hope to be able to provide the details in the near future.

Let us mention here the test case for the remaining step: the vanishing of the relevant cohomology for the normal models. This has been carried out by Miranda and Vũ Ng\d oc \cite{MN05} for weak equivalences, for each of the three types of nondegenerate singularities. A careful analysis of what happens when we take direct sums can be used to conclude that the same holds for all nondegenerate points (but still the normal models). However, let us mention the proofs we hope for the second step should not only apply generally. We expect them to be easier and more conceptual (certainly not `case by case').
\end{remark}
\end{proof}

\chapter{Pseudogroups}
\label{chap:pseudogroups}

\etocsettocdepth{2}
\localtableofcontents

\noindent
{\color{white}This line is intentionally left blank.}
\newline
Pseudogroups describe the local symmetries of differential equations and of geometric structures.
As pseudogroups are often too general, one has to focus on a more restricted class.
\emph{Lie} pseudogroups, those which are themselves defined by differential equations, are the most interesting such class, as they display a rich interplay between algebra and geometry.
We propose that \emph{closed} pseudogroups already display enough interesting behaviour to be worth further study, and take a step in that direction.
Altough the definition of a closed pseudogroup is not very surprising, to our knowledge it has not gotten any attention in the literature.

Sophus Lie's original studies of Lie pseudogroups \emph{of finite type} gave rise to the modern notion of Lie groups. Central to their success (aside from their ubiquity) is the correspondence between their geometry and algebra via Lie algebras.
In the theorem \ref{pgrp:thm} we show that closed pseudogroups also admit a Lie algebra (sheaf).
This result is best compared with the result that any closed subgroup of a Lie group is a Lie group. Aside from the technical details, its proof follows classical methods involving the flows of sums and Lie brackets of vector fields.

\clearpage
\section{Preliminaries}
%\etocsettocdepth{2}
%\localtableofcontents

Let $X$ be a manifold. 
By a \textbf{local diffeomorphism}\index{local diffeomorphism},
or locally defined diffeomorphism, we mean a pair $(U,\phi)$ consisting of an open subset $U\sub X$ and a smooth open embedding 
    \[
        \phi:U\longhookrightarrow X.
    \]
The usual notion of a \emph{local diffeomorphism} is a smooth map $M\to M$ that has maximal rank everywhere (and thus is everywhere locally a diffeomorphism), whereas for us it means a diffeomorphism that is only defined locally (i.e.\ on an open subset). We have little use for the usual notion and where we do, we spell out its definition. The term \emph{locally defined diffeomorphism} used in repitition quickly becomes too pedantic. Other terms, such as \emph{open embedding}, distract from our point of view: pseudogroups are `local' versions of (subgroups of) the group of diffeomorphisms.

We denote the set of all local diffeomorphisms by $\diff_X$, and the set of those with domain $U$ by $\diff_X(U)$. 
When $U=X$ we will write $\diff(X)$ instead of $\diff_X(X)$.
We will often denote a local diffeomorphism $(U,\phi)$ by $\phi$ and leave the domain implicit.

\subsection{Pseudogroups}

\begin{definition}
\label{def:pseudogroup}
A \textbf{pseudogroup}\index{pseudogroup} is a subset $\pgrp\sub \diff_X$ that satisfies group-like axioms
(identity, composition, and inversion):

\begin{enumerate}

\item[a)]
$\id \in \pgrp(X)$.

\item[b)] 
If $\phi\in \pgrp(U)$ and $\psi\in \pgrp(V)$, then $\phi\circ\psi \in \pgrp(\psi^{-1}(U))$. 

\item[c)]
If $\phi\in \pgrp(U)$, then $\phi^{-1} \in \pgrp(\phi(U))$.

\end{enumerate}

And sheaf-like axioms (restriction and collation):

\begin{enumerate}

\item[d)]
If $\phi\in\pgrp(U)$ and $V\sub U$ is open, then also $\phi\vert_V \in \pgrp(V)$. 
This means in particular that the empty set lies in $\pgrp$.

\item[e)]
If $\phi \in \diff_X(U)$ and there is an open cover $U=\medcup_{i\in I}U_i$ such that $\phi\vert_{U_i} \in\pgrp(U_i)$ for all $i\in I$, then $\phi\in\pgrp(U)$.

\end{enumerate}

If $\pgrp$ satisfies all the above except possibly the collation axiom, then it is called a \textbf{prepseudogroup}.
\end{definition}

In the remainder of this section we discuss various examples of pseudogroups and some methods to construct them. 
We will return to many of these examples later in our discussion.

\begin{example}
\label{exa:pseudogroup_poisson}
Given a symplectic manifold $(X,\omega)$, the local symplectomorphisms form a pseudogroup:
    \[
        \diff_\omega(U)
        \defeq
        \big\{
            \phi \in \diff_X(U)
            \sep
            \phi^*\omega = \omega|_U
        \big\}.
    \]
It is an example of a Lie pseudogroup, one that is determined by a partial differential equation on the local diffeomorphisms (see definition \ref{def:jet_determined}).
\end{example}

\begin{example}
\label{exa:pseudogroup_riem}
Given a Riemannian manifold $(X, g)$, the local isometries
    \[
        \diff_g(U)
        \defeq
        \big\{
            \phi \in \diff_X(U)
            \sep
            \phi^*g = g|_U
        \big\}
    \]
form a pseudogroup. It is well-known \cite{MySte39, Pal57} that the group of global isometries $G\defeq\diff_g(X)$ is a finite dimensional Lie group. 
It thus comes with a faithful action
    $
        G \hookrightarrow \diff(X).
    $
\end{example}

\begin{example}
\label{exa:stabilizer_pseudogroup}
Let $S\sub X$ be a set. The stabilizer of $S$ is a pseudogroup:
    \[
        \diff^S_X(U)
        \defeq
        \big\{
            \phi \in \diff_X(U)
            \sep
            \phi(S \cap U) \sub S,
            \;\;
            \phi^{-1}(S \cap \phi(U)) \sub S
        \big\}.
    \]
This awkward phrasing of what is essentially the condition `$\phi(S)=S$' is necessary when $S$ is not contained in $U$.
If $\pgrp$ is a pseudogroup on $X$, then we denote
    \[
        \pgrp^S(U) \defeq \pgrp(U) \cap \diff_X^S(U).
    \]
\end{example}

\begin{example}
If $\pgrp$ and $\calQ$ are pseudogroups, then so is $\pgrp \cap \calQ$.
\end{example}

\begin{example}
\label{exa:restricted_pseudogroup}
If $\pgrp$ is a pseudogroup on $X$, and $\xi: Y \hookrightarrow X$ is an \emph{open} embedding, 
then we may pull $\pgrp$ back along $\xi$: for any open set $U\sub Y$,
    \[
        \xi^*\pgrp(U)
        \defeq
        \big\{
            \phi \in \diff_Y(U)
            \sep
            \xi\circ\phi\circ\xi^{-1} \in \pgrp(\xi(U))
        \big\}.
    \]
\end{example}

The collation axiom can be described as \textbf{germ-determinacy}\index{germ-determinacy}: two local diffeomorphisms $\phi \in \diff_X(U)$ and $\psi \in \diff_X(V)$ are equivalent as germs at $x$ if there is an open neighborhood $W\sub U\cap V$ of $x$ such that $\phi|_W = \psi|_W$. 
We denote the equivalence class of $\phi$ at $x$ by $\phi_{(x)}$, which is called a germ. 
For any subset $\pgrp\sub \diff_X$, the stalk at $x$ is the set of germs at $x$:
    \[
        \pgrp_{(x)}
        \defeq
        \big\{
            \phi_{(x)}
            \sep
            \phi \in \pgrp(U),
            \;
            x \in U
        \big\}.
    \]
With these concepts in mind, the collation axiom reads that a local diffeomorphism $(U, \phi)$ lies in $\pgrp$ if and only if $\phi_{(x)}\in \pgrp_{(x)}$ for every $x\in U$.

\begin{example}
Suppose $\pgrp\sub \diff_X$ satisfies the group-like and restriction axioms.
Then its sheafification defines a pseudogroup:
    \[
        \overline{\pgrp}^\mathrm{sh}(U)
        \defeq
        \big\{
            \phi \in \diff_X(U)
            \sep
            \phi_{(x)} \in \pgrp_{(x)}
            \;\;\forall\; x\in U
        \big\}.
    \]
\end{example}

\begin{proof}
The sheaf-like and identity axioms are immediate. 
\\
For the composition axiom, if $\phi \in \overline{\pgrp}^\mathrm{sh}(U)$ and $\psi \in \overline{\pgrp}^\mathrm{sh}(V)$, and $x\in \psi^{-1}(U)$, then by definition there are open neighborhoods $U'\sub U$ of $\psi(x)$ and $V'\sub V$ of $x$ such that $\phi|_{U'}\in \pgrp(U')$ and $\psi|_{V'}\in \pgrp(V')$. 
It follows that 
    \[
        (\phi\circ\psi)\big|_{W'} \in \pgrp(W'),
        \qquad
        W' \defeq \psi^{-1}(U')\cap V',
    \]
which shows that 
    $
        (\phi\circ\psi)_{(x)} \in \pgrp_{(x)}.
    $ 
We conclude that $\phi\circ\psi \in \overline{\pgrp}^\mathrm{sh}(\psi^{-1}(U))$. 
The argument is likewise for the inversion axiom.
\end{proof}

\begin{example}
\label{exa:lie_group}
Given a Lie group $G$ and a Lie group action $\rho$ of $G$ on $X$, define a pseudogroup
    \[
        \diff_G(U) \defeq
        \overline{
            \big\{
                \rho(g)|_U \in \diff_X(U) \sep g\in G
            \big\}
        }^\mathrm{sh}
    \]
as follows: a local diffeomorphism $\phi \in \diff_X(U)$ lies in $\diff_G(U)$ if and only if for every $x\in U$ there is an open neighborhood $V\sub U$ of $x$ and a $g\in G$ such that 
    $
        \phi|_V = \rho(g)|_V.
    $
Note that it is the sheafification of the image of $\rho$.

Let $X=\R^m$ and $G\sub\GL_m$, and consider the locally constant maps with values in $G$, seen as a subsheaf of the smooth maps:
    \[
        \underline{G}(U)
        \defeq
        \big\{
            g \in C^\infty(U, G)
            \sep
            \dd g = 0
        \big\}
    \]
A locally constant map $g:x\mapsto g_x$ in $\underline{G}(U)$ defines a smooth map 
    \[
        \phi_g: U\longto X,
        \quad
        \phi_g(x) \defeq g_x(x).
    \]
The map $\phi_g$ is locally a diffeomorphism, since $\dd_x\phi_g = g_x$ is bijective, but may fail to be injective. 
We claim that the pseudogroup $\diff_G(U)$ is the set of those $\phi_g$ that are local diffeomorphisms:
    \[
        \diff_G(U)
        =
        \big\{
            \phi_g
            \sep
            g \in \underline{G}(U)
        \big\}
        \,\cap\,
        \diff_X(U).
    \]
\end{example}

\begin{proof}
Remark that the action $\rho: \mathrm{GL}_m \to \diff(\R^m)$ is locally faithful, i.e.\
if $g,h\in G$ and there is an open set $U\sub \R^m$ such that $\rho(g)|_U = \rho(h)|_U$, then $g=h$.
This is because $\rho$ is a faithful representation, so that
    \[
        \rho(g) = \dd_x\rho(g) = \dd_x\rho(h) = \rho(h)
    \]
for any $x\in U$ implies that $g=h$.

Suppose that $\phi_g$ with $g\in \underline{G}(U)$ is injective. 
Then by definition, for any $x\in U$, there is an open neighborhood $V\sub U$ of $x$ such that $g|_V = g_x$.
This implies that 
    $
        \phi_g|_V = \rho(g_x)|_V,
    $
which shows that $\phi_g \in \diff_G(U)$. 

Now suppose that $\phi\in\diff_G(U)$. 
By definition, for any $x\in U$ there is an open neighborhood $V\sub U$ of $x$ and some $g\in G$ such that 
    \[
        \phi|_V = \rho(g)|_V = \phi_g|_V.
    \]
If also $W\sub U$ and $h\in G$ such that $\phi|_W = \phi_{h}|_W$, then 
    \[
        \rho(g)|_{V\cap W} = \phi|_{V\cap W} = \rho(h)|_{V\cap W}
    \]
implies that $g=h$ whenever the intersection is nonempty.
Hence there is a locally constant $g\in \underline{G}(U)$ such that $\phi=\phi_g$.
\end{proof}

\begin{example}
\label{exa:pseudogroup_of_invariants}
Let $\calC\sub C^\infty$ be a subsheaf of the smooth functions on $X$. 
Then its invariants
    \[
        \diff_\calC(U)
        \defeq
        \big\{
            \phi \in \diff_X(U)
            \sep
            f\circ\phi \in \calC(U)
            \;\;\forall\; f \in \calC(\phi(U))
        \big\}
    \]
satisfy the axioms of a pseudogroup except for the inversion axiom. 
\end{example}

\begin{proof}
The identity, composition, and restriction axioms are immediate.
For the collation axiom, given $f \in \calC(\phi(U))$ and an open cover $U=\cup_i U_i$, suppose that 
    $
        f\circ\phi|_{U_i} \in \calC(U_i)
    $
for all $i$. 
Since $\calC$ is a sheaf, this implies that $f\circ\phi\in\calC(U)$, and thus that $\phi\in \diff_\calC(U)$.
\end{proof}

\begin{example}
To give an example of $\diff_\calC$ that defines a pseudogroup, fix a point $x\in X$ and $\theta\in T^*_xX$, and consider
    \[
        \calC(U) 
        \defeq
        \big\{
            f \in C^\infty(U)
            \sep
            f(x)=0,
            \;
            \dd_xf = \theta
        \big\}
    \]
for any open set $U\sub X$ with $x\in U$, and $\calC(U)=C^\infty(U)$ otherwise.
Then
    \[
        \diff_\calC(U)
        =
        \big\{
            \phi \in \diff_X(U)
            \sep
            \phi(x) = x,
            \;\;
            \theta \circ \dd_x\phi = \theta
        \big\}.
    \]
\end{example}

\begin{proof}
If $\phi\in \diff_X(U)$ and $\phi(x)\neq x$, then we can always choose a function $f\in \calC(\phi(U))$ such that $f(\phi(x))\neq 0$, so then $\phi\notin\diff_\calC(U)$. 
Hence for any $\phi\in \diff_\calC(U)$, this shows that $\phi(x)=x$ and 
    \[
        \theta = \dd_x(f\circ\phi) = \theta \circ\dd_x\phi.
    \]
The other inclusion is immediate.
\end{proof}

\begin{example}
To give an example of $\diff_\calC$ that is not a pseudogroup, consider $X = \R$ and 
    \[
        \calC(U)
        \defeq
        \big\{
            f \in C^\infty(U)
            \sep
            f(x) = 0
            \;\;\forall\;
            x\leq 0
        \big\}
    \]
Note that there is a function $f\in \calC(\R)$ such that $f(x)>0$ for all $x>0$, so any translation $T_r(x) = x - r$ with $r> 0$ lies in $\diff_\calC(\R)$, but its inverse does not.
Instead we should consider the pseudogroup
    \[
        \diff_\calC^\vee(U)
        \defeq
        \big\{
            \phi \in \diff_X(U)
            \sep
            \phi \in \diff_\calC(U)
            \text{ and }
            \phi^{-1} \in \diff_\calC(\phi(U))
        \big\}.
        \qedhere
    \]
\end{example}

\subsection{Lie Algebra Sheaves}

Denote by $\gerX_X$ the sheaf of vector fields on a manifold $X$.

\begin{definition}
A \textbf{Lie algebra sheaf}\index{Lie algebra sheaf} is a subset $\calA \sub \gerX_X$ that satisfies:
\begin{itemize}

\item[a)]
$\calA(U)$ is a Lie subalgebra of $\gerX_X(U)$ for every open $U\sub X$.

\item[b)]
$\calA$ is a subsheaf of $\gerX_X$.
\qedhere

\end{itemize}

\end{definition}

We think of Lie algebra sheaves as the infinitesimal counterparts to pseudogroups in the way that Lie algebras are related to Lie groups. 
However, without additional axioms on the pseudogroups this correspondence cannot be made very precise.
\begin{example}
Given a symplectic manifold $(X, \omega)$, the symplectic vector fields form a Lie algebra sheaf:
    \[
        \gerX_\omega(U)
        \defeq
        \big\{
            v \in \gerX_X(U)
            \sep
            \gerL_v(\omega) = 0
        \big\}
    \]
However, the Hamiltonian vector fields
    \[
        \mathrm{Ham}_\omega(U)
        \defeq
        \big\{
            v \in \gerX_X(U)
            \sep
            i_v\omega = -\dd f
            \text{ for some }
            f \in C^\infty(U)
        \big\}
    \]
do not, since $\mathrm{Ham}_\omega(U) = \gerX_\omega(U)$ for simply connected open subsets $U\sub X$, 
whereas $\mathrm{Ham}_\omega(V) \neq \gerX_\omega(V)$ if $V\sub X$ is more interesting.
Obviously, $\mathrm{Ham}_\omega$ does not satisfy the collation axiom.
\end{example}

\begin{example}
\label{exa:lie_algebra}
Given a Lie algebra $\gerg$ and a Lie algebra action $\rho:\gerg \to X$,
define a Lie algebra sheaf
    \[
        \calA_\gerg(U)
        \defeq
        \overline{
            \big\{
                \rho(a)|_U \in \gerX_X(U)
                \sep
                a \in \gerg
            \big\}
        }^\mathrm{sh}
    \]
as follows: a local vector field $v \in \gerX_X(U)$ lies in $\calA_\gerg(U)$ if and only if for every $x\in U$ there is an open neighborhood $V\sub U$ of $x$ and some $a\in \gerg$ such that $v|_V = \rho(a)|_V$.
Note that it is the sheafification of the image of $\rho$.
\end{example}

\begin{example}
Consider a pair $(\calC, \calB)$ where $\calC$ is a presheaf of smooth functions, $\calB$ is a linear sheaf of smooth functions, and $\calB(U)\sub \calC(U)$ for all open sets $U\sub X$. 
Define
    \[
        \mathrm{Der}_{\calC, \calB}(U)
        \defeq
        \big\{
            v \in \gerX_X(U)
            \sep
            \gerL_{v|_V}(f) \in \calB(V)
            \;\;\forall\,
            f \in \calC(V),
            \;
            V\sub U
        \big\}.
    \]
Then $\mathrm{Der}_{\calC, \calB}$ is a Lie algebra sheaf.
Notable examples are:
\begin{enumerate}

\item[a)]
If $\calB(U)= \{ 0 \}$ for all $U\sub X$, it gives the annihilator of $\calC$:
    \[
        \mathrm{Ann}_\calC
        \defeq
        \mathrm{Der}_{\calC, \{0\}}.
    \]

\item[b)]
If $\calB = \calC$, 
%and thus $\calC$ is a linear \emph{sheaf} of smooth functions, 
it gives the sheaf derivations of $\calC$:
    \[
        \mathrm{Der}_\calC
        \defeq
        \mathrm{Der}_{\calC, \calC}.
    \]

%\item
%Let $\calC^i$ be a linear sheaf of smooth functions, and suppose that $\calC^{i+1} \sub \calC^i$, for all $i\in \N$.
%Thus the collection $\calC^\bullet = \{ \calC^i \}$ defines a filtration of $C^\infty$ by linear sheaves.
%Then for any $k$ there is a Lie algebra sheaf
%    \[
%        \mathrm{Der}_{\calC^\bullet}^k(U)
%        \defeq
%        \big\{
%            v \in \gerX_X(U)
%            \sep
%            \gerL_{v|_V}(f) \in \calC^{i+k}
%            \;\;\forall\;
%            f \in \calC^i(V),
%            \;
%            V\sub U,
%            \;
%            i \in \N
%        \big\}.
%    \]

\end{enumerate}
\end{example}

\begin{proof}
We will denote $\calA\defeq \mathrm{Der}_{\calC, \calB}$.

Let $u, v \in \calA(U)$ and $r \in \R$. 
Then for any open set $V\sub U$ and function $f \in \calC(V)$ we have
    \[
        \gerL_{(u+rv)|_V}(f)
        =
        \gerL_{u|_V}(f) + r \gerL_{v|_V}(f)
        \;\in\;
        \calB(V),
    \]
because we assumed $\calB(V)$ is an $\R$-linear subspace of $C^\infty(V)$. 
Moreover, by Cartan's formula,
    \[
        \gerL_{[u,v]|_V}(f)
        =
        (\gerL_{u|_V} \circ \gerL_{v|_V} - \gerL_{v|_V} \circ \gerL_{u|_V})(f)
        \;\in\;
        \calB(V).
    \]
Hence $u+rv$ and $[u,v]$ lie in $\calA(U)$, and we conclude that $\calA(U)$ is a Lie subalgebra of $\gerX_X(U)$.

$\calA$ is a presheaf by definition: suppose that $v\in \calA(U)$ and $W\sub U$ is an open subset.
Then for any open subset $V\sub W$, we also have $V\sub U$, hence for any $f\in \calC(V)$ we have $\gerL_{v|_V}(f) \in \calB(V)$.
Hence $v|_W \in \mathrm{Der}_\calC(W)$, so we conclude that $\calA$ is a presheaf.

Finally, $\calA$ is a sheaf by the local nature of the Lie derivative: 
suppose that $v\in \gerX_X(U)$ is such that for every $x\in U$ there is an open neighborhood $W\sub U$ of $x$ such that $v|_W\in \calA(W)$.
Let $V\sub U$ be an open subset, and let $f\in \calC(V)$.
Then for a given $x\in U$ choose such an open neighborhood $W\sub U$ of $x$.
Since $\calC$ is a presheaf, $f|_W\in \calC(W)$, and
    \[
        \gerL_{v|_V}(f)\big|_W
        =        
        \gerL_{v|_{V\cap W}}(f|_W) 
        \;\in\; 
        \calB(W).
    \]
Since $\calB$ is a sheaf, this implies $\gerL_{v|_V}(f) \in \calB(V)$.
Hence $v \in \calA(U)$, and we conclude that $\calA$ is a subsheaf of $\gerX_X$.
\end{proof}

\begin{lemma}
Let $\calC$ and $\calB$ be as above. 
Moreover, assume that $\calC$ is a sheaf and that $\calC(U)$ is a $C^\infty(U)$-module for every open set $U\sub X$.
Then
    \[
        \mathrm{Der}_{\calC, \calB}(U)
        =
        \big\{
            v \in \gerX_X(U)
            \sep
            \gerL_v(f) \in \calB(U)
            \;\;\forall\;
            f \in \calC(U)
        \big\}.
    \]
\end{lemma}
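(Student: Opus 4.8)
The statement is essentially a ``sheaf-theoretic Leibniz'' argument: the defining condition of $\mathrm{Der}_{\calC, \calB}$ quantifies over all opens $V \sub U$ and all $f \in \calC(V)$, but under the extra hypotheses we only need to test on global sections over $U$. One inclusion is trivial: if $v$ lies in the left-hand side, then in particular taking $V = U$ shows $\gerL_v(f) \in \calB(U)$ for all $f \in \calC(U)$, so $v$ lies in the right-hand side. So the whole content is the reverse inclusion.

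For the reverse inclusion, suppose $v \in \gerX_X(U)$ satisfies $\gerL_v(f) \in \calB(U)$ for every $f \in \calC(U)$. I must show that for an arbitrary open $V \sub U$ and an arbitrary $g \in \calC(V)$ one has $\gerL_{v|_V}(g) \in \calB(V)$. The point is that $g$ need not extend to a global section over $U$, so I work locally: fix $x \in V$, choose a bump function $\eta \in C^\infty(U)$ supported in $V$ with $\eta \equiv 1$ on a neighborhood $W \sub V$ of $x$. Then $\eta g$, extended by zero, is a smooth function on $U$, and since $\calC(U)$ is a $C^\infty(U)$-module and $\calC$ is a sheaf, $\eta g \in \calC(U)$ (the extension-by-zero is a section of $\calC$ over $U$ because it agrees with $g \in \calC(V)$ on $V$ and with $0 \in \calC(U \setminus \mathrm{supp}\,\eta)$ on the complement of the support, and these glue). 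By hypothesis $\gerL_v(\eta g) \in \calB(U)$, hence $\gerL_v(\eta g)|_W = \gerL_{v|_W}(g|_W) \in \calB(W)$ since $\eta \equiv 1$ on $W$. As $x \in V$ was arbitrary, $\gerL_{v|_V}(g)$ restricts into $\calB$ on a neighborhood of every point of $V$, and since $\calB$ is a sheaf this forces $\gerL_{v|_V}(g) \in \calB(V)$. Therefore $v \in \mathrm{Der}_{\calC, \calB}(U)$.

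The only mild subtlety --- and the step I would be most careful about --- is justifying that $\eta g$ (extended by zero) genuinely belongs to $\calC(U)$. This needs both hypotheses: the module structure gives $\eta g \in \calC(V)$ (as $\eta|_V \in C^\infty(V)$ and $g \in \calC(V)$), and the sheaf property glues $\eta g$ on $V$ with the zero section on the open set $U \setminus \mathrm{supp}\,\eta$ --- these two opens cover $U$ and the sections agree on the overlap (both are zero there, since $\eta$ vanishes on a neighborhood of $\partial(\mathrm{supp}\,\eta)$). One should note that $0 \in \calC$ over any open because $\calC$ is linear (or a module). Everything else is the routine locality of the Lie derivative, $\gerL_{v|_V}(g)|_W = \gerL_{v|_W}(g|_W)$, which has already been used repeatedly in the preceding proofs of this section.
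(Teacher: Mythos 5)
Your proof is correct and uses essentially the same argument as the paper: extend a local section $g \in \calC(V)$ to $\calC(U)$ by multiplying with a bump function (invoking the $C^\infty$-module structure and the sheaf gluing), apply the global hypothesis, and then localize back using the sheaf property of $\calB$. The paper omits the trivial inclusion you spell out, but otherwise the two proofs are identical.
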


\begin{proof}
Let $v \in \gerX_X(U)$ be such that $\gerL_v(f)$ for all $f \in \calC(U)$.
Let $V\sub U$ be an open subset and $f \in \calC(V)$.
For a given $x\in V$, pick a smooth function $\xi: U \!\to\! [0,1]$ that is equal to one on a neighborhood of $x$ and is equal to zero on an open neighborhood of $U\backslash V$.
Since $\calC(V)$ is a $C^\infty(V)$-module, we have $\xi\cdot f\in \calC(V)$.
Moreover, $\xi\cdot f$ extends by zero to a smooth function on $U$ and, since $\calC$ is a sheaf, $\xi\cdot f \in \calC(U)$.
Hence
    \[
        \gerL_{v|_V}(f)\big|_W
        =
        \gerL_{v|_{V\cap W}}(f|_W)
        =
        \gerL_{v|_{V\cap W}}((\xi\cdot f)|_W)
        =
        \gerL_{v|_V}(\xi\cdot f)|_W
        \;\in\;
        \calB(W).
    \]
Since $\calB$ is a sheaf, it follows that $\gerL_{v|_V}(f) \in \calB(V)$.
\end{proof}

\subsection{Isotopies and Vector Fields}

We look at the notion of a smooth path in a pseudogroup. Clearly, such a path should assign to each time $t \in [0,1]$ an element $(U^t,\phi^t)$ of the pseudogroup, and do so in a coherent, smooth way. As usual when defining paths of functions, it is more convenient to consider maps 
    $
        [0,1]\times X\to X,
    $
satisfying some conditions, instead of maps $[0,1]\to C^\infty(X,X)$.

For an open set $U\sub [0,1] \!\times\! X $, and $(t, x) \in U$, introduce the notation
    \[
        U^t
        \defn
        U \cap \big( \set{t} \times X \big),
        \qquad
        U_x
        \defn
        U \cap \big([0,1]\times \set{x}\big).
    \]
Thinking of $t$ as time, we will assume that the open set $U^t$ can only shrink as time passes:
    \[
        U^t\sub U^s,
        \quad\forall\,
        s\leq t.
    \]
This is equivalent to the $U_x$ being intervals containing zero for all $x\in U^0$.

\begin{definition}

A \textbf{local isotopy}\index{local isotopy} is a smooth map $\phi:U\to X$, with ${U\sub [0,1]\times X}$ as above, for which $\phi^0=\id\!|_{U^0}$ and $\phi^t:U^t\to X$ is a local diffeomorphism for all $t$.

We say that a local isotopy $\phi$ lies in $\pgrp$ if $\phi^t \in \pgrp(U^t)$ for all $t$.
\end{definition}

The process of integration assigns to a local time-dependent vector field $v:V\to TX$, where $V\sub [0,1]\times X$, a local isotopy $\varphi_v:U\to X$ given by the ordinary differential equation 
    \[
        \frac{\dd}{\dd t}\varphi_v^t
        =
        v^t\circ \varphi_v^t,
        \qquad
        \varphi^0_v=\id\!|_{U^0}.
    \]
If one requires that $U$ is the largest domain on which $\varphi_v$ can be defined whilst satisfying this equation, then $\varphi_v$ becomes unique. This local isotopy $\varphi_v$ is called the total flow of $v$.

\begin{lemma}
\label{lemma:pgrp:one_parameter}
Let $\pgrp$ be a pseudogroup on $X$ and $v\in\gerX_X(V)$.
Then the total flow $\varphi_v^t$ of $v$ lies in $\pgrp$ if and only if for every $x\in U^0=V$ there is a neighborhood $W\sub U^0$ of $x$ and an $\epsilon > 0$ such that
    \[
        W\sub U^t,
        \quad
        \varphi^t_v\vert_W \in \pgrp(W),
        \quad\forall\;
        t\in [-\epsilon, \epsilon].
    \]
\end{lemma}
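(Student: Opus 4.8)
The statement is a local criterion for when a flow stays in a pseudogroup: the ``only if'' direction is trivial (if $\varphi_v^t \in \pgrp(U^t)$ for all $t$, restrict to any $W$ with $\overline{W}$ compact in $U^0$ and use that $U^t$ shrinks continuously to find a uniform $\epsilon$, or simply take $W = U^t$ and $\epsilon$ anything — actually the honest point is that the pseudogroup condition $\varphi_v^t|_{U^t}\in\pgrp(U^t)$ gives $\varphi_v^t|_W \in \pgrp(W)$ by the restriction axiom, and one extracts the uniform $\epsilon$ from the fact that $U_x$ is an interval containing $0$). So the content is the ``if'' direction, and the plan is to upgrade the local-in-time, local-in-space membership into global-in-time membership using the group-like and sheaf-like axioms.

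First I would fix $t_0 \in U_x$ for a given $x$ and aim to show $\varphi_v^{t_0}|_{\text{nbhd of }x} \in \pgrp$. The key algebraic fact is the cocycle (flow) identity: for $s,t$ with the relevant points in the domain,
\[
    \varphi_v^{t+s} = \varphi_v^{t} \circ \big(\varphi_v^{s}\circ(\varphi_v^{t})^{-1} \circ \varphi_v^{t}\big),
\]
or more usefully $\varphi_v^{t+s}\circ(\varphi_v^{t})^{-1} = $ the time-$s$ flow of the pushed-forward vector field $(\varphi_v^t)_* v^{t+\cdot}$ — but since we only have $v$ itself (time-independent here, $v\in\gerX_X(V)$), the cleanest statement is the one-parameter-group identity $\varphi_v^{t+s} = \varphi_v^s \circ \varphi_v^t = \varphi_v^t\circ\varphi_v^s$ wherever defined. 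Using the hypothesis, near $x$ there is $W_x\ni x$ and $\epsilon_x>0$ with $\varphi_v^t|_{W_x}\in\pgrp(W_x)$ for $|t|\le\epsilon_x$. Then by the composition and inversion axioms, $\varphi_v^t|_{\text{(suitable open)}}\in\pgrp$ for all $t$ that can be written as a finite sum of steps of size $\le\epsilon_x$ along a point's trajectory — this is the standard ``open-closed'' or ``chain along the trajectory'' argument.

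Concretely: for fixed $x$, consider the trajectory $\gamma(t)=\varphi_v^t(x)$ for $t\in U_x=(a,b)\ni 0$. For each $\tau\in[0,t_0]$ (assuming $t_0>0$; the negative case is symmetric) the hypothesis applied at the point $\gamma(\tau)$ gives an open $W_\tau\ni\gamma(\tau)$ and $\epsilon_\tau>0$ with $\varphi_v^s|_{W_\tau}\in\pgrp$ for $|s|\le\epsilon_\tau$. By compactness of $[0,t_0]$ choose a partition $0=\tau_0<\tau_1<\dots<\tau_N=t_0$ with $\tau_{i+1}-\tau_i<\epsilon_{\tau_i}$ and $\gamma(\tau_{i+1})\in W_{\tau_i}$; then shrink the $W_{\tau_i}$ to opens $W_i'\ni\gamma(\tau_i)$ so that $\varphi_v^{\tau_{i+1}-\tau_i}(W_i')\sub W_{i+1}'$ and $\varphi_v^{\tau_{i+1}-\tau_i}|_{W_i'}\in\pgrp(W_i')$ by the restriction axiom. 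Composing, $\varphi_v^{t_0}|_{W_0'}\in\pgrp(W_0')$ by the composition axiom, and $W_0'$ is an open neighborhood of $x$. Since $x\in U^{t_0}$ was arbitrary, the local diffeomorphism $\varphi_v^{t_0}:U^{t_0}\to X$ lies in $\pgrp_{(y)}$ at every germ, hence by the collation axiom $\varphi_v^{t_0}|_{U^{t_0}}\in\pgrp(U^{t_0})$. The main obstacle — really the only place care is needed — is the bookkeeping of shrinking domains: one must verify that the finitely many shrinkings $W_i'$ can be arranged compatibly so that the composition is defined on a genuine neighborhood of $x$ (this is where one uses continuity of $t\mapsto\varphi_v^t(x)$ and openness of each $\varphi_v^t$), and that the partition can indeed be chosen by a Lebesgue-number argument on the compact interval $[0,t_0]\sub U_x$. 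Everything else is a direct invocation of the pseudogroup axioms (b), (c), (d), (e).
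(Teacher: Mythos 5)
Your approach is the same as the paper's (cover the trajectory, compactness of $[0,t_0]$, then chain compositions via the group axioms), but as written the partition step has a gap. You claim a partition $0=\tau_0<\dots<\tau_N=t_0$ with $\tau_{i+1}-\tau_i<\epsilon_{\tau_i}$, and then justify this by a Lebesgue-number argument. The Lebesgue number lemma for the cover $\{(\tau-\epsilon_\tau,\tau+\epsilon_\tau)\}_\tau$ only gives a uniform $\delta>0$ so that every interval of diameter $<\delta$ lies in \emph{some} $(\sigma-\epsilon_\sigma,\sigma+\epsilon_\sigma)$; it does not bound $\tau_{i+1}-\tau_i$ by $\epsilon$ \emph{evaluated at $\tau_i$ itself}, and a greedy choice $\tau_{i+1}=\tau_i+\epsilon_{\tau_i}/2$ can fail to terminate in finitely many steps (take $\epsilon_\tau\sim 1-\tau$ on $[0,1)$). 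So the decomposition $\varphi^{t_0}_v=\varphi^{\tau_N-\tau_{N-1}}_v\circ\cdots\circ\varphi^{\tau_1-\tau_0}_v$ with each factor directly in $\pgrp$ via the hypothesis at $\gamma(\tau_i)$ is not justified.

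The paper closes exactly this hole by taking a finite subcover with centers $r_0=0,\dots,r_k=t$, choosing overlap times $q_i\in I_i\cap I_{i+1}$ with $r_i<q_i<r_{i+1}$, and then writing each intermediate step as
\[
\varphi^{q_i-q_{i-1}}_v = \varphi^{q_i-r_i}_v \circ \big(\varphi^{q_{i-1}-r_i}_v\big)^{-1},
\]
so that \emph{both} exponents $q_i-r_i$ and $q_{i-1}-r_i$ are within $\epsilon(r_i)$ of $0$, where the hypothesis actually applies. Each small-time piece then sits in $\pgrp$ on a suitable neighborhood of $\gamma(r_i)$, and composing and inverting gives the full time-$t$ flow near $x$. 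This forward-plus-inverse trick, anchored at the centers of the subcover, is the essential device your version is missing. Alternatively, the ``open-closed'' argument you mention in passing (show that $\{t : \varphi^t_v$ has germ at $x$ in $\pgrp_{(x)}\}$ is open and closed in the interval $U_x$) also works and avoids the partition bookkeeping entirely — it would have been a valid and genuinely different route had you carried it out.
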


\begin{proof}

Suppose that $t>0$ and $x\in U^t$ are given. We will show that there is a neighborhood $W^t\sub U^t$ of $x$ such that $\varphi^t_v|_{W^t}$ lies in $\pgrp(W^t)$ by writing it as a finite composition of maps that we know lie in $\pgrp$. Since this then holds for any $x\in U^t$, it follows that $\varphi^t_v\in \pgrp(U^t)$. Finally, the case where $t<0$ follows by considering the vector field $-v$.

Note that $\varphi^t_v(U^t)\sub U^0$ for all $t$. By applying the hypothesis to $x^r\defeq\varphi^r_v(x)$ for all $r \in [0,t]$, we find a  neighborhood 
    $
        W(r)\sub U^0
    $
of $x^r$ and an $\epsilon(r) > 0$ such that
    \[
        W(r)\sub U^s,
        \quad
        \varphi_v^s \vert_{W(r)} \in \pgrp(W(r)),
        \qquad\forall\;
        s \in [-\epsilon(r), \epsilon(r)].
    \]
The open intervals 
    $
        \big(
            r-\epsilon(r), r+\epsilon(r)
        \big)
    $ 
cover $[0,t]$, so we can choose a finite subcover. 
This subcover can be chosen to be of the form
    \[
        I_i \defeq
        \big(
            r_i-\epsilon(r_i), r_i + \epsilon(r_i)
        \big),
        \quad
        W_i \defeq W(r_i),
        \qquad
        i=0,\ldots, k,
    \]
so that $r_0=0$ and $r_k=t$, and only consecutive intervals overlap. Now pick an intermediate time $q_i$ in each intersection:
    \[
        q_i 
        \in 
        I_i
        \cap 
        I_{i+1},
        \quad
        r_i < q_i < r_{i+1},
        \qquad
        i = 1, \ldots, k-1.
    \]
Since both
    $
        \varphi^{q_{i-1} - r_i}_v|_{W_i}
    $
and
    $
        \varphi^{q_i- r_i}_v|_{W_i}
    $
lie in $\pgrp(W_i)$,
by the group-like axioms of $\pgrp$ we have
    \[
        \varphi^{q_i-q_{i-1}}_v %\vert_{W_i}
        =
        \varphi^{q_i-r_i}_v
        \circ
        (\varphi^{q_{i-1} - r_i}_v)^{-1} %\vert_{W_i},
        \in \pgrp(W'_i)
        \qquad
        i=2,\ldots, k-1,
    \]
on the domain 
    $
        W'_i
        \defn
        \varphi^{ q_{i-1} - r_i }_v(W_{i+1}).
    $
By the composition axiom of $\pgrp$ we have that
    \[
        \varphi^t_v
        =
        \varphi^{t-q_{k-1}}_v
        \circ
        \varphi^{q_{k-1}-q_{k-2}}_v
        \circ
        \cdots
        \circ
        \varphi^{q_2-q_1}_v
        \circ
        \varphi^{q_1}_v
    \]
lies in $\pgrp$ on a neighborhood of $x$. These neighborhoods cover $U^t$, so by the sheaf-like axioms of $\pgrp$ it follows that
    $
        \varphi^t_v : U^t \to X
    $
lies in $\pgrp(U^t)$.
\end{proof}

\begin{question}
What are the \emph{connected components} of a pseudogroup?
\end{question}

\section{Closed Pseudogroups}
%\etocsettocstyle{\subsubsection*{Local Contents}}{}
%\etocsettocdepth{2}
%\localtableofcontents

We introduce \emph{closed} pseudogroups, and show that they already share some interesting properties with Lie pseudogroups.

We start by recalling the compact-open $C^\infty$-topology on spaces of smooth functions, in particular related to $\diff_X$. This can be done conveniently using the language of jet bundles: at a point $x\in X$, consider all smooth maps $U\to Y$, defined on open neighborhoods $U\sub X$ of $x$, and mapping into a fixed manifold $Y$. Two such maps are $k$-jet equivalent if their value and their derivatives up to order $k$ agree at $x$, where their derivatives are computed by first choosing charts around $x$ and around their value at $x$. The equivalence class of a particular $f:U\to Y$ is called the $k$-jet of $f$ at $x$ and is denoted by $j^k_x(f)$. The $k$-th jet bundle is the bundle of all $k$-jets:
    \[
        J^k(X,Y)
        \defeq
        \medcup_{x\in X}
        \big\{
            j^k_x(f)
            \sep
            f\in C^\infty(U, Y),
            \;
            x\in U\sub X
        \big\}.
    \]
It is a locally trivial fiber bundle over $X$, and the assignment 
    \[
        j^k(f):x\longmapsto j^k_x(f)
    \]
is a smooth local section for any smooth map $f:U\to Y$. Jet bundles allow for coordinate free arguments about partial derivatives of smooth maps.

Let $U\sub X$ be an open subset. 
A basic $C^k$-open subset of $C^\infty(U, Y)$ consists of a compact subset $K\sub U$ and an open subset 
    $
        O
        \sub 
        J^k(X, Y)\vert_{K},
    $
and it is defined as the set
    \[
        \calO(K, O)
        \defn
        \big\{
            f \in C^\infty(U, Y)
            \sep
            j^k(f)(K) \sub O 
        \big\}.
    \]
The \textbf{compact-open $C^k$-topology} is generated by all basic $C^k$-open subsets, and the compact-open $C^\infty$-topology is the union of the compact-open $C^k$-topologies for all $k$.

The strong $C^k$-topology on $\diff_X(U)$ forgoes on the requirement that $K$ is compact: 
its opens are generated by the $\calO(K,O)$ for all $K\sub U$ that are \emph{closed} in $U$.

\begin{definition}
\label{def:closed_pseudogroup}
Call a pseudogroup $\pgrp$ \textbf{closed}\index{pseudogroup!closed} if $\pgrp(U)$ is a closed subset of $\diff_X(U)$ w.r.t.\  the compact-open $C^\infty$-topology for all open sets $U\sub X$.

Likewise, call a Lie algebra sheaf $\calA$ closed if $\calA(U)$ is a closed subset of $\gerX(U)$ w.r.t.\ the compact-open $C^\infty$-topology for all open sets $U\sub X$.
\end{definition}

\begin{example}
$\diff_\omega$ and $\diff_g$ from examples \ref{exa:pseudogroup_poisson} and \ref{exa:pseudogroup_riem} are closed pseudogroups. They all fall under the class discussed in section \ref{sec:jet_determinacy}.
\end{example}

\begin{example}
\label{exa:closed_intersection}
If $\pgrp$ and $\calQ$ are closed pseudogroups, then so is $\pgrp \cap \calQ$.
\end{example}

%\begin{definition} 
%Call a Lie group action $\rho$ of $G$ on $X$ \textbf{locally faithful}\index{locally faithful} 
%when, for all $g, h\in G$ and any open set $U\sub X$, 
%    \[
%        \rho(g)|_U = \rho(h)|_U
%        \;\implies\;
%        g=h.
%        \qedhere
%    \]
%\end{definition}

\begin{example}
\label{exa:closed_lie_group}
Suppose that $X= \R^m$ and $G\sub \mathrm{GL}_m$ is a closed subgroup, then the pseudogroup $\diff_G$ from example \ref{exa:lie_group} is closed.
\end{example}

\begin{proof}
From example \ref{exa:lie_group} we know that 
    \[
        \diff_G(U) = \{ \phi_g \sep g \in \underline{G}(U) \} \medcup \diff_X(U).
    \]
Suppose that $\phi_n \in \diff_G(U)$ is a sequence of local diffeomorphisms that converges to $\phi \in \diff_{\R^m}(U)$.
Then for every $x \in U$ there is an open neighborhood $V\sub U$ such that for all $n$ there is a unique $g_n \in G$ such that $\phi_n|_V = \rho(g_n)|_V$.
Since $\phi_n|_V$ converges to $\phi|_V$ w.r.t. the $C^1$-topology, it follows that $g_n = \dd_y\phi_n$ converges to $\dd_y \phi$ for all $y\in V$.
This shows that $g \defeq \dd_y \phi \in G$ and $\phi|_V = \rho(g)|_V$.
\end{proof}

\begin{example}
$\diff_\calC^\vee$ from example \ref{exa:pseudogroup_of_invariants} is a closed pseudogroup if $\calC$ is a closed subsheaf of the smooth functions on $X$. This follows from the fact that the composition map,
    \[
        c_f:
        \diff_X(U)
        \longto
        C^\infty(U)
        \sep
        \phi \mapsto f\circ \phi,    
    \]
is a $C^k$-to-$C^k$ continuous map for all $f \in C^\infty(\R)$ and all $k$, and so is the inversion map.
\end{example}

The compact-open $C^\infty$-topology is coarser than the strong topology.
So there is a potentially weaker notion of closed pseudogroup, where $\pgrp(U)$ should be a closed subset of $\diff_X(U)$ w.r.t.\  the strong $C^\infty$-topology.

Note that $\diff_X(U)$ is not a closed subset of $C^\infty(U,X)$, similar to how $\GL(\R^m)$ is not a closed subset of $\End(\R^m)$.
It is not an open set either, at least not for the compact-open topologies, but in lemma \ref{lemma:close_to_identity} we describe its open character.
We stress to the reader that $\pgrp$ is considered closed if $\pgrp(U)$ is closed as a subset of $\diff_X(U)$.

We will quickly discuss several other possible definitions and show why they are equivalent. 
To this end we discuss sequences in $\diff_X$, and what it means for a sequence to converge. Namely, a sequence of local diffeomorphisms assigns to every $n\in \N$ a local diffeomorphism $(U_n,\phi_n)\in \diff_X$, possibly varying the domain.

\begin{definition}
We say that a sequence $(U_n, \phi_n) \in \diff_X$ converges on an open subset $U\sub X$ if there exists a $\phi\in \diff_X(U)$ such that for all $x\in U$ there is an open neighborhood $U(x) \sub U$ of $x$ and an $N(x) \in \N$ such that 
    \[
        U(x) \sub U_n,
        \quad\forall\;
        n\geq N(x),
    \]
and such that the sequence $\phi_n\vert_{U(x)}$ converges to $\phi\vert_{U(x)} \in C^\infty(U(x), X)$ w.r.t.\  the compact-open $C^\infty$-topology.
\end{definition}

\begin{lemma}
\label{lemma:pseudogroup_limits}
For any sequence $ \phi_n \in \diff_X(U_n)$ there is a largest open set $U$ on which it converges.
\end{lemma}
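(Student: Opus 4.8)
\textbf{Proof plan for Lemma \ref{lemma:pseudogroup_limits}.}

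The plan is to define $U$ directly as the union of \emph{all} open sets on which the sequence converges, and then verify two things: first, that this union is again a set on which the sequence converges (so it is the largest such set), and second, that the candidate limit is well-defined, i.e.\ the limits produced on overlapping open sets agree. The second point is where the actual content lies, since the definition of convergence bundles together a limit diffeomorphism $\phi$ and a covering-by-neighborhoods condition, and a priori different open sets could produce incompatible $\phi$'s.

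First I would show uniqueness of the limit on a connected open set, or more simply: if $\phi_n$ converges to $\phi$ on $V$ and to $\psi$ on $W$, then $\phi|_{V\cap W} = \psi|_{V\cap W}$. Indeed, for $x\in V\cap W$ pick neighborhoods $U(x)\sub V$ and $U'(x)\sub W$ as in the definition and an $N$ beyond both thresholds $N(x), N'(x)$; on $U(x)\cap U'(x)$ the sequence $\phi_n$ converges (w.r.t.\ the compact-open $C^\infty$-topology, hence in particular pointwise) simultaneously to $\phi$ and to $\psi$, so $\phi=\psi$ there. Since such $x$ exhaust $V\cap W$, we get $\phi|_{V\cap W}=\psi|_{V\cap W}$. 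This is the key compatibility statement.

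Next, let $\{V_\alpha\}_{\alpha\in A}$ be the collection of all open subsets of $X$ on which $\phi_n$ converges, with associated limits $\phi_\alpha\in\diff_X(V_\alpha)$, and set $U\defeq\bigcup_\alpha V_\alpha$. By the compatibility just proved, the $\phi_\alpha$ agree on overlaps, so they glue to a single smooth map $\phi: U\to X$ (using that $\diff_X$ is a sheaf, or just the sheaf property of $C^\infty(-,X)$); moreover $\phi$ is a local diffeomorphism because each $\phi_\alpha$ is, and the local-diffeomorphism condition is local. Finally I would check that $\phi_n$ converges to this $\phi$ on $U$: given $x\in U$, choose $\alpha$ with $x\in V_\alpha$, and then the neighborhood $U(x)\sub V_\alpha\sub U$ and threshold $N(x)$ witnessing convergence on $V_\alpha$ witness it on $U$ as well, since $\phi|_{U(x)}=\phi_\alpha|_{U(x)}$. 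Hence $U$ is an open set on which the sequence converges, and by construction it contains every such set, so it is the largest one.

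I do not expect any serious obstacle here; the only mild subtlety is bookkeeping the two layers of ``locality'' in the definition of convergence (the limit map is global on $V_\alpha$, but the $C^\infty$-convergence is only asserted on the small neighborhoods $U(x)$), and making sure the gluing of the $\phi_\alpha$ uses only pointwise agreement, which the compatibility lemma supplies. One should also note in passing that $A$ is nonempty only if the sequence converges \emph{somewhere}; if it converges nowhere then $U=\emptyset$ works vacuously, so the statement holds in all cases.
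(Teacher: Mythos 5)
Your proof plan has the same overall shape as the paper's (take $U$ to be the union of all opens on which the sequence converges, glue the local limits, check the definition), but it skips the one step that actually carries all the weight, and the way it is skipped is incorrect.

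Recall that in this paper a \emph{local diffeomorphism} $(U,\phi)\in\diff_X(U)$ is an open \emph{embedding}, i.e.\ it is required to be injective on all of $U$ --- not merely of maximal rank, which is the ``usual'' notion the paper explicitly sets aside. When you write ``$\phi$ is a local diffeomorphism because each $\phi_\alpha$ is, and the local-diffeomorphism condition is local,'' you are silently switching to the maximal-rank notion, which \emph{is} local. Injectivity on $U$ is a global condition and does not follow from injectivity on each $V_\alpha$: for instance two disjoint open intervals $V_1,V_2\sub\R$ can each carry an embedding into $\R$ with overlapping images, they agree on their (empty) overlap, yet the glued map is not injective. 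So after your gluing step you only know that $\phi\in C^\infty(U,X)$ has non-singular differential everywhere; you do not yet know $\phi\in\diff_X(U)$, and so you have not verified that ``$\phi_n$ converges to $\phi$ on $U$'' in the sense of the definition, which requires a limit \emph{in} $\diff_X(U)$.

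This missing injectivity is exactly where the paper's proof spends all its effort. It assumes $\phi(x)=\phi(y)$ for distinct $x,y\in U$, takes disjoint compact balls $A\ni x$ and $B\ni y$ on which $\phi$ is invertible, uses the definition of convergence to get $A,B\sub U_n$ for large $n$, then an argument with Lemma \ref{lemma:inverse} to show $z=\phi(x)$ lies in both $\phi_n(A)$ and $\phi_n(B)$ for large $n$, contradicting the injectivity of $\phi_n$. Your plan needs a step of this kind; your compatibility lemma (agreement of limits on overlaps) is correct and useful for the gluing, but it does not address injectivity of the glued map.
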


\begin{proof}
Let $U$ be the union of all open sets on which $ \phi_n $ converges. 
Since the limit functions agree on overlaps, let $\phi \in C^\infty(U, X)$ be the unique function that extends all limits. 
Remark that the differential of $\phi$ is non-singular, so it suffices to show that $\phi$ is injective. 
Suppose not, and pick $x, y \in U$ such that $\phi(x)=\phi(y)$, and write $z=\phi(x)$. 
Now pick a compact ball $A\sub U$ around $x$ and a compact ball $B\sub U$ around $y$ that do not intersect, and such that $\phi$ is invertible on both $A$ and $B$. 
Then by assumption we have $A, B \sub U_n$ for large enough $n$. 

Now let $A'\sub \mathrm{int}(A)$ be a smaller compact ball around $x$. 
We claim that $\phi_n(A')\sub \phi(A)$ for large enough $n$. 
Suppose not, then there is a sequence $x'_n \in A'$ such that $\phi_n(x'_n) \notin \phi(A)$ for all $n$. 
Since $A'$ is compact, we may assume that $x'_n$ converges to some $x'\in A'$ possibly by passing to a convergent subsequence. 
On one hand we have that $\phi(x') \in \phi(A')$ as the limit of $\phi(x'_n)$, since $\phi(A')$ is closed, while on the other hand we have that $\phi(x') \notin \phi(\mathrm{int}(A))$ as the limit of $\phi_n(x'_n)$. 

By applying lemma \ref{lemma:inverse} to 
    \[
        (\phi|_A)^{-1} \circ (\phi_n\vert_{A'}):
        A' \longto A 
    \]
we obtain that $x\in (\phi|_A)^{-1}(\phi_n(A'))$ and thus that $z \in \phi_n(A)$ for large enough $n$. 
Similarly we have that $z \in \phi_n(B)$, but this contradicts the injectivity of the $\phi_n$.
\end{proof}

\begin{definition}
\label{def:lim_of_loc_seq}
The limit of a sequence $\phi_n \in \diff_X(U_n)$ is the local diffeomorphism $\phi \in \diff_X(U)$ such that the sequence converges to $\phi$ on $U$ and such that $U$ is maximal with this property.
We denote the limit by
    \[
        (U, \phi) = \lim_{n\to \infty} (U_n, \phi_n).
        \qedhere
    \]
\end{definition}

\begin{proposition}
\label{prop:pgrp:closed}
The following are equivalent for a pseudogroup $\pgrp$:
\begin{itemize}

\item $\pgrp$ is closed.

\item $\pgrp$ is closed under taking limits (as defined above).

\item If $\phi \in \diff_X(U)$ is such that for every $x\in U$ there is an open neighborhood $V\sub U$ of $x$ and a sequence $\phi_n \in \pgrp(V)$ converging to $\phi|_V$, then $\phi\in \pgrp(U)$.

\end{itemize}
\end{proposition}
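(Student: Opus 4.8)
The statement is a ``three-way equivalence'' of the standard flavor, and I would prove it by showing the implications in a cycle. The second and third conditions are essentially the ``sequential'' reformulations of closedness at the level of stalks versus at the level of whole opens, so the content lies in converting between the jet-bundle topology of Definition \ref{def:closed_pseudogroup} and the notion of limit from Definition \ref{def:lim_of_loc_seq}, and then in exploiting the sheaf-like collation axiom (d)--(e) of Definition \ref{def:pseudogroup}.

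\textbf{First implication (closed $\Rightarrow$ closed under limits).} Suppose $\pgrp$ is closed and let $(U,\phi) = \lim_n (U_n,\phi_n)$ with $\phi_n \in \pgrp(U_n)$. Fix $x \in U$; by Definition \ref{def:lim_of_loc_seq} there is an open neighborhood $U(x) \sub U$ of $x$ and $N(x)$ with $U(x) \sub U_n$ for $n \geq N(x)$, and $\phi_n|_{U(x)} \to \phi|_{U(x)}$ in the compact-open $C^\infty$-topology. By the restriction axiom (d), $\phi_n|_{U(x)} \in \pgrp(U(x))$ for $n \geq N(x)$, and since $\pgrp(U(x))$ is closed in $\diff_X(U(x))$ and the limit $\phi|_{U(x)}$ is a local diffeomorphism (this is exactly the content that makes $\phi$ well-defined in Lemma \ref{lemma:pseudogroup_limits}), we get $\phi|_{U(x)} \in \pgrp(U(x))$. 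This holds for all $x \in U$, so by the collation axiom (e), $\phi \in \pgrp(U)$.

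\textbf{Second implication (closed under limits $\Rightarrow$ the local sequential criterion).} This is almost immediate: if $\phi \in \diff_X(U)$ and around each $x \in U$ there is $V \sub U$ and $\phi_n \in \pgrp(V)$ with $\phi_n \to \phi|_V$ in the compact-open $C^\infty$-topology on $C^\infty(V,X)$, then $(V,\phi|_V) = \lim_n (V,\phi_n)$ in the sense of Definition \ref{def:lim_of_loc_seq} (the maximal domain of convergence of the constant-domain sequence $\phi_n$ contains $V$; one restricts the limit to $V$ and uses restriction axiom (d) to stay inside $\pgrp$). Hence $\phi|_V \in \pgrp(V)$ by hypothesis, and then $\phi \in \pgrp(U)$ by collation axiom (e).

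\textbf{Third implication (the local sequential criterion $\Rightarrow$ closed).} Fix an open $U \sub X$ and let $\phi$ be in the closure of $\pgrp(U)$ inside $\diff_X(U)$ with respect to the compact-open $C^\infty$-topology. Since $\diff_X(U)$ sits inside $C^\infty(U,X)$, which is metrizable for the compact-open $C^\infty$-topology when $U$ has a countable basis (exhaust $U$ by compacts $K_m$ and use the countably many seminorms from the jet bundles $J^k$), being in the closure is the same as being a limit of a sequence $\phi_n \in \pgrp(U)$ in that topology. For any $x \in U$, pick a relatively compact open $V$ with $x \in V \sub \overline V \sub U$; then $\phi_n|_V \to \phi|_V$ and $\phi_n|_V \in \pgrp(V)$ by restriction axiom (d), so the hypothesis gives $\phi \in \pgrp(U)$. (If one does not want to invoke metrizability of the global space, one can instead run the argument directly through the definition of closedness as ``complement of an open set'' and a net/neighborhood-basis argument; but the metrizability route is cleanest.)

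\textbf{Main obstacle.} The only genuinely delicate points are: (i) justifying that the limit of a convergent sequence of local diffeomorphisms is again a local diffeomorphism on the maximal domain, which is precisely Lemma \ref{lemma:pseudogroup_limits} and may be cited; and (ii) the reduction ``closure $=$ sequential closure'' in the third implication, which rests on the fact that the compact-open $C^\infty$-topology on $C^\infty(V,X)$ is metrizable for $V$ second countable. Everything else is bookkeeping with the axioms (d) and (e). I expect (ii) to be the step most worth spelling out, since the paper works with the strong topology elsewhere and one must be careful that closedness is taken with respect to the compact-open (not the strong) topology, and that it is closedness as a subset of $\diff_X(U)$, not of $C^\infty(U,X)$.
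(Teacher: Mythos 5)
Your proof is correct and follows essentially the same cycle of implications as the paper's, with the same structure of reducing everything to the sheaf-like axioms and then handling the compact-open topology. The only notable variation is in the third implication, where you invoke metrizability of $C^\infty(V,X)$ (via second countability) to identify closure with sequential closure, while the paper appeals directly to first countability of $\diff_X(U)$; both justifications are valid and yield the same conclusion. Your treatment of the first implication is, if anything, slightly more careful in that you work directly with the neighborhoods $U(x)$ and indices $N(x)$ from the definition of convergence rather than asserting $\overline V \sub U_n$ for $n$ large, which strictly speaking requires a short compactness argument.
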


\begin{proof}
Suppose that $\pgrp$ is closed, and that we are given a sequence $(U_n,\phi_n)\in \pgrp$ which converges to $(U,\phi)\in\diff_X$. 
For any $x\in U$, pick an open neighborhood $V\sub U$ whose closure is compact and still contained in $U$. 
By assumption, there is an $N\in \N$ such that $\overline V\sub U_n$, for all $n\geq N$, and $\phi_n$ $C^k$-converges to $\phi$ on $\overline V$. 
%Surely, the lowest $N\in \N$ such that $\overline V\sub U_n$ for all $n\geq N$ does not depend on $k$.
So we conclude that $\phi_n\vert_V$ converges to $\phi\vert_V$ in $\diff_X(V)$ w.r.t.\  the $C^\infty$-topology. 
Hence we have $\phi\vert_V\in \pgrp(V)$, and by the sheaf-like axioms of $\pgrp$, we also have $\phi\in \pgrp$.

Now suppose that the second statement holds, and that we are given a $(U,\phi)\in \diff_X$ as described in the third statement. It follows directly that for every $x\in U$ there is an open neighborhood $V\sub U$ of $x$ such that $\phi\vert_V\in \pgrp$. Hence $\phi\in\pgrp$ by the sheaf-like axioms of $\pgrp$.

Finally, suppose that the third statement holds. In particular, this implies that $\pgrp(U)$ is a sequentially closed subset of $\diff_X(U)$.  The topology on $\diff_X(U)$ is first countable. This is evident from its definition via the basis open sets $\calO(K,O)$ and the use of compact exhaustions. Hence it follows that $\pgrp(U)$ is a closed subset of $\diff_X(U)$.
\end{proof}

\subsection{Jet Determinacy}
\label{sec:jet_determinacy}

We briefly discuss the most common class of closed pseudogroups. 
Recall that the collation axiom of pseudogroups can be phrased as being determined by its germs: A local diffeomorphism belongs to the pseudogroup if its germ belongs to it at every point. 
This axiom can be replaced by something stronger by looking at jets instead of the germs.

Let $k\in \N\cup \{\infty\}$. Recall that the $k$-th jet groupoid $J^k\diff_X$ is the subset of $k$-jets that are represented by a local diffeomorphism. This subset has the structure of a Lie groupoid by the functorial properties of $k$-jets. Any local diffeomorphism $(U,\phi)$ defines a local bisection 
    \[
        j^k(\phi):U\to J^k\diff_X.
    \] 
A \textbf{partial differential relation}\index{partial differential relation} on the local diffeomorphisms is a subset
    \[
        R \sub J^k\diff_X.
    \]
A local solution to $R$ is a local diffeomorphism $(U,\phi)\in\diff_X$ such that $j^k(\phi)$ maps into $R$. 
We denote the set of solutions by
    \[
        \sol_R(U)
        \defeq
        \big\{
            \phi\in \diff_X(U)
            \sep
            j^k(\phi)(U)\sub R
        \big\}.
    \]

\begin{example}
The basic $C^k$-open subsets $\calO(K,O)$ are the solution sets of a partial differential relation. In those cases $R$ is the union of $O$ and the complement of $J^k\diff_X\!|_K$.
\end{example}

A pseudogroup $\pgrp$ on $X$ defines an \emph{abstract} subgroupoid of $J^k\diff_X$ by
    \[
        J^k\pgrp
        \defn
        \big\{
            j^k_x(\phi)
            \in J^k\diff_X
            \sep
            (U,\phi)\in\pgrp,
            \;
            x\in U
        \big\}.
    \]
Any abstract full subgroupoid $G\sub J^k\diff_X$ is an example of a partial differential relation, and its set of solutions
    \[
        \pgrp(G)(U)
        \defeq      
        \sol_G(U)
        =
        \big\{
            \phi \in \diff_X(U)
            \sep
            j^k_x(\phi) \in G_x,
            \;\;\forall\;
            x\in U
        \big\}
    \]
defines a pseudogroup on $X$.
By taking $G=J^k\pgrp$, we observe that 
    \[
        \pgrp \sub \pgrp(J^k\pgrp).
    \] 
    
\begin{definition}
\label{def:jet_determined}
A pseudogroup $\pgrp$ is called \textbf{$k$-jet determined}\index{pseudogroup!$k$-jet determined} if 
    \[
        \pgrp=\pgrp(J^k\pgrp),
    \]
In other words, when it satisifies that a $\phi \in\diff_X(U)$ belongs to $\pgrp(U)$ if and only if $j^k_x(\phi)\in J^k_x\pgrp$ for every $x\in U$.

Furthermore, we call it a \textbf{Lie pseudogroup}\index{pseudogroup!Lie} if it is $k$-jet determined for some $k\in \N$, and $J^k\pgrp$ is a closed \emph{Lie} subgroupoid of $J^k\diff_X$ (a smooth embedded submanifold).
\end{definition}

\begin{proposition}
If $\pgrp$ is $k$-jet determined and $J^k\pgrp$ is a closed subset of $J^k\diff_X$, for some $k\in \N\cup\{\infty\}$, then $\pgrp$ is a closed pseudogroup.
\end{proposition}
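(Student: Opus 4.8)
The plan is to verify directly that $\pgrp(U)$ is closed in $\diff_X(U)$ with respect to the compact-open $C^\infty$-topology, using the sequential characterization from Proposition \ref{prop:pgrp:closed} (valid since $\diff_X(U)$ is first countable). So I would start with a sequence $\phi_n \in \pgrp(U)$ converging to some $\phi \in \diff_X(U)$, fix a point $x \in U$, and aim to show $j^k_x(\phi) \in J^k_x\pgrp$; by $k$-jet determinacy of $\pgrp$ this suffices (applied at every $x \in U$) to conclude $\phi \in \pgrp(U)$.

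The key step is continuity of the jet prolongation. First I would recall that for any open $V \sub U$ the assignment $\psi \mapsto j^k(\psi)$ is continuous from $\diff_X(V)$ with the compact-open $C^k$-topology into the space of sections of $J^k\diff_X$ over $V$; this is essentially the definition of the $C^k$-topology (convergence of a map together with its derivatives up to order $k$ is convergence of its $k$-jet). In particular, for each fixed $x$, the evaluation $\psi \mapsto j^k_x(\psi) \in J^k\diff_X$ is continuous. Hence $j^k_x(\phi_n) \to j^k_x(\phi)$ in $J^k\diff_X$. When $k = \infty$ this uses that convergence in the compact-open $C^\infty$-topology is convergence in every $C^k$, so the $k$-jet converges in each finite jet bundle, i.e.\ in $J^\infty\diff_X$ with the inverse limit topology.

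Now since $\phi_n \in \pgrp(U)$, by definition $j^k_x(\phi_n) \in J^k_x\pgrp \sub J^k\pgrp$. By hypothesis $J^k\pgrp$ is a closed subset of $J^k\diff_X$, so the limit $j^k_x(\phi)$ lies in $J^k\pgrp$ as well; and since $J^k\diff_X \to X$ is a fiber bundle and all the $j^k_x(\phi_n)$ sit in the fiber over $x$, the limit lies in the fiber over $x$, i.e.\ $j^k_x(\phi) \in J^k_x\pgrp$. As this holds for every $x \in U$ and $\pgrp = \pgrp(J^k\pgrp)$, we get $\phi \in \pgrp(U)$. The same argument works locally (replacing $U$ by small neighborhoods $V$ of each point, as in the third statement of Proposition \ref{prop:pgrp:closed}), which is in fact the form needed since convergence of a sequence of local diffeomorphisms is itself a local notion.

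The only genuinely delicate point is the bookkeeping around domains: a convergent sequence $\phi_n \in \diff_X(U_n)$ need not have all $U_n$ equal to $U$, so strictly one works on a neighborhood $V$ of $x$ with $\overline V$ compact and $\overline V \sub U_n$ for $n$ large (as in the proof of Proposition \ref{prop:pgrp:closed}), restricts there, and then invokes the collation/germ axiom to glue. I expect this domain-chasing to be the main obstacle to a fully clean write-up, but it is routine given the machinery already developed; the substantive input — continuity of $j^k$ and closedness of $J^k\pgrp$ — is immediate. No smoothness or Lie-groupoid structure of $J^k\pgrp$ is needed here, only that it is a closed subset, which is why this proposition is strictly weaker than (and a stepping stone toward) statements about Lie pseudogroups.
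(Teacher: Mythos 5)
Your proof is correct. It uses the same two ingredients as the paper's — closedness of the jet fibers and $k$-jet determinacy — but in dual form: you show $\pgrp(U)$ is sequentially closed, routing through Proposition \ref{prop:pgrp:closed} and the continuity of $\psi \mapsto j^k_x(\psi)$, whereas the paper shows the complement of $\pgrp(U)$ is $C^k$-open directly, by exhibiting for each $\phi \notin \pgrp(U)$ a point $x$ with $j^k_x(\phi) \notin J^k_x\pgrp$ and then taking the basic open set $\calO(\{x\}, O)$ with $O$ separating $j^k_x(\phi)$ from $J^k_x\pgrp$. The paper's version avoids any appeal to first-countability and also makes visible that only closedness of each source-fiber $J^k_x\pgrp \sub J^k_x\diff_X$ is needed — a formally weaker hypothesis than $J^k\pgrp$ being closed in $J^k\diff_X$, which the paper's proof explicitly records as the statement it actually establishes. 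Your own argument isolates this implicitly in the step where you pass from $j^k_x(\phi) \in J^k\pgrp$ to $j^k_x(\phi) \in J^k_x\pgrp$ by intersecting with the (closed) source-fiber over $x$; so in effect both proofs use nothing beyond closed fibers, and your closing remark that no smoothness or Lie groupoid structure on $J^k\pgrp$ enters is exactly the point of stating this proposition separately from the definition of a Lie pseudogroup.
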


\begin{proof}
In fact, we show that $\pgrp$ is closed if for some $k\in\N\cup\{\infty\}$ it is $k$-jet determined and $J^k\pgrp$ has closed source-fibers in $J^k\diff_X$, i.e. $J^k_x\pgrp$ is a closed subset of $J^k_x\diff_X$ for all $x\in X$.

Suppose that $\pgrp$ is $k$-jet determined, for finite $k$, and $J^k\pgrp$ has closed source fibers. Let $(U,\phi)$ be a local diffeomorphism in the complement of $\pgrp(U)$. Since $\pgrp$ is $k$-jet determined, this means there is a $x\in U$ such that $j^k_x(\phi)\not\in J^k_x\pgrp$. Recall that $J^k_x\pgrp$ is a closed subset of $J^k_x\diff_X$, so there is an open neighborhood $O\sub J^k\diff_X$ of $j^k_x(\phi)$ which is disjoint from $J^k_x\pgrp$. The set
    $
        \calO(\{x\}, O)
    $
is a compact-open $C^k$-open subset of $\diff_X(U)$. Its elements are not local solutions of the partial differential relation $J^k\pgrp$, so they cannot be elements of $\pgrp(U)$. It follows that $\pgrp(U)$ is a closed subset of $\diff_X(U)$.

Now suppose $\pgrp$ is $\infty$-jet determined and $J^\infty\pgrp$ has closed source-fibers. The former means that a local diffeomorphism $(U,\phi)$ belongs to $\pgrp$ if and only if $j^k_x(\phi)\in J^k_x\pgrp$ for all $x\in U$ and $k\in \N$. The latter means that if $j^\infty_x(\phi) \not\in J^\infty_x\pgrp$, then there is an $k=k(\phi)\in \N$ and an open neighborhood $O\sub J^{k}_x\diff_X$ of $j^{k}_x(\phi)$ which is disjoint from $J^k_x\pgrp$. 
\end{proof}

\begin{question}
Are all closed pseudogroups jet determined?
\end{question}

\subsection{Closure}

By the closure of a pseudogroup $\pgrp$ we will mean the smallest closed pseudogroup containing $\pgrp$.
In account of example \ref{exa:closed_intersection}, the closure is 
    \[
        \overline{\pgrp}
        \defeq
        \bigcap
        \big\{
            \calQ
            \text{ closed pseudogroup with $\pgrp\sub \calQ$}
        \big\}.
    \]
We will attempt to realize the closure more constructively by adding limits.

\begin{lemma}
\label{lemma:pseudogroup_closure}
The closure under limits of a pseudogroup $\pgrp$,
    \[
        \overline{\pgrp}^\mathrm{top}(U)
        \defeq
        \big\{
            \lim_{n\to \infty} \phi_n \in \diff_X(U)
            \sep
            \phi_n \in \pgrp(U_n)
        \big\},
    \]
is a prepseudogroup. 
\end{lemma}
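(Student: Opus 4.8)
The plan is to verify that $\overline{\pgrp}^{\mathrm{top}}$ satisfies the group-like axioms (a)--(c) and the restriction axiom (d) of Definition \ref{def:pseudogroup}, since a prepseudogroup is precisely a subset of $\diff_X$ satisfying all axioms except possibly collation (e). Throughout, the key technical tool is Lemma \ref{lemma:pseudogroup_limits}, which guarantees that every sequence $\phi_n \in \diff_X(U_n)$ has a well-defined maximal limit $(U,\phi)$, together with the elementary fact that composition and inversion of local diffeomorphisms are continuous for the compact-open $C^\infty$-topology (this is the same continuity used in Example \ref{exa:pseudogroup_of_invariants} via the maps $c_f$, applied now to $\diff_X$ itself).

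First I would check the identity axiom: $\id \in \pgrp(X) \sub \overline{\pgrp}^{\mathrm{top}}(X)$ by taking the constant sequence. For the restriction axiom, if $\phi = \lim_n \phi_n$ on $U$ with $\phi_n \in \pgrp(U_n)$ and $V \sub U$ is open, then $\phi_n|_{V \cap U_n} \in \pgrp(V\cap U_n)$ by the restriction axiom for $\pgrp$, and these converge to $\phi|_V$; since $V$ is contained in the domain of convergence, $\phi|_V \in \overline{\pgrp}^{\mathrm{top}}(V)$. For the inversion axiom, given $\phi = \lim_n \phi_n$ on $U$ with $\phi_n \in \pgrp(U_n)$, one has $\phi_n^{-1} \in \pgrp(\phi_n(U_n))$; the content here is that $\phi_n^{-1} \to \phi^{-1}$ on $\phi(U)$ in the appropriate local sense. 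This follows by working locally: around any point of $\phi(U)$, pick a small compact ball on which $\phi$ is a diffeomorphism onto its image, apply Lemma \ref{lemma:inverse} (as in the proof of Lemma \ref{lemma:pseudogroup_limits}) to conclude that for large $n$ the $\phi_n^{-1}$ are defined on a fixed neighborhood and $C^\infty$-converge to $\phi^{-1}$ there. Hence $\phi^{-1} \in \overline{\pgrp}^{\mathrm{top}}(\phi(U))$.

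The composition axiom is the one requiring the most care and is where I expect the main obstacle. Suppose $\phi = \lim_n \phi_n$ on $U$ with $\phi_n \in \pgrp(U_n)$, and $\psi = \lim_n \psi_m$ on $V$ with $\psi_m \in \pgrp(V_m)$. We must show $\phi \circ \psi \in \overline{\pgrp}^{\mathrm{top}}(\psi^{-1}(U))$. One extracts a single diagonal-type sequence: around any $x \in \psi^{-1}(U)$, choose a small neighborhood $W$ of $x$ with compact closure such that $\psi(\overline W) \sub U$ is compact, so that for large $n$ we have $\overline W \sub V_n$ and $\psi_n(\overline W)$ lies in a fixed compact subset of $U$, hence in $U_n$ for large $n$; then $\phi_n \circ \psi_n$ is defined on $W$ for large $n$, lies in $\pgrp(W)$ by the composition and restriction axioms of $\pgrp$, and $C^\infty$-converges to $\phi \circ \psi$ on $W$ by continuity of composition (uniformly on the compact set $\overline W$, using that $\phi_n \to \phi$ uniformly on the relevant fixed compact neighborhood of $\psi(\overline W)$). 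Covering $\psi^{-1}(U)$ by such $W$'s, patching the locally-defined limit sequences, and invoking maximality of the limit domain (Lemma \ref{lemma:pseudogroup_limits}), we conclude $\phi \circ \psi$ lies in $\overline{\pgrp}^{\mathrm{top}}(\psi^{-1}(U))$.

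The subtlety throughout is bookkeeping of shrinking domains: a limit of elements of $\pgrp$ need not have a uniformly controlled domain, so each axiom must be checked germ-by-germ and then reassembled, which is exactly why the outcome is only a \emph{pre}pseudogroup — the collation axiom (e) is not automatic, and indeed one does not expect $\overline{\pgrp}^{\mathrm{top}}$ to be collation-closed without further sheafification. I would end by remarking that combining $\overline{\pgrp}^{\mathrm{top}}$ with the sheafification operation $\overline{(\cdot)}^{\mathrm{sh}}$ (and possibly iterating) is the natural route toward the genuine closure $\overline{\pgrp}$, but that the present lemma isolates the purely topological step.
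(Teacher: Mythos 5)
Your proof follows essentially the same route as the paper's: verify the axioms one at a time, leaning on Lemma~\ref{lemma:pseudogroup_limits} for the existence of maximal limit domains and on the continuity of composition with respect to the compact-open $C^\infty$-topology, with the key bookkeeping step being the compactness argument that lets you extract a fixed neighborhood on which the composed sequence is eventually defined. The composition argument you give (pick $W$ compact, make $\psi(\overline W)$ compact in $U$, push both convergences to a fixed compact set, use $C^k\times C^k$-to-$C^k$ continuity) is the one the paper uses, with slightly different choices of nested balls.

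One detail deserves attention. In the restriction step you write ``since $V$ is contained in the domain of convergence, $\phi|_V\in \overline{\pgrp}^{\mathrm{top}}(V)$.'' Per Definition~\ref{def:lim_of_loc_seq}, the limit of a sequence is taken on the \emph{maximal} open set of convergence, so membership in $\overline{\pgrp}^{\mathrm{top}}(V)$ requires the maximal convergence domain of the restricted sequence $\phi_n|_{V\cap U_n}$ to be exactly $V$, not merely to contain it. The paper spells out this one-liner: if the restricted sequence converges on an open $W$, then any $x\in W$ satisfies $x\in U_n\cap V$ for $n$ large, forcing $W\sub V$. (The same care is implicitly needed for composition, though there one may instead use the already-proved restriction axiom to shrink the maximal limit domain down to $\psi^{-1}(U)$.)

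On the inversion axiom, your sketch via Lemma~\ref{lemma:inverse} applied on small compact balls is the natural completion; the paper's own proof of this lemma announces that step in its last sentence but does not carry it out, so you have in fact supplied detail that the paper leaves implicit. Your closing remark — that collation is the genuinely missing axiom and that $\overline{\pgrp}^{\mathrm{top}}$ must be combined with sheafification to reach the true closure — matches the discussion surrounding the lemma.
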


\begin{proof}
Suppose that $\phi_n\in \pgrp(U_n)$ converges to $\phi \in \diff_X(U)$ in the sense of definition \ref{def:lim_of_loc_seq}.
Let $V\sub U$ be an open subset. 
We claim that 
    \[
        \phi_n|_V \in \pgrp(U_n\cap V)
    \]
converges to $\phi|_V$, and thus that $\phi|_V \in \overline{\pgrp}^\mathrm{top}(V)$.
Surely, $\phi_n|_V$ converges to $\phi|_V$ on $V$.
Moreover, suppose $W$ is an open set such that $\phi_n|_V$ converges to some $\psi\in \diff_X(W)$ on $W$.
Then for any $x \in W$ on has that $x \in U_n\cap V$ for large enough $n$.
This already implies that $W\sub V$, and therefore $V$ is the largest open set on which $\phi_n|_V$ converges.

Suppose that moreover $\psi_n\in \pgrp(V_n)$ converges to $\psi \in \diff_X(V)$ in the sense of definition \ref{def:lim_of_loc_seq}.
We will show that $\phi_n\circ\psi_n \in \pgrp(\psi_n^{-1}(U_n))$ converges to $\phi\circ\psi \in \diff_X(\psi^{-1}(U))$ on 
    \[
        W\defeq\psi^{-1}(U).
    \]
Let $x \in W$. 
Then $x\in V$, hence there is an open neighborhood $A \sub V$ of $x$ such that $A\sub V_n$ for large enough $n$, and such that $\psi|_{A} \sub C^\infty(A, X)$ converges to $\psi|_{A}$.
Likewise, $\psi(x)\in U$, hence there is an open neighborhood $C\sub U$ of $\psi(x)$ such that $C\sub U_n$ for large enough $n$, and such that $\phi_n|_C \in C^\infty(C, X)$ converges to $\phi|_C$.
We may pick an open neighborhood $B\sub A$ of $x$ such that $\overline{B}$ is compact, and
    \[
        \overline{B} \sub A,
        \qquad
        \psi(\overline{B})\sub C.
    \] 
By the convergence of $\psi_n$, it follows that $\psi_n(\overline{B})\sub C$ for large enough $n$. 
Recall that the composition map
    \[
        C^\infty(C, X) \times C^\infty(B, C)
        \longto
        C^\infty(B, X)
    \]
is $C^k\!\times\!C^k$-to-$C^k$ continuous w.r.t. the compact open topologies for all $k$.
We conclude that the sequence $(\phi_n\circ\psi_n)|_B$ converges to $(\phi\circ\psi)|_B$ in $C^\infty(B, X)$, and this shows that  
    $
        (\phi_n \circ \psi_n)|_W
        \in
        \pgrp(\psi_n^{-1}(U_n))
    $
converges to $\phi\circ\psi$ on $W$.
Hence 
    $
        \phi\circ\psi \in \overline{\pgrp}^\mathrm{top}(W).
    $

Now we will show that $\phi_n^{-1}$ converges to $\phi^{-1}$ on $\phi(U)$.
\end{proof}

\begin{question}
When is $\overline{\pgrp}^\text{top}$ equal to the closure $\overline{\pgrp}$? And what about its sheafification?
\end{question}

%\input{pseudogroups/lie_algebra_sheaf}
%\clearpage
\section{The Lie Algebra Sheaf of a Closed Pseudogroup}
\label{sec:lie_algebra_sheaf}
%\etocsettocdepth{2}
%\localtableofcontents

We define the Lie algebra sheaf associated with a closed pseudogroup. For pseudogroups in general, there are several candidates, but each fails to be a Lie algebra sheaf. In the case of a closed pseudogroup these candidates coincide and form what we will call the Lie algebra sheaf $\calA$ of a closed pseudogroup $\pgrp$.
The first candidate is the set of all local vector fields whose total flow lies in $\pgrp$:
    \[
        \calA_\pgrp^\text{flow}
        \defeq
        \big\{
            v \in \gerX_X
            \sep
            \varphi_v^t \in \pgrp(U^t)
            \;\;\forall\; 
            t\in \R
        \big\}.
    \]
Here $\varphi_v^t:U\to X$ denotes the total flow of $v$. However, by lemma \ref{lemma:pgrp:one_parameter}, one can weaken the maximality condition, and instead only demand that there is an open neighborhood $V$ of $\{ 0\}\times U^0$ such that $\varphi_v^t\vert_{V^t}$ lies in $\pgrp$ for all $t$.
%Lemma \ref{lemma:pgrp:one_parameter} also shows that $\calA_\pgrp^\text{flow}$ is a sheaf.
Moreover, its vectors induce one-parameter subpseudogroups of $\pgrp$. Many basic properties about Lie groups related to integration can be proven by studying one-parameter subgroups. However, we believe that $\calA_\pgrp^\text{flow}(U)$ is not a Lie subalgebra in general: it may fail to be closed under the Lie bracket and under addition.

\begin{lemma}
Let $U, V\sub X$ be open sets.
\begin{itemize}

\item
$\calA_\pgrp^\text{flow}$ is a subsheaf of $\gerX_X$.

\item
If $v\in \calA^\text{flow}_\pgrp(V)$ and $\phi\in \pgrp(U)$ then also $\phi_*v\in \calA(\phi(V))$.

\end{itemize}
\end{lemma}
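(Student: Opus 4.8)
The two assertions are essentially about transporting the defining property of $\calA^\text{flow}_\pgrp$ through the sheaf axioms of $\pgrp$ and through conjugation by an element of $\pgrp$. I would handle them in this order, since the second uses the flow-naturality formula that is the crux of the first.

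For the subsheaf claim, the plan is to verify the presheaf (restriction) and gluing (collation) properties directly from the definition of the total flow. If $v\in \calA^\text{flow}_\pgrp(V)$ and $W\sub V$ is open, then the total flow $\varphi^t_{v|_W}$ has domain contained in that of $\varphi^t_v$, and on that domain $\varphi^t_{v|_W} = \varphi^t_v|_{(\cdot)}$ by uniqueness of solutions of the defining ODE. By the restriction axiom (d) of $\pgrp$, $\varphi^t_v|_{U^t}\in\pgrp(U^t)$ implies its restriction to any open subset also lies in $\pgrp$; by Lemma \ref{lemma:pgrp:one_parameter} (one only needs the flow to lie in $\pgrp$ near $\{0\}\times W$), this is exactly the condition for $v|_W\in\calA^\text{flow}_\pgrp(W)$. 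For gluing, suppose $v\in\gerX_X(U)$ restricts to elements of $\calA^\text{flow}_\pgrp$ over an open cover $\{U_i\}$. For a fixed $t$ and point $x\in U^t$, pick the $i$ with $x$ in (the relevant shrunk domain over) $U_i$; then $\varphi^t_v$ agrees near $x$ with $\varphi^t_{v|_{U_i}}\in\pgrp$, so $\varphi^t_v$ is locally in $\pgrp$ near every point of $U^t$, and the collation axiom (e) gives $\varphi^t_v|_{U^t}\in\pgrp(U^t)$. Again invoking Lemma \ref{lemma:pgrp:one_parameter} finishes it. I expect this part to be routine bookkeeping about shrinking domains.

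For the conjugation statement, the key is the standard naturality of flows: if $\phi\in\pgrp(U)$ and $v$ is a vector field, then on the appropriate domain
\[
    \varphi^t_{\phi_*v} = \phi \circ \varphi^t_v \circ \phi^{-1}.
\]
This is proved by checking that the right-hand side solves the defining ODE for the flow of $\phi_*v$ with the correct initial condition (at $t=0$ both sides are the identity on the relevant open set), together with uniqueness of the total flow. Granting this, for $v\in\calA^\text{flow}_\pgrp(V)$ and $x\in\phi(V)$ one has $\varphi^t_v|_{W}\in\pgrp(W)$ for $W$ a small neighborhood of $\phi^{-1}(x)$ and $t$ near $0$; then $\phi\circ\varphi^t_v\circ\phi^{-1}$ is a composition of three elements of $\pgrp$ (using axioms (a)--(c): $\phi\in\pgrp$, $\varphi^t_v|_W\in\pgrp$, $\phi^{-1}\in\pgrp$ by the inversion axiom applied to $\phi$), hence lies in $\pgrp$ on a neighborhood of $x$. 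So $\varphi^t_{\phi_*v}$ is locally in $\pgrp$ near every point of $\phi(V)$ for $t$ near $0$, and Lemma \ref{lemma:pgrp:one_parameter} gives $\phi_*v\in\calA^\text{flow}_\pgrp(\phi(V))$.

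The main obstacle is purely technical: carefully matching up the shrinking domains so that all compositions are defined and the hypotheses of Lemma \ref{lemma:pgrp:one_parameter} apply, and pinning down the precise domain of validity of the flow-conjugation identity (it holds only where both sides are defined, which requires $\varphi^t_v$ to map into the domain of $\phi$). I would state the flow-conjugation formula as a small separate observation (or cite it as standard) to keep the argument clean, and otherwise the proof is a direct application of the pseudogroup axioms together with Lemma \ref{lemma:pgrp:one_parameter}.
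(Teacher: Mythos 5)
Your proposal is correct and follows essentially the same route as the paper: the sheaf property reduces to the pseudogroup axioms combined with Lemma \ref{lemma:pgrp:one_parameter}, and the invariance under push-forward rests on the flow-conjugation identity $\varphi^t_{\phi_*v} = \phi\circ\varphi^t_v\circ\phi^{-1}$. The only difference is that you spell out the domain-matching bookkeeping (which the paper compresses into ``follows immediately'' and ``flows behave functorially'') — a worthwhile clarification, since the conjugation identity only holds where $\varphi^t_v$ lands in the domain of $\phi$, and it is precisely Lemma \ref{lemma:pgrp:one_parameter} that lets one work locally near each point and each $t$ near $0$.
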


\begin{proof}
That $\calA^\text{flow}_\pgrp$ is a sheaf follows immediately from lemma \ref{lemma:pgrp:one_parameter}.
Because flows behave functorially, we have
    $
        \varphi_{\phi_*v}^t
        =
        \phi \circ \varphi_v^t \circ \phi^{-1}
        \in \pgrp
    $
for all $\phi\in \pgrp$ and $t\in \R$.
This implies that $\phi_*v \in \calA^\text{flow}_\pgrp(\phi(V))$, and thus that $\calA^\text{flow}_\pgrp$ is invariant under push-forwards by $\pgrp$.
\end{proof}

The second candidate is the set of all local vector fields that are the first order approximation of a local isotopy in $\pgrp$. This should not necessarily give a subsheaf of the local vector fields, so we will work with its sheafification. Namely, for an open set $V\sub X$, define
    \[
        \calA_\pgrp(V)
        \sub 
        \gerX_X(V)
    \]
as follows: a vector field $v\in \gerX_X(V)$ lies in $\calA_\pgrp(V)$ if and only if for every $x\in V$ there is an open neighborhood $U\sub V$ of $x$ and a local isotopy $\phi^t \in \pgrp(U)$, for $t\in (-\epsilon, \epsilon)$, such that
    \[
        v|_U
        =
        \tfrac{\dd}{\dd t}\big|_{t=0} \phi^t.
    \]
It has the following properties.

\begin{lemma}
\label{lemma:sheaf_of_vector_spaces}
$\calA_\pgrp(U)$ is a vector subspace of $\gerX_X(U)$ for all open sets $U\sub X$. 
\end{lemma}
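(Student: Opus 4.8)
We must show that $\calA_\pgrp(U)$ is a linear subspace of $\gerX_X(U)$ for every open $U\sub X$. Since $\calA_\pgrp$ is a sheaf by construction, it suffices to check the closure properties germwise: given $u, v \in \calA_\pgrp(U)$ and $r\in \R$, we must show that near every point $x\in U$ the vector field $u + rv$ (resp.\ $rv$, and the zero vector field) is the velocity at $t=0$ of a local isotopy lying in $\pgrp$. The zero vector field is handled by the constant isotopy $\phi^t = \id$, which lies in $\pgrp$ by the identity axiom; and scalar multiples are handled by reparametrizing time, $\phi^t \mapsto \phi^{rt}$, which rescales the initial velocity by $r$ and still lies in $\pgrp$. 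So the real content is closure under addition.

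\textbf{Plan for closure under addition.} Fix $x\in U$. By hypothesis there are open neighborhoods $U_u, U_v \sub U$ of $x$ and local isotopies $\phi^t \in \pgrp(U_u)$, $\psi^t \in \pgrp(U_v)$ for $t\in(-\epsilon,\epsilon)$ with $\tfrac{\dd}{\dd t}|_{t=0}\phi^t = u|_{U_u}$ and $\tfrac{\dd}{\dd t}|_{t=0}\psi^t = v|_{U_v}$. The classical trick (exactly as in the proof that sums of vector fields exponentiate, which underlies Theorem \ref{pgrp:thm}) is to form the composite isotopy $\chi^t \defeq \phi^t \circ \psi^t$, defined and smooth on a possibly smaller neighborhood $W\sub U_u\cap U_v$ of $x$ and for small $t$; by the composition and restriction axioms of the pseudogroup, $\chi^t \in \pgrp(W')$ for an appropriate open $W'$. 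A direct computation with the chain rule gives
\[
    \tfrac{\dd}{\dd t}\big|_{t=0} \chi^t
    =
    \tfrac{\dd}{\dd t}\big|_{t=0} \phi^t
    +
    \tfrac{\dd}{\dd t}\big|_{t=0} \psi^t
    =
    (u+v)|_{W'},
\]
using that $\phi^0 = \psi^0 = \id$. Hence $u+v$ is locally the initial velocity of an isotopy in $\pgrp$, so $u+v\in \calA_\pgrp(U)$. Combined with the scalar-multiplication step, this shows $u + rv \in \calA_\pgrp(U)$ for all $r\in\R$, which is the assertion.

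\textbf{Bookkeeping and the main obstacle.} The only delicate point is domain-shrinking: $\phi^t$ maps $U_u$ into $X$, so $\psi^t(x)$ need not lie in $U_u$ for $x$ ranging over all of $U_v$, and one must shrink to a neighborhood $W$ of the base point and an interval $(-\epsilon',\epsilon')$ on which the composition is defined and is a local diffeomorphism. This is exactly the kind of argument packaged in the notational conventions for local isotopies ($U^t \sub U^s$ for $s\le t$, $U_x$ an interval about $0$) set up earlier in the excerpt, and in Lemma \ref{lemma:pgrp:one_parameter}; none of it obstructs the conclusion since we only need germs at $x$. I do not expect a genuine obstacle here — unlike for $\calA_\pgrp^\text{flow}$, where closure under bracket and sum is problematic because one needs the \emph{total} flow to lie in $\pgrp$, here we only demand a short-time isotopy, and composition of isotopies is an allowed pseudogroup operation. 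The proof is therefore a routine application of the composition/restriction axioms together with the first-order Taylor expansion of a composite at $t=0$, and I would present it in that order: reduce to germs, dispose of $0$ and scalar multiples by reparametrization, then treat the sum via $\phi^t\circ\psi^t$.
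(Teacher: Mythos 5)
Your proof is correct and takes essentially the same approach as the paper's: dispose of scalar multiples by time-reparametrization, and prove closure under addition by composing the two local isotopies $\phi^t\circ\psi^t$ and differentiating at $t=0$. The paper computes $\frac{\dd}{\dd t}(\Phi^t\circ\Psi^t)$ at general $t$ before specializing, but that is cosmetic; your extra discussion of domain-shrinking is harmless, and your observation that this argument requires only short-time isotopies (unlike $\calA_\pgrp^\text{flow}$) is exactly the right point to emphasize.
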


\begin{proof}
Closure under scalar multiplication is immediate, since one can rescale time by a scalar and then apply the chain rule. 
Addition follows from composing local isotopies. For suppose that $v,w\in \calA_\pgrp$. Given $x\in X$, fix local isotopies $\Phi$ for $v$ and $\Psi$ for $w$ as in the definition of $\calA_\pgrp$. Let $v^t$ and $w^t$ be the time-dependent vector fields generating $\Phi^t$ and $\Psi^t$ respectively. Note that $v=v^0$ and $w=w^0$.
The composition $\Phi^t\circ \Psi^t$ is still a local isotopy, it is defined on a neighborhood of $(0,x)$ in $\R\!\times\! X$, and it satisfies
    \[
        \tfrac{\dd}{\dd t}
        \big( 
            \Phi^t \circ \Psi^t
        \big)
        =
        v^t \circ \Phi^t \circ \Psi^t 
        + 
        D\Phi^t \circ w^t \circ \Psi^t
        =
        \big(
            v^t + (\Phi^t)_* w^t
        \big) 
        \circ 
        \big(
            \Phi^t \circ \Psi^t
        \big).
    \]
In particular, $v+w=v^0+(\Phi^0)_*w^0$ is an element of $\calA_\pgrp$.
\end{proof}

Observe that 
    $
        \calA_\pgrp^\text{flow}\sub\calA_\pgrp
    $
for any pseudogroup $\pgrp$.
If $\pgrp$ is closed, much more is true.

\begin{theorem}
\label{pgrp:thm}
Let $\pgrp$ be a closed pseudogroup on $X$.

\begin{itemize}

\item 
$\calA_\pgrp^\textrm{flow} = \calA_\pgrp$.

\item 
$\calA_\pgrp$ is a Lie algebra sheaf.

\item 
If $v\in \calA_\pgrp$ and $\phi\in\pgrp$, then $\phi_*v \in \calA_\pgrp$.

\item 
$\calA_\pgrp(V)$ is a closed subset of $\gerX_X(V)$ for all open sets $V\sub X$.

\end{itemize}

\end{theorem}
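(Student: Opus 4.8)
The four bullets of Theorem \ref{pgrp:thm} are interlocking, and the natural order is: first identify $\calA_\pgrp^\textrm{flow}$ with $\calA_\pgrp$, then use that identification to prove the Lie-algebra-sheaf property, and finally deduce closedness. Throughout I will work locally (the sheaf axioms reduce everything to small contractible charts) and use Lemma \ref{lemma:pgrp:one_parameter} as the basic bridge between vector fields and pseudogroup elements.

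\textbf{Step 1: $\calA_\pgrp^\textrm{flow} = \calA_\pgrp$.} The inclusion $\calA_\pgrp^\textrm{flow} \sub \calA_\pgrp$ is immediate (the flow is a local isotopy). For the converse, let $v \in \calA_\pgrp(V)$ and fix $x$, a chart $U \ni x$, and a local isotopy $\phi^t \in \pgrp(U)$ with $\tfrac{\dd}{\dd t}\big|_{t=0}\phi^t = v|_U$. The standard trick is to look at the difference quotient: for fixed $s$, the family $t \mapsto \phi^{s+t}\circ(\phi^s)^{-1}$ lies in $\pgrp$ (by the group-like axioms, on a shrunken domain) and has time-derivative at $t=0$ equal to $v^s \defeq (\phi^s)_*$ of the generator, so it is not quite $v$. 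To actually recover the flow of $v$, I would instead argue as follows: fix a small $n$, form the $n$-fold composition $\psi_n^t \defeq \phi^{t/n}\circ\cdots\circ\phi^{t/n}$ ($n$ factors), which lies in $\pgrp$ on a uniform neighborhood of $x$ for $t$ in a fixed interval, and observe that $\psi_n^t \to \varphi_v^t$ in the compact-open $C^\infty$-topology as $n\to\infty$ (this is the Lie-product / exponential-formula estimate: $\phi^{t/n} = \id + \tfrac{t}{n}v + O(1/n^2)$ in $C^k$ on compacts). Since $\pgrp$ is closed, Proposition \ref{prop:pgrp:closed} gives $\varphi_v^t \in \pgrp$ on that neighborhood, for all small $t$; then Lemma \ref{lemma:pgrp:one_parameter} upgrades this to the total flow lying in $\pgrp$, i.e. $v \in \calA_\pgrp^\textrm{flow}(V)$. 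This is the step I expect to be the main obstacle: making the convergence $\psi_n^t \to \varphi_v^t$ precise in all $C^k$-seminorms, uniformly on compacts and in $t$, with careful control of domains so that all the $\psi_n^t$ are simultaneously defined — the classical proof for Lie groups has no domain issues, and here one must track shrinking open sets throughout.

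\textbf{Step 2: $\calA_\pgrp$ is a Lie algebra sheaf, and is $\pgrp$-invariant.} By Lemma \ref{lemma:sheaf_of_vector_spaces}, $\calA_\pgrp(U)$ is already a vector subspace, and by construction $\calA_\pgrp$ is a subsheaf of $\gerX_X$. The only missing point is closure under the Lie bracket. Here I use Step 1 and the commutator formula for flows: for $v,w \in \calA_\pgrp = \calA_\pgrp^\textrm{flow}$, the maps
    \[
        \chi_n^t \defeq
        \varphi_w^{-\sqrt{t}/n}\circ
        \varphi_v^{-\sqrt{t}/n}\circ
        \varphi_w^{\sqrt{t}/n}\circ
        \varphi_v^{\sqrt{t}/n}
    \]
(iterated $n^2$ times, with appropriate sign conventions) lie in $\pgrp$ on a uniform neighborhood of any given point for $t \geq 0$ small, and converge in the compact-open $C^\infty$-topology to $\varphi_{[v,w]}^{t}$ as $n\to\infty$; closedness of $\pgrp$ plus Lemma \ref{lemma:pgrp:one_parameter} then force $[v,w] \in \calA_\pgrp^\textrm{flow} = \calA_\pgrp$. (For $t<0$ one applies the same argument to $-[v,w]=[w,v]$.) The $\pgrp$-invariance $\phi_*v \in \calA_\pgrp$ for $\phi \in \pgrp$, $v \in \calA_\pgrp$, is already recorded in the lemma preceding the theorem, via the functoriality $\varphi_{\phi_*v}^t = \phi\circ\varphi_v^t\circ\phi^{-1}$.

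\textbf{Step 3: $\calA_\pgrp(V)$ is closed in $\gerX_X(V)$.} Suppose $v_j \to v$ in the compact-open $C^\infty$-topology on $\gerX_X(V)$, with each $v_j \in \calA_\pgrp(V) = \calA_\pgrp^\textrm{flow}(V)$. I want $\varphi_v^t \in \pgrp$ near each point, for small $t$. Fix $x \in V$ and a compact ball $A \sub V$ around $x$; by continuous dependence of solutions of ODEs on the vector field (in $C^k$), the flows $\varphi_{v_j}^t$ are all defined on a common neighborhood $W$ of $x$ for $|t|\leq\epsilon$ (for $j$ large) and $\varphi_{v_j}^t \to \varphi_v^t$ in the compact-open $C^\infty$-topology on $W$, uniformly for $|t|\leq\epsilon$. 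Each $\varphi_{v_j}^t|_W \in \pgrp(W)$ since $v_j \in \calA_\pgrp^\textrm{flow}$, so by Proposition \ref{prop:pgrp:closed} (closedness under limits) we get $\varphi_v^t|_W \in \pgrp(W)$ for all $|t|\leq\epsilon$. Applying Lemma \ref{lemma:pgrp:one_parameter} once more yields $v \in \calA_\pgrp^\textrm{flow}(V) = \calA_\pgrp(V)$, which is the claim. The routine technical input here is the $C^k$-continuity of the solution map for ODEs together with, again, bookkeeping of domains; both are standard but need to be stated carefully.
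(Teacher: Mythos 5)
Your Steps 1 and 3 line up with the paper's proof: Step 1 is exactly the use of the $n$-fold-composition convergence $(\Phi^{t/n})^n \to \varphi_v^t$ (Lemma \ref{lemma:time_independent_flows}), and Step 3 is the $C^k$-continuous dependence of flows on the vector field (Lemma \ref{lemma:closed_lie_algebra_sheaf}) combined with Proposition \ref{prop:pgrp:closed} and Lemma \ref{lemma:pgrp:one_parameter}. The real divergence is Step 2 and, crucially, its \emph{position} in the argument. You prove the Lie bracket before closedness via an iterated Trotter/Lie-product commutator $\chi_n^t$; the paper instead proves closedness of $\calA_\pgrp(V)$ in $\gerX_X(V)$ \emph{first}, and then gets the bracket almost for free by observing that $\zeta^t = \tfrac1t((\varphi_v^t)_*w - w)$ lies in $\calA_\pgrp$ for all small $t\neq 0$ (push-forward invariance plus the vector-space structure) and converges to $[v,w]$ as $t\to 0$, so closedness forces $[v,w]\in\calA_\pgrp$. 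That reordering is the key trick, because it reduces the bracket to a single well-behaved limit rather than a double limit ($n\to\infty$ and then $t$), and it needs no new estimates beyond what is already proved.

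Your Trotter-commutator route is genuinely different, and it is worth comparing against the alternative the paper itself sketches in the remark after the theorem. The paper's remark uses a single commutator isotopy $\phi^t = \varphi_v^t\circ\varphi_w^t\circ\varphi_v^{-t}\circ\varphi_w^{-t}$ reparametrized by $\sqrt t$, but notes that this is only a $C^1$-isotopy and therefore only shows $[v,w]\in\calA_\pgrp^1$, leaving open whether $\calA_\pgrp^\text{flow}=\calA_\pgrp^1$. Your iterated version $\chi_n^t$ cleverly sidesteps the $C^1$-isotopy issue: for each fixed $n$ the map $\chi_n^t$ is a genuine finite composition of flows in $\pgrp$, so only closedness under limits is needed, not a new notion of isotopy. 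The price is a Lie-product estimate — $(\varphi_w^{-s}\circ\varphi_v^{-s}\circ\varphi_w^{s}\circ\varphi_v^{s})^{n^2} \to \varphi_{[v,w]}^{s^2 n^2}$ with $s=\sqrt t/n$ — in all $C^k$-norms, uniformly on compacts with controlled domains. That estimate is of the same flavor as Lemma \ref{lemma:time_independent_flows} but is strictly harder (it involves a second-order cancellation), and the paper does not prove it. So your approach is plausible but leaves a non-trivial analytic lemma unproved, whereas the paper's reordering avoids that lemma entirely; the moral is that once closedness of $\calA_\pgrp(V)$ is established, the Lie bracket is the cheap consequence, not the expensive step.
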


\begin{proof}
The proof presented here uses the lemmas from section \ref{sec:intermezzo_closed_pseudogroups} and lemma \ref{lemma:pgrp:one_parameter}.
Suppose that $\pgrp$ is closed. Given $v\in\calA_\pgrp(V)$ we need to show that $\varphi^t_v\in \pgrp(U^t)$ for all $t\in\R$, where $\phi_v^t :U^t \to X$ is the total flow of $v$.
By lemma \ref{lemma:pgrp:one_parameter} it suffices to show that for all $x\in V$ there is a neighborhood $W\sub U^0$ of $x$ and an $\epsilon > 0$ such that, for all $t\in (-\epsilon, \epsilon)$,
    \[
        \varphi_v^t\vert_W
        \in
        \pgrp(W).
%        \quad\forall\;
%        t\in [-s,s].
    \]
Lemma \ref{lemma:time_independent_flows} tells us exactly that: for a given $x\in U^0=V$, pick a local isotopy $\Phi^t$ as given by the definition of $\calA_\pgrp$. 
By that Lemma, the sequence
    $
        \big(
            \Phi^{t/n}
        \big)^n|_B
        \in \pgrp(B)
    $
converges uniformly to $\varphi^t_v|_B$ on a closed ball centered at $x$ in all $C^k$-norms. It follows that $v\in \calA_\pgrp^\text{flow}(V)$.
With the first statement of the theorem proven, the second (except for closure under the Lie bracket) and the third follow from the above lemmas.

Next we will show that $\calA_\pgrp(U)$ is a closed subset of $\gerX_X(U)$. Suppose that $v_n\in \calA_\pgrp(U)$ is a sequence of vector fields that converges to $v\in \gerX_X(U)$ w.r.t.\ the $\textrm{co}^\infty$-topology. Then we must show that $\varphi_v^t \in \pgrp(U^t)$ for all $t$, and by Lemma \ref{lemma:pgrp:one_parameter} it is sufficient to show that for every $x\in U^0$ there is a neighborhood $W\sub U^0$ of $x$ and an $\epsilon>0$ such that $\varphi^t_v|_W \in \pgrp(W)$ for all $t\in [-\epsilon, \epsilon]$. Now choose a chart at $x$, and let $B$ and $C$ be closed balls centered at $x$ such that $B\sub \mathrm{int}(C)$ and $C\sub U^0$. Since the $v_n|_C$ converge to $v|_C$ in all $C^k$-norms, it follows from lemma \ref{lemma:closed_lie_algebra_sheaf} that there is an $\epsilon>0$ such that, for all $t\in (-\epsilon, \epsilon)$, the $\varphi^t_v|_B$ and $\varphi^t|_B$ are defined as maps $B\to C$, and the sequence $\varphi^t_{v_n}|_B$ converges to $\varphi^t_v|_B$ in all $C^k$-norms. Since $\pgrp$ is closed, this shows that $\phi_v^t|_{\mathrm{int}(B)} \in \pgrp(\mathrm{int}(B))$ for all $t\in (-\epsilon, \epsilon)$, and thus that $v\in \calA_\pgrp(V)$.

We have already shown that $\calA_\pgrp$ is a sheaf of vector subspaces of $\gerX_X$. To show that is also closed under the Lie bracket we will use all of the above results. Let $v,w\in\calA_\pgrp(V)$, and let $\varphi^t_v:U^t\to X$ be the total flow of $v$. Define a homotopy of local vector fields by
    \[
        \zeta^t 
        \defeq
        \tfrac 1 t
        \big(
            (\varphi_v^t)_*w
            -
            w|_{V^t}
        \big),
        \qquad
        V^t \defeq \varphi^t_v(U^t)\sub V.
    \]
Note that $\zeta^t \in\calA_\pgrp(V^t)$ for all $t$: $\calA_\pgrp$ is invariant under push-forwards by elemets of $\pgrp$, so 
    $
        {(\varphi_v^t)_*w \in \calA_\pgrp(V^t)},
    $
and $\calA_\pgrp$ is closed under addition and scalar multiplication, so $\zeta^t\in \calA_\pgrp(V^t)$. 
In the limit as $t$ goes to zero we find
    \[
        \lim_{t\to 0}
        \zeta^t
        =
        \tfrac{\dd}{\dd t}\big\vert_{t=0}
        (\varphi_v^t)_*w
        =
        [v,w].
    \]
Since $\calA_\pgrp(V)$ is a closed subset of $\gerX_X(V)$, it must contain $[v, w]$.
\end{proof}

\begin{remark}
There is an alternative way of proving that $\calA_\pgrp$ is closed under the Lie bracket, which is more similar to lemma \ref{lemma:sheaf_of_vector_spaces}.
However, it requires us to introduce local $C^1$-isotopies, 
i.e.\ the maps $\phi:U\to X$, with $U\sub [0,1]\!\times\! X$, such that:
\begin{itemize}

\item 
$\phi:U\to X$ is $C^1$-smooth.

\item 
$\phi^t:U^t\to X$ and $\tfrac{\dd}{\dd t}\phi^t:U^t\to X$ are $C^\infty$-smooth for all $t$.

\end{itemize} 
Moreover, we have to enlarge $\calA_\pgrp$ to allow for local $C^1$-isotopies: a vector field $v\in \gerX_X(V)$ lies in $\calA^1_\pgrp(V)$ if and only if for every $x\in V$ there is an open neighborhood $U\sub V$ of $x$ and a local $C^1$-isotopy $\phi^t\in \pgrp(U)$, for $t\in (-\epsilon, \epsilon)$, such that 
    $
        v|_U
        =
        \tfrac{\dd}{\dd t}
        \big|_{t=0}
        \phi^t.
    $

Note that in general 
    $
        \calA_\pgrp^\text{flow}
        \sub
        \calA_\pgrp
        \sub
        \calA_\pgrp^1.
    $
The following argument actually shows that $\calA_\pgrp$ is closed under the Lie bracket if $\calA_\pgrp^\text{flow}=\calA_\pgrp^1$, but does not require that $\pgrp$ is closed.

Let $v, w\in \calA_\pgrp^\text{flow}(V)$, and let $\varphi_v^t$ and $\varphi_w^t$ be their respective total flows. Define the local isotopy
    \[
        \phi^t
        \defeq
        \varphi_v^t 
        \circ
        \varphi_w^t
        \circ
        \varphi_v^{-t}
        \circ
        \varphi_w^{-t}.
    \]
Its first derivative along $t$ is
    \[
        \tfrac{\dd}{\dd \epsilon} \big|_{\epsilon=0} 
        \phi^{t+\epsilon} \circ (\phi^t)^{-1}
        =
        v
        +
        (
            \varphi_v^t
        )_* w
        -
        (
            \varphi_v^t \circ \varphi_w^t
        )_* v
        -
        (
            \varphi_v^t \circ \varphi_w^t
            \circ
            \varphi_v^{-t}
        )_* w.
    \]
In particular, evaluating at $t=0$ gives $\frac{\dd}{\dd t}\big|_{t=0}\phi^t = 0$, hence $\phi^t$ has no linear term in the $t$ variable. Moreover, further computations show that 
    \[
        \tfrac{\dd^2}{\dd t^2}\big|_{t=0}
        \phi^t 
        = 
        [v,w].
    \]
It follows that $\phi^{\sqrt t}$ is a local $C^1$-isotopy that demonstrates that $[v,w]\in \calA_\pgrp^1$. 
We conclude that $\calA_\pgrp$ is closed under the Lie algebra bracket if $\calA_\pgrp^\text{flow}=\calA_\pgrp^1$.
\end{remark}

\section{Natural Bundles}
\label{sec:natural_bundle}

A natural bundle \cite{EpsThu79, Nij72, PalTer77, Gro86} on $M$ is often described as a submersion $\pi_\textsc{X}:X \!\to\! M$ with a lift of the local diffeomorphisms on $M$.
We look at a variation on this, where only a given pseudogroup $\pgrp$ on $M$ is lifted, and we show that if the lift is continuous and $\pgrp$ is closed then then this induces a closed pseudogroup on the total space of the bundle.

\begin{definition}
A \textbf{$\pgrp$-natural bundle}\index{natural bundle} on $M$ is a submersion $\pi_\textsc{X}: X \!\to\! M$ with a map
    \[
        E: \pgrp \longto \diff_X
    \]
such that $E$ respects the group-like and sheaf-like structures:

\begin{itemize}

\item[a)]
$\phi\in \diff_M(U)$ lifts to a bundle map $E(\phi)$ over $\phi$, i.e.\
    \[
        \begin{tikzcd}
            \pi_\textsc{X}^{-1}(U)
            \arrow[swap]{d}
            \arrow{r}{E(\phi)}
            &
            X
            \arrow{d}
            \\
            U
            \arrow{r}{\phi}
            &
            M.
        \end{tikzcd}
    \]

\item[b)]
The identity $\id\!|_U$ is lifted to the identity, i.e.\
    $
        E(\id\!|_U) 
        = 
        \id\!|_{\pi_\textsc{X}^{-1}(U)}.
    $

\item[c)]
Composition is preserved, i.e.\ $E(\phi\circ\psi) = E(\phi) \circ E(\psi)$ for all $\phi,\psi\in \pgrp$.
\end{itemize}

And such that $E$ respects the smooth structure:

\begin{itemize}

\item[d)]
$E$ is continuous w.r.t. to the compact-open $C^\infty$-topologies: if a sequence of local diffeomorphisms $\phi_n \in \pgrp(U_n)$ converges to $\phi \in \pgrp(U)$ on $U$, then $E(\phi_k)$ converges to $E(\phi)$ on $\pi_\textsc{X}^{-1}(U)$.

\item[e)]
$E$ is smooth: if $\phi^t \in \pgrp(U^t)$ is a smooth family of local diffeomorphisms, then $E(\phi^t)$ is also a smooth family.
\qedhere

\end{itemize}
\end{definition}

Let $\calD$ be the set of all charts into $M$. 
More precisely, for any open set $U\sub \R^m$, let
    $
        \calD(U)
    $
be the set of all open embeddings $U\!\hookrightarrow\!M$.
Since $\calD$ represents the atlas of $M$, it is the means of constructing anything on $M$.
For $d\geq 1$ or $d=\infty$, its space of $d$-jets $J^d\calD$ is a bibundle
    \[
        \R^m \xleftarrow{\;\; l \;\;} J^d\calD \xrightarrow{\;\; r \;\;} M.
    \]
Denote its fibers by $J^d_{x,y}\calD \defeq l^{-1}(x)\medcap r^{-1}(y)$ and $J^d_x\calD \defeq l^{-1}(x)$.

In accordance with \cite{EpsThu79}, for $d=0$, define $J^0_{x,y}\calD$ to be the space of equivalence classes of pointed open embeddings $(\R^m,x) \!\to\! (M,y)$ up to isotopies of $\R^m$ that keep $x$ fixed.
It also forms a (topological) bibundle $J^0\calD$ over $\R^m$ and $M$.
We will not discuss $J^0\calD$ other than to recite the main result of \cite{EpsThu79}.

\begin{lemma}
Any local diffeomorphism $\phi \in \diff_M(U)$ lifts to a local diffeomorphism 
    \[
        \phi^{(d)}:
        J^d\calD \big|_U \longto J^d\calD,
        \quad
        j^d_x(\sigma) \longmapsto j^d_x(\phi\circ\sigma).
    \]
This makes $J^d\calD$ a natural bundle over $M$ for all $d\geq 1$ or $d=\infty$.
\end{lemma}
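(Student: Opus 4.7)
The plan is first to verify that $\phi^{(d)}$ is well-defined and a local diffeomorphism on its own, and then to check the five axioms (a)--(e) in the definition of a $\diff_M$-natural bundle one by one. The key ingredient throughout will be the chain rule (Faà di Bruno) which shows that the $d$-jet of a composition depends only on the $d$-jets of the factors, together with the fact that composition of smooth maps is continuous in the compact-open $C^\infty$-topology.

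First I would establish the smooth structure on $J^d\calD$, noting that $l:J^d\calD\to \R^m$ is a locally trivial fiber bundle whose fibers $J^d_x\calD$ can be identified, via any chart of $M$ near $y\in M$, with a product of $M$ and finitely many polynomial jet spaces (for finite $d$; for $d=\infty$ one works germ-wise by taking inverse limits, or simply as a smooth Fréchet bundle, so that the arguments below still apply level-by-level). Next I would verify that $\phi^{(d)}$ is well-defined: if two open embeddings $\sigma_1, \sigma_2:(\R^m,x)\to M$ satisfy $j^d_x(\sigma_1)=j^d_x(\sigma_2)$, then Faà di Bruno's formula expresses the partial derivatives of $\phi\circ\sigma_i$ at $x$ up to order $d$ as polynomials in the partial derivatives of $\sigma_i$ at $x$ up to order $d$ and those of $\phi$ at $\sigma_i(x)$, which gives $j^d_x(\phi\circ\sigma_1)=j^d_x(\phi\circ\sigma_2)$. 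The inverse of $\phi^{(d)}$ is provided by $(\phi^{-1})^{(d)}$ (defined on $J^d\calD|_{\phi(U)}$), so $\phi^{(d)}$ is a diffeomorphism onto $J^d\calD|_{\phi(U)}$.

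Axioms (a), (b) and (c) are now essentially formal. For (a), observe that $l(\phi^{(d)}(j^d_x\sigma))=x=l(j^d_x\sigma)$ and $r(\phi^{(d)}(j^d_x\sigma))=\phi(\sigma(x))=\phi(r(j^d_x\sigma))$, so $\phi^{(d)}$ is a bundle map covering $\phi$ with respect to $r$; the roles of $l$ and $r$ are the ones dictated by $\pi_X=r$. Axiom (b) is immediate from $j^d_x(\id_M\circ\sigma)=j^d_x(\sigma)$, and (c) follows from
\[
    (\phi\circ\psi)^{(d)}(j^d_x\sigma)
    = j^d_x((\phi\circ\psi)\circ\sigma)
    = j^d_x(\phi\circ(\psi\circ\sigma))
    = \phi^{(d)}\bigl(\psi^{(d)}(j^d_x\sigma)\bigr),
\]
where in the last step we used that $\phi^{(d)}$ is defined germ-wise in its first argument.

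For the continuity axiom (d), assume $\phi_n\in\diff_M(U_n)$ converges to $\phi\in\diff_M(U)$ in the sense of Definition \ref{def:lim_of_loc_seq}. Given a point $j^d_x(\sigma)\in J^d\calD|_U$, pick a chart $\sigma$ whose image lies in a small neighborhood on which $\phi_n$ is eventually defined and $C^\infty$-converges to $\phi$. Then composition $\phi\mapsto \phi\circ\sigma$ is continuous from $C^\infty(\sigma(\R^m),M)$ into $C^\infty(\R^m,M)$ in the compact-open $C^\infty$-topology, so $\phi_n\circ\sigma\to\phi\circ\sigma$, and hence $j^d_x(\phi_n\circ\sigma)\to j^d_x(\phi\circ\sigma)$. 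This gives pointwise convergence of $\phi_n^{(d)}$ to $\phi^{(d)}$, and the same argument applied to compact families of charts upgrades this to convergence in the compact-open $C^\infty$-topology on $J^d\calD|_U$. Axiom (e) is completely analogous: a smooth family $\phi^t$ gives rise to a smooth map $[0,1]\times U\to M$, and precomposing with a local section of $J^d\calD\to M$ and taking $d$-jets in the base variable yields a smooth family $(\phi^t)^{(d)}$ by smoothness of the jet projection. The main subtlety, and the step I expect to cost the most attention, is item (d) for $d=\infty$: there one must argue that the convergence of all finite-order jets assembled into the compact-open $C^\infty$-topology on $J^\infty\calD$ indeed follows from the convergence on $M$, which is a compatibility statement about the inverse limit, but no new geometric idea is required beyond Faà di Bruno.
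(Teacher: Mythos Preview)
Your proof is correct and follows the same approach as the paper's, which simply verifies axioms (b) and (c) by the same one-line computations you give and then declares axioms (d) and (e) to be ``equally straightforward.'' You are in fact more thorough than the paper: you explicitly address well-definedness via Fa\`a di Bruno, check axiom (a), and spell out the continuity and smoothness arguments that the paper leaves to the reader.
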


\begin{proof}
For all $j^d_x(\sigma) \in J^d\calD$ with $\sigma(x) \in U$ we have
    \[
        (\id\!|_U)^{(k)}(j^d_x(\sigma))
        =
        j^d_x(\id\!|_U \circ \sigma)
        =
        j^d_x(\sigma).
    \]
Likewise, for all $\phi, \psi \in \diff_M$  we have
    \begin{align*}
        (\psi^{(d)} \circ \phi^{(k)})(j^d_x(\sigma))
        & =
        \psi^{(d)} 
        \big( 
            j^d_x(\phi\circ\sigma) 
        \big)
        \\
        & =
        j^d_x 
        \big( 
            \psi\circ\phi\circ\sigma
        \big)
        =
        (\psi\circ\phi)^{(d)}(j^d_x(\sigma)).
    \end{align*}
Axioms $d)$ and $e)$ are equally straightforward.
\end{proof}

%\begin{remark}
%Let $\pgrp$ be a pseudogroup on $M$.
%Consider for every open set $U\sub \R^m$ a subset $\calC(U)\sub \calD(D)$ such that
%\begin{itemize}

%\item
%If $\sigma \in \calC(U)$ and $V\sub U$ is an open subset, then $\sigma|_V \in \calC(V)$.

%\item
%If $\sigma \in \calD(U)$ and for every $x\in U$ there is an open neighborhood $V\sub U$ of $x$ such that $\sigma|_V\in \calC(V)$, 
%then also $\sigma\in \calC(U)$.

%\item
%If $\sigma \in \calC(U)$ and $\phi\in \pgrp(V)$, then also $\phi\circ\sigma \in \calC(\sigma^{-1}(V))$.

%\end{itemize}
%Suppose that $J^k\calC$ is an embedded submanifold of $J^k\calD$ and $J^k\calC \!\to\! M$ is a surjective submersion.
%Then $J^k\calC$ is a $\pgrp$-natural bundle by the above argument.
%\end{remark}

The proof of the following lemma is immediate.

\begin{lemma}
\label{lemma:natural_bundle_quotients}
Let $X \!\to\! M$ be a $\pgrp$-natural bundle and $G$ a Lie group that acts on $X$ s.t.\
\begin{itemize}

\item 
The action of $G$ on $X$ is proper and free, and $\pi_\textsc{X}(x\cdot g) = \pi_\textsc{X}(x)$.

\item
For all $\phi \in \pgrp(U)$, $x\in \pi_\textsc{X}^{-1}(U)$, and $g\in G$ we have
    \[
        E(\phi)(x \cdot g) = E(\phi)(x)\cdot g.
    \]

\end{itemize}
Then the quotient $X / G$ is also a $\pgrp$-natural bundle.
\end{lemma}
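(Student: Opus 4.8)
The plan is to verify the two defining axioms of a $\pgrp$-natural bundle for the submersion $\pi_{X/G}\colon X/G \to M$ induced by $\pi_\textsc{X}$, together with the lift $E_{/G}$ of $\pgrp$ that $E$ descends to. First I would check that $\pi_\textsc{X}$ genuinely descends: since the $G$-action is proper and free, $X/G$ is a smooth manifold and the quotient map $q\colon X \to X/G$ is a surjective submersion; because $\pi_\textsc{X}(x\cdot g) = \pi_\textsc{X}(x)$, the map $\pi_\textsc{X}$ is $G$-invariant and therefore factors uniquely through $q$ as a smooth map $\pi_{X/G}\colon X/G \to M$, which is a submersion since $\pi_\textsc{X} = \pi_{X/G}\circ q$ is a composition where the first factor is a submersion (one uses that $q$ is a submersion and a diagram chase on tangent maps, or simply that submersions are local projections). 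Then for each $\phi \in \pgrp(U)$, the equivariance hypothesis $E(\phi)(x\cdot g) = E(\phi)(x)\cdot g$ says exactly that $E(\phi)\colon \pi_\textsc{X}^{-1}(U) \to X$ is $G$-equivariant, hence descends to a smooth map
\[
    E_{/G}(\phi)\colon \pi_{X/G}^{-1}(U) \longto X/G,
\]
characterized by $E_{/G}(\phi)\circ q = q\circ E(\phi)$ (note $q^{-1}(\pi_{X/G}^{-1}(U)) = \pi_\textsc{X}^{-1}(U)$ because $\pi_\textsc{X}$ is $G$-invariant). Since $E(\phi)$ is a local diffeomorphism lifting $\phi$ and $q$ is a submersion, $E_{/G}(\phi)$ is again a local diffeomorphism and the square over $\phi$ commutes, giving axiom a).

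Next I would transport axioms b) and c) across the quotient. For b), $E(\id|_U) = \id|_{\pi_\textsc{X}^{-1}(U)}$ descends to $\id|_{\pi_{X/G}^{-1}(U)}$ by uniqueness of the descended map. For c), from $E(\phi\circ\psi) = E(\phi)\circ E(\psi)$ and the defining relations $E_{/G}(\phi)\circ q = q\circ E(\phi)$ one computes
\[
    E_{/G}(\phi\circ\psi)\circ q
    = q\circ E(\phi)\circ E(\psi)
    = E_{/G}(\phi)\circ q\circ E(\psi)
    = E_{/G}(\phi)\circ E_{/G}(\psi)\circ q,
\]
and since $q$ is surjective this forces $E_{/G}(\phi\circ\psi) = E_{/G}(\phi)\circ E_{/G}(\psi)$. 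These are all routine diagram chases once the descent of $E(\phi)$ is set up.

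Finally, axioms d) and e) (continuity and smoothness of $E_{/G}$) are where I expect the only real work, though even this should be mild. The key observation is that the quotient map $q\colon X \to X/G$ admits local smooth sections (it is a principal $G$-bundle, being a proper free action, hence locally trivial), and composition/pushforward by a fixed smooth map is continuous and smooth for the compact-open $C^\infty$-topologies (as used repeatedly in Section~\ref{sec:lie_algebra_sheaf} and in Lemma~\ref{lemma:pseudogroup_closure}). Given a convergent sequence $\phi_n \to \phi$ in $\pgrp$, one has $E(\phi_n) \to E(\phi)$ by hypothesis; composing with $q$ on the left and with a local section $s$ of $q$ on the right expresses $E_{/G}(\phi_n)$ locally as $q\circ E(\phi_n)\circ s$, and continuity of left- and right-composition then yields $E_{/G}(\phi_n) \to E_{/G}(\phi)$; the smoothness of families $E_{/G}(\phi^t)$ follows by the same device applied to a smooth family $\phi^t$. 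Assembling these, $\pi_{X/G}\colon X/G \to M$ with the lift $E_{/G}$ is a $\pgrp$-natural bundle, which is the assertion. The main obstacle, such as it is, is purely bookkeeping: making sure the local sections of $q$ can be chosen compatibly with the compact exhaustions defining the topology, so that the descended continuity statement is genuinely local and glues; this is handled by the local triviality of the principal bundle $q$.
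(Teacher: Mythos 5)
Your proof is correct; the paper simply remarks that the proof of this lemma is immediate and omits it, and your write-up spells out exactly the routine verification (descent of the submersion and of the lift $E$ via $G$-equivariance, diagram chases for the groupoid axioms, and local sections of the principal bundle $q$ for continuity and smoothness) that the authors had in mind.
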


All natural bundles (the $\pgrp$-natrual bundles with $\pgrp=\diff_M$) are known to be of the following type \cite{EpsThu79}.

\begin{example}
\label{exa:associated_jet_bundle}
Recall that $J^d\diff_{\R^m}$ is a transitive Lie groupoid over $\R^m$, and denote its isotropy group (at the origin) by $G_{m, d}$.
Moreover, define $G_{m,0}$ to be equivalence classes up to isotopy of pointed diffeomorphisms $(\R^m,0)\!\to\!(\R^m,0)$, similar to $J^0\calD$.

Remark that $J^d_{0}\calD\!\to\! M$ is a principle $G_{m, d}$-bundle over $M$.
Suppose that $F$ is a smooth manifold and that $G_{m, d}$ acts smoothly on it from the right.
Then the associated bundle
    \[
        X \defeq F \times_{G_{m, d}} J^d_{0}\calD
    \]
defines a natural bundle over $M$ for all $d\geq 1$ and $d=\infty$.
Remark that $J^d\calD$ restricts to a natural bundle $J^d_x\calD$ for any $x\in \R^m$, and this extends to a natural bundle $\widetilde X\defeq F \!\times\! J^d_0\calD$ by
    \[
        \widetilde E:
        \diff_M(U) \longto \diff_{\widetilde X}(F \!\times\! r^{-1}(U)),
        \quad
        \widetilde E(\phi) \defeq \id_F \times \phi^{(d)}.
    \]
Then the quotient $X = \widetilde X/ G_{m,d}$ is a natural bundle over $M$ by lemma \ref{lemma:natural_bundle_quotients}.
\end{example}

\begin{definition}
Let $E\pgrp$ be the pseudogroup on $X$ generated by the image of $E$, i.e.\ for any open subset $U\sub X$, define
    \[
        E\pgrp(U)
        \sub 
        \diff_X(U)
    \]
as follows: 
a local diffeomorphism $\phi\in \diff_X(U)$ lies in $E\pgrp(U)$ if and only if for every $y\in U$ there is an open neighborhood $V\sub U$ of $y$ and a local diffeomorphism $\varphi \in \pgrp(\pi_\textsc{X}(V))$ such that $\phi|_V = E(\varphi)|_V$. 
\end{definition}

\begin{lemma}
If $U\sub X$ is an open subset with connected fibers, then
    \[
        \pgrp(\pi_\textsc{X}(U))
        \longto
        E\pgrp(U),
        \quad
        \varphi
        \longmapsto
        E(\varphi)|_U,
    \]
is a bijection.
\end{lemma}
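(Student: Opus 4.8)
The plan is to prove the displayed map is a bijection by exhibiting an explicit inverse and checking it is well-defined and two-sided. The key observation is that a $\pgrp$-natural bundle comes with the covering property: every $\phi \in E\pgrp(U)$ covers (locally) some $\varphi \in \pgrp$, and the covered base map is determined by $\phi$ via $\pi_\textsc{X}$. So the natural candidate for the inverse is $\phi \mapsto \varphi$, where $\varphi \in \diff_M(\pi_\textsc{X}(U))$ is the unique local diffeomorphism with $\pi_\textsc{X} \circ \phi = \varphi \circ \pi_\textsc{X}$ on $U$. First I would check this $\varphi$ is well-defined: since $\phi$ is a bundle map over some base map on each small $V \subset U$ by the definition of $E\pgrp$, and since $\pi_\textsc{X}$ is a surjective submersion with connected fibers over $U$, these local base maps agree on overlaps and glue to a single $\varphi$ defined on all of $\pi_\textsc{X}(U)$; moreover $\varphi \in \pgrp(\pi_\textsc{X}(U))$ because $\pgrp$ satisfies the collation axiom and $\varphi$ agrees locally with elements of $\pgrp$.

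The surjectivity of the map $\varphi \mapsto E(\varphi)|_U$ is then immediate: given $\phi \in E\pgrp(U)$, the $\varphi$ just constructed satisfies $E(\varphi)|_U = \phi$, because $E(\varphi)$ and $\phi$ are both bundle maps over the same base map $\varphi$, and — here is where connected fibers enter again — two bundle maps over the same base map that agree locally on each fiber must agree, using that on each small $V$ we have $\phi|_V = E(\varphi_V)|_V$ with $\varphi_V = \varphi|_{\pi_\textsc{X}(V)}$, so $E(\varphi)|_V = E(\varphi|_{\pi_\textsc{X}(V)})|_V = E(\varphi_V)|_V = \phi|_V$, and these $V$ cover $U$. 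For injectivity, suppose $E(\varphi)|_U = E(\varphi')|_U$ for $\varphi, \varphi' \in \pgrp(\pi_\textsc{X}(U))$; applying $\pi_\textsc{X}$ and using axiom (a) (that $E$ lifts over the given base map) gives $\varphi \circ \pi_\textsc{X} = \varphi' \circ \pi_\textsc{X}$ on $U$, and since $\pi_\textsc{X}(U) = \pi_\textsc{X}(U)$ and $\pi_\textsc{X}$ is surjective onto its image, $\varphi = \varphi'$ on $\pi_\textsc{X}(U)$.

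I expect the main obstacle to be the careful gluing argument for well-definedness of the inverse: one must verify that the local base maps $\varphi_V$ arising from the definition of $E\pgrp$ genuinely agree on overlaps $V \cap V'$. This uses that $\pi_\textsc{X}$ is a submersion (so locally admits sections, forcing the base map to be determined by $\phi$) together with connectedness of the fibers of $U$ — without connected fibers, a priori $\phi$ could restrict to different base maps over different components of a fiber, and the assignment $\phi \mapsto \varphi$ would fail to be single-valued, which is precisely why the hypothesis is imposed. Once this local-to-global consistency is nailed down, the remaining verifications (that $\varphi \in \pgrp$, and the two composition identities) are routine applications of the pseudogroup axioms and the functoriality axioms (b), (c) for $E$.
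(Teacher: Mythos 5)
Your proposal is correct and takes essentially the same route as the paper: use connectedness of the fibers of $U$ to promote local fiber-preservation of $\phi$ to global fiber-preservation (equivalently, to glue the local base maps into a single $\varphi$ on $\pi_\textsc{X}(U)$), invoke the collation axiom of $\pgrp$ to conclude $\varphi\in\pgrp(\pi_\textsc{X}(U))$, check $E(\varphi)\vert_U=\phi$ by comparing on the local neighborhoods from the definition of $E\pgrp$, and note that injectivity follows directly from the fact that $E(\varphi)$ covers $\varphi$. You have correctly identified why the connected-fibers hypothesis is needed and where it enters, which is the only substantive point in the argument.
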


\begin{proof}
Note that the map is injective for any open subset of $X$.

All difeomorphisms $\phi\in E\pgrp$ are by definition locally fiber-preserving,
and since $U$ has connected fibers, this implies that any $\phi \in E\pgrp(U)$ is fiber-preserving.
If $\phi\in E\pgrp(U)$ has underlying diffeomorphism $\varphi \in \diff_M(\pi_\textsc{X}(U))$, then $\varphi\in \pgrp(\pi_\textsc{X}(U))$ and $\phi = E(\varphi)\vert_U$. 
Indeed, we know that for any $y\in U$ there is a neighborhood $V\sub U$ of $y$ and a 
    $
        \varphi'\in \pgrp(\pi_\textsc{X}(V))
    $ 
such that $\phi\vert_V = E(\varphi')\vert_V$. 
The left-hand side overlies $\varphi\vert_{\pi_\textsc{X}(V)}$ while the right-hand side overlies $\varphi'$.
Hence $\varphi\vert_{\pi_\textsc{X}(V)} = \varphi' \in\pgrp(\pi_\textsc{X}(V))$. 
This holds for any $y\in U$, so by the sheaf-like axioms we have $\varphi\in\pgrp(\pi_\textsc{X}(U))$. 
It is now clear that $\phi=E(\varphi)\vert_U$, and thus that the map is surjective.
\end{proof}

\begin{lemma}
If $\pgrp$ is closed, then so is $E\pgrp$.
\end{lemma}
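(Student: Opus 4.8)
The plan is to show that $E\pgrp$ inherits closedness from $\pgrp$ by working locally in the base and exploiting the two lemmas just proven: the bijection $\pgrp(\pi_\textsc{X}(U)) \to E\pgrp(U)$ over opens with connected fibers, and the continuity axiom (d) for the lift $E$. First I would observe that it suffices to check the closedness condition locally: by Proposition \ref{prop:pgrp:closed}, $E\pgrp$ is closed if and only if, whenever $\phi \in \diff_X(W)$ has the property that every $y \in W$ has a neighborhood on which $\phi$ is a limit of a sequence in $E\pgrp$, then $\phi \in E\pgrp(W)$. So fix such a $\phi$ and a point $y \in W$; I want to produce a neighborhood $V \sub W$ of $y$ with $\phi|_V \in E\pgrp(V)$, and then the sheaf axiom for $E\pgrp$ finishes the argument.

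Near $y$, shrink to an open $V \sub W$ with connected fibers over $\pi_\textsc{X}(V)$ and with a sequence $\phi_n \in E\pgrp(V)$ converging to $\phi|_V$ (we may arrange both simultaneously by intersecting the neighborhood from the hypothesis with a fibered chart). By the bijection lemma, each $\phi_n = E(\varphi_n)|_V$ for a unique $\varphi_n \in \pgrp(\pi_\textsc{X}(V))$. Since all $\phi_n$ are fiber-preserving and converge to $\phi|_V$, the limit $\phi|_V$ is also fiber-preserving — this uses that being a bundle map over \emph{some} diffeomorphism of the base is a closed condition in the $C^1$-topology on $\diff_X$, because the base map is recovered as $\pi_\textsc{X}\circ \phi \circ (\text{section})$ and convergence in $C^\infty(V,X)$ forces convergence of these base maps. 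Hence $\phi|_V$ has an underlying diffeomorphism $\varphi \in \diff_M(\pi_\textsc{X}(V))$, and the base maps $\varphi_n$ of $\phi_n$ converge to $\varphi$ (again since projection is continuous in the relevant topologies). Because $\pgrp$ is closed and $\varphi_n \in \pgrp(\pi_\textsc{X}(V))$, we get $\varphi \in \pgrp(\pi_\textsc{X}(V))$ after possibly shrinking $V$ so that $\overline{\pi_\textsc{X}(V)}$ is compact and contained in $\pi_\textsc{X}(W)$, applying the closedness of $\pgrp$ there.

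It remains to identify $\phi|_V$ with $E(\varphi)|_V$. Here I would invoke continuity axiom (d) of the $\pgrp$-natural bundle: the sequence $\varphi_n \to \varphi$ in $\pgrp(\pi_\textsc{X}(V))$ gives $E(\varphi_n) \to E(\varphi)$ on $\pi_\textsc{X}^{-1}(\pi_\textsc{X}(V)) \supseteq V$ in the compact-open $C^\infty$-topology. But $E(\varphi_n)|_V = \phi_n \to \phi|_V$ by hypothesis, and limits in $\diff_X(V)$ are unique, so $\phi|_V = E(\varphi)|_V$, which by definition means $\phi|_V \in E\pgrp(V)$. Running this over all $y \in W$ and collating by the sheaf axioms of $E\pgrp$ gives $\phi \in E\pgrp(W)$, proving $E\pgrp$ is closed.

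The main obstacle I anticipate is the bookkeeping around \emph{domains}: the hypothesis gives convergence only on small neighborhoods, whereas the bijection lemma needs connected fibers and the closedness of $\pgrp$ needs to be applied on an open with compact closure inside the original base domain, so one has to shrink $V$ a couple of times while making sure the sequence $\phi_n$ (hence $\varphi_n$) still converges on the shrunk domain — this is routine but needs care. A secondary point worth stating cleanly (rather than waving at) is that ``underlies a base diffeomorphism'' passes to $C^\infty$-limits, i.e.\ that the map $E\pgrp(V) \to \diff_M(\pi_\textsc{X}(V))$ sending $\phi_n \mapsto \varphi_n$ is continuous; this follows because over a fibered chart the base map is literally a component of $\phi_n$ in coordinates, so $C^k$-convergence of $\phi_n$ forces $C^k$-convergence of $\varphi_n$. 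Everything else is a direct application of Proposition \ref{prop:pgrp:closed}, the two preceding lemmas, and continuity axiom (d).
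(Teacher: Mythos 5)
Your proof is correct and follows essentially the same route as the paper's: restrict to an open with connected fibers, use the bijection lemma to write $\phi_n = E(\varphi_n)|_V$, push the convergence $\phi_n\to\phi|_V$ down to convergence $\varphi_n\to\varphi$ of the underlying base maps, invoke closedness of $\pgrp$ to get $\varphi\in\pgrp$, and then use continuity axiom (d) together with uniqueness of limits in the $C^\infty$-topology to identify $\phi|_V = E(\varphi)|_V$. The only cosmetic differences are that you route through the third bullet of Proposition \ref{prop:pgrp:closed} rather than directly through closure under limits, and that you spell out why the limit $\phi|_V$ is fiber-preserving (the paper takes this as given since $\diff_{\pi_\textsc{X}}$ is itself a closed pseudogroup, Example \ref{exa:fiber_preserving}); both are harmless variations on the same argument.
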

\begin{proof}
Suppose that $(U_n, \phi_n) \in E\pgrp(U)$ converges to $(U, \phi) \in \diff_{\pi_\textsc{X}}(U)$. 
Let $V\sub U$ be an open set connected fibers and $\overline{V}\sub U$, and such that $\overline{V}$ is compact.
By the convergence, we have that $\overline{V}\sub U_n$ for large enough $n$, and by the above we know that $\phi_n|_V$ and $\phi|_V$ are fiber-preserving.
We will show that $\phi\vert_V$ lies in the image of $E$.
By the above we know that 
    \[
        \phi_n|_V = E(\varphi_n)|_V
    \]
for large enough $n$, where $\varphi_n \in \pgrp(\pi_\textsc{X}(V))$ is the local diffeomorphism underlying $\phi_n|_V$. If $\varphi$ is the local diffeomorphism underlying $\phi|_V$, it follows that $\varphi_n$ converges to $\varphi$ on $\pi_\textsc{X}(V)$.
Namely, for any $x\in \pi_\textsc{X}(V)$, pick a small open neighborhood $W\sub \pi_\textsc{X}(V)$ of $x$ and a local section $\sigma \in \Gamma_V(W)$. 
Then 
    \[
        \varphi_n|_W = \pi_\textsc{X}\circ \phi_n \circ \sigma
    \]
converges to $\varphi|_W$ w.r.t.\ the compact-open $C^\infty$-topology. 
By the collation axiom and because $\pgrp(W)$ is closed in $\diff_M(W)$, it follows that $\varphi \in \pgrp(V)$. 
Since $E$ and the restriction map are continuous, it follows that $\phi_n|_V = E(\varphi_n)|_V$ converges to both $\phi|_V$ and $E(\varphi)|_V$. Since the compact-open $C^\infty$-topology on $\diff_X(U)$ is Hausdorff, we conclude that $\phi|_V = E(\varphi)|_V$.
\end{proof}

%\begin{conjecture}
%The induced the map
%    \[
%        E: \calA_\pgrp(V) \longto \calA_{E\pgrp}(\pi_\textsc{X}^{-1}(V)),
%        \quad
%        E(v) \defeq
%        \tfrac{\dd}{\dd t}\big|_{t=0} E(\varphi_v^t)
%    \]
%is a continuous map of Lie algebra sheaves along $\pi_\textsc{X}$:
%\begin{itemize}
%
%\item
%$E(u+rv) = E(u) + rE(v)$ for all $u,v \in \calA_\pgrp(V)$ and $r\in \R$.
%
%
%\item
%$\big[ E(u), E(v) \big] = E\big( [u, v] \big)$ for all $u,v \in \calA_\pgrp(V)$.
%
%\item
%If the sequence $v_n \in \calA_\pgrp(V_n)$ converges to $v\in \calA_\pgrp(V)$ on $V$, then the sequence 
%    \[
%        E(v_n) \in \calA_{E\pgrp}(\pi_\textsc{X}^{-1}(V_n))
%    \]
%converges to $E(v) \in \calA_{E\pgrp}(\pi_\textsc{X}^{-1}(V))$ on $\pi_\textsc{X}^{-1}(V)$.
%
%
%\end{itemize}
%\end{conjecture}

\begin{lemma}
If $U\sub X$ is an open subset with connected fibers, then
    \[
        \calA_\pgrp(\pi_\textsc{X}(U))
        \longto
        \calA_{E\pgrp}(U),
        \quad
        v
        \longmapsto
        \tfrac{\dd}{\dd t}\big|_{t=0}
        E(\varphi_v^t)|_U
    \]
is a bijection.
\end{lemma}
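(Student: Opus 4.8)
The plan is to prove this by exhibiting an explicit inverse, mirroring the bijection statement already established for the pseudogroups themselves ($\pgrp(\pi_\textsc{X}(U)) \to E\pgrp(U)$ on opens with connected fibers). First I would set up notation: for $v \in \calA_\pgrp(\pi_\textsc{X}(U))$, Theorem \ref{pgrp:thm} gives $v \in \calA_\pgrp^\textrm{flow}$, so the total flow $\varphi_v^t$ lies in $\pgrp$ near $\pi_\textsc{X}(U)$, and by the natural bundle axioms (b), (c), (e) the lifts $E(\varphi_v^t)$ form a smooth local isotopy in $E\pgrp$ with $E(\varphi_v^0) = \id$. Hence $w \defeq \tfrac{\dd}{\dd t}\big|_{t=0} E(\varphi_v^t)|_U$ is a well-defined local vector field on $U$, and it lies in $\calA_{E\pgrp}(U)$ essentially by definition of $\calA_{E\pgrp}$ as the sheafification of first-order approximations of isotopies in $E\pgrp$. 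Since $\pi_\textsc{X} \circ E(\varphi_v^t) = \varphi_v^t \circ \pi_\textsc{X}$, the vector field $w$ is $\pi_\textsc{X}$-related to $v$; in particular $w$ is fiber-preserving in the appropriate sense and $\dd\pi_\textsc{X}(w) = v \circ \pi_\textsc{X}$. This already shows the map is injective: if $w = 0$ then $v \circ \pi_\textsc{X} = \dd\pi_\textsc{X}(w) = 0$, and since $\pi_\textsc{X}|_U$ is a submersion (surjective onto $\pi_\textsc{X}(U)$) we get $v = 0$.

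For surjectivity, I would take $w \in \calA_{E\pgrp}(U)$ and show it arises from some $v \in \calA_\pgrp(\pi_\textsc{X}(U))$. Using the previous lemma ($\pgrp(\pi_\textsc{X}(U)) \to E\pgrp(U)$ bijective on connected-fiber opens) together with the fact that every element of $E\pgrp$ is locally fiber-preserving, one sees that any local isotopy $\phi^t$ in $E\pgrp$ near a point of $U$ is, after shrinking, of the form $E(\varphi^t)$ for a unique local isotopy $\varphi^t$ in $\pgrp$. Differentiating at $t=0$ and using axiom (e) (smoothness of $E$ on smooth families), the infinitesimal generator of $\phi^t$ is the $E$-lift of the infinitesimal generator of $\varphi^t$, which is a local section of $\calA_\pgrp$. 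So locally on $U$, $w$ is an $E$-lift of a germ in $\calA_\pgrp$; since $w$ is $\pi_\textsc{X}$-related to a well-defined vector field (it must be, as the local $\varphi^t$'s are uniquely determined by $\phi^t$ via $\pi_\textsc{X}$, hence their generators glue), these local sections of $\calA_\pgrp$ on $\pi_\textsc{X}(U)$ agree on overlaps and glue to a single $v \in \gerX_{\pi_\textsc{X}(U)}$, which lies in $\calA_\pgrp(\pi_\textsc{X}(U))$ because $\calA_\pgrp$ is a sheaf. Then $w = \tfrac{\dd}{\dd t}\big|_{t=0} E(\varphi_v^t)|_U$ by the first statement of Theorem \ref{pgrp:thm} (total flow of $v$ lies in $\pgrp$) and uniqueness of flows.

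The main obstacle I anticipate is the gluing/uniqueness step in surjectivity: one must carefully argue that the locally-defined vector fields on $\pi_\textsc{X}(U)$ obtained by "projecting down" the local isotopies really are independent of all the choices (the open $V$, the local isotopy $\phi^t$ representing the germ of $w$, the section $\sigma$) and therefore patch together. The connectedness of the fibers of $U$ is what makes a local fiber-preserving diffeomorphism have a well-defined underlying diffeomorphism downstairs (as in the proof of the earlier lemma), and this is exactly what is needed to make the projection of $w$ unambiguous; I would be careful to invoke it at the right moment. A secondary point worth spelling out is why $w$ itself is genuinely fiber-preserving: this follows because $\calA_{E\pgrp} = \calA_{E\pgrp}^\textrm{flow}$ (Theorem \ref{pgrp:thm} applied to the closed pseudogroup $E\pgrp$, which is closed by the preceding lemma), so the flow of $w$ lies in $E\pgrp$ and hence is locally fiber-preserving, forcing $w$ to be tangent to the distribution projecting isomorphically onto $T(\pi_\textsc{X}(U))$ fiberwise. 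Everything else — bilinearity, compatibility with restriction, smoothness in families — is routine bookkeeping that I would not grind through.
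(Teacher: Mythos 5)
Your proposal is correct and follows essentially the same route as the paper. A couple of small remarks on the details. For surjectivity, you first produce local sections of $\calA_\pgrp$ near each point and then glue them via the sheaf axiom; the paper instead begins by observing that, since $w$'s flow is fiber-preserving, $\dd\pi_\textsc{X}\circ w$ is constant along the (connected) fibers of $U$, and hence $w$ is $\pi_\textsc{X}$-related to a unique, globally defined $v\in\gerX_M(\pi_\textsc{X}(U))$ from the outset. This reverses your order — global $v$ first, local membership in $\calA_\pgrp$ second — and sidesteps the need to check agreement of local $v$'s on overlaps (a minor point, since any overlap-agreement reduces to the same uniqueness argument, but it is marginally cleaner). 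One attribution nit: you appeal to axiom (e) to get smoothness in $t$ of the downstairs family $\varphi^t$, but axiom (e) goes the other direction (smooth families upstairs from smooth families downstairs). The smoothness of $\psi^t$ in the paper comes simply from the explicit formula $\psi^t = \pi_\textsc{X}\circ\varphi_w^t\circ\sigma$ for a local section $\sigma$, as in the bijection lemma for $\pgrp\to E\pgrp$. Everything else you write — using $\calA_{E\pgrp}=\calA_{E\pgrp}^{\textrm{flow}}$ to see that the flow of $w$ lies in $E\pgrp$, the role of connected fibers in making the projection unambiguous, and the final identification $E(\varphi_v^t)|_U=\varphi_w^t|_U$ — matches the paper's argument.
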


\begin{proof}
Note that the map is injective for any open subset of $X$.

For any $w\in \calA_{E\pgrp}(U)$, its flow $\varphi_w^t\in E\pgrp(U^t)$ is locally fiber-preserving.
This implies that
    $
        \dd(\pi_\textsc{X}\circ\varphi_w^t)v=0,
    $
and thus 
    $
        \dd(\dd\pi_\textsc{X} \circ w)v = 0,
    $
for all vertical vectors $v\in T^\mathrm{v}X|_U$.
Since $U\sub X$ has connected fibers, it follows that $\dd\pi_\textsc{X}\circ w$ is constant along the fibers of $U$.
This implies there is a unique smooth local vector field $v\in \gerX_M(\pi_\textsc{X}(U))$ that is $\pi_\textsc{X}$-related to $w$, i.e.\
    \[
        \dd\pi_\textsc{X}\circ w = v \circ\pi_\textsc{X}.
    \]
For any $x\in U$ there is an open neighborhood $W\sub U$ of $x$, and an $\epsilon > 0$, such that $W\sub U^t$ for all $t\in (-\epsilon, \epsilon)$ and such that $W$ has connected fibers.
This means that $\varphi_w^t|_W \in E\pgrp(W)$ is fiber-preserving, and thus there is a family of diffeomorphisms $\psi^t\in \pgrp(\pi_\textsc{X}(W))$ such that 
    \[
        E(\psi^t)|_W = \varphi_w^t|_W,
    \]
for all $t \in (-\epsilon, \epsilon)$.
It easy to see that $\psi^t$ depends smoothly on $t$ and that $v|_{\pi_\textsc{X}(W)} = \tfrac{\dd}{\dd t}\big|_{t=0} \psi^t$.
This implies that $v|_{\pi_\textsc{X}(W)} \in \calA_\pgrp(\pi_\textsc{X}(W))$.
Moreover, because $\varphi_v^t|_{\pi_\textsc{X}(W)} = \psi^t$ and by the axioms of $E$, we have that
    \[
        E(\varphi^t_v)|_W 
        = 
        E(\varphi^t_v|_{\pi_\textsc{X}(W)})|_W 
        =
        E(\psi^t)|_W
        =
        \varphi_w^t|_W. 
    \]
This holds near all $x$, so we conclude that $v\in \calA_{\pgrp}(\pi_\textsc{X}(U))$ and that 
    \[
        E(v)|_U = w.
        \qedhere
    \]
\end{proof}

Epstein \& Thurston \cite{EpsThu79} phrase natural bundles differently: Let $0\leq s\leq r\leq \infty$. For them, a natural bundle is a functor $E$ that maps a $C^r$-manifold $M$ to a $C^s$-manifold $EM$ with a $C^s$-smooth map  
    \[
        \pi_\textsc{EM}:EM \longto M,
    \]
and that maps a $C^r$-smooth map $f:M \!\to\! N$ to a $C^s$-smooth bundle map
    \[
        E(f):EM\longto EN
    \]
over $f$. 
Moreover, they impose the following locality axiom:

\begin{itemize}
\item
    $
        {E(\id\!|_U): EU \!\to\! EM}
    $
is a $C^s$-diffeomorphism onto $\pi_{E(M)}^{-1}(U)$ for any open set $U\sub M$.
\end{itemize}

They prove in order that $\pi_\textsc{EM}$ is a locally-trivial fiber bundle over $M$, that the lift is continuous, and that the natural bundle is an associated jet bundle as described in example \ref{exa:associated_jet_bundle}.

We are only interested in the case where $r=s=\infty$.
Moreover, we restrict $E$ to the category of open subsets of a fixed manifold $M$, denote $X = EM$, and tacitly identify 
    $
        EU 
        = 
        \pi_X^{-1}(U)
    $ 
via the locality axiom.
Recall proposition 7.1 from \cite{EpsThu79} on the continuity of $E$.

\begin{CitedProposition}[{{\cite[proposition 7.1]{EpsThu79}}}]
Let $\phi_m:(\R^n,x) \!\to\! (\R^n,y)$ be a sequence of $C^r$-embeddings such that the $r$-jet of $\phi_m$ at $x$ converges to the $r$-jet of $\phi$ at $x$.
Then $E(\phi_m)|_{\pi^{-1}(x)}$ converges to $E(\phi)|_{\pi^{-1}(x)}$ in the compact-open $C^0$-topology.

In particular, if $\phi_0$ and $\phi_1$ have the same $r$-jet at $x$, then $E(\phi_0)|_{\pi^{-1}(x)} = E(\phi_1)|_{\pi^{-1}(x)}$.
\end{CitedProposition}

\begin{corollary}
If $E$ is a natural bundle in the sense of \cite{EpsThu79} and $\pgrp$ is a jet determined pseudogroup on $M$, then $E\pgrp$ is also jet determined.
\end{corollary}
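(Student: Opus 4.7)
The plan begins by identifying the right jet order. By the Epstein--Thurston structure theorem, any natural bundle has the form $X = F \times_{G_{m,d}} J^d_0\calD$ for some finite $d$ (cf.\ example~\ref{exa:associated_jet_bundle}), with $E(\varphi)[f,\xi] = [f, \varphi^{(d)}(\xi)]$. This yields a natural correspondence: for any $l \geq 1$, the $l$-jet $j^l_y E(\varphi)$ at $y \in X$ depends on and determines the $(l+d)$-jet of $\varphi$ at $\pi_\textsc{X}(y)$. I would take $l$ to be any integer with $l \geq \max(1,\, k-d)$, so that $l+d \geq k$ and fiber-preservation is already detected by the $1$-jet; the natural choice $l = \max(k,1)$ suffices, and the claim is then that $E\pgrp$ is $l$-jet determined.

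Next I would show the claim. Let $\phi \in \diff_X(U)$ satisfy $j^l_y(\phi) \in J^l_y E\pgrp$ for every $y \in U$. For each such $y$, pick $\psi_y \in \pgrp$ defined near $\pi_\textsc{X}(y)$ with $j^l_y(\phi) = j^l_y(E(\psi_y))$. Because $E(\psi_y)$ is fiber-preserving and $l \geq 1$, the $1$-jet agreement forces $\phi$ to be locally fiber-preserving at $y$; patching and shrinking $U$ so that its fibers are connected, $\phi$ descends to a base map $\varphi \in \diff_M(\pi_\textsc{X}(U))$, using the earlier lemma which identifies fiber-preserving elements of $E\pgrp(U)$ with $E$-images of elements of $\pgrp(\pi_\textsc{X}(U))$.

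I would then invoke the $k$-jet determinacy of $\pgrp$: at every $x = \pi_\textsc{X}(y)$ the jet correspondence gives $j^{l+d}_x(\varphi) = j^{l+d}_x(\psi_y)$, hence $j^k_x(\varphi) \in J^k_x \pgrp$; since $x$ is arbitrary in $\pi_\textsc{X}(U)$, jet determinacy yields $\varphi \in \pgrp(\pi_\textsc{X}(U))$. It then remains to check that $\phi$ coincides with the lift $E(\varphi)$ on all of $U$. Running the jet correspondence in reverse, $j^l_y E(\varphi) = j^l_y E(\psi_y) = j^l_y(\phi)$ for every $y \in U$; the uniqueness of a fiber-preserving diffeomorphism over a fixed base with prescribed $l$-jets at every total-space point then forces $\phi = E(\varphi) \in E\pgrp(U)$, completing the argument.

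The hard part will be making the jet correspondence and the uniqueness statement rigorous: that $l$-jets on the total space really do determine $(l+d)$-jets on the base, and that the lift of a base map is uniquely pinned down by its $l$-jets over each point of the fiber. Both rely on the associated-jet-bundle description afforded by Epstein--Thurston together with a careful analysis of how the $G_{m,d}$-action on $F$ transports jets between base and total space. Once these structural facts are in place, the rest is a routine chase using the sheaf-like axioms of pseudogroups and the hypothesized $k$-jet determinacy of $\pgrp$.
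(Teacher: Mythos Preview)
Your overall strategy---descend to a base map, check $k$-jet determinacy of $\pgrp$, then identify $\phi$ with the lift---is sound and is essentially what the paper has in mind (the paper gives no explicit proof, treating the corollary as immediate from the cited Epstein--Thurston proposition). However, the ``jet correspondence'' you rely on is overstated and, as written, false.

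You claim that $j^l_y E(\varphi)$ \emph{determines} $j^{l+d}_{\pi_\textsc{X}(y)}(\varphi)$. This fails already for $X=TM$ (so $d=1$): at a point $y=(x,0)$ on the zero section, $j^1_y(T\varphi)$ recovers $\varphi(x)$ and $D\varphi(x)$ but contains no second-order information about $\varphi$. In general, $j^l_y E(\varphi)$ depends only on $j^{l+d}_x\varphi$ (true) and determines only $j^l_x\varphi$ (since $\varphi=\pi_\textsc{X}\circ E(\varphi)\circ\sigma$ for any local section $\sigma$ through $y$). This gap propagates: in your step ``$j^{l+d}_x(\varphi)=j^{l+d}_x(\psi_y)$'' you only actually get $j^l_x(\varphi)=j^l_x(\psi_y)$, and then ``running the jet correspondence in reverse'' to get $j^l_y E(\varphi)=j^l_y E(\psi_y)$ does not follow.

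The repair is short, and it is exactly the content of the cited proposition. Take $l\geq\max(k,d,1)$. From $j^l_y\phi=j^l_y E(\psi_y)$ you get $j^l_x\varphi=j^l_x\psi_y$; since $l\geq k$ this gives $j^k_x\varphi\in J^k_x\pgrp$ and hence $\varphi\in\pgrp$. For the last step, since $l\geq d$ you have $j^d_x\varphi=j^d_x\psi_y$, and now the Epstein--Thurston proposition says directly that $E(\varphi)|_{\pi_\textsc{X}^{-1}(x)}=E(\psi_y)|_{\pi_\textsc{X}^{-1}(x)}$; in particular $E(\varphi)(y)=E(\psi_y)(y)=\phi(y)$. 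This holds at every $y\in U$, so $\phi=E(\varphi)\in E\pgrp(U)$. No separate ``uniqueness of the lift from its $l$-jets'' argument is needed---the cited proposition already delivers pointwise equality.
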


In remark (2) of \cite{EpsThu79}, Epstein \& Thurston ask if their argument also works for other categories of manifolds, for example symplectic manifolds (and their symplectomorphisms).

\begin{question}
Do axioms (d,e) follow from axioms (a,b,c) for any pseudogroup?
\end{question}

We suspect that one should require the following properties:
\begin{enumerate*}

\item
the pseudogroup is closed,

\item
the pseudogroup is jet-determined,

\item
around any point of $M$ there is a chart such that the pull-back of the pseudogroup along this chart contains all translations.

\end{enumerate*}

\chapter{PDEs with Symmetry}
\label{chapter:PDEs with Symmetry}
\etocsettocdepth{2}
\localtableofcontents

{\color{white}.}
\newline
Many problems in differential geometry, and especially normal form results for various geometric structures, can be reduced to the meta-problem of showing that a fixed solution to a partial differential equation is rigid under a natural group of symmetries. 
To make this precise we will first introduce the main ingredients of this thesis: 
the notion of a partial differential equation with symmetry.

\begin{notation}
\label{notation}
Let $X$ and $Y$ be two smooth manifolds, and let $\mathcal{S}$ be a subpresheaf of the sheaf of smooth maps from $X$ to $Y$, i.e.\ an assignment    
    \[
        \mathcal{S}: U\longmapsto \mathcal{S}(U),
    \]
which associates to an open set $U\sub X$ a subset of $C^\infty(U,Y)$ so that, if $V\sub U$ are open sets, then $f|_V\in \mathcal{S}(V)$ for all $f\in \mathcal{S}(U)$.

We will adopt the following notation: 
if $Z\sub X$ is any subset, we denote by $\mathcal{S}(Z)$ the set of maps of the form $f|_Z$, where $f\in \mathcal{S}(U)$ for some open set $U$ containing $Z$.
This notation will be used throughout the thesis, and several such subpresheaves occur.
\end{notation}

\clearpage
\section{Pseudogroups as Symmetries}
%\etocsettocdepth{2}
%\localtableofcontents

We fix smooth manifolds $X$ and $M$ and a submersion
    \[
        \pi_\textsc{x}\sep X\longto M.
    \]
In this section we describe how a pseudogroup $\pgrp$ on the total space $X$ induces an action on the sheaf $\Gamma_X$ of local sections of $\pi_\textsc{X}$. 
In this way we can think of $\pgrp$ as a set of symmetries of $\Gamma_X$. 
Moreover, if $\pgrp$ is closed, then its Lie algebra sheaf $\calA$ is well-defined and induces an infintesimal action on the `tangent bundle' of $\Gamma_X$.

We begin by briefly discussing certain constructions that appear throughout the thesis, and we give several examples of pseudogroups that are particular to the total space of a submersion.

\begin{notation}
Let $\pi_E: E \to X$ be a submersion and $f:Y\to X$ a smooth map.
We denote the fiber of $E$ at $x\in X$ by $E_x$ (so that $E= \medcup_{x\in X} E_x$), and the pull-back of $E$ along $f$ by
    \[
        E_f
        \defeq
        \medcup_{y\in Y}
        \big(
        E_{f(y)} \times \{ y\}
        \big).
    \]
Moreover, if $Y\sub X$ is an embedded submanifold, we further abbreviate the restriction of $E$ to $Y$ by $E_Y$.
For example:
\begin{enumerate}[label=(\roman*)]

\item[a)]
For an embedded submanifold $Y\sub X$ we denote the restriction by
    \[
        T_YX
        =
        \medcup_{x\in Y} T_xX.
    \]

\item[b)]
For a section $b\in \Gamma_X(M)$ we write
    $
        T^*_bX
        =
        \medcup_{x \in M} T_{b(x)}^*X.
    $

\end{enumerate}
\end{notation}

\begin{definition}
\label{def:vertical_bundle}
The vertical bundle of $X$ will be denoted by
    \[
        VX
        \defeq
        \big\{
            v \in TX
            \sep
            \dd\pi_\textsc{X}(v) = 0
        \big\}.
    \]
Given a section $b \in \Gamma_X(U)$, the pull-back $V_bX$ is a vector bundle over $U$.
A \textbf{vertical tubular neighborhood}\index{vertical tubular neighborhood} of $b$ in $X$ is an open embedding
    \[
        \xi: 
        V_bX
        \longhookrightarrow
        X
    \]
that preserves fibers (mapping $V_{b(x)}X$ into $X_x$), and identifies the zero section of $V_bX$ with $b$.
\end{definition}

Vertical tubular neighborhoods exist for any submersion and any section.
If one thinks of $\Gamma_X$ as an infinite dimensional manifold,
then the space of sections of $V_bX$ play the role of tangent space at $b$,
and vertical tubular neighborhoods provide charts.

\begin{definition}
\label{def:normal_bundle}
Let $Z \sub X$ be an embedded submanifold.
The normal bundle of $Z$ in $X$ will be denoted by
    \[
        N(X, Z)
        \defeq
        \frac{T_ZX}{TZ}.
    \]
We say that $Z$ is a \textbf{fibered submanifold}\index{fibered submanifold} of $X$ if the restriction
    \[
        \pi_\textsc{Z} \defeq \pi_\textsc{X}|_Z:
        Z \longto \pi_\textsc{X}(Z)
    \]
is again a submersion.
In this case the normal bundle of $Z$ in $X$ is also obtained as the quotient of the respective vertical bundles:
    \[
        N(X, Z)
        =
        \frac{ V_ZX }{ VZ }.
    \]
A \textbf{vertical tubular neighborhood} of $Z$ in $X$ is an open embedding
    \[
        \xi: N(X,Z) \longhookrightarrow X
    \]
that preservers fibers and is the identity on $Z$.
\end{definition}

If $Z = b(U)$ is the image of a section $b\in\Gamma_X(U)$, then these concepts agree:
there is a natural inclusion $V_bX \hookrightarrow N(X,b(U))$, and this is an isomorphism because the ranks of these vector bundles are equal.

One can think of the space of fibered submanifolds of $X$ as an infinite dimensional manifold.
Its objects in the connected component of a given fibered manifold $Z$ are the fiber-preserving embeddings $Z\hookrightarrow X$ modulo the fiber-preserving diffeomorphisms of $Z$.
The space of sections of $N(X,Z)$ plays the role of tangent space at $Z$, and the vertical tubular neighborhoods again provide charts.
However, we find it generally easier to work with submersions and normal bundles than with infinite dimensional manifolds.

Let us look at several examples of pseudogroups on $X$.

\begin{example}
\label{exa:fiber_preserving}
The local diffeomorphisms on $X$ that locally map fibers to fibers form a closed pseudogroup:
    \begin{align*}
        \diff_{\pi_\textsc{X}}(U)
        &\defeq
        \big\{
            \phi \in \diff_X(U)
            \sep
            \dd_x(\pi\circ\phi)v = 0
            \;\;\forall\; 
            x \in U
            \;\forall\; v \in V_xX
        \big\}.
    \end{align*}
This is an example of a pseudogroup defined by a differential equation (or better yet, a partial differential relation, see section \ref{sec:jet_determinacy}).
\end{example}

\begin{example}
$\pi_\textsc{X}$ is a natural bundle over $M$ if there is a map
    \[
        E: \diff_M \longto \diff_X
    \]
that respects the group-like and sheaf-like structure (see section \ref{sec:natural_bundle}). 
A typical example of this is the tangent bundle $TM$, or any bundle constructed functorially out of $TM$. 
For any closed pseudogroup $\pgrp$ on $M$, the map $E$ defines a closed pseudogroup
    \[
        E\pgrp(U)
        \sub
        \diff_{\pi_\textsc{X}}(U)
    \]
as follows: a local diffeomorphism $\phi\in \diff_X(U)$ lies in $E\pgrp(U)$ if and only if for every $x\in U$ there is an open neighborhood $V\sub U$ of $x$ and a local diffeomorphism $\varphi \in \pgrp(\pi_\textsc{X}(V))$ such that $\phi|_V = E(\varphi)|_V$. 
Hence it locally maps fibers to fibers.
\end{example}

\begin{example}
\label{exa:vertical_tub_nbhd}
Not every interesting closed pseudogroup on $X$ arises from a natural bundle or otherwise respects the bundle structure of $X$. 
For suppose that $\pgrp$ is a closed pseudogroup on a manifold $Y$, and $M\!\hookrightarrow\! Y$ is a closed embedded submanifold. Let 
    $
        X = N(Y, M)
    $
be the normal bundle of $M$ in $Y$, and fix a tubular neighborhood 
    $
        \xi: X \hookrightarrow Y.
    $
Then the restriction $\xi^*\pgrp$ is a closed pseudogroup on $X$ (see example \ref{exa:restricted_pseudogroup}), which does not respect the bundle structure. 
Sections of $X$ correspond to closed, embedded submanifolds of $Y$ that are close to $M$. The local action of $\xi^*\pgrp$ on these sections can be understood as moving around submanifolds of $Y$. 
\end{example}

\subsection{Local Action}
\label{sect:local action}

Let $\pi_\textsc{X}:X\to M$ be a submersion and let $\pgrp$ be a pseudogroup on $X$.
We will construct a local action 
of $\pgrp$ on the sheaf $\Gamma_X$ of smooth sections of $\pi_\textsc{x}$. The term `action' is used lightly here, since we are not familiar with any formal definition of an action of a pseudogroup on a sheaf. Nonetheless, at the end of this section we make an attempt at writing down axiomatic properties of the concepts presented here.

The action will be `local' in the sense of a local action of a topological group on a space: 
not every local diffeomorphism $\phi\in \pgrp$ can act on a section $b\in \Gamma_X$, but when it is possible (when they are \emph{compatible}), the action is also defined for any $\tilde\phi$ close to $\phi$ and $\tilde b$ close to $b$.
The obstruction to compatibility is partly infinitesimal (a transversality condition, e.g.~\ref{exa:inf_incompatible}) and partly topological (e.g.~\ref{exa:top_incompatible}).
Much of this section is concerned with describing the compatibility condition and showing when it is satisfied.

If $\phi\in \pgrp$ maps fibers to fibers, the action is defined for any section $b\in \Gamma_X$ as the pullback $\phi^*b$. More precisely, if $\phi_0$ is the base map of $\phi$, then define
    \begin{equation}
    \label{for:action_is_pullback}
        \phi^*b
        \defeq
        \phi^{-1} \circ b \circ \phi_0.
    \end{equation}
But elements of the pseudogroup $\pgrp$ typically do not map fibers to fibers:
\begin{enumerate*}[label=(\roman*)]
    \item
        The local diffeomorphisms that map fibers to fibers do not satisfy the sheaf-like axioms of a pseudogroup. 
        Their sheafification is the pseudogroup $\diff_{\pi_\textsc{X}}$ defined in example \ref{exa:fiber_preserving}.
    \item
        By example \ref{exa:vertical_tub_nbhd} it is clear that many interesting pseudogroups on $X$ completely ignore the bundle structure.
\end{enumerate*}

Given $\phi\in \pgrp$ and $b \in \Gamma_X$, we will give conditions under which the image of $\phi^{-1}\circ b$ defines a local section of $X$.
Namely, a local section $b\in \Gamma_X(V)$ can be thought of as an embedded submanifold 
    $
        b(V)\sub X,
    $
and the converse holds under certain conditions.

\begin{lemma}
Let $B\sub X$ be an embedded submanifold and suppose that $W\defeq \pi_\textsc{X}(B)$ is an open set. 
The following are equivalent:
\begin{enumerate}[label=(\roman*)]

\item
$B$ comes from a local section.

\item
    $
        \pi_\textsc{x}\vert_B:B\to W
    $
is a diffeomorphism.

\item
$B$ and $\pi_\textsc{X}^{-1}(x)$ intersect transversally in a unique point for all $x \in W$.

\end{enumerate}
\end{lemma}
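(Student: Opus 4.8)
The statement to prove is the equivalence of three characterizations of an embedded submanifold $B \sub X$ (with $W \defn \pi_\textsc{X}(B)$ open) ``coming from a local section'': (i) $B = b(W)$ for some $b \in \Gamma_X(W)$; (ii) $\pi_\textsc{X}|_B : B \to W$ is a diffeomorphism; (iii) $B$ meets each fiber $\pi_\textsc{X}^{-1}(x)$ transversally in exactly one point. This is an elementary but foundational lemma, and I would prove it by a cyclic chain of implications $(i) \Rightarrow (ii) \Rightarrow (iii) \Rightarrow (i)$, since each individual arrow is short.

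First I would treat $(i) \Rightarrow (ii)$: if $B = b(W)$ with $b$ a smooth section, then $b : W \to B$ is smooth, and since $\pi_\textsc{X} \circ b = \id_W$, the map $\pi_\textsc{X}|_B$ is a smooth bijection with smooth inverse $b$, hence a diffeomorphism. Next, $(ii) \Rightarrow (iii)$: assuming $\pi_\textsc{X}|_B$ is a diffeomorphism, bijectivity immediately gives that $B \cap \pi_\textsc{X}^{-1}(x)$ is a single point $p_x$ for every $x \in W$; for transversality at $p_x$ I would note that $\dd_{p_x}(\pi_\textsc{X}|_B) : T_{p_x}B \to T_x W$ is an isomorphism, so $T_{p_x}B$ maps \emph{onto} $T_x M = T_x W$ under $\dd_{p_x}\pi_\textsc{X}$, and since $\ker \dd_{p_x}\pi_\textsc{X} = V_{p_x}X = T_{p_x}(\pi_\textsc{X}^{-1}(x))$, we get $T_{p_x}B + T_{p_x}(\pi_\textsc{X}^{-1}(x)) = T_{p_x}X$, which is exactly transversality (and the point being unique handles the ``unique point'' clause).

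The only implication with any real content is $(iii) \Rightarrow (i)$. Here I would argue as follows. For each $x \in W$, write $\{p_x\} = B \cap \pi_\textsc{X}^{-1}(x)$; transversality together with the dimension count $\dim B + \dim(\pi_\textsc{X}^{-1}(x)) = \dim M + \dim(X_x) = \dim X$ forces $T_{p_x}B \cap V_{p_x}X = 0$, so $\dd_{p_x}(\pi_\textsc{X}|_B)$ is injective, hence an isomorphism onto $T_x W$ by equality of dimensions; thus $\pi_\textsc{X}|_B : B \to W$ is a local diffeomorphism. It is also a bijection by the ``unique point'' hypothesis, so it is a (global) diffeomorphism, and its inverse $b \defn (\pi_\textsc{X}|_B)^{-1} : W \to B \hookrightarrow X$ is a smooth section of $\pi_\textsc{X}$ with $b(W) = B$. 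The main subtlety to be careful about is that transversality alone does not force $B$ to be a section — one genuinely needs the ``in a unique point'' part of hypothesis (iii), as otherwise $B$ could be, e.g., a multi-sheeted cover of $W$; I would make sure the writeup flags that the cardinality condition is what upgrades ``local diffeomorphism'' to ``diffeomorphism''. I expect no serious obstacle; the proof is a routine application of the regular value / transversality toolkit together with the inverse function theorem.
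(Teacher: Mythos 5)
Your proposal is correct and takes essentially the same approach as the paper, which proves the same cyclic chain $(i)\Rightarrow(ii)\Rightarrow(iii)\Rightarrow(i)$ but is far more terse (the paper simply notes the first two implications are immediate and that $(iii)\Rightarrow(i)$ follows from the implicit function theorem). The only place to tighten is in $(iii)\Rightarrow(i)$: the dimension count $\dim B = \dim M$ is stated as a given, but it is actually a consequence of transversality plus the uniqueness of the intersection point (via the transversal-intersection theorem, which forces the intersection to be zero-dimensional), and it would be cleaner to flag that one-line derivation rather than present the count as known.
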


\begin{proof}
The implications $(i) \Rightarrow (ii)$ and $(ii) \Rightarrow (iii)$ are immediate, and implication $(iii) \Rightarrow (i)$ follows from the implicit function theorem.
\end{proof}

In the following definition, let $U\sub X$ and $V, W\sub M$ be open sets.

\begin{definition}
\label{def:compatibility}
We call $\phi \in \pgrp(U)$ and $b \in \Gamma_X(V)$  \textbf{compatible}\index{compatibility} on $W$ if
    \[
        B 
        \defeq
        \phi^{-1}(b(V)) \cap \pi_\textsc{X}^{-1}(W)
    \]
comes from a local section. In this case we denote
    \[
        (b\cdot \phi)\vert_{W} 
        \defeq
        (\pi_\textsc{X}|_B)^{-1}
        \;\in\; \Gamma_X(W)
    \]
Moreover, for any $x \in M$ we say that $\phi$ and $b$ compatible near $x$ if there is an open neighborhood $W$ of $x$ on which they are compatible. 
We write
    \[
        V(b, \phi)
        \defeq
        \big\{
            x \in M \sep
            \text{$\phi$ and $b$ are compatible near $x$}
        \big\}
    \]
for the largest open on which $\phi$ and $b$ are compatible, and we denote
    \[
        b\cdot \phi 
        \defeq
        (b \cdot \phi)|_{V(b, \phi)}.
        \qedhere
    \]
\end{definition}

\begin{example}
\label{exa:inf_incompatible}
Let $\pi_\textsc{X}:\R^2 \to \R$ and let $\phi_\theta\in \diff(\R^2)$ be rotation by an angle $\theta$. Then $b\in C^\infty(\R, \R)$ is compatible with $\phi_\theta$ near any point $x$ where $\graph(\dd_xb)$ does not have angle $\pi/2+\theta$ with the horizontal axis.
\end{example}

For $\phi\in \diff_{\pi_\textsc{X}}$ the transversality condition between $B$ and the fibers of $\pi_\textsc{X}$ always holds. However, the compatibility condition between $\phi$ and $b$ on $W$ is not vacuous: 
it tells us that the fiber $\pi_\textsc{X}^{-1}(x)$ over $x\in V$ intersects with the image of $\phi^{-1}\circ b$ in a unique point. 
The following example shows that this does not always hold.

\begin{example}
\label{exa:top_incompatible}
Let $\pi_\textsc{X}=e^{i x}: \R \to S^1$ be the universal covering space of the circle and let $b:S^1\backslash\{0\}\to \R$ be the section that maps onto $(0, 2\pi)$. Then an orientation-preserving $\phi \in \diff(\R)$ is compatible with $b$ only on the set
    \[
        V(b,\phi)=
        \pi_\textsc{X}\big(
            (
                \mathrm{max}(r, s-2\pi),
                \mathrm{min}(r+2\pi, s)
            )
        \big)
    \]
if $r< s \leq r+4\pi$, where $r \defeq\phi^{-1}(0)$ and $s \defeq \phi^{-1}(2\pi)$.
\end{example}

For a local diffeomorphism $\phi\in\pgrp$ that maps fibers to fibers we defined the action by the pullback formula (\ref{for:action_is_pullback}), 
where $\phi_0$ is the  map underlying $\phi$. 
This $\phi_0$ does not exist in general, but we can define a \textbf{base map relative to $b$}\index{relative base map}:
    \[
        \phi_b
        \defeq
        \pi_\textsc{X} \circ \phi \circ (b\cdot\phi):
        V(b,\phi) \longto M.
    \]
It satisfies the equality
    $
        \phi\cdot b = \phi^{-1}\circ b\circ \phi_b.
    $
Its inverse is the map
    \[
        \psi_b
        =
        \pi_\textsc{X} \circ \phi^{-1} \circ b.
    \]
Note that $\psi_b$ is always well-defined, even when $\phi_b$ is not.
Remark that $\phi$ and $b$ are compatible on $W$ if and only if the image of $\psi_b$ contains $W$ and $\psi_b$ is invertible on $\psi_b^{-1}(W)$.

In the following lemma we state some basic properties of the local action.

\begin{lemma}
\label{lemma:restricting_action}
Consider open sets $U, \tilde U\sub X$, and let $\phi\in \pgrp(U)$, let $\psi \in \pgrp(\tilde U)$, and let $b\in \Gamma_X(V)$.
\begin{enumerate}[label=(\roman*)]

\item
Suppose that $\phi$ and $b$ are compatible on $W$, and let 
    $
        U'\sub U,
    $
    $
        V'\sub V,
    $
and
    $ 
        W'\sub W
    $
be open subsets. Then $\phi\vert_{U'}$ and $b\vert_{V'}$ are compatible on $W'$ if and only if
    \[
        (b\cdot\phi)(W') \sub U', 
        \quad
        \phi_b^{-1}(W') \sub V'.
    \]

\item
Acting by the inclusion $\id\!|_U:U\hookrightarrow X$ is restriction to $\pi_\textsc{X}(U\cap b(V))$:
    \[
        V(b, \id\!|_U) = \pi_\textsc{X}\big( U\cap b(V) \big),
        \qquad
        b\cdot \id\!|_U = b|_{V(b, \id\!|_U)}.
    \]

\item
The action is compatible with composition on the open where both sides are defined:
    \[
        \big(
            b \cdot (\phi\circ\psi)
        \big)\big|_W
        =
        b \cdot \phi \cdot \psi \big|_W,
        \quad\text{where}\quad
        W
        \defeq
        V(b\cdot\phi, \psi) 
        \cap 
        V(b, \phi\circ\psi).
    \]
    
\end{enumerate}
\end{lemma}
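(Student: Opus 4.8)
Looking at this lemma, the final statement (iii) says that acting by a composition $\phi \circ \psi$ agrees with acting by $\psi$ first and then $\phi$, on the open set where both make sense. Let me think about how to prove it.

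\textbf{Proof plan.}

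The plan is to unwind the definitions and verify the set-theoretic identities carefully, since everything here is really about images of embedded submanifolds and preimages under fiber projections. First I would fix notation: write $B_1 \defeq \psi^{-1}(b(V)) \cap \pi_\textsc{X}^{-1}(W_1)$ for the submanifold realizing $b \cdot \psi$ over an appropriate open $W_1 \sub V(b,\psi)$, so that $(b\cdot\psi)|_{W_1} = (\pi_\textsc{X}|_{B_1})^{-1}$; and similarly $B_2 \defeq (\phi)^{-1}((b\cdot\psi)(W_1')) \cap \pi_\textsc{X}^{-1}(W)$ for $(b\cdot\psi)\cdot\phi$, and $B_{12} \defeq (\phi\circ\psi)^{-1}(b(V)) \cap \pi_\textsc{X}^{-1}(W)$ for $b\cdot(\phi\circ\psi)$. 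The key algebraic observation is that $(\phi\circ\psi)^{-1} = \psi^{-1}\circ\phi^{-1}$, so that $B_{12} = \psi^{-1}\big(\phi^{-1}(b(V))\big) \cap \pi_\textsc{X}^{-1}(W)$. Because $(b\cdot\psi)(W_1)$ is exactly the portion of $\psi^{-1}(b(V))$ lying over $W_1$ (this is just the definition of $b\cdot\psi$ as a section whose image is $B_1$), applying $\psi^{-1}\circ\phi^{-1}$ term by term shows $\psi^{-1}\big((b\cdot\psi)(W_1)\big) = $ the part of $B_{12}$ lying over $W_1$ — provided one is careful that $\phi^{-1}(b(V)) \cap (\text{stuff over } W_1)$ is controlled.

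The key steps, in order, would be: (1) reduce to checking that $V(b,\phi\circ\psi) \cap V(b\cdot\psi,\phi) = W$ makes both $B_{12}$ and $B_2$ (suitably restricted) come from sections, using part (i) of the same lemma to relate the compatibility of the restricted maps to containment conditions on $(b\cdot\psi)(W)$ and on the relative base maps; (2) show the two submanifolds $B_{12}|_W$ and $B_2|_W$ coincide inside $X$, which is the set-theoretic computation $\psi^{-1}\big(\phi^{-1}(b(V)) \cap (b\cdot\psi)(\text{dom})\big) = \psi^{-1}\phi^{-1}(b(V)) \cap \psi^{-1}((b\cdot\psi)(\text{dom}))$ together with $\psi^{-1}((b\cdot\psi)(W_1)) \supseteq B_{12}|_{W_1}$; (3) conclude that since both sections have the same image over $W$ and both are genuine sections (their image maps diffeomorphically to $W$ under $\pi_\textsc{X}$), they are equal as elements of $\Gamma_X(W)$. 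Step (3) is immediate from the uniqueness in the lemma characterizing when a submanifold comes from a section.

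\textbf{Main obstacle.} The hard part will be step (2), specifically bookkeeping the domains: the natural domain on which $b\cdot\psi$ is defined ($V(b,\psi)$) may be strictly larger or smaller than relevant pieces, and one must check that restricting $b\cdot\psi$ to $\phi_b^{-1}(\cdots)$-type sets does not lose any of the relevant submanifold. The cleanest way is to work locally around a point $x \in W$, choose small enough neighborhoods so that all the relevant maps are genuine diffeomorphisms onto their images (using that compatibility near a point gives such a neighborhood by definition), and then the identity $(\phi\circ\psi)^{-1} = \psi^{-1}\circ\phi^{-1}$ makes everything match on the nose; then invoke germ-determinacy / the sheaf axioms to pass from the pointwise statement to the statement over all of $W$. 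This localization also handles the transversality (infinitesimal compatibility) conditions automatically, since transversality is an open condition and is assumed to hold at points of $W$ via the definition of $V(b,\phi\circ\psi)$ and $V(b\cdot\psi,\phi)$.
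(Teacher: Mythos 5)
The paper gives no proof of this lemma — it is stated and the text immediately moves on — so there is no paper argument to compare against. Your sketch also only addresses part (iii); parts (i) and (ii) are not touched at all.

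For (iii), the overall strategy — unwind the definitions, use $(\phi\circ\psi)^{-1}=\psi^{-1}\circ\phi^{-1}$ to compare the image submanifolds, localize, and invoke the uniqueness of a section with a given image — is the right one. But your bookkeeping has the composition order reversed, and this is a substantive mismatch, not a cosmetic one. The lemma asserts $b\cdot(\phi\circ\psi)=(b\cdot\phi)\cdot\psi$ (note $W$ uses $V(b\cdot\phi,\psi)$, so $\phi$ acts first). The composite image therefore sits inside $\psi^{-1}\bigl(\phi^{-1}(b(V))\bigr)=(\phi\circ\psi)^{-1}(b(V))$, matching $B_{12}$. Your $B_1$, however, realizes $b\cdot\psi$ and your $B_2$ realizes $(b\cdot\psi)\cdot\phi$, so $B_2\sub\phi^{-1}\bigl(\psi^{-1}(b(V))\bigr)=(\psi\circ\phi)^{-1}(b(V))$ — a different submanifold from $B_{12}$ unless $\phi$ and $\psi$ commute. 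The sentence claiming $\psi^{-1}\bigl((b\cdot\psi)(W_1)\bigr)$ equals part of $B_{12}$ compounds this: $(b\cdot\psi)(W_1)$ already lies in $\psi^{-1}(b(V))$, so hitting it with another $\psi^{-1}$ produces nothing resembling $B_{12}$.

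The fix is mechanical: set $B_1 \defeq \phi^{-1}(b(V))\cap\pi_\textsc{X}^{-1}(\cdot)$ (realizing $b\cdot\phi$) and $B_2 \defeq \psi^{-1}\bigl((b\cdot\phi)(\cdot)\bigr)\cap\pi_\textsc{X}^{-1}(W)$ (realizing $(b\cdot\phi)\cdot\psi$). Then $B_2\sub\psi^{-1}\bigl(\phi^{-1}(b(V))\bigr)$, which is what $B_{12}$ is over $W$, and your localization plus the uniqueness step closes the argument. You should also make the domain bookkeeping in your step (1) explicit — that both submanifolds come from sections precisely over $V(b\cdot\phi,\psi)\cap V(b,\phi\circ\psi)$ — and this is where part (i) of the lemma is actually used, which is another reason not to skip it.
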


From one perspective the action of $\pgrp$ on $\Gamma_X$ is defined as a map of sets
    \[
        \Gamma_X \times \pgrp \longto \Gamma_X.
    \]
One could try to interpret the properties in the above lemma as axioms for an action of a pseudogroup on a sheaf. Instead, we choose to interpret the action as something only defined locally: for fixed open sets $U\sub X$ and $V, W\sub M$, and fixed $\phi\in \pgrp(U)$ and $b \in \Gamma_X(U)$ that are compatible on $W$, under some conditions, any $\phi'\in\pgrp(U)$ close to $\phi$ and of $b' \in \Gamma_X(U)$ close to $b$ are compatible on $W$.
We make this precise in the following lemmas.
Their proves are similar to lemmas \ref{lemma:diffeo} and \ref{lemma:global_action}, and are thus omitted.

\begin{lemma}
\label{lemma:close_to_identity}
Let $X_0, X_1\sub X$ be compact sets with dense interior, and $\phi\in\diff_X(X_1)$ a local diffeomorphism, such that   
    \[
        X_0 \sub \phi(\mathrm{int}(X_1)).
    \]
Then there is a $C^1$-open neighborhood 
    $
        \calU\sub C^\infty(X_1,X)
    $
of $\phi$ such that any $\psi \in \calU$ is a diffeomorphism onto its image and 
    \[
        X_0 \sub \psi(\mathrm{int}(X_1)).
    \]
\end{lemma}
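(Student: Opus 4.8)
Looking at Lemma \ref{lemma:close_to_identity}, this is a quantitative openness statement: given a local diffeomorphism $\phi$ defined on a compact set $X_1$ whose image on the interior covers the compact set $X_0$, every $C^1$-nearby smooth map is still an embedding whose interior image covers $X_0$.

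\textbf{The plan.} First I would address the embedding part. The set of embeddings of a compact manifold into another manifold is $C^1$-open in $C^\infty(X_1,X)$; this is classical (e.g.\ a consequence of the stability of immersions plus injectivity being $C^1$-open on compact domains, via the usual tube-lemma argument on the compact set $X_1\times X_1$ away from the diagonal combined with the inverse function theorem near the diagonal). One should invoke exactly the auxiliary statement \ref{lemma:inverse} cited nearby, or lemma \ref{lemma:diffeo}, which presumably packages this; so I would first record that there is a $C^1$-open $\calU_1\ni\phi$ such that every $\psi\in\calU_1$ is a diffeomorphism onto its image.

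\textbf{The covering part.} This is the heart of the matter. Since $X_0\sub\phi(\mathrm{int}(X_1))$ and $X_0$ is compact, there is a compact set $K$ with $X_0\sub\mathrm{int}(K)$, $K\sub\phi(\mathrm{int}(X_1))$; set $K'\defeq\phi^{-1}(K)\sub\mathrm{int}(X_1)$, a compact set. Equip $X$ near $\phi(K')$ with a metric; by uniform continuity choose $\delta>0$ so that the $3\delta$-neighborhood of $\phi(K')$ lies inside $\phi(\mathrm{int}(X_1))$ and so that $X_0$ is contained in the $\delta$-interior of $K$. Now take $\calU_2\sub\calU_1$ to be the $C^0$-open (hence $C^1$-open) neighborhood of $\phi$ consisting of $\psi$ with $\sup_{x\in X_1}\mathrm{dist}(\psi(x),\phi(x))<\delta$. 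For $\psi\in\calU_2$ I claim $X_0\sub\psi(\mathrm{int}(X_1))$. The argument is a degree/continuity argument: the straight-line homotopy (in a chart, or using the exponential map) $h_s\defeq$ geodesic from $\phi(x)$ to $\psi(x)$ stays within $3\delta$ of $\phi(K')$ for $x\in K'$, hence never meets $X_0$ on $\partial K'$ (whose image under $h_s$ stays near $\phi(\partial K')$, at distance $>\delta$ from $X_0$ by the choice of $\delta$). Therefore $\psi|_{K'}$ and $\phi|_{K'}$ are homotopic as maps $(K',\partial K')\to(X,X\setminus X_0)$, so they induce the same map on local homology/degree at each point of $X_0$; since $\phi|_{K'}$ has degree one over each point of $X_0$ (as $\phi$ is a diffeomorphism onto its image containing $\mathrm{int}(K)\supset X_0$), so does $\psi|_{K'}$, and in particular each point of $X_0$ is in the image. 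Then $\calU\defeq\calU_2$ works.

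\textbf{Main obstacle.} The genuinely delicate point is the covering/surjectivity part: a $C^1$-small perturbation of an embedding can shrink the image, so one cannot argue pointwise and must use a global topological invariant (mapping degree relative to the boundary) that is stable under $C^0$-small homotopies avoiding $X_0$. Getting the bookkeeping of the nested compact sets $X_0\sub\mathrm{int}(K)$, $K'=\phi^{-1}(K)\sub\mathrm{int}(X_1)$ right, and choosing $\delta$ so that the linear homotopy never drags $\partial K'$ across $X_0$, is where all the care goes. The embedding part and the reduction to the cited lemmas \ref{lemma:inverse}/\ref{lemma:diffeo} are routine. Since the proof is entirely parallel to those of lemmas \ref{lemma:diffeo} and \ref{lemma:global_action}, I would in the end likely just point to that parallel and give the degree argument in a sentence or two.
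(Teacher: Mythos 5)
The paper actually omits the proof of this lemma, remarking only that it is \emph{``similar to lemmas \ref{lemma:diffeo} and \ref{lemma:global_action}''}; the intended argument is to be reconstructed from those, and in both of them the surjectivity step ultimately rests on the minimization trick in lemma \ref{lemma:inverse}: on a small closed ball around the target point, the function $\lambda(y)=|y+g(y)-x|^2$ has boundary values exceeding its value at the center, so it attains an interior minimum, and there $\dd\lambda=0$ combined with injectivity of $\dd(\id+g)$ forces $y+g(y)=x$. Your proposal takes the same route for the embedding part but replaces this minimization argument for the covering part with a degree/relative-homology argument for maps of pairs $(K',\partial K')\to(X,X\setminus X_0)$. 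Both are correct; the minimization approach is more elementary (no algebraic topology at all, just calculus on a ball) and fits the paper's estimates-driven style, while the degree argument is more invariant and arguably clearer about \emph{why} the conclusion is $C^0$-stable. Note also that lemma \ref{lemma:inverse}, which you already cite for the embedding part, already packages the covering part in its last paragraph, so one can extract both conclusions from that single lemma after reducing to the flat picture (pre-composing with $\phi^{-1}$ in a chart, or using charts $\chi^i$ on the source and $\chi^i\circ\phi^{-1}$ on the target, as in the proof of lemma \ref{lemma:diffeo}); you gesture at this but leave the reduction implicit. One small point in your degree argument: to speak of a map of pairs $(K',\partial K')\to(X,X\setminus X_0)$ and its degree, $K'$ should be chosen with tame boundary (e.g.\ take $K$ to be a finite union of closed balls so that $K'=\phi^{-1}(K)$ inherits this), rather than $\phi^{-1}$ of an arbitrary compact set.
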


%\begin{proof}
%Let $U, V, W$ and $\phi$ be as in the lemma, and define
%    \[
%        U'\defeq \phi(U),
%        \quad
%        V'\defeq \phi(V),
%        \quad
%        W'\defeq W
%        \quad
%        \phi' \defeq \id\!|_{U'}.
%    \]
%Suppose that the lemma holds for $\phi = \id$, then the lemma holds for this quadruple, so there is a compact-open $C^1$-open set $\calU'\sub C^\infty(\phi(U), X)$ such that, for any $\psi'\in \calU'$, the restriction $\psi'|_{\phi(V)}$ is a diffeomorphism onto its image and
%    \[
%        \overline W \sub \psi'(\phi(V)).
%    \]
%The left-composition map
%    \[
%        c_{\phi^{-1}}:
%        C^\infty(U,X) \longto C^\infty(\phi(U), X),
%        \quad
%        \psi \mapsto \psi\circ \phi^{-1},
%    \]
%is known to be $C^1$-to-$C^1$ continuous (c.f.~\cite{mather69}), hence the set
%    $
%        \calU \defeq (c_{\phi^{-1}})^{-1}(\calU')
%    $
%is a compact-open $C^1$-open subset of $C^\infty(U, X)$. 
%For any $\psi\in \calU$ we now know that $\psi\circ\phi^{-1}|_{\phi(V)}$ is a diffeomorphism onto its image (hence so is $\psi|_V$), and $\overline W \sub \psi\circ\phi^{-1}(\phi(V)) = \psi(V)$.
%
%{\color{red}UNFINISHED}
%\end{proof}

\begin{lemma}
\label{lemma:local_action}
Let $X_1\sub X$ and $M_0, M_1 \sub M$ be compact sets with dense interior, and $b\in \Gamma_X(M_1)$ a section, such that
    \[
        M_0 \sub \mathrm{int}(M_1),
        \quad
        b(M_0) \sub \mathrm{int}(X_1).
    \]
There are $C^1$-neighborhoods $\calU\sub \diff_X(X_1)$ of $\id\!|_{X_1}$ and $\calV\sub \Gamma_X(M_1)$ of $b$ such that any $\phi\in \calU$ and $c\in \calV$ are compatible on $M_0$.
\end{lemma}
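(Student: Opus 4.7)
The plan is to reduce this to the previous Lemma \ref{lemma:close_to_identity} by studying the map
\[
    \psi_c \defeq \pi_\textsc{X} \circ \phi^{-1} \circ c,
\]
which is precisely the candidate inverse for the relative base map $\phi_b$ from the text preceding Definition \ref{def:compatibility}. The key observation, already recorded in that discussion, is that $\phi$ and $c$ are compatible on $M_0$ if and only if the image of $\psi_c$ contains $M_0$ and $\psi_c$ is invertible on $\psi_c^{-1}(M_0)$. When $(\phi,c)=(\id|_{X_1}, b)$, we have $\psi_c = \pi_\textsc{X}\circ b = \id|_{M_1}$; so for $(\phi,c)$ close to $(\id|_{X_1}, b)$ in the $C^1$-topology, we expect $\psi_c$ to be $C^1$-close to $\id|_{M_1}$, and then Lemma \ref{lemma:close_to_identity} applied with $X\leftarrow M$, $X_0\leftarrow M_0$, $X_1\leftarrow M_1$ will finish the job.

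First I would arrange that $\psi_c$ is honestly defined as a map $M_1\to M$. Since $\phi^{-1}$ is only defined on $\phi(X_1^\circ)$ (where $X_1^\circ = \mathrm{int}(X_1)$), we need $c(M_1)\sub \phi(X_1^\circ)$. Note that $b(M_1)\sub X_1^\circ$ (after possibly shrinking $M_1$ slightly; the hypothesis only assumes this on $M_0$, but by an easy exhaustion one may replace $M_1$ by a compact set with dense interior sitting strictly between $M_0$ and the original $M_1$). By compactness of $M_1$ and openness of $\phi(X_1^\circ)$ in $X$, there exist $C^0$-neighborhoods $\calU_0$ of $\id|_{X_1}$ and $\calV_0$ of $b$ such that $c(M_1)\sub \phi(X_1^\circ)$ for every $(\phi,c)\in\calU_0\times\calV_0$; hence $\psi_c: M_1\to M$ is well-defined on these neighborhoods. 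Moreover the composition
\[
    (\phi,c)\longmapsto \psi_c = \pi_\textsc{X}\circ\phi^{-1}\circ c
\]
is continuous in the $C^1$-topologies: inversion of diffeomorphisms is $C^1$-continuous near the identity, composition of $C^1$-maps between compact domains is $C^1$-continuous, and $\pi_\textsc{X}$ is smooth and fixed. Since the value at $(\id|_{X_1}, b)$ is $\id|_{M_1}$, we can shrink $\calU_0$ and $\calV_0$ to $\calU$ and $\calV$ such that $\psi_c$ lies in any prescribed $C^1$-neighborhood of $\id|_{M_1}$.

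Now I apply Lemma \ref{lemma:close_to_identity} with $X\leftarrow M$, $X_0\leftarrow M_0$, $X_1\leftarrow M_1$, and reference map $\id|_{M_1}$; the hypothesis $M_0\sub\id|_{M_1}(\mathrm{int}(M_1))=\mathrm{int}(M_1)$ is exactly our assumption. The lemma produces a $C^1$-neighborhood of $\id|_{M_1}$ in $C^\infty(M_1,M)$ in which every map is a diffeomorphism onto its image and whose image still contains $M_0$. Taking $\calU$ and $\calV$ small enough that $\psi_c$ lands in this neighborhood, we conclude that $\psi_c$ is a diffeomorphism onto its image and $M_0\sub\psi_c(\mathrm{int}(M_1))$; in particular $\psi_c$ is invertible on $\psi_c^{-1}(M_0)$, which by the compatibility criterion gives compatibility of $\phi$ and $c$ on $M_0$. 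The only genuine subtlety is the bookkeeping needed to ensure $\phi^{-1}\circ c$ is defined on all of $M_1$ — essentially the same kind of $C^0$-control that underlies Lemma \ref{lemma:close_to_identity} itself — after which the statement is a direct corollary of that lemma.
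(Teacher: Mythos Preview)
The paper does not give a proof of this lemma: it states that the proofs of Lemmas \ref{lemma:close_to_identity} and \ref{lemma:local_action} ``are similar to lemmas \ref{lemma:diffeo} and \ref{lemma:global_action}, and are thus omitted.'' Your approach is a correct way to fill this gap, and it is conceptually the same as what the paper points to: in Lemma \ref{lemma:global_action} the key step is to show that $\phi_e=\pi\circ\phi^{-1}\circ e$ is a diffeomorphism (there via local charts and Lemma \ref{lemma:diffeo}), which is exactly your strategy of showing that $\psi_c$ is a diffeomorphism onto an image containing $M_0$ via Lemma \ref{lemma:close_to_identity}.

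One point of bookkeeping deserves more care than you give it. You need an intermediate compact $M_1'$ with $M_0\subset\mathrm{int}(M_1')$, $M_1'\subset\mathrm{int}(M_1)$, and $b(M_1')\subset\mathrm{int}(X_1)$; and also an intermediate compact $X_0'\subset\mathrm{int}(X_1)$ with $b(M_1')\subset\mathrm{int}(X_0')$. Then a first application of Lemma \ref{lemma:close_to_identity} (on $X$, with $X_0\leftarrow X_0'$) ensures $X_0'\subset\phi(\mathrm{int}(X_1))$ for $\phi$ in a $C^1$-neighborhood of $\id|_{X_1}$, so that $\phi^{-1}\circ c$ is defined on $M_1'$ once $c(M_1')\subset X_0'$; and a second application (on $M$, with $M_1\leftarrow M_1'$) handles $\psi_c$. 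You should also note explicitly that the $C^1$-neighborhood $\calV$ you obtain is in $\Gamma_X(M_1)$ (not $\Gamma_X(M_1')$): this is immediate since restriction $\Gamma_X(M_1)\to\Gamma_X(M_1')$ is $C^1$-continuous. With these details made explicit, your argument is complete and matches the spirit of the paper's referenced lemmas, packaged at a slightly higher level via Lemma \ref{lemma:close_to_identity} rather than directly in charts.
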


%\begin{example}
%\label{exa:incompatible}
%The conditions on $\overline W$ are sufficient to prevent the following example. 
%Take $M=S^1$, $X=\R$, and $V=S^1\backslash\set{0}$ as in example \ref{exa:top_incompatible}, and let $b\in \Gamma_X(V)$ be a section. Arbitrary small deformations of $\id:X\to X$ can always cross the image of $b$ over itself, so the conclusion of lemma \ref{lemma:local_action} cannot hold for $W=V$.
%\end{example}

\subsection{Infinitesimal Action}
\label{sec:inf_action}

Recall that a closed pseudogroup $\pgrp$ has an associated a Lie algebra sheaf denoted by $\calA$ (see theorem \ref{pgrp:thm}).
Given a global section $b\in \Gamma_X(M)$, the local action of $\pgrp$ on $\Gamma_X$ we will define a map of sheaves along $b$:
    \[
        \delta_b: 
        \calA
        \longto 
        \Gamma_{V_b X},
    \]
where $VX$ is the vertical bundle of $X$, and
    $
        V_b X
    $
its pullback along $b$.

\begin{definition}
\label{def:inf_action}
The \textbf{infinitesimal action}\index{infinitesimal action} of $\calA$ on $\Gamma_X$ at $b$ is
    \[
        \delta_b :
        \calA(U) \longto \Gamma_{V_b X}({b^{-1}(U)}),
        \qquad
        \delta_b (v)
        \defeq
        \tfrac{\dd }{\dd t}\big|_{t=0}
        (b\cdot \varphi_v^t),
    \]
where $\varphi_v^t:U^t \to X$ is the flow of $v$.
\end{definition}

\begin{lemma}
\label{prop:infinitesimal_action}
$\delta_bv$ is well-defined for any $v\in \calA(U)$, and
    \[
        \qquad
        \delta_b (v)
        =
        (\dd b\circ \dd \pi_\textsc{x} -\id)(v\circ b).
    \]
\end{lemma}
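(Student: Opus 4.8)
The plan is to unwind the definitions and reduce everything to a pointwise computation of a derivative. First I would fix $v\in\calA(U)$ and recall the two relevant flows: $\varphi_v^t:U^t\to X$ is the genuine (total) flow of $v$, and by Theorem \ref{pgrp:thm}, since $\pgrp$ is closed, $\calA=\calA_\pgrp=\calA_\pgrp^{\mathrm{flow}}$, so $\varphi_v^t\in\pgrp(U^t)$ for all $t$ in some interval around $0$. Then I would invoke Lemma \ref{lemma:local_action}: for a fixed global section $b$ and any compact $M_0\subset M$ with dense interior, the pair $(\varphi_v^t, b)$ is compatible on $M_0$ for all sufficiently small $t$, so $b\cdot\varphi_v^t$ is a well-defined local section on a neighborhood of $M_0$ that depends smoothly on $t$ (the smooth dependence comes from the implicit function theorem, since $b\cdot\varphi_v^t=(\pi_\textsc{X}|_{B_t})^{-1}$ with $B_t$ cut out transversally). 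Exhausting $M$ by such compacts, $t\mapsto b\cdot\varphi_v^t$ is a smooth curve in $\Gamma_{V_bX}(b^{-1}(U))$ near $t=0$, and its derivative at $0$ is $\delta_b(v)$, a section of $V_bX$ over $b^{-1}(U)$. This establishes well-definedness.

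For the formula, the key relation is the one recorded just before Lemma \ref{lemma:restricting_action}: writing $\phi=\varphi_v^t$ and using the relative base map $\phi_b=\pi_\textsc{X}\circ\varphi_v^t\circ(b\cdot\varphi_v^t)$, one has the identity
\[
    b\cdot\varphi_v^t \;=\; (\varphi_v^t)^{-1}\circ b\circ (\varphi_v^t)_b .
\]
I would differentiate this at $t=0$. At $t=0$ we have $\varphi_v^0=\id$ and hence $(\varphi_v^0)_b=\id_{b^{-1}(U)}$. The derivative of $(\varphi_v^t)^{-1}$ at $t=0$ is $-v$, and the derivative of $(\varphi_v^t)_b$ at $t=0$ is some vector field $w$ on the base, which by definition of $\phi_b$ equals $\dd\pi_\textsc{X}\circ\bigl(\tfrac{\dd}{\dd t}|_{t=0}(\varphi_v^t)^{-1}\circ b\circ(\varphi_v^t)_b\bigr)$ evaluated appropriately — more directly, differentiating $\psi_b=\pi_\textsc{X}\circ(\varphi_v^t)^{-1}\circ b$ (which is always defined and is the inverse of $(\varphi_v^t)_b$) gives $\tfrac{\dd}{\dd t}|_{t=0}\psi_b=-\dd\pi_\textsc{X}(v\circ b)$, so $w=\dd\pi_\textsc{X}(v\circ b)$ as a vector field along $b^{-1}(U)$. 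Applying the product/chain rule to the displayed identity at $t=0$ yields
\[
    \delta_b(v)
    \;=\;
    -\,v\circ b \;+\; \dd b\bigl(\dd\pi_\textsc{X}(v\circ b)\bigr)
    \;=\;
    \bigl(\dd b\circ\dd\pi_\textsc{X}-\id\bigr)(v\circ b),
\]
which is the claimed formula. It remains only to check the decorations: $(\dd b\circ\dd\pi_\textsc{X}-\id)(v\circ b)$ is a section of $TX$ along $b$, and it is vertical because $\dd\pi_\textsc{X}$ applied to it gives $\dd\pi_\textsc{X}(\dd b(\dd\pi_\textsc{X}(v\circ b)))-\dd\pi_\textsc{X}(v\circ b)=\dd(\pi_\textsc{X}\circ b)(\dd\pi_\textsc{X}(v\circ b))-\dd\pi_\textsc{X}(v\circ b)=0$ since $\pi_\textsc{X}\circ b=\id$; hence it lands in $\Gamma_{V_bX}$ as required.

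The main obstacle I expect is the careful bookkeeping of domains and the smoothness-in-$t$ claim: one must make sure that the local section $b\cdot\varphi_v^t$ is defined on a $t$-independent neighborhood of each point of $b^{-1}(U)$ for small $t$ and varies smoothly there, which is exactly what Lemma \ref{lemma:local_action} (together with a compact exhaustion argument) provides, but one has to phrase it so that the derivative at $t=0$ is computed fiberwise and patches to a global section over $b^{-1}(U)$. Once that technical point is cleanly handled, the identity $b\cdot\varphi_v^t=(\varphi_v^t)^{-1}\circ b\circ(\varphi_v^t)_b$ turns the computation into a one-line application of the chain rule, and the verticality check is immediate from $\pi_\textsc{X}\circ b=\id$. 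An alternative, should the flow identity be awkward to differentiate directly, is to work in a vertical tubular neighborhood $\xi:V_bX\hookrightarrow X$ of $b$ (Definition \ref{def:vertical_bundle}), express $b\cdot\varphi_v^t$ as the graph of a time-dependent section of $V_bX$, and read off the linearization; this gives the same answer but requires choosing $\xi$, so I would prefer the coordinate-free argument above.
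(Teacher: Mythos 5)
Your proof is correct and takes essentially the same route as the paper: it establishes well-definedness by applying Lemma \ref{lemma:local_action} on compact pieces around each point of $b^{-1}(U)$ and then differentiates the identity $b\cdot\varphi_v^t = \varphi_v^{-t}\circ b\circ(\varphi_v^t)_b$ at $t=0$ via the chain rule. Your explicit computation of $\tfrac{\dd}{\dd t}\big|_{t=0}(\varphi_v^t)_b$ (through $\psi_b$) and the final verticality check merely spell out steps the paper leaves implicit, but the underlying argument is the same.
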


\begin{proof}
For any $x\in b^{-1}(U)$, pick compact balls $M_0, M_1\sub b^{-1}(U)$ centered at $x$, and $X_1 \sub U$ centered at $b(x)$, such that
    \[
        M_0 \sub \mathrm{int}(M_1),
        \quad
        b(M_1) \sub \mathrm{int}(X_1).
    \]
By lemma \ref{lemma:local_action} there is a $C^1$-open neighborhood $\calU\sub \diff_X(X_1)$ of $\id\!|_{X_1}$ such that any $\phi\in \calU$ is compatible with $b|_{M_1}$ on $M_0$.
Since the flow depends smoothly on $t$, there is an $\epsilon >0$ such that $\varphi^t_v|_{X_1} \sub \calU$ for all $t\in (-\epsilon, \epsilon)$.
This implies $(b|_{M_1}\cdot\varphi^t_v|_{X_1})|_{M_0}$ is well-defined for all $t\in (-\epsilon, \epsilon)$, and so is
    \[
        \delta_b(v)\big|_{M_0}
        =
        \tfrac{\dd }{\dd t}\big|_{t=0}
        \big(
            b|_{M_1} \cdot \varphi^t_v|_{X_1}
        \big)\big|_{M_0}.
    \]
Hence the infinitesimal action is well-defined. Moreover, recall that 
    \[
        b\cdot \varphi_v^t
        =
        \varphi_v^{-t}
        \circ
        b
        \circ
        (\varphi_v^t)_b,
        \qquad
        (\varphi_v^{t})_b^{-1}
        =
        \pi_\textsc{x}
        \circ
        \varphi_v^{-t}
        \circ
        b. 
    \]
Hence this formula for $\delta_bv$ follows from the chain rule.
\end{proof}

\clearpage
\section{PDEs with Symmetry}
%\etocsettocdepth{2}
%\localtableofcontents

Given two vector bundles $E$, $F$ over $M$, a linear partial differential operator of order $d$ from $E$ to $F$ is a map of sheaves
    $
        Q: \Gamma_E \to \Gamma_F
    $
that is locally given by
    \[
        Q(e)_j =
        \sum\nolimits_i
        \sum\nolimits_{\abs\alpha \leq d}
        f^{i, \alpha}_j
        \partial^\alpha e_i,
    \]
for some smooth functions $f^{i, \alpha}_j$. Here by `locally' we mean that we work on an open set $U\sub M$ that admits frames $E\vert_U\simeq \R^k\times U$ and $F\vert_U\simeq \R^l\times U$. A section $e\in \Gamma_U E$ can then be seen as a smooth map $e:U\to\R^k$ with component functions $e_i$. In other words, $Q$ is a map that depends $C^\infty$-linearly on the derivatives up to order $d$ of the section of $\Gamma E$. In the language of jet-bundles this means that $Q$ corresponds to a $C^\infty$-linear map of sheaves
    \[
        Q^{(d)}: \Gamma_{J^dE} \longto \Gamma_F
    \]
via the identity
    $
        Q = Q^{(d)} \circ j^d.
    $
Moreover, by being a $C^\infty$-linear map of sheaves, $Q^{(d)}$ corresponds to a vector bundle map
    \[
        q: J^dE \longto F
    \]
by requiring
    $
        Q^{(d)}(\sigma) = q \circ \sigma
    $
for all $\sigma \in \Gamma_{J^dE}$. This gives us also a convenient way of expressing \emph{non-linear} partial differential operators.

Let $\pi_\textsc{X}: X\to M$ and $\pi_\textsc{Y}: Y\to M$ be surjective submersions over $M$. 

\begin{definition}
\label{def:pdo}

A partial differential operator \textbf{(PDO)}\index{partial differential operator} 
%$\pi_\textsc{X}\dasharrow \pi_\textsc{Y}$
of order $d$ is a fiber-preserving map
    \[
        q: J^dX \longto Y,
        \quad
        \pi_\textsc{Y}\circ q =\pi_{J^dX}.
    \]
We associate with it the map of sheaves
    \[
        Q: \Gamma_X \longto \Gamma_Y,
        \quad
        Q(b) \defeq
        q \circ j^d(b).
    \]
A partial differential equation \textbf{(PDE)}\index{partial differential equation} of order $d$ on $\pi_\textsc{X}$ is such a PDO combined with a global section $\zeta \in \Gamma_Y(M)$. 
A section $\zeta$ can also be decribed by its image $Z\defeq \zeta(M) \sub Y$.

We call $b\in \Gamma_X(U)$ a \textbf{solution}\index{solution} of $(Q, Z)$ if it satisfies 
    \[
        Q(b) = \zeta|_U.
    \]
We denote the sheaf of solutions by
    \[
        \sol_{Q, Z}(U) \defeq
        \big\{
            b \in \Gamma_X(U)
            \sep
            Q(b) = \zeta|_U
        \big\}.
        \qedhere
    \]
\end{definition}

In summary, a PDE can be represented by the square
\[
\begin{tikzcd}
    J^dX
    \arrow[->, swap]{d}
%    {\pi_\textsc{X}}
    \arrow[->]{r}{q}
    &
    Y
    \arrow[->, start anchor = south west, end anchor = north east, swap]{dl}
%    {\pi_\textsc{Y}}
    \\
    M
    \arrow{r}{\zeta}
    &
    \arrow[hook, swap]{u}
%    \arrow{l}
    Z.
\end{tikzcd}
\]

We wish to combine the notions of pseudogroups and PDEs. A local diffeomorphism $\phi \in \diff_X$ will be considered a symmetry of the sheaf of solutions $\sol(Q, Z)$ if it maps solutions to solutions, i.e.\ if $b \in \sol_{Q, Z}$, then 
    \[
        b \cdot \phi \in \sol_{Q, Z}.
    \]
In general we are not interested in the set of all symmetries of $(Q, Z)$, or the structure of that set. Instead, we will fix a pseudogroup of symmetries as part of our data.

Recall the definitions of a closed pseudogroup (definition \ref{def:closed_pseudogroup}) and the local action (definition \ref{def:compatibility}) of $\pgrp$ on $\Gamma_X$.

\begin{definition}
\label{def:pde_with_symmetry}
A \textbf{PDE with symmetry}\index{partial differential equation!with symmetry} $(Q,Z)$ of order $d$ on $\pi_\textsc{X}$ is
\begin{itemize}

\item
a \emph{closed} pseudogroup $\pgrp$ on $X$,

\item
a PDE $(Q,Z)$ of order $d$ on $\pi_\textsc{X}$,
\end{itemize}

such that
if $\phi \in \pgrp(U)$ and $b\in \sol_{Q, Z}(V)$ are compatible on $W$, then 
    \[
        (b\cdot \phi)|_W
        \in 
        \sol_{Q,Z}(W).
        \qedhere
    \]
\end{definition}

\subsection{Deformation Complex}

We aim to study the interplay between PDEs with symmetry and their infinitesimal counterpart: the deformation complex defined here.
In sections \ref{sec:lie_algebra_sheaf} and \ref{sec:inf_action} we defined the Lie algebra sheaf of a closed pseudogroup and the infinitesimal action respectively.
Here we describe the relation between these objects and the linear data of a PDE with symmetry.

%To define the linearization of a PDE $(Q, Z)$ on $\pi_\textsc{X}$ at a global solution $b\in \sol_{Q,Z}(M)$, recall that the `tangent space' of $\Gamma_X$ at $b$ is the sheaf of vector spaces $\Gamma_{V_b\! X}$ over $M$, where 
%    \[
%        V\! X
%        \defeq
%        \mathrm{ker}(\dd\pi_\textsc{X})
%        \!\to\! M
%    \]
%is the vertical bundle of $\pi_\textsc{X}$, and 
%    \[
%        V_b\! X\defeq b^*V\! X
%    \]
%is its pull-back along $b$.
%Likewise, let $V_\zeta Y=\zeta^*Y$ be the vertical bundle along $\zeta$.

Let $(Q, Z)$ be a PDE with symmetry and $b\in \sol_{Q, Z}(M)$ a global solution.
Recall that $VX$ denotes the vertical bundle of $X$ and $V_bX$ its pull-back along $b$.
Likewise we have the vertical bundle $VY$ and its pull-back $V_\zeta Y$.

\begin{definition}
\label{def:linearization}
%Let $X\to M$ be a surjective submersion and let $(Q, Z)$ be a PDE on $X$.
The \textbf{linearization}\index{partial differential equation!linearization of} of a PDE with symmetry $(Q, Z)$ at a global solution $b \in \sol_{Q,Z}(M)$ is
    \[
        \delta_b:
        \Gamma_{V_b\! X}(U)
        \longto 
        \Gamma_{V_\zeta\! Y}(U)
        \qquad
        \delta_b(w)
        \defeq
        \tfrac{\dd}{\dd t}
        \big|_{t=0}
            Q(b^{t}),
    \]
where $b^t\in \Gamma_X(U^t)$ is a family of sections with 
    $
        b^0 = b,
    $
and
    $
        \tfrac{\dd}{\dd t}\big|_{t=0}
        b^{t}
        =
        w.
    $
\end{definition}

Intuitively, the kernel of $\delta_b$ represents the tangent space to $\sol_{Q, Z}$ at $b$. 
We are interested in when the infinitesimal action lies open in this kernel.

\begin{definition}
\label{def:deformation_complex}
%Let $\pi_\textsc{X}: X \to M$ be a surjective submersion and $(\pgrp, Q, Z)$ a PDE with symmetry on $X$.
The \textbf{deformation complex}\index{deformation complex} of $(\pgrp, Q, Z)$ at a global solution $b \in \sol_M(Q, Z)$ is the linear chain complex
    \[
        \calA
        \xrightarrow{\;\; \delta_b \;\;}
        \Gamma_{V_b X}
        \xrightarrow{ \,\delta_b\, }
        \Gamma_{V_\zeta X},
    \] 
where the operators $\delta_b$ are defined in \ref{def:inf_action} and \ref{def:linearization} respectively.
\end{definition}

\begin{lemma}
The above is a chain complex: $\delta_b \circ \delta_b = 0$.
\end{lemma}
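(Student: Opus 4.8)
The statement to prove is that the deformation complex
\[
    \calA
    \xrightarrow{\;\delta_b\;}
    \Gamma_{V_b X}
    \xrightarrow{\;\delta_b\;}
    \Gamma_{V_\zeta Y}
\]
is genuinely a chain complex, i.e.\ that $\delta_b \circ \delta_b = 0$. The plan is to unwind the two occurrences of $\delta_b$ --- the first coming from the infinitesimal action of the pseudogroup (Definition \ref{def:inf_action}) and the second coming from the linearization of the PDO (Definition \ref{def:linearization}) --- and to exploit the defining property of a PDE with symmetry: that $\pgrp$ permutes solutions.

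First I would take a local section $v \in \calA(U)$ of the Lie algebra sheaf of $\pgrp$ and let $\varphi_v^t$ be its total flow; by Theorem \ref{pgrp:thm} (the identity $\calA_\pgrp^{\text{flow}} = \calA_\pgrp$) we know $\varphi_v^t \in \pgrp$ for all $t$. Since $b \in \sol_{Q,Z}(M)$ is a global solution and $\pgrp$ is a pseudogroup of symmetries, Definition \ref{def:pde_with_symmetry} gives that $b \cdot \varphi_v^t$ is again a local solution, defined and smooth in $t$ on a neighborhood of any given point (this is exactly the compatibility/smoothness packaged in Lemma \ref{lemma:local_action} and used in the proof of Lemma \ref{prop:infinitesimal_action}). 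Therefore the curve $t \mapsto b^t \defeq b \cdot \varphi_v^t$ is a family of solutions through $b$, so $Q(b^t) = \zeta$ identically in $t$. Differentiating at $t = 0$ and using the definition of the linearization $\delta_b$ applied to the family $b^t$, we get
\[
    \delta_b\big(\delta_b(v)\big)
    =
    \delta_b\Big(\tfrac{\dd}{\dd t}\big|_{t=0} b^t\Big)
    =
    \tfrac{\dd}{\dd t}\big|_{t=0} Q(b^t)
    =
    \tfrac{\dd}{\dd t}\big|_{t=0} \zeta
    =
    0 .
\]
The only subtlety is that Definition \ref{def:linearization} is phrased as "$\tfrac{\dd}{\dd t}\big|_{t=0} Q(b^t)$ for \emph{some} family $b^t$ with $b^0=b$ and $\tfrac{\dd}{\dd t}|_{t=0}b^t = w$", so one must first check that this derivative is independent of the chosen family and hence may be computed along the particular family $b\cdot\varphi_v^t$; this is immediate because $Q = Q^{(d)}\circ j^d$ with $q\colon J^dX \to Y$ a smooth fiber-preserving map, so $\delta_b$ is a genuine (first-order) differential operator in $w$ and $\tfrac{\dd}{\dd t}|_{t=0}Q(b^t)$ depends only on $j^d$ of $\tfrac{\dd}{\dd t}|_{t=0}b^t$.

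The one point that needs a little care --- and which I expect to be the main (minor) obstacle --- is the matching of domains: $\delta_b(v)$ is a section of $V_bX$ over $b^{-1}(U)$, whereas to feed it into the second $\delta_b$ and to invoke that $b\cdot\varphi_v^t$ is a solution we need a common open set near each point on which everything (the flow, the compatibility $b\cdot\varphi_v^t$, and $Q(b\cdot\varphi_v^t)=\zeta$) is simultaneously defined for $t$ in a small interval. This is exactly provided, point by point, by Lemma \ref{lemma:local_action} together with the smooth dependence of the flow on $t$, just as in the proof of Lemma \ref{prop:infinitesimal_action}; since $\delta_b\circ\delta_b = 0$ is a local (even pointwise) identity on sections, verifying it near each point of $M$ suffices. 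Assembling these observations gives the result, and no computation beyond the chain-rule/differentiation-under-smoothness already used for Lemma \ref{prop:infinitesimal_action} is required.
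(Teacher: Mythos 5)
Your proof is correct and takes essentially the same route as the paper: pick $v \in \calA$ with flow $\varphi_v^t \in \pgrp$, observe that $Q(b\cdot\varphi_v^t)=\zeta$ identically in $t$ by the PDE-with-symmetry axiom, and differentiate at $t=0$ to obtain $\delta_b\circ\delta_b(v)=0$. Your additional remarks on well-definedness of the linearization and on matching of domains are sound but are left implicit in the paper's (shorter) argument.
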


\begin{proof}
Let $v\in \calA(U)$ be a vector field with flow $\varphi_v^t \in \pgrp(U^t)$. We have that 
    \[
        \zeta = Q(b\cdot \varphi_v^t)
    \]
equals $\zeta$ over the largest open set $V(b, \varphi_v^t)$ where $b$ and $\varphi_v^t$ are compatible, for $t\in \R$. Hence
    \[
        0 = 
        \tfrac{\partial}{\partial t}\big|_{t=0} \zeta
        = 
        \delta_b \big(
            \tfrac{\partial}{\partial t}\big|_{t=0} (b\cdot \varphi_v^t)
        \big)
        =
        (\delta_b \circ \delta_b)(v)
        \;\in\; \Gamma_{V_\zeta Y}(U).
        \qedhere
    \]
\end{proof}

\clearpage
\section{Local Rigidity}
\label{sec:local_rigidity}
%\etocsettocdepth{2}
%\localtableofcontents

We introduce the notion of nested domains to make precise the aphorism 
\begin{quote}
    \emph{`infinitesimal rigidity implies rigidity'}
\end{quote}
in a \emph{local} setting involving a PDE with symmetry.
The keyword here is local: we will first define \emph{local rigidity} and \emph{local infinitesimal rigidity} in the language of the previous sections.
Then we introduce \emph{nested domains with corners} and reformulate the above concepts.
This then provides the necessary technical details to formulate the \nameref{Main Theorem}.

Given a surjective submersion $\pi_\textsc{X}: X \!\to\! M$, fix the following objects:
\begin{itemize}

\item
a PDE with symmetry $(\pgrp, Q, Z)$,

\item
the Lie algebra sheaf $\calA$ of $\pgrp$,

\item
a global solution $b \in \sol_{Q, Z}(M)$,

\end{itemize}

\subsection{Sketch of Local Rigidity}

Let $K\sub M$ be a compact set.
Loosely speaking, local rigidity of the solution $b$ near $K$ would mean that the \emph{orbit} of $\pgrp$ through $b$ lies \emph{open} in the set of germs of $\sol_{Q,Z}$ at $K$. 
It is difficult to work with the weak notion topology that does exist on the set of germs.
Therefore we opt to work with fixed open subsets and mimick the definition of germs.

\begin{definition}
\label{def:local_rigidity_attempt}
Let $M_0, M_1\sub M$ and $X_0, X_1\sub X$ be \emph{compact} subsets such that
    \[
        M_0
        \sub 
        \mathrm{int}(M_1),
        \qquad
        X_0
        \sub
        \mathrm{int}(X_1),
        \qquad
        b(M_i)
        \sub
        \mathrm{int}(X_i),
        \quad i=0,1.
    \]

\begin{itemize}
\item 
A section $c\in \Gamma_X(M_1)$ is \textbf{locally equivalent}\index{locally equivalent} to $b$ on $M_0$ if there is a local diffeomorphism $\phi\in \pgrp(X_0)$ that is compatible with $b$ on $M_0$, and such that 
    \[
        c\vert_{M_0}
        =
        (b\cdot\phi)\vert_{M_0}.
    \]

\item 
The solution $b$ is \textbf{locally rigid}\index{locally rigid} relative to $(M_1, M_0)$ if there is a $p\in \N$ and a compact-open $C^p$-open neighborhood 
    \[
        \calV
        \sub
        \sol_{Q,Z}(M_1)
    \]
of $b\vert_{M_1}$ such that all
    $
        c\in \calV
    $
are locally equivalent to $b$ on $M_0$.
\qedhere
\end{itemize} 
\end{definition}

\begin{remark}
The above notion of \emph{local equivalence} is not necessarily an equivalence relation.
It may for example fail to be symmetric. 
\end{remark}

We can describe local rigidity more abstractly through non-linear chain complexes.
Recall that by lemma \ref{lemma:local_action} there are $C^1$-open neighborhoods
    \[
        \calU_i \sub \pgrp(X_i)
    \]
of $\id\!|_{X_i}$
such that any $\phi\in \calU_i$ is compatible with $b$ on $M_i$. 
We obtain the sequence of operators
    \[
        \calU_i
        \xrightarrow{ \;\;P\;\; }
        \Gamma_X(M_i)
        \xrightarrow{ \;\;Q\;\; }
        \Gamma_Y(M_i),
        \qquad
        P(\phi)\defeq (b\cdot \phi)\vert_{M_i},
        \quad 
        \phi\in \calU_i.
    \] 
We may think of this as a nonlinear chain complex, since the composition is constant by the axioms of a PDE with symmetry:
    \[
        (Q\circ P)(\phi) = \zeta|_{M_0}.
    \]
The complex can be called \emph{exact} if the image $P(\calU_i)$ lies open in the preimage
    $
        Q^{-1}(\zeta|_{M_i}).
    $
However, there are various reasons why this would be asking for too much.
Instead one could ask for a compact-open $C^p$-open subset $\calV_1\sub \sol_{Q, Z}(M_1)$ and an operator
    \[
        \begin{tikzcd}[row sep=small]
            \calU_1
            \arrow{r}
            \arrow[d,
                -,
                dotted,
                start anchor={
                    [yshift=0.25ex]south
                },
                end anchor={
                    [yshift=-0.25ex]north
                },
            ]
            &
            \calV_1
            \arrow[r]
            \arrow[d,
                -,
                dotted,
                start anchor={
                    [yshift=0.25ex]south
                },
                end anchor={
                    [yshift=-0.25ex]north
                },
            ]
            \arrow[dl,
                start anchor={
                    [xshift=0ex,yshift=.5ex]south west
                },
                end anchor={
                    [xshift=-0.5ex,yshift=-0.5ex]north east
                },
                "H" description
            ]
            &
            \Gamma_Z(M_1)
            \arrow[d,
                -,
                dotted,
                start anchor={
                    [yshift=0.25ex]south
                },
                end anchor={
                    [yshift=-0.25ex]north
                },
            ]
            \\
            \calU_0
            \arrow{r}
            &
            \Gamma_X(M_0)
            \arrow{r}
            &
            \Gamma_Y(M_0)
        \end{tikzcd}
    \]
that splits the complex: $(P\circ H)(c) = c|_{M_0}$ for all solutions $c\in \calV_1$. 

The problem of local rigidity lends itself to some type of inverse function theorem,
where we first solve an infinitesimal version of the problem, obtaining some $h$, and then integrate it to obtain $H$.
We regard the Lie algebra sheaf $\calA$ as the tangent space at the identity to $\pgrp$, and the kernel of the linearization of $Q$ as the tangent space at $b$ to $\sol_{Q,Z}(M)$. 
Therefore we are led to study the chain complex from definition \ref{def:deformation_complex}, for $i=0,1$,
    \[
        \calA(X_i)
        \xrightarrow{ \;\;\delta_b\;\; }
        \Gamma_{V_bX}(M_i)
        \xrightarrow{ \;\;\delta_b\;\; }
        \Gamma_{V_\zeta Y}(M_i).
    \]
The infinitesimal notion of rigidity is that the linear chain complex is exact, $\im{\delta_b}=\ker{\delta_b}$. From the viewpoint of linear algebra this is equivalent to the existence of homotopy operators, i.e. maps $h_i$ such that
    \[
        \delta_b \circ h_1 + h_2\circ \delta_b
        =
        \id.
    \]
Assuming local rigidity, and moreover that $H$ is a differentiable map, we expect that the derivative at $b$ of $H$ is a suitable choice for $h_1$. 
This leads us to the following definition.

\begin{definition}
\label{def:infinitesimal_rigidity_attempt}
We call $b$ \textbf{locally infinitesimally rigid} relative to $(M_1, M_0)$ if there are linear operators
    \[
        \begin{tikzcd}[row sep=small]
            \calA(X_1)
            \arrow[d,
                -,
                dotted,
                start anchor={
                    [yshift=0.25ex]south
                },
                end anchor={
                    [yshift=-0.25ex]north
                },
            ]
            \arrow{r}
            &
            \Gamma_{V_bX} (M_1)
            \arrow[d,
                -,
                dotted,
                start anchor={
                    [yshift=0.25ex]south
                },
                end anchor={
                    [yshift=-0.25ex]north
                },
            ]
            \arrow{r}
            \arrow[dl,
                start anchor={
                    [xshift=1ex,yshift=1ex]south west
                },
                end anchor={
                    [xshift=-1ex,yshift=-1ex]north east
                },
                "h_1" description
            ]
            &
            \Gamma_{V_\zeta Y}(M_1)
            \arrow[d,
                -,
                dotted,
                start anchor={
                    [yshift=0.25ex]south
                },
                end anchor={
                    [yshift=-0.25ex]north
                },
            ]
            \arrow[dl,
                start anchor={
                    [xshift=1ex,yshift=1ex]south west
                },
                end anchor={
                    [xshift=-1ex,yshift=-1ex]north east
                },
                "h_2" description
            ]
            \\
            \calA(X_0)
            \arrow{r}
            &
            \Gamma_{V_bX}(M_0)
            \arrow{r}
            &
            \Gamma_{V_\zeta Y}(M_0).
        \end{tikzcd}
    \]
such that
    $
        (\delta_b\circ h_1
        +
        h_2\circ\delta_b)(w)
        =   
        w|_{M_0}
    $
for all $w\in \Gamma_{V_bX}(M_1)$.
\end{definition}

%\clearpage
\subsection{Domains with Corners}

Let $D$ be a smooth manifold without boundary (which in the previous discussion is typically either $X$ or $M$). 

\begin{definition}
\label{def:domain_with_corners}

A \textbf{domain with corners}\index{domain with corners} in $D$ is a compact submanifold $D_1\sub D$ of codimension zero and possibly with corners, i.e.\ it can be covered by charts $(U,\chi )$ of $D$ such that
    \[
        \chi (U\cap  D_1)
        =
        \chi (U)\cap  \left([0,\infty)^k \!\times\! \R^{m-k}\right),
    \]
where the $k$ may depend on the chart.

Denote by $\partial D_1$ the topological boundary of $D_1\sub D$, i.e.\
    \[
        \partial D_1
        \defeq
        D_1\backslash \mathrm{int}(D_1).
        \qedhere
    \]
\end{definition}

Essentially, we adopt the definition of \cite{Joyce} of a manifold with corners, but our notion of boundary is different to the one given there.

\begin{definition}
The \textbf{interior cone}\index{interior cone} of a domain with corners $D_1\sub D$ at $x\in D_1$ is the open cone
    \[
        \calC_x D_1
        \sub 
        T_x D,
    \]
of vectors $v\in T_x D$ such that, for any curve $\gamma^t \in D$ with $\dot{\gamma}^0 =v$, there is an $\epsilon >0$ such that
    \[
        \gamma^t \in \mathrm{int}(D_1),
        \quad\forall\;
        t \in (0, \epsilon).
        \qedhere
    \]
\end{definition}

The following lemma is immediate.

\begin{lemma}
\label{lemma:interior_cone}
Let $(U,\chi )$ be a chart at $x\in D_1$ adapted to $D_1$, i.e.\ a chart as in definition \ref{def:domain_with_corners} with $\chi(x)=0$. 
Then the interior cone is given by
    \[
        \calC_x D_1
        =
        (\dd_x \chi )^{-1}
        \big(
            (0,\infty)^k \!\times\! \R^{m-k}
        \big).
    \]
In particular, for $x\in \mathrm{int}(D_1)$ we have $\calC_x D_1 =T_xD$.

\end{lemma}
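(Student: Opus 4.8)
The plan is to verify the claimed formula for the interior cone $\calC_x D_1$ directly in an adapted chart, and then read off the special case $x\in\mathrm{int}(D_1)$ as a corollary. The whole statement is local and chart-invariant (the interior cone is defined intrinsically via curves, so it transforms covariantly under diffeomorphisms), so it suffices to carry out the computation in one adapted chart $(U,\chi)$ at $x$ with $\chi(x)=0$ and
\[
    \chi(U\cap D_1)=\chi(U)\cap\bigl([0,\infty)^k\times\R^{m-k}\bigr).
\]
Pushing everything forward by $\chi$, we are reduced to the model situation $D=\R^m$, $D_1$ a neighbourhood of $0$ of the form $V\cap\bigl([0,\infty)^k\times\R^{m-k}\bigr)$ for some open $V\ni 0$, and $x=0$; we must show $\calC_0 D_1=(0,\infty)^k\times\R^{m-k}$.

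First I would prove the inclusion $\supseteq$. Take $v=(v_1,\dots,v_m)$ with $v_1,\dots,v_k>0$. The straight-line curve $\gamma^t=tv$ has $\dot\gamma^0=v$, and for small $t>0$ its first $k$ coordinates $tv_i$ are strictly positive while $tv$ stays inside $V$; hence $\gamma^t\in\mathrm{int}(D_1)=V\cap\bigl((0,\infty)^k\times\R^{m-k}\bigr)$ for $t\in(0,\epsilon)$, so $v\in\calC_0 D_1$ using this particular curve — and the definition only requires the conclusion to hold for \emph{some} curve, so we are done. (If the intended reading of the definition were `for all curves tangent to $v$', the argument is the same: for an arbitrary $C^1$ curve $\gamma^t$ with $\gamma^0=0$, $\dot\gamma^0=v$, Taylor expansion gives $\gamma^t_i=tv_i+o(t)$, which is $>0$ for small $t$ when $v_i>0$.) For the reverse inclusion $\subseteq$, suppose $v\notin(0,\infty)^k\times\R^{m-k}$, i.e. $v_i\le 0$ for some $i\le k$. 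Then for the curve $\gamma^t=tv$ (and indeed for any $C^1$ curve tangent to $v$, by the Taylor expansion $\gamma^t_i=tv_i+o(t)$), the $i$-th coordinate satisfies $\gamma^t_i\le 0+o(t)$; if $v_i<0$ this is negative for small $t>0$ so $\gamma^t\notin D_1$, and if $v_i=0$ one can perturb within the tangency class to make it negative, or simply note $\gamma^t_i$ fails to be strictly positive so $\gamma^t\notin\mathrm{int}(D_1)$. Either way $v\notin\calC_0 D_1$. Transporting back through $(\dd_x\chi)^{-1}$ gives the stated formula.

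Finally, for $x\in\mathrm{int}(D_1)$ one may take a chart with $k=0$, so the model cone is all of $\R^m$ and $\calC_x D_1=(\dd_x\chi)^{-1}(\R^m)=T_xD$; alternatively, observe directly that if $x$ is interior then every curve through $x$ stays in $\mathrm{int}(D_1)$ for small $|t|$, so every tangent vector lies in the interior cone.

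I do not anticipate a serious obstacle here — this is a routine unwinding of definitions. The only point requiring a little care is the precise quantifier in the definition of the interior cone (``for any curve $\gamma^t$ \dots there is an $\epsilon>0$''): one should check that the chart map $\chi$ sends curves tangent to $v$ at $x$ to curves tangent to $\dd_x\chi(v)$ at $0$, so that the cone condition is genuinely preserved, and that the boundary behaviour of $D_1$ in the model is exactly as described. Both are immediate from the chain rule and the defining chart property of a domain with corners, so the whole lemma is essentially bookkeeping.
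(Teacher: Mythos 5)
Your argument is correct, and it is the natural unwinding of definitions — indeed the paper states this lemma with no proof at all, remarking only that it is ``immediate,'' so your write-up simply fills in what the authors left to the reader. One point of understanding worth fixing: you first write ``the definition only requires the conclusion to hold for \emph{some} curve,'' but the paper's definition is universally quantified (``for \emph{any} curve $\gamma^t$ with $\dot\gamma^0=v$, there is an $\epsilon>0$...''). Your parenthetical recovers the correct argument via the Taylor expansion $\gamma^t_i = tv_i + o(t)$, which is the reading that actually matches the definition; you should delete the ``some curve'' remark so as not to leave the impression that the weaker reading is an acceptable interpretation. Everything else is sound, in particular the use of the straight-line curve as a single counterexample in the $\subseteq$ direction (exactly what the universal quantifier licenses), the chart-invariance remark, and the observation that interior points admit adapted charts with $k=0$, giving $\calC_xD_1 = T_xD$.
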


\subsection{Nested Domains with Corners}

Controlling the size of domains will be encoded by the following notion.

\begin{definition}
\label{def:nested_domain}
A \textbf{nested domain with corners}\index{nested domain with corners} in $D$ is a family $\{D_r\}_{r\in [0,1]}$ of domains with corners in $D$ such that there exists a smooth family of diffeomorphisms
    \[
        \phi^r:D\diffto D,
        \quad
        r\in [0,1],
    \]
satisfying
    $
        \phi^0=\id_D
    $
and
    $
        \phi^{1-r}(D_1)=D_r,
    $
and whose infinitesimal generator points inwards, i.e.\ 
    \[
        \tilde v^r
        \defeq 
        \tfrac{\dd}{\dd \epsilon}\big|_{\epsilon = 0}
            (\phi^r)^{-1}
            \circ
            \phi^{r+\epsilon}
    \]
maps into the interior cone $\calC D_1$ on $D_1$ for all $r\in [0,1]$.
\end{definition}

\begin{remark}
{\color{white}This line is blank.}
\begin{itemize}

\item
The family of diffeomorphisms $\phi^r$ and $D_1$ determine the entire family $D_r$, but we do not regard $\phi^r$ as part of the data of a nested domain.

\item
We assume that $\phi^r$ is a diffeomorphism of $D$, but it suffices if it is defined locally on an open neighborhood of $D_1$. Suppose that $\phi^r$ is defined on an open neighborhood $U\sub D$ of $D_1$, and let $v^r\in \gerX_D(U)$ be the generator of $\phi^r$. Let $\xi:D\to \R_{\geq 0}$ be a bump function that is one on $D_1$ and zero outside $U$. Then $\xi\cdot v^r$ extends to $D$, and its flow still witnesses that $D_r$ is a nested domain.

\item
Usually the generator of a smooth family of diffeomorphisms $\phi^r$ is the time-dependent vector field $v^r$ given by
    \[
        v^r \defeq 
        \tfrac{\dd}{\dd\epsilon}\big|_{\epsilon = 0}
        \phi^{r+\epsilon} \circ (\phi^r)^{-1}.
    \]
This is related to $\tilde v$ by $\tilde v^r = (\phi^r)^*v^r$ and it agrees if $\phi^r$ is additive.

\end{itemize}
\end{remark}

\begin{example}
Closed polydiscs in $\C^n$ are nested domains with corners:
    \[
        P_r
        \defeq
        \big\{
            z \in \C^n
            \sep
            |z_i| \leq 1+r
            \;\;\forall\;
            i = 1, \ldots, n
        \big\}.
        \qedhere
    \]
\end{example}

\begin{example}
Let $D_1\sub D$ be any domain with corners.
Using a partition of unity and that $\calC_p D_1\sub T_pD_1$ is a convex set, one can construct a compactly supported vector field $v\in \gerX(D)$ such that $v|_{D_1}\in \Gamma_{\calC D_1}$.
If $\varphi_v^t$ denotes the flow of $v$, then 
    \[
        D_{r}\defeq \varphi^{1-r}_v(D_1)
    \]
defines a nested domain with corners in $D$.
\end{example}

\begin{example}
\label{exa:constructing_X_r}
Let $\pi_\textsc{X}:X \!\to\! M$ be a surjective submersion, $b\in \Gamma_X(M)$ a global section, and $\{ M_r \}$ a nested domain with corners in $M$.
We aim to construct a nested domain with corners $\{X_r\}$ in $X$ such that
    \[
        b(M_r) \sub X_r,
        \qquad
        X_r \sub \pi_\textsc{X}^{-1}(M_r),
    \]
for all $r\in [0,1]$.
Let $\xi: V_bX \hookrightarrow X$ be a vertical tubular neighborhood of $b$, i.e.\ a diffeomorphism onto its image that preserves fibers and maps the zero section to $b$. 
Denote the vertical bundle along $b$ by $E\defeq V_bX$.
Choose a vector bundle metric $g$ on $E$, and define
    \[
        X_r \defeq
        \big\{
            e_x \in E_x
            \sep
            |e_x|_g \leq 1+r,
            \;
            x \in M_r
        \big\},
        \quad
        r\in [0,1].
    \]
Then $\{X_r\}$ is a nested domain with corners in $X$.
\end{example}

\begin{proof}
Choose a linear connection $\nabla$ on $E$. 
Call a local section $e\in \Gamma_E(U)$ horizontal at $x$ if $\nabla_{v_x}(e) = 0$ for all $v_x\in T_xM$. 
The lift of a vector field $v \in \gerX(M)$ is the vector field $\hat v$ such that 
    \[
        \hat v(e(x)) = \dd_x e\circ v(x),
    \]
for all $x\in M$ and any local section $e\in \Gamma_E(U)$ that is horizontal at $x$.

Let $\phi^r$ be a family of diffeomorphisms of $M$ corresponding to the nested domain $\{M_r\}$, and let $v^r$ on $M$ be its generator. 
Let $\varphi_{\hat v}^r$ be the flow of the lift $\hat v^r$ of $v^r$, and define a family of diffeomorphisms $\psi^r$ on $E$ by
    \[
        \psi^r(e_x) 
        =
        \tfrac{2-r}2
        \cdot
        \varphi_{\hat v}^r(e_x),
        \quad
        e_x \in E_x,
        \quad
        r \in [0,1].
    \]
Pick the linear connection to be metric, i.e.\ such that
    \[
        \gerL_v g(e_1, e_2) 
        =
        g(\nabla_v(e_1), e_2)
        +
        g(e_1, \nabla_v(e_2)),
        \qquad\forall\;
        v \in \gerX_M
        \quad\forall\;
        e_1, e_2 \in \Gamma_E.
    \]
This implies that, for any vector field $v$ on $M$,
    \[
        (\varphi_{\hat v}^r)^*|.|_g = |.|_g.
    \] 
Namely, for any point $e_x\in E_x$ we may pick a local section $e\in \Gamma_E(U)$ with $x\in U$, horizontal at $x$, and $e(x)=e_x$. Then for any vector field $v$ on $M$,
    \[
        (\gerL_{\hat v}|.|^2_g)_{e_x}
        =
        (\dd_{e(x)}|.|^2_g \circ \dd_xe)(v(x))
        =
        (\gerL_v g(e,e))_x
        =
        2 g_x(\nabla_{v(x)}(e),e_x)
        = 0.
    \]
This already shows that
    $
        \psi^{1-r}(X_1) = X_r.
    $
    
We still have to show that the generator of $\psi^r$ maps into the interior cone of $X^r$. 
Note that the interior cone $\calC X_r$ certainly contains all vectors on $E$ that shrink $g$ and that are projected into $\calC M_r$ by $\pi_\textsc{X}$. 
In other words, 
    \[
        \big\{
            w \in TE
            \sep
            \gerL_w|.|_g^2 < 0
        \big\}
        \medcap
        \dd\pi_E^{-1}(\calC M_r)
        \;\;\sub\;\;
        \calC X_r.
    \]
If $w^r$ on $E$ is the generator of $\psi^r$, we have to show that $\tilde w^r \defeq (\psi^r)^*w^r$ maps into $\calC X_r$ for all $r\in [0,1]$. 
A simple computation shows that
    \[
        w^r
        =
        \tfrac{1}{r-2} \cdot \xi
        +
        \hat v^r,
    \]
where $\xi$ is the Euler vector field on $E$, i.e.\ the generator of multiplication by $e^t$.
This gives
    \[
        \tilde w^r
        =
        \tfrac{1}{r-2}
        \cdot
        (\varphi_{\hat v}^r)^*\xi
        +
        (\varphi_{\hat v}^r)^* \hat v^r.
    \]
We have already shown that $\gerL_{\hat v^r}|.|_g^2 = 0$ and thus also $(\varphi_{\hat v}^r)_*|.|_g^2 = |.|_g^2$.
Since $\gerL_\xi |.|_g^2 > 0$ and $r-2<0$, we conclude that $\tilde w^r$ maps into $\calC X_r$.
\end{proof}

\begin{lemma}
\label{lem:stictily_decreasing}
A nested domain with corners $\{D_r\}$ is strictly decreasing:
    \[
        D_r \sub \mathrm{int}(D_s),
        \quad\forall\;
        0\leq r < s \leq 1.
    \]
\end{lemma}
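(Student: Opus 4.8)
The statement to prove is that a nested domain with corners $\{D_r\}$ is strictly decreasing, i.e.\ $D_r \subset \mathrm{int}(D_s)$ whenever $0 \le r < s \le 1$. The plan is to reduce everything to the local behaviour of the flow $\phi^r$ near its ``inward-pointing'' generator $\tilde v^r$, and then to a one-variable monotonicity argument on curves. First I would recall the defining data: a smooth family of diffeomorphisms $\phi^r : D \xrightarrow{\sim} D$ with $\phi^0 = \id_D$ and $\phi^{1-r}(D_1) = D_r$, whose generator
\[
    \tilde v^r = \tfrac{\dd}{\dd\epsilon}\big|_{\epsilon=0}
    (\phi^r)^{-1}\circ \phi^{r+\epsilon}
\]
maps $D_1$ into the interior cone $\calC D_1$. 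Rewriting the target inclusion via $\phi$: since $D_r = \phi^{1-r}(D_1)$ and $D_s = \phi^{1-s}(D_1)$, and $\phi^{1-s}$ is a diffeomorphism, the inclusion $D_r \subset \mathrm{int}(D_s)$ is equivalent to
\[
    \big(\phi^{1-s}\big)^{-1}\circ \phi^{1-r}(D_1) \subset \mathrm{int}(D_1).
\]
Setting $\psi \defeq \big(\phi^{1-s}\big)^{-1}\circ \phi^{1-r}$, it therefore suffices to show $\psi(D_1) \subset \mathrm{int}(D_1)$ whenever $1-s < 1-r$, i.e.\ for the composition of the flow over a positive time-interval.

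Next I would set up the one-parameter family interpolating $\id_D$ and $\psi$: for $u \in [1-s, 1-r]$ put $\psi^u \defeq \big(\phi^{1-s}\big)^{-1}\circ \phi^{u}$, so $\psi^{1-s} = \id_D$ and $\psi^{1-r} = \psi$. Its generator at parameter $u$ is $\tfrac{\dd}{\dd\epsilon}|_{\epsilon=0}(\psi^u)^{-1}\circ\psi^{u+\epsilon}$, which a direct computation identifies with $\tilde v^u$ (the same inward-pointing generator as above, pulled back along $\phi^u$ versus $\phi^{1-s}$ — the $\big(\phi^{1-s}\big)^{-1}$ factor cancels in the conjugation). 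Thus $\psi^u$ is the flow, starting at the identity, of a (time-dependent) vector field that at every time points into $\calC D_1$ along $D_1$. Now fix $x \in D_1$ and consider the curve $\gamma^u \defeq \psi^u(x)$ for $u \in [1-s,1-r]$. Its velocity at each $u$ is $\dd_x\psi^u(\tilde v^u_x)$; I would use Lemma~\ref{lemma:interior_cone} together with a chart $(U,\chi)$ adapted to $D_1$ at the relevant point to express the interior-cone condition in the explicit coordinate form $(0,\infty)^k \times \R^{m-k}$, and deduce that in those coordinates each boundary coordinate function $u \mapsto (\chi \circ \gamma^u)_i$, $i \le k$, has strictly positive derivative whenever $\gamma^u$ touches the corresponding face $\{x_i = 0\}$ of $\partial D_1$. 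A standard ODE comparison/barrier argument then shows the curve cannot leave $D_1$ and, for $u > 1-s$, lies in $\mathrm{int}(D_1)$: each coordinate $x_i$ that is zero at the initial time becomes strictly positive immediately, and any $x_i$ that is positive stays positive for a short time, while the compactness of $D_1$ and continuity of $\psi^u$ upgrade ``short time'' to the whole interval by a connectedness/open-closed argument on the set of $u$ for which $\gamma^{[1-s,u]}\subset D_1$.

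The main obstacle I anticipate is the careful handling of the corner structure: a point $x \in \partial D_1$ may lie in the intersection of several faces simultaneously, the coordinate $k$ varies from chart to chart, and the interior cone is only a cone (not a half-space), so one must argue that \emph{all} the relevant boundary coordinates increase and none of the positive ones can hit zero. The clean way to manage this is to work entirely in adapted charts via Lemma~\ref{lemma:interior_cone}, phrase the condition ``$\gamma^u \in \mathrm{int}(D_1)$'' as ``$(\chi\circ\gamma^u)_i > 0$ for all $i \le k$'', and run the barrier estimate coordinatewise; one also needs a uniform lower bound on the positive derivatives near each face, which follows from compactness of $D_1$ and continuity of $(u,x)\mapsto \dd_x\psi^u(\tilde v^u_x)$. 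With that in place the global statement follows by covering $D_1$ with finitely many adapted charts and patching. Everything else (smoothness of $\psi^u$ in $u$, the identification of its generator, the equivalence of the two formulations of the inclusion) is routine.
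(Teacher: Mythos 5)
Your reduction to showing $\psi(D_1) \subset \mathrm{int}(D_1)$ for $\psi = (\phi^{1-s})^{-1}\circ\phi^{1-r}$, your identification of the (pulled-back) generator of $\psi^u$ with $\tilde v^u$, and your use of Lemma~\ref{lemma:interior_cone} plus compactness are all in the same spirit as the paper's proof. But the step where you track a single flow line $\gamma^u=\psi^u(x)$ over the whole interval $[1-s,1-r]$ via an open--closed/barrier argument has a genuine gap. The hypothesis is that $\tilde v^u$ (the pulled-back generator) maps $D_1$ into $\calC D_1$; the velocity of your curve, as you yourself compute, is $\dd_x\psi^u(\tilde v^u_x)$, a vector at $\gamma^u$ obtained by pushing $\tilde v^u_x\in\calC_x D_1$ forward through the diffeomorphism $\psi^u$. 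At $u=1-s$ this is just $\tilde v^{1-s}_x$ and the inward-pointing condition applies, but for $u>1-s$ a pushforward by an arbitrary diffeomorphism need not carry $\calC_x D_1$ into $\calC_{\gamma^u}D_1$. So if the set $\{u: \gamma^{[1-s,u]}\subset D_1\}$ has supremum $T<1-r$ and $\gamma^T$ sits on $\partial D_1$, nothing forces the velocity there to point inward, and the barrier argument breaks. The "uniform lower bound on positive derivatives near each face'' you invoke is a property of $\tilde v^u$ along $D_1$, not of the actual curve velocity once the curve has moved off its initial point.

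The paper's proof sidesteps this precisely. It does not follow one flow line for a long time; instead it uses Lemma~\ref{lemma:interior_cone} and compactness of $D_1\times[0,1]$ to produce a single $\epsilon>0$ such that
\[
(\phi^{1-s})^{-1}\circ\phi^{1-s+u}(D_1)\sub\mathrm{int}(D_1)
\quad\text{for all } u\in(0,\epsilon)\text{ and all } s\in[0,1].
\]
Here only the velocity at $u=0$ matters, and that is exactly $\tilde v^{1-s}$ restricted to $D_1$, where the cone condition does apply. Equivalently $D_r\sub\mathrm{int}(D_s)$ whenever $0<s-r<\epsilon$, and the general case follows by partitioning $[r,s]$ into steps of length less than $\epsilon$ and chaining $D_{r}\sub\mathrm{int}(D_{r_1})\sub D_{r_1}\sub\cdots\sub\mathrm{int}(D_s)$. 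Your argument can be repaired by adopting this structure: prove the \emph{uniform short-time} inclusion (which is exactly where your coordinate/compactness work belongs), and then chain set inclusions rather than running a single ODE over the whole interval.
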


\begin{proof} 
By definition \ref{def:nested_domain} we have $(\phi^{1-s})^*v^{1-s} \in \calC D_1$ for all $s\in [0,1]$. 
Hence by lemma \ref{lemma:interior_cone} and the compactness of $D_1$, there is a constant $\epsilon>0$ such that
    \[
        (\phi^{1-s})^{-1}\circ  \phi^{1-s+u}(D_1)
        \sub 
        \mathrm{int}(D_1),
    \]
for all $u\in (0,\epsilon)$ and $s\in [0,1]$.
This is equivalent to
    \[
        D_{r}\sub \mathrm{int}(D_{s}), 
        \quad\forall\;
        0<s-r<\epsilon,
    \]
which implies that the nested domain is strictly decreasing.
\end{proof}

\subsection{Local Rigidity \& Tame Operators}
\label{sec:tame_estimates}

Now also fix nested domains $\{ M_r \}$ in $M$ and $\{ X_r \}$ in $X$ such that
    \[
        b(M_r) \sub X_r,
        \qquad\forall\;
        r \in [0,1].
    \]  
We will reformulate definition \ref{def:local_rigidity_attempt} on local rigidity and definition \ref{def:infinitesimal_rigidity_attempt} on local infinitesimal rigidity to be compatible with a choice of nested domains.
At the same time we introduce \emph{tame} operators on nested domains, a variation on the tame operators from \cite{Ham82}.

We start by defining $C^k$-norms on the local sections of an arbitrary vector bundle over $M$.
Then we define norms for local sections of $X$ close to $b$
and, in a similar fashion, for local diffeomorphisms on $X$ close to the identity.
See also section \ref{subsect:comparison} for more details.

Let $\pi_\textsc{E}: E\!\to\! M$ be a vector bundle, and fix vector bundle metrics $m_k$ for all $k\geq 0$ on the fibers of the jet bundle $J^k E \!\to\! M$ such that the projections 
    \[
        \pi_k^{k+1}: J^{k+1} E\longto J^k E
    \]
are norm decreasing. 
Let $K\sub M$ be a compact set with $K=\overline{\mathrm{int}(K)}$. This condition ensures that every section $e\in \Gamma_E(K)$ has a well-defined $k$-jet 
    \[
        j^k(e) \in \Gamma_{J^k E}(K).
    \] 
Now define
    \[
        \norm{e}_{k,K}
        \defeq
        \sup\nolimits_{x\in K}
        \abs{
            j^k(e)_x
        }_{m_k}.
    \]
Moreover, given nested a domain $\{ M_r \}$ and a section $e\in \Gamma_E(M_r)$, simplify the notation to
    \[
        \norm{e}_{k, r}
        \defeq
        \norm{e}_{k, M_r}.
    \]
To define norms on the sections of $X$, pick a vertical tubular neighborhood 
    \[
        \xi: 
        V_bX 
        \!\longhookrightarrow\! 
        X
    \]
around a fixed global section $b \in \Gamma_X(M)$ (see definition \ref{def:vertical_bundle}), and choose $C^k$-norms on $V_bX$ as we did before.
Let 
    \[
        \calV\sub \Gamma_X
    \]
be the subsheaf of local sections that map into the image of $\xi$.
Then for any $c\in \calV(M_r)$ we define
    \[
        \norm{c - b}_{k, r}
        \defeq
        \norm{ \xi^{-1}\circ c }_{k, r}.
    \]
Similarly, to define norms on local diffeomorphisms, pick a vertical tubular neighborhood 
    \[
        \chi: TX \!\longhookrightarrow\! X \!\times\! X
    \]
around the graph of the identity. 
Let 
    \[
        \calU\sub \pgrp
    \]
be the set of local diffeomorphisms whose graph lies in the image $\chi$.
Then for any $\phi \in \calU(X_r)$ we define
    \[
        \norm{\phi - \id}_{k, r}
        \defeq
        \norm{ \chi^{-1}\circ (\id, \phi)}_{k, r}.
    \]

\begin{definition}
\label{def:local_rigidity}
Call a solution $b\in \sol_{Q,Z}(M)$ \textbf{rigid}\index{rigid} relative to $\{ M_r \}$ and $\{ X_r \}$ 
if there is an integer $p\in \N$ and for all $r<s$ there is a $C^p$-open neighborhood 
    \[
        \calV_{s,r} \sub \sol_{Q,Z}(M_s)
    \]
and a (possibly non-linear) operator
    \[
        \begin{tikzcd}[row sep=small]
            \pgrp(X_s)
            \arrow[d,
                -,
                dotted,
                start anchor={
                    [yshift=0.25ex]south
                },
                end anchor={
                    [yshift=-0.25ex]north
                },
            ]
            &
            \calV_{s,r}
            \arrow[d,
                -,
                dotted,
                start anchor={
                    [yshift=0.25ex]south
                },
                end anchor={
                    [yshift=-0.25ex]north
                },
            ]
            \arrow[dl,
                start anchor={
                    [xshift=0ex,yshift=.5ex]south west
                },
                end anchor={
                    [xshift=-0.5ex,yshift=-0.5ex]north east
                },
                "\phi^{s,r}" description
            ]
            \\
            \pgrp(X_r)
            &
            \sol_{Q, Z}(M_r)
        \end{tikzcd}
    \]
such that $\phi^{s,r}(c)$ is compatible with $b$ on $M_r$ for all $c\in\calV_{s,r}$ and we have
    \[
        c|_{M_r} = (b \cdot \phi^{s,r})|_{M_r},
    \]
Call it \textbf{tamely rigid} if there are constants $c_k, d_k, l \geq 0$ such that
    \[
        \norm{\phi^{s,r} - \id}_{k, r}
        \leq
        c_k (s - r)^{-d_k}
        \norm{c - b}_{k+l, s},
        \qquad\forall\;
        c \in \calV_{s,r}.
        \qedhere
    \]
\end{definition}

\begin{definition}
\label{def:local_infinitesimal_rigidity}
Call a solution $b \in \sol_{Q, Z}(M)$ \textbf{infinitesimally rigid}\index{infinitesimally rigid}
relative to $\{ M_r \}$ and $\{ X_r \}$ 
if for all $r < s$ there are linear operators
    \[
        \begin{tikzcd}[row sep=small]
            \calA(X_s)
            \arrow{r}
            \arrow[d,
                -,
                dotted,
                start anchor={
                    [yshift=0.25ex]south
                },
                end anchor={
                    [yshift=-0.25ex]north
                },
            ]
            &
            \Gamma_{V_b X}(M_s)
            \arrow{r}
            \arrow[d,
                -,
                dotted,
                start anchor={
                    [yshift=0.25ex]south
                },
                end anchor={
                    [yshift=-0.25ex]north
                },
            ]
            \arrow[dl,
                start anchor={
                    [xshift=1ex,yshift=1ex]south west
                },
                end anchor={
                    [xshift=-1ex,yshift=-1ex]north east
                },
                "h^{s,r}_1" description
            ]
            &
            \Gamma_{V_\zeta Y}(M_s)
            \arrow[d,
                -,
                dotted,
                start anchor={
                    [yshift=0.25ex]south
                },
                end anchor={
                    [yshift=-0.25ex]north
                },
            ]
            \arrow[dl,
                start anchor={
                    [xshift=1ex,yshift=1ex]south west
                },
                end anchor={
                    [xshift=-1ex,yshift=-1ex]north east
                },
                "h^{s,r}_2" description
            ]
            \\
            \calA(X_r)
            \arrow{r}
            &
            \Gamma_{V_bX}(M_r)
            \arrow{r}
            &
            \Gamma_{V_\zeta Y}(M_r)
        \end{tikzcd}
    \]
such that 
for all $w \in \Gamma_{V_b X}(M_s)$ we have
    \[
        ( \delta_b \circ h_1 + h_2 \circ \delta_b )(w)
        =
        w|_{M_r},
    \]
Call it \textbf{tamely infinitesimally rigid} if moreover there are $c_k, d_k, l_i\geq 0$ such that
    \[
        \norm{ h_i(w) }_{k, r}
        \leq
        c_k (s-r)^{-d_k}
        \norm{ w }_{k+l_i, s}.
        \qedhere
    \] 
\end{definition}

\clearpage
\section{Main Theorem: Simple Version}
%\etocsettocdepth{2}
%\localtableofcontents

Given a submersion $\pi_\textsc{X}: X \to M$, consider the following objects:

\begin{itemize}

\item
a PDE with symmetry $(\pgrp, Q, Z)$ (see definition \ref{def:pde_with_symmetry}),

\item
the Lie algebra sheaf $\calA$ of $\pgrp$ (see theorem \ref{pgrp:thm}),

\item
a global solution $b \in \sol_{Q, Z}(M)$,

\item 
nested domains $\{ M_r \}$ and $\{ X_r \}$ in $M$ and $X$ respectively (see definition \ref{def:nested_domain}) such that
    \[
        b(M_r) \sub X_r,
        \qquad\forall\;
        r \in [0,1].
    \]

\end{itemize}

\begin{NamedTheorem}[Main Theorem (Simple)]
\label{Main Theorem}
Tame infinitesimal rigidity implies rigidity:

Suppose there are linear operators, for all $r<s$,
    \[
        \begin{tikzcd}[row sep=small]
            \calA(X_s)
            \arrow{r}
            \arrow[d,
                -,
                dotted,
                start anchor={
                    [yshift=0.25ex]south
                },
                end anchor={
                    [yshift=-0.25ex]north
                },
            ]
            &
            \Gamma_{V_b X}(M_s)
            \arrow{r}
            \arrow[d,
                -,
                dotted,
                start anchor={
                    [yshift=0.25ex]south
                },
                end anchor={
                    [yshift=-0.25ex]north
                },
            ]
            \arrow[dl,
                start anchor={
                    [xshift=1ex,yshift=1ex]south west
                },
                end anchor={
                    [xshift=-1ex,yshift=-1ex]north east
                },
                "h^{s,r}_1" description
            ]
            &
            \Gamma_{V_\zeta Y}(M_s)
            \arrow[d,
                -,
                dotted,
                start anchor={
                    [yshift=0.25ex]south
                },
                end anchor={
                    [yshift=-0.25ex]north
                },
            ]
            \arrow[dl,
                start anchor={
                    [xshift=1ex,yshift=1ex]south west
                },
                end anchor={
                    [xshift=-1ex,yshift=-1ex]north east
                },
                "h^{s,r}_2" description
            ]
            \\
            \calA(X_r)
            \arrow{r}
            &
            \Gamma_{V_bX}(M_r)
            \arrow{r}
            &
            \Gamma_{V_\zeta Y}(M_r)
        \end{tikzcd}
    \]
and there are constants $c_k, d_k, l_1, l_2 \geq 0$ such that
    \[
            (\delta_b\circ h^{s,r}_1
            +
            h^{s,r}_2\circ \delta_b)(w)
            =
            w|_{M_r},
            \qquad
            \norm{ h_i^{s,r}(v) }_{k,r}
            \leq
            c_k(s-r)^{-d_k}
            \norm{ v }_{k+l_i, s}.
    \]
Then there is an integer $p\in \N$, and for all $r<s$ there is a $C^p$-open neighborhood
    \[
        \calV_{s,r} \sub \sol_{Q,Z}(M_s),
    \]
of $b|_{M_s}$, and a (non-linear) operator
    \[
        \begin{tikzcd}[row sep=small]
            \pgrp(X_s)
%            \arrow[r,)->]
            \arrow[d,
                -,
                dotted,
                start anchor={
                    [yshift=0.25ex]south
                },
                end anchor={
                    [yshift=-0.25ex]north
                },
            ]
            &
            \calV_{s,r}
            \arrow[d,
                -,
                dotted,
                start anchor={
                    [yshift=0.25ex]south
                },
                end anchor={
                    [yshift=-0.25ex]north
                },
            ]
            \arrow[dl,
                ->,
                start anchor={
                    [xshift=.5ex,yshift=1.5ex]south west
                },
                end anchor={
                    [xshift=-.5ex,yshift=-.5ex]north east
                },
                "\phi^{s,r}" description
            ]
            \\
            \pgrp(X_r)
            &
            \sol_{Q,Z}(M_r),
        \end{tikzcd}
    \]
such that all $\phi^{s,r}(c)$ and $c$ are compatible on $M_r$ for all $c \in \calV_{s,r}$ and
    \[
        (c\cdot \phi^{s,r}(c))|_{M_r}
        =
        b|_{M_r}.
    \]
Moreover, there are constants $a_k, b_k, l\geq 0$ such that
    \[
        \norm{\phi^{s,r}(c)-\id}_{k,r}
        \leq
        a_k(s-r)^{- b_k}
        \norm{c-b}_{k+l,s}.
    \]
\end{NamedTheorem}

\clearpage
\begin{remark}
The constants $a_k$, $b_k$, $c_k$, and $d_k$ depend only on $k$.
\end{remark}

\begin{remark}
Although it is certainly not optimal, we obtain
    \[
        p = \max(l_1+1, d) + \max(6l_1+5, 4l_2+1).
    \]
Moreover, the $C^p$-open subsets $\calV_{s,r}\sub \sol_{Q,Z}(M_s)$ depend polynomially on $s-r$ in the sense that they are given by an expression 
    \[
        \norm{ c-b }_{p,s} < \gamma(s-r)^{-\delta}
    \]
for some constants $\gamma, \delta > 0$. See inequality (\ref{size}).
\end{remark}

\begin{remark}
It follows from the estimate for $\phi^{s,r}$ that 
    \[
        \phi^{s,r}(b)
        =
        \id\!|_{X_r}.
    \]
%and that $\phi^{s,r}$ is continuous at $b|_{M_s}$ with respect to the $C^\infty$-topologies on both $\calV_{s,r}$ and $\pgrp(M_r)$.
\end{remark}

\begin{remark}
In some cases one can construct \emph{better} homotopy operators $\{h^{s,r}_1\}$ and $\{h^{s,r}_2\}$ which do not shrink the parameter of the nested domains.
By the nature of the proof, however, this does not improve the conclusion of the \nameref{Main Theorem}. 

Suppose that for each $r\in [0,1]$ there are linear operators
    \[
        h_1^{r}:
        \Gamma_{V_b X}(M_r)
        \longto 
        \calA(X_r), 
        \qquad 
        h_2^{r}:\Gamma_{V_\zeta Y}(M_r)
        \longto 
        \Gamma_{V_b X}(M_r),
    \]
satisfying the homotopy relation and the tame estimates:
    \[
        \delta_b
        \circ 
        h_1^{r}
        +
        h_2^{r}
        \circ
        \delta_b
        =\id,
        \qquad
        \norm{ h_i^r (v) }_{k,r}
        \leq
        c_k \norm{v}_{k+l_i, r}.
    \]
This fits into the above framework by taking $d_k=0$ and
    \[
        h^{s,r}_1\! (v)\defeq h_1^s(v)\vert_{X_r}, 
        \qquad
        h^{s,r}_2\! (w)\defeq h_2^s(w)\vert_{M_r}.
    \]
\end{remark}

\begin{remark}
Let $X \!\to\! M$ be a natural bundle as in example \ref{exa:associated_jet_bundle}:
    \[
        X = F \times_{G_{m, d}} J^d_0\calD.
    \]
Let $\{M_r\}$ be a shrinking domain on $M$, and $\{X_r\}$ be a shrinking domain on $X$ such that 
    \[
        X_r\sub \pi_\textsc{X}^{-1}(M_r),
    \]
and such that each $X_r$ has connected fibers (see example \ref{exa:constructing_X_r}).
\end{remark}

\begin{lemma}
The family of maps, for $r\in [0,1]$, defined by
    \[
        E_r:
        \calA_\calP(M_r) \longto \calA_{E\pgrp}(X_r),
        \qquad
        E_r(v) 
        \defeq 
        \tfrac{\dd}{\dd t}\big|_{t=0}
        E(\varphi_v^t)|_{X_r},
    \]
is tame: there are constants $c_k \geq 0$ such that
    \[
        \norm{ E_r(v) }_{k, r}
        \leq
        c_k \norm{ v }_{k+d, r}.
    \]
\end{lemma}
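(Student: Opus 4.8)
The plan is to unwind the definition of the $C^k$-norms on sections of $\calA_\calP(M_r)$ and $\calA_{E\pgrp}(X_r)$ and to reduce the claim to a local, coordinate-based estimate. Recall from Example \ref{exa:associated_jet_bundle} that $X = F \times_{G_{m,d}} J^d_0\calD$ and that $\widetilde X = F \times J^d_0\calD$ is a natural bundle whose lift is $\widetilde E(\phi) = \id_F \times \phi^{(d)}$, with $X = \widetilde X/G_{m,d}$. Since the norms on sections of $\calA_{E\pgrp}(X_r)$ are defined via a vertical tubular neighborhood of the identity graph and finitely many jet-bundle metrics (see Section \ref{sec:tame_estimates}), and since $X_r$ has connected fibers, the bijection from the last lemma of Section \ref{sec:natural_bundle} identifies $E_r$ with the assignment $v \mapsto \tfrac{\dd}{\dd t}\big|_{t=0}E(\varphi_v^t)|_{X_r}$ evaluated on this chart. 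So the first step is to pass to the cover $\widetilde X$, where $\widetilde E(\phi)$ is literally $\id_F \times \phi^{(d)}$, and observe that it suffices to estimate the operator $v \mapsto \tfrac{\dd}{\dd t}\big|_{t=0}\varphi_v^{t,(d)}$, i.e.\ the infinitesimal action on the $d$-jet bundle $J^d\calD$.

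Next I would write this operator in jet coordinates. The flow $\varphi_v^t$ of $v$ on $M$ (or on the chart domain in $\R^m$) lifts to $J^d\calD$ by $j^d_x(\sigma)\mapsto j^d_x(\varphi_v^t\circ\sigma)$, and differentiating at $t=0$ yields a vector field on $J^d\calD$ whose components are polynomial expressions in the partial derivatives of $v$ up to order $d$ — this is the standard formula for the complete lift of a vector field to a jet bundle, and the key structural fact is that these expressions are \emph{universal} (they do not depend on $v$, only $m$ and $d$) and are first-order in the top derivatives of $v$. The infinitesimal action $\delta_b$ on $\widetilde X = F\times J^d_0\calD$ is then this lift crossed with the zero vector field on $F$. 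Composing with the tubular-neighborhood chart and the fixed metrics, the components of $E_r(v)$ become smooth functions of: the point of $X_r$, the components of $v$, and the components of $\partial^\alpha v$ for $|\alpha|\le d$. Because $X_r$ is contained in the compact $X_1$ and $M_r$ in the compact $M_1$, all the structure functions (chart transition data, metric coefficients, the universal lift polynomials) are bounded along with their derivatives, uniformly in $r$.

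The final step is the routine tame estimate: applying the $C^k$-norm $\norm{\cdot}_{k,r}$, which controls derivatives up to order $k$ along $M_r$ (lifted appropriately to $X_r$), to an expression that is smooth in finitely many arguments, at most one of which (the top jet of $v$) carries $d$ derivatives, gives by the Leibniz rule and the chain rule (the Gagliardo--Nirenberg-type interpolation in the sup-norm setting, or more simply just repeated product rule since we use $C^k$-sup norms and everything is on a compact set) a bound
\[
    \norm{E_r(v)}_{k,r} \le c_k \norm{v}_{k+d,r},
\]
with $c_k$ depending only on $k$ (and on $m$, $d$, and the fixed bounds on $X_1$), and crucially independent of $r$ since all constants are taken over $X_1 \supset X_r$. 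I expect the main obstacle to be purely bookkeeping: carefully matching the $C^k$-norm on $\Gamma_{V_bX}$-type objects over $X_r$ — which is defined through \emph{another} tubular neighborhood and \emph{another} family of jet metrics — with the $C^{k+d}$-norm on vector fields over $M_r$, so that the "$+d$" loss of derivatives is exactly accounted for and no hidden dependence on $r$ creeps in through the choice of connections or bump functions used to build the nested domains. Once the local model of $E_r$ as a universal differential operator of order $d$ with bounded coefficients is established, the estimate itself is immediate.
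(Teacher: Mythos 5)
Your proposal is correct, but it follows a genuinely different route from the paper. You pass to the cover $\widetilde X = F\times J^d_0\calD$, recognize $E_r(v)$ as (the $F$-trivial factor times) the $d$-th prolongation of $v$ to $J^d\calD$, note via Fa\`a di Bruno that the prolongation is a \emph{linear} differential operator of order $d$ in $v$ with coefficients that are universal polynomials in the fiber (jet) variables, and then deduce the tame estimate by Leibniz on the compact $X_1\supset X_r$. The paper instead recognizes $E$ abstractly as an instance of a ``general PDE'' $Q$ of order $d$ in the sense of definition \ref{def:gen_pde} (with $M\times M\to M$ as source, $X\times X\to X$ as target, and $q(j^d_x\phi, (f,j^d_0\varphi)) = (f, j^d_0(\phi\circ\varphi))$), invokes \nameref{Lemma:A} to get a tame estimate on $\phi\mapsto Q(\phi)$ near the identity with loss $d$, and then combines with \nameref{Lemma:B}(1) (which bounds $\varphi_v^t$ in terms of $v$) to get the estimate on the $t$-derivative. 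What the paper's route buys is economy: it wholesale reuses the machinery built for the \nameref{Main Theorem} (Lemma A's composition estimates in particular) and avoids re-doing jet-coordinate bookkeeping. What your route buys is explicitness and self-containment: you exhibit $E_r$ concretely as the $d$-th jet prolongation, so the loss of exactly $d$ derivatives is visible on the nose rather than arriving via an appeal to the order of an abstract PDO. Both hinge on the same structural fact. Two small corrections: $E_r(v)$ lands in $\calA_{E\pgrp}(X_r)\subset\gerX(X_r)$, a vector field on $X_r$, not a section of $V_bX$, so the norm matching you worry about is between $\gerX(X_r)$ and $\gerX(M_r)$; and when you pass to $\widetilde X$ you should say a word about lifting the nested domain $X_r$ along the principal $G_{m,d}$-bundle $\widetilde X\to X$ and comparing norms across the quotient, which is routine by compactness of $X_1$ and local triviality but is a step your proposal currently elides.
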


\begin{proof}
One can easily see that the map $E: \diff_X \to \diff_{J^d\calD}$ is a general PDE in the sense of definition \ref{def:gen_pde}.
If we wish to be very precise about it, then it is given by
    \begin{align*}
        \widetilde X &= M \!\times\! M \to M,
        \\
        \widetilde Y &= X \!\times\! X \to X, 
        \\
        q_0 &= \pi_\textsc{X}: J^d\calD \longto M,
        \\
        q &: J^d \widetilde X \times_{\!M} X \longto \widetilde Y, 
    \end{align*}
where for any $j^d_x(\phi) \in J^d_x(M, M)$ and $y = (f, j^d_0(\varphi)) \in X_x$,
    \[
        q \big( j^d_x(\phi),  y \big)
        =
        \big(
            f, j^d_0(\phi \circ \varphi)
        \big).
    \]
Since $Q(\id) = \id$, by \nameref{Lemma:A} there is a $C^1$-open neighborhood 
    \[
        \calU(M_s)\sub \diff_M(M_s)
    \]
of $\id\!|_{M_s}$ and there are constants $c_k \geq 0 $ such that
    \[
        \norm{ Q(\phi) - \id }_{k, r }
        \leq
        c_k
        \norm{ \phi - \id }_{k+d, s},
        \quad\forall\;
        \phi \in \calU(M_s).
    \]
The required estimate now follows from \nameref{Lemma:B}(1).
\end{proof}

\begin{remark}
For the \nameref{Main Theorem} one can obtain a conclusion of the form
    \[
        (b\cdot \psi^{s,r}(c))\vert_{M_r}
        =
        c\vert_{M_r}
    \]
instead of what is currently stated (see lemma \ref{lemma:action_on_the_other_side} for a full statement).
This can be done by taking the inverse of $\phi^{s,r}$ (with some care) and standard estimates such as in lemma \ref{lemma:diffeo}.
On the other hand, this can also be deduced by applying the main theorem as described in the following corollary.
\end{remark}

\begin{corollary}
For all $r < s$ there is a $C^p$-open neighborhood 
    \[
        \calU_{s,r}
        \sub
        C^\infty(X_s, X)
    \]
of $\id|_{X_s}$, and an operator
    $
        \psi^{s,r}:
        \calU_{s,r}
        \longto
        \diff_X(X_r)
    $
such that 
    \[
        \psi^{s,r}(\phi)(X_r)\sub X_s,
        \qquad
        \phi \circ \psi^{s,r}(\phi) = \id\!|_{X_s},
    \]
for all $\phi \in \calU_{s,r}$.
Moreover, there are constants $a_k, b_k, l\geq 0$ such that
    \[
        \norm{\psi^{s,r}(\phi) - \id}_{k,r}
        \leq
        a_k (s-r)^{-b_k}
        \norm{\phi - \id}_{k+l, s},
        \qquad\forall\;
        \phi \in \calU_{s,r}.
    \]
\end{corollary}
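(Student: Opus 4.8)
The goal is to invert a local diffeomorphism $\phi:X_s\to X$ close to the identity, producing a smooth local inverse $\psi^{s,r}(\phi):X_r\to X_s$, with tame estimates; this is exactly the kind of self-contained analytic lemma that the \nameref{Main Theorem} is built to absorb. The plan is to derive this corollary from the \nameref{Main Theorem} by choosing the data of a suitable PDE with symmetry, rather than by redoing the estimates by hand. First I would set up the auxiliary problem: take the submersion to be the trivial one $\widetilde X \defeq X \to \mathrm{pt}$ (or, keeping a base, $X\times X \to X$ via the first projection), let $\pgrp \defeq \diff_X$, which is a closed pseudogroup, let $Q$ be the trivial PDO of order $0$ with target $Y=X$ and $q=\id$, and let the distinguished solution be $b=\id_X$. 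Then $\sol_{Q,Z}$ is all of $\Gamma_X = C^\infty(\cdot,X)$, the deformation complex degenerates, and the infinitesimal rigidity homotopy operators can be taken to be $h_1 = \id$, $h_2=0$, which are trivially tame with $l_1=l_2=0$, $d_k=0$. The content of the \nameref{Main Theorem} in this degenerate case is precisely a tame local-inversion statement for elements of $\diff_X$ near the identity on a nested domain.

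Second, I would match up the conclusion of the \nameref{Main Theorem} with the statement of the corollary. The theorem produces, for $r<s$, a $C^p$-open neighborhood $\calV_{s,r}\subset \sol_{Q,Z}(M_s)$ of $b|_{M_s}$ and an operator $\phi^{s,r}:\calV_{s,r}\to \pgrp(X_r)$ with $(c\cdot \phi^{s,r}(c))|_{M_r}=b|_{M_r}$ and the tame estimate on $\|\phi^{s,r}(c)-\id\|_{k,r}$. In the degenerate setup, a "solution" $c\in \calV_{s,r}$ is just a section close to $\id$, i.e.\ (after identifying via the vertical tubular neighborhood) a local diffeomorphism $\phi$ close to $\id|_{X_s}$; the group-element $\phi^{s,r}(c)$ is a local diffeomorphism of $X$; and the equation $c\cdot \phi^{s,r}(c)=b$ unwinds, via the definition of the local action (formula \eqref{for:action_is_pullback} and Definition \ref{def:compatibility}), to $\phi^{-1}\circ\bigl(\phi^{s,r}(c)\bigr)=\id$, i.e.\ $\phi^{s,r}(c)=\phi$ on $M_r$ — which says $\phi^{s,r}(c)$ composed on the appropriate side with $\phi$ is the identity. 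So I would define $\psi^{s,r}(\phi)\defeq \phi^{s,r}(\phi)$ and read off $\phi\circ\psi^{s,r}(\phi)=\id|_{X_s}$ together with $\psi^{s,r}(\phi)(X_r)\subset X_s$ directly; the estimate $\|\psi^{s,r}(\phi)-\id\|_{k,r}\le a_k(s-r)^{-b_k}\|\phi-\id\|_{k+l,s}$ is then literally the estimate supplied by the theorem, since $\|c-b\|_{k+l,s}=\|\phi-\id\|_{k+l,s}$ under the chosen identification.

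The main obstacle is bookkeeping with the nested domains and the two-sidedness of composition. The \nameref{Main Theorem} is phrased with a fixed ambient nested domain $\{X_r\}$ over $\{M_r\}$ and with $b(M_r)\subset X_r$, and it naturally yields $c\cdot\phi^{s,r}(c)=b$, i.e.\ an identity that reads as a \emph{left} inverse after unwinding; the corollary wants $\phi\circ\psi^{s,r}(\phi)=\id$. If the orientation comes out as the other composition order, I would correct it exactly as the remark preceding the corollary suggests: apply the already-available tame composition and inversion estimates (Lemma \ref{lemma:diffeo}, Lemma \ref{lemma:close_to_identity}), or equivalently run the main theorem once more with roles swapped, shrinking $s$ slightly to $s'$ with $r<s'<s$ to absorb the loss of domain; the polynomial dependence of the $\calV$'s on $s-r$ (inequality \eqref{size}) guarantees that this double application still produces a $C^p$-open neighborhood of $\id|_{X_s}$ and a tame estimate, at the cost of enlarging the constants $a_k,b_k$ and the loss of derivatives $l$. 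Finally I would check the compatibility hypothesis needed to apply the local action is automatic here: for $\phi$ sufficiently $C^1$-close to $\id|_{X_s}$, Lemma \ref{lemma:local_action} (applied with $b=\id$) guarantees $\phi$ and $\id$ are compatible on $M_r$, so the operator $\psi^{s,r}$ is defined on a genuine $C^p$-neighborhood and all the unwound identities make sense; the value $p$ is then the one produced by the \nameref{Main Theorem}, specialized to $l_1=l_2=0$, $d_k=0$.
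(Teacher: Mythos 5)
Your overall strategy matches the paper's: run the \nameref{Main Theorem} with a degenerate PDE so that its conclusion reads as a tame right-inversion operator, noting that the infinitesimal-rigidity homotopy operators are essentially trivial restrictions, so the hypotheses come for free. However, two details are off, and the second one matters.

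First, the auxiliary submersion $\widetilde X \defeq X \to \mathrm{pt}$ cannot work: its sections are points of $X$, not maps $X_s \to X$, and there is no nested domain downstairs. The parenthetical alternative you list, $\widetilde X \defeq X \times X \to X$ via the first projection, is the one that works, with the pseudogroup being not $\diff_X$ directly but its lift $\widetilde\pgrp$ acting on the first factor (elements locally of the form $(\phi,\id)$), and with the PDE literally empty, $(Q,Z)=(0,0)$; the data $Y=X$, $q=\id$ you propose would not give a vacuous constraint.

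Second, your unwinding of $c \cdot \phi^{s,r}(c) = b$ is incorrect, and this error generates all the "two-sidedness" trouble. Writing $c = (\id, g)$ for the solution and $\Psi = (\psi, \id)$ for the pseudogroup element, the relative base map of $\Psi$ with respect to $c$ is $\psi$ itself (it is $(\pi_\textsc{X}\circ\Psi^{-1}\circ c)^{-1}$, which here is $\psi$), so formula (\ref{for:action_is_pullback}) gives
\[
c \cdot \Psi \;=\; \Psi^{-1}\circ c\circ \psi \;=\; (\id,\, g\circ\psi),
\]
and $c\cdot\Psi = b = (\id,\id)$ is precisely $g\circ\psi = \id$. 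Renaming $g\leftrightarrow\phi$ and $\psi\leftrightarrow\psi^{s,r}(\phi)$, this is exactly $\phi\circ\psi^{s,r}(\phi)=\id$, with no composition-order discrepancy. Your claimed unwinding $\phi^{-1}\circ\phi^{s,r}(c)=\id$, hence $\phi^{s,r}(c)=\phi$, drops the right-composition with the relative base map; taken at face value it makes the operator the identity and cannot give the corollary. Because of this, the fallback (re-running the theorem with roles swapped, or post-composing with Lemma~\ref{lemma:diffeo}-type inversion estimates) is unnecessary: the theorem's conclusion is already the desired right-inverse statement, which is how the paper reads it off in one step.
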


\begin{proof}
Consider $\widetilde M \defeq X$ and the submersion $\widetilde\pi: \widetilde X \defeq X\!\times\! X \to X$ that maps $(x,y)\mapsto x$.
Then local sections $b \in \Gamma_{\widetilde X}(U)$ correspond to smooth maps $f\in C^\infty(U, X)$ via $b = (\id\!|_U, f)$.
The local diffeomorphisms $\diff_{X}$ lift to a pseudogroup $\widetilde\pgrp$ on $\widetilde X$ where 
    \[
        \psi \in \widetilde\pgrp(U)
    \]
if and only if for every $y \in U$ there is an open neighborhood $V\sub U$ of $x$ and a local diffeomorphism $\phi \in \diff_X(\widetilde\pi(V))$ such that $\psi|_V = (\phi, \id)$.
The corresponding local action of $\widetilde\pgrp$ on $\Gamma_{\widetilde X}$ is simply composition on the right.
Its Lie algebra sheaf $\widetilde\calA$ consists of all horizontal vectors, i.e.\ those $v=(v_1, v_2) \in \gerX_{\widetilde X}$ for which $v_2=0$.
With no further restrictions in mind, we apply the \nameref{Main Theorem} to the PDE with symmetry $(\widetilde\pgrp, 0, 0)$, with $\id\in C^\infty(X, X)$ as global section and nested domains 
    \[
        \widetilde M_r 
        \defeq 
        X_r,
        \qquad
        \widetilde X_r
        \defeq
        X_r \!\times\! X_r.
    \]
By lemma \ref{prop:infinitesimal_action} we find that the corresponding deformation complex is 
    \[
        \widetilde\calA
        \xrightarrow{ \;\; \delta_{\id} \;\; }
        \Gamma_{\widetilde X}
        \xrightarrow{ \;\; 0 \;\; }
        \{ 0 \}, 
    \]
where
    \[
        \delta_{\id}(v)
        =
        v_1\circ ( \id \!\times\! \id),
        \qquad
        v = (v_1, 0) \in \widetilde\calA.
    \]
We conclude from the \nameref{Main Theorem} that for all $r<s$ there is a $C^p$-open neighborhood
    $
        \calU_{s,r} 
        \sub
        C^\infty(X_s, X)
    $
of $\id\!|_{X_s}$, 
and an operator 
    \[
        \psi^{s,r} : \calU_{s,r} \longto C^\infty(X_r, X)
    \]
such that $\psi^{s,r}(\phi)$ and $\phi$ are composable on $X_r$ for all $\phi\in\calU_{s,r}$ and 
    \[
        (\phi \cdot \psi^{s,r}(\phi))|_{X_r}
        =
        \id\!|_{X_r},
        \qquad
        |\psi^{s,r}(\phi) - \id|_{k, r}
        \leq
        a_k(s-r)^{-b_k}
        |\phi - \id|_{k+l, s}.
    \]
Remark that $\psi^{s,r}(\phi)$ and $\phi$ are compatible on $X_r$ if and only if 
    \[
        \psi^{s,r}(\phi)(X_r)
        \sub 
        X_s.
        \qedhere
    \]
\end{proof}

\begin{lemma}
\label{lemma:action_on_the_other_side}
The conclusion of the \nameref{Main Theorem} is equivalent to the following: 
there is an integer $p\in \N$, and for all $r<s$ there is a $C^p$-open neighborhood
    \[
        \calV_{s,r} \sub \sol_{Q,Z}(M_s),
    \]
of $b|_{M_s}$ and a (possibly non-linear) operator
    \[
        \begin{tikzcd}[row sep=small]
            \pgrp(X_s)
            \arrow[d,
                -,
                dotted,
                start anchor={
                    [yshift=0.25ex]south
                },
                end anchor={
                    [yshift=-0.25ex]north
                },
            ]
            &
            \calV_{s,r}
            \arrow[d,
                -,
                dotted,
                start anchor={
                    [yshift=0.25ex]south
                },
                end anchor={
                    [yshift=-0.25ex]north
                },
            ]
            \arrow[dl,
                ->,
                start anchor={
                    [xshift=.5ex,yshift=1.5ex]south west
                },
                end anchor={
                    [xshift=-.5ex,yshift=-.5ex]north east
                },
                "\psi^{s,r}" description
            ]
            \\
            \calU_{s,r}
            & 
            \sol_{Q,Z}(M_r),
        \end{tikzcd}
    \]
such that $\psi^{s,r}(c)$ and $b$ are compatible on $M_r$ for all $c \in \calV_{s,r}$ and
    \[
        (b\cdot \psi^{s,r}(c))\vert_{M_r}
        =
        c\vert_{M_r},
    \]
Moreover, there are constants $a_k, b_k, l\geq 0$ such that
    \[
        \norm{\psi^{s,r}(c)-\id}_{k,r}
        \leq
        a_k(s-r)^{-b_k}
        \norm{c-b}_{k+l,s}.
    \]
\end{lemma}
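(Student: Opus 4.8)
The plan is to deduce the $\psi$--form from the $\phi$--form (the conclusion of the \nameref{Main Theorem}) by inverting the local diffeomorphisms produced there; the reverse implication is obtained by the identical argument with the roles of $\phi^{s,r}$ and $\psi^{s,r}$ interchanged, so it suffices to describe one direction. The only genuinely new ingredient beyond the \nameref{Main Theorem} is a one--step refinement of the nested domains: since $\phi^{s,r}(c)$ lives on $X_r$, its inverse lives on $\phi^{s,r}(c)(X_r)$, which need not contain $X_r$; inverting an operator defined at a slightly larger domain level fixes this.

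Fix $r<s$ and set $t\defeq\tfrac12(r+s)$, so $r<t<s$. Apply the \nameref{Main Theorem} at the pair $(t,s)$: it yields an integer $p\in\N$, a $C^p$--open neighborhood $\calV_{s,t}\sub\sol_{Q,Z}(M_s)$ of $b|_{M_s}$, and an operator $\phi^{s,t}\colon\calV_{s,t}\to\pgrp(X_t)$ with $\phi^{s,t}(c)$ compatible with $c$ on $M_t$, $(c\cdot\phi^{s,t}(c))|_{M_t}=b|_{M_t}$, and
    \[
        \norm{\phi^{s,t}(c)-\id}_{k,t}
        \le
        a_k(s-t)^{-b_k}\norm{c-b}_{k+l,s}.
    \]
Shrinking $\calV_{s,t}$ if necessary we may assume $\phi^{s,t}(c)$ is $C^1$--close to $\id\!|_{X_t}$ uniformly in $c$. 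Since the nested domain $\{X_r\}$ is strictly decreasing (Lemma \ref{lem:stictily_decreasing}) we have $X_r\sub\mathrm{int}(X_t)$, so Lemma \ref{lemma:close_to_identity} applied with $X_0=X_r$, $X_1=X_t$ and base diffeomorphism $\id\!|_{X_t}$ gives $X_r\sub\phi^{s,t}(c)(\mathrm{int}(X_t))$ for all such $c$. Hence $(\phi^{s,t}(c))^{-1}$ is defined on a neighborhood of $X_r$, and by the inversion and restriction axioms of a pseudogroup
    \[
        \psi^{s,r}(c)\defeq(\phi^{s,t}(c))^{-1}\big|_{X_r}
    \]
lies in $\pgrp(X_r)$; we set $\calV_{s,r}\defeq\calV_{s,t}$ (further intersected, if needed, with the $C^p$--neighborhood required by the estimates below).

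To verify the identity, write $\phi\defeq\phi^{s,t}(c)$ and act on $(c\cdot\phi)|_{M_t}=b|_{M_t}$ on the right by $\phi^{-1}$. Lemma \ref{lemma:restricting_action}(iii) gives, on the overlap $W$ of the relevant domains,
    \[
        (c\cdot(\phi\circ\phi^{-1}))\big|_{W}
        =
        (c\cdot\phi\cdot\phi^{-1})\big|_{W}
        =
        (b\cdot\phi^{-1})\big|_{W},
    \]
while $\phi\circ\phi^{-1}$ is the inclusion of the domain of $\phi^{-1}$ into $X$, so by Lemma \ref{lemma:restricting_action}(ii) the left--hand side is simply a restriction of $c$. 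Routine domain bookkeeping, using once more the strict decrease of $\{M_r\}$ and $\{X_r\}$ and shrinking $\calV_{s,r}$ in $C^0$ so that $c(M_r)$ remains inside the domain of $(\phi^{s,t}(c))^{-1}$, then shows that $\psi^{s,r}(c)$ and $b$ are compatible on all of $M_r$ and
    \[
        (b\cdot\psi^{s,r}(c))\big|_{M_r}=c\big|_{M_r}.
    \]

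For the tame estimate, the standard bound for inverting local diffeomorphisms close to the identity with a controlled loss of domain (Lemma \ref{lemma:diffeo}), applied to $\phi^{s,t}(c)\in\pgrp(X_t)$ and the levels $r<t$, gives constants $c_k',d_k',l'\ge0$ with
    \[
        \norm{\psi^{s,r}(c)-\id}_{k,r}
        \le
        c_k'(t-r)^{-d_k'}\norm{\phi^{s,t}(c)-\id}_{k+l',t}.
    \]
Substituting the estimate on $\phi^{s,t}$ and $t-r=s-t=\tfrac12(s-r)$ yields $\norm{\psi^{s,r}(c)-\id}_{k,r}\le a_k(s-r)^{-b_k}\norm{c-b}_{k+l,s}$ after renaming the constants, which completes this direction; the converse is the same computation applied to $\psi^{s,r}$. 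The main obstacle is exactly the domain bookkeeping in the middle step — ensuring compatibility of $\psi^{s,r}(c)$ with $b$ on the whole of $M_r$, not just near interior points, which is where the refinement $r<t<s$ and the strict monotonicity of the nested domains are essential — together with extracting from Lemma \ref{lemma:diffeo} the precise polynomial dependence of the inversion estimate on the gap $t-r$.
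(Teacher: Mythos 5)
Your argument is correct in outline, but it follows a genuinely different route from the paper's proof, and it leans on a step that is not quite as cheap as you present it.

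You derive $\psi^{s,r}$ by \emph{directly inverting} the local diffeomorphism $\phi^{s,t}(c)$ produced by the \nameref{Main Theorem} at the intermediate level $t=\tfrac12(r+s)$, using the pseudogroup axioms, Lemma~\ref{lemma:close_to_identity} for domain control, and Lemma~\ref{lemma:restricting_action} for the algebra. This is exactly the alternative that the paper's own remark immediately before the preceding corollary alludes to (``taking the inverse of $\phi^{s,r}$ (with some care)''), but it is not the route the paper actually carries out. The paper instead composes with the \emph{right-inverse operator} $\psi^{t,r}$ supplied by the preceding corollary --- which is obtained not by hand but by running the \nameref{Main Theorem} a second time, on a trivial PDE over $\widetilde M=X$ --- and then computes
\[
  c|_{M_r}
  =
  (c\cdot\id|_{X_r})|_{M_r}
  =
  \big((c\cdot\phi^{s,t}(c))|_{M_t}\cdot\psi^{t,r}(\phi^{s,t}(c))\big)\big|_{M_r}
  =
  (b\cdot\psi^{s,r}(c))|_{M_r}.
\]
The two constructions yield the same operator up to domain, so both prove the statement; the paper's choice buys the tame estimate for free from the corollary.

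The one genuine soft spot in your write-up is the estimate step. You cite Lemma~\ref{lemma:diffeo} as ``the standard bound for inverting local diffeomorphisms close to the identity with a controlled loss of domain,'' but that lemma is the global, compactly-supported version: it takes a $\phi\in C^\infty_K(X,X)$ and returns $\|\phi^{-1}\|_{k,K}\lesssim\|\phi\|_{k,K}$ on the \emph{same} compact set $K$, with no loss of domain and no $(s-r)$-dependence in the constant. (Lemma~\ref{lemma:inverse} is the local version and does shrink domains, but its constants are hidden in $\lesssim$ and depend on the chosen balls.) To turn either of these into the inequality you claim,
\[
  \norm{\psi^{s,r}(c)-\id}_{k,r}\le c_k'(t-r)^{-d_k'}\norm{\phi^{s,t}(c)-\id}_{k+l',t},
\]
one still has to run the shrinking-domain machinery of Section~\ref{sec:lemmas_corners}: extend $\phi^{s,t}(c)$ to a compactly supported diffeomorphism via the extension operators of Lemma~\ref{lem:extension}, apply Lemma~\ref{lemma:diffeo} there, restrict back to $X_r$, and track how the constants depend on $t-r$ --- the paper never states this as a ready-made lemma, and this is precisely the ``some care'' the paper sidesteps by invoking the corollary. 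So your approach is sound, but as written it quotes a tame inversion estimate that the paper does not supply and that would need to be proved along the lines of Lemmas~\ref{lemma:B_1_global}--\ref{lemma:C_global}. If you want the cheaper route, invoke the preceding corollary exactly as the paper does.

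Finally, your verification that $(b\cdot\psi^{s,r}(c))|_{M_r}=c|_{M_r}$ via Lemma~\ref{lemma:restricting_action}(ii) and (iii) is correct; this matches the paper's computation in spirit (the paper's display contains a typo, $M_s$ where it should read $M_t$).
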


\begin{proof}
Fix $r < s$ and consider $t \defeq \tfrac12(s+r)$. 
Let $\calV_{s, t}$ and $\phi^{s, t}$ be as in the conclusion of the \nameref{Main Theorem},
and let $\calU_{t, r}$ and $\psi^{t, r}$ be as in the previous corollary.
Since $\phi^{s,t}$ is continuous at $b|_{M_s}$, we may shrink its domain such that it maps into $\calU_{s,r}$:
    \[
        \calV_{s, t}
        \xrightarrow{ \;\; \phi^{s, t} \;\; }
        \calU_{t, r}
        \xrightarrow{ \;\; \psi^{t, r} \;\; }
        \pgrp(X_r).
    \]
Write $\psi^{s,r}(c)\defeq (\psi^{t, r}\circ\phi^{s,t})(c)$ and observe that
    \[
        c|_{M_r} 
        =
        (c \cdot \id\!|_{X_r})|_{M_r}
        =
        \big(
            (c \cdot \phi^{s, t}(c))|_{M_s}
            \cdot \psi^{s,r}(c)
        \big)\big|_{M_r} 
        =
        \big(
            b|_{M_s} \cdot \psi^{s,r}(c)
        \big)|_{M_r},
    \]
and the tame estimates follow from combining the estimates of $\phi^{s, t}$ and $\psi^{t, r}$. 
The converse follows from the same argument.
\end{proof}

\clearpage
\section{Main Theorem: General Version}
%\etocsettocdepth{2}
%\localtableofcontents

We present a more general version of the \nameref{Main Theorem} which will probably cover most interesting applications. 
It differs from the main theorem by allowing a more general notion of PDE.

Recall that a PDE consists of a fiber-preserving map $q: J^dX \to Y$ and a section $\zeta\in \Gamma_Y(M)$.
We previously defined
    \[
        Q(b)\defeq q\circ j^d(b),
        \qquad
        Z \defeq \zeta(M),
    \]
and said that a section $b\in \Gamma_X(V)$ is a solution if and only if $Q(b)=\zeta|_V$.
We will relax this definition by replacing the section $\zeta$ by an arbitrary closed fibered submanifold $Z$,
and by allowing the partial differential operator to change base,
in the sense that $\pi_\textsc{Y}: Y \to N$ may be a submersion over a different base manifold.

\begin{definition}
\label{def:gen_pde}
A \textbf{general PDE}\index{partial differential equation!general} $(Q,Z)$ of order $d$ on $\pi_\textsc{X}$ consists of
\begin{itemize}

\item
a submersion $\pi_\textsc{Y}:Y \to N$,

\item
a smooth map
    $
        q_0: N \to M,      
    $

\item 
a fiber-preserving map
    $
        q: 
        J^d X \!\times_{\!M}\! N
        \to
        Y
    $
over $N$,

\item 
a closed fibered submanifold $Z\sub Y$, i.e.\ a closed submanifold such that 
    $
        \pi_\textsc{Z} \defeq \pi_\textsc{Y}|_Z: Z\to N
    $
is still a submersion.

\end{itemize}
This data induces a map of sheaves over $q_0$ by
    \[
        Q \sep \Gamma_X \longto \Gamma_Y,
        \qquad
        Q(b)
        \defeq 
        q\circ j^db\circ q_0.
    \]
A solution is a local section $b\in \Gamma_X(V)$ such that $Q(b)$ maps into $Z$,
and the sheaf of solutions will be denoted by
    \[
        \sol_{Q,Z}(V)
        \defeq
        \big\{
            b \in \Gamma_X(V)
            \sep
            Q(b) \in \Gamma_Z(q_0^{-1}(V))
        \big\}.
        \qedhere
    \]
\end{definition}

In summary, a general PDE fits into the following diagram:
   \[
    \begin{tikzcd}
        X
        \arrow[->]{d}
        &
        J^dX \times_{\! M} N
        \arrow[->]{d}
        \arrow[->]{r}{q}
        &
        Y
        \arrow[->, start anchor = south west, end anchor = north east]{dl}
        \\
        M
        &
        \arrow[->, swap]{l}{q_0}
        N
        &
        \arrow[hook, swap]{u}
        \arrow{l}
        Z
    \end{tikzcd}
    \]

\clearpage
\begin{example}
Let $(P, \pi)$ be a Poisson manifold. 
Recall that a submanifold $M^m$ is called co-isotropic if and only if $\pi^\sharp$ restricts to a map $(TM)^\circ \!\to\! TM$, where 
    $
        (TM)^\circ \sub T^*_MP
    $
is the conormal bundle of $M\sub P$.
In other words, one requires that the induced map $(TM)^\circ \!\to\! N M$ vanishes,
and by the cannonical isomorphism $TM^\circ \simeq (N M)^*$, this is equivalent to $\pi|_M$ lying in the kernel of the projection
    \[
        p: \wedge^2 T_MP \longto \wedge^2 N M.
    \]
Submanifolds of $P$ diffeomorphic to $M$ can be represented either
intrinsically as parametrized submanifolds or extrinsically as graphs over $M$.

Parametrized submanifolds are represented by embeddings $M \!\hookrightarrow\! P$, which form a strong $C^1$-open subset of $C^\infty(M, P)$.
Consider the Grassmannian bundle of $TP$, pulled back to the trivial bundle $P \!\times\! M$:
    \[
        X \defeq P \!\times\! M \longto M,
        \qquad
        Z \defeq \mathrm{Gr}_m(TP) \!\times\! M \longto X.
    \]
Let $U \sub J^1X$ be the open subset of all one-jets of maximal rank, i.e.\ the $j^1_x(\phi)$ for which $\dd_x\phi$ is of maximal rank.
There is a smooth bundle map over $X$ given by
    \[
        \tau
        \sep
        J^1X \supseteq U
        \longto
        \mathrm{Gr}_m(TP) \!\times\! M,
        \qquad
        \tau( j^1_x(\phi) )
        \defeq
        \dd_x\phi(T_xM).
    \]
Let $E \to \mathrm{Gr}_m(TP)$ be the tautological bundle, and consider the bundle
    \[
        Y 
        \defeq
        \wedge^2 (\underline{TP}/ E) \!\times\! M
        \longto X,
    \]
where $\underline{TP}\defeq TP \times \mathrm{Gr}_k(TP)$.
Since $Y$ is a vector bundle over $Z$, we may identify $Z$ with the image of the zero section, and thus consider it as a submanifold of $Y$.
Moreover, let
    \[
        \sigma
        \sep
        \mathrm{Gr}_m(TP) 
        \longto
        \wedge^2 (\underline{TP} / E)
    \]
be the map obtained by projecting $\pi$ onto $TP/E$.
We conclude that the co-isotropy condition fits into the framework of a general PDE as
    \[
        \begin{tikzcd}
            J^1 X \supseteq U
            \arrow[->]{d}
            \arrow[->]{r}{\sigma \circ \tau}
            &
            Y
            \arrow[
                ->, 
                start anchor = south west, 
                end anchor = north east
            ]{dl}
            \\
            M
            &
            \arrow[hook, swap]{u}
            \arrow{l}
            Z.
        \end{tikzcd}
    \]
\end{example}

\begin{example}
\label{exa:has_coisotropic}
Let $(P, \pi)$ be a manifold and $M$ a submanifold. 
We will turn the previous example on its head: 
recall that a Poisson bivector $\pi\in \gerX^2_P$ has $M$ as a co-isotropic submanifold if $\pi^\sharp$ restricts to a map $(TM)^\circ \!\to\! TM$.
Hence such a bivector needs to satisfy two separate equations: the Poisson equation $[\pi, \pi]=0$ and the above co-isotropy condition.

Consider the bundle map over $P$ given by
    \[
        q
        \sep
        J^1(\wedge^2 TP) \longto \wedge^3 TP,
        \quad
        q(j^1_x\pi)
        \defeq
        [\pi,\pi]_x,
    \]
where $\pi \in \gerX^2_M(U)$, with $x\in U$, is any local representative of $j^1_x\pi$.
This is well-defined because the Schouten-Nijenhuis bracket $[-,-]$ on multivector fields satisfies the Leibniz identity.
Consider also the bundle map
    \begin{align*}
        & r
        \sep
        \wedge^2 T_MP
        \longto
        \wedge^2 NM,
        \quad
        r(\pi_x)
        \defeq
        \pi_x \!\!\! \mod TM.
    \end{align*}
We conclude that this problem fits into the framework of general PDEs as
    \[
    \begin{tikzcd}
        \wedge^2 TM
        \arrow[->]{d}
        &
        J^1(\wedge^2 TM) \sqcup \wedge^2 T_NM
        \arrow[->]{d}
        \arrow[->]{r}{q\;\sqcup\;r}
        &
        \wedge^3 TM \sqcup \wedge^2 NM
        \arrow[->, start anchor = south west, end anchor = north east]{dl}
        \\
        M
        &
        \arrow[->, swap]{l}
        M \sqcup N
        &
        \arrow[hook, swap]{u}
        \arrow{l}
        M \sqcup N.
    \end{tikzcd}
    \]
Here the symbol $\sqcup$ stands for the disjoint union.
\end{example}

\begin{definition}
\label{def:gen_pde_with_symmetry}
A \textbf{PDE with symmetry}\index{partial differential equation!with symmetry} $(\pgrp, Q, Z)$ consists of
\begin{itemize}

\item
a \emph{closed} pseudogroup $\pgrp$ on $X$,

\item
a PDE $(Q,Z)$ on $\pi_\textsc{X}$, where $q_0$ is \emph{proper},

\end{itemize}
such that
if $\phi \in \pgrp(U)$ and $b\in \sol_{Q, Z}(V)$ are compatible on $W$, then 
    \[
        (b\cdot \phi)|_W
        \in 
        \sol_{Q,Z}(W).
        \qedhere
    \]
\end{definition}

Given a submersion $\pi_\textsc{X}: X \to M$, fix the following objects:
\begin{itemize}

\item
a general PDE with symmetry $(\pgrp, Q, Z)$,

\item
the Lie algebra sheaf $\calA$ of $\pgrp$ (see theorem \ref{pgrp:thm}),

\item
a global solution $b \in \sol_{Q, Z}(M)$, and denote $\zeta \defeq Q(b)$,

\item 
nested domains $\{ M_r \}$, $\{ N_r \}$, and $\{ X_r \}$ in $M$, $N$ and $X$ respectively (see definition \ref{def:nested_domain}) such that
    \[
        b(M_r) \sub X_r,
        \qquad
        q_0(N_r) \sub M_r,
        \qquad\forall\;
        r \in [0,1].
    \]
\end{itemize}

Recall that $N(Y, Z)$ denotes the normal bundle of $Z$ in $Y$ and $N_\zeta(Y,Z)$ its pull-back along $\zeta$ (see definition \ref{def:normal_bundle}).

\begin{definition}
The second $\delta_b$ of the deformation complex (see definition \ref{def:deformation_complex}) can be replaced by the linear operator
    \[
        \delta_b:
        \Gamma_{V_bX}(U)
        \longto
        \Gamma_{N_\zeta(Y, Z)}(q_0^{-1}(U)),
        \qquad
        \delta_b(w)
        \defeq
        \tfrac{\dd}{\dd t}\big|_{t=0}
        Q(b^t)
        \!\!\!\mod T_\zeta Z,
    \]
where $b^t \in \Gamma_X(U^t)$ is a family of sections with $b^0=b$ and $\tfrac{\dd}{\dd t}\big|_{t=0} b^t = w$.
\end{definition}

\begin{NamedTheorem}[Main Theorem (General)]
\label{General Main Theorem}
Tame infinitesimal rigidity implies rigidity:

Suppose there are linear operators, for all $r<s$,
    \[
        \begin{tikzcd}[row sep=small]
            \calA(X_s)
            \arrow{r}
            \arrow[d,
                -,
                dotted,
                start anchor={
                    [yshift=0.25ex]south
                },
                end anchor={
                    [yshift=-0.25ex]north
                },
            ]
            &
            \Gamma_{V_b X}(M_s)
            \arrow{r}
            \arrow[d,
                -,
                dotted,
                start anchor={
                    [yshift=0.25ex]south
                },
                end anchor={
                    [yshift=-0.25ex]north
                },
            ]
            \arrow[dl,
                start anchor={
                    [xshift=1ex,yshift=1ex]south west
                },
                end anchor={
                    [xshift=-1ex,yshift=-1ex]north east
                },
                "h^{s,r}_1" description
            ]
            &
            \Gamma_{N_\zeta(Y, Z)}(N_s)
            \arrow[d,
                -,
                dotted,
                start anchor={
                    [yshift=0.25ex]south
                },
                end anchor={
                    [yshift=-0.25ex]north
                },
            ]
            \arrow[dl,
                start anchor={
                    [xshift=1ex,yshift=1ex]south west
                },
                end anchor={
                    [xshift=-1ex,yshift=-1ex]north east
                },
                "h^{s,r}_2" description
            ]
            \\
            \calA(X_r)
            \arrow{r}
            &
            \Gamma_{V_bX}(M_r)
            \arrow{r}
            &
            \Gamma_{N_\zeta(Y, Z)}(N_r)
        \end{tikzcd}
    \]
and there are constants $c_k, d_k, l_1, l_2 \geq 0$, such that
    \[
            (\delta_b\circ h^{s,r}_1
            +
            h^{s,r}_2\circ \delta_b)(w)
            =
            w|_{M_r},
            \qquad
            \norm{ h_i^{s,r}(v) }_{k,r}
            \leq
            c_k(s-r)^{-d_k}
            \norm{ v }_{k+l_i, s}.
    \]
Then there is a $p\in \N$, and for all $r<s$ there is a $C^p$-open neighborhood
    \[
        \calV_{s,r} \sub \sol_{Q,Z}(M_s)
    \]
of $b|_{M_s}$ and a (non-linear) operator
    \[
        \begin{tikzcd}[row sep=small]
            \pgrp(X_s)
            \arrow[d,
                -,
                dotted,
                start anchor={
                    [yshift=0.25ex]south
                },
                end anchor={
                    [yshift=-0.25ex]north
                },
            ]
            &
            \calV_{s,r}
            \arrow[d,
                -,
                dotted,
                start anchor={
                    [yshift=0.25ex]south
                },
                end anchor={
                    [yshift=-0.25ex]north
                },
            ]
            \arrow[dl,
                ->,
                start anchor={
                    [xshift=.5ex,yshift=1.5ex]south west
                },
                end anchor={
                    [xshift=-.5ex,yshift=-.5ex]north east
                },
                "\phi^{s,r}" description
            ]
            \\
            \pgrp(X_r)
            &
            \sol_{Q,Z}(M_r),
        \end{tikzcd}
    \]
such that $\phi^{s,r}(c)$ and $c$ are compatible on $M_r$ for all $c\in \calV_{s,r}$ and
    \[
        (c\cdot \phi^{s,r}(c))\vert_{M_r}
        =
        b\vert_{M_r}.
    \]
Moreover, there are constants $a_k, b_k, l\geq 0$ such that
    \[
        \norm{\phi^{s,r}(c)-\id}_{k,r}
        \leq
        a_k (s-r)^{-b_k}
        \norm{c-b}_{k+l,s}.
    \]
\end{NamedTheorem}

\subsection{Multiple Differential Equations}
\label{remk:more equations}

One may care about studying the solutions of a finite collection of partial differential equations $\{(Q^i,Z^i)\}_{i=1}^l$ given by partial differential operators
    \[
        Q^i:\Gamma_X(M)
        \longto
        \Gamma_{Y^i}(N^i),
    \]
covering base maps $q^i_{0}:N^i \to M$.
The sheaf of solution set is then naturally the intersection of solution sets:
    \[
        \sol_{\{Q^i,Z^i\}}(U)
        =
        \medcap_{i}
        \sol_{Q^i,Z^i}(U).
    \]
Let $\pgrp$ be a closed pseudogroup which preserves $\sol_{\{Q^i,Z^i\}}$ (in the sense of definition \ref{def:pde_with_symmetry}), and fix a global solution $b\in \sol_{\{Q^i, Z^i\}}(M)$. 
The \nameref{General Main Theorem} can be applied in this situation by enlarging the class of manifolds that we are working with to allow $N$, $Y$, and $Z$ to be disjoint unions of finitely many manifolds of possibly different dimensions. 
It should be clear from the proofs, and from the \nameref{Main Theorem for Differential Relations} presented in the next section, that this poses no problem. 
Hence we consider
    \[
    \begin{tikzcd}
        X
        \arrow[->]{d}
        &
        \medsqcup_i (J^{d_i}X_i \!\times_{\!M_i}\! N_i)
        \arrow[->]{d}
        \arrow[->]{r}{\medsqcup_i q^i}
        &
        \medsqcup_i Y^i
        \arrow[->, start anchor = south west, end anchor = north east]{dl}
        \\
        M
        &
        \arrow[->, swap]{l}
        \medsqcup_i N^i
        &
        \arrow[hook, swap]{u}
        \arrow[swap]{l}{\medsqcup_i q_0^i}
        \medsqcup_i Z^i.
    \end{tikzcd}
    \]
Remark that $\medsqcup_iq_0^i$ is proper if and only if all of the maps $q_0^i$ are proper.
The \nameref{General Main Theorem} permits us to choose nested domains $\{X_r\}$, $\{ M_r\}$ and $\{N_r^i \}$ such that 
    \[
        b(M_r)\sub X_r,
        \qquad
        q_0^i(N_r^i)\sub M_r,
        \qquad
        \forall\;
        r \in [0,1]
        \quad\forall\;
        i=1, \ldots, l.
    \]
The second map of the deformation complex thus takes the form
    \begin{align*}
        N_r
        & \defeq
        \medsqcup_i N_r^i,
        \quad
        N_{\zeta}(Y, Z)
        \defeq
        \medoplus_i
        N_{\zeta^i}(Y^i, Z^i),
        \\
        \delta_b
        & \defeq
        (\delta_b^1, \ldots, \delta^l_b)
        \sep
        \Gamma_{V_b X}(M_r)
        \longto 
        \Gamma_{N_\zeta(Y, Z)}(N_r).
    \end{align*}
To satisfy the hypothesis of the theorem one needs to find linear operators
    \begin{align*}
        h_1^{s,r}:
        \Gamma_{V_b X}(M_s) 
        \longto \calA(X_r),
        \quad
        h_2^{s,r,i}:
        \Gamma_{N_{\zeta^i}(Y^i, Z^i)}(N_s^i) 
        \longto 
        \Gamma_{V_b X}(M_r),
    \end{align*}
which satisfy the tameness inequalities, and the homotopy relation:
    \[
        \delta_b \circ h_1^{s,r}
        +
        \sum\nolimits_{i=1}^l h_2^{s,r,i}\circ \delta_b
        =
        \id^{s,r},
    \]
This is equivalent to the usual homotopy relation for the operator
    \[
        h_2^{s,r}
        \defeq
        \sum\nolimits_{i=1}^l
        h_2^{s,r,i}\circ \pr_i:
        \Gamma_{N_\zeta(Y, Z)}(N_s)
        \longto 
        \Gamma_{V_bX}(M_r),
    \]
where the $\pr_i$ denote the projections onto $\Gamma_{N_{\zeta_i}(Y^i, Z^i)}(N_s^i)$.

\clearpage
\section{Main Theorem: Differential Relations}
%\etocsettocdepth{2}
%\localtableofcontents

The differential equations that appear in the \nameref{General Main Theorem}, and in the version of subsection \ref{remk:more equations}, are particular examples of so-called partial differential relations on a submersion. 
A differential relation is the bare minimum of data needed to speak of \emph{imposing a condition on the derivatives of sections}: a subset of the jet-bundle.
We will formulate a version of the main theorem in the language of differential relations,
and we will show that the \nameref{General Main Theorem} is a special case of this theorem.

Some aspects (such as picking a tubular neighborhood) make this statement less cannonical.
Moreover, without a partial differential operator providing us with a linear chain complex, the aphorism 
\emph{infinitesimal rigidity implies rigidity} 
is lost in the details. 
Instead we include certain \emph{quadratic estimates} in the hypothesis that would otherwise follow from tame  infinitesimal rigidity (see \nameref{Lemma:A}).
We start with recalling the definition of a PDR \cite{Gro86}.

Let $\pi_\textsc{X}: X \!\to\! M$ be a submersion.

\begin{definition}
A \textbf{partial differential relation (PDR)}\index{partial differential relation} of order $d$ on $\pi_\textsc{X}$ is a subset 
    \[
        R \sub J^dX.
    \]
A local solution of $R$ is a local section $b\in \Gamma_X(U)$ such that
    \[
        j^db(U)\sub R.
    \]
The sheaf of solutions will be denoted by 
    \[
        \sol_R(U) \defeq
        \big\{
            b \in \Gamma_X(U)
            \sep
            j^db(U) \sub R
        \big\}.
        \qedhere
    \]
\end{definition}

\begin{example}
Recall that a general PDE on $\pi_\textsc{X}$ consists of a submersion $\pi_\textsc{Y}: Y \!\to\! N$, a smooth map $q_0: N\!\to\! M$, a fiber preserving map 
    \[
        q: J^dX \!\times_{\!M}\! N \!\to\! Y
    \]
and a closed fibered submanifold $Z\sub Y$. 
This defines a closed partial differential relation
    \[
        R \defeq 
        \big\{
            j^d_{y}(b) \in J^dX
            \sep
            q_x(j^d_y(b)) \in Z,
            \;\;
            q_0(x) = y            
        \big\}
    \]
such that
    $
        \sol_{Q, Z}(V) = \sol_R(V)
    $
for all $V\sub M$.
Some information is lost, namely the submersion $\pi_Y$, the (higher order) derivatives of $q$ along $R$, and the behaviour of $q$ away from $R$.
\end{example}

\begin{definition}
\label{def:pdr_with_symmetry}
A \textbf{PDR with symmetry}\index{partial differential relation!with symmetry} $(\pgrp, R)$ of order $d$ consists of

\begin{itemize}

\item 
a \emph{closed} pseudogroup $\pgrp$ (see definition \ref{def:closed_pseudogroup}),

\item
a partial differential relation $R$ of order $d$ on $\pi_\textsc{X}$,

\end{itemize}

such that if $\phi \in \pgrp(U)$ and $b\in\sol_R(V)$ are compatible on $W$, then 
    \[
        (b\cdot\phi)|_W \in \sol_R(W).
        \qedhere
    \]
\end{definition}

Given a submersion $\pi_\textsc{X}: X \!\to\! M$, consider the following objects:
\begin{itemize}

\item
a PDR with symmetry $(\pgrp, R)$ of order $d$,

\item
the Lie algebra sheaf $\calA$ of $\pgrp$ (see theorem \ref{pgrp:thm}),

\item
a global solution $b \in \sol_R(M)$,

\item
a vertical tubular neighborhood
    $
        \xi: V_b X \hookrightarrow X
    $
around $b$.

\item 
nested domains $\{ M_r \}$ and $\{ X_r \}$ in $M$ and $X$ respectively (see definition \ref{def:nested_domain}) such that
    \[
        b(M_r) \sub X_r,
        \quad\forall\;
        r \in [0,1].
    \]

\end{itemize}

\begin{NamedTheorem}[Main Theorem for PDRs]
\label{Main Theorem for Differential Relations}
Tame quadratic estimates imply rigidity:

Suppose there is a linear operator, for all $r<s$,
    \[
        \begin{tikzcd}[row sep=small]
            \calA(X_s)
            \arrow{r}
            \arrow[d,
                -,
                dotted,
                start anchor={
                    [yshift=0.25ex]south
                },
                end anchor={
                    [yshift=-0.25ex]north
                },
            ]
            &
            \Gamma_{V_b X}(M_s)
            \arrow[d,
                -,
                dotted,
                start anchor={
                    [yshift=0.25ex]south
                },
                end anchor={
                    [yshift=-0.25ex]north
                },
            ]
            \arrow[dl,
                start anchor={
                    [xshift=1ex,yshift=1ex]south west
                },
                end anchor={
                    [xshift=-1ex,yshift=-1ex]north east
                },
                "h^{s,r}" description
            ]
            \\
            \calA(X_r)
            \arrow{r}
            &
            \Gamma_{V_bX}(M_r)
        \end{tikzcd}
    \]
and there are constants $c_k, d_k, l_1, l_2 \geq 0$ and $\theta >0$, such that
    \[
            \norm{ h^{s,r}(v) }_{k,r}
            \leq
            c_k(s-r)^{-d_k}
            \norm{ v }_{k+l_1, s}
    \]
for all $v \in \Gamma_{V_bX}(M_s)$, and such that
    \begin{equation}
        \label{ineq:quadratic}
        \norm{
            v_{|M_r}
            -
            \delta_b\circ h^{s,r}(v)
        }_{k,r}
        \leq 
        c_k(s-r)^{-d_k}
        \norm{v}_{d,s} 
        \norm{v}_{k+l_2,s}
    \end{equation}
for all $v\in \Gamma_{V_bX}(M_s)$ for which
    $
        \norm{v}_{d,s}
        <
        \theta
    $
and
    $
        (\xi\circ v) \in \sol_R(M_s).
    $

Then there is an integer $p\in \N$, and for all $r<s$ there is a $C^p$-open neighborhood
    \[
        \calV_{s,r} \sub \sol_R(M_s)
    \]
of $b|_{M_s}$ and a (possibly non-linear) operator
    \[
        \begin{tikzcd}[row sep=small]
            \pgrp(X_s)
%            \arrow[r,)->]
            \arrow[d,
                -,
                dotted,
                start anchor={
                    [yshift=0.25ex]south
                },
                end anchor={
                    [yshift=-0.25ex]north
                },
            ]
            &
            \calV_{s,r}
            \arrow[d,
                -,
                dotted,
                start anchor={
                    [yshift=0.25ex]south
                },
                end anchor={
                    [yshift=-0.25ex]north
                },
            ]
            \arrow[dl,
                ->,
                start anchor={
                    [xshift=.5ex,yshift=1.5ex]south west
                },
                end anchor={
                    [xshift=-.5ex,yshift=-.5ex]north east
                },
                "\phi^{s,r}" description
            ]
            \\
            \pgrp(X_r)
            & 
            \sol_R(M_r),
        \end{tikzcd}
    \]
such that $\phi^{s,r}(c)$ and $c$ are compatible on $M_r$ for all $c\in \calV_{s,r}$ and
    \[
        (c\cdot \phi^{s,r}(c))\vert_{M_r}
        =
        b\vert_{M_r}.
    \]
Moreover, there are constants $a_k, b_k, l\geq 0$ such that
    \[
        \norm{\phi^{s,r}(c)-\id}_{k,r}
        \leq
        a_k (s-r)^{-b_k}
        \norm{c-b}_{k+l,s}.
        \qedhere
    \]
\end{NamedTheorem}

\chapter{Applications}
\label{chapter: Applications}
\etocsettocdepth{2}
\localtableofcontents

\newpage
\section{Rigidity of Lie Algebras}
Let $(\gerg, \mu)$ be a finite dimensional Lie algebra.

\begin{proposition}
\label{Proposition:lie_algebra_structures}
If $H^2(\gerg, \mathrm{ad}_\mu) = 0$, then there is an open neighborhood 
    \[
        \calV 
        \sub 
        \big\{
            \text{Lie algebra structures $\nu$ on $\gerg$}
        \big\}
    \]
around $\mu$ and a map
    \[
        g : \calV \longto \mathrm{GL}(\gerg)
    \]
such that $\nu \cdot g(\nu) = \mu$ for $\nu \in \calV$, and $g(\mu) = \id$ and $g$ is continuous at $\mu$.
\end{proposition}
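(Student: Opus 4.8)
The plan is to realize this as an instance of the \nameref{General Main Theorem} (in fact the simplest possible instance, where nothing depends on a base point, so all the ``nested domain'' bookkeeping is trivial). Take $M$ to be a point, $X = \Lambda^2\gerg^*\otimes\gerg$ the vector space of all skew bilinear brackets on $\gerg$ (viewed as a bundle over a point), $\pgrp$ the closed pseudogroup on $X$ induced by the linear $\GL(\gerg)$-action $\nu\mapsto \nu\cdot g$, and let the PDE $(Q,Z)$ encode the Jacobi identity: $Q(\nu) = [\nu,\nu]_{\mathrm{alg}} \in \Lambda^3\gerg^*\otimes\gerg$, with $Z$ the zero section. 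Since $M$ is a point there are no derivatives involved, so this is a PDE of order $0$; the pseudogroup visibly preserves the solution set because $\GL(\gerg)$ acts by Lie algebra isomorphisms on the space of Lie brackets. The global solution is $b = \mu$, and the nested domains $\{M_r\}$, $\{X_r\}$ can all be taken constant (or shrinking balls, with the parameter $r$ playing no essential role).

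With these identifications the deformation complex of Definition \ref{def:deformation_complex} at $b=\mu$ is exactly the truncation
\[
    \gergl(\gerg)
    \xrightarrow{\;\delta_\mu\;}
    \Lambda^2\gerg^*\otimes\gerg
    \xrightarrow{\;\delta_\mu\;}
    \Lambda^3\gerg^*\otimes\gerg
\]
of the Chevalley--Eilenberg complex $C^\bullet(\gerg,\mathrm{ad}_\mu)$: the first $\delta_\mu$ is $A\mapsto \mathrm{ad}_\mu(A)$ (infinitesimal conjugation, which is the CE differential $C^1\to C^2$), and the second $\delta_\mu$ is the linearization of the Jacobiator, which is the CE differential $C^2\to C^3$. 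The hypothesis $H^2(\gerg,\mathrm{ad}_\mu)=0$ says precisely that $\ker(\delta_\mu\colon C^2\to C^3) = \mathrm{im}(\delta_\mu\colon C^1\to C^2)$, i.e.\ the complex is exact in the middle. Since everything in sight is a finite-dimensional vector space, exactness at $C^2$ is equivalent to the existence of linear homotopy operators $h_1\colon C^2\to C^1$ and $h_2\colon C^3\to C^2$ with $\delta_\mu\circ h_1 + h_2\circ\delta_\mu = \id_{C^2}$ (split the short exact sequences $0\to \ker\to C^2\to \mathrm{im}\to 0$ etc.\ using the vector space structure). These are bounded operators between finite-dimensional normed spaces, so the tame estimates of Definition \ref{def:local_infinitesimal_rigidity} hold trivially with $l_i=0$, $d_k=0$ (all the $(s-r)^{-d_k}$ factors are absent, and the ``$C^k$-norm'' is just a fixed norm on a finite-dimensional space since there are no derivatives). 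Hence $b=\mu$ is tamely infinitesimally rigid.

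Applying the \nameref{General Main Theorem} (taking $r<s$ and then forgetting the parameters, which are vacuous here) yields a $C^p$-open — hence, in finite dimensions, simply open — neighborhood $\calV$ of $\mu$ inside $\sol_{Q,Z} = \{$Jacobi brackets on $\gerg\}$, together with an operator $\phi^{s,r}\colon\calV\to\pgrp = \GL(\gerg)$, continuous at $\mu$, such that $\nu\cdot\phi^{s,r}(\nu) = \mu$ for all $\nu\in\calV$; setting $g\defeq\phi^{s,r}$ gives the claimed map, and the estimate $\norm{\phi^{s,r}(c)-\id}\le a_k\norm{c-b}$ together with $\phi^{s,r}(b)=\id$ (cf.\ the remarks following the theorem) gives both $g(\mu)=\id$ and continuity of $g$ at $\mu$.

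The main obstacle — really the only nontrivial point — is checking that the abstract machine genuinely degenerates to something trivial in this zero-dimensional, order-zero setting: one must verify that the compatibility conditions of Definition \ref{def:compatibility} are automatic (a section over a point is just a point of $X$, and ``compatible'' just means $\phi^{-1}(\nu)$ is again a point of $X$, which holds for any diffeomorphism), that the local action reduces to the $\GL(\gerg)$-action, and that the normal-bundle $\delta_b$ of the General version coincides with the linearized Jacobiator composed with projection to the zero section's normal bundle $\cong C^3$. None of this is deep, but it is the place where one has to be careful that no hidden smoothness/derivative loss appears. A fully self-contained alternative, if one prefers not to invoke the heavy theorem, would be a direct Nash--Moser/Newton iteration — but in finite dimensions with $l_i=0$ this is just the ordinary implicit function theorem applied to the map $\GL(\gerg)\times\calV \to C^2$, $(g,\nu)\mapsto \nu\cdot g$, using the splitting coming from $H^2=0$; I would mention this as a remark but carry out the proof via the \nameref{General Main Theorem} for uniformity with the rest of the thesis.
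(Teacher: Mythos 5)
Your overall strategy matches the paper's: cast the Jacobi identity as a PDE with symmetry over a point, identify the deformation complex with the Chevalley--Eilenberg complex of $\mathrm{ad}_\mu$, use $H^2=0$ to split off finite-dimensional homotopy operators (vacuously tame), and apply the main theorem. The identifications you make are all correct, including the observation that $\delta_\mu\colon C^1\to C^2$ is infinitesimal conjugation and $\delta_\mu\colon C^2\to C^3$ is the linearized Jacobiator.

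However, there is a genuine gap: you declare ``$\pgrp$ the \emph{closed} pseudogroup on $X=\Lambda^2\gerg^*\otimes\gerg$ induced by the $\GL(\gerg)$-action'' without verifying closedness, and closedness is precisely the nontrivial hypothesis that entitles you to the Lie algebra sheaf and to the whole rigidity machinery. By Example~\ref{exa:closed_lie_group}, the induced pseudogroup $\diff_G$ on a vector space is closed when the subgroup $G\sub\GL_m$ is a \emph{closed} subgroup; here that means one must show the image of the representation $\GL(\gerg)\to\GL(\mathrm{Hom}(\wedge^2\gerg,\gerg))$, $g\mapsto(\mu\mapsto g^{-1}\mu(g\cdot,g\cdot))$, is a closed subgroup. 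The paper explicitly flags that this is not obvious and does \emph{not} prove it; instead it sidesteps the issue by enlarging the total space to $\gerg\times\mathrm{Hom}(\wedge^2\gerg,\gerg)$ with the equation $\id\times\mathrm{Jac}$ and $\zeta=(0,0)$, and the action $(v,\mu)\cdot g=(g^{-1}v,\mu\cdot g)$. Including the defining representation $g\mapsto g^{-1}$ as a factor makes the image of $\GL(\gerg)$ manifestly a closed subgroup (any convergent sequence of images forces the underlying $g_n$ to converge), and only then does Example~\ref{exa:closed_lie_group} apply. Solutions of the enlarged equation are still the pairs $(0,\mu)$ with $\mu$ a Lie bracket, and the deformation complex picks up a harmless extra summand $\gerg\xrightarrow{\pm\id}\gerg$ that does not affect exactness. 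To repair your argument you must either supply a proof that the original image is closed, or adopt this enlargement.

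A minor point: you invoke the \nameref{General Main Theorem}, whereas after the enlargement the paper can use the simple \nameref{Main Theorem} directly since $Z$ is given by a global section $\zeta$. This is not a mathematical issue, but the simple version is sufficient and somewhat cleaner here.
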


\begin{proof}
To apply the \nameref{Main Theorem} we need to specify 
\begin{enumerate*}

\item
the PDE that determines Lie algebra structures,

\item
the closed (pseudo)group that acts on the Lie algebra structures,

\item
auxiliary nested domains and norms,

\item
the homotopy operators.

\end{enumerate*}
We will do so in that order.

Recall that a Lie algebra structure on $\gerg$ is an anti-symmetric bilinear form $\mu : \wedge^2 \gerg \to \gerg$ that satisfies the Jacobi identity $\mathrm{Jac}(\mu)=0$ where
    \begin{align*}
        &
        \mathrm{Jac}(\mu)(x, y, z)
        \defeq
        \mu(\mu(x, y), z)
        +
        \mu(\mu(y, z), x)
        +
        \mu(\mu(z, x), y).
    \end{align*}
The Jacobi identity is a PDE in the sense of definition \ref{def:pdo} as
    \[
        \begin{tikzcd}
            \mathrm{Hom}(\wedge^2\gerg, \gerg)
            \arrow[->]{d}
            \arrow[->]{r}{\mathrm{Jac}}
            &
            \mathrm{Hom}(\wedge^3\gerg, \gerg)
            \arrow[
                ->, 
                start anchor = south west, 
                end anchor = north east
            ]{dl}
            \\
            \{*\}
            &
            \arrow[hook, swap]{u}{0}
            \arrow{l}
            \{*\}.
        \end{tikzcd}
    \]
Remark that the general linear group $\mathrm{GL}(\gerg)$ acts on $\wedge^2 \gerg^* \otimes \gerg$ by
    \[
        (\mu\cdot g)(x, y)
        \defeq
        g^{-1} \mu(gx, gy),
    \]
and this actions sends Lie brackets to Lie brackets.
Let 
    \[
        \diff_{\mathrm{GL}(\gerg)} \sub \diff_{\mathrm{Hom}(\wedge^2\gerg, \gerg)}
    \]
be the pseudogroup on induced by this action (see example \ref{exa:lie_group}).
It is not clear to us whether this pseudogroup is closed, which amounts to proving that the image of the action is a closed subset of $\mathrm{GL}(\wedge^2\gerg^*\otimes\gerg)$.
However, there is a simple method to avoid this issue:
consider instead the equation
    \[
        \begin{tikzcd}
            \gerg
            \times
            \mathrm{Hom}(\wedge^2\gerg, \gerg)
            \arrow[->]{d}
            \arrow[->]{r}{\id \!\times \mathrm{Jac}}
            &
            \gerg
            \times
            \mathrm{Hom}(\wedge^3\gerg, \gerg)
            \arrow[
                ->, 
                start anchor = south west, 
                end anchor = north east
            ]{dl}
            \\
            \{*\}
            &
            \arrow[hook, swap]{u}{(0, 0)}
            \arrow{l}
            \{*\}.
        \end{tikzcd}
    \]
Although the involved spaces are bigger, the solutions are just pairs $(0, \mu)$ where $\mu$ is a Lie bracket.
Now $\mathrm{GL}(\gerg)$ acts on $\gerg \times \mathrm{Hom}(\wedge^2\gerg, \gerg)$ by
    \[
        (v, \mu)\cdot g
        \defeq
        (g^{-1}v, \mu\cdot g),   
    \]
and it still maps solutions to solutions.
This is a faithful representation, hence by example \ref{exa:closed_lie_group} the induced pseudogroup 
    \[
        \diff_{\mathrm{GL}(\gerg)}
        \sub
        \diff_{\gerg \times \mathrm{Hom}(\wedge^2\gerg, \gerg)}
    \]
is closed according to example \ref{exa:closed_lie_group}.
Hence the triple 
    \[
        \pgrp = \diff_{\mathrm{GL}(\gerg)},
        \quad 
        q = \id \!\times \mathrm{Jac},
        \quad
        \zeta = (0,0),
    \]
forms a PDE with symmetry in the sense of definition \ref{def:pde_with_symmetry}.
Its Lie algebra sheaf $\calA$ is the one induced by the infinitesimal action (see example \ref{exa:lie_algebra}):
    \[
        \calA = \calA_{\gergl(\gerg)} \sub \gerX_{\gerg \times \mathrm{Hom}(\wedge^2\gerg, \gerg)}.
    \]

Formally, in order to apply the \nameref{Main Theorem} we have to fix a nested domain $\{ X_r \}$ in $\gerg \times \mathrm{Hom}(\wedge^2 \gerg, \gerg)$.
In the current setting this choice does not matter much, but for the sake of completeness we pick
    \[
        X_r = \{ 
            (x, \nu) \in \gerg\times\mathrm{Hom}(\wedge^2\gerg, \gerg)
            \sep
            | x | \leq 1 + r,
            \;\;
            | \nu - \mu | \leq 1+r
        \}
    \]
for an arbitrary choice of metrics on $\gerg$ and $\mathrm{Hom}(\wedge^2\gerg, \gerg)$.
The other nested domain is trivial: $M_r = \{*\}$.

The infinitesimal action of $\gergl(\gerg)$ on $\gerg\times \mathrm{Hom}(\wedge^2\gerg, \gerg)$ gives maps
    \[
        \rho^r : \mathrm{Hom}(\gerg, \gerg) \longto \calA(X_r),
        \qquad
        \alpha \longmapsto \rho(\alpha)|_{X_r}
    \]
where we identify $\gergl(\gerg)= \mathrm{Hom}(\gerg, \gerg)$. After composing with $\rho^r$ the deformation complex becomes
    \[
        \mathrm{Hom}(\gerg, \gerg)
        \xrightarrow{\;\; (-\id, \delta_\mu) \;\;}
        \gerg \times \mathrm{Hom}(\wedge^2\gerg, \gerg)
        \xrightarrow{\;\; \id \!\times \delta_\mu \;\;}
        \gerg \times \mathrm{Hom}(\wedge^3\gerg, \gerg),
    \]
where $\delta_\mu$ is the Chevally-Eilenberg differential of the adjoint representation:
    \begin{align*}
        \delta_\mu(\alpha)(x, y)
        & =
        \mu(\alpha x, y) - \mu(\alpha y, x) - \alpha \mu(x, y),
        \\
        \delta_\mu(\beta)(x, y, z)
        & =
        \mu(x, \beta(y, z) - \mu(y, \beta(x, z)) + \mu(z, \beta(x, y))
        \\
        & +
        \beta(\mu(x, y), z) - \beta(\mu(x, z), y) + \beta(\mu(y, z), x),
    \end{align*}
for all $\alpha \in \mathrm{Hom}(\gerg, \gerg)$ and $\beta \in \mathrm{Hom}(\wedge^2\gerg, \gerg)$.

Now suppose that $H^2(\gerg, \ad_\mu) = 0$.
Then there are linear maps  
    \[
        \mathrm{Hom}(\gerg, \gerg)
        \xleftarrow{\;\; \tilde h_1 \;\;}
        \mathrm{Hom}(\wedge^2\gerg, \gerg)
        \xleftarrow{\;\; \tilde h_2 \;\;}
        \mathrm{Hom}(\wedge^2\gerg, \gerg)
    \]
such that $\delta_\mu \circ \tilde h_1 + \tilde h_2 \circ \delta_\mu = \id$.
Then define homotopy operators by
    \[
        h_1^r \defeq (0, \rho^r \circ \tilde h_1),
        \qquad
        h_2^r \defeq \id \times \tilde h_2.
    \]
These maps are clearly tame for any choice of $C^k$-norms on $\calA$, hence the hypothesis of 
the \nameref{Main Theorem} is satisfied.
\end{proof}

\newpage
\section{Darboux Theorem}

In symplectic geometry the so-called Moser trick shows that symplectic structures are locally rigid.
We show how to deduce this standard result from the \nameref{Main Theorem}. 

Let $(M,\omega)$ be a symplectic manifold, let $x\in M$, and let $M_r$ be a ball of radius $\epsilon(1+r)$ centered at $x$.

\begin{proposition}
The symplectic form $\omega$ is locally rigid among symplectic forms relative to $\{ M_r \}$, i.e.\ for all $r<s$ there is a $C^p$-open neighborhood 
    \[
        \calV_{s,r} \sub \Omega^2_\mathrm{cl}(M_s)
    \]
of $\omega|_{M_s}$ and an operator
    \[
        \phi^{s,r} : \calV_{s,r} \longto \diff_M(M_r)
    \]
such that $\phi^{s,r}(\varpi)^*\varpi = \omega|_{M_r}$ for all $\varpi\in \calV_{s,r}$.
\end{proposition}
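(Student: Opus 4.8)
The plan is to realize the local rigidity of $\omega$ as a special case of the \nameref{Main Theorem}. First I would set up the PDE with symmetry. Take $\pi_\textsc{X}\colon X = \wedge^2 T^*M \to M$ with $\Gamma_X$ the sheaf of $2$-forms, and let the pseudogroup $\pgrp$ on $X$ be the natural lift $E\diff_M$ of the (closed) pseudogroup $\diff_M$ of all local diffeomorphisms of $M$: this is closed by the results of Section \ref{sec:natural_bundle}, and the induced local action on $\Gamma_X$ is exactly pullback of forms. The relevant equation is the pair of conditions ``closed'' and ``nondegenerate''. Nondegeneracy is an open condition, so it should be built into the choice of $C^p$-neighborhood rather than into the PDE itself; the genuine PDE is $d\varpi = 0$, which is a linear first-order PDE $Q\colon \Gamma_{\wedge^2 T^*M} \to \Gamma_{\wedge^3 T^*M}$, $Q(\varpi) = d\varpi$, with target section $\zeta = 0$, so $Z$ is the zero section. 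Pullback by (lifts of) diffeomorphisms preserves closedness, so $(\pgrp, Q, Z)$ is a PDE with symmetry in the sense of Definition \ref{def:pde_with_symmetry}, with $b = \omega$ a global solution.

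Next I would choose the nested domains and compute the deformation complex. Take $\{M_r\}$ as the given family of balls (a nested domain with corners once one rounds to coordinate polydiscs, or directly using the radial contraction vector field), and build $\{X_r\}$ over $\{M_r\}$ as in Example \ref{exa:constructing_X_r}. The Lie algebra sheaf of $E\diff_M$ is the full sheaf $\gerX_M$ of vector fields, and the infinitesimal action $\delta_\omega$ at $\omega$ is $\delta_\omega(v) = \gerL_v\omega = d(i_v\omega)$ (using $d\omega = 0$). The linearization of $Q$ is $\delta_\omega(\alpha) = d\alpha$ on $2$-forms. So the deformation complex at $\omega$ is
\[
\gerX_M \xrightarrow{\;v \mapsto \gerL_v\omega\;} \Omega^2_M \xrightarrow{\;d\;} \Omega^3_M .
\]
Because $\omega$ is nondegenerate, $v\mapsto i_v\omega$ is an isomorphism $\gerX_M \to \Omega^1_M$, so this complex is isomorphic to the truncated de Rham complex $\Omega^1_M \xrightarrow{d} \Omega^2_M \xrightarrow{d}\Omega^3_M$. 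The homotopy operators are then supplied by a Poincar\'e-lemma homotopy operator for the balls $M_r$: the standard radial homotopy $k$ with $dk + kd = \id$ on positive-degree forms. One sets $h_1^{s,r}(\alpha) \defeq (i_{(\cdot)}\omega)^{-1}\big(k(\alpha|_{M_r})\big)$ and $h_2^{s,r}(\beta) \defeq k(\beta|_{M_r})$; a short check gives $\delta_\omega\circ h_1^{s,r} + h_2^{s,r}\circ\delta_\omega = \mathrm{restriction}$.

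The remaining point is the tameness of these homotopy operators. Since for fixed $s$ the operator $k$ on $M_s$ is a genuine bounded (even order-zero, up to the single integration) operator in each $C^k$-norm, and restriction to $M_r$ only decreases norms, one gets estimates $\norm{h_i^{s,r}(v)}_{k,r} \le c_k \norm{v}_{k,s}$ with $d_k = 0$ and $l_i = 0$ — in fact we are in the ``better homotopy operators'' situation described in the remark after the \nameref{Main Theorem}, where the parameter is not shrunk at all. Multiplication by the nondegenerate $\omega$ and inversion of $i_{(\cdot)}\omega$ contribute only constant factors depending on $\sup_{M_1}$-norms of $\omega$ and $\omega^{-1}$, preserving tameness. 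With tame infinitesimal rigidity established, the \nameref{Main Theorem} produces, for all $r<s$, a $C^p$-open neighborhood $\calV_{s,r}\subset \Omega^2_{\mathrm{cl}}(M_s)$ of $\omega|_{M_s}$ and an operator $\phi^{s,r}\colon \calV_{s,r}\to \diff_M(M_r)$ with $(\varpi\cdot \phi^{s,r}(\varpi))|_{M_r} = \omega|_{M_r}$, i.e. $\phi^{s,r}(\varpi)^*\varpi = \omega|_{M_r}$; shrinking $\calV_{s,r}$ if necessary so that every $\varpi$ in it is nondegenerate (an open condition, automatic near $\omega$ in $C^0$) gives the statement.

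I do not expect a serious obstacle here: the content of the Moser trick has been entirely absorbed into the Nash–Moser machinery of the \nameref{Main Theorem}, and the only thing to verify by hand is that the de Rham homotopy operator on a ball is tame, which is standard. The one mild subtlety is matching the bookkeeping — ensuring $\{M_r\}$ and $\{X_r\}$ genuinely form nested domains with corners (handled by rounding balls to polydiscs or by the explicit radial vector field argument of Example \ref{exa:constructing_X_r}) and confirming that the nondegeneracy condition is correctly placed inside the open neighborhood $\calV_{s,r}$ rather than in the equation.
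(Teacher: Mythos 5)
Your proposal is correct and takes essentially the same approach as the paper: lift $\diff_M$ to the bundle of $2$-forms, recognize closedness as the PDE, identify the deformation complex with the truncated de Rham complex via $\omega^\sharp$, and supply tame homotopy operators from the Poincar\'e lemma on balls. The only cosmetic deviation is that the paper takes $X$ to be the open subbundle of nondegenerate $2$-forms rather than all of $\wedge^2 T^*M$, whereas you build nondegeneracy into the choice of $C^p$-neighborhood — both are valid bookkeeping choices and lead to the same deformation complex at $\omega$.
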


\begin{proof}

Let $X\sub \wedge^2 T^*M$ the open subset of non-degenerate $2$-forms, let $Y= \wedge^3 T^*M$ and let $Q=\dd$ be the de Rham differential. 

Let $\pgrp$ be the closed pseudogroup on $\wedge^2 T^*M$ induced by the natural lift (see section \ref{sec:natural_bundle}). 
Its Lie algebra sheaf $\calA$ identifies with $\gerX_M$ on any open subset $U\sub X$ with connected fibers, i.e.\ for any $v\in \calA(U)$ there is a unique $w\in \gerX(\pi_\textsc{X}(U))$ such that $v = \hat w|_U$, where $\hat w$ is the generator of 
    \[
        (\varphi^t_w)_*: X|_{\pi_\textsc{X}(U)} \!\to\! X.
    \]
Let $\{M_r\}$ be a nested domain around $x$, defined by picking closed balls around $x$.
Let $\{X_r\}$ be any nested domain in $X$ with connected fibers and such that 
    \[
        \omega(M_r)\sub X_r,
        \qquad
        X_r \sub \pi_\textsc{X}^{-1}(M_r),
    \]
for example as constructed in example \ref{exa:constructing_X_r}. 
For every $r\in [0,1]$, we obtain a tame isomorphism
    \[
        \gerX_M(M_r) \xrightarrow{\;\; \simeq \;\;} \calA(X_r).
    \]
After identifying $TM \simeq T^*M$ via $\omega^\sharp$, one finds the deformation complex
    \[
        \Omega^1(M_r)
        \xrightarrow{\;\; \dd \;\;} 
        \Omega^2(M_r) 
        \xrightarrow{\;\; \dd \;\;} 
        \Omega^3(M_r).
    \]
The map first map is given by 
    \[
        \delta_\omega((\omega^\sharp)^{-1}\alpha)
        =
        \gerL_{(\omega^\sharp)^{-1}\alpha}\omega 
        = 
        \dd(i_{(\omega^\sharp)^{-1}\alpha}\omega) 
        = 
        \dd\alpha,
    \]
hence the deformation complex is just the de Rham complex. 
One easily checks that the Poincar\'e Lemma provides tame homotopy operators 
    \[
        h_r^i :\Omega^{i+1}(M_r)\longto \Omega^i(M_r).
        \qedhere
    \]
\end{proof}

Local rigidity implies the existence of Darboux coordinates, as is clearly stated in the following corollary.

\begin{corollary}
\label{corollary:darboux}
For every $x\in M$ there is a chart $\xi :U\to \R^{2n}$ around $x$ such that
    \[
        \xi^*\omega_\mathrm{can}=\omega\vert_U.
    \]
\end{corollary}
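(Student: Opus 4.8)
The plan is to deduce the existence of Darboux coordinates from the preceding proposition, which says that $\omega$ is locally rigid among symplectic (closed nondegenerate) forms relative to a nested family of balls $\{M_r\}$ around $x$. The strategy is the standard one: transport the statement to $\R^{2n}$ via an arbitrary chart, use a linear change of coordinates to match the form at the single point $x$, and then invoke local rigidity to upgrade pointwise agreement to agreement on a whole neighborhood.

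First I would choose an arbitrary chart $\psi : U' \to \R^{2n}$ with $\psi(x) = 0$, and push $\omega$ forward to a closed nondegenerate $2$-form $\omega' := (\psi^{-1})^*\omega$ on $\psi(U') \subset \R^{2n}$. At the origin, $\omega'_0$ is a nondegenerate skew-symmetric bilinear form on $\R^{2n}$, so by the linear Darboux (symplectic linear algebra) theorem there is $A \in \mathrm{GL}(2n,\R)$ with $A^*\omega'_0 = (\omega_{\mathrm{can}})_0$. Precomposing $\psi$ with $A$, we may assume from the start that our chart satisfies $\omega'_0 = (\omega_{\mathrm{can}})_0$ at the origin. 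Now restrict attention to a small ball $M_s := \{|y| \le \epsilon(1+s)\}$ (in the notation of the proposition, with $\epsilon$ small enough that $M_1 \subset \psi(U')$) on which both $\omega'$ and $\omega_{\mathrm{can}}$ are defined.

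Next I would apply the proposition with the roles reversed: it produces, for $r < s$, a $C^p$-open neighborhood $\calV_{s,r} \subset \Omega^2_{\mathrm{cl}}(M_s)$ of $\omega_{\mathrm{can}}|_{M_s}$ and an operator $\phi^{s,r}$ with $\phi^{s,r}(\varpi)^*\varpi = \omega_{\mathrm{can}}|_{M_r}$ for $\varpi \in \calV_{s,r}$. (Equivalently one applies the stated version with $\omega$ in the role of the fixed structure and notes the conclusion is symmetric up to taking the inverse diffeomorphism, which is a diffeomorphism of a slightly smaller ball.) The point is that $\omega'$ lies in such a neighborhood of $\omega_{\mathrm{can}}$ provided $\epsilon$ is chosen small enough: since $\omega'_0 = (\omega_{\mathrm{can}})_0$ agree at the origin and both are smooth, by shrinking the ball we make $\|\omega' - \omega_{\mathrm{can}}\|_{C^p, M_s}$ as small as desired, in particular small enough to land in $\calV_{s,r}$. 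Applying the operator to $\varpi = \omega'$ gives a diffeomorphism $\eta := \phi^{s,r}(\omega')$ of $M_r$ onto its image with $\eta^*\omega' = \omega_{\mathrm{can}}|_{M_r}$. Composing, $\xi := \eta^{-1} \circ (\psi \circ A)$ (suitably restricted) is a chart around $x$ with $\xi^*\omega_{\mathrm{can}} = \omega|_U$, which is the claim.

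The only genuine point requiring care — the "main obstacle", though it is routine given the machinery — is the matching of the $C^p$-smallness: one must verify that a fixed closed nondegenerate form agreeing with $\omega_{\mathrm{can}}$ to order zero at the origin can, after rescaling coordinates $y \mapsto \lambda y$, be brought $C^p$-close to $\omega_{\mathrm{can}}$ on a fixed ball. This is a standard Taylor-estimate argument: the rescaling contracts higher-order terms while fixing the constant term, so the difference $\omega' - \omega_{\mathrm{can}}$ becomes small in every $C^p$-norm on a fixed ball as $\lambda \to 0$. With that in hand, the corollary follows immediately from the proposition, with no further analysis needed — the hard analytic content (the Moser/fast-convergence argument) has already been absorbed into the proof of local rigidity via the \nameref{Main Theorem}.
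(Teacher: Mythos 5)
Your overall strategy is the same as the paper's: reduce to $\R^{2n}$ by a chart, normalize $\omega$ at the origin by symplectic linear algebra, bring the form $C^p$-close to $\omega_\mathrm{can}$ by a rescaling, and then invoke the local rigidity proposition. That said, there is a genuine flaw in the middle of your write-up that the rescaling remark flags but does not fully repair.

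The claim ``since $\omega'_0 = (\omega_\mathrm{can})_0$ and both are smooth, by shrinking the ball we make $\|\omega' - \omega_\mathrm{can}\|_{C^p,M_s}$ as small as desired'' is false for $p\geq 1$: shrinking the domain controls the $C^0$-distance but leaves the higher derivatives of the (fixed) difference $\omega'-\omega_\mathrm{can}$ unchanged. You then correctly observe that a coordinate rescaling $y\mapsto \lambda y$ is the right fix, but two things are still missing. First, the naive pullback $m_\lambda^*\omega'$ has coefficients $\lambda^2\,\omega'_{ij}(\lambda y)$, which tend to $0$ rather than to $\omega_\mathrm{can}$; one must use the normalized family $\lambda^{-2}m_\lambda^*\omega'$, which is exactly what the paper writes as $\omega^\epsilon := \epsilon^{-2}m_\epsilon^*\omega$, and for which the homogeneity $m_\epsilon^*\omega_\mathrm{can}=\epsilon^2\omega_\mathrm{can}$ is the essential compensating identity. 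Second, once you apply the rigidity proposition to the rescaled form you get a diffeomorphism $\phi$ with $\phi^*\omega^\epsilon=\omega_\mathrm{can}$, and to obtain the chart for $\omega$ itself you must conjugate $\phi$ by $m_\epsilon$: the final map is $m_\epsilon\circ\phi^{-1}\circ m_\epsilon^{-1}$ (composed with the initial linear-plus-chart normalization), not $\eta^{-1}\circ(\psi\circ A)$ as you wrote. Without these two corrections, the argument as stated does not produce a chart pulling $\omega_\mathrm{can}$ back to $\omega$.
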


\begin{proof}
It is suffices to prove this for $M=\R^{2n}$ and $x=0$. 
Let $B_r$ denote the closed ball of radius $r$ centered at the origin.

After a linear change of coordinates one may assume that $\omega(0)=\omega_\mathrm{can}$. 
Let $m_\epsilon:\R^{2n} \!\to\! \R^{2n}$ denote scalar multiplication by $\epsilon>0$. 
Then 
    \[
        \omega^\epsilon
        \defeq
        \epsilon^{-2}\,
        m_\epsilon^*\omega
    \] 
converges to $\omega_\mathrm{can}$ on $B_2$ as $\epsilon$ tends to $0$.
Hence for some $\epsilon>0$ there is a local diffeomorphism $\phi: B_1 \!\to\! B_2$ such that 
    \[
        \phi^*\omega^\epsilon = \omega_\mathrm{can}|_{B_1}.
    \] 
Since $m_\epsilon^*\omega_\mathrm{can}= \epsilon^2\omega_\mathrm{can}$, this implies
    \[
        (m_\epsilon\phi m_\epsilon^{-1})^*\omega
        =
        (\phi m_\epsilon^{-1})^* 
        \big(
            \epsilon^2\,\omega^\epsilon
        \big)
        =
        (m_\epsilon^{-1})^*
        \big(
            \epsilon^2\,\omega_\mathrm{can}|_{B_1}
        \big)
        =
        \omega_\mathrm{can}|_{B_\epsilon}.
    \]
So the conclusion holds for $\xi = m_\epsilon \circ \phi^{-1} \circ m_\epsilon^{-1}$ and $U = \epsilon \cdot\phi(B_1)$.
\end{proof}

\newpage
\section{Newlander-Nirenberg Theorem}
\label{sect:newlander-nirenberg}
\etocsettocdepth{2}
\localtableofcontents

{\color{white}.}
\newline
The Newlander-Nirenberg Theorem gives holomorphic charts for manifolds with an involutive almost complex structure, therefore realizing it is a genuine complex manifold. Here we show this result follows from the main Theorem. The Newlander-Nirenberg Theorem can be understood as a local normal form problem, as a holomorphic chart is in essence an isomorphism between the almost complex structure and the standard complex structure $I$ on $\bbR^{2n}=\bbC^n$. Hence showing that $I$ is locally rigid, combined with a standard argument, leads to a proof of the Newlander-Nirenberg Theorem.

\subsection{Newlander-Nirenberg}

Recall that an almost complex structure on a manifold $M$ is a endomorphism $J:TM \to TM$ for which $J^2=-\id$. To define the standard example, first identify $\R^{2n}$ with $\bbC^n$ by sending $(x_1,y_1,\ldots)$ to $(x_1+iy_1,\ldots)$, and then multiply by $i$:
    \[
        I
        \sep    
        T \bbC^n
        \longto
        T \bbC^n,
        \quad
        I(v) \defeq i \cdot v.
    \]
Call an atlas \textbf{holomorphic} if every transition function $\calT$ is holomorphic:
    \[
        \dd \calT\circ I
        = 
        I \circ \dd \calT.
    \]
This defines an almost complex structure on $M$ by pulling back $I$ along each chart. A holomorphic atlas is called \textbf{compatible} with a given $J$ if it recovers $J$ in this way.

Recall that $J$ is \textbf{involutive} if $N_J = 0$, where for $X,Y\in \gerX(M)$,
    \[
        N_J(X, Y)
        =
        [X, Y] + J( [JX, Y] + [X, JY]) - [JX, JY].
    \]

\begin{theorem}[Newlander-Nirenberg]
\label{thm_NN}
An almost complex structure $J$ admits a compatible holomorphic atlas if and only if $J$ is involutive.
\end{theorem}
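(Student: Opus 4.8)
The plan is to deduce the Newlander--Nirenberg theorem from the \nameref{Main Theorem} exactly as the Darboux theorem was deduced above: first prove that the standard complex structure $I$ on $\bbC^n$ is locally rigid among involutive almost complex structures, and then run the standard rescaling-and-patching argument to produce holomorphic charts. The trivial direction is immediate: if a compatible holomorphic atlas exists, then in each chart $J$ equals $I$, and since $N_I = 0$ by a direct computation, involutivity follows. So the content is the converse.

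To set up the PDR with symmetry, I would take $X \defeq \{ J \in \End(TM) \sep J^2 = -\id \} \sub \End(TM)$, which is a (fiber) submanifold of $\End(TM)$, and encode involutivity as a first-order partial differential relation $R \sub J^1 X$: namely $R$ consists of those $1$-jets of sections $J$ for which the Nijenhuis tensor $N_J$ (a section of $\wedge^2 T^*M \otimes TM$, computed pointwise from $J$ and its first derivatives) vanishes. For the symmetry group I would take the closed pseudogroup $\pgrp$ on $X$ induced by the natural lift of $\diff_M$ (using that $\End(TM)$ is a natural bundle and applying the results of Chapter \ref{chap:pseudogroups}, in particular theorem \ref{pgrp:thm}); pullback of almost complex structures by local diffeomorphisms sends involutive ones to involutive ones, so $(\pgrp, R)$ is a PDR with symmetry in the sense of Definition \ref{def:pdr_with_symmetry}. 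The global solution is $b = I$ on $M = \bbC^n$, and on an open set with connected fibers the Lie algebra sheaf $\calA$ is identified with $\gerX_M$ just as in the Darboux argument.

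The next step is to identify the linearization $\delta_I$ and produce tame homotopy operators. The tangent space to $X$ at $I$ is $\{ A \in \End(TM) \sep AI + IA = 0 \}$, which is the bundle of $(0,1)$-forms valued in $T^{1,0}\bbC^n$, i.e. $\overline{\Omega}{}^{0,1}\otimes T^{1,0}$; the infinitesimal action $\delta_I(v) = \gerL_v I$ is, after this identification, the Dolbeault operator $\bar\partial$ acting on vector fields of type $(1,0)$, and the linearized involutivity condition is again $\bar\partial$ into $(0,2)$-forms. Thus the deformation complex is (a summand of) the Dolbeault complex $\bar\partial : \overline{\Omega}{}^{0,\bullet}\otimes T^{1,0}$, and the tame homotopy operator is the standard $\bar\partial$-Poincar\'e homotopy on polydiscs (the Bochner--Martinelli / Grothendieck integral formula), which is well known to lose only finitely many derivatives and satisfies the required estimates on a nested family of polydiscs $\{M_r\}$. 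Here, however, the relation $R$ is not linear, so rather than invoking infinitesimal rigidity directly I would verify the hypothesis of the \nameref{Main Theorem for Differential Relations}: I need the quadratic estimate (\ref{ineq:quadratic}) saying that for $J$ close to $I$ and involutive, $v - \delta_I \circ h^{s,r}(v)$ is controlled quadratically in $v = \xi^{-1}\circ J$. This is exactly the statement that the nonlinear remainder of the involutivity equation (the terms in $N_J$ that are quadratic or higher in $J - I$ and its derivatives) is tame-quadratic, which follows from the \nameref{Lemma:A}-type estimates for the Nijenhuis operator together with the interpolation inequalities. Granting this, the \nameref{Main Theorem for Differential Relations} yields, for $r < s$, an operator $\phi^{s,r}$ defined on a $C^p$-neighborhood of $I$ that pulls any nearby involutive $J$ back to $I$ on $M_r$.

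Finally, the standard localization argument: given an arbitrary involutive $J$ on a manifold near a point $x$, choose coordinates so $M = \bbC^n$, $x = 0$, and (after a linear change) $J(0) = I(0)$; rescale by $m_\epsilon$ so that $J^\epsilon \defeq m_\epsilon^* J \to I$ in $C^p$ on a polydisc as $\epsilon \to 0$; for $\epsilon$ small, $J^\epsilon$ lies in the neighborhood where $\phi^{s,r}$ is defined, producing a diffeomorphism carrying $J^\epsilon$ to $I$, which unwinds (exactly as in Corollary \ref{corollary:darboux}) to a chart $\xi$ near $0$ with $\xi^* I = J$. Such charts form a compatible holomorphic atlas.

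The main obstacle I expect is verifying the tame quadratic estimate (\ref{ineq:quadratic}) for the Nijenhuis relation: one must carefully expand $N_J$ around $J = I$, isolate the linear part as $\bar\partial$, and bound the higher-order remainder using the tame product/composition estimates and the smoothing-interpolation machinery of Chapter \ref{chapter:PDEs with Symmetry}. A secondary technical point is checking that the $\bar\partial$-Poincar\'e homotopy on polydiscs genuinely fits the tameness format of Definition \ref{def:local_infinitesimal_rigidity} (loss of a fixed number of derivatives, with the constant independent of the domain in the right way); this is classical but needs to be phrased in the present language.
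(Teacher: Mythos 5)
Your plan is correct in its broad strokes and achieves the same reduction as the paper: identify the deformation complex at $I$ with (copies of) the Dolbeault complex on a polydisc, feed the $\bar\partial$-homotopy operators into the rigidity machinery, and unwind via the rescaling $m_\epsilon^*J \to I$ exactly as in Corollary \ref{corollary:darboux}. But there are two places where you diverge from the paper's setup, one cosmetic and one an unnecessary detour.

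The cosmetic difference is the parameterization of almost complex structures. You take $X = \{ J \in \End(TM) \sep J^2 = -\id \}$ inside $\End(TM)$, which is a non-compact fiber bundle, and you will therefore have to build nested domains $\{X_r\}$ by hand via example \ref{exa:constructing_X_r}. The paper instead works with the complex Grassmannian bundle $X = \gr_{\bbC, n}(T_\bbC M)$, with almost complex structures realized as sections landing in the open subbundle $X_\transverse$ of subspaces transverse to their conjugates (i.e.\ the $(0,1)$-eigenbundle). The payoff of the Grassmannian is that its fibers are compact, so $X_r \defeq \pi_\textsc{X}^{-1}(M_r)$ is automatically a nested domain, and the involutivity operator $Q(B) \in \Gamma(\wedge^2 \calT^* \otimes (T_\bbC M/\calT))$ extends cleanly to all sections of $X$, not just the transverse ones. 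Your choice works, it is just slightly less tidy.

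The detour is your use of the \nameref{Main Theorem for Differential Relations}. You reason that ``the relation $R$ is not linear, so rather than invoking infinitesimal rigidity directly I would verify the hypothesis of the \nameref{Main Theorem for Differential Relations},'' i.e.\ you plan to prove the quadratic estimate (\ref{ineq:quadratic}) for the Nijenhuis tensor by hand. This is more work than the framework requires: the PDE machinery (and in particular \nameref{Lemma:A}) already accommodates \emph{nonlinear} first-order operators $Q$. Since the Nijenhuis tensor $J \mapsto N_J$ is a perfectly good fiber-preserving map $q : J^1 X \to \wedge^2 T^* M \otimes TM$ vanishing at $j^1 I$, you can invoke the (General) Main Theorem with $Y = \wedge^2 T^*M \otimes TM$ and $Z$ its zero section. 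Then \nameref{Lemma:A} hands you the quadratic estimate automatically once you exhibit tame homotopy operators $h_1, h_2$ for the linearized complex, which is exactly the Dolbeault calculation you already propose. The paper takes this cleaner route. Going through the PDR version is not wrong, but it amounts to re-deriving \nameref{Lemma:A} for this particular $Q$, and you flag this verification as your ``main obstacle'' when in fact it is already done in the general framework.
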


We prove the following rigidity Theorem using the \nameref{General Main Theorem}.
In the next paragraph we will show how this result implies the Newlander-Nirenberg Theorem using standard arguments.

\begin{theorem}
\label{Theorem:rig_NN}
Let $I$ be the standard complex structure on $\mathbb C^n$, and let
    \[
        \qquad
        M_r
        \defeq
        \set{
            z\in \bbC^n
            \sep
            \abs{z_i}\leq 1+r,
            \,\forall\, i=1,\ldots, n
        },
        \quad
        r \in [0,1].
    \] 
There is a $p\in \N$ and for all $r < s$ there is a $C^p$-open neighborhood 
    $
        \calV^{s, r}
    $
of $I|_{M_s}$ inside the space of all involutive almost complex structures on $M_s$, and
a (non-linear) operator
    \[
        \phi^{s, r}:
        \calV^{s, r}
        \longto
        \diff_{\mathbb C^n}(M_r)
    \]
such that 
    \[
        (\phi^{s, r}(J))^*J\vert_{M_r} = I\vert_{M_r},
        \qquad\forall\;
        J \in \calV^{s, r}.
    \]
Moreover, there are constants $c_k, d_k, l \geq 0$ such that
    \[
        \norm{
            \phi^{s, r}(J) - \id
        }_{k, r}
        \leq
        c_k(s-r)^{-d_k}
        \norm{
            J-I
        }_{k+l, s}.
    \]
\end{theorem}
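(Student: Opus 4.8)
The plan is to recognise Theorem \ref{Theorem:rig_NN} as a rigidity statement for a PDE with symmetry and to invoke the \nameref{General Main Theorem}. First I would set up the geometric data. Let $X \sub \mathrm{End}(T\mathbb{C}^n)$ be the fibre bundle over $M = \mathbb{C}^n$ whose sections are the almost complex structures, i.e.\ the endomorphisms $J$ with $J^2 = -\id$; it is a natural bundle, built functorially out of $T\mathbb{C}^n$, so every local diffeomorphism of $\mathbb{C}^n$ lifts to it and the lift $\pgrp \defeq E\diff_{\mathbb{C}^n}$ is a closed pseudogroup on $X$ (by the lemma that $E\pgrp$ is closed when $\pgrp$ is, applied to the jet-determined, hence closed, pseudogroup $\diff_{\mathbb{C}^n}$). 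The Nijenhuis tensor defines a first order partial differential operator $Q(J) \defeq N_J$ with values in the bundle $Y$ of antisymmetric maps $TM\otimes TM\to TM$, and we take $Z\sub Y$ to be the zero section. Since $N_{J\cdot\phi} = \phi^*N_J$, the pullback of an involutive structure is again involutive, so $(\pgrp, Q, Z)$ is a PDE with symmetry and $I$ is a global solution. As nested domains I would take $\{M_r\}$ the polydiscs of the statement and $\{X_r\}$ any nested domain in $X$ with connected fibres satisfying $I(M_r)\sub X_r\sub \pi_\textsc{X}^{-1}(M_r)$, for instance the one produced in Example \ref{exa:constructing_X_r}.

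Next I would identify the deformation complex at $J=I$. On opens with connected fibres the Lie algebra sheaf $\calA$ of $\pgrp$ is canonically $\gerX_{\mathbb{C}^n}$, and the vertical bundle $V_IX$ is the bundle of endomorphisms anticommuting with $I$; via $A\mapsto A|_{T^{0,1}}$ the latter is $\mathbb{R}$-linearly isomorphic to $\Omega^{0,1}(T^{1,0})$. Under these identifications the infinitesimal action sends a real vector field $v$ to $\gerL_vI$, which equals $\bar\partial$ of the $(1,0)$-part of $v$, and (because $N_I=0$) the linearisation $\delta_I$ of $Q$ is a constant multiple of $\bar\partial$. Since $Z$ is the zero section, $N_\zeta(Y,Z)$ is again the fibre bundle, and the deformation complex of the \nameref{General Main Theorem} becomes the low-degree Dolbeault complex with coefficients in $T^{1,0}\mathbb{C}^n$,
\[
    \gerX_{\mathbb{C}^n}
    \xrightarrow{\;\bar\partial\;}
    \Omega^{0,1}(T^{1,0})
    \xrightarrow{\;\bar\partial\;}
    \Omega^{0,2}(T^{1,0}).
\]

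The analytic heart of the argument is the construction of tame homotopy operators for $\bar\partial$ on the nested polydiscs $\{M_r\}$: for $r<s$ I would build linear operators $h^{s,r}_2\colon \Omega^{0,1}(T^{1,0})(M_s)\to \Omega^{0,0}(T^{1,0})(M_r)$ and $h^{s,r}_3\colon \Omega^{0,2}(T^{1,0})(M_s)\to\Omega^{0,1}(T^{1,0})(M_r)$ with $\bar\partial\circ h^{s,r}_2 + h^{s,r}_3\circ\bar\partial = \id$ and tame estimates $\norm{h(f)}_{k,r}\le c_k(s-r)^{-d_k}\norm{f}_{k+l,s}$. On a polydisc this can be done explicitly by iterating the one-variable Cauchy--Pompeiu transform coordinate by coordinate, after a smooth cutoff interpolating between $M_s$ and $M_r$ (the source of the $(s-r)^{-d_k}$ factors); alternatively one may use Hörmander's $L^2$-estimates together with elliptic regularity, or a Grauert bumping argument. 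Composing $h^{s,r}_2$ with the identification $\gerX_{\mathbb{C}^n}\cong\calA$ gives the first homotopy operator $h^{s,r}_1$, and $h^{s,r}_2$ itself serves as the second; these satisfy the hypothesis of the \nameref{General Main Theorem}, whose conclusion is precisely the statement of Theorem \ref{Theorem:rig_NN}.

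The main obstacle is this tame $\bar\partial$-homotopy: one must produce a right inverse to $\bar\partial$ with a \emph{fixed} loss of derivatives $l$ and only a polynomial blow-up $(s-r)^{-d_k}$ as the inner domain approaches the outer one, uniformly in $k$, and then carry these constants through the cutoff and the identification with endomorphism-valued forms. Finally, Theorem \ref{Theorem:rig_NN} yields the Newlander--Nirenberg Theorem \ref{thm_NN} by the scaling argument of Corollary \ref{corollary:darboux}: given an involutive $J$ near a point $p$ of a manifold, choose coordinates with $J(p)=I(p)$ (pointwise linear algebra), note that the rescalings $m_\epsilon^*J$ converge to $I$ in every $C^p$-norm on a fixed polydisc as $\epsilon\to 0$, so $m_\epsilon^*J\in\calV^{s,r}$ for small $\epsilon$; conjugating the diffeomorphism $\phi^{s,r}(m_\epsilon^*J)$ back by $m_\epsilon$ produces a chart in which $J$ becomes $I$, i.e.\ a holomorphic chart.
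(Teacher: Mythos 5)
Your proposal is correct and is in substance the same proof as the paper's: identify the pseudogroup as the natural lift of $\diff_{\bbC^n}$, encode involutivity as a first-order PDE with symmetry, identify the deformation complex at $I$ with the low-degree Dolbeault complex, build tame homotopy operators out of the Cauchy–Riemann transform on nested polydiscs, and invoke the \nameref{General Main Theorem}. The one genuine difference is the model you use for the total space: you take $X$ to be the bundle of endomorphisms $J$ with $J^2=-\id$, whereas the paper takes $X=\gr_{\bbC,n}(T_\bbC M)$ and realises almost complex structures as sections of the transversality locus $X_{\transverse}$. The paper's choice buys compact fibres, so the nested domain $X_r = \pi_\textsc{X}^{-1}(M_r)$ can be taken literally with no further work; your endomorphism bundle has noncompact fibres, so you correctly fall back on the construction of Example \ref{exa:constructing_X_r} to produce $X_r$ — a mild extra step, but one you do flag. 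In exchange your model makes the identification of $V_IX$ with $\Omega^{0,1}(T^{1,0})$ (endomorphisms anticommuting with $I$) arguably more immediate, while the paper spends a lemma unwinding this via the tautological bundle over the Grassmannian. Both routes terminate at the same Dolbeault complex $\Omega^{0,\bullet}(M_r)\otimes\bbC^n$ and the same Cauchy–Riemann tame estimates; your mention of H\"ormander $L^2$ methods or Grauert bumping as alternatives is a legitimate aside, though the paper uses only the explicit iterated one-variable transform. The reduction of Theorem \ref{thm_NN} to the rigidity statement by rescaling also matches the paper.
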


\begin{proof}[Proof of Theorem \ref{thm_NN}]
We prove now the Newlander-Nirenberg Theorem \ref{thm_NN} using the rigidity Theorem \ref{Theorem:rig_NN}.

Using a chart we may assume that $J$ is an involutive almost complex structure on $M_1$.
Moreover, we may assume that $J_0 = I$.
Now consider the smooth family 
    \[
        J^\epsilon_x
        \defeq 
        (m_\epsilon^* J)_x
        =
        \dd_{\epsilon x} m_\epsilon^{-1} \circ J_{\epsilon x} \circ \dd_x m_\epsilon,
        \qquad
        \epsilon \in (0,1],
        \quad
        x \in M_1,
    \]
where $m_\epsilon$ denotes multiplication by the scalar $\epsilon > 0$.
Each $J^\epsilon$ is an involutive almost complex structure on $M_1$.
Moreover, this family extends smoothly to $\epsilon = 0$ with $J^0 = I$.
Therefore, for $\epsilon >0$ small enough, we may applying Theorem \ref{Theorem:rig_NN} to obtain an embedding
    \[
        \phi: M_0 \longto M_1
    \]
satisfying $\phi^*J^\epsilon = I|_{M_0}$.
We conclude that the diffeomorphism
    \[
        m_\epsilon \circ \phi: (U, I) \xrightarrow{\simeq} (V, J)
    \]
is holomorphic, where $U = \mathrm{int}(M_0)$ and $V = \epsilon \cdot \phi(U)$.
\end{proof}

\subsection{The PDE with Symmetry}

Almost complex structures on $M$ can be interpreted as those $\bbC$-linear subbundles $B$ of $T_\bbC M = TM \otimes \bbC $ which are transverse to their complex conjugation:
    \[
        \overline B \oplus B = T_\bbC M.
    \]
Indeed, an almost complex structure $J$ gives a decomposition
    \[
        T_\bbC M = T^{1,0}_JM \oplus T^{0,1}_JM
    \]
into the $+i$ and $-i$ eigenspaces of $J$, respectively. 
Conversely, any $\bbC$-linear subbundle $B\sub T_\bbC M $ of complex rank $n$, transverse to its complex conjugate, defines an almost complex structure: 
first define
    \[
        J'
        \sep
        T_\bbC M \longrightarrow T_\bbC M,
        \qquad
        J'(v)
        \defeq
        \begin{cases}
            i\cdot v
            & v\in \overline B
            \\
            -i\cdot v
            & v\in B.
        \end{cases}
    \]
It clearly satisfies $(J')^2=-1$. 
Moreover, $\overline{J'}=J'$, so its restriction $J\defeq J'\vert_{TM}$ is an almost complex structure on $M$ for which $T^{0,1}_JM=B$.

Consider the complex-rank-$n$ Grassmannian bundle
    \[
        X
        \defeq     
        \gr_{\bbC,n} (T_\bbC M)
        \longto M,
    \]
where $n$ is half the dimension of $M$.
Complex structures can be seen as sections of $X$ which lie in the open subbundle
    \[
        X_{\transverse}
        \defeq
        \big\{
            B_x \in \gr_{\bbC,n}(T_\bbC M)
            \sep
            B_x
            \transverse
            \overline B_x
        \big\}.
    \]
Recall the involutivity condition for almost complex structures seen as $\bbC$-linear subbundles of $T_\bbC M$: 
the Lie bracket extends to a $\bbC$-bilinear bracket on $T_\bbC M$, and one calls $B$ \textbf{involutive} if it is closed under this bracket:
    \[
        [v,w] \in \Gamma_B(M),
        \qquad
        \forall\,
        v,w\in \Gamma_B(M).
    \]
For any given $B$, the Leibniz identity shows that 
    \[
        Q(B)
        \sep 
        \wedge^2 B \longto \nicefrac{T_\bbC M}{B}
        \sep
        (v_x, w_x)
        \mapsto
        [v,w]_x \mod B_x
    \]
does not depend on the choice of $v,w\in \Gamma_B(M)$. 
So involutivity can be written as $Q(B) = 0$.

Consider the tautological bundle $\calT\to X$ whose fiber at $B_x$ is the vector space $B_x$ itself. 
%Note that above $X_{\transverse}$ there is a natural isomorphism
%    $
%        \nicefrac{ T_\bbC M }{ \calT }
%        \simeq
%        \overline \calT.
%    $
Consider the vector bundle
    \[
        Y \defeq \wedge^2 \calT^* 
            \otimes_\bbC
            \nicefrac{ T_\bbC M }{ \calT }
            \longto X
    \]
Hence $Q$ is a map
    \[
        Q
        \sep
        \Gamma_{X}(M)
        \longto
        \Gamma_{ 
            Y
        }(M)
        ,
    \]
and an almost complex structure $B \in \Gamma_{X_{\transverse}}(M)$ is involutive if and only if the image of $Q(B)$ lies in the zero section of 
    $
        Y\to X.
    $ 
Moreover, $Q(B)$ only depends on the first jet of $B$.
We conclude that the involutivity condition on almost complex structures can be put in the framework of a general PDE (see definition \ref{def:gen_pde}):
    \[
    \begin{tikzcd}
    X_{\transverse}
    \arrow[->]{d}
    &
    J^1X_{\transverse}
    \arrow[->]{d}
    \arrow[->]{r}{q}
    &
    Y
    \arrow[->,dl,
        start anchor = south west, 
        end anchor = north east
    ]
    \\
    M
    &
    \arrow[equal]{l}
    M
    &
    \arrow[hook, swap]{u}{\zeta}
    \arrow{l}
    X_{\transverse}.
    \end{tikzcd}
    \]
However note that the PDE extends to sections of $X$, which has the advantage of being a bundle with compact fibers.

Note that $X$ and $X_{\transverse}$ are natural bundles (see section \ref{sec:natural_bundle}).
Let $\pgrp$ be the pseudogroup on $X$ generated by lifting $\diff_M$, and note that $X_{\transverse}$ is invariant under $\pgrp$.
Denote its Lie algebra sheaf by $\calA$.
Likewise, $Y$ is a natural bundle over $M$, and $Q$ is an equivariant map.
We conclude that $(\pgrp, Q, X)$ is a general PDE with symmetry (see definition \ref{def:gen_pde_with_symmetry}).

Let $\{ M_r \}$ be a nested domain in $M$. 
Since $\pi_\textsc{X}: X\to M$ has compact fibers, the family
    \[
        X_r \defeq \pi_\textsc{X}^{-1}(M_r)
    \]
is a nested domain in $X$.
Since it has connected fibers, the natural lift gives an identification
    $
        \gerX(M_r) \simeq \calA(X_r).
    $
Moreover, for an involutive almost complex structure is given by the following standard result.

\begin{lemma}
The deformation complex of an involutive $B\in \Gamma_{X_{\transverse}}(M)$ is isomorphic to the $\bar\partial$-complex:
    \[
        \Gamma_{\overline B}(M_r)
        \xrightarrow{\;\; \bar\partial \;\;}
        \Gamma_{B^*\otimes \overline B}(M_r)
        \xrightarrow{\;\; \bar\partial \;\;}
        \Gamma_{\wedge^2B^* \otimes \overline B}(M_r),
    \]
where the differentials are given by
    \begin{align*}
        \bar\partial\alpha(v)
        &=
        \pr_{\overline B}
        \;
        [\alpha,v],
        \\
        \bar\partial \beta(v,w)
        &=
        \pr_{\overline B}
        \;
        \left(
            [\beta v,w] 
            + 
            [v,\beta w] 
            -
            \beta[v,w]  
        \right).
    \end{align*}
In the case of the standard complex structure $I$ on $\bbC^n$, with $M_r \sub \bbC^n$, this is the Dolbeault complex 
   \[
        (\Omega^{0,k}(M_r)\otimes \bbC^n, \bar\partial \otimes \id).
   \]   
\end{lemma}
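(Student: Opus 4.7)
The plan is to identify each term of the deformation complex with the corresponding space in the $\bar\partial$-complex and then match the two differentials. Since $X \to M$ is a natural bundle (a Grassmannian bundle of $T_\bbC M$) and $X_r$ is chosen with connected fibers, the natural lift yields a tame isomorphism $\gerX(M_r) \cong \calA(X_r)$, so the first term becomes $\gerX(M_r)$. Using the involutive splitting $T_\bbC M = B \oplus \overline B$, a real vector field decomposes as $v = v^{(1,0)} + v^{(0,1)}$ with $v^{(1,0)} \in \Gamma_{\overline B}$ and $v^{(0,1)} \in \Gamma_B$, and the assignment $v \mapsto v^{(1,0)}$ defines a real-linear isomorphism $\gerX(M_r) \xrightarrow{\sim} \Gamma_{\overline B}(M_r)$: reality of $v$ forces $\overline{v^{(1,0)}} = v^{(0,1)}$, so $v^{(1,0)} = 0$ implies $v=0$, and both sides have the same real rank $2n$ pointwise.

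For the second and third terms, the vertical tangent space of the Grassmannian at $B_x$ is canonically $\mathrm{Hom}_\bbC(B_x, T_{\bbC,x}M/B_x)$, and the transversality $B \oplus \overline B = T_\bbC M$ identifies $T_\bbC M / B \cong \overline B$, giving $V_B X_{\transverse} \cong B^* \otimes_\bbC \overline B$. Since $Y \to X$ is a vector bundle and $\zeta$ is its zero section, $N_{\zeta(B)}(Y, Z)$ is just the fiber $Y_B = \wedge^2 B^* \otimes_\bbC \overline B$.

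With the spaces identified, I would compute both differentials directly. For the first, the infinitesimal action is $\delta_B v = \gerL_v B$, which on $w \in \Gamma_B$ reads $\pr_{\overline B}[v,w]$; involutivity gives $[v^{(0,1)}, w] \in \Gamma_B$, so that term drops out and what remains is $\pr_{\overline B}[v^{(1,0)}, w] = \bar\partial v^{(1,0)}(w)$. For the second, I would parameterize a curve through $B$ in $X_{\transverse}$ by $B^t = \{ w + t\beta(w) : w \in B\}$, $\beta \in \Gamma_{B^* \otimes \overline B}$. A short calculation shows that the projection $T_\bbC M \to \overline B$ along $B^t$ equals $\pr_{\overline B} - t\,\beta \circ \pr_B + O(t^2)$. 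Expanding $Q(B^t)$ on sections $w_i + t\beta(w_i) \in \Gamma_{B^t}$, and using $[w_1, w_2] \in \Gamma_B$ to extract the leading $t$-term, one obtains
\[
    \delta_B Q(\beta)(w_1, w_2)
    =
    \pr_{\overline B}\bigl([\beta w_1, w_2] + [w_1, \beta w_2] - \beta[w_1, w_2]\bigr),
\]
which is exactly the claimed formula for $\bar\partial\beta$.

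Finally, the specialization to $(\bbC^n, I)$ reduces to trivializing $\overline B = T^{1,0}\bbC^n$ globally via $\partial/\partial z_i$; this identifies $\Gamma_{\wedge^k B^* \otimes \overline B}(M_r)$ with $\Omega^{0,k}(M_r) \otimes \bbC^n$ and converts the $\bar\partial$ above into the ordinary Dolbeault operator tensored with $\id_{\bbC^n}$. The only technical subtlety in the whole argument is the second differential: one must keep track of a one-parameter family of projections onto the \emph{varying} complement $B^t$, but this becomes routine once the direct-sum coordinates relative to $B \oplus \overline B$ are fixed from the outset.
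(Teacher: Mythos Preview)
Your proof is correct and follows essentially the same approach as the paper: identify the three terms via the real isomorphism $\gerX(M_r) \cong \Gamma_{\overline B}(M_r)$, the Grassmannian tangent space $V_B X_{\transverse} \cong B^* \otimes \overline B$, and the fiber $Y_B \cong \wedge^2 B^* \otimes \overline B$, then compute both differentials directly using involutivity of $B$. The only minor difference is in the bookkeeping for the second differential: the paper trivializes $T_\bbC M / B^t$ via the composition $\pr_{\overline B} \circ \pr_{\overline{B}^t}$ (first project onto $\overline{B}^t$ along $B^t$, then onto $\overline B$), whereas you project directly onto $\overline B$ along $B^t$; your route is slightly cleaner since the latter projection is exactly $\pr_{\overline B} - t\,\beta \circ \pr_B$ (no $O(t^2)$ term is actually needed), and both agree to first order in $t$, which is all that matters for the linearization.
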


\begin{proof}
The isomorphism $\Gamma_{\overline B}(M_r) \simeq \gerX(M_r)$ is given by
    \[
        \rho: \overline B \longto TM,
        \quad
        \alpha \longmapsto \alpha + \bar \alpha,
    \]
with inverse given by 
    \[
        TM \longhookrightarrow T_\bbC M = \overline B \oplus B \xrightarrow{\;\; \pr_{\overline B \;\;} } \overline B.
    \]
Note that the vertical bundle of $X_{\transverse}$ is isomorphic to $\calT^* \otimes \overline \calT|_{X_{\transverse}}$ via 
    \begin{align*}
        & \lambda : VX_{\transverse} \xrightarrow{\;\; \simeq \;\;} \calT^* \otimes \overline \calT|_{X_{\transverse}},
        \\
        & \lambda\big(\tfrac{\dd}{\dd t}\big|_{t=0} B^t\big)(v)
        =
        \pr_{\overline B}\big( \tfrac{\dd}{\dd t}\big|_{t=0} v^t \big),
    \end{align*}
where $B^t \in X_{\transverse, x}$ is a smooth path with $B^0 = B$, $v\in B$, and $v^t \in B^t$ is a smooth path such that $v^0= v$.
Therefore pulling back the vertical bundle $VX$ along $B\in \Gamma_{X_{\transverse}}(M_r)$ gives the second term in the complex:
    $
        \Gamma_{ B^* \otimes \overline B}(M_r).
    $

Next we identify the first differential of the deformation complex with the $\bar\partial$-operator.
For any $\alpha\in \Gamma_{\overline V}(M_r)$, denote by $\phi^t_{\alpha}$ on the flow on $X$ of the natural lift of $\rho\alpha$. 
Note that the action of $\phi_\alpha^t$ on $B$ is given by
    \[
        B^t \defeq B\cdot \phi_\alpha^t = (\phi_{\rho\alpha}^t)^*B.
    \]
Hence for any $v \in \Gamma_B(M_r)$, we have $(\phi_{\rho\alpha}^t)^*v \in B^t$, so
    \[
        \lambda\big(\tfrac{\dd}{\dd t}\big|_{t=0} B^t\big)(v) 
        =
        \pr_{\overline B} \; [\rho \alpha, v]
        =
        \pr_{\overline B}
        \;
        [\alpha + \bar \alpha, v ]
        =
        \pr_{\overline B}
        \;
        [\alpha,v].
    \]
Here we have used that $B$ is involutive so that $[\bar\alpha, v] \in \Gamma_B(M_r)$.

For the second differential we use that $T_\bbC M = B \oplus \overline B$ to define a tubular neighborhood $U(B)$ of $B$ in $X$.
Namely, define
    \[
        U(B) \defeq
        \big\{
            C_x \in X_x
            \sep
            \pr_{B_x}(C_x) = B_x
        \big\}.
    \]
Then the map
    \[
        B^* \otimes \overline B
        \longto 
        U(B),
        \quad
        \epsilon_x
        \longmapsto
        (\id \oplus \epsilon_x)B_x 
    \]
is an isomorphism. 
The intersection  $U(B)_{\transverse} \defeq U(B)\medcap X_{\transverse}$ corresponds to the $\epsilon_x : B_x \to \overline B_x$ such that the map
    \[
        B_x \oplus \overline B_x \longto B_x \oplus \overline B_x,
        \quad
        a \oplus b \longmapsto (a + \bar\epsilon_x b) \oplus (b + \epsilon_x a)
    \]
is invertible.
The projection onto $B$ restricts to an isomorphism
    \[
        \pr_B: \calT|_{U(B)_{\transverse}} \xrightarrow{\;\; \simeq \;\;} B \times U(B)_{\transverse}.
    \]
And similarly we have an isomorphism
    \[
        \pr_{\overline B} : \overline\calT|_{U(B)_{\transverse}} \xrightarrow{\;\; \simeq \;\;} \overline B \times U(B)_{\transverse}.
    \]
Consider now a smooth family $B^t \in \Gamma_{U(B)_{\transverse}}(M_r)$ with $B^0 = B$.
Let $\epsilon^t$ be the corresponding smooth family in $\Gamma_{B^*\otimes \overline B}(M_r)$. 
Under the above identifications, $Q(B^t)$ corresponds to
    $
        \widetilde Q( \epsilon^t ) \in \Gamma_{\wedge^2 B^* \otimes \overline B}(M_r),
    $
given by
    \[
        \widetilde Q( \epsilon^t )(v, w) 
        =
        \pr_{\overline B} \circ \pr_{\overline B^t} \; [v \oplus \epsilon^t v, w \oplus \epsilon^t w].
    \]
To compute $\tfrac{\dd}{\dd t}\big|_{t=0} \widetilde Q(\epsilon^t)$, we decompose
    \[
        [v \oplus \epsilon^t v, w \oplus \epsilon^t w]
        =
        (a^t + \bar\epsilon^t b^t) \oplus (b^t + \epsilon^t a^t),
    \]  
where $a^t \oplus \epsilon^t a^t \in B^t$ and $b^t \oplus \epsilon^t b^t \in \overline B^t$.
Note that
    \[
        \tfrac{\dd}{\dd t}\big|_{t=0} \widetilde Q(\epsilon^t)(v,w) = \tfrac{\dd}{\dd t }\big|_{t=0} b^t.
    \]
Taking the derivative at $t=0$ of the decomposition, and projecting onto $\overline B$, we obtain
    \[
        \pr_{\overline B}\big(
        [ \dot\epsilon^0 v, w]
        +
        [ v, \dot\epsilon^0 w ]
        \big)
        =
        \tfrac{\dd}{\dd t}\big|_{t=0} b^t
        +
        \dot\epsilon^0 a^0.
    \]
From the decomposition we read $a^0 = \pr_{B}\; [v,w] = [v,w]$, hence
    \[
        \tfrac{\dd}{\dd t}\big|_{t=0} \widetilde Q(\epsilon^t)(v,w)
        =
        \pr_{\overline B}\big(
        [ \dot\epsilon^0 v, w]
        +
        [ v, \dot\epsilon^0 w ]
        -
        \dot\epsilon^0 [v,w]
        \big).
        \qedhere
    \]
\end{proof}

\subsection{The Homotopy Operators}

In order to apply the \nameref{General Main Theorem} to the deformation of involutive almost complex structures we need to construct tame homotopy operators for the chain complex described in the previous paragraph.
The deformation complex for the standard complex structure on $\bbC^n$ consists of $n$ copies of the Dolbeault complex $\Omega^{0,q}(\bbC^n)$ in degree one, so the Dolbeault Lemma tells us it is exact. We recall how to explicitly produce homotopy operators for the Dolbeault complex in degree one and show that they satisfy tame estimates. The groundwork takes place in dimension $n=1$, and the general results follow from algebraic manipulations afterwards. 

Let $D_r \sub \bbC$ be the closed disc of radius $r$ at the origin. The Dolbeault complex on $D_r$ is
    \[
        C^\infty(D_r,\bbC)
        \overset{ \bar\partial }{ \longto }
        C^\infty(D_r, \bbC)
        \longto
        0.
    \]

\begin{proposition}
For any $0< r < s\leq 1$, define the Cauchy-Riemann operator
    \[
        T^{s,r}\!
        \sep
        C^\infty(D_s, \bbC)
        \longto
        C^\infty(D_r, \bbC),
        \quad
        T^{s,r}\! f(z)
        \defeq
        \iint\nolimits_{D_s}
        f(\zeta)
        \frac{ \dd\zeta \dd\bar \zeta }
            { \zeta - z }.            
    \]
It satisfies the properties of a tame homotopy operator:
\begin{itemize}
\item 
    $
        \bar\partial T^{s,r}\! f 
        = 
        f\vert_{D_r}.
    $

\item There are constants $c_k>0$ such that
    $
        \norm{ T^{s,r}\! f }_{k,r}
        \leq
        c_k (s-r)^{-k-2}  
        \norm{ f }_{k,s}.
    $

\end{itemize}
\end{proposition}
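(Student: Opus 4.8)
The plan is to establish the two bullet points in order. The first, that $\bar\partial T^{s,r}f = f|_{D_r}$, is the classical Cauchy--Pompeiu formula (or the inhomogeneous Cauchy integral formula): for a smooth function $f$ on a neighborhood of $\overline{D_s}$, the Cauchy transform $T^{s,r}f$ solves $\partial_{\bar z}(T^{s,r}f) = f$ pointwise on the interior of $D_s$, hence in particular on $D_r$ since $r < s$. I would recall the short proof: write the kernel $1/(\zeta - z)$, pass to polar coordinates centered at $z$ to see the integral is absolutely convergent (the $1/(\zeta-z)$ singularity is integrable in two real dimensions), differentiate under the integral sign after an $\epsilon$-excision around $z$, and apply Stokes/Green on the annular region to pick up the boundary term that produces $f(z)$ in the limit $\epsilon\to 0$. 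This is standard and I would cite it rather than reproduce every line.

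The second bullet, the tame estimate $\|T^{s,r}f\|_{k,r} \leq c_k (s-r)^{-k-2}\|f\|_{k,s}$, is where the real work lies. The naive approach of differentiating the kernel fails because $\partial_z^k (1/(\zeta-z))$ is not integrable for $k\geq 2$. The fix is the standard trick of moving derivatives off the kernel and onto $f$: first, $\partial_{\bar z}$-derivatives are free because $\bar\partial T^{s,r}f = f|_{D_r}$, so one only needs to control $\partial_z^k T^{s,r}f$. For those, one changes variables $\zeta \mapsto \zeta + z$ to make the kernel $z$-independent, so $\partial_z$ passes to $\partial_\zeta$ acting on $f(\zeta+z)$, then changes back. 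The subtlety is that translating $z$ moves the domain of integration, which is why the estimate picks up negative powers of $s-r$: one introduces a cutoff function $\chi$ equal to $1$ on $D_r$ and supported in $D_{(r+s)/2}$ with $\|\chi\|_{C^k} \lesssim (s-r)^{-k}$, splits $f = \chi f + (1-\chi)f$, applies the integration-by-parts argument to $\chi f$ (whose extension by zero is smooth on all of $\bbC$), and for $(1-\chi)f$ uses that the kernel is bounded away from its singularity on the relevant region so derivatives fall harmlessly on $1/(\zeta-z)$ at the cost of powers of the distance from $D_r$ to $\partial D_s$, i.e.\ again powers of $(s-r)^{-1}$. Counting: $k$ derivatives on the cutoff give $(s-r)^{-k}$, and two more powers come from the geometric/area factors in the kernel estimates, yielding the stated exponent $-k-2$.

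The main obstacle, then, is bookkeeping the negative powers of $s-r$ carefully through the cutoff-and-split argument to land exactly on $-k-2$ (or at worst some explicit $-k-\text{const}$, which is all the \nameref{General Main Theorem} needs, since it only requires \emph{some} polynomial loss). I would organize this as a lemma: for $g\in C^\infty_c(\bbC)$, $T g \defeq \iint g(\zeta)\,\dd\zeta\dd\bar\zeta/(\zeta-z)$ satisfies $\|Tg\|_{C^k(D_r)} \leq c_k\|g\|_{C^k}$ with $c_k$ depending only on $k$ and the diameter of $\overline{D_s}$; this is the translation-invariant core estimate and is standard. Then the dependence on $s-r$ enters solely through the $C^k$-norm of the cutoff and the reciprocal-distance bounds, both elementary. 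Finally, I would remark that the higher-dimensional homotopy operators for the Dolbeault complex on polydiscs in degree one are obtained from $T^{s,r}$ by the usual iterated construction (Ehrenpreis/Grauert--Lieb style, or just the explicit degree-one formula since we only need $q=0,1,2$), and the tameness is inherited termwise; the $n$ copies coming from the $\otimes\,\bbC^n$ factor are handled componentwise and change nothing.
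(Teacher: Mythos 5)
Your proposal is correct and takes essentially the same route as the paper: a cutoff-and-split argument in which the compactly supported piece $\chi f$ is handled by a change of variables that shifts the singularity away so that derivatives land on $\chi f$ rather than the kernel (the paper uses polar coordinates $\rho e^{i\theta}=\zeta-z$; your linear translation $\zeta\mapsto\zeta+z$ accomplishes the same after extending by zero to all of $\bbC$), and the piece $(1-\chi)f$ vanishing near $D_r$ is handled by the kernel being bounded below there (the paper records the sharper observation that this contribution to $T^{s,r}f$ is actually holomorphic on $D_r$, but the effect is the same). Your exponent bookkeeping is loose — the $(s-r)^{-k}$ from cutoff derivatives lives in the $\chi f$ term while the kernel-derivative loss lives in the $(1-\chi)f$ term, so they combine via a maximum rather than a product — but as you note the \nameref{General Main Theorem} only requires some explicit polynomial loss, so this does not affect correctness.
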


\begin{proof}
The first statement is a standard result about the Cauchy-Riemann operator \cite{kodaira}. Nonetheless, most of that argument is reproduced in our estimates. We start by splitting functions on $D_s$ into two cases: functions $f_1$ supported in the interior of $D_s$, and functions $f_2$ which vanish on $D_{r+\epsilon(s-r)}$ for an $\epsilon \in (0,\nicefrac12)$. It is clear that any function $f$ on $D_s$ can be written as $f=f_1+f_2$. Moreover, in the final paragraphs of the proof we show this can be done in a consistent manner so that the assignment $f\mapsto (f_1,f_2)$ satisfies tame estimates. Given the linearity of $T^{s,r}\!$, it is therefore sufficient to consider each case on its own.

Regarding the first case, the change of coordinates $\rho e^{i\theta } = \zeta-z$ gives
    \[
        T^{s,r}\! f_1(z)
        =
        \int_{-\pi}^\pi
        \int_0^\infty
        f_1(z + \rho e^{i\theta} )
        e^{-i \theta }
        \,\dd\rho \,\dd\theta.
    \]
So for any multi-indices $p,q\in \N$,
    \begin{align*}
        &\abs{ 
            \partial^p \bar\partial^q
            \int_{-\pi}^\pi
            \int_0^\infty
            f_1( z+\rho e^{i\theta} )
            e^{-i\theta}
            \,\dd\rho \,\dd\theta
        }
        \\
        &\qquad\qquad =
        \abs{ 
            \int_{-\pi}^\pi
            \int_0^\infty
            \partial^p \bar\partial^q 
            f_1( z+\rho e^{i\theta} )
            e^{-i\theta}
            \,\dd\rho \,\dd\theta
        }
        \\
        &\qquad\qquad \leq
        \volume( \mathrm{supp}(f) )
        \cdot
        \max\nolimits_{z\in \bbC}
        \abs{
            \partial^p\bar\partial^q
            f_1(z)
        }.
    \end{align*}
In particular it follows that 
    $
        \norm{T^{s,r}\! f_1}_{k,r}
        \leq
        \volume( D_s )
        \cdot
        \norm{ f_1 }_{k,s}.
    $

Regarding the second case, the integral
    \[
        T^{s,r}\! f_2(z)
        =
        \iint\nolimits_{D_s}
        f_2(\zeta)  
        \frac{ \dd\zeta \dd\bar\zeta }{ \zeta - z }
        =
        \iint\nolimits_{    
            D_s \backslash D_{r+\epsilon(s-r)} 
        }
        \frac{f_2(\zeta)}{\zeta-z}
        \,\dd\zeta \,\dd\bar\zeta
    \]
is a complex analytic function which converges uniformly for $z\in D_r$, since $\abs\zeta\geq \abs z + \epsilon(s-r)$. Hence $\bar\partial T^{s,r}\! f_2 = 0$ on $D_r$, and the integrals and $\partial$ commute, so that for all $k\in \N$
    \[
        \partial^k
        T^{s,r}\! f_2(z)
        =
        (-1)^kk!
        \iint\nolimits_{ 
            D_s \backslash D_{r+\epsilon(s-r)} 
        }        
        \frac{ f_2(\zeta) }{ (\zeta - z)^{k+1} }
        \,\dd\zeta \,\dd\bar\zeta
    \]
Note that 
    $
        \abs{ \zeta - z }^{k+1} 
        \geq 
        \epsilon^{k+1}(s-r)^{k+1},
    $
so we observe that
    \[
        \abs{ \partial^k T^{s,r}\! f_2(z) }
        \leq
        \volume( D_s\backslash D_r )
        \epsilon^{-k-1}
        (s-r)^{-k-1}
        \sup\nolimits_{z\in D_s} \abs{ f_2(z) }.
    \] 
In particular it follows that 
    \[
        \norm{T^{s,r}\!f_2}_{k,r}
        \leq
        \frac{
            \volume( D_s\backslash D_r )
        }{
            \epsilon^{k+1}
            (s-r)^{k+1}
        }
        \norm{ f_2 }_{0,s},
    \]
which is the very extreme case of a tame estimate where the $C^k$-norm of $T^{s,r}f_2$ ever only depends on the $C^0$-norm of $f_2$. 
Combining both estimates and the argument below completes the proof.

For a set $S\sub D_s$, define the set of smooth functions with support in $S$ as
    \[
        C^\infty(D_s;S)
        \defeq
        \set{
            f\in C^\infty(D_s)
            \sep
            \mathrm{supp}(f)\sub \mathrm{int}(S)
        }.
    \]
We claim the following: For any $\epsilon \in (0,\nicefrac 12)$ there are two families of linear operators
    \begin{align*}
        h^{s,r}_1\!
        &\sep
        C^\infty(D_s)
        \longrightarrow
        C^\infty(D_s;D_{s-\epsilon(s-r)}),
        \\
        h^{s,r}_2\!
        &\sep
        C^\infty(D_s)
        \longrightarrow
        C^\infty(D_s;D_s\backslash D_{r+\epsilon(s-r)}),
    \end{align*}
satisfying the relation
    $
        h^{s,r}_1\!+ h^{s,r}_2\! = \id,
    $
and for which there are positive constants
    $
        c_k>0
    $
so that, for all $k\geq 0$ and all $\nicefrac{1}{2}\leq r<s\leq 1$:
    \[
        \norm{h^{s,r}_i\!f}_{k,s}
        \leq
        c_k
        { (1-2\epsilon)^{-1}(s-r)^{-1} }
        \norm{f}_{k,s}.
    \]

Consider a bump function 
    $
        \chi:\R\to\R_{\geq 0}
    $ 
which is $\chi(t)=1$ for $t\leq 0$, $\chi(t)=0$ for $t\geq 1$, and decreasing on the interval $(0,1)$. Then define a bump function $\chi^{s,r}\!$ on $D_s$ by
    \[
        \chi^{s,r}\!(x)
        \defeq
        \chi\left(
            \frac{
                \abs{x}-(r+\epsilon(s-r))
            }{
                (1-2\epsilon)(s-r)
            }
        \right),
    \]
and the operators $h^{s,r}_i\!$ by
    \[
        h^{s,r}_1\!f
        \defeq        
        \chi^{s,r}\!
        \cdot
        f,
        \qquad
        h^{s,r}_2\!f
        \defeq
        (1-\chi^{s,r}\!)\cdot f.
    \]
It is clear that $h^{s,r}_1\!+h^{s,r}_2\!=\id$, and the challenge lies with proving the estimates. First remark that, using interpolation estimates,
    \[
        \norm{h^{s,r}_1\!f}_{k,s}
        \lesssim
            \norm{\chi^{s,r}\!}_{k,s}
            \norm{ f }_{0,s}
            +
            \norm{\chi^{s,r}\!}_{0,s}
            \norm{ f }_{k,s}
        \lesssim
        \norm{ \chi^{s,r}\!}_{k,s}
        \norm{ f }_{k,s},
    \]
and likewise
    $
        \norm{h^{s,r}_2\!f}_{k,s}
        \lesssim
        \norm{ 1-\chi^{s,r}\! }_{k,s}
        \norm{ f }_{k,s}.
    $ 
For $\nicefrac{1}{2}\leq r < s \leq 1$, define
    \[
        \mu^{s,r}
        \sep
        D_s\backslash \mathrm{int}(D_{\nicefrac12})
        \to
        \R,
        \qquad
        \mu^{s,r}(x)
        \defeq
        \frac{
            \abs{x}-(r+\epsilon(s-r))
        }{
            (1-2\epsilon)(s-r)
        },
    \]
so that $\chi^{s,r}\!$ is extension by zero of $\chi\circ\mu^{s,r}$. We purposefully introduce the lower bound $r\geq\nicefrac 12$ to stay away from the singularity of $x\mapsto \abs x$.
It follows that, for 
    $
        A_s\defeq D_s\backslash \mathrm{int}(D_{\nicefrac12}),
    $
    \[
        \norm{ \chi^{s,r}\! }_{k,s}
        =
        \norm{ \chi\circ\mu^{s,r} }_{k,A_s},
    \]
so we are left with showing that
    $
        \norm{ \chi\circ\mu^{s,r}}_{k,A_s}
        \lesssim
        (1-2\epsilon)^{-1}(s-r)^{-1}.
    $
Now apply the estimates for the composition of smooth maps to get
    \begin{align*}
        \norm{\chi^{s,r}\!}_{k,s}
        & \lesssim
        \norm{ \chi }_{0,\R}
        +
        \norm{ \chi }_{k+1,\R}
        \norm{ \mu^{s,r} }_{0,A_s}
        +   
        \norm{ \chi }_{1,\R}
        \norm{ \mu^{s,r} }_{k,A_s}
        \\
        & \lesssim
        1+
        \norm{ \chi }_{k+1,\R}
        \norm{ \mu^{s,r} }_{k,A_s}
        \\
        & \lesssim
        1 +
        \norm{ \mu^{s,r} }_{k,A_s},
    \end{align*}
where in the last estimate we treat $\norm{\chi}_{k+1,\R}$ as a constant, since $\chi$ is independent of $r$ and $s$. We directly observe that
    \[
        \norm{ \mu^{s,r} }_{k,A_s}^2
        \leq
        (1-2\epsilon)^{-1}(s-r)^{-1}
        \sup\nolimits_{x\in A_s}
        \sum\nolimits_{\abs\alpha\leq k}
        \abs{
            D^\alpha\abs{x}
        }^2.
    \]
Partial derivatives of the absolute value function are always of the form
    \[
        D^\alpha\abs{x}
        =
        \frac{ p_\alpha(x,\abs{x}) }
            { q_\alpha(x,\abs{x}) },
    \]
where $p_\alpha$ and $q_\alpha$ are polynomial expressions in the $x_i$ and in $\abs{x}$. Since
    $
        r \leq \abs{x_i},\abs{x} \leq s,
    $
the absolute value of $D^\alpha\abs{x}$ can be bounded by
    \[
        \abs{ D^\alpha \abs{x} }
        \leq
        \frac{ \tilde{p_\alpha}(s) }
        { \tilde{q_\alpha} (r) }
        \leq
        \frac{ \tilde{p_\alpha}(1) }
        { \tilde{q_\alpha}(\nicefrac12) },  
    \]
where $\tilde{p_\alpha}$ and $\tilde{q_\alpha}$ are polynomials in one variable with nonnegative coefficients. Hence it follows that
    $
        \norm{ \mu^{s,r} }_{k,A_s} 
        \lesssim
        (1-2\epsilon)^{-1}(s-r)^{-1},
    $
and this completes the estimates:
    \begin{align*}
        \norm{ h^{s,r}_1\!f }_{k,s}
        & \lesssim
        (1+ \norm{\mu^{s,r}}_{k,A_s})
        \norm{f}_{k,s}
        \\
        &\lesssim
        \left(
            1 + (1-2\epsilon)^{-1}(s-r)^{-1}
        \right)
        \norm{f}_{k,s}
        \\
        & \lesssim
        (1-2\epsilon)^{-1}(s-r)^{-1} \norm{f}_{k,s}.
        \qedhere
    \end{align*}
\end{proof}

Let $D^n_r \sub \bbC^{n}$ be the closed polydisk of radius $r$ centered at the origin. We recall from \cite{newlander_nirenberg} how to construct homotopy operators in degree one for the Dolbeault complex $(\Omega^{0,q}(D_r^n),\bar\partial)$.

\begin{proposition}
There is a family of operators
    \[
        h^{s,r}_q
        \sep
        \Omega^{0,q}(D^n_s)
        \longto
        \Omega^{0,q-1}(D^n_r),
        \quad
        q=1,2,
    \]
such that
\begin{itemize}

\item
    $
        (\bar\partial h^{s,r}_1\! + h^{s,r}_2\! \bar\partial )
        \alpha
        =
        \alpha\vert_{D^n_r}.
    $

\item
There are constants $c_k>0$ such that
    \[
        \norm{ h^{s,r}_q\!\alpha }_{k,r}
        \leq 
        c_k
         (s-r)^{ -n(k+2) }
        \norm{ \alpha }_{ k+n-1, s }.
    \]

\end{itemize}
\end{proposition}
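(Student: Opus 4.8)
The plan is to reduce the $n$-dimensional Dolbeault Lemma with estimates to the one-dimensional Cauchy--Riemann operator $T^{s,r}$ constructed above, following the classical Dolbeault homotopy but keeping careful track of how the polydisk radii shrink. First I would introduce, for each $0\le r<s\le 1$, an increasing chain of intermediate radii
\[
    r = r_0 < r_1 < \cdots < r_n = s,
    \qquad
    r_j - r_{j-1} = \tfrac{1}{n}(s-r),
\]
and for each coordinate $\zeta_j$ of $\C^n$ let $T^{s,r}_{(j)}$ denote the operator $T^{r_j,r_{j-1}}$ applied in the $j$-th variable only, treating the remaining variables as parameters (integrating over the disk $D_{r_j}\sub\C$ in the $\zeta_j$-slice). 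The two families of estimates from the previous proposition survive this parametrised version verbatim, with the same constants, because differentiating under the integral sign in the parameter directions commutes with $T^{s,r}_{(j)}$ and costs nothing.

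Next I would recall the standard construction. Write a $(0,q)$-form $\alpha = \sum_{|I|=q} \alpha_I \, \dd\bar z_I$. Call $\alpha$ \emph{of type $\le p$} if $\alpha_I = 0$ whenever $I$ contains an index $> p$. Any form is of type $\le n$, and a $\bar\partial$-closed form of type $\le 0$ is holomorphic (so $\bar\partial h_1 = \mathrm{id}$ on it, with $h_1 = 0$). The induction step is the classical one: given $\alpha$ of type $\le p$, apply $T^{s,r}_{(p)}$ to the coefficients $\alpha_I$ with $p\in I$ (after stripping the $\dd\bar z_p$), obtaining a form $\beta_p$ of degree $q-1$; then $\alpha - \bar\partial\beta_p$ is $\bar\partial$-closed (if $\alpha$ was) and of type $\le p-1$, by the same cancellation computation as in \cite{newlander_nirenberg} (using $\bar\partial_{\bar z_p} T^{s,r}_{(p)} = \mathrm{id}$ in the $p$-th slot and that the other $\bar\partial_{\bar z_k}$, $k<p$, pass through $T^{s,r}_{(p)}$). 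Iterating over $p = n, n-1, \dots, 1$ and summing the partial homotopies produces $h^{s,r}_1$ and $h^{s,r}_2$ satisfying the homotopy identity on the polydisk $D^n_r$. The restriction is to $D^n_{r_0} = D^n_r$ since each $T^{s,r}_{(j)}$ shrinks radius in its own variable from $r_j$ down to $r_{j-1}$, and the final form lives on the intersection $D^n_{r_0}$.

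For the tame estimate I would chase the constants through the $n$-fold iteration. Each application of $T^{s,r}_{(j)}$ contributes a factor $c_k (r_j - r_{j-1})^{-k-2} = c_k\, n^{k+2} (s-r)^{-k-2}$ to the $C^k$-norm, \emph{but} because a later $T^{s,r}_{(j')}$ with $j'<j$ differentiates in the parameter direction $\bar z_{j'}$ of an expression already containing derivatives of $\alpha$, the composition of $n$ such operators costs at most $n$ derivatives of $\alpha$ beyond order $k$ — this is where the loss $l = n-1$ comes from, one derivative being absorbed at each of the $n-1$ recursive reductions (the top step costing nothing extra). Multiplying the $n$ radius factors gives $(s-r)^{-n(k+2)}$, up to a constant depending only on $k$ and $n$, and the norm of $h^{s,r}_q\alpha$ is controlled by $\norm{\alpha}_{k+n-1,s}$ as claimed. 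The main obstacle I anticipate is precisely this bookkeeping: verifying that the parameter-direction derivatives introduced at each stage of the recursion accumulate to exactly $n-1$ (and not, say, $O(nk)$) extra derivatives, which requires organising the interpolation/Leibniz estimates so that at each reduction step one uses the already-established $C^{k}$-bound on the previous remainder rather than re-expanding everything from $\alpha$; done naively the loss blows up, and the clean bound needs the estimates arranged inductively on $p$ with the inductive hypothesis stated in the right form.
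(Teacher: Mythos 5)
Your approach differs genuinely from the paper's: you sketch the classical induction on the \emph{type} of a form (the largest index appearing in its multi-indices), whereas the paper writes out closed formulas for $h_1^{s,r}$ and $h_2^{s,r}$ --- alternating sums over subsets $J\subset\{1,\dots,n\}$ with combinatorial denominators $1/(|J|+1)$ and $1/((|J|+1)(|J|+2))$ --- and then verifies the homotopy identity by a direct term-by-term computation. The closed formula is essentially the unrolled form of the induction, so the underlying construction is the same, but the verification strategies are different, and the inductive route, as you have sketched it, has a genuine gap.

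The gap is that your reduction step works only when $\bar\partial\alpha = 0$, and ``summing the partial homotopies'' does not, by itself, produce $h_2$ or the unconditional homotopy identity. Concretely (take $q=1$): given $\alpha = \sum_j\alpha_j\,\dd\bar z_j$ of type $\leq p$ and setting $\beta_p = T_{(p)}\alpha_p$, the remainder $\alpha - \bar\partial\beta_p$ acquires new terms $-\bar\partial_j T_{(p)}\alpha_p\,\dd\bar z_j = -T_{(p)}\bar\partial_j\alpha_p\,\dd\bar z_j$ for $j>p$, and since $\alpha_j = 0$ for $j>p$ one has $\bar\partial_j\alpha_p = (\bar\partial\alpha)(\partial_{\bar z_j},\partial_{\bar z_p})$ --- so these terms are linear in $\bar\partial\alpha$, vanish only when $\alpha$ is closed, and in general break the type reduction. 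Carrying these $\bar\partial\alpha$-dependent corrections through the recursion (so that they assemble into $h_2$) is exactly the bookkeeping that the paper's combinatorial denominators encode, and nothing in your sketch addresses it. By contrast, the obstacle you flag as the ``main'' one, a potential $O(nk)$ loss of derivatives, is not a real concern: all operators in play are linear, so the loss is additive --- one derivative per $\bar\partial_j$ in the longest chain, hence at most $n-1$ by inspecting the formula. (A minor further point: the intermediate radii $r_j - r_{j-1} = (s-r)/n$ are dispensable. The paper's $T^{s,r}_j$ shrinks the $j$-th coordinate from $s$ to $r$ in one stroke, working over the mixed-radius polydisks $D^J_{s,r}$, which avoids the spurious constant $n^{n(k+2)}$ that your chain introduces, though that constant is in any case absorbed into $c_k$.)
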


\begin{proof}

One can recall the general formula for the homotopy operators directly from \cite{newlander_nirenberg}. First we recall the formulas for $\bar\partial$ in the coordinates $z=(z^1,\ldots, z^n)$: let $\bar\partial_j$ denote the Dolbeault operator in the $z^j$-coordinate, then we have
    \[
        \bar\partial(\alpha) 
        = 
        \bar\partial_k\alpha \,\dd\bar z^k,
        \qquad
        \bar\partial(\beta_k \,\dd\bar z^k)
        =
        ( 
            \bar\partial_k \beta_l
            -
            \bar\partial_l \beta_k
        ) 
        \, \dd\bar z^k \wedge \dd\bar z^l.
    \]
Then for a subset 
    $
        J\sub \set{1,\ldots, n},
    $
let $D^J_{s,r}$ be the closed polydisk which is of radius $r$ in any coordinate $z^j$ with $j\in J$, and of radius $s$ in any other coordinate:
    \[
        D^J_{s,r}
        \defeq
        \prod\nolimits_{j=1}^n
        D_{s-\delta^J(j)(s-r)}.
    \]
For a smooth function $f:D^J_{s,r}\to \bbC$, and a $j\not\in J$, denote the Cauchy-Riemann operator by
    \[
        T_j^{s,r} f(z) 
        \defeq
        \frac{1}{2\pi i} 
        \iint_{D_s} 
        f(
            z^1,
            \ldots,
            z^{j-1},
            \zeta,
            z^{j+1},
            \ldots, 
            z^n
        )
        \frac{ 
            \dd\zeta \,\dd\bar\zeta 
        }{
            \zeta- z^j
        }
    \]
for $z \in 
        D^{J\cup \set{j}}_{s,r}$.
From a parameter dependent version of the previous Proposition it follows that:

\begin{itemize}

\item
    $
        \partial_jT^{s,r}_j\! f
        =   
        f\vert_{D^{J\cup\set{j}}_{s,r}}
    $
for all $0<r<s\leq 1$, $j\not\in J$, and 
    $
        f
        \in 
        C^\infty(D^J_{s,r},\bbC).
    $

\item
    $
        \bar\partial_i \bar\partial_j
        =
        \bar\partial_j \bar\partial_i,
    $
    $
        \bar\partial_i T^{s,r}_j\!
        =
        T^{s,r}_j\! \bar\partial_i,
    $
and
    $
        T^{s,r}\!_i T^{s,r}_j\!
        =
        T^{s,r}_j\! T^{s,r}\!_i
    $
whenever $i\neq j$.

\item
There are constants $c_k>0$ such that
    \[
        \norm{ T^{s,r}_j\!\! f }_{k, D^{J\cup\set{j}}_{s,r}}
        \leq
        c_k  (s-r)^{-k-2}
        \norm{ f }_{k, D^J_{s,r} }.
    \]

\end{itemize}
The operators $\bar\partial_j$ and $T^{s,r}_j\!$ will be the building blocks of the homotopy operators $h^{s,r}_q$. 

The first homotopy operator sends
    $
        \beta 
        = 
        \beta_k \dd\bar z^k
        \in
        \Omega^{0,1}(D^n_s)
    $ 
to 
    \[
        h^{s,r}_1\!\beta
        \defeq
        \left.
        \sum\nolimits_k
        \sum\nolimits_{ J \not\ni\, k}
        \frac{ 
            (-1)^J 
        }{
            J+1
        }
        \left(
            \prod\nolimits_{j\in J}
            T^{s,r}_j\! \bar\partial_j
            T^{s,r}_k\! \beta_k
        \right)
        \right\vert_{D^n_r},
    \]
where the sum runs over all subsets $J\sub\set{1,\ldots, n}$ that do not contain $k$. 

The second homotopy operator sends
    $
        \gamma 
        =   
        \gamma_{kl} 
        \, \dd\bar z^k \wedge \dd\bar z^l
        \in
        \Omega^{0,2}(D^n_s)    
    $ 
to
    \[
        h^{s,r}_2\!\gamma
        \defeq
        \left.
        \sum\nolimits_{
            \substack{ k,l \\ k\neq l }
        }
        \sum\nolimits_{ J\not\ni\, k,l }
        \frac{
            (-1)^{ J +1}
        }{
            ( J + 1 ) ( J + 2 )
        }
        \left(
            \prod\nolimits_{j\in J}
            T^{s,r}_j\! \bar\partial_j
            T^{s,r}_l\! \gamma_{kl}
        \right)
        \right\vert_{D^n_r}
        \dd\bar z^k,
    \]
where the sum runs over all subsets $J\sub \set{1,\ldots, n}$ that do not contain $k$ or $l$.
It should be clear to the reader what the formula for $h^{s,r}_q$ should be for $q \geq 3$, but we are only interested in $q=1,2$. 

The estimates for $h_q^{s,r}$ follow directly from the estimates for $T^{s,r}_j\!$ and $\bar\partial_j$. Indeed, the expressions 
    $
        \prod_{j\in J}
        T_j^{s,r}
        \bar\partial_j
        T^{s,r}_k\!
    $
contain at most $n$ operators of the type $T^{s,r}_j\!$, which explains the factor $(s-r)^{-n(k+2)}$, and at most $(n-1)$ operators of the type $\bar\partial_j$, which explains the loss of derivatives. One can obtain slightly better estimates for $h^{s,r}_2\!$, but this isn't important to us.

To compute 
    $
        \bar\partial (h^{s,r}_1\!\beta)
        =
        \bar\partial_p 
        (h^{s,r}_1\!\beta)
        \,\dd\bar z^p,
    $
distinguish in the sum between the cases $k=p$, where one uses
    $
        \bar\partial_pT^{s,r}_p\!
        \beta_p
        =
        \beta_p,
    $
the cases $J \ni p$, where one uses
    $
        \bar\partial_p
        T^{s,r}_p\!
        \bar\partial_p
        =
        \bar\partial_p,
    $
and otherwise. Then on $D^n_r$:
    \begin{align*}
        \bar\partial_p h^{s,r}_1\!\beta
        & =
        \sum_{J \not\ni\, p}
        \frac{ 
            (-1)^J
        }{
            J+1
        }   
        \prod\nolimits_{j\in J}
        T^{s,r}_j\!
        \bar\partial_j
        \beta_p
        \\
        &\qquad +
        \sum_{k\neq p}
        \sum_{J \not\ni\, p}
        \bar\partial_p
        \left(
            \frac{
                (-1)^J
            }{
                J+1
            }
            +
            \frac{
                (-1)^{J+1}
            }{
                J+2
            }
            T^{s,r}_p\!\bar\partial_p
        \right)
        \prod_{j\in J}
        T^{s,r}_j\! 
        \bar\partial_j
        T^{s,r}_k\!
        \beta_k
        \\
        & =
        \sum_{J \not\ni\, p}
        \frac{ 
            (-1)^J
        }{
            J+1
        }   
        \prod\nolimits_{j\in J}
        T^{s,r}_j\!
        \bar\partial_j
        \beta_p
        \\
        &\qquad +
        \sum_{k\neq p}
        \sum_{J \not\ni\, p}
        \frac{
            (-1)^J
        }{
            (J+1)(J+2)
        }
        \prod_{j\in J}
        T^{s,r}_j\! 
        \bar\partial_j
        T^{s,r}_k\!
        \bar\partial_p
        \beta_k.
    \end{align*}
To compute
    $
        h^{s,r}_2\!
        \bar\partial \beta 
        =
        (h^{s,r}_2\!\bar\partial \beta)_p
        \, \dd\bar z^p,  
    $
recall that 
    $
        \bar\partial\beta   
        =
        ( 
        \bar\partial_p \beta_k
        -
        \bar\partial_k \beta_p
        )
        \, \dd\bar z^p \wedge \dd\bar z^k.
    $
Then on $D^n_r$:
    \begin{align*}
        (h^{s,r}_2\!\bar\partial \beta)_p
        & =
        \sum_{k\neq p}
        \sum_{J \not\ni k, p}   
        \frac{
            (-1)^{ J+1 }
        }{
            (J+1)(J+2)
        }
        \prod_{j\in J}
        T^{s,r}_j\!
        \bar\partial_j
        T^{s,r}_k\!
        (\bar\partial_p\beta_k
        -
        \bar\partial_k\beta_p)
        \\
        & =
        -
        \sum\nolimits_{
            \substack{
                J \not\ni\, p
                \\
                J\geq 1
            }
        }
        \frac{ (-1)^J }{ J+1 }
        \prod\nolimits_{j\in J}
        T^{s,r}_j\!
        \bar\partial_j
        \beta_p
        \\
        &\qquad -
        \sum_{k\neq p}
        \sum_{J \not\ni k, p}   
        \frac{
            (-1)^J
        }{
            (J+1)(J+2)
        }
        \prod_{j\in J}
        T^{s,r}_j\!
        \bar\partial_j
        T^{s,r}_k\!
        \bar\partial_p\beta_k.
    \end{align*}
From this it becomes clear that
    $
        (\bar\partial h^{s,r}_1\!
        +
        h^{s,r}_2\!\bar\partial) \beta
        =
        \beta\vert_{D^n_r}.
    $
\end{proof}

\chapter{Proof of the Main Theorem}
\label{chapter:Proof of the Main Theorem}
\etocsettocdepth{2}
\localtableofcontents

\noindent
{\color{white}This line is intentionally left blank.}
\newline
In this section we present the proof of the \nameref{Main Theorem}. Throughout it we fix the objects from the statement of the \nameref{Main Theorem}. 

The discussion will be organized as follows:
\begin{itemize} 

\item
Section \ref{Step 1}: we reduce to the case of the trivial bundle $(Y,Z)=( N\times \R^n, N\times \{0\})$.

\item 
Section \ref{Step 2}: we localize the problem to a tubular neighborhood of $b$.

\item
Section \ref{Step 3}: using a quadratic inequality (whose proof is delayed until section \ref{sect:lemmas}), we reduce the problem to the setting of the 
\nameref{Main Theorem for Differential Relations}

\item 
Section \ref{Step 4}: we state inequalities for flows of vector fields and their action, which will be proven in section \ref{sect:lemmas}

\item 
Section \ref{Step 5}: we state a convergence result for infinite compositions of diffeomorphism, which will be proven in section \ref{sect:lemmas}

\item
Section \ref{Step 6}: we recall the interpolation inequalities and the properties of smoothing operators adapted to our problem

\item 
Section \ref{Step 7}: we give the proof of the 
\nameref{Main Theorem for Differential Relations} using the Nash-Moser fast convergence method.

\end{itemize}

\newpage
\section{Preparations}
\etocsettocdepth{2}
\localtableofcontents

\subsection{Simplifying the Equation}
\label{Step 1}

We show that, given a general PDE, the bundle $\pi_\textsc{Y}:Y \!\to\! N$ and the fibered submanifold $Z\sub Y$ can be replaced by $N\times\R^n$ and $N\times \{ 0\}$ respectively, for some $n\geq 0$. This is based on the following observation.

\begin{lemma}
Let $Z\sub Y$ be a closed embedded submanifold. There exists an $n\in \N$ and a smooth map $\alpha:Y\to \R^n$ such that $Z=\alpha^{-1}(0)$ and
    \[
        \overline{\dd_z\alpha}: T^\mathrm{n}_z(Y,Z) \longto \R^n
    \]
is injective for all $z\in Z$, where $T^\mathrm{n}_z(Y,Z)\defeq T_zY/T_zZ$ is the normal bundle, and $\overline{\dd\alpha}_z$ denotes the map induced by the differential of $\alpha$ at $z$.
\end{lemma}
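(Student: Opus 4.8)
The goal is to embed $Z$ as the zero set of a map $\alpha: Y \to \R^n$ in a "clean" way, meaning the differential of $\alpha$ sees the entire normal direction to $Z$ at every point of $Z$. The natural tool is the Whitney embedding theorem combined with a tubular neighborhood argument. First I would fix a tubular neighborhood $\xi: N^\mathrm{n}(Y,Z) \hookrightarrow Y$ of $Z$ in $Y$, so that near $Z$ the manifold $Y$ looks like the total space of the normal bundle, with $Z$ sitting as the zero section. On this neighborhood there is a tautological candidate: the "fiberwise coordinate" map $N^\mathrm{n}(Y,Z) \to N^\mathrm{n}(Y,Z)$, but of course a vector bundle is not in general trivial, so one cannot literally take fiber coordinates globally over $Z$.

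\textbf{Handling nontriviality of the normal bundle.} The standard fix is to use the fact that any vector bundle over a manifold embeds as a subbundle of a trivial bundle of sufficiently high rank. So choose $n$ large enough that there is a vector bundle embedding $\iota: N^\mathrm{n}(Y,Z) \hookrightarrow Z \times \R^n$ over $Z$, i.e.\ a fiberwise injective bundle map. Composing with the projection to $\R^n$ gives a map $N^\mathrm{n}(Y,Z) \to \R^n$ which, restricted to each fiber, is injective and linear; call its composition with $\xi^{-1}$ (defined on the tubular neighborhood) the local model of $\alpha$. On the tubular neighborhood this map vanishes exactly on $Z$ and its vertical derivative along $Z$ is precisely $\iota$ composed with projection, which is injective on each fiber — this is exactly the condition that $\overline{\dd_z\alpha}: T^\mathrm{n}_z(Y,Z) \to \R^n$ is injective, since the vertical derivative of a fiberwise linear map along the zero section recovers the bundle map itself.

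\textbf{Extending globally.} The map so far is only defined on the tubular neighborhood $U \defeq \xi(N^\mathrm{n}(Y,Z))$. To extend it to all of $Y$ while keeping $Z = \alpha^{-1}(0)$, I would use that $Z$ is closed in $Y$. Pick a bump function $\rho: Y \to [0,1]$ supported in $U$ and equal to $1$ on a smaller neighborhood $U'$ of $Z$ with $\overline{U'} \subset U$; multiply the local model by $\rho$ to get a map $\alpha_1: Y \to \R^n$ vanishing on $Z$, equal to the local model near $Z$, and vanishing outside $U$. However $\alpha_1^{-1}(0)$ might be strictly larger than $Z$ (it contains at least $Y \setminus U$). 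To correct this, enlarge the target: since $Y \setminus U'$ is an open set whose closure in $Y$ misses $Z$... more robustly, use that $Z$ is closed so there is a function $g: Y \to \R$ with $g > 0$ on $Y \setminus Z$ and $g = 0$ on a neighborhood of... no — simply take a smooth $g \geq 0$ with $g^{-1}(0) = \overline{U'}$, available since $\overline{U'}$ is closed, and set $\alpha \defeq (\alpha_1, g): Y \to \R^{n+1}$. Then $\alpha(z) = 0$ iff $z \in Z$ (on $U'$ we need $g=0$ and $\alpha_1 = 0$, which forces $z \in Z$; outside $U'$ we have $g > 0$). The normal derivative condition is unaffected by appending $g$, since $\dd g$ vanishes along $Z$ (as $g \equiv 0$ near $Z$). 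Replacing $n$ by $n+1$, this $\alpha$ has all required properties.

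\textbf{Main obstacle.} The only genuinely delicate point is arranging $Z = \alpha^{-1}(0)$ exactly rather than $Z \subseteq \alpha^{-1}(0)$; the injectivity of $\overline{\dd_z\alpha}$ is purely local near $Z$ and follows automatically from the tubular-neighborhood construction. I expect the cleanest route to the exact-zero-set condition is the $(\alpha_1, g)$ trick above, using that $Z$ is closed to produce $g$. Everything else — existence of tubular neighborhoods, embedding a bundle in a trivial one, bump functions — is standard and I would not belabor it.
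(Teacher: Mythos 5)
Your proof is correct and takes essentially the same approach as the paper: tubular neighborhood, embedding the normal bundle into a trivial bundle $\R^{n-1}\times Z$, cutting off with a bump function $\chi$ supported in the tubular neighborhood, and appending one extra scalar component to kill the spurious zeros outside. The only difference is that the paper chooses that extra component to be $1-\chi$ (so $\alpha = (\chi\beta,\, 1-\chi)$), which is already at hand and has $\{1-\chi=0\}=\{\chi=1\}\sub U$ automatically, whereas you introduce a fresh nonnegative function $g$ with $g^{-1}(0)=\overline{U'}$; both choices work, the paper's is just a touch more economical since no new function is needed.
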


\begin{proof}
Consider the following objects: 
\begin{itemize}

    \item a vertical tubular neighborhood $\exp: T^\mathrm{n}(Y,Z)\longhookrightarrow Y$ of $Z$, with image denoted by $U$;

    \item a vector bundle embedding 
%    of $ T^\mathrm{n}(Y,Z)$ into a trivial bundle 
    $\theta: T^\mathrm{n}(Y,Z)\longhookrightarrow \R^{n-1}\times Z$;

    \item a smooth function $\chi:Y \!\to\! [0,1]$ supported on $U$, and which is equal to one on a neighborhood of $Z$. 
\end{itemize}

These objects exist because $Z$ is closed and embedded. Let $\beta$ be the composition of the maps:
    \[
        \beta:
        U 
        \xrightarrow{\;\exp^{-1}\;}
        T^\mathrm{n}(Y,Z) 
        \xhookrightarrow{ \;\;\;\theta\;\;\; }
        \R^{n-1} \times Z
        \xrightarrow{ \;\;\pr_1\;\; }
        \R^{n-1}.
    \]
Then the map $\alpha \defeq (\chi\beta,(1-\chi)):Y\longto \R^n$ satisfies both requirements.
\end{proof}

In the next lemma we assume the setting of the \nameref{General Main Theorem}.

\begin{lemma}
Let $\alpha$ be as in the previous lemma. Consider the partial differential operator
    \[
        \widetilde{Q}:
        \Gamma_X(M)
        \longto 
        C^\infty(N, \R^n),
        \quad
        \widetilde{Q}(e)
        \defeq 
        \alpha\circ Q(e),
    \]
and $\widetilde Z \defeq \{0\} \!\times\! N$. 
Then the partial differential equation 
    $
        (\widetilde{Q}, \widetilde Z )
    $ 
satisfies
    \[
         \sol_{Q,Z}(U)
         =
         \sol_{\widetilde{Q}, \widetilde Z }(U),
         \quad\forall\;
         U\sub M,
    \]
and it meets the hypothesis of the \nameref{General Main Theorem} if $(Q,Z)$ does.
\end{lemma}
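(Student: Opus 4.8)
The plan is to prove the two claimed properties of the simplified PDE $(\widetilde Q,\widetilde Z)$ in turn: first the equality of solution sheaves, and then that the hypotheses of the \nameref{General Main Theorem} are inherited. The equality of solution sheaves is nearly immediate from the previous lemma. By construction $\widetilde Q(e)=\alpha\circ Q(e)$ and $\widetilde Z=\{0\}\times N$ (viewed inside $\R^n\times N$), so $e\in\sol_{\widetilde Q,\widetilde Z}(U)$ means precisely that $\alpha\circ Q(e)$ is identically zero on $q_0^{-1}(U)$, i.e.\ that $Q(e)$ maps into $\alpha^{-1}(0)=Z$; this is exactly the condition $e\in\sol_{Q,Z}(U)$. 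So I would spell this out in one or two sentences, noting that $\widetilde Q$ is again a partial differential operator of the same order $d$ (composition of the fiber-preserving $q$ with the smooth map $\alpha$ is still fiber-preserving over $N$ after composing with $q_0$), and that its base map is still $q_0$, which is still proper.

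For the second claim I must check each ingredient of Definition \ref{def:gen_pde_with_symmetry} and of the data needed to run the \nameref{General Main Theorem}. The pseudogroup $\pgrp$, the submersion $\pi_\textsc{X}$, the base map $q_0$ (still proper), and the nested domains $\{M_r\},\{N_r\},\{X_r\}$ are unchanged; the target bundle is now $\R^n\times N\to N$ with $\widetilde\zeta\defeq\widetilde Q(b)=\alpha\circ\zeta\equiv 0$, so $\widetilde Z$ is a closed fibered submanifold (the zero section) and $N_{\widetilde\zeta}(\widetilde Y,\widetilde Z)\cong\R^n\times N$ canonically. The symmetry condition — that $\phi\in\pgrp(U)$ and $b'\in\sol_{\widetilde Q,\widetilde Z}(V)$ compatible on $W$ gives $(b'\cdot\phi)|_W\in\sol_{\widetilde Q,\widetilde Z}(W)$ — follows directly from the equality of solution sheaves just established together with the fact that $(Q,Z)$ is a PDE with symmetry. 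So far this is bookkeeping.

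The one genuine point requiring the injectivity hypothesis on $\overline{\dd_z\alpha}$ is the identification of the linearized operator $\delta_b$ for $(\widetilde Q,\widetilde Z)$ with (a bundle isomorphism applied to) the one for $(Q,Z)$, so that tame infinitesimal rigidity is transferred. By the chain rule, for a family $b^t$ with $b^0=b$ and $\tfrac{\dd}{\dd t}\big|_{t=0}b^t=w$,
\[
    \tfrac{\dd}{\dd t}\big|_{t=0}\widetilde Q(b^t)
    =
    \dd_\zeta\alpha\circ\tfrac{\dd}{\dd t}\big|_{t=0}Q(b^t).
\]
Since $\zeta$ maps into $Z$, restricting $\dd_z\alpha$ to $T_zY$ and passing to the quotient by $T_zZ=\ker(\dd_z\alpha|_{T_zZ})$ — here using that $Z\subseteq\alpha^{-1}(0)$ so $\dd_z\alpha$ kills $T_zZ$ — the map $\dd_\zeta\alpha$ descends to a vector bundle map $\overline{\dd_\zeta\alpha}\colon N_\zeta(Y,Z)\to\R^n\times N$ which is fiberwise injective by hypothesis; being a fiberwise-injective bundle map between bundles of the same rank over a compact base (after restricting to the relevant nested domains), it is a vector bundle isomorphism onto its image, and with the right choice of $\widetilde Y$ (or simply by restricting $\widetilde Q$'s target to this image, which is again a trivial-type bundle) it is an isomorphism. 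Under this isomorphism the $\delta_b$ of $(\widetilde Q,\widetilde Z)$ equals $\overline{\dd_\zeta\alpha}\circ\delta_b$ with $\delta_b$ the original linearized operator. Hence if $h_1^{s,r},h_2^{s,r}$ are tame homotopy operators witnessing infinitesimal rigidity for $(Q,Z)$, then $h_1^{s,r}$ together with $h_2^{s,r}\circ(\overline{\dd_\zeta\alpha})^{-1}$ do the same for $(\widetilde Q,\widetilde Z)$: the homotopy relation is unchanged, and the tame estimates are preserved because composing with the fixed smooth bundle isomorphism $(\overline{\dd_\zeta\alpha})^{\pm 1}$ changes $C^k$-norms only up to a $k$-dependent constant (a standard consequence of the interpolation/composition estimates, with no loss of derivatives). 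I expect this transfer-of-estimates step to be the only place where care is needed — specifically, verifying that $\overline{\dd_\zeta\alpha}$ and its inverse are genuine smooth bundle maps of bounded $C^k$-norm over each $N_r$, which is where the compactness of the nested domains and the injectivity hypothesis are both used — while everything else is routine unwinding of definitions.
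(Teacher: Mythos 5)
Your first two paragraphs are fine and match the paper. The gap is in the last paragraph, where you claim $\overline{\dd_\zeta\alpha}\colon N_\zeta(Y,Z)\to\R^n\times N$ is ``a fiberwise-injective bundle map between bundles of the same rank'' and hence an isomorphism. This is false: $n$ is not the codimension of $Z$ in $Y$. Looking at the construction of $\alpha$ in the previous lemma, $n-1$ is the rank of an ambient trivial bundle into which $N(Y,Z)$ is merely \emph{embedded} (plus one more coordinate for the bump function), so $\R^n\times N$ generally has strictly larger rank than $N_\zeta(Y,Z)$. Consequently $\overline{\dd_\zeta\alpha}$ has no inverse, so $(\overline{\dd_\zeta\alpha})^{-1}$ is undefined and your proposed $\widetilde h_2 = h_2\circ(\overline{\dd_\zeta\alpha})^{-1}$ does not exist. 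Your attempted patch --- restrict $\widetilde Q$'s target to the image of $\overline{\dd_\zeta\alpha}$ --- does not rescue this either, because that image is canonically isomorphic to $N_\zeta(Y,Z)$ and generally not trivial; in effect you would be renaming the original bundle and losing the entire point of the reduction, which is to land in the trivial pair $(\R^n\times N,\{0\}\times N)$.

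The correct move, and what the paper does, uses only a one-sided inverse: a fiberwise-injective vector bundle map always admits a smooth vector bundle retraction $\sigma_\zeta\colon\R^n\times N\to N_\zeta(Y,Z)$ with $\sigma_\zeta\circ\overline{\dd_\zeta\alpha}=\id$ (choose a fiberwise complement to the image). Then set $\widetilde h_1^{s,r}=h_1^{s,r}$ and $\widetilde h_2^{s,r}=h_2^{s,r}\circ\sigma_\zeta$. The homotopy relation holds because
\[
\widetilde h_2^{s,r}\circ\widetilde\delta_b
= h_2^{s,r}\circ\sigma_\zeta\circ\overline{\dd_\zeta\alpha}\circ\delta_b
= h_2^{s,r}\circ\delta_b,
\]
and the tame estimates for $\widetilde h_2^{s,r}$ follow from those for $h_2^{s,r}$ since $\sigma_\zeta$ is a fixed smooth bundle map (your observation about $C^k$-norm control for such maps is exactly what is needed here, applied to $\sigma_\zeta$ rather than to a nonexistent inverse). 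No invertibility, and in particular no rank equality, is required.
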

\begin{proof}
Since $\alpha^{-1}(0)=Z$, the two PDEs have indeed the same space of solutions. 
It suffices to construct  operators $\widetilde{h}_2^{s,r}$ such that, together with $h_1^{s,r}$, they satisfy the hypotheses of the \nameref{General Main Theorem}. 
Since 
    $
        \overline{\dd_z\alpha}:
         T^\mathrm{n}_z(Y,Z) \longto \R^n
    $
is injective for $z\in Z$, there exists a vector bundle map
    \[
        \sigma:
        \R^n \!\times\! Z
        \longto 
        T^\mathrm{n}(Y,Z)
        \quad 
        \textrm{s.t.}
        \quad 
        \sigma\circ 
        \overline{\dd\alpha}
        =\id_{ T^\mathrm{n}(Y,Z)}.
    \]
Pulling back these maps via $\zeta\defeq Q(b) \in \Gamma_Z(N)$, we obtain the vector bundle maps
    \[
        \overline{\dd_\zeta\alpha}:
        T^\mathrm{n}_\zeta(Y,Z)
        \longto 
        \R^n \!\times\! N
        \quad 
        \textrm{and}
        \quad
        \sigma_\zeta:
        \R^n \!\times\! N 
        \longto  
        T^\mathrm{n}_\zeta(Y,Z),
    \]
which also satisfy
    $
        \sigma_\zeta
        \circ 
        \overline{\dd_\zeta\alpha}
        =
        \id_{T^\mathrm{n}_\zeta(Y,Z)}.
    $
The linearization of $\widetilde{Q}$ at $b$ is given by
    \[
        \widetilde \delta_b
        =
        \overline{\dd_\zeta\alpha}
        \circ 
        \delta_b:
        \Gamma_{T^\mathrm{n}_b X}(M)
        \longto 
        C^\infty(N, \R^ns),
    \]
hence it is easy to see that the map
    \[
        \widetilde{h}_2^{s,r}
        \defeq 
        h_2^{s,r}
        \circ 
        \sigma_\zeta:
        C^\infty(N_s, \R^n) 
        \longto 
        \Gamma_{T^\mathrm{v}_bX}(M_r)
    \]
satisfies the tame inequalities, and since
    $
        \widetilde{h}_2^{s,r}
        \circ 
        \widetilde\delta_b
        =
        h_2^{s,r}\circ \delta_b,
    $
also the homotopy relation.
\end{proof}

\begin{notation}
From now on we will assume that $Y=\R^n \!\times\! N$ and $Z=\{0\} \!\times\! N$, and we will still denote the partial differential operator by $Q$. 
Hence the PDE simply becomes
    \[
        Q(c)=0.
    \]
\end{notation}

\subsection{Simplifying the Bundle \& Norms}
\label{Step 2}

From now on we denote the vertical bundle of $X$ along $b$ by
    \[
        \pi_E:
        E\defeq T^\mathrm{v}_bX \longto M.
    \]
Fix a vertical tubular neighborhood of $b$ in $X$, i.e. a open embedding
    \[
        \xi:E\longhookrightarrow X,
    \]
that preserves fibers and which sends the zero section of $E$ to $b$. 
Moreover, we simplify our notation by identifying $E$ with its image in $X$. 
This means that $b=0$, and that $\Gamma_E(M)$ is a strong $C^0$-neighborhood of $b$ in $\Gamma_X(M)$.
Since the conclusion of the \nameref{General Main Theorem} only holds for sections of $X$ close to $b$, we may as well only work on $E$.

\begin{notation}
From now on we will assume that $E= X$ is a vector bundle over $M$ and $b=0$.
In particular,
    $
        Q(0)=0,
    $
and the deformation complex becomes
    \[
        \calA(X_r)
        \xrightarrow{ \;\;\delta_0\;\; }
        \Gamma_E(M_r)
        \xrightarrow{ \;\;\delta_0\;\; }
        C^\infty(N_r, \R^n).
    \]
\end{notation}

Fix vector bundle metrics $m_k$, for $k\geq 0$, on the fibers of the jet bundle $J^k E \!\to\! M$ such that the projections 
    \[
        \pi_k^{k+1}: J^{k+1} E\longto J^k E
    \]
are norm decreasing. 
Let $K\sub M$ be a compact set such that $K=\overline{\mathrm{int}(K)}$. This condition ensures that every section $e\in \Gamma_E(K)$ has a well-defined $k$-jet $j^k(e)\in \Gamma_{J^k E}(K)$. 
Define
    \[
        \norm{e}_{k,K}
        \defeq
        \sup\nolimits_{x\in K}
        \abs{
            j^k(e)_x
        }_{m_k}.
    \]
Moreover, given the nested domain $\{ M_r \}$ and $e\in \Gamma_E(M_r)$, simplify the notation to
    \[
        \norm{e}_{k, r}
        \defeq
        \norm{e}_{k, M_r}.
    \]
To define norms on local diffeomorphisms, fix a vertical tubular neighborhood 
    $
        \chi: TX \!\hookrightarrow\! X \!\times\! X
    $
around the graph of the identity. 
Let $\calU\sub \diff_X$ be the strong $C^0$-open subset defined by
    \[
        \calU(U) 
        \defeq
        \big\{
            \phi \in \diff_X(U)
            \sep
            j^0(\phi)(U) \sub \chi(TX)
        \big\}.
    \]
Then for any $\phi \in \calU(X_r)$ we define
    \[
        \norm{\phi}_{k, r}
        \defeq
        \norm{ \chi^{-1}\circ (\id, \phi)}_{k, r}.
    \]
This notation differs slightly from section \ref{sec:local_rigidity} and the statement of the Main Theorems, where we write $\norm{\phi-\id}_{k,r}$ to mean the same thing, for the sake of readability of the proofs.

\begin{notation}
Any constants $c$, $c_k$, and $d_k$ are meant to be \emph{large enough}:
inequalities
    \[
        c_k(s-r)^{-d_k} \norm{ v }_{k,r}
        \leq
        c_k(s-r)^{-d_k} \norm{ w }_{k+l,s}
    \]
should be understood as follows: there are constants $c_k'\geq c_k$ and $d_k'\geq d_k$ such that
    \[
        c_k(s-r)^{-d_k} \norm{ v }_{k,r}
        \leq
        c_k'(s-r)^{-d_k'} \norm{ w }_{k+l,s}.
    \]
Similarly, the constant $\theta > 0$ is meant to be \emph{small enough} such that \nameref{Lemma:A}, \nameref{Lemma:B}, and \nameref{Lemma:C} can be applied where needed.
\end{notation}

\subsection{Lemma A: Quadratic Inequality}
\label{Step 3}

In this section we show that the \nameref{General Main Theorem} follows from the \nameref{Main Theorem for Differential Relations}. 
The core of this argument is \nameref{Lemma:A}, where we show that $\delta_0$ approximates $Q$ up to a quadratic estimate. 
The proof of this result is rather standard: 
it relies on a local computation, the Taylor approximation formula for the map $q$, and the interpolation inequalities (see section \ref{sect:lemmas}).
Recall that $Q$ is a differential operator of order $d$, and $Q(0)=0$.

\begin{NamedTheorem}[Lemma A]
\label{Lemma:A}
There are constants $\theta>0$ and $c_k\geq 0$ such that
    \[
        \norm{ Q(e)-\delta_0(e) }_{k,r}
        \leq 
        c_k \norm{ e }_{d,r} \norm{ e }_{k+d,r},
    \]
for all $k\geq 0$, all $r\in [0,1]$, and all $e\in \Gamma_E(M_r)$ satisfying $\norm{ e }_{d,r}<\theta$.
\end{NamedTheorem}

\begin{lemma}
\label{Lemma:RelationsImpliesMain}
Suppose that $\{ h_1^{s,r} \}$ and $\{ h_2^{s,r} \}$ are families of operators that satisfy the hypothesis of the \nameref{General Main Theorem}:
there are constants $c_k, d_k, l_1, l_2 \geq 0$ such that
    \[  
        (
            \delta_0 \circ h_1^{s,r} + h_2^{s,r} \circ \delta_0
        )(e)
        =
        e|_{M_r},
        \qquad
        \norm{ h_i^{s,r}(e) }_{k, r}
        \leq
        c_k (s-r)^{-d_k} 
        \norm{ e }_{k + l_i, s}.
    \]
Then the hypothesis of the \nameref{Main Theorem for Differential Relations} can be met: there are constants $c_k, d_k, l \geq 0$ and $\theta > 0$ such that
    \[
        \norm{ 
            e\vert_{M_r}
            -
            \delta_0\circ h_1^{s,r}\!(e)
        }_{k,r}
        \leq 
        c_k (s-r)^{-d_k}
        \norm{ e }_{d,s}
        \norm{ e }_{k+l,s}
    \]
for all $e\in \Gamma_E(M_s)$ satisfying 
    $
        \norm{ e }_{d,r}<\theta 
    $
and
    $
        Q(e)=0.
    $
\end{lemma}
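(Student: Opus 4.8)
The plan is to read the quadratic inequality straight off the homotopy relation, using \nameref{Lemma:A} as the one nontrivial input; there is no further hard analysis to do. So suppose $e\in\Gamma_E(M_s)$ solves $Q(e)=0$ and has $\norm{e}_{d,s}<\theta$, where $\theta$ is the threshold from \nameref{Lemma:A} (this is exactly the smallness hypothesis appearing in the \nameref{Main Theorem for Differential Relations}). Applying the homotopy relation of the hypothesis to $w=e$ gives
\[
    e|_{M_r}
    =
    \delta_0\circ h_1^{s,r}(e)
    +
    h_2^{s,r}\big(\delta_0(e)\big),
\]
so that the term to be estimated is simply $e|_{M_r}-\delta_0\circ h_1^{s,r}(e)=h_2^{s,r}(\delta_0(e))$.

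Next I would exploit that $e$ is a solution. Since $\delta_0$ is the linearization of $Q$ at $0$ and $Q(0)=0$, and since $Q(e)=0$ holds on $N_s$ (equivalently $M_s$), we have $\delta_0(e)=-(Q(e)-\delta_0(e))$ there, and \nameref{Lemma:A} bounds this quadratically:
\[
    \norm{\delta_0(e)}_{k+l_2,s}
    =
    \norm{Q(e)-\delta_0(e)}_{k+l_2,s}
    \leq
    c_k\,\norm{e}_{d,s}\,\norm{e}_{k+l_2+d,s}.
\]
Feeding this into the tame estimate $\norm{h_2^{s,r}(v)}_{k,r}\leq c_k(s-r)^{-d_k}\norm{v}_{k+l_2,s}$ with $v=\delta_0(e)$ yields
\[
    \norm{e|_{M_r}-\delta_0\circ h_1^{s,r}(e)}_{k,r}
    =
    \norm{h_2^{s,r}(\delta_0(e))}_{k,r}
    \leq
    c_k(s-r)^{-d_k}\,\norm{e}_{d,s}\,\norm{e}_{k+l_2+d,s},
\]
which is the asserted inequality with $l\defeq l_2+d$. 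The only bookkeeping is that $M_r\subset M_s$ (Lemma \ref{lem:stictily_decreasing}), so all norms on the right are legitimately taken on $M_s$, and that $\delta_0\circ h_1^{s,r}$ is a well-defined composition because $h_1^{s,r}$ takes values in $\calA(X_r)$, on which $\delta_0$ acts.

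Accordingly, the ``main obstacle'' is not in this lemma but in \nameref{Lemma:A} itself, whose proof (deferred to Section \ref{sect:lemmas}) is the standard local computation: expand the bundle map $q$ of the differential operator in a first-order Taylor polynomial along the zero section, so that the remainder is genuinely quadratic in the jet of $e$, and then control the resulting $C^k$-norms by the interpolation inequalities. The one place where the hypothesis $Q(e)=0$ on all of $M_s$ is essential is the substitution $\delta_0(e)=-(Q(e)-\delta_0(e))$: without it one would only control $\delta_0(e)$ linearly in $e$, and the Nash--Moser scheme in Section \ref{Step 7} would fail to converge. Everything else in the passage from the \nameref{General Main Theorem} to the \nameref{Main Theorem for Differential Relations} is formal, so this lemma is all that is needed to make that reduction.
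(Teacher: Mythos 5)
Your proof is correct and follows essentially the same path as the paper's: apply the homotopy relation to reduce the left-hand side to $h_2^{s,r}(\delta_0(e))$, use the tame estimate for $h_2^{s,r}$, and then — because $Q(e)=0$ — substitute $\delta_0(e)=\delta_0(e)-Q(e)$ so that Lemma~A supplies the quadratic bound, yielding $l=d+l_2$. The paper phrases the first step as a triangle inequality in which the first term vanishes by the homotopy relation, which is the same observation you make directly; otherwise the two arguments coincide line for line.

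One small bookkeeping point worth flagging: Lemma~A, applied on $M_s$, needs $\norm{e}_{d,s}<\theta$, which is the smallness condition appearing in the Main Theorem for Differential Relations, and is what you assume. The lemma as printed in the paper writes $\norm{e}_{d,r}<\theta$ in its hypothesis; since $M_r\subset M_s$ this is implied by $\norm{e}_{d,s}<\theta$ but not conversely, so you are right to use the stronger (and almost certainly intended) condition. This does not affect the validity of your argument.
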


\begin{remark}
We obtain $l = d+l_2$.
\end{remark}

\begin{proof}
This follows from the weak homotopy relation, the tameness inequalities for $h_2^{s,r}$, and the previous lemma with $Q(e)=0$: 
    \begin{align*}
        \norm{ 
            e
            -
            (\delta_0\circ h_1^{s,r})(e)
        }_{k,r}
        &\leq
        \norm{ 
            e
            -
            (
                \delta_0\circ h^{s,r}_1
                +
                h^{s,r}_2 \circ \delta_0
            )(e)
        }_{k,r}
        +
        \norm{ (h_2^{s,r} \circ \delta_0)(e) }_{k,r}
        \\
        &\leq 
        c_k(s-r)^{-d_k}
        \norm{ \delta_0(e) }_{k+l_2,s}
        \\
        &\leq 
        c_k(s-r)^{-d_k}
        \norm{ e }_{d,s}
        \norm{ e }_{k+d+l_2,s}.
        \qedhere
    \end{align*}
\end{proof}

\subsection{Lemma B: Flows and the Action}
\label{Step 4}

We give estimates for the flows of vector fields, and for their action on sections. 
The proof of \nameref{Lemma:B} can be found in section \ref{sect:lemmas} (see also \cite{Mar14,MMZ12} for similar statements). 
Recall from section \ref{Step 2} that $\calU(X_r)\sub \diff_X(X_r)$ denotes a fixed $C^0$-open neighborhood of $\id\!|_{X_r}$ on which we have fixed $C^k$-distances $\norm{\cdot}_{k,r}$ for local diffeomorphisms.

\begin{NamedTheorem}[Lemma B]
\label{Lemma:B}
There are constants $\theta>0$ and $c_k\geq 0$ such that for all $r<s$, and all $v\in \gerX(X_s)$ and $e\in \Gamma_E(M_s)$ satisfying
    \[
        \norm{v}_{1,s} < (s-r) \theta,
        \qquad
        \norm{e}_{1,s} < (s-r) \theta,
    \]
the flow $\varphi_v$ of $v$ is defined on $[0,1]\!\times\! X_{(s+r)/2}$ and belongs to $\calU(X_{(s+r)/2})$,
    \[  
        \varphi_v^t:
        X_{(s+r)/2}\longto X_s,
        \qquad 
        \varphi_v^{t} \in \calU(X_{(s+r)/2}),
    \]
and $\varphi_v^t$ and $e$ are compatible on $M_r$ for all $t\in [0,1]$,
    \[
        (e\cdot\varphi_v^t)|_{M_r}
        \in 
        \Gamma_E(M_r).
    \]
Moreover, the time-one flow $\varphi_v:=\varphi_v^1$ satisfies the following inequalities:
\begin{align}
    \norm{\varphi_v}_{k,{(s+r)/2}}
    &\leq 
    c_k \norm{v}_{k,s},
    \tag{1}
    \\
    \norm{e\cdot\varphi_v}_{k,r}
    &\leq 
    c_k
    \big(
        \norm{e}_{k,s}
        +
        \norm{v}_{k,s}
    \big),
    \tag{2}
    \\
    \norm{
        e\cdot\varphi_v
        -
        e
        -
        \delta v
    }_{k,r}
    &\leq 
    c_k
    \big(
        ( \norm{e}_{0,s} + \norm{v}_{0,s} )
        \norm{v}_{k+1,s}
        +
        \norm{v}_{0,s}
        \norm{e}_{k+1,s}
    \big)
    \tag{3}.
\end{align}
\end{NamedTheorem}

\subsection{Lemma C: Infinite Compositions}
\label{Step 5}

We give a convergence result for the infinite composition of diffeomorphisms. 
The proof of \nameref{Lemma:C} can be found in section \ref{sect:lemmas} (see also \cite{Conn85,Mar14} for similar statements).

\begin{NamedTheorem}[Lemma C]
\label{Lemma:C}
There are constants $\theta>0$ and $c_k\geq 0$ such that, 
for all decreasing sequences
    \[
        \{r_{\nu}\}_{\nu\geq 0},
        \quad
        0 \leq r_{\nu+1}\leq r_{\nu}\leq 1,
    \]
and all sequences of local diffeomorphisms
    \[
        \{\phi_{\nu}\in \calU_{X_r}\}_{\nu\geq 1},
        \quad
        \phi_{\nu}:X_{r_{\nu}} \longto X_{r_{\nu-1}}, 
    \]
satisfying
    \[
        \sum\nolimits_{\nu\geq 1}
        \norm{ \phi_{\nu} }_{1,r_\nu}<\theta
        \quad\textrm{and}\quad 
        \sum\nolimits_{\nu\geq 1}
        \norm{ \phi_\nu }_{k,r_\nu}<\infty,
        \quad\forall\; k\geq 0,
    \]
the sequence of smooth maps $\{\psi_\nu \}_{\nu\geq 1}$ given by
    \[
        \psi_\nu
        \defeq 
        \phi_1\circ\phi_2\circ\ldots \circ \phi_\nu \vert_{X_r}:
        X_r \longto X_{r_0},
        \quad\textrm{where}\quad 
        r\defeq\lim_{\nu\to \infty}r_{\nu},
    \]
converges on $X_r$ w.r.t.\ the $C^{\infty}$-topology to a smooth map
    \[
        \psi:X_r \longto X_{r_0},
        \quad 
        \psi 
        \defeq
        \lim_{\nu\to \infty}\psi_\nu \big|_{X_r}.
    \]
Moreover, $\psi$ is a diffeomorphism that lies in $\calU(X_r)$ and there are constants $c_k\geq 0$ such that
    \[
        \norm{ \psi }_{k,r}
        \leq 
        c_k 
        \sum\nolimits_{\nu\geq 1}
        \norm{ \phi_\nu }_{k,r_\nu}.
    \]
\end{NamedTheorem}

\subsection{Smoothing Operators and Interpolation Inequalities}
\label{Step 6}

A main ingredient in the Nash-Moser method is the use of smoothing operators, which account for the \emph{loss of derivatives} during the iteration. The \textbf{smoothing operators}\index{smoothing operators} we will be using are a family of linear operators,
    \[
        S_t^r:
        \Gamma_E(M_r) \longto \Gamma_E(M_r),
        \qquad 
        r\in [0,1],
        \quad
        t>1,
    \]
which satisfy the smoothing inequalities:
    \begin{equation}
        \label{ineq:smoothing}
        \norm{ S_t^r (e) }_{k,r}
        \leq 
        c_k t^l
        \norm{ e }_{k-l,r},
        \qquad 
        \norm{ e-S_t^r (e) }_{k-l,r}
        \leq 
        c_k t^{-l}
        \norm{ e }_{k,r},
    \end{equation}
for all $e\in \Gamma_E(M_r)$ and $0\leq l\leq k$. On sections of a vector bundle with compact base (possibly with corners), the existence of such operators is proven in \cite{Ham82}. In section \ref{sec:lemmas_corners} we give a simple argument to show how one can use a family of smoothing operators $S^1_t$ on $\Gamma_E(M_1)$ to obtain a family $S_t^r$ for all $r\in [0,1]$ as above.

The norms $\norm{ \cdot }_{k,r}$ satisfy the following \textbf{interpolation inequalities}\index{interpolation inequalities}:
    \begin{equation}
    \label{ineq:interpol}
        \norm{ e }_{k,r}^{n-l}
        \leq 
        c_k
        \norm{ e }_{l,r}^{n-k}
        \norm{ e }_{n,r}^{k-l},
        \qquad l \leq k\leq n.
    \end{equation}
This can be proven directly \cite{Conn85}, but it also follows from the smoothing inequalities \cite{Ham82}.

\newpage
\section{The Nash-Moser Iteration}\label{Step 7}
\etocsettocdepth{2}
\localtableofcontents

\noindent
{\color{white}This line is intentionally left blank.}
\newline
We prove the 
\nameref{Main Theorem for Differential Relations} using a Nash-Moser type iteration:
for any local solution $e \in \sol_{Q,Z}$ we define local diffeomorphisms $\{\phi_\nu\}_{\nu\geq 0} \in \pgrp$ whose infinite composition $\psi = \phi_0\circ\phi_1\circ\ldots$ will satisfy the equation $e\cdot \psi = 0$. 
Since $\pgrp$ is a closed pseudogroup, it immediately follows that also $\psi\in\pgrp$.

\subsection{The List of Constants}

Fix a constant $\theta>0$ smaller than the constants denoted also by $\theta$ from the 
\nameref{Main Theorem for Differential Relations}, \nameref{Lemma:B} and \nameref{Lemma:C}.

Consider also a real number $t_0>1$ and let 
    \[
        t_{\nu} \defeq t_{0}^{(3/2)^{\nu}}.
    \]
The value of $t_0$ will be determined so that it is larger than a finite number of constants which will appear during the proof. 
More precisely, we will assume that: \emph{For a finite number of pairs of constants $c, \mu>0$ (more precisely, seven), the following inequality holds:}
    \begin{equation}\label{ineq:t_0}
        c^{\nu}
        t_{\nu}^{- \mu}
        <1,
        \quad
        \textrm{for all}\ \nu\geq 0.
    \end{equation}

We will also use the following numbers, which are expressed in terms of the constants from the statement of the 
\nameref{Main Theorem for Differential Relations},
    \begin{align*}
        p &\defeq \mathrm{max}(l_1+1, d),
        \\
        q &\defeq \mathrm{max}(6l_1+5, 4l_2+1),
        \\
        l &\defeq \mathrm{max}(4l_1+1, l_1+l_2), 
        \\
        n &\defeq \mathrm{max}(l_1+1, l_2+d),
        \\
        b &\defeq \mathrm{max}(d_1+1, d_p, 2d_{p+q}).
    \end{align*}

\begin{remark}
By lemma \ref{Lemma:RelationsImpliesMain}, for the \nameref{General Main Theorem} one should replace $l_2$ by $d+l_2$ in the above constants.
\end{remark}

Fix a pair $0<r<s\leq 1$, and define two sequences of radii $\{ s_\nu \}$ and $\{ r_\nu \}$ such that
    \[ 
        s_{\nu+1} < r_\nu < s_\nu,
        \qquad\forall\;
        \nu\geq 0.
    \]
More specifically, pick
    \[
        s_0\defeq s,
        \qquad  
        s_{\nu+1} \defeq s_\nu - (s-r)3^{-(\nu+1)}, 
        \qquad 
        r_{\nu} \defeq s_{\nu} - \tfrac12 (s-r)3^{-(\nu+1)}.
    \]
Then both sequences converge to $r_{\infty}\defeq\tfrac12 (s+r)$.
Similar to the sequence $t_\nu$, let
    \[
        \epsilon_\nu \defeq (s-r)^{b(3/2)^\nu}.
    \]
Abbreviate the smoothing operators to
    \[
        S_\nu\defeq S^{s_{\nu}}_{\epsilon_\nu^{-1} t_\nu}: 
        \Gamma_E(M_{s_\nu})
        \longto 
        \Gamma_E(M_{s_\nu}),
    \]
and the homotopy operators to
    \[
        h_{\nu}
        \defeq 
        h^{s_\nu, r_\nu}:
        \Gamma_E(M_{s_{\nu}})
        \longto
        \calA(X_{r_\nu}).
    \]

\subsection{The Iteration}
Fix a solution $e\in \sol_R(M_s)$, and assume that $e$ satisfies
    \begin{align}
    \label{size}
   	    \norm{e}_{p,s} 
   	    < 
   	    (s-r)^b
   	    t_0^{-(2l_1+1)},
        \qquad
        \norm{e}_{p+q,s} 
        < 
        (s-r)^{-b}
        t_0^{(2l_1+1)}.
    \end{align}
For all $\nu\geq 0$ we will construct inductively a sequence of solutions of $R$, where
    \[
        e_\nu \in \sol_R(M_{s_\nu}),
        \qquad
        e_{0}\defeq e.
    \]
Assume that $e_\nu$ has been defined, and consider the vector field $v_\nu \in \calA(X_{r_\nu})$ given by
    \[
        v_{\nu}
        \defeq 
        -(h_\nu \circ S_\nu)(e_\nu).
    \]
Moreover, assume that $v_\nu$ and $e_\nu$ satisfy the inequalities
    \begin{equation}
    \label{ineq:flow_action}
        \norm{v_\nu}_{1, r_\nu}
        \leq 
        (r_\nu - s_{\nu+1}) \theta,
        \qquad  
        \norm{e_\nu}_{1, r_\nu}
        \leq 
        (r_\nu - s_{\nu+1}) \theta.
    \end{equation}
Then by \nameref{Lemma:B} the time-one flow $\varphi_{v_\nu}$ of $v_\nu$ is defined as follows:
    \[
        \varphi_{v_\nu}:
        X_{(r_\nu+s_{\nu+1})/2}
        \longto 
        X_{r_\nu},
        \qquad
        \varphi_{v_\nu}
        \in 
        (\pgrp\medcap \calU)(X_{(r_\nu + s_{\nu+1})/2}), 
    \]
Moreover, the local diffeomorphism $\varphi_{v_\nu}$ and the local section $e_\nu |_{M_{r_\nu}}$ are compatible on $M_{s_{\nu+1}}$.
This produces the next term of the sequence:
    \[
        e_{\nu+1}
        \defeq 
        (e_\nu|_{M_{r_\nu}} \cdot \varphi_{v_\nu})|_{M_{s_{\nu+1}}}
        \;\in\; 
        \sol_R(M_{s_{\nu+1}}).
    \]
We will prove the following statement, from which the \nameref{Main Theorem for Differential Relations} follows.

\begin{remark}
The assumptions (\ref{size}) specify the $C^{p+q}$-open neighborhood of $b=0$ from the 
\nameref{Main Theorem for Differential Relations}, while the number $l$ is the tameness degree of the map $e\mapsto \psi$.
\end{remark}

\subsection{The Proof}

For $\nu\geq 0$ we prove inductively the following inequalities:
\begin{itemize}

\item Induction hypothesis $(a)_\nu$: the \emph{weak high-norm} estimate
    \[
        \norm{e_\nu}_{p+q,s_\nu}
        \leq 
        (\epsilon_\nu^{-1} t_\nu)^{2l_1+1}.
    \]

\item Induction hypothesis $(b)_\nu$: the \emph{strong low-norm} estimate
    \[
        \norm{e_\nu}_{p,s_\nu}
        \leq
        (\epsilon_\nu^{-1} t_\nu)^{-(2l_1+1)}.
    \]
\end{itemize}
The second inductive hypothesis ensures that the iteration indeed tends to zero for low-index norms, while the first hypothesis is mostly used to effectively apply the smoothing and interpolation estimates.

The inductive hypotheses $(a)_0$ and $(b)_{0}$ are precisely our base assumptions (\ref{size}) on $e$. 
Assume that for some $\nu\geq 0$ the iteration produces an $e_{\nu}$ satisfying $(a)_\nu$ and $(b)_{\nu}$. 
We will show that (\ref{ineq:flow_action}), $(a)_{\nu+1}$ and $(b)_{\nu+1}$ hold.

First we prove an inequality which we will use repeatedly.
%Given the multitude of indices in this chapter, we would like to clarify that $c_k^\nu\defeq (c_k)^\nu$ means the $\nu$-th power of $c_k$.

\begin{lemma}
\label{lemma:often}
There are constants $c_k \geq 0$ such that, for any $m\geq 0$,
    \[
        \norm{v_{\nu}}_{k, r_{\nu}}
        \leq 
        c_k (s_\nu - r_\nu)^{-d_k}
        (\epsilon_\nu^{-1} t_\nu)^m
        \norm{ e_\nu }_{k+l_1-m,s_\nu}.
    \]
\end{lemma}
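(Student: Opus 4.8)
The statement is a purely routine combination of the bounds already available: the tameness estimate for the homotopy operators $h^{s,r}$ from the hypothesis of the \nameref{Main Theorem for Differential Relations}, and the smoothing inequalities (\ref{ineq:smoothing}). So the proof is short and mechanical, and I would present it as such. The only genuine content is bookkeeping of the exponents of $\epsilon_\nu^{-1}t_\nu$, which is why the high-norm estimate $(a)_\nu$ never enters here (it only becomes relevant later, when one needs to trade a high-index norm for a low-index one via interpolation).

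\textbf{Key steps.} First I would unwind the definition $v_\nu = -(h_\nu \circ S_\nu)(e_\nu)$ and apply the tameness inequality for $h_\nu = h^{s_\nu, r_\nu}$, namely
\[
    \norm{ h^{s_\nu, r_\nu}(w) }_{k, r_\nu}
    \leq
    c_k (s_\nu - r_\nu)^{-d_k}
    \norm{ w }_{k+l_1, s_\nu},
\]
with $w = S_\nu(e_\nu)$. This gives
\[
    \norm{v_\nu}_{k, r_\nu}
    \leq
    c_k (s_\nu - r_\nu)^{-d_k}
    \norm{ S_\nu(e_\nu) }_{k+l_1, s_\nu}.
\]
Second, recalling that $S_\nu = S^{s_\nu}_{\epsilon_\nu^{-1} t_\nu}$, I would apply the first smoothing inequality in (\ref{ineq:smoothing}), with the smoothing parameter $t = \epsilon_\nu^{-1} t_\nu$ and with the "loss" parameter taken equal to $m$ (legitimate as long as $m \leq k+l_1$; for $m > k+l_1$ one instead uses $\norm{ S_\nu(e_\nu) }_{k+l_1, s_\nu} \leq c (\epsilon_\nu^{-1}t_\nu)^{m}\,(\epsilon_\nu^{-1}t_\nu)^{k+l_1-m}\norm{e_\nu}_{0}$ — in either case the cited inequality applies verbatim after relabelling, and one may absorb the discrepancy into the constants which are "large enough" per the conventions of Section \ref{Step 6}). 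This yields
\[
    \norm{ S_\nu(e_\nu) }_{k+l_1, s_\nu}
    \leq
    c_k (\epsilon_\nu^{-1} t_\nu)^{m}
    \norm{ e_\nu }_{k+l_1 - m, s_\nu}.
\]
Combining the two displays and renaming constants gives exactly the claimed bound.

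\textbf{Main obstacle.} There is essentially no obstacle; the only point requiring a word of care is the choice of which smoothing index to use so that the exponent on $\epsilon_\nu^{-1}t_\nu$ comes out to be precisely $m$ rather than something shifted by $l_1$. This is handled by the fact that the smoothing operators $S^r_t$ satisfy (\ref{ineq:smoothing}) for \emph{every} $0 \leq l \leq k$, so one simply picks the bookkeeping parameter to land on $m$; alternatively, one observes that the inequality is stated with a free parameter $m \geq 0$ precisely so that later applications (Steps verifying $(a)_{\nu+1}$ and $(b)_{\nu+1}$, and the flow-action bound (\ref{ineq:flow_action})) can instantiate $m$ as convenient. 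I would close with the remark that the particular cases $m=0$ (no smoothing gain, used for low norms) and $m = l_1+1$ or $m = l_1 + l_2$ (used to create a contraction factor) are the ones invoked in the subsequent lemmas.
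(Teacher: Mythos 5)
Your proof is correct and is essentially identical to the paper's: apply the tameness estimate for $h^{s_\nu,r_\nu}$ to $S_\nu(e_\nu)$, then the first smoothing inequality with loss parameter $m$. The extra caveat you add about $m > k+l_1$ is not needed (and the proposed fallback is slightly off, since $\norm{e_\nu}_{0}$ does not dominate $\norm{e_\nu}_{k+l_1-m}$ when that index is negative — but in that regime the right-hand side of the lemma is itself ill-defined, so the statement is implicitly restricted to $m \leq k+l_1$, exactly the range in which it is later invoked).
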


\begin{proof}
We use the tameness inequality of the homotopy operator $h_\nu=h^{s_\nu,r_\nu}$, 
and the smoothing inequalities (\ref{ineq:smoothing}) for any $m \geq 0$:
    \begin{align*}
        \norm{v_\nu}_{k, r_\nu}
        =
        \norm{ (h_{\nu} \circ S_\nu)(e_\nu) }_{k, r_\nu}
        &\leq 
        c_k (s_\nu - r_\nu)^{-d_k}
        \norm{ S_\nu(e_\nu) }_{k+l_1, s_\nu}
        \notag
        \\
        &\leq
        c_k (s_\nu - r_\nu)^{-d_k}
        (\epsilon_\nu^{-1} t_\nu)^m
        \norm{ e_\nu }_{k+l_1-m,s_\nu}.
        \qedhere
    \end{align*}
\end{proof}
\noindent
\begin{proposition}
\label{proposition:induction_hypothesis}
$(b)_\nu$ implies $(\ref{ineq:flow_action})_\nu$.
\end{proposition}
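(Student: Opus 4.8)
The statement $(b)_\nu \implies (\ref{ineq:flow_action})_\nu$ asserts that if the strong low-norm estimate holds at step $\nu$, then the two smallness conditions required to run \nameref{Lemma:B} at step $\nu$ are satisfied, namely
\[
    \norm{v_\nu}_{1,r_\nu} \leq (r_\nu - s_{\nu+1})\theta,
    \qquad
    \norm{e_\nu}_{1,r_\nu} \leq (r_\nu - s_{\nu+1})\theta.
\]
The proof is a bookkeeping exercise: bound each quantity using Lemma \ref{lemma:often} (for $v_\nu$) or the smoothing/interpolation inequalities (for $e_\nu$), feed in $(b)_\nu$, and then check that the resulting power of $t_0$ is negative so that inequality (\ref{ineq:t_0}) forces the bound below the required threshold once $t_0$ is chosen large enough.

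\textbf{Bounding $\norm{e_\nu}_{1,r_\nu}$.} Since $r_\nu < s_\nu$, we have $\norm{e_\nu}_{1,r_\nu} \leq \norm{e_\nu}_{1,s_\nu} \leq \norm{e_\nu}_{p,s_\nu}$ because $p = \max(l_1+1, d) \geq 1$. By $(b)_\nu$ this is at most $(\epsilon_\nu^{-1} t_\nu)^{-(2l_1+1)}$. Now $r_\nu - s_{\nu+1} = \tfrac12(s-r)3^{-(\nu+1)}$ and $\epsilon_\nu = (s-r)^{b(3/2)^\nu}$, so the claimed inequality becomes
\[
    (s-r)^{b(2l_1+1)(3/2)^\nu} t_\nu^{-(2l_1+1)}
    \leq
    \tfrac{\theta}{2} (s-r) 3^{-(\nu+1)}.
\]
Since $b \geq d_1+1 \geq 1$ and $(3/2)^\nu \geq 1$, the power of $(s-r)$ on the left dominates the one on the right (recall $s-r \leq 1$), so it suffices to absorb the factor $2\cdot 3^{\nu+1}/\theta$ into a bound of the form $c^\nu t_\nu^{-\mu} < 1$ with $c = 6/\theta$ and $\mu = 2l_1+1 > 0$; this is exactly one of the seven applications of (\ref{ineq:t_0}).

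\textbf{Bounding $\norm{v_\nu}_{1,r_\nu}$.} Apply Lemma \ref{lemma:often} with $k=1$ and $m=l_1$, giving
\[
    \norm{v_\nu}_{1,r_\nu}
    \leq
    c_1 (s_\nu - r_\nu)^{-d_1}
    (\epsilon_\nu^{-1} t_\nu)^{l_1}
    \norm{e_\nu}_{1-l_1 \vee 0,\, s_\nu}
    \leq
    c_1 (s_\nu - r_\nu)^{-d_1}
    (\epsilon_\nu^{-1} t_\nu)^{l_1}
    \norm{e_\nu}_{p,s_\nu},
\]
where in the last step we bound the low-order norm by $\norm{e_\nu}_{p,s_\nu}$ (using $p \geq l_1 \geq \max(1-l_1,0)$ when $l_1 \geq 1$, and trivially when $l_1 = 0$). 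By $(b)_\nu$ the right-hand side is at most $c_1 (s_\nu-r_\nu)^{-d_1} (\epsilon_\nu^{-1} t_\nu)^{l_1 - (2l_1+1)} = c_1 (s_\nu-r_\nu)^{-d_1}(\epsilon_\nu^{-1} t_\nu)^{-(l_1+1)}$. Now $s_\nu - r_\nu = \tfrac12(s-r)3^{-(\nu+1)}$ and $\epsilon_\nu^{-1} = (s-r)^{-b(3/2)^\nu}$, so since $b \geq d_1+1$ the power of $(s-r)$ coming from $\epsilon_\nu^{-(l_1+1)}$ is $-b(l_1+1)(3/2)^\nu$, while the $(s_\nu-r_\nu)^{-d_1}$ factor contributes $(s-r)^{-d_1}$ and the required right-hand side $r_\nu - s_{\nu+1}$ contributes $(s-r)^{+1}$; one checks that $b(l_1+1)(3/2)^\nu + d_1 \geq 1$, so all powers of $(s-r) \leq 1$ are on the favourable side after moving them to the left. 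What remains is a bound $c^\nu t_\nu^{-(l_1+1)} < 1$ for a suitable constant $c$ (collecting the $c_1$, the $2\cdot 3^{\nu+1}$, and the $3^{d_1(\nu+1)}$ factors), which is again an instance of (\ref{ineq:t_0}) with $\mu = l_1+1 > 0$. This is the only mildly delicate point — keeping track of which exponents of $(s-r)$ and $3^\nu$ land where — but it is entirely routine; choosing $t_0$ large enough to satisfy both instances of (\ref{ineq:t_0}) completes the proof.
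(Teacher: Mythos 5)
Your proposal is correct and follows the same overall strategy as the paper: bound the two norms using $(b)_\nu$, carry out the power counting in $(s-r)$, and absorb the resulting polynomial-in-$\nu$ constants via inequality (\ref{ineq:t_0}). The only genuinely different choice you make is to apply Lemma~\ref{lemma:often} with $m=l_1$ rather than $m=0$, as the paper does. With $m=0$ one gets $\norm{v_\nu}_{1,r_\nu}\le c_1(s_\nu-r_\nu)^{-d_1}\norm{e_\nu}_{l_1+1,s_\nu}$ directly (and $p\ge l_1+1$ handles the rest), yielding the sharper exponent $(\epsilon_\nu t_\nu^{-1})^{2l_1+1}$; your choice $m=l_1$ adds an unnecessary factor $(\epsilon_\nu^{-1}t_\nu)^{l_1}$ on the favourable side and ends with the weaker exponent $(\epsilon_\nu t_\nu^{-1})^{l_1+1}$, which still works since $l_1+1>0$ but buys nothing.

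Two small bookkeeping slips worth flagging, though neither invalidates the conclusion. First, with $k=1$ and $m=l_1$, Lemma~\ref{lemma:often} gives a right-hand side involving $\norm{e_\nu}_{k+l_1-m,s_\nu}=\norm{e_\nu}_{1,s_\nu}$, not $\norm{e_\nu}_{\max(1-l_1,0),s_\nu}$ as you wrote; the subsequent bound by $\norm{e_\nu}_{p,s_\nu}$ is of course still legal because $p\ge 1$. Second, the quantity $(\epsilon_\nu^{-1}t_\nu)^{-(l_1+1)}$ equals $\epsilon_\nu^{l_1+1}t_\nu^{-(l_1+1)}$, so the power of $(s-r)$ contributed is the \emph{positive} $+b(l_1+1)(3/2)^\nu$, and the condition to check before invoking (\ref{ineq:t_0}) is $b(l_1+1)(3/2)^\nu - d_1 - 1\ge 0$ (not $\cdots+d_1\ge 1$), which holds since $b\ge d_1+1$. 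Once these are straightened out the argument is sound.
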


\begin{proof}
For the first inequality of (\ref{ineq:flow_action}) we use lemma \ref{lemma:often} for $k=1$ and $m=0$, 
induction hypothesis $(b)_\nu$ where $p\geq l_1+1$, 
that $b\geq d_1+1$, 
and inequality (\ref{ineq:t_0}):
    \begin{align*}
        \norm{ v_\nu }_{1,r_\nu}
        &\leq 
        c_1 (s_\nu - r_\nu)^{-d_1}
        \norm{ e_\nu }_{l_1+1,s_\nu}
        \\
        &\leq
        c_1 (s_\nu - r_\nu)^{-d_1} 
        (\epsilon_\nu t_\nu^{-1})^{2l_1+1}
        \\
        &\leq
        c_1 2^{d_1} 3^{d_1(\nu+1)}
        (s - r)
        t_\nu^{-(2l_1+1)}
        \big(
            (s - r)^{-(d_1+1)} \epsilon_\nu^{2l_1+1}
        \big)
        \\
        &\leq
        (r_\nu-s_{\nu+1})
        \theta
        \big(
            c_1
            \theta^{-1}
            2^{1+d_1}
            3^{(1+d_1)(\nu+1)} 
            t_\nu^{-(2l_1+1)}
        \big)
        \\
        &
        \leq
        (r_\nu-s_{\nu+1})
        \theta.
    \end{align*}
The second inequality in (\ref{ineq:flow_action}) follows from using induction hypothesis $(b)_\nu$, that $b\geq 1$, and inequality (\ref{ineq:t_0}):
    \begin{align*}
        \norm{ e_\nu }_{1,s_\nu}
        &\leq 
        \norm{ e_\nu }_{p,s_\nu}
        \leq 
        (\epsilon_\nu t_\nu^{-1})^{2l_1 + 1}
        \\
        &\leq
        (s-r)
        t_\nu^{-(2l_1+1)}
        \big(
            (s - r)^{-1} \epsilon_\nu^{2l_1+1}
        \big)
        \\
        &\leq
        (r_\nu - s_{\nu+1}) \theta
        \big(
            2 \theta^{-1} 3^{\nu+1} t_\nu^{-(2l_1+1)}
        \big)
        \leq
        (r_\nu - s_{\nu+1}) \theta.
        \qedhere
    \end{align*}
\end{proof}
\noindent
By \nameref{Lemma:B} we conclude that the step 
    $
        e_{\nu+1}
        \defeq (e_{\nu}|_{M_{r_\nu}} \cdot \varphi_{v_{\nu}})
    $ 
is well-defined on $M_{s_{\nu+1}}$.

\begin{lemma}
\label{lemma:diffnorm}
    $
        \norm{ \varphi_{v_\nu} }_{1,(r_\nu + s_{\nu+1})/2}
        \leq 
        (r_{\nu} - s_{\nu+1}) \theta.
    $
\end{lemma}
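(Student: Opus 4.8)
\textbf{Proof plan for Lemma \ref{lemma:diffnorm}.}

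The plan is to apply inequality (1) of \nameref{Lemma:B} to the vector field $v_\nu$ on $X_{s_\nu}$, noting that this requires the hypotheses of \nameref{Lemma:B} to be met. By Proposition \ref{proposition:induction_hypothesis} (which uses induction hypothesis $(b)_\nu$) the vector field $v_\nu$ satisfies $\norm{v_\nu}_{1,r_\nu}\leq (r_\nu-s_{\nu+1})\theta$, so \nameref{Lemma:B} applies with the pair of radii $r_\nu > s_{\nu+1}$, giving in particular that $\varphi_{v_\nu}$ is defined on $X_{(r_\nu+s_{\nu+1})/2}$ and
    \[
        \norm{ \varphi_{v_\nu} }_{1,(r_\nu+s_{\nu+1})/2}
        \leq
        c_1 \norm{v_\nu}_{1, r_\nu}.
    \]
So the task reduces to showing that the right-hand side is bounded by $(r_\nu - s_{\nu+1})\theta$, which means I need a bit more room than what Proposition \ref{proposition:induction_hypothesis} literally provides — but that room is present, since the proof of that proposition actually yields a bound with an extra decaying factor.

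First I would redo the estimate of $\norm{v_\nu}_{1,r_\nu}$ as in the proof of Proposition \ref{proposition:induction_hypothesis}, keeping the spare factor explicit: using Lemma \ref{lemma:often} with $k=1$, $m=0$, induction hypothesis $(b)_\nu$ (valid since $p\geq l_1+1$), the choice $b\geq d_1+1$, and $s_\nu - r_\nu = \tfrac12(s-r)3^{-(\nu+1)}$, one gets
    \[
        \norm{v_\nu}_{1,r_\nu}
        \leq
        (r_\nu - s_{\nu+1})\theta
        \cdot
        \big(
            c_1 \theta^{-1} 2^{1+d_1} 3^{(1+d_1)(\nu+1)} t_\nu^{-(2l_1+1)}
        \big).
    \]
Multiplying by $c_1$ from the flow estimate, the factor in parentheses becomes $c_1^2\theta^{-1}2^{1+d_1}3^{(1+d_1)(\nu+1)}t_\nu^{-(2l_1+1)}$, which is of the form $c^\nu t_\nu^{-\mu}$ with $c = c_1^2\theta^{-1}2^{1+d_1}3^{1+d_1}$ and $\mu = 2l_1+1 > 0$. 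By the assumption (\ref{ineq:t_0}) on $t_0$ (this being one of the seven pairs $(c,\mu)$ alluded to there), this factor is $<1$ for all $\nu\geq 0$, hence
    \[
        \norm{ \varphi_{v_\nu} }_{1,(r_\nu+s_{\nu+1})/2}
        \leq
        c_1\norm{v_\nu}_{1,r_\nu}
        \leq
        (r_\nu - s_{\nu+1})\theta,
    \]
as claimed.

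The only real subtlety is bookkeeping: making sure $t_0$ is chosen large enough to absorb the combined constant $c_1^2\theta^{-1}2^{1+d_1}3^{1+d_1}$ into the list governed by (\ref{ineq:t_0}), and making sure the radius hypothesis $\norm{v_\nu}_{1,r_\nu}\leq(r_\nu-s_{\nu+1})\theta$ needed to invoke \nameref{Lemma:B} with radii $r_\nu>s_{\nu+1}$ is exactly the content of Proposition \ref{proposition:induction_hypothesis} — which it is. No genuine obstacle is expected here; the lemma is a routine consequence of the flow estimate and the already-established bound on $v_\nu$, with the spare decaying factor providing the margin.
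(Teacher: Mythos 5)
Your proposal is correct and follows essentially the same route as the paper: the paper likewise applies \nameref{Lemma:B}(1) to reduce to $c_1\norm{v_\nu}_{1,r_\nu}$ and then, as it says explicitly, ``repeat[s] the argument of proposition \ref{proposition:induction_hypothesis}'' to obtain the bound with the spare factor $\big(c_1\theta^{-1}2^{1+d_1}3^{(1+d_1)(\nu+1)}t_\nu^{-(2l_1+1)}\big)$, which is then absorbed using assumption~(\ref{ineq:t_0}). Your explicit identification of the pair $(c,\mu)$ governing that absorption is a slightly more detailed account of the same bookkeeping.
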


\begin{proof}
We may use \nameref{Lemma:B}(1) because of inequality (\ref{ineq:flow_action}), and then we repeat the argument of proposition \ref{proposition:induction_hypothesis}:
    \begin{align*}
        \norm{ \varphi_{v_\nu} }_{1, (r_\nu +s_{\nu+1})/2}
        & \leq 
        c_1
        \norm{ v_\nu }_{1, r_\nu}
        \leq 
        c_1 (s_\nu - r_\nu)^{-d_1}
        \norm{ e_\nu }_{l_1+1,s_\nu}
        \\
        &\leq
        c_1 (s_\nu - r_\nu)^{-d_1} 
        (\epsilon_\nu t_\nu^{-1})^{2l_1+1}
        \\
        &\leq
        c_1 2^{d_1} 3^{d_1(\nu+1)}
        (s - r)
        t_\nu^{-(2l_1+1)}
        \big(
            (s - r)^{-(d_1+1)} \epsilon_\nu^{2l_1+1}
        \big)
        \\
        &\leq
        (r_\nu-s_{\nu+1})
        \theta
        \big(
            c_1
            \theta^{-1}
            2^{1+d_1}
            3^{(1+d_1)(\nu+1)} 
            t_\nu^{-(2l_1+1)}
        \big)
        \\
        & \leq
        (r_\nu-s_{\nu+1})
        \theta.
        \qedhere
    \end{align*}
\end{proof}
\noindent
The use of smoothing operators avoids the \emph{loss of derivatives} in the iteration, which plays a key role in the following lemma.

\begin{lemma}
\label{lemma:no_loss}
There are constants $c_k \geq 0$ such that
    \[
        \norm{ e_{\nu+1} }_{k, s_{\nu+1}}
        \leq 
        c_k (s_\nu - r_\nu)^{-d_k} 
        (\epsilon_\nu^{-1} t_\nu)^{l_1}
        \norm{ e_\nu }_{k,s_\nu}.
    \]
\end{lemma}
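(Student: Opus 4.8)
The statement to prove is lemma~\ref{lemma:no_loss}: a tame estimate for the new section $e_{\nu+1}$ in terms of $e_\nu$, with a controlled power of the smoothing parameter $\epsilon_\nu^{-1}t_\nu$ and a controlled power of $(s_\nu-r_\nu)^{-1}$. The key point is that the expression $e_{\nu+1}=(e_\nu|_{M_{r_\nu}}\cdot\varphi_{v_\nu})|_{M_{s_{\nu+1}}}$ is the action of the time-one flow of $v_\nu$ on $e_\nu$, and $v_\nu=-(h_\nu\circ S_\nu)(e_\nu)$. So I would simply combine the action estimate \nameref{Lemma:B}(2) with the estimate for $v_\nu$ from lemma~\ref{lemma:often}.

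\textbf{First step: apply \nameref{Lemma:B}(2).} By proposition~\ref{proposition:induction_hypothesis} and lemma~\ref{lemma:diffnorm}, the hypotheses of \nameref{Lemma:B} are met on the relevant domains (with $s$ replaced by $s_\nu$, $r$ by $s_{\nu+1}$, passing through the intermediate radius $r_\nu$; one applies the lemma on the pair $(r_\nu,s_{\nu+1})$ after first restricting $e_\nu$ to $M_{r_\nu}$, or more cleanly on the nested pair $(s_\nu,s_{\nu+1})$ noting $s_{\nu+1}<r_\nu<s_\nu$). This gives
\[
    \norm{e_{\nu+1}}_{k,s_{\nu+1}}
    \leq
    c_k\big( \norm{e_\nu}_{k,r_\nu} + \norm{v_\nu}_{k,r_\nu} \big)
    \leq
    c_k\big( \norm{e_\nu}_{k,s_\nu} + \norm{v_\nu}_{k,r_\nu} \big),
\]
using monotonicity of the norms in the radius.

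\textbf{Second step: estimate $\norm{v_\nu}_{k,r_\nu}$ and absorb.} Apply lemma~\ref{lemma:often} with $m=0$:
\[
    \norm{v_\nu}_{k,r_\nu}
    \leq
    c_k (s_\nu-r_\nu)^{-d_k}\,\norm{e_\nu}_{k+l_1,s_\nu}.
\]
This has the wrong index $k+l_1$, so instead apply lemma~\ref{lemma:often} with $m=l_1$:
\[
    \norm{v_\nu}_{k,r_\nu}
    \leq
    c_k (s_\nu-r_\nu)^{-d_k}\,(\epsilon_\nu^{-1}t_\nu)^{l_1}\,\norm{e_\nu}_{k,s_\nu}.
\]
Since $(\epsilon_\nu^{-1}t_\nu)^{l_1}\geq 1$ and $(s_\nu-r_\nu)^{-d_k}\geq 1$, the term $\norm{e_\nu}_{k,s_\nu}$ coming from the first summand in \nameref{Lemma:B}(2) is dominated by this, and combining the two steps yields exactly
\[
    \norm{e_{\nu+1}}_{k,s_{\nu+1}}
    \leq
    c_k(s_\nu-r_\nu)^{-d_k}\,(\epsilon_\nu^{-1}t_\nu)^{l_1}\,\norm{e_\nu}_{k,s_\nu},
\]
as claimed (after enlarging the constants $c_k,d_k$ in the sense fixed in the notation).

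\textbf{Expected obstacle.} There is no deep obstacle here; this is a bookkeeping lemma. The only thing requiring care is checking that the domain hypotheses of \nameref{Lemma:B} are genuinely satisfied at this stage of the induction — i.e.\ that the smallness conditions $\norm{v_\nu}_{1,r_\nu}\leq(r_\nu-s_{\nu+1})\theta$ and $\norm{e_\nu}_{1,r_\nu}\leq(r_\nu-s_{\nu+1})\theta$ hold, which is precisely (\ref{ineq:flow_action}) established in proposition~\ref{proposition:induction_hypothesis} from induction hypothesis $(b)_\nu$ — and that the crucial choice $m=l_1$ (rather than $m=0$) is what prevents the loss of derivatives, since the smoothing operator $S_\nu$ is exactly what allows trading derivatives for a power of $\epsilon_\nu^{-1}t_\nu$. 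One should also be slightly careful that the intermediate radius $(r_\nu+s_{\nu+1})/2$ appearing in \nameref{Lemma:B} for the diffeomorphism norm does not interfere: the action estimate \nameref{Lemma:B}(2) is stated directly for the pair $(s,r)$ and delivers a section on $M_r$, so applied with $(s_\nu,s_{\nu+1})$ it gives the section on $M_{s_{\nu+1}}$ directly, which is what we want.
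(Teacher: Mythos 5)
Your proof is correct and follows the same route as the paper: apply \nameref{Lemma:B}(2), then estimate $\norm{v_\nu}_{k,r_\nu}$ via Lemma~\ref{lemma:often} with $m=l_1$, and absorb the $\norm{e_\nu}_{k,s_\nu}$ term using $(\epsilon_\nu^{-1}t_\nu)^{l_1}\geq 1$ and $(s_\nu-r_\nu)^{-d_k}\geq 1$. Your remarks on why $m=l_1$ (rather than $m=0$) is the right choice, and on the verification of the smallness hypotheses, correctly fill in details the paper leaves implicit.
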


\begin{proof}
We use \nameref{Lemma:B}(2) and lemma \ref{lemma:often} for $m=l_1$:
    \[
        \norm{ e_{\nu+1} }_{k,s_{\nu+1}}
        \leq 
        c_k(
            \norm{ e_{\nu} }_{k,s_{\nu}}
            +
            \norm{ v_{\nu} }_{k,r_{\nu}}
        )
        \leq
        c_k (s_\nu - r_\nu)^{-d_k} 
        (\epsilon_\nu^{-1} t_\nu)^{l_1}
        \norm{ e_{\nu} }_{k,s_{\nu}}.
        \qedhere
    \]
\end{proof}
\noindent
Now we prove the weak high-norm estimate.

\begin{proposition}
\label{proposition:hypothesis_a}
$(a)_\nu$ and $(b)_\nu$ imply $(a)_{\nu+1}$.
\end{proposition}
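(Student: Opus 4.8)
The plan is to reproduce the kind of estimate used for Lemma~\ref{lemma:no_loss} at the top index $k=p+q$, feed in the weak high-norm hypothesis $(a)_\nu$, and then check that the resulting loss is absorbed by the growth of the smoothing parameter. First I would apply Lemma~\ref{lemma:no_loss} with $k=p+q$ and then the induction hypothesis $(a)_\nu$ to get
\[
    \norm{e_{\nu+1}}_{p+q, s_{\nu+1}}
    \leq
    c_{p+q}(s_\nu - r_\nu)^{-d_{p+q}}
    (\epsilon_\nu^{-1} t_\nu)^{l_1}
    \norm{e_\nu}_{p+q, s_\nu}
    \leq
    c_{p+q}(s_\nu - r_\nu)^{-d_{p+q}}
    (\epsilon_\nu^{-1}t_\nu)^{3l_1+1}.
\]

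Next I would rewrite the target of $(a)_{\nu+1}$ using the recursions $t_{\nu+1}=t_\nu^{3/2}$ and $\epsilon_{\nu+1}=\epsilon_\nu^{3/2}$, which give $\epsilon_{\nu+1}^{-1}t_{\nu+1}=(\epsilon_\nu^{-1}t_\nu)^{3/2}$ and hence
\[
    (\epsilon_{\nu+1}^{-1}t_{\nu+1})^{2l_1+1}
    =
    (\epsilon_\nu^{-1}t_\nu)^{3(2l_1+1)/2}
    =
    (\epsilon_\nu^{-1}t_\nu)^{3l_1+1}\,
    (\epsilon_\nu^{-1}t_\nu)^{1/2}.
\]
Since $\epsilon_\nu^{-1}t_\nu\geq 1$ (as $s-r\leq 1$ and $t_\nu>1$), comparing this with the bound above shows that $(a)_{\nu+1}$ follows from the single scalar inequality
\[
    c_{p+q}(s_\nu - r_\nu)^{-d_{p+q}}
    \leq
    (\epsilon_\nu^{-1}t_\nu)^{1/2}.
\]

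To close this I would substitute $s_\nu - r_\nu = \tfrac12(s-r)3^{-(\nu+1)}$ and $\epsilon_\nu^{-1}=(s-r)^{-b(3/2)^\nu}$, so the left side equals $c_{p+q}2^{d_{p+q}}3^{d_{p+q}(\nu+1)}(s-r)^{-d_{p+q}}$. Because $b\geq 2d_{p+q}$ forces $b(3/2)^\nu/2\geq d_{p+q}$ and $s-r\leq 1$, the factor $(s-r)^{-d_{p+q}}$ is dominated by $\epsilon_\nu^{-1/2}=(s-r)^{-b(3/2)^\nu/2}$, and the claim reduces to
\[
    c_{p+q}2^{d_{p+q}}3^{d_{p+q}(\nu+1)}
    \leq
    t_\nu^{1/2}
    =
    t_0^{(3/2)^\nu/2},
\]
which holds for all $\nu\geq0$ once $t_0$ is taken sufficiently large, by inequality~(\ref{ineq:t_0}) applied with the pair $(c,\mu)$ with $\mu=\tfrac12$ and $c$ chosen to dominate the geometric-times-polynomial factor $c_{p+q}2^{d_{p+q}}3^{d_{p+q}(\nu+1)}$.

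The computation is routine Nash--Moser bookkeeping; the one point that matters — and the point the choices of $b$, the radii $s_\nu,r_\nu$, and the lower bound on $t_0$ were engineered for — is that the loss coming from the shrinking collars $(s_\nu-r_\nu)^{-d_{p+q}}$ together with the extra $(\epsilon_\nu^{-1}t_\nu)^{l_1}$ factor in Lemma~\ref{lemma:no_loss} is outweighed, uniformly in $\nu$, by the half-power $(\epsilon_\nu^{-1}t_\nu)^{1/2}$ of headroom produced by the super-exponential growth $t_\nu=t_0^{(3/2)^\nu}$ of the smoothing parameter. I expect this uniform-in-$\nu$ balancing to be the only place that needs care; everything else is a direct chain of the already-proven inequalities.
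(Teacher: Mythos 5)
Your proof is correct and takes essentially the same route as the paper's: apply Lemma~\ref{lemma:no_loss} at $k=p+q$, feed in $(a)_\nu$, use $\epsilon_{\nu+1}^{-1}t_{\nu+1}=(\epsilon_\nu^{-1}t_\nu)^{3/2}$ to peel off a half-power of headroom, and close the resulting scalar inequality via $b\geq 2d_{p+q}$ and (\ref{ineq:t_0}). Your presentation is actually a bit cleaner than the paper's (you isolate the scalar reduction $c_{p+q}(s_\nu-r_\nu)^{-d_{p+q}}\leq(\epsilon_\nu^{-1}t_\nu)^{1/2}$ explicitly, whereas the paper folds it into a single display chain that, incidentally, contains a small slip in the $\epsilon$-exponent on the third line); one harmless remark is that the observation $\epsilon_\nu^{-1}t_\nu\geq 1$ is not actually needed to justify the cancellation, only positivity is.
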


\begin{proof}
We use lemma \ref{lemma:no_loss} for $k=p+q$, induction hypothesis $(a)_{\nu}$, that $b\geq 2 d_{p+q}$, and inequality (\ref{ineq:t_0}):
\begin{align*}
    \norm{ e_{\nu+1} }_{p+q,s_{\nu+1}}
    &\leq 
    c_{p+q} (s_\nu - r_\nu)^{-d_{p+q}}
    (\epsilon_\nu^{-1} t_\nu)^{l_1}
    \norm{ e_\nu }_{p+q,s_\nu}
    \\
    & \leq
    c_{p+q} (s_\nu - r_\nu)^{-d_{p+q}} 
    (\epsilon_\nu^{-1} t_\nu)^{3l_1 + 1}
    \\
    &=
    c_{p+q} 2^{d_k} 3^{d_{p+q}(\nu+1)}
    \big(
        (s-r)^{-d_{p+q}}
        \epsilon_{\nu+1}^{1/2}
    \big)
    \epsilon_{\nu+1}^{-(2l_1+1)}
    t_\nu^{3l_1 + 1}
    \\
    &\leq
    \big(  
        c_{p+q}
        2^{d_k} 
        3^{d_{p+q}(\nu+1)} 
        t_{\nu}^{-1/2}
    \big)
    \epsilon_{\nu+1}^{-(2l_1+1)}
    t_{\nu+1}^{2l_1+1}
    \\
    & \leq
    (\epsilon_{\nu+1}^{-1} t_{\nu+1})^{2l_1+1}.
    \qedhere
\end{align*}
\end{proof}
\noindent
For the strong low-norm estimate we decompose $e_{\nu+1}$ as the sum
    \[
        e_{\nu+1}
        =
        \big(
            e_{\nu}\cdot\varphi_{v_{\nu}}
            -
            e_{\nu}
            -
            \delta_0(v_{\nu})
        \big)
        +
        \big(
            e_{\nu}
            -
            (\delta_0 \circ h_{\nu})(e_{\nu})
        \big)
        +
        \big(
            (\delta_0 \circ h_{\nu} \circ (\id-S_{\nu}))(e_{\nu})
        \big),
    \]
which we denote by $T_1$, $T_2$, and $T_3$. The first two terms admit \emph{quadratic} inequalities.

\begin{lemma}
\label{lemma:T12}
There are constants $c_k \geq 0$ such that
    \[
        \norm{ T_1 }_{k,s_{\nu+1}}
        +
        \norm{ T_2 }_{k,s_{\nu+1}}
        \leq 
        c_k (s_\nu - r_\nu)^{-2 d_{k+1}} (\epsilon_\nu t_\nu^{-1})^{2l_1+1}
        \norm{ e_\nu }_{k+n, s_\nu}.
    \]
\end{lemma}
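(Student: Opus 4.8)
The term $T_1$ is precisely what \nameref{Lemma:B}(3) controls: with $v=v_\nu$ and $e=e_\nu$ it gives
\[
    \norm{ T_1 }_{k,s_{\nu+1}}
    \leq
    c_k\big(
        (\norm{e_\nu}_{0,s_\nu} + \norm{v_\nu}_{0,s_\nu})\norm{v_\nu}_{k+1,s_\nu}
        +
        \norm{v_\nu}_{0,s_\nu}\norm{e_\nu}_{k+1,s_\nu}
    \big),
\]
where I am writing $s_\nu$ in place of the intermediate radius $(r_\nu+s_{\nu+1})/2$ after using that norms are monotone in the radius (and $s_{\nu+1}<r_\nu<s_\nu$). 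The term $T_2$ is $e_\nu - (\delta_0\circ h_\nu)(e_\nu)$, which is exactly the left-hand side of the tame quadratic estimate (\ref{ineq:quadratic}) in the hypothesis of the \nameref{Main Theorem for Differential Relations}, applicable because $e_\nu\in\sol_R(M_{s_\nu})$ and $\norm{e_\nu}_{d,s_\nu}<\theta$ (which holds by $(b)_\nu$, since $p\ge d$, after shrinking $\theta$ and using (\ref{ineq:t_0}) as in Proposition \ref{proposition:induction_hypothesis}); it yields
\[
    \norm{ T_2 }_{k,s_\nu}
    \leq
    c_k(s_\nu-r_\nu)^{-d_k}\norm{e_\nu}_{d,s_\nu}\norm{e_\nu}_{k+l_2,s_\nu}.
\]

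\textbf{Key steps.} First I would bound each occurrence of $\norm{v_\nu}_{j,r_\nu}$ appearing above by Lemma \ref{lemma:often}: for the factor $\norm{v_\nu}_{k+1,s_\nu}$ use $m=0$ to get $\le c_{k+1}(s_\nu-r_\nu)^{-d_{k+1}}\norm{e_\nu}_{k+1+l_1,s_\nu}$, for $\norm{v_\nu}_{0,s_\nu}$ use $m=l_1$ to get $\le c_0(s_\nu-r_\nu)^{-d_0}\norm{e_\nu}_{0,s_\nu}$, and likewise $\norm{v_\nu}_{0,r_\nu}$ with $m=l_1$. Substituting into the Lemma \ref{lemma:B}(3) bound, every summand of $\norm{T_1}_{k,s_{\nu+1}}$ becomes a product of a negative power of $(s_\nu-r_\nu)$, the factor $\norm{e_\nu}_{0,s_\nu}$ (or $\norm{e_\nu}_{d,s_\nu}$), and a single high-index norm of $e_\nu$ of order at most $k+1+l_1$; by monotonicity in the index and the choice $n\ge l_1+1$ and $n\ge l_2+d$, all these high-index norms are $\le\norm{e_\nu}_{k+n,s_\nu}$, and the low-index factor $\norm{e_\nu}_{0,s_\nu}\le\norm{e_\nu}_{p,s_\nu}$ is $\le(\epsilon_\nu t_\nu^{-1})^{2l_1+1}$ by induction hypothesis $(b)_\nu$; the same bound applies to $\norm{e_\nu}_{d,s_\nu}$ in the $T_2$ estimate since $p\ge d$. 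Collecting the powers of $(s_\nu-r_\nu)$: $T_1$ contributes at worst $(s_\nu-r_\nu)^{-(d_{k+1}+d_0)}\le(s_\nu-r_\nu)^{-2d_{k+1}}$ (enlarging $c_k$ and using $d_0\le d_{k+1}$), and $T_2$ contributes $(s_\nu-r_\nu)^{-(d_k+d_{l_2+d}+\cdots)}$, which is also $\le(s_\nu-r_\nu)^{-2d_{k+1}}$ after enlarging constants; finally adding the two estimates and absorbing all numerical constants into a single $c_k$ gives the claimed inequality.

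\textbf{Main obstacle.} The only genuinely delicate point is the radius bookkeeping: $T_1$ is naturally estimated on the intermediate radius $(r_\nu+s_{\nu+1})/2$ produced by \nameref{Lemma:B} while $T_2$ and $T_3$ live on $s_\nu$ and $r_\nu$, so I must be careful to state everything on $s_{\nu+1}$ using $s_{\nu+1}<(r_\nu+s_{\nu+1})/2<r_\nu<s_\nu$ and the monotonicity $\norm{\cdot}_{k,s_{\nu+1}}\le\norm{\cdot}_{k,r}$ for $r\ge s_{\nu+1}$, and to make sure the single common denominator $(s_\nu-r_\nu)^{-2d_{k+1}}$ dominates the various $(s_\nu-r_\nu)^{-d_j}$ factors (which is fine since $d_j$ is monotone in $j$ and we are free to enlarge the constants $c_k$, per the convention in Section \ref{Step 2} that constants are "large enough"). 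Everything else is a routine substitution of Lemma \ref{lemma:often}, the hypotheses $(a)_\nu$/$(b)_\nu$, and the definitions of $n$, $l$, $p$, $q$.
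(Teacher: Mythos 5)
Your overall strategy coincides with the paper's: estimate $T_1$ via \nameref{Lemma:B}(3), estimate $T_2$ via the quadratic inequality (\ref{ineq:quadratic}), and then convert all $v_\nu$-norms to $e_\nu$-norms with Lemma~\ref{lemma:often} so that $(b)_\nu$ can supply the smallness factor. So the \emph{decomposition} and \emph{key lemmas} match the paper exactly.

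However, there is a genuine error in your application of Lemma~\ref{lemma:often}. You write that $m=l_1$ gives
\[
    \norm{v_\nu}_{0,r_\nu}\le c_0(s_\nu-r_\nu)^{-d_0}\norm{e_\nu}_{0,s_\nu},
\]
but the lemma actually gives
\[
    \norm{v_\nu}_{0,r_\nu}\le c_0(s_\nu-r_\nu)^{-d_0}(\epsilon_\nu^{-1}t_\nu)^{l_1}\norm{e_\nu}_{0,s_\nu},
\]
and the factor $(\epsilon_\nu^{-1}t_\nu)^{l_1}$ is large, not negligible. If you keep it and then invoke $(b)_\nu$, you obtain only $(\epsilon_\nu t_\nu^{-1})^{l_1+1}$ instead of the needed $(\epsilon_\nu t_\nu^{-1})^{2l_1+1}$, which is too weak. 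The paper avoids this entirely by taking $m=0$ throughout, i.e.\ $\norm{v_\nu}_{0,r_\nu}\lesssim(s_\nu-r_\nu)^{-d_0}\norm{e_\nu}_{l_1,s_\nu}$, and then using $l_1< p$ together with $(b)_\nu$ to bound $\norm{e_\nu}_{l_1,s_\nu}$; this is the clean way to get the $(\epsilon_\nu t_\nu^{-1})^{2l_1+1}$ factor and loses nothing in the high-index norm since $k+l_1+1\le k+n$.

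Two smaller remarks. First, your ``main obstacle'' about the intermediate radius $(r_\nu+s_{\nu+1})/2$ is not an obstacle at all: Lemma~\ref{Lemma:B}(3) estimates $\norm{\cdot}_{k,r}$ directly at the inner radius $r=s_{\nu+1}$, not at the midpoint, so there is nothing to reconcile (the midpoint only appears in Lemma~\ref{Lemma:B}(1)). Second, your displayed $T_2$ estimate reads $\norm{T_2}_{k,s_\nu}$ where it should read $\norm{T_2}_{k,r_\nu}$ (or $\norm{T_2}_{k,s_{\nu+1}}$ by monotonicity); the paper also uses index $k+d+l_2$ rather than $k+l_2$ here, consistent with the $l_2\mapsto d+l_2$ replacement from Lemma~\ref{Lemma:RelationsImpliesMain}, which is why the constant is defined as $n=\max(l_1+1,l_2+d)$.
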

\begin{proof}
We use \nameref{Lemma:B}(3), lemma \ref{lemma:often} for $m=0$, and then induction hypothesis $(b)_{\nu}$ where $p\geq l_1$:
    \begin{align*}
        \norm{ T_1 }_{k,s_{\nu+1}}
        &\leq 
        c_k \big(
            \norm{ v_\nu }_{0,r_\nu}
            \norm{ e_\nu }_{k+1,s_\nu}
            +
            (
                \norm{ v_\nu }_{0,r_\nu}
                +
                \norm{ e_\nu }_{0,s_\nu}
            )
            \norm{ v_\nu }_{k+1,r_\nu}
        \big)
        \\
        &\leq 
        c_k (s_\nu - r_\nu)^{- (d_0 + d_{k+1})}
        \norm{ e_\nu }_{l_1,s_\nu}
        \norm{ e_\nu }_{k+l_1+1,s_\nu}
        \\
        &\leq 
        c_k (s_\nu - r_\nu)^{- 2 d_{k+1} } 
        (\epsilon_\nu^{-1} t_\nu)^{-(2l_1+1)}
        \norm{ e_\nu }_{k+l_1+1,s_\nu}.
    \end{align*}
By using that $p\geq d$, induction hypothesis $(b)_{\nu}$,  and inequality (\ref{ineq:t_0}) we obtain that
    \[
        \norm{ e_\nu }_{d,s_\nu}
        \leq 
        c_d (\epsilon_\nu t_\nu^{-1})^{2l_1+1}
        <
        (s-r) \theta.
    \]
$(\pgrp,R)$ is a PDR with symmetry, so $e_{\nu}$ remains a solution of $R$. 
Therefore, by the above estimate, we may apply the \emph{quadratic} inequality (\ref{ineq:quadratic}) to $e_{\nu}$. 
This gives the estimate for $T_2$:
    \begin{align*}
        \norm{ T_2 }_{k,s_{\nu+1}}
        &\leq
        c_k (r_\nu - s_{\nu+1})^{-d_k}
        \norm{ e_\nu }_{d,s_\nu}
        \norm{ e_\nu }_{k+d+l_2,s_\nu}
        \\
        &\leq 
        c_k (s_\nu - r_\nu)^{-d_k}
        \norm{ e_\nu }_{d,s_\nu}
        \norm{ e_\nu }_{k+d+l_2,s_\nu}
        \\
        &\leq 
        c_k (s_\nu - r_\nu)^{-d_k} 
        (\epsilon_\nu t_\nu^{-1})^{2l_1+1}
        \norm{ e_\nu }_{k+d+l_2,s_\nu }.
        \qedhere
    \end{align*}
\end{proof}
\noindent
\begin{lemma}
\label{lemma:T3}
For all $k,m\geq 0$ there are constants $c_{k,m} \geq 0$ such that
    \[
        \norm{ T_3 }_{k,s_{\nu+1}}
        \leq 
        c_{k,m} (s_\nu - r_\nu)^{-d_k} 
        (\epsilon_\nu t_\nu^{-1})^m
        \norm{ e_\nu }_{k+l_1+m,s_\nu}.
    \]
\end{lemma}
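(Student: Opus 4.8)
The plan is to estimate the term
\[
    T_3 = (\delta_0 \circ h_\nu \circ (\id - S_\nu))(e_\nu)
\]
by peeling off the three operators one at a time, in the order $\delta_0$, then $h_\nu$, then $(\id - S_\nu)$. First I would note that $\delta_0$ is a first-order linear differential operator with smooth coefficients (being the linearization of $Q$ at $b=0$ composed with the infinitesimal action, cf. \nameref{Lemma:A} and the deformation complex), so on the domain $M_{s_{\nu+1}}$ it satisfies a crude tame bound of the form $\norm{\delta_0(w)}_{k, s_{\nu+1}} \leq c_k \norm{w}_{k+1, r_\nu}$; actually since $\delta_0$ only loses one derivative and we already have plenty of slack built into the constant $l_1$, I would absorb this loss into the $l_1$-shift. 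Then I would apply the tameness inequality for the homotopy operator $h_\nu = h^{s_\nu, r_\nu}$, which reads $\norm{h_\nu(e)}_{k, r_\nu} \leq c_k (s_\nu - r_\nu)^{-d_k} \norm{e}_{k+l_1, s_\nu}$, producing the factor $(s_\nu - r_\nu)^{-d_k}$ and the shift by $l_1$.

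The crucial last step is the bound on $(\id - S_\nu)(e_\nu)$. Here I would invoke the second of the smoothing inequalities (\ref{ineq:smoothing}): for any $m \geq 0$ and $0 \leq m \leq k'$,
\[
    \norm{ e_\nu - S_\nu(e_\nu) }_{k', s_\nu}
    \leq
    c_{k'} (\epsilon_\nu^{-1} t_\nu)^{-m} \norm{ e_\nu }_{k'+m, s_\nu},
\]
recalling that $S_\nu = S^{s_\nu}_{\epsilon_\nu^{-1} t_\nu}$. The factor $(\epsilon_\nu^{-1} t_\nu)^{-m} = (\epsilon_\nu t_\nu^{-1})^m$ is exactly the gain we want, and it is the whole point of inserting the smoothing operator: without it, $T_3$ would merely equal $e_\nu|_{M_{s_{\nu+1}}} - (\delta_0 \circ h_\nu)(e_\nu)$ up to lower-order terms, which by the quadratic inequality (\ref{ineq:quadratic}) is only quadratically small — but $T_3$ with the smoothing is made \emph{arbitrarily} small at the cost of an $m$-fold loss of derivatives, which is affordable since $(a)_\nu$ controls the high norm $\norm{e_\nu}_{p+q, s_\nu}$. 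Chaining the three estimates, with the derivative loss $1$ from $\delta_0$ plus $l_1$ from $h_\nu$ plus $m$ from the smoothing, and noting $1 + l_1 \leq l_1 + l_1$ so that the total shift is at most $k + l_1 + m$ after renaming (here one uses that $k+l_1+m$ already dominates $k+(1+l_1)+m$ by enlarging $c_{k,m}$ if necessary, or simply that the statement allows any shift bounded by $l_1 + m$ beyond $k$), we arrive at
\[
    \norm{ T_3 }_{k, s_{\nu+1}}
    \leq
    c_{k,m} (s_\nu - r_\nu)^{-d_k} (\epsilon_\nu t_\nu^{-1})^m \norm{ e_\nu }_{k+l_1+m, s_\nu},
\]
as claimed.

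I do not expect a genuine obstacle here — this lemma is the ``easy'' half of the low-norm decomposition, precisely because the presence of $(\id - S_\nu)$ converts it into a pure smoothing estimate with no need for the solution property of $e_\nu$ or the quadratic structure. The only mild care needed is bookkeeping: making sure the loss of derivatives from $\delta_0$ (which is order one, or order $d$ if one prefers to be generous) is correctly absorbed, and that the smoothing parameter $\epsilon_\nu^{-1} t_\nu$ is the one actually used to define $S_\nu$ so that the exponent $-m$ lands on $(\epsilon_\nu^{-1} t_\nu)$ rather than on $t_\nu$ or $\epsilon_\nu$ alone. After establishing this lemma, combining it with Lemma \ref{lemma:T12} (which handles $T_1 + T_2$ with a genuinely quadratic gain $(\epsilon_\nu t_\nu^{-1})^{2l_1+1}$) and choosing $m$ large enough relative to the fixed constants will yield induction hypothesis $(b)_{\nu+1}$, completing the iteration step; that final balancing of $m$ against the list of constants $p, q, l, n, b$ is where one must be slightly attentive, but it is routine given the setup of inequality (\ref{ineq:t_0}).
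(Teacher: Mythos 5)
Your overall plan --- peel off $\delta_0$, then $h_\nu$, then $(\id-S_\nu)$, using the tameness of $h_\nu$ for the $(s_\nu-r_\nu)^{-d_k}$ factor and the second smoothing inequality for the $(\epsilon_\nu t_\nu^{-1})^m$ factor --- is exactly the paper's, and your remarks about why this term is the ``easy'' one are on point. But there is a genuine error in your handling of $\delta_0$. The operator appearing in $T_3 = (\delta_0\circ h_\nu\circ(\id-S_\nu))(e_\nu)$ is the \emph{first} arrow of the deformation complex, i.e.\ the infinitesimal action $\delta_b:\calA(X_r)\to\Gamma_{V_bX}(M_r)$, \emph{not} the linearization of $Q$ (that is the second arrow, which takes values in $\Gamma_{V_\zeta Y}$). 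By the explicit formula in Lemma~\ref{prop:infinitesimal_action}, $\delta_b(v)=(\dd b\circ\dd\pi_\textsc{X}-\id)(v\circ b)$, which is a $C^\infty$-linear bundle map applied to $v\circ b$: it involves no derivatives of $v$ at all. Hence the correct tame bound is $\norm{\delta_0(v)}_{k,r}\leq c_k\norm{v}_{k,r}$ with zero derivative loss, which is exactly what the paper invokes and which yields $k+l_1+m$ on the nose.

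Your claim that $\delta_0$ is first-order (``loses one derivative'') therefore misidentifies the operator, and the patch you propose does not work: you write that ``$k+l_1+m$ already dominates $k+(1+l_1)+m$ by enlarging $c_{k,m}$,'' but this is backwards --- a $C^{k+l_1+m}$-norm does not control a $C^{k+l_1+m+1}$-norm, and no constant can buy back a lost derivative. Likewise ``noting $1+l_1\leq l_1+l_1$'' is not relevant since one cannot unilaterally change $l_1$ inside this lemma, which is stated with the $l_1$ fixed by the hypothesis of the main theorem. As written your chain of inequalities would produce $\norm{e_\nu}_{k+l_1+m+1,s_\nu}$ on the right, which is not the statement and is genuinely worse (it would force a re-examination of the constants $p,q$ in Proposition~\ref{proposition:hypothesis_b}, where the lemma is applied with $m=q-l_1$ and the resulting index must stay $\leq p+q$). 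The fix is not bookkeeping but recognizing that the infinitesimal action is zeroth-order; once that is in place, the rest of your argument goes through verbatim.
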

\begin{proof}
First note that the operator
    $
        \delta_0:
        \calA(X_r)  
        \!\to\! 
        \Gamma_E(M_r)
    $
satisfies an inequality of the form 
    \[
        \norm{ \delta_0(v) }_{k,r}
        \leq 
        c_k
        \norm{ v }_{k,r}.
    \]
Now we use this, the tameness inequality for $h_\nu$, and the smoothing inequalities (\ref{ineq:smoothing}):
    \begin{align*}
        \norm{ T_3 }_{k,s_{\nu+1}}
        &\leq 
        c_k
        \norm{ (h_\nu \circ (\id-S_\nu))(e_\nu) }_{k,r_\nu}
        \\
        & \leq
        c_k (s_\nu - r_\nu)^{-d_k}
        \norm{ e_\nu - S_\nu(e_\nu) }_{k+l_1,s_\nu}
        \\
        &\leq 
        c_{k,m} (s_\nu - r_\nu)^{-d_k} 
        (\epsilon_\nu t_\nu^{-1})^m
        \norm{ e_\nu }_{k+l_1+m,s_\nu}.
        \qedhere
\end{align*}
\end{proof}
\noindent
Now we prove the \emph{strong low-norm} estimate.

\begin{proposition}
\label{proposition:hypothesis_b}
$(a)_\nu$ and $(b)_\nu$ imply $(b)_{\nu+1}$.
\end{proposition}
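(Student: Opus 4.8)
The plan is to prove $(b)_{\nu+1}$, i.e.\ the strong low-norm estimate
$\norm{e_{\nu+1}}_{p,s_{\nu+1}} \leq (\epsilon_{\nu+1}^{-1}t_{\nu+1})^{-(2l_1+1)}$, by estimating each of the three terms $T_1$, $T_2$, $T_3$ in the decomposition $e_{\nu+1} = T_1 + T_2 + T_3$ at the index $k=p$, and then combining. For $T_1$ and $T_2$ I would apply Lemma~\ref{lemma:T12} with $k=p$: this bounds $\norm{T_1}_{p,s_{\nu+1}} + \norm{T_2}_{p,s_{\nu+1}}$ by $c_p(s_\nu-r_\nu)^{-2d_{p+1}}(\epsilon_\nu t_\nu^{-1})^{2l_1+1}\norm{e_\nu}_{p+n,s_\nu}$. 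The factor $\norm{e_\nu}_{p+n,s_\nu}$ must then be controlled by interpolating between the low norm $\norm{e_\nu}_{p,s_\nu}$ (controlled by $(b)_\nu$) and the high norm $\norm{e_\nu}_{p+q,s_\nu}$ (controlled by $(a)_\nu$), using the interpolation inequality (\ref{ineq:interpol}); here the choice $q \geq 6l_1+5$ and $n$ as defined are exactly what makes the interpolated power of $(\epsilon_\nu^{-1}t_\nu)$ small enough. For $T_3$ I would apply Lemma~\ref{lemma:T3} with $k=p$ and a sufficiently large (but fixed) choice of $m$ — large enough that the gain $(\epsilon_\nu t_\nu^{-1})^m$ beats the loss, while $p+l_1+m$ stays $\leq p+q$ so that $(a)_\nu$ still applies; this is why $q$ must be taken generously. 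Then $\norm{e_\nu}_{p+l_1+m,s_\nu}$ is again handled by interpolating between $(a)_\nu$ and $(b)_\nu$.

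Concretely, after inserting the interpolation bounds, each of $\norm{T_i}_{p,s_{\nu+1}}$ becomes a product of: a harmless polynomial-in-$\nu$ prefactor $c_p\,2^{\cdots}\,3^{\cdots(\nu+1)}$ coming from $(s_\nu-r_\nu)^{-d}= (\tfrac12(s-r)3^{-(\nu+1)})^{-d}$ and from $(s-r)^{\pm b(\cdots)}$ absorbed into $\epsilon_\nu$-powers; a power of $\epsilon_{\nu+1}$ with a strictly positive exponent (ensured by $b$ being large, exactly as in Proposition~\ref{proposition:hypothesis_a}); and a power of $t_\nu$ with a strictly negative exponent (the quadratic/smoothing gain, which is the whole point of the fast-convergence scheme). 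I would then write the target bound as $(\epsilon_{\nu+1}^{-1}t_{\nu+1})^{-(2l_1+1)} = \epsilon_{\nu+1}^{2l_1+1}t_{\nu+1}^{-(2l_1+1)}$, factor this out of each $\norm{T_i}_{p,s_{\nu+1}}$, and observe that what remains is of the form $c_p\,C^{\nu+1}\,t_\nu^{-\mu}$ with $\mu>0$ and $C$ a fixed constant. By the defining property (\ref{ineq:t_0}) of $t_0$ — which we are entitled to invoke for the (finitely many, at most seven) pairs $(c,\mu)$ arising in the proof — each such remainder is $\leq 1$ (for $\nu\geq 0$), so summing the three contributions gives $\norm{e_{\nu+1}}_{p,s_{\nu+1}} \leq 3 \cdot (\epsilon_{\nu+1}^{-1}t_{\nu+1})^{-(2l_1+1)} \cdot (\text{something} \leq \tfrac13)$, establishing $(b)_{\nu+1}$. (One absorbs the harmless constant $3$ into the $c,\mu$-bookkeeping by taking $t_0$ slightly larger.)

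Having completed the induction, I would then close out the proof of the \nameref{Main Theorem for Differential Relations}: the sequence $\phi_\nu \defeq \varphi_{v_\nu}$ satisfies $\sum_\nu \norm{\phi_\nu}_{1,r_\nu} < \theta$ by Lemma~\ref{lemma:diffnorm} and the geometric-type summability of the right-hand sides (again using (\ref{ineq:t_0})), and $\sum_\nu \norm{\phi_\nu}_{k,r_\nu} < \infty$ for every $k$ by applying \nameref{Lemma:B}(1), Lemma~\ref{lemma:often} with $m=0$, and interpolating $\norm{e_\nu}_{k+l_1,s_\nu}$ between $(a)_\nu$ and $(b)_\nu$ — which gives a bound with a negative power of $t_\nu$ for all but finitely many $\nu$, hence summable. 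Then \nameref{Lemma:C} applies: the infinite composition $\psi^{s,r}(e) \defeq \phi_0\circ\phi_1\circ\cdots|_{X_r}$ converges in $C^\infty$ to a diffeomorphism $\psi \in \calU(X_r)$, with the tame estimate $\norm{\psi}_{k,r} \leq c_k \sum_\nu \norm{\phi_\nu}_{k,r_\nu}$, which after the interpolation bookkeeping yields $\norm{\psi^{s,r}(e)-\id}_{k,r} \leq a_k(s-r)^{-b_k}\norm{e}_{k+l,s}$. Since each $\phi_\nu \in \pgrp$ and $\pgrp$ is closed, $\psi \in \pgrp(X_r)$; and since $e_\nu \to 0$ in $C^p$ on the shrinking domains while $e_{\nu+1} = e_\nu \cdot \varphi_{v_\nu}$, a limiting argument (continuity of the action, \nameref{Lemma:B}(2), and the $C^0$-convergence $e_\nu \to 0$) gives $(e\cdot\psi^{s,r}(e))|_{M_r} = 0 = b|_{M_r}$. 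The main obstacle is purely the bookkeeping in the $T_1,T_2,T_3$ estimates: one must verify that with the specific constants $p,q,l,n,b$ as defined, every interpolation exponent lands on the correct side (positive power of $\epsilon_{\nu+1}$, negative power of $t_\nu$), so that the finitely many $(c,\mu)$ conditions on $t_0$ suffice — this is routine but delicate, and is the heart of why the Nash-Moser scheme converges.
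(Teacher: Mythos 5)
Your proposal matches the paper's proof essentially exactly: decompose $e_{\nu+1}=T_1+T_2+T_3$, estimate $T_1+T_2$ via Lemma~\ref{lemma:T12} at $k=p$ followed by interpolation between $(a)_\nu$ and $(b)_\nu$, estimate $T_3$ via Lemma~\ref{lemma:T3} at $k=p$ with $m$ chosen so that $p+l_1+m\leq p+q$ (the paper takes $m=q-l_1$ exactly, so $(a)_\nu$ applies directly with no further interpolation), and then absorb the $(s_\nu-r_\nu)^{-d}$ and $3^{\cdots(\nu+1)}$ prefactors using the defining inequality~(\ref{ineq:t_0}) for $t_0$. The only cosmetic difference is that you say you'd interpolate again for the $T_3$ term, whereas the paper's choice $m=q-l_1$ makes that unnecessary.
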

\begin{proof}
We evaluate $\norm{ e_{\nu+1} }_{p,s_{\nu+1}}$ by using the above lemmas for $k=p$. 
From this, combined with the interpolation inequalities for $p\leq p+n\leq p+q$, induction hypotheses $(a)_{\nu}$ and $(b)_{\nu}$, that $n/q<1/4$, and inequality (\ref{ineq:t_0}) we obtain
    \begin{align*}
        &
        \norm{ T_1 }_{p,s_{\nu+1}}
        +
        \norm{ T_2 }_{p,s_{\nu+1}}
        \leq
        c_p (s_\nu - r_\nu)^{- 2 d_{p+1}} 
        (\epsilon_\nu t_\nu^{-1})^{2l_1+1}
        \norm{ e_\nu }_{p+n, s_\nu}
        \\
        &\leq
        c_p (s_\nu - r_\nu)^{- 2 d_{p+1}} 
        (\epsilon_\nu t_\nu^{-1})^{2l_1+1}
        \norm{ e_\nu }_{p,s_\nu}^{1-n/q}
        \norm{ e_\nu }_{p+q,s_\nu}^{n/q}
        \\
        &\leq 
        c_p (s_\nu - r_\nu)^{-2d_{p+1}}
        (\epsilon_\nu t_\nu^{-1})^{2(2l_1+1)(1-n/q)}
        \\
        &=
        c_p 2^{2d_{p+1}} 9^{d_{p+1}(\nu+1)} 
        \big(
            (s - r)^{-2d_{p+1}}
            \epsilon_\nu^{2(2l_1+1)(1/4 - n/q)}
        \big)
        \epsilon_{\nu+1}^{2l_1+1}
        t_\nu^{-2(2l_1+1)(1-n/q)}
        \\
        &\leq
        \big(   
            c_p 2^{2d_{p+1}+1} 9^{d_{p+1}(\nu+1)} 
            t_\nu^{-2(2l_1+1)(1/4 - n/q)}
        \big)
        \tfrac12
        (\epsilon_{\nu+1} t_{\nu+1}^{-1})^{2l_1+1}
        \\
        &\leq
        \tfrac12 (\epsilon_{\nu+1} t_{\nu+1}^{-1})^{2l_1+1}.
    \end{align*}
We use lemma \ref{lemma:T3} for $k=p$ and $m=q-l_1$, induction hypothesis $(a)_{\nu}$, that $q>6l_1+5/2$, and inequality (\ref{ineq:t_0}):
    \begin{align*}
        \norm{ T_3 }_{p,s_{\nu+1}}
        &\leq 
        c_{p, q-l_1} (s_\nu - r_\nu)^{-d_p} 
        (\epsilon_\nu t_\nu^{-1})^{q-l_1}
        \norm{ e_\nu }_{p+q,s_\nu}
        \\
        &\leq
        c_{p, q-l_1} (s_\nu - r_\nu)^{-d_p}
        (\epsilon_\nu t_\nu^{-1})^{q - 3l_1 -1}
        \\
        &=
        c_{p, q-l_1} 2^{d_p} 3^{d_p(\nu+1)}
        \big(
            (s - r)^{-d_p} \epsilon_\nu^{q-6l_1-5/2}
        \big)
        \epsilon_{\nu+1}^{2l_1+1}
        t_\nu^{3l_1+1-q}
        \\
        & \leq
        \big(
            c_{p, q-l_1}
            2^{d_p+1} 3^{d_p(\nu+1)}
            t_\nu^{6l_1+5/2-q}
        \big)
        \tfrac12
        (\epsilon_{\nu+1} t_{\nu+1}^{-1})^{2l_1+1}
        \\
        & \leq
        \tfrac12
        (\epsilon_{\nu+1} t_{\nu+1}^{-1})^{2l_1+1}.
        \qedhere
    \end{align*}
\end{proof}
\noindent
We conclude that the iteration is well-defined, and $e_{\nu}$ satisfies $(a)_{\nu}$ and $(b)_{\nu}$, for all $\nu\geq 0$.
The next Lemma is used to show convergence of the iteration.

\begin{proposition}
\label{proposition:almost_done}
For all $k\geq 0$ there are constants $a_k, b_k\geq0$ such that
    \[
        \sum\nolimits_{\nu \geq 0}
        \norm{ v_\nu }_{k, r_\nu}
        \leq 
        a_k(s - r)^{-b_k}
        \norm{ e }_{k+l, s}.
    \]
\end{proposition}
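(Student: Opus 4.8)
The goal is to sum up, over all $\nu\geq 0$, the estimate for $\norm{v_\nu}_{k,r_\nu}$. Recall $v_\nu = -(h_\nu \circ S_\nu)(e_\nu)$, so the starting point is Lemma \ref{lemma:often}: for any $m\geq 0$,
    \[
        \norm{v_\nu}_{k,r_\nu}
        \leq
        c_k (s_\nu - r_\nu)^{-d_k}
        (\epsilon_\nu^{-1} t_\nu)^m
        \norm{e_\nu}_{k+l_1-m, s_\nu}.
    \]
First I would dispose of the geometric factor: $s_\nu - r_\nu = \tfrac12 (s-r)3^{-(\nu+1)}$, so $(s_\nu-r_\nu)^{-d_k} = c_k 3^{d_k(\nu+1)} (s-r)^{-d_k}$, which contributes a clean power of $(s-r)$ together with an exponential in $\nu$ that will be absorbed later. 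Next I would split into two regimes, exactly as in the proof of the strong low-norm estimate (Proposition \ref{proposition:hypothesis_b}): for high $k$ use the weak high-norm hypothesis $(a)_\nu$ directly (taking $m=0$), and for the convergence-giving low end use interpolation between $\norm{e_\nu}_{p,s_\nu}$ (controlled by $(b)_\nu$) and $\norm{e_\nu}_{p+q,s_\nu}$ (controlled by $(a)_\nu$), choosing $m$ large enough. Concretely, for a fixed $k$, I would apply Lemma \ref{lemma:often} with $m = q - l_1$ (or whatever value makes $k + l_1 - m \leq p+q$ while leaving a surplus power of $(\epsilon_\nu^{-1}t_\nu)$ of the correct sign), giving
    \[
        \norm{v_\nu}_{k,r_\nu}
        \leq
        c_k\, 3^{d_k(\nu+1)} (s-r)^{-d_k}
        (\epsilon_\nu^{-1}t_\nu)^{q-l_1}
        \norm{e_\nu}_{k+q, s_\nu},
    \]
and then bound $\norm{e_\nu}_{k+q,s_\nu}$ by interpolating against $(a)_\nu$ and $(b)_\nu$; the net effect is a bound of the form $\norm{v_\nu}_{k,r_\nu} \leq c_k\, 3^{d_k(\nu+1)}(s-r)^{-b_k}(\epsilon_\nu^{-1}t_\nu)^{-\mu}$ for some $\mu>0$.

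\textbf{The summation.} Once each term is bounded by $c_k\, 3^{d_k(\nu+1)} (s-r)^{-b_k} (\epsilon_\nu^{-1}t_\nu)^{-\mu}$, recall $t_\nu = t_0^{(3/2)^\nu}$ and $\epsilon_\nu = (s-r)^{b(3/2)^\nu}$, so $(\epsilon_\nu^{-1}t_\nu)^{-\mu} = (s-r)^{b\mu(3/2)^\nu} t_0^{-\mu(3/2)^\nu}$. The factor $t_0^{-\mu(3/2)^\nu}$ decays doubly-exponentially and, by the choice of $t_0$ via inequality (\ref{ineq:t_0}), it dominates both the exponential $3^{d_k(\nu+1)}$ and the (bounded, since $s-r\le 1$) power $(s-r)^{b\mu(3/2)^\nu}$, making the series summable with a sum bounded by a constant times the $\nu=0$ term. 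Since $(s-r)^{b\mu(3/2)^\nu}\le 1$, the only power of $(s-r)$ that survives in the final bound is $(s-r)^{-b_k}$, and the $e$-norm appearing throughout is at most $\norm{e}_{k+q,s}$ (recall $e_\nu\in\sol_R(M_{s_\nu})$ is obtained from $e$ by successive actions, and the iteration started from $e_0 = e$); one checks $k+q \le k+l$ against the definition $l = \max(4l_1+1, l_1+l_2)$, or more simply tracks which high-norm of $e$ enters through $(a)_0,(b)_0$. Collecting constants gives
    \[
        \sum_{\nu\geq 0} \norm{v_\nu}_{k,r_\nu}
        \leq
        a_k (s-r)^{-b_k} \norm{e}_{k+l,s},
    \]
as claimed.

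\textbf{The main obstacle.} The routine parts — applying Lemma \ref{lemma:often}, the geometric factor, the interpolation — are straightforward; the delicate bookkeeping is making sure that after the interpolation step the residual power of $(\epsilon_\nu^{-1}t_\nu)$ is strictly negative (so that the $t_0$-decay kicks in) and simultaneously the loss of derivatives $k+l_1-m$ stays within the range $[0, p+q]$ where both induction hypotheses apply. This is the same tension between the weak high-norm and strong low-norm estimates that governs the whole Nash-Moser scheme, and it is resolved by the specific numerology in the list of constants ($p,q,l,n,b$); I expect the only real work is verifying that $m$ can be chosen in the valid range while leaving exponent surplus $\mu>0$, and that the resulting loss index is bounded by $l$. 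After that, Proposition \ref{proposition:almost_done} feeds directly into \nameref{Lemma:C} (the hypotheses $\sum_\nu \norm{\phi_\nu}_{1,r_\nu}<\theta$ and $\sum_\nu\norm{\phi_\nu}_{k,r_\nu}<\infty$ follow from this proposition applied with $k=1$ and general $k$, together with \nameref{Lemma:B}(1)), yielding the limit diffeomorphism $\psi$ with the stated tame bound, and closedness of $\pgrp$ gives $\psi\in\pgrp$.
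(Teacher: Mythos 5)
Your plan diverges from the paper in one structural way, and that divergence creates a genuine gap: you interpolate $\norm{e_\nu}_{k+l_1-m,s_\nu}$ purely against the induction hypotheses $(a)_\nu$ and $(b)_\nu$, which bound $\norm{e_\nu}_{p,s_\nu}$ and $\norm{e_\nu}_{p+q,s_\nu}$ by the \emph{numerical} quantities $(\epsilon_\nu^{-1}t_\nu)^{\mp(2l_1+1)}$ with no factor of $\norm{e}$ on the right. Carrying out your chain of estimates therefore yields only $\sum\nolimits_\nu\norm{v_\nu}_{k,r_\nu}\leq a_k(s-r)^{-b_k}$ --- a constant in $(s-r)$ --- not the claimed tame estimate, which is \emph{linear} in $\norm{e}_{k+l,s}$. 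This linearity is not cosmetic; it is what propagates into Proposition \ref{prop:iteration} to give $\norm{\psi}_{k,r}\lesssim\norm{e}_{k+l,s}$ and hence the tame bound on $\phi^{s,r}$ in the \nameref{Main Theorem for Differential Relations}. Your parenthetical attempt to recover the $\norm{e}$-factor (``the $e$-norm appearing throughout is at most $\norm{e}_{k+q,s}$'', then ``one checks $k+q\leq k+l$'') does not work: $(a)_\nu,(b)_\nu$ do not carry $\norm{e}$ at all, and moreover $q=\max(6l_1+5,4l_2+1)$ exceeds $l=\max(4l_1+1,l_1+l_2)$ in general, so $k+q\leq k+l$ is false.

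The paper's proof does something different: it first iterates the non-loss inequality from Lemma \ref{lemma:no_loss} to obtain
\[
\norm{e_\nu}_{k,s_\nu}\;\leq\;c_k^\nu\,(s_\nu-r_\nu)^{-\nu d_k}\,(\epsilon_\nu^{-1}t_\nu)^{2l_1}\,\norm{e}_{k,s},
\]
where the telescoping product is controlled by $(\epsilon_0^{-1}t_0)\cdots(\epsilon_{\nu-1}^{-1}t_{\nu-1})<(\epsilon_\nu^{-1}t_\nu)^{2}$. This \emph{explicitly} keeps $\norm{e}_{k,s}$ on the right. It then estimates $\norm{v_{\nu+1}}_{k,r_{\nu+1}}$ via Lemma \ref{lemma:often} at $m=0$ together with the decomposition $e_{\nu+1}=T_1+T_2+T_3$ (Lemmas \ref{lemma:T12} and \ref{lemma:T3} with $m=2l_1+1$). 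The factor $(\epsilon_\nu t_\nu^{-1})^{2l_1+1}$ from those lemmas combines with $(\epsilon_\nu^{-1}t_\nu)^{2l_1}$ from the iterated bound to a net $\epsilon_\nu t_\nu^{-1}$, which decays doubly exponentially; the loss indices $k+l_1+n$ and $k+4l_1+1$ are absorbed into $k+l$ using $l\geq n+l_1$ and $l\geq 4l_1+1$. If you want to salvage your interpolation idea, you would need to interpolate against the iterated $\norm{e}$-bearing bound on at least one side; interpolating against $(a)_\nu,(b)_\nu$ alone cannot reintroduce $\norm{e}_{k+l,s}$.
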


\begin{proof}
By iterating the inequality from lemma \ref{lemma:no_loss}, and using that
    \[
        (\epsilon_0^{-1}t_0)
        (\epsilon_1^{-1}t_1)
        \cdots 
        (\epsilon_{\nu-1}^{-1} t_{\nu-1})
        =
        (\epsilon_0^{-1} t_0)^{\sum_{j=0}^{\nu-1} (3/2)^{j}}
        < 
        (\epsilon_0^{-1}t_0)^{2(3/2)^{\nu}}
        =
        (\epsilon_\nu^{-1} t_\nu)^2,
    \]
we obtain the inequality
    \[
        \norm{ e_\nu }_{k,s_\nu}
        \leq 
        c_k^\nu 
        (s_\nu - r_\nu)^{-\nu d_k}
        (\epsilon_\nu^{-1} t_\nu)^{2l_1}
        \norm{ e }_{k,s}.
    \]
We use lemma \ref{lemma:often} for $m=0$, 
lemmas \ref{lemma:T12} and \ref{lemma:T3} for $m=2l_1+1$, 
the above inequality, 
and that $l\geq n+l_1$  and $l\geq 4l_1+1$:
    \begin{align*}
        \norm{ v_{\nu+1} }_{k,r_{\nu+1}}
        &\leq
        c_k (s_{\nu+1} - r_{\nu+1})^{-d_k}
        \norm{ e_{\nu+1} }_{k + l_1,s_{\nu+1}}
        \\
        &\leq
        c_k 3^{d_k} 
        (s_\nu - r_\nu )^{-3 d_k}
        (\epsilon_\nu t_\nu^{-1})^{2l_1+1}
        \big(
            \norm{ e_\nu }_{k+l_1+n,s_\nu}
            +
            \norm{ e_\nu }_{k+4l_1+1,s_\nu}
        \big)
        \\
        &\leq 
        c_k^{\nu+1} 3^{d_k}
        (s_\nu - r_\nu)^{-(\nu+3)d_k}
        \epsilon_\nu t_\nu^{-1}
        \norm{ e }_{k+l,1}
        \\
        &\leq
        \big(
            (c_k2^{d_k})^{\nu+1}
            3^{(\nu^2 + 3\nu + 3)d_k}
            t_0^{-(3/2)^\nu}
        \big)
        (s - r)^{b(3/2)^\nu-(\nu+2)d_k}
        \norm{ e }_{k+l, s}.
    \end{align*}
Since $(3/2)^\nu$ grows faster than any polynomial expression in $\nu$, the proposition follows.
\end{proof}
\noindent
We conclude the proof by applying \nameref{Lemma:C} to the sequence 
    $
        \set{\varphi_{v_\nu}}_{\nu\geq 0}.
    $

\begin{proposition}
\label{prop:iteration}
The sequence
    $
        \psi_\nu
        \defeq 
        \varphi_{v_0}
        \circ \ldots\circ
        \varphi_{v_{\nu}}|_{X_r}
    $
converges w.r.t.\ the $C^\infty$-topology on $\pgrp(X_r)$ to a diffeomorphism $\psi\in \pgrp(X_r)$
such that $\psi$ and $e$ are compatible on $M_r$ and
    \[
        (e\cdot \psi)|_{M_r} = 0.
    \]
Moreover, for all $k\geq 0$ there are constants $a_k, b_k \geq 0$ such that
    \[
	    \norm{\psi}_{k,r} 
        \leq 
        a_k (s-r)^{-b_k} 
        \norm{e}_{k+l,s}.
    \]
\end{proposition}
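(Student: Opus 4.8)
The statement we must prove -- call it Proposition \ref{prop:iteration} -- is the conclusion of the Nash--Moser iteration. All the hard analytic work has already been staged: we have the iteratively constructed solutions $e_\nu \in \sol_R(M_{s_\nu})$, the vector fields $v_\nu = -(h_\nu\circ S_\nu)(e_\nu) \in \calA(X_{r_\nu})$, the time-one flows $\varphi_{v_\nu}$, and the partial sums $e_{\nu+1} = (e_\nu|_{M_{r_\nu}}\cdot \varphi_{v_\nu})|_{M_{s_{\nu+1}}}$. We also have the two inductive hypotheses $(a)_\nu$, $(b)_\nu$ established for all $\nu$, and in Proposition \ref{proposition:almost_done} the crucial summability bound $\sum_{\nu\geq 0}\norm{v_\nu}_{k,r_\nu}\leq a_k(s-r)^{-b_k}\norm{e}_{k+l,s}$. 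So the proof of Proposition \ref{prop:iteration} is essentially an assembly job: feed these ingredients into \nameref{Lemma:C} and \nameref{Lemma:B}, and read off convergence of the composed diffeomorphisms together with the vanishing of $e\cdot\psi$ in the limit.

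\textbf{Step 1: convergence of the composed diffeomorphisms.} First I would invoke \nameref{Lemma:B} to record, for each $\nu$, that $\varphi_{v_\nu}$ is a genuine element of $(\pgrp\cap\calU)(X_{(r_\nu+s_{\nu+1})/2})$ mapping into $X_{r_\nu}$, with $\norm{\varphi_{v_\nu}}_{k,(r_\nu+s_{\nu+1})/2}\leq c_k\norm{v_\nu}_{k,r_\nu}$ (this is \nameref{Lemma:B}(1), applicable by inequality (\ref{ineq:flow_action}) which was verified in Proposition \ref{proposition:induction_hypothesis}). Combining with Proposition \ref{proposition:almost_done}, the sequence $\{\varphi_{v_\nu}\}$ satisfies the two summability hypotheses of \nameref{Lemma:C}: $\sum_\nu \norm{\varphi_{v_\nu}}_{1,\cdot} < \theta$ (here one uses that $t_0$ is chosen large enough, exactly as in the proof of Proposition \ref{proposition:induction_hypothesis}, and that $b_1$ is a fixed constant while $(s-r)\leq 1$), and $\sum_\nu\norm{\varphi_{v_\nu}}_{k,\cdot}<\infty$ for every $k$. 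With the decreasing radii $\{r_\nu\}$ (or rather the radii $(r_\nu+s_{\nu+1})/2$, relabelled), all converging to $r_\infty = \tfrac12(s+r) > r$, \nameref{Lemma:C} then gives that $\psi_\nu = \varphi_{v_0}\circ\cdots\circ\varphi_{v_\nu}|_{X_r}$ converges in the $C^\infty$-topology to a diffeomorphism $\psi\in\calU(X_r)$ with $\norm{\psi}_{k,r}\leq c_k\sum_\nu\norm{\varphi_{v_\nu}}_{k,\cdot}\leq a_k(s-r)^{-b_k}\norm{e}_{k+l,s}$, which is the claimed tame estimate (after absorbing the $c_k$ from \nameref{Lemma:B}(1) into $a_k$). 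That $\psi\in\pgrp(X_r)$ follows because each $\varphi_{v_\nu}\in\pgrp$, hence each $\psi_\nu\in\pgrp(X_r)$ (group-like axioms), and $\pgrp$ is a \emph{closed} pseudogroup, so the $C^\infty$-limit stays in $\pgrp$ -- this is exactly Proposition \ref{prop:pgrp:closed}.

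\textbf{Step 2: the limit solves the equation, and compatibility.} Next I would show $(e\cdot\psi)|_{M_r}=0$. The natural route is to observe that by Lemma \ref{lemma:restricting_action}(iii) (compatibility with composition) and the definition of the iteration, $e\cdot\psi_\nu$ restricted to an appropriate shrinking domain equals $e_{\nu+1}$ restricted there; more precisely, one shows inductively that $(e\cdot\psi_\nu)|_{M_{s_{\nu+1}}} = e_{\nu+1}$. Then $\norm{e_{\nu+1}}_{0,s_{\nu+1}}\to 0$: indeed from the strong low-norm estimate $(b)_{\nu+1}$ we have $\norm{e_{\nu+1}}_{p,s_{\nu+1}}\leq(\epsilon_{\nu+1}^{-1}t_{\nu+1})^{-(2l_1+1)}$, and since $\epsilon_{\nu+1}^{-1}t_{\nu+1}\to\infty$ this tends to zero (in particular the $C^0$-norm does). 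Since $\psi_\nu\to\psi$ in $C^\infty$ on $X_r$, and $M_{s_{\nu+1}}\supset M_{r_\infty}\supset M_r$ eventually, we get $e\cdot\psi|_{M_r} = \lim_\nu (e\cdot\psi_\nu)|_{M_r} = \lim_\nu e_{\nu+1}|_{M_r} = 0$ in $C^0$, hence $=0$ identically since it is a smooth section and the convergence is $C^\infty$. Compatibility of $\psi$ with $e$ on $M_r$ is part of the same package: each $\psi_\nu$ is compatible with $e$ on $M_{s_{\nu+1}}\supset M_r$ by \nameref{Lemma:B} applied at each step (the image of $(e\cdot\psi_\nu)$ lands in the appropriate $X$-domain), and this passes to the limit by Lemma \ref{lemma:local_action} since $\psi$ is $C^1$-close to $\psi_\nu$ for large $\nu$.

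\textbf{The main obstacle.} The genuinely delicate point is not any single estimate but the bookkeeping of domains in Step 2: verifying that $(e\cdot\psi_\nu)|_{M_{s_{\nu+1}}}$ really equals $e_{\nu+1}$ requires carefully tracking, via Lemma \ref{lemma:restricting_action}, that the relative base maps $(\varphi_{v_\nu})_{e_\nu}$ compose correctly and that all the compatibility domains $V(e_\nu\cdot\varphi_{v_\nu},\ldots)$ contain the radii we need. One must also be careful that the $C^\infty$-convergence $\psi_\nu\to\psi$ is on the fixed domain $X_r$ while the equation $e\cdot\psi_\nu = e_{\nu+1}$ lives a priori only on the shrinking domains $M_{s_{\nu+1}}$; the resolution is that $s_{\nu+1}\downarrow r_\infty$ strictly dominates $r$, so for every $\nu$ we already have $M_r\subset M_{s_{\nu+1}}$, and no further shrinking is needed. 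Everything else is a direct citation of \nameref{Lemma:B}, \nameref{Lemma:C}, Proposition \ref{proposition:almost_done}, and Proposition \ref{prop:pgrp:closed}.
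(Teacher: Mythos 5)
Your Step 1 is essentially identical to the paper's: invoke \nameref{Lemma:B}(1) and Lemma \ref{lemma:diffnorm} for each $\varphi_{v_\nu}$, sum using Proposition \ref{proposition:almost_done}, feed the two summability hypotheses into \nameref{Lemma:C} to obtain $\psi_\nu\to\psi$ in all $C^k$-norms together with the tame estimate, then use closedness of $\pgrp$ (Proposition \ref{prop:pgrp:closed}) to conclude $\psi\in\pgrp(X_r)$.

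In Step 2, both you and the paper hinge on the same telescoping identity (your $(e\cdot\psi_\nu)|_{M_{s_{\nu+1}}}=e_{\nu+1}$, the paper's $e_\nu|_{M_r}=(e\cdot\psi_\nu)|_{M_r}$ -- the same thing modulo an index shift), but you take a genuinely different route from there. You pass to the limit on both sides, writing $e\cdot\psi = \lim_\nu(e\cdot\psi_\nu)$; this requires first knowing that $\psi$ and $e$ are compatible on $M_r$, which you supply separately via Lemma \ref{lemma:local_action} and the $C^1$-smallness of $\psi$. The paper avoids having to establish compatibility as a prerequisite: it rewrites the identity as the inclusion $\psi_\nu(e_\nu(x))\in e(M_s)$, then takes the limit \emph{only of the left-hand side}, using $e_\nu(x)\to 0_x$ (from $(b)_\nu$), the $C^0$-convergence $\psi_\nu\to\psi$, and compactness of $e(M_s)$, to get $\psi(0_x)\in e(M_s)$ for all $x\in M_r$. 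From $\psi(M_r)\subset e(M_s)$ it then reads off both compatibility and $(e\cdot\psi)|_{M_r}=0$ simultaneously. Your route is correct, but carries a small additional burden (the separate compatibility argument) that the paper's topological argument sidesteps; either way the conclusion follows, and the estimate and closedness arguments are identical.
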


\begin{proof}
We use lemma \ref{lemma:diffnorm}, \nameref{Lemma:B}(1) and proposition \ref{proposition:almost_done}:
    \begin{align*}
        \sum\nolimits_{\nu\geq 0}
        \norm{ \varphi_{v_\nu} }_{1,s_{\nu+1}}
        & < 
        \tfrac12 (s-r) \theta,
        \\
        \sum\nolimits_{\nu\geq 0}
        \norm{ \varphi_{v_\nu} }_{k,s_{\nu+1}}
        & \leq 
        c_k 
        \sum\nolimits_{\nu\geq 0}
        \norm{ v_\nu }_{k,r_\nu}
        \leq 
        a_k(s-r)^{-b_k} \norm{ e }_{k+l,s}.
    \end{align*}
Therefore we may apply \nameref{Lemma:C} and conclude that the sequence $\psi_{\nu}$ converges uniformly on $X_r$ for all $C^k$-norms to a diffeomorphism $\psi\in\mathrm{Diff}_{X_r}$, which satisfies
    \[
        \norm{ \psi }_{k,r}
        \leq 
        a_k(s-r)^{b_k} \norm{ e }_{k+l,s}.
    \]
Since each $\psi_\nu\in \pgrp(X_0)$, and since $\pgrp(X_0)$ is closed in $\diff_X(X_0)$, it follows that $\psi\in \pgrp(X_0)$. 

Finally, we show that $\psi$ and $e$ are compatible on $M_r$ and that $e\cdot\psi=0$. 
Since 
    \[
        e_{\nu}|_{M_r}
        =
        (e_{\nu-1} \cdot \phi_\nu)|_{M_r}
        =
        (e_{\nu-2} \cdot (\phi_{\nu-1} \circ \phi_\nu))|_{M_r}
        =
        \ldots
        =
        (e \cdot \psi_{\nu})|_{M_r},
    \]
we have that $e_{\nu}(x)=(e\cdot \psi_{\nu})(x)$ for all $x\in M_r$. 
Hence $\psi_{\nu}(e_{\nu}(x))$ belongs to the compact set $e(M_s)$. On the other hand, $e_{\nu}(x)$ converges to $0_x$, and $\psi_{\nu}$ converges to $\psi$ on $X_r$, and therefore $\psi(0_x)\in e(M_s)$. 
So $\psi(M_r)\subset e(M_s)$, which implies that $e\cdot\psi=0$.
 \end{proof}
\noindent
% \color{black}

%\input{proof_of_main_theorem/lemmas}
\newpage
\section{Proofs of the Technical Lemmas}\label{sect:lemmas}
\etocsettocdepth{2}
\localtableofcontents

\noindent
{\color{white}This line is intentionally left blank.}
\newline
In this section we prove \nameref{Lemma:A}, \nameref{Lemma:B} and \nameref{Lemma:C}, which were used in the proof of the \nameref{Main Theorem}.
We will prove three different versions of these Lemmas: local versions, global versions for compactly supported functions, and versions for shrinking domains with corners.
These settings correspond to three of the subsections. After the first subsection there is an intermezzo. Here we prove several lemmas that are useful for closed pseudogroups, using the methods developed here.

\subsubsection{The Constant $\theta$}
\label{sec:constant_theta}

In this section we will prove several inequalities, which hold for maps that are `small enough'.
The various constants measuring this `smallness' will all be denoted by the same symbol $\theta>0$, so one should think about $\theta>0$ as the minimum of several constants appearing in the following sections. 
A more precise statement is the following:
\begin{quote}
    \emph{There exists a $\theta>0$ such that all results of this section hold.}
\end{quote}
This constant depends on the operator $Q$, on the shrinking domains, on the choice of norms and on local trivializations.

\subsubsection{The Symbol $\lesssim$}
\label{sec:symbol_lesssim} 

We uphold the following notation in order to simplify estimates.
Let $\mathcal{A}$ be a set, and let 
    $
        p, q:\mathcal{A}\to [0,\infty)
    $
be two maps.
The expression:
    \[
        p(f) \lesssim q(f),
        \quad\forall\;
        f\in \mathcal{A}
    \]
means that there exists a constant $c\geq 0$ such that
    \[
        p(f)\leq cq(f),
        \quad\forall\;
        f\in \mathcal{A}.
    \]

\subsection{Local Versions}
%\etocsettocdepth{3}
%\localtableofcontents

The local versions of \nameref{Lemma:B} and \nameref{Lemma:C} presented here involve many explicit computations and estimates of partial derivatives (and with that, the heart of the matter), whereas the subsequent sections deal with extending these results to different global settings. There is no local version of \nameref{Lemma:A}, as it is dealt with directly in the later sections (using some estimates from this section). 

First we discuss convenient notation to work with partial derivatives, and settle on a choice of $C^k$-norms.
Then we recall the so-called interpolation inequalities, and prove a variant that is often easier to use.
Then we prove estimates for the composition of two maps, and for the inverse of a diffeomorphism. Finally we prove the local versions of Lemma B and C.

\subsubsection{PDOs for Compositions}

For multi-indices
    $
        a=(a_1,\ldots,a_m)\in \N^m
    $
and $b \in \N^m$ we uphold the following notation:
\begin{itemize}

\item
    $
        |a| \defeq a_1+\ldots +a_m.
    $

\item
    $
        a! \defeq a_1! \cdots a_m!.
    $

\item
    $
        a \leq b 
        \Longleftrightarrow
        a_i \leq b_i
        \;\forall\; i.
    $

\item
    $
        {a \choose b}
        \defeq
        {a_1 \choose b_1}
        \cdots
        {a_m \choose b_m}.
    $

\item
    $
        \partial^a \defeq
        \tfrac{ \partial^{a_1} }{ \partial x_1^{a_1} }
        \cdots
        \tfrac{ \partial^{a_m} }{ \partial x_m^{a_m} }
    $
and 
    $
        D^{a} 
        \defeq 
        \tfrac1{a!}
        \partial^a.
    $

\end{itemize}

The last two expressions are partial differential operators acting on smooth functions defined on open subsets of $\R^m$ with values in $\R^n$.

\begin{lemma}
$D^a$ satisfies the following multiplicativity property for $f, g: \R^m \to \R$:
    \begin{equation}
    \label{eqn:D-product}
        D^a(f\cdot g)
        =
        \sum\nolimits\nolimits_{b\leq a}
        D^{b}(f) \cdot D^{a-b}(g).
    \end{equation}
\end{lemma}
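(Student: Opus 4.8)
The plan is to deduce this from the classical higher-order Leibniz rule. First I would recall the multivariate Leibniz formula for the ordinary partial derivative operators $\partial^a$: for smooth functions $f, g:\R^m\to\R$ one has
\[
    \partial^a(f\cdot g)
    =
    \sum\nolimits_{b\leq a}
    \binom{a}{b}
    \partial^b(f)\,\partial^{a-b}(g).
\]
This is standard and proved by induction: the one-variable case $\partial_x^k(fg)=\sum_{j=0}^k\binom{k}{j}\partial_x^j f\,\partial_x^{k-j}g$ is the usual Leibniz rule (induction on $k$, using Pascal's identity $\binom{k}{j}+\binom{k}{j-1}=\binom{k+1}{j}$), and the multivariate statement follows by applying the one-variable case successively in each of the coordinate directions $x_1,\ldots,x_m$, the mixed binomial coefficient $\binom{a}{b}=\prod_i\binom{a_i}{b_i}$ arising as the product of the one-variable ones.

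Next I would simply divide by $a!$ and rewrite the binomial coefficient using $\binom{a}{b}=\dfrac{a!}{b!\,(a-b)!}$, valid termwise since $b\leq a$. This gives
\[
    D^a(f\cdot g)
    =
    \frac{1}{a!}\sum\nolimits_{b\leq a}\frac{a!}{b!\,(a-b)!}\,\partial^b(f)\,\partial^{a-b}(g)
    =
    \sum\nolimits_{b\leq a}\frac{1}{b!}\partial^b(f)\cdot\frac{1}{(a-b)!}\partial^{a-b}(g),
\]
and recognizing $\frac{1}{b!}\partial^b=D^b$ and $\frac{1}{(a-b)!}\partial^{a-b}=D^{a-b}$ yields the claimed identity \eqref{eqn:D-product}. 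The componentwise version for $\R^n$-valued $f$ (or the bilinear-pairing version used later for compositions) is immediate since both sides are computed coordinatewise.

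There is no real obstacle here; the statement is a bookkeeping reformulation of the Leibniz rule, and the only thing worth highlighting is that the normalization $D^a=\frac{1}{a!}\partial^a$ is chosen precisely so that the binomial coefficients disappear, which is what makes the subsequent composition estimates (Faà di Bruno type bounds) cleaner.
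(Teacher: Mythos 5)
Your proof is correct and is the standard argument: reduce to the classical multivariate Leibniz rule for $\partial^a$ and observe that the normalization $D^a=\tfrac1{a!}\partial^a$ absorbs the binomial coefficients. The paper states this lemma without proof, treating it as routine, so your write-up matches the intended reasoning.
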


In this paragraph we will state some formulas related to the partial derivatives of the composition of two smooth maps.
To keep these formulas compact and readable, we will first introduce a family of auxiliary partial differential operators. 
Namely, for multi-indexes $a\in\N^m$ and $b\in \N^n$ with $|a|\geq |b|\geq 1$, consider the operator 
    \[
        P^{a}_{b}:
        C^\infty(U, \R^n) \longto C^\infty(U),
        \quad U\sub \R^m,
    \]
which sends a smooth function
    $
        v
        =
        (v_1, \ldots, v_n) \in C^\infty(U, \R^n)
    $
to the smooth function
    \[
        P^a_b(v)
        \defeq 
        \sum\nolimits\nolimits_{(a^{i,j})}
        \prod\nolimits_{i=1}^n
        \prod\nolimits_{j=1}^{b_i}
        D^{a^{i,j}}(v_i),
    \]
where the sum runs over all decompositions $(a^{i,j})$ with
    \[
        \sum\nolimits\nolimits_{i=1}^n
        \sum\nolimits\nolimits_{j=1}^{b_i}a^{i,j}
        =a
    \]
and where $a^{i,j}\in \N^m$ with $|a^{i,j}|\geq 1$.
We extend this definition to $|b|=0$ by
    \[
        P^a_0(v) \defeq 
        \left\{
        \begin{array}{ll}
            1, & \text{if } a=0,
            \\
            0, & \text{otherwise.}
        \end{array}
        \right.
    \]

\begin{lemma}
For open sets $U\sub \R^n$ and $V\sub \R^m$, and smooth maps $u: U\to \R$ and $v: V \to U$, we find the formula
    \begin{equation}
    \label{eqn:composition}
        D^a(u\circ  v)
        =
        \sum\nolimits\nolimits_{|b| \leq |a|}(D^bu)\circ  v\cdot  P^{a}_b(v).
    \end{equation}
\end{lemma}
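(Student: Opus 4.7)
My plan is to prove the composition formula by reading off Taylor coefficients, which makes the combinatorics of $P^a_b$ transparent. Fix $x \in V$, set $y = v(x) \in U$, and consider a formal indeterminate $h \in \R^m$. The starting observation is that $D^a f(x)$ is precisely the coefficient of $h^a$ in the Taylor polynomial of $f$ at $x$, so the desired identity amounts to matching the coefficient of $h^a$ in the Taylor expansion of $u \circ v$ at $x$, computed in two different ways.

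First I would write the expansion of $v$ about $x$ as $v(x+h) = y + w(h)$, where
$$w(h) = \sum\nolimits_{|a'| \geq 1} D^{a'} v(x)\, h^{a'} \in \R^n,$$
so that expanding $u$ about $y$ yields $u(v(x+h)) = \sum_b D^b u(y) \cdot w(h)^b$. The main step is to expand the product $w(h)^b = \prod_{i=1}^n w_i(h)^{b_i}$: each factor $w_i(h)^{b_i}$ becomes a sum over tuples $(a^{i,1},\ldots,a^{i,b_i})$ of multi-indices in $\N^m$ with $|a^{i,j}| \geq 1$, contributing $\prod_j D^{a^{i,j}} v_i(x) \cdot h^{\sum_j a^{i,j}}$. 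Multiplying these together and imposing $\sum_{i,j} a^{i,j} = a$ gives exactly
$$[h^a]\, u(v(x+h)) = \sum\nolimits_b D^b u(y) \cdot P^a_b(v)(x),$$
which is the right-hand side of (\ref{eqn:composition}). Since the left-hand side $[h^a]\, u(v(x+h))$ equals $D^a(u \circ v)(x)$ by definition of the Taylor coefficients, the identity follows.

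To make this rigorous without invoking convergence of Taylor series I would truncate: all manipulations are finite operations on polynomials in $h$ of degree $\leq |a|$, and the formal identity of the coefficient of $h^a$ is all that is needed. The constraint $|b| \leq |a|$ in the sum is automatic since each factor in $P^a_b$ carries $|a^{i,j}| \geq 1$ and there are $|b| = \sum_i b_i$ such factors, forcing $|b| \leq |a|$. The vanishing convention $P^a_0 = \delta_{a,0}$ is consistent with the fact that the only contribution from $b = 0$ comes from $a = 0$.

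An alternative proof is induction on $|a|$, with base case $|a| = 1$ giving the chain rule, and the inductive step using the Leibniz rule (\ref{eqn:D-product}) together with the differentiation identity $\partial_k D^b u = (b_k+1) D^{b+e_k} u$. The main obstacle on this route is verifying by hand the combinatorial recurrence for $P^a_b$ under differentiation, whereas the generating-function argument above packages all of this bookkeeping into a single bilinear expansion. For this reason I would present the formal Taylor proof as the main one.
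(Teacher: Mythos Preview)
Your proof is correct and follows precisely the route the paper suggests: the paper's entire proof is the one-line remark ``This formula is easily checked, e.g.\ it suffices to check it for $u$ being a polynomial,'' and your Taylor-coefficient argument is exactly a fleshed-out version of that hint, since truncating to degree $|a|$ is the same as replacing $u$ and $v$ by their Taylor polynomials. Your careful tracking of how the ordered decompositions $(a^{i,j})$ arise from expanding $w(h)^b$ makes transparent why $P^a_b$ has the shape it does, which the paper leaves implicit.
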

This formula is easily checked, e.g.\ it suffices to check it for $u$ being a polynomial.

\begin{lemma}
For an open set $U\sub \R^m$ and smooth maps $v,w : U \to \R$, we find the formula
    \begin{equation}
    \label{eqn:P-sum}
        P^a_b(v+w) 
        =
        \sum\nolimits\nolimits_{b' \leq b}
        {b \choose b'}
        \sum\nolimits\nolimits_{a' \leq a}
        P^{a'}_{b'}(v) \cdot P^{a-a'}_{b - b'}(w).
    \end{equation}
\end{lemma}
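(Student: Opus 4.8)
The plan is to prove formula~(\ref{eqn:P-sum}) by a direct combinatorial expansion, using only two elementary facts: each operator $D^{c}$ is linear, and the product $\prod_{i,j}(\cdot)$ defining $P^a_b$ is commutative with an index set that is symmetric under permuting $j$ within each fixed $i$.

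First I would unfold the definition of the left-hand side. By definition, $P^a_b(v+w)$ is the sum, over all decompositions $(c^{i,j})_{1\leq i\leq n,\ 1\leq j\leq b_i}$ with $\sum_{i,j}c^{i,j}=a$ and $|c^{i,j}|\geq 1$, of the product $\prod_{i,j}D^{c^{i,j}}(v_i+w_i)$. Linearity of $D^{c^{i,j}}$ splits each factor as $D^{c^{i,j}}(v_i)+D^{c^{i,j}}(w_i)$, and expanding the product over the index set $I=\{(i,j):1\leq i\leq n,\ 1\leq j\leq b_i\}$ rewrites the whole thing as a double sum: over decompositions $(c^{i,j})$ and over subsets $S\subseteq I$, of the term $\prod_{(i,j)\in S}D^{c^{i,j}}(v_i)\cdot\prod_{(i,j)\notin S}D^{c^{i,j}}(w_i)$.

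The second step is to regroup this double sum according to the \emph{shape} of $S$, that is, the multi-index $b'\leq b$ with $b'_i\defeq\#\{j:(i,j)\in S\}$. For a fixed shape $b'$ there are exactly $\binom{b}{b'}=\prod_i\binom{b_i}{b'_i}$ subsets of that shape, and — by commutativity of the product and the symmetry of the constraint under relabeling $j$ for fixed $i$ — all of them contribute the same amount. Using the representative $S=\{(i,j):j\leq b'_i\}$ and splitting $a'\defeq\sum_{(i,j)\in S}c^{i,j}$, $a-a'=\sum_{(i,j)\notin S}c^{i,j}$, this common contribution is seen to be exactly $\sum_{a'\leq a}P^{a'}_{b'}(v)\cdot P^{a-a'}_{b-b'}(w)$: the sum over decompositions of the $v$-part (indices $j=1,\dots,b'_i$) is $P^{a'}_{b'}(v)$, and after relabeling the $w$-part (the remaining $b_i-b'_i$ indices) is $P^{a-a'}_{b-b'}(w)$. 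Summing over all shapes $b'\leq b$ then yields the right-hand side of~(\ref{eqn:P-sum}); the degenerate cases $b'_i=0$ (resp.\ $b'_i=b_i$) are absorbed by the convention that $P^c_0$ equals $1$ when $c=0$ and $0$ otherwise, which makes the endpoint terms $b'=0$ and $b'=b$ reduce to $P^a_b(w)$ and $P^a_b(v)$ and keeps the bookkeeping consistent when $n>1$.

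The only slightly delicate point — the main (mild) obstacle — is verifying the symmetry assertion used in the regrouping: that for a fixed shape $b'$ all $\binom{b}{b'}$ subsets $S\subseteq I$ really give equal contributions. This reduces to observing that both the summand $\prod_{(i,j)\in S}D^{c^{i,j}}(v_i)\prod_{(i,j)\notin S}D^{c^{i,j}}(w_i)$ and the constraint $\{\sum_{i,j}c^{i,j}=a,\ |c^{i,j}|\geq1\}$ are invariant under any permutation of the index $j$ within a fixed $i$, and any two subsets of the same shape differ by such a permutation. This is purely formal, and once it is in place the identity drops out of the expansion; the case $n=1$ in the statement is, of course, the special case $b\in\mathbb N$ of this argument.
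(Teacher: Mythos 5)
Your argument is correct, and it fills an actual gap: the paper simply asserts this lemma without any proof or even a remark (unlike the composition formula just before it, which at least carries the note that ``it suffices to check it for $u$ being a polynomial''). So there is no in-text argument to compare against; yours is a complete, direct combinatorial verification and is the natural one. The three-step structure is sound: linearity of $D^{c}$ lets you distribute the product over $I=\{(i,j):1\leq i\leq n,\ 1\leq j\leq b_i\}$ into a sum over subsets $S\subseteq I$; grouping by the shape $b'_i=\#\{j:(i,j)\in S\}$ uses the genuine symmetry that both the summand and the constraint $\sum c^{i,j}=a$, $|c^{i,j}|\geq 1$ are invariant under permuting $j$ within each row $i$, so any two subsets of a given shape contribute equally and there are $\binom{b}{b'}=\prod_i\binom{b_i}{b'_i}$ of them; and splitting $a=a'+(a-a')$ across $S$ and $I\setminus S$ for the representative $S=\{(i,j):j\leq b'_i\}$ reconstitutes $P^{a'}_{b'}(v)\cdot P^{a-a'}_{b-b'}(w)$ after the obvious relabeling of the $w$-indices.

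Two small points worth tidying. First, your remark about the ``degenerate cases $b'_i=0$'' slightly overstates what needs the convention: only the extreme shapes $b'=0$ and $b'=b$ actually invoke the $|b'|=0$ convention $P^{c}_0=\delta_{c,0}$ (yielding the endpoint terms $P^a_b(w)$ and $P^a_b(v)$); a shape $b'\neq 0$ with some but not all $b'_i=0$ needs no convention at all, since the corresponding inner product over $j$ is empty and the definition of $P^{a'}_{b'}$ already handles it. Second, you are right to note implicitly that the lemma as printed says $v,w\colon U\to\R$, which forces $n=1$, whereas the place where it is actually used (in the proof of Lemma \ref{lemma:local comp}, for sections into $\R^m\times\R^n$) needs the vector-valued version $v,w\colon U\to\R^n$. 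Your proof covers the general case, so this is in fact a small typo in the paper's statement which your argument quietly repairs.
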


\subsubsection{Norms on Smooth Functions}

Let $K\sub \R^m$ be a compact set with dense interior:
    \[
        K=\overline{\mathrm{int}(K)}.
    \]
\begin{definition}
A function on $K$ with values in $\R^n$ (or any manifold) is said to be smooth if it extends to a smooth function on a neighborhood of $K$.
The space of all such maps is denoted by $C^{\infty}(K,\R^n)$.
\end{definition}

The condition that $K$ has dense interior insures that the operators $D^a$  for $a\in \N^m$ are defined on $C^{\infty}(K,\R^n)$.
For $k\geq 0$ we introduce standard $C^k$-norms:
    \[
        |f|_{k,K} \defeq 
        \sup\nolimits_{x\in K}
        \Big(
            \sum\nolimits\nolimits_{|a|\leq k}
            |D^a f(x)|^2
        \Big)^{1/2},
        \quad f\in C^{\infty}(K,\R^n).
    \]
When the domain of $K$ can be understood from the context we will simplify notation to 
    \[
        |f|_k \defeq |f|_{k,K}.
    \]

\subsubsection{Interpolation Inequalities}

Let $B\sub \R^m$ be an closed ball.
The $C^k$-norms on $C^{\infty}(B,\R^n)$ satisfy the following type of interpolation inequalities:
    \[
        |f|_{j}^{k-i}
        \leq 
        c_{i,j,k}
        |f|_{i}^{k-j}
        |f|_{k}^{j-i},
        \quad
        \textrm{for all}
        \quad
        i \leq j\leq k,
    \]
with constants depending only on the indexes $i, j, k$ and the dimensions $m, n$.
A proof of this result can be found in for example \cite{Conn85}.
Note that, with the notation of \nameref{sec:symbol_lesssim}, the interpolation inequalities become
    \[
        |f|_j^{k-i} \lesssim |f|_{i}^{k-j}|f|_{k}^{j-i}, 
        \quad\forall\; 
        f\in C^{\infty}(B,\R^n).
    \]

Throughout this section we work exclusively with maps defined on closed balls. The reasons for doing so are mostly precisely because the interpolation inequalities hold, and on a few occasions we use that they are convex.
Interpolation inequalities hold for more general subsets; for example, they hold on $C^{\infty}(K,\R^n)$ for a compact manifold with corners $K\sub \R^m$ of codimension zero (compactness can be dropped if one requires all derivatives to be bounded). 
%\color{green} However, we do not know how general these classes of subsets are.}

\subsubsection{Generalized Interpolation Inequalities}

The interpolation inequalities will be used often in the following form (see also \cite{Conn85} for a different version of this lemma). Fix an integers $N\geq 0$ and $m_i, n_i \geq 0$ for $1 \leq i \leq N$, and consider smooth maps 
    \[
        f_i \in C^\infty(B_i, \R^{n_i}),
        \quad
        B_i \sub \R^{m_i},
    \]
where the $B_i\sub \R^{m_i}$ are fixed closed balls.
Suppose moreover that we are given indices 
    \[
        0 \leq q_i \leq Q_i
    \]
for $1\leq i \leq N$, and we are interested in bounding the product of the high index norms $|f_i|_{Q_i}$ by an expression involving (mostly) the low index norms $|f_i|_{q_i}$.

\begin{lemma}
\label{lemma:extended_interpolation}
If $M\geq 0$ is an integer such that
    \[
        (Q_1-q_1) + \ldots + (Q_N-q_N) \leq M,
    \]
then the following inequality holds:
    \[
        |f_1|_{Q_1} \cdots |f_N|_{Q_N}
        \lesssim 
        \sum\nolimits\nolimits_{j=1}^N 
        \big(
            |f_{1}|_{q_1}
            \cdots 
            |\widehat{f_j}|_{q_j}
            \cdots  
            |f_N|_{q_N}
        \big)
        |f_j|_{q_j+M},
    \]
for all $f_i\in C^{\infty}(B_i,\R^{n_i})$ with $1\leq i\leq N$.
Here the circumflex means one should exclude that factor from the product.
\end{lemma}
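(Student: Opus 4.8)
The plan is to reduce the general statement, with $N$ different maps living on balls of possibly different dimensions, to the classical single‑map interpolation inequality by a sequence of elementary manipulations. First I would normalize the situation: by replacing each $f_i$ with the pair of scalars $(|f_i|_{q_i}, |f_i|_{q_i+M})$ and noting that all the intermediate norms $|f_i|_{Q_i}$ we care about satisfy $q_i \le Q_i \le q_i+M$, the classical interpolation inequality gives, for each fixed $i$,
\[
    |f_i|_{Q_i}
    \lesssim
    |f_i|_{q_i}^{1-\lambda_i}\,|f_i|_{q_i+M}^{\lambda_i},
    \qquad
    \lambda_i \defeq \frac{Q_i-q_i}{M}\in[0,1].
\]
Multiplying these over $i=1,\dots,N$ yields
\[
    |f_1|_{Q_1}\cdots|f_N|_{Q_N}
    \lesssim
    \prod\nolimits_{i=1}^N |f_i|_{q_i}^{1-\lambda_i}\,|f_i|_{q_i+M}^{\lambda_i},
\]
with $\sum_i \lambda_i \le 1$ by the hypothesis on $M$. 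So the whole problem is now the purely algebraic inequality: for nonnegative reals $x_i = |f_i|_{q_i}$ and $y_i = |f_i|_{q_i+M}$ and exponents $\lambda_i\ge 0$ with $\sum\lambda_i\le 1$ (and $x_i\le y_i$, since a lower‑index norm is bounded by a higher‑index one), one has
\[
    \prod\nolimits_i x_i^{1-\lambda_i} y_i^{\lambda_i}
    \;\lesssim\;
    \sum\nolimits_{j=1}^N
    \Big(\prod\nolimits_{i\neq j} x_i\Big)\, y_j.
\]

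The key step is then this elementary inequality, which I would prove by the weighted AM–GM inequality. Write $\prod_i x_i^{1-\lambda_i}y_i^{\lambda_i} = \big(\prod_i x_i\big)\cdot \prod_i (y_i/x_i)^{\lambda_i}$ (the ratios are $\ge 1$), and set $\lambda_0 \defeq 1-\sum_i\lambda_i \ge 0$, $t_0\defeq 1$, $t_j\defeq y_j/x_j\ge 1$. Since $\sum_{j=0}^N \lambda_j = 1$, weighted AM–GM gives $\prod_{j=0}^N t_j^{\lambda_j}\le \sum_{j=0}^N \lambda_j t_j \le 1 + \sum_{j=1}^N y_j/x_j$. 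Multiplying back by $\prod_i x_i$ and using $\prod_i x_i \le \prod_{i\neq j} x_i \cdot y_j$ term by term (again $x_j\le y_j$), and absorbing the leading $\prod_i x_i$ into any one of the sum's terms, produces exactly the right‑hand side up to a constant depending only on $N$ and the indices. I should double‑check the edge cases: $Q_i=q_i$ for all $i$ (then $M$ could be $0$, all $\lambda_i=0$, and the inequality is trivial after choosing $j=1$), and $N=1$ (then it is literally the classical interpolation inequality).

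The main obstacle, such as it is, is bookkeeping rather than depth: one must be careful that the constants in the classical interpolation inequality depend only on the indices $i, j, k$ and the ambient dimensions $m_i, n_i$ — all fixed finite data — so that the product of $N$ such inequalities still has a constant independent of the $f_i$, and that the final AM–GM step does not smuggle in any dependence on the norms. A secondary subtlety is that the classical inequality as quoted from \cite{Conn85} is stated for closed balls; since each $f_i$ is assumed defined on a fixed closed ball $B_i$, this is exactly the hypothesis we have, so no additional extension argument is needed. I would also remark in passing that the same proof works verbatim if the $B_i$ are replaced by compact manifolds with corners on which interpolation inequalities hold, which is the form in which this lemma will actually be applied.
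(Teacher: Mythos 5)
Your argument is correct, and its two pillars — applying the classical interpolation inequality to each factor separately and then invoking weighted AM--GM — are exactly the same as the paper's. The bookkeeping after that point is genuinely different though. The paper first normalizes (by increasing $Q_N$) to force $\sum_i\lambda_i=1$, which lets it rewrite each exponent $1-\lambda_i = \sum_{j\neq i}\lambda_j$ and regroup the product directly as
\[
    \prod\nolimits_{i}
    |f_i|_{Q_i}
    \;\lesssim\;
    \prod\nolimits_{j}
    \Bigl(
        \bigl(\,\prod\nolimits_{i\neq j} |f_i|_{q_i}\,\bigr)\,|f_j|_{q_j+M}
    \Bigr)^{\lambda_j},
\]
so the subsequent application of $x_1^{\lambda_1}\cdots x_N^{\lambda_N}\le x_1+\cdots+x_N$ (which is just weighted AM--GM) gives the stated sum immediately, with no further manipulation. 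You instead keep the slack $\sum_i\lambda_i\le 1$, factor out $\prod_i x_i$, introduce a dummy term $(\lambda_0, t_0)=(1-\sum\lambda_i,\,1)$, apply AM--GM to the ratios $t_j=y_j/x_j$, and then re-absorb the leading $\prod_i x_i$ into one of the sum's terms. This works, but it costs you two extra hypotheses that the paper's route does not use: (i) the monotonicity $|f_j|_{q_j}\le|f_j|_{q_j+M}$ of the $C^k$-norms in $k$ (needed both to justify $t_j\ge 1$ and for the final absorption), and (ii) strict positivity of the low norms $x_i$ to form the ratios — you would want to explicitly dispose of the case where some $|f_i|_{q_i}=0$ (it kills the left-hand side outright, so it is harmless, but it should be said). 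The paper's normalization trick avoids both. On the other hand, your route makes the $M=0$ edge case easier to see, and the observation that the inequality survives verbatim on more general compact domains with corners is a worthwhile remark that the paper leaves implicit.
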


\begin{proof}
By increasing $Q_N$ we may assume that 
    \[
        (Q_1-q_1)+\ldots+(Q_N-q_N)=M.
    \]
By multiplying the interpolation inequalities
    \[
        |f_i|_{Q_i}
        \lesssim 
        |f_i|_{q_i}^{1-\frac{Q_i-q_i}{M}}
        |f_i|_{q_i+M}^{\frac{Q_i-q_i}{M}},
    \]
and writing $1-(Q_i-q_i)/M = \sum\nolimits_{j\neq i}(Q_j-q_j)/M$, we obtain
    \[
        \prod\nolimits_{i=1}^N
        |f_i|_{Q_i}
        \lesssim 
        \prod\nolimits_{i=1}^N
        \Big(
            \big(
                |f_{1}|_{q_1}
                \ldots 
                |\widehat{f_i}|_{q_i}
                \ldots  
                |f_N|_{q_N}
            \big)
            |f_i|_{q_i+M}
        \Big)^{\frac{Q_i-q_i}{M}}.
    \]
The result follows by applying the trivial inequality:
    \[
        x_1^{\lambda_1}\ldots x_N^{\lambda_N}
        \leq 
        x_1+\ldots+x_N,
    \]
where $0\leq x_1,\ldots,x_N$, and where $\lambda_1,\ldots,\lambda_N\in [0,1]$ are such that 
    \[
        \lambda_1+\ldots+\lambda_N=1.
        \qedhere
    \]
\end{proof}

\subsubsection{Estimates for Compositions}

Next we prove estimates for the composition of two maps (other versions of these results can be found in \cite{Conn85}). The estimates in the Lemma below will form the backbone for all other Lemmas in this chapter. They are used explicitly in Lemmas \ref{lemma:inverse}, \ref{lemma:action_local},  \ref{lemma:local_flow_action}, \ref{lemma:infinite_comp}, \ref{lemma:time_independent_flows}, \ref{lemma:cont_action}, \ref{lemma:comparison}, \ref{lemma:A_global}, and \ref{lemma:global_action}.

\begin{lemma}
\label{lemma:local comp}
Fix closed balls
    $
        B\sub \R^m
    $
and
    $
        A\sub \R^m\times\R^n
    $
with $B\times \{0\}\sub \mathrm{int}(A)$, and let
    \[
        \mathrm{i}=\id\times 0: \R^m\longto \R^{m}\times \R^n
    \]
be the standard embedding of $\R^m$ into $\R^m \times \R^n$.
For all
    \[
        f\in C^{\infty}(B,\R^{m}\times\R^n)
        \quad\text{and}\quad
        g\in C^{\infty}(A,\R^l),
    \]
if $|f|_0<\theta$, then $(\mathrm{i}+f)(B)\sub A$ and
    \begin{align*}
        |g\circ (\mathrm{i}+f)|_{k} 
        &\lesssim  
        |g|_{k}(1+|f|_{k}),
        \tag{a}
        \\
        |g\circ (\mathrm{i}+f) - g \circ  \mathrm{i}|_{k} 
        &\lesssim  
        |g|_{k+1}|f|_{k},
        \tag{b}
        \\
        |g\circ  (\mathrm{i}+f)-g\circ  \mathrm{i}-\dd g(\mathrm{i})f|_k
        &\lesssim 
        |g|_{k+2}|f|_0|f|_{k},
        \tag{c}
\intertext{and if in addition $|f|_1<\theta$, then:}
        |g\circ (\mathrm{i}+f)|_{k} 
        &\lesssim  
        |g|_{k}+|g|_1|f|_{k},
        \tag{d}
        \\
        |g\circ (\mathrm{i}+f)-g\circ  \mathrm{i}|_{k} 
        &\lesssim  
        |g|_{k+1}|f|_{0}+|g|_0|f|_{k+1},
        \tag{e}
\end{align*}
where $\dd g(\mathrm{i})f$ denotes the differential of $g$ at $\mathrm{i}$ along $f$, i.e.\
    \[
        \dd g(\mathrm{i})f
        \defeq
        \tfrac{\dd }{\dd \epsilon}\big|_{\epsilon=0}
        g(\mathrm{i}+\epsilon f).
    \]
\end{lemma}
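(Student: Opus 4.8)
The strategy is a standard but careful bootstrapping argument using the chain-rule formula \eqref{eqn:composition} together with the generalized interpolation inequalities of Lemma \ref{lemma:extended_interpolation}. First I would fix $\theta>0$ small enough that $|f|_0<\theta$ forces $(\mathrm{i}+f)(B)\subset A$; this is immediate since $B\times\{0\}\subset\mathrm{int}(A)$ and $A$ is compact, so a uniform tube of radius $\theta$ around $B\times\{0\}$ is contained in $A$. From then on all expressions $g\circ(\mathrm{i}+f)$ are well-defined as smooth maps on $B$.

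For estimate (a) I would apply formula \eqref{eqn:composition} with $v=\mathrm{i}+f$, so that
$D^a(g\circ(\mathrm{i}+f))=\sum_{|b|\le|a|}(D^bg)\circ(\mathrm{i}+f)\cdot P^a_b(\mathrm{i}+f)$,
then expand $P^a_b(\mathrm{i}+f)$ via \eqref{eqn:P-sum} (splitting off the linear part $\mathrm{i}$ whose higher derivatives vanish). Each resulting term is a product of factors $D^c(f_i)$ whose orders sum to at most $k$, multiplied by $(D^bg)\circ(\mathrm{i}+f)$; bounding the latter by $|g|_k$ (using $|f|_0<\theta$ and Lemma \ref{lemma:local comp}(a) inductively, or simply $\sup|D^bg|\le|g|_k$ since the image stays in $A$) and applying Lemma \ref{lemma:extended_interpolation} to the product of $f$-derivatives gives $|g|_k(1+|f|_k)$. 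For (b) one subtracts the $|a|$-derivative of $g\circ\mathrm{i}$, which is exactly the $b=a$, $f$-free term, and estimates the remaining terms (each containing at least one factor $D^cf$ with $|c|\ge1$) by $|g|_{k+1}|f|_k$; the extra derivative on $g$ appears because when all $f$-factors are put at low order by interpolation, one of them may carry up to $k$ derivatives while $g$ is then hit with one more. Estimate (c) is the second-order Taylor remainder: subtract also the linear term $\dd g(\mathrm{i})f$, which is the sum of terms with exactly one $f$-factor at full order and $g$ at order $|b|=1$; what remains has either two or more $f$-factors (yielding a $|f|_0|f|_k$ after interpolation) or one $f$-factor together with a $g$-derivative of order $\ge2$, and in both cases one gets $|g|_{k+2}|f|_0|f|_k$. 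Estimates (d) and (e) are the refined versions under the extra hypothesis $|f|_1<\theta$: now $|f|_1$ is treated as a bounded constant, so in the interpolation one keeps $|g|_k$ (resp.\ $|g|_{k+1}$) paired with a bounded $|f|$, or $|g|_1$ (resp.\ $|g|_0$) paired with the full-order $|f|_k$ (resp.\ $|f|_{k+1}$); Lemma \ref{lemma:extended_interpolation} with the roles of the $f$ and $g$ norms chosen accordingly delivers the result.

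The main technical obstacle is purely bookkeeping: correctly tracking, across the double expansion \eqref{eqn:composition}--\eqref{eqn:P-sum}, exactly which terms are subtracted off in (b) and (c), and verifying that every surviving term has the claimed structure (at least one or at least two $f$-factors, with the matching number of spare derivatives available on $g$). Once the combinatorial identity is organized, each individual term is controlled by a single application of the generalized interpolation inequality, and there is no analytic difficulty. I would therefore present the argument by first proving (a), then deducing (b) and (c) as Taylor remainders of the same expansion, and finally noting that (d),(e) follow by re-running the interpolation step with $|f|_1$ absorbed into the constants.
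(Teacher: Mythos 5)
Your overall strategy (expand with \eqref{eqn:composition} and \eqref{eqn:P-sum}, then close with Lemma \ref{lemma:extended_interpolation}) is the right one, and your sketches of the domain claim, (a), (d), and (e) are consistent with what the paper does. But your route for (b) and (c) contains a genuine gap.

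For (b) you propose to subtract $D^a(g\circ\mathrm{i})$ from the expansion of $D^a(g\circ(\mathrm{i}+f))$ and claim that ``each remaining term contains at least one factor $D^cf$ with $|c|\ge1$.'' That is not true. In the expansion the $f$-free term (the one with $a'=0$) is $(D^{(a,0)}g)\circ(\mathrm{i}+f)$ --- note the evaluation at the shifted argument --- whereas $D^a(g\circ\mathrm{i})=(D^{(a,0)}g)\circ\mathrm{i}$ is evaluated at the base. These do not cancel; after your subtraction you are left with
\[
(D^{(a,0)}g)\circ(\mathrm{i}+f)\;-\;(D^{(a,0)}g)\circ\mathrm{i},
\]
which contains no explicit factor $D^cf$ and cannot be handled by the interpolation bookkeeping you describe. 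It requires a separate mean-value or integral-representation argument, exactly the kind of thing the paper must do for (e) via the formula $(D^{(a,0)}g)(\mathrm{i}+f)-(D^{(a,0)}g)(\mathrm{i})=\sum_i f_i\int_0^1 (\partial_i D^{(a,0)}g)\circ(\mathrm{i}+tf)\,dt$. The same issue contaminates your sketch of (c): the purported exact cancellation against $D^a(\dd g(\mathrm{i})f)$ fails because the $g$-derivatives in the chain-rule expansion are evaluated at $\mathrm{i}+f$, not at $\mathrm{i}$.

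The paper avoids this entirely by proving (b) and (c) through Hadamard's lemma applied at the level of the function, not the $a$-th derivative: one writes $g\circ(\mathrm{i}+f)-g\circ\mathrm{i}=\sum_i f_i h^i$ with $h^i=\int_0^1 (\partial_i g)\circ(\mathrm{i}+tf)\,dt$, bounds $|h^i|_k\lesssim |g|_{k+1}(1+|f|_k)$ by the already-established (a), and then applies the Leibniz rule \eqref{eqn:D-product} and Lemma \ref{lemma:extended_interpolation} to the genuine product $\sum_i f_i h^i$, where now every term truly carries an $f_i$ factor; a second Hadamard application gives (c) with $h^{ij}$ of two $f$-factors. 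You should either adopt this reformulation or explicitly insert the integral estimate for the residual $a'=0$ term in your version --- as written, the claim that all surviving terms have an $f$-derivative factor is what would fail in a line-by-line verification.
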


\begin{remark}
There are versions of inequalities (a), (b) and (c) for higher degrees in the following sense.
Let $T_{d}(g,f)$ be the $d$-th Taylor polynomial in $f$ of the composition $g\circ (\mathrm{i}+f)$.
By the same steps as in the proofs below, one can show that for $|f|_0<\theta$,
    \[  
        |g\circ  (\mathrm{i}+f)-T_{d}(g,f)|_k
        \lesssim 
        |g|_{k+d}|f|_0^{d-1}|f|_{k}.
    \]
However, we will only need the inequalities listed above.
\end{remark}

\begin{proof}
If $\theta$ is smaller than the distance between $B\times \{0\}$ and $(\R^m\times\R^n)\backslash A$, then clearly 
    \[
        (\mathrm{i}+f)(B)\sub A.
    \]
Note that $P^{a}_b(\mathrm{i})\neq 0$ if and only if $b=(a,0)$. In that case, the only decomposition $(a^{i,j})$ of $a$ that appears in $P^a_{(a,0)}(\mathrm{i})$ is the one where $a^{i,j}=1$ for all $i,j$, so that $P^a_{(a,0)}(\mathrm{i})=1$.

Let $k\geq 0$, and let $a\in\N^m$ be a multi-index with $|a|= k$.
By applying formulas (\ref{eqn:composition}) and (\ref{eqn:P-sum}) in succession we obtain
    \begin{align}
        &D^a(g\circ (\mathrm{i}+f))
        =
        \sum\nolimits\nolimits_{|b|\leq k}
        (D^bg)\circ (\mathrm{i}+f)
        \cdot
        P^{a}_b(\mathrm{i}+f)
        \nonumber
        \\
        &\qquad =
        \sum\nolimits\nolimits_{|b|\leq k}
        \sum\nolimits\nolimits_{a'\leq a}
        \sum\nolimits\nolimits_{b'\leq b}
        {b \choose b'}
        (D^bg) \circ (\mathrm{i}+f)
        \cdot
        P^{a'}_{b'}(f)
        \cdot
        P^{a-a'}_{b-b'}(\mathrm{i}).
        \nonumber
\intertext{Then by applying the above observation about $P^a_b(\mathrm{i})$ we obtain}
        &\qquad=
        \sum\nolimits\nolimits_{a'+c=a}
        \sum\nolimits\nolimits_{|b'| \leq |a'|}
        {b'+(c,0) \choose b'}
        (D^{b'+(c,0)}g) \circ  (\mathrm{i}+f)
        \cdot
        P^{a'}_{b'}(f)
        \nonumber
        \\
        &\qquad=
        (D^{(a,0)}g) \circ (\mathrm{i}+f)+T_a,
        \label{eqn:comp1}
    \end{align}
where $T_a$ represents the part of the sum where $a'\neq 0$.
The explicit description of $P^{a'}_{b'}$ for a term in $T_a$ implies the following estimate:
    \begin{equation}
    \label{ineq:zbrr}
        |(D^{b'+(c,0)}g)\circ (\mathrm{i}+f)P^{a'}_{b'}(f)|_0
        \lesssim 
        \sum\nolimits |g|_{k+p-s}|f|_{j_1}\cdots |f|_{j_{p}},
    \end{equation}
where $1\leq p\defeq |b'|\leq s\defeq |a'|$ and $1\leq j_1,\ldots, j_{p}$ are such that $j_1+\ldots+j_{p}=s$.

Assume now that $|f|_1< \theta$.
Applying Lemma \ref{lemma:extended_interpolation} to the right-hand side of (\ref{ineq:zbrr}) with
    \[
        q_1=\ldots=q_{p+1}=1,
        \quad
        Q_1=j_1, \ldots, Q_{p}=j_{p},
        Q_{p+1}=k+p-s,
    \]
    \[
        (Q_1-q_1)+\ldots+(Q_{p+1}-1)= k-1,
    \]
we obtain
    \[
        |g|_{k+p-s}|f|_{j_1}\cdots |f|_{j_{p}}
        \lesssim 
        |f|_1^{p-1}(|g|_k|f|_1+|g|_1|f|_k).
    \]
Since $|f|_1\leq \theta$, this gives
    \begin{equation}
    \label{ineq:T_a}
        |T_a|_{0}
        \lesssim 
        |g|_{k}|f|_{1}+|g|_{1}|f|_{k}.
    \end{equation}
The other term in (\ref{eqn:comp1}) is $\lesssim |g|_k$, hence inequality (d) of the Lemma follows.

Note that, since $A$ is convex, 
    \[
        (\mathrm{i}+tf)(B)\sub A,
        \quad\forall\; t\in [0,1].
    \]
Therefore, using formulas (\ref{eqn:comp1}) and (\ref{eqn:composition}) we can write
    \begin{align}
        D^a(g\circ (\mathrm{i}+f)-g\circ  \mathrm{i})-T_a
        &=
        (D^{(a,0)}g)\circ  (\mathrm{i}+f)-(D^{(a,0)}g)\circ  \mathrm{i}
        \nonumber
        \\
        &=
        \sum\nolimits\nolimits_{i=1}^{m+n}
        f_i
        \int_0^1
            \big(
                \frac{\partial}{\partial x_i} 
                \circ  
                D^{(a,0)} g
            \big)
            \circ 
            (\mathrm{i}+tf)
        \,\dd t.
        \label{equation:integral}
    \end{align}
For $|f|_1<\theta$, using formulas (\ref{equation:integral}) and (\ref{ineq:T_a}), and Lemma \ref{lemma:extended_interpolation}, we obtain inequality (e):
    \begin{align*}
        |g\circ (\mathrm{i}+f)-g\circ  \mathrm{i}|_{k}
        &\lesssim 
        |g|_{k+1}|f|_0+|g|_{k}|f|_{1}+|g|_{1}|f|_{k}
        \\
        &\lesssim
        |g|_{k+1}|f|_0+|g|_{0}|f|_{k+1}.
    \end{align*}

From now on we will only assume that $|f|_0<\theta$.
For $(a)$, applying Lemma \ref{lemma:extended_interpolation} to each term of formula (\ref{ineq:zbrr}) gives
    \[
        |g|_{k+p-s}|f|_{j_1}\cdots |f|_{j_{p}}
        \lesssim 
        |g|_{k+p-s} |f|_0^{p-1}|f|_p
        \lesssim 
        |g|_k|f|_k.
    \]
Since the other term in formula (\ref{eqn:comp1}) is $\lesssim |g|_k$, we obtain inequality (a):
    \[
        |g\circ  (\mathrm{i}+f)|_k
        \lesssim 
        |g|_k(1+|f|_k).
    \]

For $(b)$, by applying Hadamard's Lemma we can write
    \begin{equation}
    \label{Hadamard}
        g\circ  (\mathrm{i}+f)-g\circ  \mathrm{i}
        =
        \sum\nolimits\nolimits_{i=1}^{m+n}
        f_ih^i,
        \qquad
        h^{i} \defeq 
        \int_0^1
        \frac{\partial g}{\partial x_i} \circ  (\mathrm{i}+tf) \,\dd t
    \end{equation}
Applying inequality (a) of the Lemma to the composition $\tfrac{\partial}{\partial x_i} g\circ  (\mathrm{i}+tf)$ we obtain
    \begin{equation}
    \label{eq:brute_1}
        |h^{i}|_k
        \lesssim 
        |g|_{k+1}(1+|f|_k).
    \end{equation}
Let $a\in\N^m$ be a multi-index with $|a|=k$.
By applying $D^a$ to formula (\ref{Hadamard}) we obtain
    \begin{align*}
        D^a(g\circ  (\mathrm{i}+f)-g\circ  \mathrm{i})
        =
        \sum\nolimits\nolimits_{a' \leq a}
        \sum\nolimits\nolimits_{i=1}^{m+n}
        D^{a'}(f_i)
        \cdot
        D^{a-a'}(h^i).
    \end{align*}
Using this, inequality (\ref{eq:brute_1}), and Lemma \ref{lemma:extended_interpolation} we obtain inequality (b):
    \begin{align*}
        |g\circ  (\mathrm{i}+f)-g\circ  \mathrm{i}|_k
        &\lesssim 
        \sum\nolimits\nolimits_{j_1+j_2=k}
        |f|_{j_1}|g|_{j_2+1}(1+|f|_{j_2})
        \\
        &\lesssim
        |g|_{k+1}
        \sum\nolimits\nolimits_{j_1+j_2=k}
        |f|_{j_1}(1+|f|_{j_2})
        \\
        &\lesssim
        |g|_{k+1}
        \sum\nolimits\nolimits_{j_1+j_2=k}
        (|f|_{j_1}+|f|_0|f|_{j_1+j_2})
        \\
        &\lesssim
        |g|_{k+1}|f|_{k}.
\end{align*}

Inequality (c) is proven similarly: denoting 
    \[
        \delta(g,f)
        \defeq 
        g\circ  (\mathrm{i}+ f)-g\circ  \mathrm{i}-\dd g(\mathrm{i})f
    \]
and applying Hadamard's Lemma twice we obtain
    \[
        \delta(g,f)=
        \sum\nolimits\nolimits_{i,j=1}^{m+n}
        f_if_jh^{i,j},
        \qquad
        h^{i,j}\defeq 
        \int_0^1
            (1-t)\frac{\partial^2 g}{\partial x_i \partial x_j} \circ  (\mathrm{i}+tf) 
        \,\dd t
    \]
By applying inequality (a) to the integrand we obtain
    \begin{equation}
    \label{eq:brute}
        |h^{i,j}|_k
        \lesssim 
        |g|_{k+2}(1+|f|_k).
    \end{equation}
As in the proof of (b), by using the multiplicativity property of the operators $D^a$ as described in formula (\ref{eqn:D-product}), then estimate (\ref{eq:brute}), and finally Lemma \ref{lemma:extended_interpolation} we obtain (c):
    \[|
        \delta(g,f)|_k
        \lesssim 
        \sum\nolimits\nolimits_{j_1+j_2+j_3=k}
        |f|_{j_1}
        |f|_{j_2}
        |g|_{j_3+2}
        (1+|f|_{j_3})
        \lesssim
        |g|_{k+2}|f|_0|f|_{k}.
        \qedhere
    \]
\end{proof}

From the proof of the Lemma we also extract the following Corollary.
Here we refrain from using the symbol $\lesssim$ in order to make an estimate with a more precise description of the constant appearing in front of one of the terms.
This Corollary is exclusively needed for Lemma \ref{lemma:infinite_comp} (the local version of \nameref{Lemma:C}) and Lemma \ref{lemma:time_independent_flows} (the intermezzo on flows of time-dependent vector fields).

\begin{corollary}
\label{coro:local comp}
Let $B$ and $C$ be closed balls in $\R^m$ such that $B\sub \mathrm{int}(C)$.
There are constants $c_k\geq 0$, such that for all smooth maps
    \[
        f\in C^{\infty}(B,\R^{m})
        \quad \mathrm{and}\quad 
        g\in C^{\infty}(C,\R^l),
    \]
satisfying $|f|_1<\theta$ we have that $(\id+f)(B)\sub C$ and that
    \[
        |g\circ (\id+f)|_k
        \leq 
        |g|_{k} + c_k
        (|g|_{k}|f|_{1}+|g|_1|f|_k).
    \]
\end{corollary}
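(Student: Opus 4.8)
The plan is to re-run the proof of Lemma~\ref{lemma:local comp}(d), keeping careful track of the single term whose coefficient must be exactly $1$. First I would observe, exactly as there, that shrinking $\theta$ below the distance between $B$ and $\R^m\setminus\mathrm{int}(C)$ forces $(\id+f)(B)\sub C$, since $|f|_0\leq|f|_1<\theta$; thus every composition below is defined. Note that this is the special case of Lemma~\ref{lemma:local comp} in which the ``$\R^n$-part'' is trivial, so that $\mathrm{i}=\id$ and $(a,0)=a$ throughout.

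Next, recall formula~(\ref{eqn:comp1}) from the proof of Lemma~\ref{lemma:local comp}: for every multi-index $a\in\N^m$,
\[
    D^a\big(g\circ(\id+f)\big)
    =
    (D^ag)\circ(\id+f) + T_a,
\]
where $T_a$ is precisely the part of the sum in~(\ref{eqn:comp1}) indexed by decompositions $a'+c=a$ with $a'\neq 0$, i.e.\ the terms in which at least one genuine derivative of $f$ occurs (these arise by combining~(\ref{eqn:composition}) and~(\ref{eqn:P-sum}) and using that $P^{a'}_{b'}(\id)\neq 0$ only for $a'=0$). Applying the triangle inequality for the Euclidean norm of $k$-jets gives, for each $x\in B$,
\[
    \Big(\sum_{|a|\leq k}\big|D^a(g\circ(\id+f))(x)\big|^2\Big)^{1/2}
    \leq
    \Big(\sum_{|a|\leq k}\big|(D^ag)(\id+f)(x)\big|^2\Big)^{1/2}
    +
    \sum_{|a|\leq k}|T_a|_0 .
\]
The first summand is bounded by $\sup_{y\in C}\big(\sum_{|a|\leq k}|D^ag(y)|^2\big)^{1/2}=|g|_k$, because $(\id+f)(B)\sub C$; this is the term that carries coefficient $1$.

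It remains to bound $\sum_{|a|\leq k}|T_a|_0$, but this is exactly the estimate already performed in the proof of Lemma~\ref{lemma:local comp}: by the explicit form of the operators $P^{a'}_{b'}$, each summand of $T_a$ (with $|a|\leq k$) has the shape $|g|_{k+p-s}\,|f|_{j_1}\cdots|f|_{j_p}$ with $1\leq p\leq s\leq k$ and $j_1+\cdots+j_p=s$, and Lemma~\ref{lemma:extended_interpolation} with all low indices equal to $1$, together with $|f|_1<\theta$, yields $|g|_{k+p-s}\,|f|_{j_1}\cdots|f|_{j_p}\lesssim|g|_k|f|_1+|g|_1|f|_k$, as in~(\ref{ineq:T_a}); there are only finitely many $a$ with $|a|\leq k$, so $\sum_{|a|\leq k}|T_a|_0\leq c_k(|g|_k|f|_1+|g|_1|f|_k)$ for a suitable $c_k\geq 0$ depending only on $k$ (and on $m,l$). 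Taking the supremum over $x\in B$ and combining the two bounds gives the claimed inequality. I do not anticipate a real obstacle here: the content is bookkeeping, and the only point needing care is isolating the ``top'' term $(D^ag)\circ(\id+f)$ from the remainder $T_a$ so the former keeps coefficient $1$, while noting that the interpolation estimate for $T_a$ is verbatim the one from Lemma~\ref{lemma:local comp}.
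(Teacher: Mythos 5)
Your proof is correct and follows essentially the same route as the paper's: isolate the ``top'' term $(D^a g)\circ(\id+f)$ from $T_a$ via formula~(\ref{eqn:comp1}), reuse the estimate~(\ref{ineq:T_a}) on $T_a$ which already assumes $|f|_1<\theta$, and then pass to the sup-norm keeping coefficient exactly $1$ on the $|g|_k$ term. The only cosmetic difference is that you spell out the Euclidean triangle inequality on $k$-jets explicitly, whereas the paper phrases the same step pointwise (``for each $x\in B$ there is $y=x+f(x)\in C$ with $|D^a(g\circ(\id+f))(x)|\leq |D^ag(y)|+c_k(\cdots)$'').
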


\begin{proof}
As seen in the proof of the previous Lemma (with $n=m$ and $C=A$), for all multi-indices $a\in\N^m$ with $|a|\leq k$ we can write
    \[
        D^a(g\circ  (\id+f))-D^a(g)\circ  (\id+f)
        = T_a.
    \]
Recall estimate (\ref{ineq:T_a}): 
    \[
        |T_a|_k\lesssim (|g|_{k}|f|_{1}+|g|_1|f|_k).
    \]
Therefore we find constants $c_k\geq 0$ with the property that for any $x\in B$ there exists $y\in A$ (namely $y=x+f(x)$) such that
    \[
        |D^a(g\circ  (\id+f))(x)|
        \leq 
        |D^a(g)(y)|+ c_k(|g|_{k}|f|_{1}+|g|_1|f|_k).
    \]
This clearly implies the Corollary.
\end{proof}

\subsubsection{Estimates for Inverses} 
Next we prove estimates for inverses of diffeomorphisms (see also \cite{Conn85} for a similar result). They are used explicitly in Lemmas \ref{lemma:flow} and \ref{lemma:action_local} (the local versions of \nameref{Lemma:B}), Lemma \ref{lemma:diffeo} (on smooth maps close to the identity), and also Lemmas \ref{lemma:pseudogroup_limits} and \ref{lemma:pseudogroup_closure}.

\begin{lemma}
\label{lemma:inverse}
Let $B$ and $C$ be two closed balls in $\R^m$ such that $B\sub \mathrm{int}(C)$.
Any
    \[
        \id+g:C\longrightarrow\R^m
        \quad\textrm{satisfying}\quad  
        |g|_{1,C}<\theta
    \]
is a diffeomorphism onto its image, its image contains $B$, and its inverse
    \[
        \id+f
        \defeq 
        (\id+g)^{-1}|_{B}:
        B\longto C,
    \]
satisfies
    \begin{equation}
    \label{ineq:norm_inverse}
        |f|_{k}\lesssim |g|_{k},
        \quad\forall\; k\geq 0.
    \end{equation}
\end{lemma}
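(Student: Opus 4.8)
I would split the statement into its two halves: (i) the geometric claim that $\id+g$ is a diffeomorphism onto an open set containing $B$, and (ii) the analytic claim $|f|_k\lesssim|g|_k$. Part (i) is handled by a contraction‑mapping argument, and part (ii) by bootstrapping the functional identity $f=-\,g\circ(\id+f)$ against the composition estimates of Lemma \ref{lemma:local comp} and the interpolation inequalities.

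\textbf{Diffeomorphism onto its image containing $B$.} Since $C$ is a ball (convex) and $|g|_{1,C}<\theta<1$, for $x,y\in C$ we have $|(x+g(x))-(y+g(y))|\ge(1-\theta)|x-y|$, so $\id+g$ is injective on $C$; and $\dd(\id+g)=I+\dd g$ is invertible, being within $\theta$ of the identity, so by the inverse function theorem $\id+g$ is a diffeomorphism onto its open image. To see $B$ lies in that image, fix $\epsilon>0$ with $B_\epsilon\sub\mathrm{int}(C)$ ($B_\epsilon$ the closed $\epsilon$‑neighbourhood of $B$) and assume $\theta\le\epsilon$. For $y\in B$ the map $T_y\colon x\mapsto y-g(x)$ is a contraction of $B_\epsilon$ with Lipschitz constant $\le|g|_1<\theta<1$, and $|T_y(x)-y|=|g(x)|\le|g|_0<\theta\le\epsilon$ forces $T_y(B_\epsilon)\sub B_\epsilon$; the Banach fixed point theorem then produces $x\in B_\epsilon\sub\mathrm{int}(C)$ with $(\id+g)(x)=y$. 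Hence $B\sub(\id+g)(\mathrm{int}(C))$ and $\id+f\defeq(\id+g)^{-1}|_B\colon B\to C$ is well defined and smooth.

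\textbf{The estimates.} From $(\id+g)\circ(\id+f)=\id$ one reads off the identity $f=-\,g\circ(\id+f)$. This gives $|f|_0=|g\circ(\id+f)|_0\le|g|_0$ at once; differentiating and using the Neumann series for $(I+\dd g\circ(\id+f))^{-1}$ (norm $\le 1/(1-\theta)\le 2$) gives $|f|_1\lesssim|g|_1$, so in particular $|f|_1<c\,\theta$ is below the threshold needed to apply Lemma \ref{lemma:local comp}. For $k\ge 2$ I would induct: assuming $|f|_j\lesssim|g|_j$ for $j<k$, apply the composition estimate (Lemma \ref{lemma:local comp}(d), or directly formula (\ref{eqn:comp1}) in its proof) to $f=-\,g\circ(\id+f)$. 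The term carrying $D^af$ of top order $k$ occurs only as $\sum_j(\partial_jg)\circ(\id+f)\,D^af_j$, whose $C^0$‑norm is $\le m\,|g|_1<m\theta\,|f|_k$ with $m$ the ambient dimension (independent of $k$); every remaining term is a product of derivatives of $f$ of order $\le k-1$ and of $g$ of order $\le k$, and by the generalized interpolation inequality (Lemma \ref{lemma:extended_interpolation}), using $|f|_1<\theta$ and $|g|_1<\theta$ to trade off excess derivatives, together with the induction hypothesis, each such term is $\lesssim|g|_k$. Thus $(1-m\theta)|f|_k\lesssim|g|_k$, and for $\theta$ with $m\theta\le\tfrac12$ we get $|f|_k\lesssim|g|_k$, closing the induction.

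\textbf{Main obstacle.} Steps (i) and the $C^0,C^1$ bounds are routine. The only delicate point is the inductive step: one must (a) verify that the coefficient of the top‑order term $|f|_k$ is bounded by $m\theta$ \emph{uniformly in $k$}, which is why a single $\theta$ suffices rather than one degrading with $k$ — this requires looking inside the proof of Lemma \ref{lemma:local comp} rather than quoting it as a black box, since its implicit constant is $k$‑dependent; and (b) organise the interpolation of the lower‑order remainder so that, after invoking the induction hypothesis, one lands on the clean bound $\lesssim|g|_k$ and not on a spurious quadratic expression such as $|g|_k|g|_{k-1}$ — the trick is to interpolate the $f$‑factors and the $g$‑factors separately, each time pulling toward the index $1$ so as to absorb factors of $|f|_1,|g|_1<\theta$.
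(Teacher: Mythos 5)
Your proof is correct and takes essentially the same approach as the paper: the core estimate is obtained by the same induction through the composition formula applied to $f=-g\circ(\id+f)$, isolating the unique term carrying $D^af$ (which the paper handles by inverting $\id+\dd g$ pointwise and invoking the $k$-independent Neumann-series bound $|(\id+\dd g)^{-1}|\le 2$, rather than absorbing $m\theta|f|_k$ into the left-hand side as you do) and controlling the lower-order remainder $\Sigma_a$ via Lemma \ref{lemma:extended_interpolation}. The one place you diverge is the surjectivity onto $B$: you run a Banach fixed-point argument for $T_y\colon x\mapsto y-g(x)$ on a slightly enlarged ball, while the paper minimizes $y\mapsto|y+g(y)-x|^2$ over such a ball and shows the minimum is interior with vanishing differential; both are standard and equivalent.
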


%\begin{remark}
%In the proof we will skip the indication of the domain in the notation of the $C^k$-norms.
%\end{remark}
\begin{proof}
Let $x,y\in C$ be such that $x+g(x)=y+g(y)$.
Then
    \[
        |x-y|
        =
        |g(x)-g(y)|
        \leq 
        \sum\nolimits\nolimits_{i=1}^m
        \int_0^1
            |D^i(g)(x+t(y-x))(x_i-y_i)|
        \,\dd t
        \leq 
        |g|_1|x-y|.
    \]
Hence for $0<\theta<1$ we have $x=y$, and therefore $\id+g$ is injective.
Next, for $\theta<1/2$ we have that, for $x\in C$ and $v\in \R^m$, $\dd_x g$ satisfies
    \[|
        \dd_x g(v)|
        \leq 
        \sum\nolimits\nolimits_{i=1}^{m}
        |D^i_x(g)||v_i|
        \leq 
        |g|_1|v|
        \leq 
        |v|/2.
    \]
Therefore $\id+\dd g$ is invertible, and therefore $\id+g$ is a diffeomorphism onto its image.
For later use, note that the operator norm of $(\id+\dd g)^{-1}$ satisfies:
    \begin{equation}
    \label{eq:op_norm}
        |(\id+\dd g)^{-1}|
        =
        \big|
            \sum\nolimits\nolimits_{n\geq 0} (-1)^n (\dd g)^{n}
        \big|
        \leq 
        \sum\nolimits\nolimits_{n\geq 0} 2^{-n}=2.
    \end{equation}
We write 
    \[
        (\id+f)\defeq (\id+g)^{-1}:(\id+g)(C)\longto C.
    \]
Then $f$ satisfies $f=-g\circ (\id+f)$, which gives inequality (\ref{ineq:norm_inverse}) for $k=0$.

We prove the general case by induction.
Consider a multi-index $a\in \N^m$, with $|a|=k$.
Note that $P^{a}_b(\id)=1$ if $a=b$ and $P^{a}_b(\id)=0$ if $a\neq b$.
Therefore, by the calculation from the proof of Lemma \ref{lemma:local comp} we have
    \begin{equation}
    \label{eqn:deriv_of_inverse}
        -D^a(f)
        =
        \sum\nolimits\nolimits_{a=a'+c}
        \sum\nolimits\nolimits_{|b'|\leq |a'|}
        {b'+c \choose b'}
        (D^{b'+c}g) \circ  (\id+f)
        \cdot
        P^{a'}_{b'}(f).
    \end{equation}
Let $\Sigma_a$ be the sum of terms involving only derivatives of $f$ of order strictly smaller than $k$.
By applying the inductive hypothesis (that $|f|_{j}\lesssim |g|_j$ for $j<k$) the terms in $\Sigma_a$ can be estimated by
    \[
        |\Sigma_a|_0
        \lesssim
        \sum\nolimits\nolimits_{0\leq p\leq s\leq k} 
        |g|_{k+p-s}
        |g|_{j_1}
        \cdots
        |g|_{j_{p}},
    \]
where $p\defeq |b'|\leq s\defeq |a'|$, $1\leq j_1,\ldots, j_{p}$, are such that $j_1+\ldots+j_{p}=s$.
Note that $k-p+s\geq 1$ (unless $k=s$ and $p=0$, but in that case $k=0$, which contradicts $k\geq 1$).
By applying Lemma \ref{lemma:extended_interpolation} we obtain
    \[
        |\Sigma_a|_0
        \lesssim 
        \sum\nolimits\nolimits_{1\leq p\leq k}
        |g|_1^{p}
        |g|_k
        \lesssim 
        |g|_k.
    \]
The only terms in the right-hand side of formula (\ref{eqn:deriv_of_inverse}) which involve derivatives of $f$ of degree $k$ are those where $a'=a$, $c=0$ and $|b'|=1$. Hence we can write
    \[
        D^a(f)
        +
        \sum\nolimits\nolimits_{i=1}^m
        \frac{\partial g}{\partial x_i} \circ (\id+f)D^a(f^i)
        =
        -\Sigma_a.
    \]
The left-hand side is the differential of $g$ applied to the vector $D^a(f)$. Therefore,
    \[
        D^a(f)  
        =
        -(\id+\dd g)^{-1}_{(\id+f)}\Sigma_a.
    \]
Using inequality (\ref{eq:op_norm}) we obtain that
    \[
        |D^a(f)|_0
        \lesssim 
        |\Sigma_a|_0
        \lesssim 
        |g|_k.
    \]
This implies that $f$ has bounded $k$-th derivatives, and shows the inductive hypothesis 
    \[
        |f|_k\lesssim |g|_k.
    \]

Finally, we show that for $\theta$ small enough we have $B\sub (\id+g)(C)$.
Let $\epsilon>0$ be smaller than the distance between $B$ and $\R^m\backslash C$.
Then, for any $x\in B$, the closed ball $B_{\epsilon}(x)$ of radius $\epsilon$ around $x$ lies in $C$.
Consider the function
    \[
        \lambda:B_{\epsilon}(x)\longrightarrow \R,
        \quad
        \lambda(y) \defeq |y+g(y)-x|^2.
    \]
For $\epsilon/2>\theta>0$, we have that $\epsilon>2|g|_0$.
Therefore, for $z\in \partial B_{\epsilon}(x)$, we have that:
    \[
        |z+g(z)-x|
        \geq 
        |x-z|-|g|_0
        = 
        \epsilon-|g|_0
        > 
        |g|_0
        \geq |g(x)|,
    \]
thus $\lambda(z)>\lambda(x)$.
This shows that $\lambda$ attains its minimum in $\mathrm{int}(B_{\epsilon}(x))$.
At such a minimum-point $y$, we have that:
    \[
        0
        =
        \dd_y\lambda
        =
        2\dd_y(\id+g)(y+g(y)-x).
    \]
Since $\dd_y(\id+g)$ is injective, $y+g(y)=x$.
Hence, $x\in (\id+g)(C)$.
\end{proof}

\subsubsection{Local version of Lemma B(1)}

The following is a local version of the first part of \nameref{Lemma:B}. It is used explicitly in Lemmas \ref{lemma:local_flow_action} and \ref{lemma:B_1_global}. In fact, it has been slightly extended beyond the scope of \nameref{Lemma:B} to allow for time-dependent vector fields, as needed for Lemmas \ref{lemma:time_independent_flows}, \ref{lemma:cont_action}, and \ref{lemma:closed_lie_algebra_sheaf}.

Let $C\sub \R^m$ be a closed ball. We will use the standard identification $TC\cong C\times \R^m$, which gives an identification between vector fields on $C$ and smooth maps $C\to \R^m$, i.e.\
    \[
        \gerX(C)\cong  C^\infty(C,\R^m).
    \]
Using this isomorphism we introduce $C^k$-norms $|\cdot|_{k}=|\cdot|_{k,C}$ on $\gerX(C)$.
Moreover, for a time-dependent vector field $v^t\in \gerX(C)$, with $t\in [0,1]$, we will write
    \[
        |v|_{k}\defeq \sup\nolimits_{t\in [0,1]}|v^t|_k,
    \]
and similar notations will be used for smooth families $f^t\in  C^\infty(C,\R^n)$.

\begin{lemma}
\label{lemma:flow}
Let $B$ and $C$ be two closed balls in $\R^m$ such that $B\sub \mathrm{int}(C)$. For any time dependent vector field $v^t\in \gerX(C)$, with $t\in [0,1]$, satisfying
    \[
        |v|_{1} < \theta,
    \]
the flow $\varphi^{t}_v\defeq \varphi^{t,0}_v$ of $v$ exists for all $0\leq t\leq 1$ as a map from $B$ to $C$:
    \[
        \varphi_v^{t}
        =
        \id+f^t:
        B\longto C.
    \]
Moreover, $f^t$ satisfies
    \[
        |f^t|_{k}\lesssim t|v|_k.
    \]
\end{lemma}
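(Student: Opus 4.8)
The plan is to prove this by reducing everything to estimating the time-one flow map via Gronwall-type arguments on $C^0$-norms, then bootstrapping to $C^k$ using the composition estimates from Lemma \ref{lemma:local comp}. First I would address existence: the flow equation $\tfrac{\dd}{\dd t}\varphi_v^t = v^t\circ\varphi_v^t$ with $\varphi_v^0=\id$ has a local solution by the standard ODE theory, and the point is to show the flow, started at points of $B$, stays inside $C$ for all $t\in[0,1]$. Writing $\varphi_v^t=\id+f^t$, the flow equation becomes $\tfrac{\dd}{\dd t}f^t = v^t\circ(\id+f^t)$ with $f^0=0$. Since $|v|_0<\theta$, as long as $\varphi_v^t$ maps into $C$ we get $|f^t(x)|\leq\int_0^t|v^s\circ(\id+f^s)(x)|\,\dd s\leq t|v|_0$, so $|f^t|_0\leq t|v|_0\leq\theta$. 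Choosing $\theta$ smaller than the distance from $B$ to $\R^m\setminus C$ keeps $\varphi_v^t(B)\subset C$ for all $t\in[0,1]$, which simultaneously establishes existence on all of $[0,1]$ and the $k=0$ case of the estimate.

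Next I would prove the $C^k$ estimate $|f^t|_k\lesssim t|v|_k$ by induction on $k$, with the $k=0$ case just established. For the inductive step, fix a multi-index $a$ with $|a|=k$ and differentiate the integral equation: $D^a f^t = \int_0^t D^a\big(v^s\circ(\id+f^s)\big)\,\dd s$. By Lemma \ref{lemma:local comp}(d), applied with $g=v^s$ and the perturbation $f^s$ (whose $C^1$-norm is $<\theta$ by the base case and choosing $\theta$ small), we get $|v^s\circ(\id+f^s)|_k \lesssim |v^s|_k + |v^s|_1|f^s|_k \lesssim |v|_k + \theta|f^s|_k$. Hence $|f^t|_k \leq \int_0^t\big(C|v|_k + C\theta|f^s|_k\big)\,\dd s$, and Gronwall's inequality gives $|f^t|_k \lesssim t|v|_k\, e^{C\theta t} \lesssim t|v|_k$ for $t\in[0,1]$. (One must be a little careful that the higher-order terms produced by differentiating the composition genuinely only involve $|f^s|_j$ for $j\leq k$ and lower-order derivatives that are already controlled by the inductive hypothesis, but this is exactly what the structure of the operators $P^a_b$ and the proof of Lemma \ref{lemma:local comp} provide.)

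The main obstacle I anticipate is bookkeeping rather than any deep difficulty: one needs to make sure the inductive estimates close, i.e.\ that when differentiating $v^s\circ(\id+f^s)$ to order $k$, the only appearance of a $k$-th order derivative of $f^s$ is the linear term $\dd v^s\cdot D^a f^s$, whose coefficient has norm $\lesssim|v|_1<\theta$ and can therefore be absorbed into the Gronwall estimate, while all the nonlinear terms involve products of lower-order derivatives of $f^s$ that are already bounded by $t|v|_{<k}$ times derivatives of $v^s$ of order $\leq k$. This is precisely the content of inequality (\ref{ineq:T_a}) and the generalized interpolation Lemma \ref{lemma:extended_interpolation}, so the argument is really an application of tools already developed; the work is in tracking the constants and confirming that the time-integration contributes the factor $t$ uniformly. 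A minor additional point is that for the time-dependent case one takes suprema over $t\in[0,1]$ at the end, which is harmless since all bounds are uniform in $t$.
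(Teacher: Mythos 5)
Your overall route is the same as the paper's: write the flow as $\id+f^t$, use the integral formula to get the $C^0$ bound and existence, then differentiate and run Gr\"onwall, with the key structural observation that the only order-$k$ derivative of $f$ appears linearly with the small coefficient $\lesssim|v|_1<\theta$. Where the paper writes out the differentiated integral equation explicitly (its formula~(\ref{eqn:deriv_of_flow})), isolates the remainder $\Sigma_a$ built from derivatives of $f$ of order $<k$, and bounds it by the inductive hypothesis plus interpolation, you package this as a single invocation of Lemma~\ref{lemma:local comp}(d). That streamlining is reasonable but introduces a circularity at the start of the induction.

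The gap: Lemma~\ref{lemma:local comp}(d) (equivalently the inequality~(\ref{ineq:T_a}) you cite) is only valid under the hypothesis $|f^s|_1<\theta$. You claim this ``follows from the base case and choosing $\theta$ small,'' but the base case $k=0$ gives only $|f^s|_0\le s|v|_0<\theta$; it says nothing about the $C^1$-norm. So at $k=1$, the very step that is supposed to establish control of $|f^t|_1$ presupposes it. For $k\ge 2$ the inductive hypothesis does give $|f^s|_1\lesssim s|v|_1<c\theta$ and your argument is fine (after shrinking $\theta$ against the constant $c$), but the induction never gets off the ground as written. The paper's more explicit estimate avoids the problem precisely because $\Sigma_a$ involves only derivatives of $f$ of order strictly less than $k$: these are replaced via the inductive bound $|f^\tau|_j\lesssim\tau|v|_j$ ($j<k$) before interpolation, and the interpolation step (Lemma~\ref{lemma:extended_interpolation}) then runs on $|v|_1<\theta$ alone; no smallness of $|f^s|_1$ is ever invoked. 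Your argument can be repaired either by using Lemma~\ref{lemma:local comp}(a) at the single step $k=1$ --- whose hypothesis is only $|f^s|_0<\theta$ and which gives $|f^t|_1\lesssim t|v|_1e^{|v|_1t}\lesssim t|v|_1$ --- or by a continuation argument on the maximal interval where $|f^t|_1<\theta$. Either fix should be made explicit.
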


\begin{proof}
The flow satisfies the integral formula
    \begin{equation}
    \label{eqn:int formula}
        f^t 
        =
        \int_{0}^t
            v^{\tau}\circ(\id+f^{\tau}) 
        \,\dd\tau,
    \end{equation}
wherever both sides are defined.

For $x\in B$, the flow line $\varphi_v^{t}(x)$ either stays in $\mathrm{int}(C)$ for all $t\geq 0$, or there is a smallest $t_{0}\in (0,1]$ which lies on the boundary, i.e.\ $\varphi^{t_0}_v(x)\in \partial C$. 
%(this can be seen by using an extension of $v$ to a compactly supported vector field on $\R^m$).
If such a $t_0$ exists, then (\ref{eqn:int formula}),
    \[
        |x-\varphi_{v}^{t_0}(x)|=|f^{t_0}(x)|
%        \stackrel{(\ref{eqn:int formula})}{\leq} 
        \leq
        t_0|v|_{0}<t_0\theta.
    \]
Thus taking $\theta>0$ smaller than the distance between $B$ and $\R^m\backslash C$, we obtain the contradiction that $t_0>1$. Hence the flow exists up to time one as a map from $B$ to $C$.

Equality (\ref{eqn:int formula}) implies that $|f^t|_{0}\leq t|v|_{0}$.
Now we prove by induction on $k\geq 1$ that the partial derivatives of order $k$ of $f^t$ satisfy 
    \[
        |f^t|_k\lesssim t|v|_k.
    \] 
Let $a\in \N^m$ be a multi-index with $|a|\leq k$. As in the proof of Lemma \ref{lemma:inverse}, apply $D^a$ to integral formula (\ref{eqn:int formula}) to obtain
    \begin{equation}
    \label{eqn:deriv_of_flow}
        D^a(f^t)
        =
        \sum\nolimits_{a=a'+c}
        \sum\nolimits_{|b'|\leq |a'|}
        {b'+c \choose b'}
        \int_0^t
            (D^{b'+c}v^{\tau})
            \circ
            (\id+f^{\tau})
            P^{a'}_{b'}(f^{\tau})
        \,\dd\tau.
    \end{equation}
Let $\Sigma_a$ denote the terms in this sum which involve only derivatives of $f^\tau$ of order strictly less than $k$. As in the proof of Lemma \ref{lemma:inverse}, by applying the inductive hypothesis to these terms one obtains that
    \[
        |\Sigma_a|_0
        \lesssim 
        t|v|_k.
    \]
The terms in formula (\ref{eqn:deriv_of_flow}) containing derivative of order $k$ are
    \[
        \sum\nolimits_{i=1}^m
        \int_0^t
            (D^iv^{\tau}) 
            \circ 
            (\id+f^\tau)
            D^{a}(f_i^{\tau})
        \,\dd\tau.
    \]
We conclude that there is a constant $c_k\geq 0$ such that, for all $x\in B$,
    \[
        |D^a(f^t)_x|
        \leq 
        c_k t |v|_k
        +
        |v|_1
        \int_0^t
            |D^a(f^{\tau})_x|
        \,\dd\tau.
    \]
Next we apply the following version of Gr\"onwall's inequality: if $u:[0,1]\to\R$ is a continuous function, and there are continuous functions $\alpha,\beta:[0,1]\to \R$ with $\beta(t)\geq 0$ such that
    \[
        u(t)
        \leq 
        \alpha(t)
        +
        \int_0^{t}
            \beta(\tau)
            u(\tau)
        \,\dd\tau,
        \quad\forall\;
        t\in [0,1],
    \]
then $u$ satisfies 
    $
        u(t)
        \leq 
        \alpha(t)
        +
        \int_0^t
            \alpha(\tau)
            \beta(\tau)
            e^{\int_\tau^t
                \beta(s)
            \,\dd s}
        \,\dd\tau.
    $
In our case it gives
\[|D^a(f^t)_x|\leq c_kt|v|_{k}+\int_0^tc_k\tau|v|_{k}|v|_{1}e^{(t-\tau)|v|_1}\dd \tau.\]
Hence $|f^t|_{k}\lesssim t|v|_k$, and this concludes the proof.
\end{proof}

\subsubsection{Local Version of Lemma B(2,3)}

In this section we prove two lemmas. The first Lemma deals with local diffeomorphisms of the trivial bundle
    $
        \underline\R^n 
        \defeq
        \R^m\times\R^n
    $
over
    $
        \R^m.
    $
We show that any map $C^1$-close to the identity is a local diffeomorphisms and is compatible with any section $C^1$-close to the zero section. Here we mean compatibility in the sense of definition \ref{def:compatibility} (extended to closed balls).
This Lemma is used explicitly in Lemmas \ref{lemma:local_flow_action} and \ref{lemma:global_action}.
The second Lemma is a local version of the second and third parts of \nameref{Lemma:B}. It is only used explicitly in Lemma \ref{lemma:B_23_global}.

Let $B, C\sub \R^m$ and $A\sub \R^m\times \R^n$ be closed balls. We identify local sections of the trivial bundle
    $
        \underline\R^n 
    $ 
with smooth maps $\R^m\to \R^n$, i.e.\
    \[
        \Gamma_C(\underline\R^n)
        \simeq
        C^\infty(C, \R^n).
    \]
Using this isomorphism we introduce $C^k$-norms
    $
        |\cdot|_k \defeq |\cdot|_{k,C}
    $
on $\Gamma_C(\underline\R^n)$. By abuse of notation, we also use the $C^k$-norms 
    $
        |\cdot|_k = |\cdot|_{k,B}
    $
on $\Gamma_B(\underline\R^n)$.

\begin{lemma}
\label{lemma:action_local}
Consider closed balls $B,C\sub \R^m$ and $A\sub \R^m\times \R^n$ such that
    \[
        B\times \{0\}
        \sub 
        \mathrm{int}(A),
        \quad 
        A\sub \mathrm{int}(C)\times \R^n.
    \]
For any smooth maps
    $
        e: C\to \R^n
    $
and
    $
        \phi
        =
        \id+g:A\to\R^{m}\times\R^n
    $
satisfying
    \[
        |e|_{1}<\theta,
        \quad
        |g|_{1}<\theta,
    \]
we have that $\phi$ is a diffeomorphism onto its image, and its right action on $e$ is defined on $B$:
    \[
        e\cdot\phi: B\longto \R^n.
    \]
Moreover, they satisfy the inequalities
    \begin{equation}
    \label{ineq:action_local1}
        |e\cdot \phi|_{k}
        \lesssim 
        |e|_{k}+|g|_{k}.
    \end{equation}
\end{lemma}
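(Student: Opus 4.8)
The plan is to realize $e\cdot\phi$ through the \emph{relative base map} $\psi_e$ and reduce everything to the composition estimates of Lemma \ref{lemma:local comp} together with the inverse estimates of Lemma \ref{lemma:inverse}. Concretely, one first inverts $\phi$ locally, then composes $\phi^{-1}$ with the section $\tilde e:x\mapsto(x,e(x))$, projects to get a base change $\psi_e$ of the form $\id+(\text{small})$, inverts that too, and finally reads off $e\cdot\phi$ as $\phi^{-1}\circ\tilde e\circ\psi_e^{-1}$; all the norm bounds then follow from the already-proven lemmas with no new analysis.

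\textbf{Step 1 (local inverse of $\phi$).} Fixing an intermediate ball $A'$ with $B\times\{0\}\subset\mathrm{int}(A')\subset A'\subset\mathrm{int}(A)$ and viewing $\phi=\id+g$ as a map between balls in $\R^{m+n}$, Lemma \ref{lemma:inverse} gives, for $|g|_1<\theta$, that $\phi$ is a diffeomorphism onto its image, that $\phi(A)\supset A'$, and that $\phi^{-1}|_{A'}=\id+f$ with $f=(f_1,f_2)$ satisfying $|f|_{k,A'}\lesssim|g|_{k,A}$. This already proves the first assertion of the lemma. Choosing $\theta$ smaller than half the distance from $B\times\{0\}$ to $\partial A'$ and using $|e|_0\leq|e|_1<\theta$, we get $\tilde e(B)\subset A'$, so $\psi_e:=\pi_\textsc{X}\circ\phi^{-1}\circ\tilde e:B\to\R^m$ is well defined, namely $\psi_e(x)=x+f_1(x,e(x))$.

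\textbf{Step 2 (invert the base map).} Writing $\psi_e=\id+h$ with $h=f_1\circ(\mathrm{i}+(0,e))$, Lemma \ref{lemma:local comp}(d) together with Step 1 gives $|h|_k\lesssim|f_1|_k+|f_1|_1|e|_k\lesssim|g|_k+|g|_1|e|_k\lesssim|g|_k+|e|_k$, and in particular $|h|_1\lesssim|g|_1(1+|e|_1)<\theta'$. Shrinking once more and applying Lemma \ref{lemma:inverse} to $\psi_e$, we obtain that $\psi_e$ is a diffeomorphism onto its image, which contains $B$, with inverse $\phi_e:=\psi_e^{-1}|_B=\id+h'$ satisfying $|h'|_k\lesssim|h|_k\lesssim|g|_k+|e|_k$ and $|h'|_1<\theta''$. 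By the characterization recalled after Definition \ref{def:compatibility} (compatibility of $\phi$ and $e$ holds on $B$ iff the image of $\psi_e$ contains $B$ and $\psi_e$ is invertible there), this shows $e\cdot\phi$ is defined on $B$.

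\textbf{Step 3 (identify and estimate $e\cdot\phi$).} By the defining property of the relative base map, $e\cdot\phi=\phi^{-1}\circ\tilde e\circ\phi_e$; its $\R^m$-component is $\psi_e\circ\phi_e=\id$, so $e\cdot\phi$ is indeed the graph of the $\R^n$-valued function $c:=e\circ\phi_e+f_2\circ(\phi_e,e\circ\phi_e)$. (One checks that this graph equals $\phi^{-1}(\tilde e(C))\cap\pi_\textsc{X}^{-1}(B)$, so $c$ really is $e\cdot\phi$ in the sense of Definition \ref{def:compatibility}; this is immediate from injectivity of $\phi$.) Now Lemma \ref{lemma:local comp}(d) gives $|e\circ\phi_e|_k=|e\circ(\id+h')|_k\lesssim|e|_k+|e|_1|h'|_k\lesssim|e|_k+|g|_k$, and, writing the second term as $f_2$ composed with $\mathrm{i}+(h',e\circ\phi_e)$ whose "$f$"-part has $C^1$-norm $\lesssim\theta$, the same estimate yields $|f_2\circ(\mathrm{i}+(h',e\circ\phi_e))|_k\lesssim|f_2|_k+|f_2|_1(|h'|_k+|e\circ\phi_e|_k)\lesssim|g|_k+|e|_k$. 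Adding the two bounds gives inequality (\ref{ineq:action_local1}).

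\textbf{Main obstacle.} There is essentially no analytic difficulty beyond repeated application of Lemmas \ref{lemma:local comp} and \ref{lemma:inverse}; the only thing requiring care is the bookkeeping of the nested balls $B\subset\cdots\subset A$ (and $B\times\{0\}\subset A'\subset A$) at each shrinking step, and the verification that the explicit formula for $c$ genuinely agrees with the preimage-of-the-graph definition of $e\cdot\phi$. Both are routine once the domains are fixed, which is why I would set them all up at the start of the proof.
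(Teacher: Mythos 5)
Your argument is correct, but it follows a genuinely different route from the paper's. You first invert $\phi$ itself (getting $\phi^{-1}=\id+f$ and the bound $|f|_k\lesssim|g|_k$ from Lemma \ref{lemma:inverse}), then form the base map $\psi_e=\pi_\textsc{X}\circ\phi^{-1}\circ\tilde e=\id+f_1\circ(\mathrm i+(0,e))$ on an $m$-dimensional domain, invert that, and finally read $e\cdot\phi$ off from $\phi^{-1}\circ\tilde e\circ\phi_e$, which costs you two separate applications of Lemma \ref{lemma:local comp}(d) at the end. The paper never explicitly estimates $\phi^{-1}$: instead it introduces a single auxiliary map $\psi=\id+(0,h)$ on the $(m+n)$-dimensional ball $A$, with $h(x,y)=g_2(x,y)-e(x+g_1(x,y))+e(x)$, designed precisely so that the defining equation $\phi(x,y)=(z,e(z))$ collapses to $\psi(x,y)=(x,e(x))$. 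Because $\psi$ is the identity in the first $m$ coordinates, its inverse $\psi^{-1}=\id+(0,l)$ immediately hands you the clean closed form $(e\cdot\phi)(x)=e(x)+l(x,e(x))$, so the final estimate needs only a single application of Lemma \ref{lemma:local comp}(d), with $l$ playing the role your $h'$ plays but encoding both halves of your Step 3 at once. Your version is the more pedestrian "unwind the definition of $e\cdot\phi$ from Definition \ref{def:compatibility}" argument; the paper's is shorter because it front-loads the cleverness into the construction of $\psi$.

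Two small points of care in your write-up. First, as you already flag, inverting $\psi_e$ via Lemma \ref{lemma:inverse} yields an inverse only on a ball strictly smaller than its domain, so to land exactly on $B$ you must define $\psi_e$ on a slightly larger ball $B'$ with $B\sub\mathrm{int}(B')$ and $B'\times\{0\}\sub\mathrm{int}(A')$; this is the one place your "set up the nested balls at the start" remark is doing real work and should not be left implicit. Second, note that the paper's $\psi$ is also inverted via Lemma \ref{lemma:inverse} on a ball, and the needed inverse is evaluated only at points $(x,e(x))$ with $x\in B$, which lie in the small set $B\times D\sub\psi(A)$; so the domain bookkeeping is not avoidable — the two proofs just hide it in different places.
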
%
%\begin{remark}
%In the proof we will skip the indication of the domain in the notation of the $C^k$-norms.
%\end{remark}

\begin{proof}
By Lemma \ref{lemma:inverse} we can choose $\theta>0$ such that $\phi$ is a diffeomorphism onto its image. 
Next, choose $\theta>0$ such that $|g|_0$ is smaller than the distance between $A$ and $(\R^m\backslash \mathrm{int}(C))\times \R^n$. 
This implies that
    $
        \phi(A)\sub C\times \R^n,
    $
which insures that the following map is well-defined:
    \[
        \psi\defeq\id+(0,h):A\longto \R^m\times \R^n,
        \quad
        h(x,y)\defeq g_2(x,y)-e(x+g_1(x,y))+e(x),
    \]
where $g=(g_1,g_2)$. Also define 
    \[
        \widetilde{e}:A\longto \R^n,
        \quad
        \widetilde{e}(x,y)\defeq e(x).
    \]
From the definition of $h$ we obtain the estimate
    \[
        |h|_k
        \lesssim 
        |g|_k
        +
        |\widetilde{e}\circ(\id+g)|_k
        +
        |e|_k.
    \]
Since $|\widetilde{e}|_k=|e|_k$, $|g|_1<\theta$, and $|e|_1<\theta$, by using Lemma \ref{lemma:local comp}(d) we obtain
    \begin{equation}
    \label{estim:h}
        |h|_k\lesssim |g|_k+|e|_k.
    \end{equation}
In particular, 
    $
        |h|_1\lesssim (|g|_1+|e|_1)/2< \theta,
    $
and so after shrinking $\theta$ we may apply Lemma \ref{lemma:inverse} and conclude that $\psi$ is a diffeomorphism onto its image. 
Moreover, by that Lemma we may assume that $B\times D\sub \psi(A)$, where $D\sub \R^n$ is a closed ball around $0$. Therefore after shrinking $\theta$, we have that
    \[
        (\id\times e)(B)\sub B\times D \sub \psi(A).
    \]
Note that the inverse of $\psi$ takes the form
    $
        \psi^{-1}=\id+(0,l)
    $
for some map 
    $
        l: B\times D \longto A.
    $

Next we claim that for every $x\in B$ there exits a unique $z\in C$ and a unique $y\in \R^n$ which satisfy the equation
    \begin{equation}
    \label{eq:defining}
        \phi(x,y)=(z,e(z)).
    \end{equation}
This equation is equivalent to:
    \[
        z=x+g_1(x,y),
        \quad
        e(z)=y+g_2(x,y),
    \]
and also to
    \[
        z=x+g_1(x,y),
        \quad 
        \psi(x,y)=(x,e(x)).
    \]
Hence the claim follows because $\psi$ is a diffeomorphism. Moreover, the solutions depend smoothly on $x$, as they are given by $y:B\longto \R^n$ and $z:B\longto C$ given by 
    \begin{align*}
        y(x)
        &=
        e(x)+l(x,e(x)), 
        \\
        z(x)
        &=
        x+g_1(x,e(x)+l(x,e(x))).
    \end{align*}
The defining equation (\ref{eq:defining}) of these maps gives us
    \begin{equation}
    \label{eq:defining1}
        \phi^{-1}\circ (\id\times e)\circ z
        =
        \id\times y.
    \end{equation}
By projecting onto the first component we obtain
    \[
        (\phi^{-1})_e\circ z
        = \id, 
        \qquad
        (\phi^{-1})_e
        \defeq \mathrm{pr}_1\circ \phi^{-1}\circ (\id\times e).
    \]
This shows that $z$ is a diffeomorphism onto its image $\Omega\defeq z(B)$ with inverse
    \[
        z^{-1}
        =
        (\phi^{-1})_e:
        \Omega\diffto B.
    \]
This implies that the action of $\phi$ on $(\id\times e)$ is well-defined on $B$ and is given by
    \[
        e\cdot \phi
        =
        \mathrm{pr}_2
        \circ 
        \phi^{-1}
        \circ 
        (\id\times e)
        \circ 
        (\phi^{-1})^{-1}_e
        =
        \mathrm{pr}_2
        \circ 
        \phi^{-1}
        \circ 
        (\id\times e)
        \circ 
        z
        \stackrel{(\ref{eq:defining1})}{=}
        y.
    \]
Finally, we prove inequality (\ref{ineq:action_local1}). 
By Lemma \ref{lemma:inverse} for $\psi$ and by (\ref{estim:h}) we obtain
    \begin{equation}
    \label{estim:l}
        |l|_k \lesssim |g|_k+|e|_k.
    \end{equation}
Now recall that 
    $
        e\cdot \phi=y=e+l(\id\times e).
    $ 
So by writing 
    $
        \id\times e
        =
        \id\times 0 + 0\times e
    $ 
we can apply Lemma \ref{lemma:local comp}(d), and by using also (\ref{estim:l}) we obtain the required estimate:
    \[
        |e\cdot \phi|_k
        \lesssim 
        |e|_{k}+|l|_k+|l|_1|e|_k
        \lesssim 
        |e|_k+|g|_k.
        \qedhere
    \]
\end{proof}

Next we prove the local versions of \nameref{Lemma:B}(2,3).

\begin{lemma}
\label{lemma:local_flow_action}
Consider closed balls $A$, $B$, and $C$ as in Lemma \ref{lemma:action_local}. 
For all vector fields $v\in \gerX(A)$ and all smooth maps $e:C\longto \R^n$ satisfying
    \[
        |v|_{1}<\theta,
        \quad 
        |e|_{1}<\theta,
    \]
the action of the time-one flow $\varphi_v$ of $v$ on $e$ is defined on $B$:
    \[
        e\cdot\varphi_v:
        B\longto \R^n.
    \]
Moreover, it satisfies the inequalities
    \begin{align*}
        |e\cdot\varphi_v|_{k}
        &\lesssim 
        |e|_{k}+|v|_{k},
        \\
        |e\cdot\varphi_v-e-\delta v|_{k}
        &\lesssim 
        (|v|_{0}+|e|_{0})
        |v|_{k+1}+|v|_{0}|e|_{k+1}.
    \end{align*}
\end{lemma}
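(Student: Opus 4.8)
The statement is a local version of \nameref{Lemma:B}(2,3), and the strategy is to reduce it to the two preceding lemmas: Lemma \ref{lemma:flow} (the local version of \nameref{Lemma:B}(1), which produces the flow $\varphi_v^t$ and estimates $|\varphi_v^t - \id|_k \lesssim t|v|_k$) and Lemma \ref{lemma:action_local} (which says that a diffeomorphism $C^1$-close to the identity acts on a section $C^1$-close to zero, with the estimate $|e\cdot\phi|_k \lesssim |e|_k + |g|_k$ where $\phi = \id+g$). First I would fix auxiliary closed balls $A'$, $B'$ with $B\times\{0\} \sub \mathrm{int}(A') \sub A' \sub \mathrm{int}(A)$ and $B \sub \mathrm{int}(B') \sub B' \sub \mathrm{int}(C)$, chosen so that Lemma \ref{lemma:flow} applies to $v|_{A'}$ (producing $\varphi_v^t : B' \to A$ for $\theta$ small), and then apply Lemma \ref{lemma:action_local} with $g$ the displacement of $\varphi_v$ and the section $e$ restricted appropriately. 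Writing $\varphi_v = \id + g$ with $|g|_k \lesssim |v|_k$ from Lemma \ref{lemma:flow} (at $t=1$), the first inequality $|e\cdot\varphi_v|_k \lesssim |e|_k + |v|_k$ is then immediate from the estimate $|e\cdot\varphi_v|_k \lesssim |e|_k + |g|_k$ of Lemma \ref{lemma:action_local}, since $|g|_1 \lesssim |v|_1 < \theta$ after shrinking $\theta$.

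\textbf{The second-order estimate.} The harder part is the quadratic estimate
\[
    |e\cdot\varphi_v - e - \delta v|_k
    \lesssim
    (|v|_0 + |e|_0)|v|_{k+1} + |v|_0 |e|_{k+1}.
\]
Here I would use the defining identity from the proof of Lemma \ref{lemma:action_local} that realizes $e\cdot\varphi_v$ as a solution of an implicit equation, combined with the explicit formulas (\ref{eq:defining1}) there: $e\cdot\varphi_v = e + l\circ(\id\times e)$, where $l$ is the displacement of $\psi^{-1}$, and $\psi = \id + (0,h)$ with $h$ built from $g$ (the displacement of $\varphi_v$) and $e$. The linearization $\delta v = \delta_0 v$ is, by Lemma \ref{prop:infinitesimal_action}, the map $(\dd e \circ \dd\pi - \id)(v\circ e)$ — in local trivialization it is the vertical component of $-v$ evaluated along $e$, corrected by the horizontal component acting through $\dd e$; equivalently it is the first-order-in-$v$ term of $e\cdot\varphi_v$. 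So the estimate amounts to bounding the remainder of the first-order Taylor expansion in $v$ of the map $v \mapsto e\cdot\varphi_v$. The plan is to track this through the three composition/inverse steps: (i) $g = \varphi_v - \id$ expands as $g = v + O(|v|\cdot|v|)$ with the quadratic remainder controlled by the integral formula (\ref{eqn:int formula}) and Lemma \ref{lemma:flow}; (ii) the passage $g \mapsto h$ and then $h \mapsto l = (\psi^{-1} - \id)$ is controlled by Lemma \ref{lemma:local comp}(c) (the quadratic Taylor estimate for compositions, $|g\circ(\mathrm i + f) - g\circ\mathrm i - \dd g(\mathrm i)f|_k \lesssim |g|_{k+2}|f|_0|f|_k$) and by the inverse estimate Lemma \ref{lemma:inverse}; (iii) the final substitution $l \mapsto l\circ(\id\times e)$ is again handled by Lemma \ref{lemma:local comp}(b,c), writing $\id\times e = \id\times 0 + 0\times e$ so that $e$ plays the role of the small perturbation $f$.

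\textbf{Main obstacle.} The principal bookkeeping difficulty is that the remainder mixes two "small" quantities, $v$ and $e$, and the claimed bound allocates them asymmetrically: one factor of $|v|_0$ or $|e|_0$ in low norm times one high-norm factor $|v|_{k+1}$, plus $|v|_0|e|_{k+1}$ — but crucially there is no $|e|_0|e|_{k+1}$ term, reflecting that $e\cdot\varphi_v$ depends on $e$ only to first order through the horizontal part of the flow and the rest is genuinely bilinear in $(v,e)$ or quadratic in $v$ alone. Getting precisely this allocation requires care in choosing how to split each composition (which argument is the "base" $g$ and which is the "perturbation" $f$ in Lemma \ref{lemma:local comp}) and then applying the generalized interpolation inequality Lemma \ref{lemma:extended_interpolation} to convert products of intermediate-order norms into (low norm)$\times$(top norm) form. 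The subtraction of exactly $\delta v$ — as opposed to some other first-order approximation — is what makes the purely-quadratic-in-$v$ and bilinear terms survive while the dangerous $|e|_0|e|_{k+1}$ term cancels; verifying this cancellation explicitly at the level of the formulas from Lemma \ref{lemma:action_local} is the technical heart, but it is a finite computation with Hadamard's lemma and the estimates already established, so no new idea is needed beyond careful tracking.
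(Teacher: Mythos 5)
Your plan matches the paper's: set up an auxiliary ball so Lemma~\ref{lemma:flow} gives $\varphi_v=\id+f$ with $|f|_k\lesssim|v|_k$, deduce the first inequality directly from Lemma~\ref{lemma:action_local}, and obtain the quadratic estimate by tracking the first-order Taylor remainder of $e\cdot\varphi_v=e+l\circ(\id\times e)$ through the chain $v\mapsto f\mapsto h\mapsto l\mapsto l\circ(\id\times e)$ built in the proof of Lemma~\ref{lemma:action_local}. The tools you list --- parts (b)--(e) of Lemma~\ref{lemma:local comp}, Lemma~\ref{lemma:inverse}, Lemma~\ref{lemma:extended_interpolation} --- are exactly what the paper uses, and the ball-shrinking setup in your first paragraph is what the paper does.

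Two points where you should tighten the argument. First, the inverse step is not as automatic as your item (ii) suggests: Lemma~\ref{lemma:inverse} only gives the \emph{linear} bound $|l^t|_k\lesssim|h^t|_k$, which is not enough to turn a quadratic bound on $\psi^1-\id-\dot\psi^0\circ\psi^1$ into one on $\chi^1-\id-\dot\chi^0$. The paper's device is the chain-rule identity
\[
    \chi^1-\id-\dot\chi^0
    =
    -\big(\psi^1-\id-\dot\psi^0\circ\psi^1\big)\circ\chi^1,
\]
which follows from $\chi^t\circ\psi^t=\id$ and $\dot\chi^0+\dot\psi^0=0$. You would need to discover this (or an equivalent) to transfer the quadratic estimate across the inversion; citing Lemma~\ref{lemma:local comp}(c) plus Lemma~\ref{lemma:inverse} does not by itself do it. (For the record, the paper mostly uses part (e) rather than (c) of Lemma~\ref{lemma:local comp}, because the term it subtracts, $\dot\psi^0\circ\psi^1$, is not of the simple form $\dd g(\mathrm i)f$; it splits the remainder into four groups each handled by a first-order-difference estimate.) Second, the phrase ``the dangerous $|e|_0|e|_{k+1}$ term cancels'' is misleading: no such term ever arises, because $e\cdot\varphi_v-e$ already vanishes identically at $v=0$, so every term in the remainder carries at least one factor of $v$. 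Subtracting $\delta v$ removes the piece \emph{linear} in $v$; it does not cancel a quadratic-in-$e$ piece, since none is present. Neither point is fatal --- both would surface when you carry out the computation --- but the inverse identity is the one real ingredient your sketch omits.
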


\begin{proof}
Consider a smaller closed ball $A'\sub \mathrm{int}(A)$ such that still $B\times \{0\}\sub\mathrm{int}(A')$. 
For small enough $\theta$, by Lemma \ref{lemma:flow}, the flow of $v$ is defined for all $t\in [0,1]$ as a map
    \[
        \varphi_v^t
        =
        \id+f^t:
        A'\longto A,
    \]
and it satisfies 
    $
        |f^t|_k
        \lesssim 
        |v|_k.
    $ 
In particular, by choosing $\theta$ small enough, we may assume that $e$ and $\varphi_v=\id+f$ satisfy the assumptions of Lemma \ref{lemma:action_local}, but with $A'$ instead of $A$. 
We conclude that $e\cdot\varphi_v$ is defined on $B$, and that the first inequality holds:
    \[
        |e\cdot\varphi_v|_k
        \lesssim 
        |e|_k+|f|_k
        \lesssim |e|_k+|v|_k.
    \]
Now decompose 
    $
        f^t
        =
        (f^t_1,f^t_2)
    $ 
and 
    $
        v
        =
        (v_1,v_2).
    $
As in the proof of Lemma \ref{lemma:action_local}, the following family is well-defined for all $t\in [0,1]$:
    \[
        \psi^t
        \defeq
        \id+0\times h^t:
        A'\longto \R^m\times \R^n,
        \quad
        h^t(x,y)
        \defeq 
        f_2^t(x,y)-e(x+f_1^t(x,y))+e(x).
    \]
Note that $\psi^0=\id$ and that $\psi^t$ is a smooth isotopy of local diffeomorphisms. 
The inverse of $\psi^t$ will be denoted by
    \[
        \chi^t
        \defeq 
        (\psi^t)^{-1}, 
        \quad
        \chi^t=\id+0\times l^t.
    \]
Moreover, to shorten notation we write
    \[
        \epsilon_k\defeq |e|_k,
        \quad
        \nu_k\defeq |v|_k.
    \]
Lemma \ref{lemma:flow} implies that
    \begin{equation}
    \label{ineq:zaction0}
        |f^t|_k\lesssim \nu_k,
    \end{equation}
and therefore, by (\ref{estim:h}) and (\ref{estim:l}) in the proof of Lemma \ref{lemma:action_local}, we have
    \begin{equation}
    \label{ineq:zaction1}
        |h^t|_k
        \lesssim 
        \nu_k+\epsilon_{k}, 
        \quad 
        |l^t|_k\lesssim \nu_k+\epsilon_{k}.
    \end{equation}

Next we claim that the following inequality holds:
    \begin{equation}
    \label{ineq:zaction2}
        |\psi^1-\id-\dot{\psi}^0\circ \psi^1|_k
        \lesssim 
        (\epsilon_0+\nu_0)\nu_{k+1}
        +
        \nu_0\epsilon_{k+1}
    \end{equation}
For this, note that
    $
        \dot{\psi}^0
        =
        (0,v_2-\dd e\circ v_1),
    $
and therefore
    \[
        \psi^1-\id-\dot{\psi}^0\circ\psi^1
        =
        (0,h^1-(v_2-\dd e(v_1))\circ\psi^1).
    \]
We split this expression into four groups, which will be treated separately:
    \begin{equation}
    \label{eq:zaction}
        (f^1_2-v_2)
        -
        (\widetilde{e}\circ(\id+f^1)
        -
        \widetilde{e})
        -
        (v_2\circ \psi^1-v_2)
        +
        \dd e(v_1)\circ\psi^1,
    \end{equation}
where again $\widetilde{e}(x,y)=e(x)$. 
For the first term we use the integral formula (\ref{eqn:int formula}) to write
    \[
        f^1-v
        =
        \int_0^1
            (v\circ(\id+f^t)-v)
        \,\dd t.
    \]
Applying Lemma \ref{lemma:local comp}(e) to the integrand we obtain
    \[
        |f^1-v|_k
        \lesssim 
        \nu_0\nu_{k+1}.
    \]
For the second term of (\ref{eq:zaction}), since $|\widetilde{e}|_k=\epsilon_k$, we apply Lemma \ref{lemma:local comp}(e) again, and obtain
    \begin{equation}
    \label{ineq:zaction4}
        |\widetilde{e}\circ(\id+f^1)-\widetilde{e}|_k
        \lesssim 
        \nu_0\epsilon_{k+1}+\epsilon_0\nu_{k+1}
    \end{equation}
Also for the third term of (\ref{eq:zaction}) we apply Lemma \ref{lemma:local comp} (e), and then (\ref{ineq:zaction1}), and obtain
    \begin{equation*}
        |v_2\circ(\id+h^1)-v_2|_k
        \lesssim 
        (\nu_0+\epsilon_0)\nu_{k+1}+\nu_0\epsilon_{k+1}.
    \end{equation*}
For the fourth and last term of (\ref{eq:zaction}), we first evaluate the norm of $\dd e(v_1)$. 
Note that the expression $\dd e(v_1)$ is a bilinear map in $e$ and $v_1$. 
Therefore by applying the multiplicativity relation (\ref{eqn:D-product}) for the operators $D^a$ followed by Lemma \ref{lemma:extended_interpolation} we obtain
    \[
        |\dd e(v_1)|_k
        \lesssim 
        \sum\nolimits_{j=0}^k
        \epsilon_{j+1}\nu_{k-j}
        \lesssim 
        \epsilon_1\nu_k+\epsilon_{k+1}\nu_{0}
        \lesssim 
        \epsilon_0\nu_{k+1}+\nu_0\epsilon_{k+1}.
    \]
Finally, by using Lemma \ref{lemma:local comp}(d), Lemma \ref{lemma:extended_interpolation} and (\ref{ineq:zaction1}), we obtain
    \begin{align}
        |\dd e(v_1)\circ \psi^1|_k
        &\lesssim 
        |\dd e(v_1)|_k+|\dd e(v_1)|_1|h^1|_k
        \label{ineq:zaction6}
        \\
        &\lesssim 
        |\dd e(v_1)|_k(1+|h^1|_{1})+|\dd e(v_1)|_0|h^1|_{k+1}
        \nonumber
        \\ 
        &\lesssim
        (
            \epsilon_0\nu_{k+1}
            +
            \epsilon_{k+1}\nu_{0}
        )
        (1+\nu_1+\epsilon_1)
        +
        \epsilon_{1} \nu_{0}
        (\epsilon_{k+1} + \nu_{k+1})
        \nonumber
        \\
        &\lesssim
        (\nu_0+\epsilon_0)
        \nu_{k+1}
        +
        \nu_0\epsilon_{k+1}.
        \nonumber
\end{align}
Thus we conclude that (\ref{ineq:zaction2}) holds. 
For later use, let us mention the following inequality:
    \begin{equation}
    \label{ineq:zaction7}
        |\psi^1-\id-\dot{\psi}^0\circ \psi^1|_0
        \lesssim 
        \nu_0.
    \end{equation}
This follows from inspecting the terms in (\ref{eq:zaction}) once more: for the first and the third this clearly holds; the other two are bounded by $\nu_0\epsilon_1$ (for the second, see the proof of (\ref{ineq:zaction4}), and the fourth follows directly), and we have $\epsilon_1<\theta$.

Next we show that inequality (\ref{ineq:zaction2}) implies the following inequality:
    \begin{equation}
    \label{ineq:action6}
        |\chi^1-\id-\dot{\chi}^0|_k
        \lesssim 
        (\nu_0+\epsilon_0)\nu_{k+1}+\nu_0\epsilon_{k+1}.
    \end{equation}
By differentiating $\chi^t\circ\psi^t=1$ at $t=0$ we obtain that
    $
        \dot{\chi}^0+\dot{\psi}^0 = 0,
    $
and therefore
    \begin{equation}
    \label{ineq:action7}
        \chi^1-\id-\dot{\chi}^0
        =
        \chi^1-\id+\dot{\psi}^0
        =
        -(\psi^1-\id-\dot{\psi}^0\circ\psi^1)\circ \chi^1.
    \end{equation}
This, together with inequality (\ref{ineq:zaction7}), implies that
    \begin{equation}
    \label{ineq:zaction8}
        |\chi^1-\id-\dot{\chi}^0|_0\lesssim \nu_0.
    \end{equation}
To obtain (\ref{ineq:action6}), we apply Lemma \ref{lemma:local comp}(d), Lemma \ref{lemma:extended_interpolation}(to the second term), and then inequalities (\ref{ineq:zaction7}), (\ref{ineq:zaction1}), and (\ref{ineq:zaction2}) in order:
    \begin{align*}
        |\chi^1-\id-\dot{\chi}^0|_k
        &\lesssim 
        |\psi^1-\id-\dot{\psi}^0\circ \psi^1|_k
        +
        |l^1|_k
        |\psi^1-\id-\dot{\psi}^0\circ\psi^1|_1
        \\
        &\lesssim
        |\psi^1-\id-\dot{\psi}^0\circ \psi^1|_k
        (1+|l^1|_1)
        +
        |l^1|_{k+1}
        |\psi^1-\id-\dot{\psi}^0\circ\psi^1|_0
        \\
        &\lesssim
        (\nu_0+\epsilon_0)
        \nu_{k+1}+\nu_0\epsilon_{k+1}.
\end{align*}

Finally, by the proof of Lemma \ref{lemma:action_local} we have that
    \begin{align*}
        e\cdot \varphi_v^t
        &=
        e+l^t\circ(\id\times e)
        =
        \mathrm{pr}_2\circ\chi^t\circ(\id\times e),
        \\    
        \delta v
        &=
        \tfrac{\dd}{\dd t}\big|_{t=0}
        (e\cdot\varphi_v^t)
        =
        \mathrm{pr}_2\circ \dot{\chi}^0\circ(\id\times e).
    \end{align*}
Hence we have that
    \[
        e\cdot\varphi_v
        -
        e-\delta v
        =
        \mathrm{pr_2}
        \circ
        \left(
            \chi^1-\id-\dot{\chi}^0
        \right)
        \circ 
        (\id\times 0+ 0\times e).
    \]
By applying Lemmas \ref{lemma:local comp}(d) and \ref{lemma:extended_interpolation}, and inequalities (\ref{ineq:action6}) and (\ref{ineq:zaction8}), we obtain
    \begin{align*}
        |e\cdot\varphi_v-e-\delta v|_k
        &\lesssim 
        |(\chi^1-\id-\dot{\chi}^0)\circ (\id\times0+0\times e)|_k
        \\
        &\lesssim 
        |\chi^1-\id-\dot{\chi}^0|_k
        + 
        \epsilon_k|\chi^1-\id-\dot{\chi}^0|_1
        \\
        &\lesssim 
        |\chi^1-\id-\dot{\chi}^0|_k
        (1+\epsilon_1)
        + 
        \epsilon_{k+1}
        |\chi^1-\id-\dot{\chi}^0|_0
        \\
        &\lesssim 
        (\epsilon_0+\nu_0)
        \nu_{k+1}
        +
        \nu_0\epsilon_{k+1}.
        \qedhere
    \end{align*}
\end{proof}

\subsubsection{Local version of Lemma C}

In this section we prove a local version of \nameref{Lemma:C}.

\begin{lemma}
\label{lemma:infinite_comp}
Let $B, C \sub \R^m$ be closed balls such that $B\sub \mathrm{int}(C)$. 
For all sequences of smooth maps
    \[
        \id+f_{\nu}:C\longto \R^m, 
        \quad
        \nu\geq 1,
    \]
satisfying
    \[
        \sum\nolimits_{\nu\geq 1}|f_{\nu}|_{1}<\theta
        \quad\textrm{and}\quad
        \sum\nolimits_{\nu\geq 1}|f_{\nu}|_{k}<\infty,
        \quad\forall\; k\geq 0,
    \]
we have that the compositions
    \[
        \id+g_{\nu}
        \defeq 
        (\id+f_{1})\circ(\id+f_{2})\circ \ldots \circ (\id+f_{\nu})
    \]
are well-defined on $B$, and converge in all $C^k$-norms on $B$ to a smooth map
    \[
        \id+g_{\infty}
        \defeq 
        \lim_{\nu\to \infty}
        \big(
            \id+g_{\nu}|_B
        \big):
        B\longto C.
    \]
Moreover, it satisfies
    \[
        |g_{\infty}|_{k}
        \lesssim 
        \sum\nolimits_{\nu\geq 1}
        |f_{\nu}|_{k}.
    \]
\end{lemma}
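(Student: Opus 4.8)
The plan is to realize $\id+g_\nu$ as a well-defined map between fixed nested balls, to bound $|g_\nu|_k$ \emph{uniformly in $\nu$}, and then to extract convergence from a telescoping estimate. Fix a chain of closed balls $B\subset\mathrm{int}(B')\subset B'\subset\mathrm{int}(B'')\subset B''\subset\mathrm{int}(C)$, and shrink $\theta$ so that it is smaller than the distance from each of these balls to the boundary of the next, and small enough that Corollary \ref{coro:local comp} and Lemma \ref{lemma:local comp} apply to the pairs $(B,B')$, $(B',B'')$ and $(B'',C)$. Since $\sum_\nu|f_\nu|_1<\theta$ we have $|f_\nu|_0\le|f_\nu|_1<\theta$ for every $\nu$, and reading the composition $\rho_\nu:=(\id+f_1)\circ\cdots\circ(\id+f_\nu)$ from right to left one sees that every partial composition $\sigma_i\circ\cdots\circ\sigma_\nu$ (with $\sigma_j:=\id+f_j$) moves points by at most $\sum_j|f_j|_0<\theta$; hence all these maps are well-defined as maps $B''\to C$, and in particular $\rho_\nu=\id+g_\nu$ is defined on $B''$. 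A short bootstrap — each left-peeling step $\sigma_i\circ\mu$ changes the $C^1$-distance to the identity by $\lesssim|f_i|_1$, and $\sum_j|f_j|_1<\theta$ — shows that, after possibly shrinking $\theta$ again, the $C^1$-distances of all the maps appearing below to the identity stay $<\theta$, so that Corollary \ref{coro:local comp} is applicable at every step.

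The heart of the argument is the uniform bound $|g_\nu|_{k,B'}\le A_k\sum_{j\ge1}|f_j|_k$, with $A_k$ independent of $\nu$. Writing $\mu_\nu^{(i)}:=\sigma_i\circ\cdots\circ\sigma_\nu=\id+m_\nu^{(i)}$ (and $m_\nu^{(\nu+1)}:=0$), the identity $m_\nu^{(i)}=m_\nu^{(i+1)}+f_i\circ(\id+m_\nu^{(i+1)})$ together with Corollary \ref{coro:local comp}, applied with ``$g$''$=f_i$ on $B''$ and ``$f$''$=m_\nu^{(i+1)}$ on $B'$, gives
\[
    |m_\nu^{(i)}|_{k}
    \le
    (1+c_k|f_i|_1)\,|m_\nu^{(i+1)}|_{k}
    +
    c_k\,|f_i|_{k}\,|m_\nu^{(i+1)}|_{1}
    +
    (1+c_k|f_i|_1)\,|f_i|_{k}.
\]
Using first $k=1$ and the bootstrap bound $|m_\nu^{(i+1)}|_1<\theta$, iterating from $i=\nu+1$ down to $i=1$ and using $\prod_j(1+c_1|f_j|_1)\le e^{c_1\sum_j|f_j|_1}\le e^{c_1\theta}$, one obtains $|m_\nu^{(1)}|_1\le M_1:=c_1'\sum_{j\ge1}|f_j|_1$ uniformly in $\nu$. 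Feeding $|m_\nu^{(i+1)}|_1\le M_1$ back into the display for general $k$ and iterating again yields $|g_\nu|_{k,B'}=|m_\nu^{(1)}|_{k,B'}\le A_k\sum_{j\ge1}|f_j|_k$, with $A_k$ depending only on $k$, the balls and $\theta$. The decisive point is that Corollary \ref{coro:local comp} produces the factor $(1+c_k|f_i|_1)$, rather than a bare constant $c_k$, in front of $|m_\nu^{(i+1)}|_k$: this is exactly what keeps the accumulated product of these factors bounded as $\nu\to\infty$.

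With the uniform bounds in hand, convergence is cheap. From $\rho_\nu=\rho_{\nu-1}\circ\sigma_\nu$ one gets $g_\nu-g_{\nu-1}=f_\nu+\big(g_{\nu-1}\circ(\id+f_\nu)-g_{\nu-1}\big)$, and Lemma \ref{lemma:local comp}(b), applied with ``$g$''$=g_{\nu-1}$ on $B'$ and ``$f$''$=f_\nu$ on $B$, together with $|g_{\nu-1}|_{k+1,B'}\le A_{k+1}\sum_j|f_j|_{k+1}<\infty$, gives $|g_\nu-g_{\nu-1}|_{k,B}\lesssim|f_\nu|_k$. Hence $\sum_\nu|g_\nu-g_{\nu-1}|_{k,B}<\infty$ for every $k$, so $\{g_\nu\}$ is Cauchy in $C^k(B,\R^m)$; running the same step one ball larger shows it is Cauchy in $C^k$ on a ball whose interior contains $B$, so the limit $g_\infty$ is smooth near $B$ and $\id+g_\infty$ maps $B$ into $\overline{B'}\subset C$. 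Finally, since $C^k$-norms are continuous under $C^k$-convergence, $|g_\infty|_{k,B}=\lim_\nu|g_\nu|_{k,B}\le A_k\sum_{j\ge1}|f_j|_k$, which is the claimed inequality.

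I expect the only genuine difficulty to be the bookkeeping just outlined: choosing the nested balls and the order of peeling of the composition so that Corollary \ref{coro:local comp} is always applied on a fixed pair of balls with $\nu$-independent constants, and so that the accumulated constants $\prod_j(1+c_k|f_j|_1)$ stay bounded — for which the precise ``$|g|_k+c_k(\cdots)$'' shape of the corollary, the preliminary $C^1$-bound, and the elementary inequality $\prod_j(1+a_j)\le e^{\sum_j a_j}$ are all essential. Everything else is a direct invocation of the composition estimates already proved above.
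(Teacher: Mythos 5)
Your proof is correct and rests on exactly the same technical ingredients as the paper's: the precise $(1+c_k|f|_1)$-shaped bound of Corollary~\ref{coro:local comp}, the elementary inequality $\prod(1+a_j)\le e^{\sum a_j}$ to tame the accumulated product, and a telescoping Cauchy estimate from Lemma~\ref{lemma:local comp}. The one genuine difference is the direction of peeling. The paper uses the recursion $g_{\nu+1}=f_{\nu+1}+g_\nu\circ(\id+f_{\nu+1})$, i.e.\ it peels the \emph{rightmost} factor; in Corollary~\ref{coro:local comp} the ``small perturbation'' slot is then occupied by the single $f_{\nu+1}$, whose $C^1$-norm is $<\theta$ directly from the hypothesis, so no bootstrap is needed. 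You instead peel the \emph{leftmost} factor, $m_\nu^{(i)}=m_\nu^{(i+1)}+f_i\circ(\id+m_\nu^{(i+1)})$, which places the \emph{accumulated} tail $m_\nu^{(i+1)}$ in the perturbation slot; you then owe a downward induction establishing $|m_\nu^{(i+1)}|_1<\theta$ at every step (which, as you note, closes after shrinking the statement's $\theta$ relative to the corollary's $\theta$, since $a_i\le e^{2c_1\theta_0}\sum_{j\ge i}|f_j|_1$). So your bookkeeping is somewhat heavier — the paper's decomposition makes the bootstrap evaporate — but it buys you the virtue of keeping all intermediate compositions indexed by the same $\nu$, with an explicit chain of balls $B\subset B'\subset B''\subset C$ to manage the domains, which the paper leaves more implicit. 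For the Cauchy step you invoke Lemma~\ref{lemma:local comp}(b) (needs only $|f|_0<\theta$) where the paper uses part (e); both are adequate given the uniform bound on $|g_\nu|_{k+1}$ that has already been established.
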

%\begin{remark}
%The norms $|\cdot|_k$ of the maps in the statement are the standard $C^k$-norms on the respective domains of definition:
%\[|f_{\nu}|_{k}=|f_{\nu}|_{k,C},\  |g_{\nu}|_{k}=|g_{\nu}|_{k,B},\ \textrm{and}\ |g_{\infty}|_{k}=|g_{\infty}|_{k,B}.\]
%The constants subsumed by ``$\lesssim$'' depend only on the balls $B$ and $C$.
%\end{remark}

\begin{proof}
Let $\theta$ be smaller than the distance between $B$ and $\R^m\backslash C$. 
We claim that
    \[
        (\id+f_{\nu-\mu})\circ\ldots\circ(\id+f_{\nu})(B)
        \sub C,
        \quad\forall\;
        0\leq \mu < \nu.
    \]
For suppose that this fails. Then there would be an $x_0\in B$ and a $0\leq \mu<\nu$ such that
    \[
        x_{\mu+1}
        \defeq 
        (\id+f_{\nu-\mu})\circ\ldots\circ(\id+f_{\nu})(x_0)
        \notin 
        C,
    \]
while for $0\leq i< \mu$ we would have that
    \[
        x_{i+1}
        \defeq 
        (\id+f_{\nu-i})\circ\ldots\circ(\id+f_{\nu})(x_0)
        \in C.
    \]
This would imply that
    \[
        |x_{\mu+1}-x_0|
        \leq 
        \sum\nolimits_{i=0}^{\mu}
        |x_{i+1}-x_{i}|
        =
        \sum\nolimits_{i=0}^{\mu}
        |f_{\nu-i}(x_{i})|
        \leq 
        \sum\nolimits_{j=1}^{\mu}
        |f_{\nu-j}|_{0}<\theta,
    \]
which gives the contradiction that $x_{\mu+1}\in C$. We conclude that the claim holds. In particular, $g_{\nu}$ is defined on $B$, and
    $
        (\id+g_{\nu})(B)\sub C.
    $
Define
    \[
        \sigma_{\nu}^k
        \defeq 
        \sum\nolimits_{i=1}^{\nu}
        |f_{i}|_{k},
        \qquad
        \sigma^k
        \defeq 
        \lim_{\nu\to\infty}
        \sigma_{\nu}^k
        =
        \sum\nolimits_{\nu\geq 1}
        |f_{\nu}|_{k}.
    \]
First we show that there are constants $c_k\geq 0$ such that
    \begin{equation}
    \label{eqn:g_nu is bounded}
        |g_{\nu}|_{k}
        \leq 
        c_k\sigma^k_{\nu}.
    \end{equation}
Note that $g_1=f_1$, and that in general we have the following recursive formula:
    \[
        g_{\nu+1}
        =
        f_{\nu+1} + g_{\nu}\circ (\id + f_{\nu+1}).
    \]
In particular, this implies that
    \[
        |g_{\nu+1}|_{0}
        \leq
        |f_{\nu+1}|_{0} + |g_{\nu}|_{0}.
    \]
By iterating this we obtain inequality (\ref{eqn:g_nu is bounded}) for $k=0$. 
For $k\geq 1$, by applying Corollary \ref{coro:local comp} to the recursive formula we obtain
    \begin{equation}
    \label{eqn:iterative inequality}
        |g_{\nu+1}|_{k}
        \leq
        |f_{\nu+1}|_{k}
        (1+c_k|g_{\nu}|_{1})
        +
        |g_{\nu}|_{k}
        (1+c_k|f_{\nu+1}|_{1}),
    \end{equation}
for some constants $c_k>0$ that depend only on $k$. For $k=1$, this gives
    \begin{align*}
        |g_{\nu+1}|_{1}
        &\leq 
        |f_{\nu+1}|_{1}
        +
        |g_{\nu}|_{1}
        (1+c_1|f_{\nu+1}|_{1})
        \\
        &\leq
        |f_{\nu+1}|_{1}
        +
        |g_{\nu}|_{1}
        e^{c_1|f_{\nu+1}|_{1}}.
    \end{align*}
Since the following argument will be used twice, let us define
    \[
        x_{\nu} \defeq |g_{\nu}|_{1}, 
        \quad 
        y_{\nu} \defeq |f_{\nu}|_{1},
        \quad 
        \epsilon_{\nu} \defeq c_1|f_{\nu}|_{1}.
    \]
With this notation the above inequality becomes
    \[  
        x_{\nu+1}
        \leq 
        y_{\nu+1}
        +
        x_{\nu}
        e^{\epsilon_{\nu+1}}.
    \]
Thus we obtain
    \[
        x_{\nu+1}
        e^{
            -\sum\nolimits_{i=1}^{\nu+1}
            \epsilon_{i}
        }
        \leq 
        y_{\nu+1}
        e^{
            -\sum\nolimits_{i=1}^{\nu+1}
            \epsilon_{i}
        }
        +
        x_{\nu}
        e^{
            -\sum\nolimits_{i=1}^{\nu}
            \epsilon_{i}
        }
        \leq 
        y_{\nu+1}
        +
        x_{\nu}
        e^{
            -\sum\nolimits_{i=1}^{\nu}
            \epsilon_{i}
        }.
    \]
Iterating this, and using that $x_1e^{-\epsilon_1}\leq x_1\leq y_1$, we obtain
    \[
        x_{\nu}
        e^{-\sum\nolimits_{i=1}^{\nu}\epsilon_{i}}
        \leq 
        \sum\nolimits_{i=1}^{\nu}
        y_{i}.
    \]
Hence we obtain
    \[
        x_{\nu}
        \leq 
        e^{\sum\nolimits_{i=1}^{\nu}\epsilon_{i}}
        \sum\nolimits_{i=1}^{\nu}
        y_{i}
        \leq 
        e^{\sum\nolimits_{i\geq 1}\epsilon_{i}}
        \sum\nolimits_{i=1}^{\nu}
        y_{i}.
    \]
Recall that, by the hypothesis of the Lemma, we have 
    $
        \sum\nolimits_{i\geq 1}
        \epsilon_i
        \leq 
        c_1\sigma^1
        \leq 
        c_1\theta,
    $ 
so we obtain inequality (\ref{eqn:g_nu is bounded}) for $k=1$. 
Moreover, it implies that 
    $
        |g_{\nu}|_{1}\leq c_1.
    $ 
Using this in inequality (\ref{eqn:iterative inequality}), for $k\geq 1$, we obtain an inequality of the form
    \begin{align*}
        |g_{\nu+1}|_{k}
        &\leq
        c_k |f_{\nu+1}|_{k}
        +
        |g_{\nu}|_{k}
        (1+c_k|f_{\nu+1}|_{1})
        \\
        &\leq 
        c_k |f_{\nu+1}|_{k}
        +
        |g_{\nu}|_{k}
        e^{c_k|f_{\nu+1}|_{1}}.
    \end{align*}
Now applying the argument from before with
    \[
        x_{\nu} \defeq |g_{\nu}|_{k},
        \quad
        y_{\nu} \defeq c_k|f_{\nu}|_{k}, 
        \quad
        \epsilon_{\nu} \defeq c_k|f_{\nu}|_{1},
    \]
gives inequality (\ref{eqn:g_nu is bounded}) for all $k\geq 1$.

By using Lemma \ref{lemma:local comp}(e) we obtain
    \begin{align*}
        |g_{\nu+1}-g_{\nu}|_{k}
        &\leq 
        |f_{\nu+1}|_{k}
        +
        c_k (|f_{\nu+1}|_{k+1}|g_{\nu}|_{0}
        +
        |f_{\nu+1}|_{0}|g_{\nu}|_{k+1})
        \\
        &\leq 
        c_k |f_{\nu+1}|_{k+1} |g_{\nu}|_{k+1}
        \\
        &\leq 
        c_k|f_{\nu+1}|_{k+1}\sigma^{k+1},
\end{align*}
where in the last step we have used (\ref{eqn:g_nu is bounded}). On the right-hand-side we have the terms of an absolutely convergent series. Therefore the sequence $\{g_{\nu}\}_{\nu\geq 1}$ is Cauchy with respect to all $C^k$-norms, which implies that it converges in all $C^k$-norms uniformly to a smooth map $g_{\infty}:B\to \R^m$. Since, for all $x\in B$, $x+g_{\nu}(x)\in C$ it follows that $\id+g_{\infty}$ takes values in $C$. Finally, taking $\nu\to\infty$ in inequality (\ref{eqn:g_nu is bounded}) gives the final estimate of the Lemma.
\end{proof}

\subsection{Intermezzo: Closed Pseudogroups}
\label{sec:intermezzo_closed_pseudogroups}

In this section we prove three lemmas related to closed pseudogroups.

\begin{lemma}
\label{lemma:time_independent_flows}
Let $B$ and $C$ be two closed balls in $\R^m$ such that $B\subset \mathrm{int}(C)$.
Let $v^t\in \mathfrak{X}(C)$, with $t\in [0,1]$, be a time dependent vector field satisfying
    \[
        |v|_{1}<\theta,
    \]
and denote its flow by $\varphi^t_v$. Then, for all $n\geq 1$ and all $0\leq t\leq 1$, we have that the map     
    \[
        (\varphi^{t/n}_v)^n
        \defeq
        \varphi^{t/n}_v \circ \ldots \circ \varphi^{t/n}
    \]
is defined on $B$ as a smooth map
    \[
        (\varphi_v^{t/n})^n:B\longto C,
    \]
and the sequence $(\varphi_v^{t/n})^n$ converges uniformly on $B$ in all $C^k$-norms to the flow of the (time independent) vector field $v^0$, i.e.\
    \[
        \lim_{n\to \infty}(\varphi_v^{t/n})^n
        =
        \varphi_{v^0}^t.
    \]
\end{lemma}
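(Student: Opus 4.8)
\textbf{Proof plan for Lemma \ref{lemma:time_independent_flows}.}
The plan is to reduce everything to the local estimates already established in this section, in particular Lemma \ref{lemma:flow} (existence and $C^k$-bounds for flows of time-dependent vector fields), Corollary \ref{coro:local comp} (the sharp composition estimate with the explicit constant), and Lemma \ref{lemma:infinite_comp} (convergence of infinite compositions). The reason the latter is relevant is that $(\varphi_v^{t/n})^n$ is an $n$-fold composition, so one wants to control the sum $\sum_{j=1}^n |\varphi_v^{t/n} - \id|_{k}$ uniformly in $n$. First I would fix a nested chain $B \subset \mathrm{int}(B') \subset B' \subset \mathrm{int}(C)$ of closed balls, to have room for all the compositions; by Lemma \ref{lemma:flow}, after shrinking $\theta$, each $\varphi_v^{s}$ for $0 \le s \le 1$ is defined on $B'$ as a map $B' \to C$ with $|\varphi_v^s - \id|_{k} \le c_k\, s\, |v|_{k}$. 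In particular $|\varphi_v^{t/n} - \id|_{1} \le c_1 (t/n)|v|_1 \le c_1 |v|_1 / n$, so $\sum_{j=1}^n |\varphi_v^{t/n} - \id|_1 \le c_1 |v|_1 < \theta$ provided $\theta$ was chosen small enough, uniformly in $n$. This is exactly the smallness hypothesis needed to iterate Corollary \ref{coro:local comp}, which shows inductively that the partial compositions $(\varphi_v^{t/n})^j$, $j \le n$, are all defined on $B$ as maps into $B'$ (hence into $C$) and satisfy a uniform-in-$n$ bound $|(\varphi_v^{t/n})^j - \id|_{k} \le c_k$.

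The heart of the argument is the convergence $(\varphi_v^{t/n})^n \to \varphi_{v^0}^t$. I would estimate the $C^k$-distance between consecutive iterates in $n$, or more directly compare $(\varphi_v^{t/n})^n$ with the flow of the frozen vector field $v^0$ by a telescoping/Gr\"onwall argument. Concretely, write $\psi_n \defeq (\varphi_v^{t/n})^n$ and $\Phi \defeq \varphi_{v^0}^t$, split $[0,t]$ into $n$ intervals of length $t/n$, and compare $\varphi_v^{t/n}$ on each small interval with $\varphi_{v^0}^{t/n}$. The single-step discrepancy is controlled using the integral formula (\ref{eqn:int formula}) and Lemma \ref{lemma:local comp}(e): one gets $|\varphi_v^{s} - \varphi_{v^0}^{s}|_{k} \lesssim \int_0^s |v^\tau - v^0|_{k+1}\, d\tau + (\text{lower order})$, which is $o(1)$ uniformly as the interval length $t/n \to 0$ because $\tau \mapsto v^\tau$ is continuous into $C^\infty$ (so $|v^\tau - v^0|_{k+1} \to 0$ as $\tau \to 0$, uniformly by compactness of $[0,1]$). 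Composing $n$ such near-identity perturbations and using the uniform composition bound from Corollary \ref{coro:local comp} to keep the accumulated error under control, a discrete Gr\"onwall inequality (exactly of the form used in the proof of Lemma \ref{lemma:flow}) gives $|\psi_n - \Phi|_{k} \to 0$ for every $k$.

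The main obstacle I expect is the bookkeeping in this discrete Gr\"onwall step: one must show that the per-step errors, which individually tend to zero but of which there are $n$, do not accumulate to something of order $1$. The standard trick is that the composition operator $\id + g \mapsto (\id+f)\circ(\id+g)$ is Lipschitz near the identity in $C^k$ with a constant of the form $1 + c_k(|f|_1 + |g|_1 + \ldots)$, so iterating $n$ times multiplies the accumulated error by at most $\exp(c_k \sum_j |\varphi_v^{t/n}-\id|_1) = \exp(O(|v|_1))$, a constant independent of $n$; meanwhile the total per-step error is bounded by $n \cdot o(1/n) = o(1)$ once one checks that the single-step discrepancy is $o(1/n)$ rather than merely $o(1)$. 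Getting that extra power of $1/n$ requires using that the first-order Taylor expansion of $\varphi_v^{s}$ in $s$ at $s=0$ agrees with that of $\varphi_{v^0}^{s}$ (both have derivative $v^0$ at $s=0$), so the difference is $O(s^2) = O(t^2/n^2)$ in $C^k$, again via Lemma \ref{lemma:local comp} and (\ref{eqn:int formula}). Everything else is routine once the domains are nested with enough room and $\theta$ is taken small enough for Lemma \ref{lemma:flow}, Corollary \ref{coro:local comp}, and Lemma \ref{lemma:infinite_comp} all to apply.
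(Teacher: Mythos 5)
Your proposal is correct and follows essentially the same route as the paper: establish a uniform-in-$n$ bound on the partial compositions via a discrete Gr\"onwall/exponential-weight argument driven by Corollary \ref{coro:local comp}, then compare $(\varphi_v^{t/n})^n$ with $\varphi_{v^0}^t$ by a telescoping sum, with the decisive observation (which you reach in your last paragraph) that $\varphi_v^s\circ\varphi_{v^0}^{-s}-\id = O(s^2)$ because both flows share the same first-order Taylor expansion at $s=0$, so the $n$ accumulated per-step errors are $O(1/n^2)$ each and sum to $O(1/n)$. The one small simplification the paper makes that you do not is to cut $v$ off by a bump function $\chi$ supported in $\mathrm{int}(C)$ and equal to $1$ near $B$, so that the flow of $w=\chi v$ is globally defined on $C$; this sidesteps the domain bookkeeping of your nested-ball chain (where you would still need to check $\varphi_v^{t/n}(B')\subset B'$ to iterate on a fixed intermediate ball), but this is a cosmetic convenience, not a different argument.
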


\begin{proof}
Assume that $B$ is the closed ball $B_r$ of radius $r>0$ around $0\in \R^m$, and let $\rho>0$ be such that $B_{r+\rho}\subset \mathrm{int}(C)$. By Lemma \ref{lemma:flow}, we may assume that $\varphi_v^t$ is defined on $B_{r+\rho}$ for all $t\in [0,1]$. By the proof of Lemma \ref{lemma:flow}, $\varphi_v^{t}$ displaces a point by a distance less than $t\theta$. Thus by shrinking $\theta$ we may assume that
\[\varphi_v^{t/n}\big(B_{r+\frac{i-1}{n}\rho}\big)\subset B_{r+\frac{i}{n}\rho},\ \ \ \textrm{for}\ \ t\in [0,1]\ \textrm{and}\ i=1,\ldots, n.\]
This implies that $(\varphi_v^{t/n})^n$ is defined on $B$, for all $t\in [0,1]$ and all $n\geq 1$.

\noindent
For clarity of exposition, we replace $v$ by the compactly supported vector field 
    \[
        w \defeq \chi\cdot v,
    \] 
where $\chi$ is a smooth function such that $\mathrm{supp}(\chi)\subset \mathrm{int}(C)$ and $\chi|_{B_{r+\rho}}=1$. We have that $|w|_{k}\lesssim |v|_{k}$, that $\varphi_w^t:C\to C$ is defined for all $t\in [0,1]$, and that (by the above) $(\varphi_w^{t/n})^n|_B=(\varphi_v^{t/n})^n|_B$ for all $t\in [0,1]$ and all $n\geq 1$. Thus it suffices to show that $(\varphi_w^{t/n})^n$ converges uniformly to $\varphi_{w^0}^t$ in all $C^k$-norms on $C$.

For $t\in [0,1]$ and $n\geq 0$, we define
    \[
        (\varphi_w^t)^n
        \id+f^t_n:
        C\longto C, 
        \quad
        \delta^t_{n+1}
        \defeq
        f^t_{n+1}-f^t_n:
        C\longto\R^m.
    \]
Next we prove that, for $k\geq 1$ there are constants $c_k>0$ such that
    \begin{equation}
    \label{eq:uniform bound all n}
        |f_m^{t/n}|_k
        \leq 
        c_k|w|_k, 
        \quad
        \textrm{for all}\ t\in [0,1] \text{ and } 0\leq m\leq n.
    \end{equation}
First note that
    \[
        \delta^t_{m+1}
        =
        (\varphi_w^t)^{m+1}-(\varphi_w^t)^{m}
        =
        \big(
            (\varphi_w^t)^{m}
            -
            (\varphi_w^t)^{m-1}
        \big)
        \circ
        (\varphi_w^t)
        =
        \delta^t_m \circ (\id+f^t_1).
    \]
By Lemma \ref{lemma:flow}, we have that $|f^t_1|_{1}\leq c t|w|_1<c\theta$. Therefore we may apply Lemma \ref{coro:local comp} to obtain that
    \[
        |\delta^t_{m+1}|_1
        \leq 
        |\delta^t_m|_1(1+ct|w|_1).
    \]
Define $\epsilon\defeq ct|w|_1$. By iterating the above we obtain
    \[
        |\delta^t_{m}|_1
        \leq 
        |f^t_1|_1 (1+\epsilon)^{m-1}
        \leq 
        \epsilon (1+\epsilon)^{m-1}.
    \]
Therefore,
    \[
        |f^t_m|_1
        =
        |\sum\nolimits_{i=1}^m
        \delta^t_i|_1
        \leq 
        (1+\epsilon)^{m}-1.
    \]
Next we apply Lemma \ref{coro:local comp} to the higher order norms:
    \begin{align*}
        |\delta^t_{m+1}|_k
        &\leq 
        |\delta^t_m|_k (1+c_kt|w|_1)
        +
        c_k |\delta^t_m|_1 |f^t_1|_k
        \\
        &\leq 
        |\delta^t_m|_k
        (1+c_k\epsilon)
        +
        c_k t |w|_k
        \epsilon(1+\epsilon)^{m-1}
        \\
        &\leq 
        |\delta^t_m|_k
        (1+c_k\epsilon)
        +
        c_k t |w|_k
        \epsilon (1+c_k\epsilon)^{m-1},
    \end{align*}
where we assumed that $c_k>1$. Equivalently,
    \begin{align*}
        (1+c_k\epsilon)^{1-m}
        |\delta^t_{m+1}|_k
        \leq 
        (1+c_k\epsilon)^{2-m}
        |\delta^t_m|_k
        +
        c_k \epsilon t|w|_k.
    \end{align*}
By iterating this inequality, and using again Lemma \ref{lemma:flow}, we obtain that
    \begin{align*}
        (1+c_k\epsilon)^{1-m}
        |\delta^t_{m+1}|_k
        &\leq 
        (1+c_k\epsilon)
        |\delta^t_1|_k
        +
        (m-2) c_k \epsilon t|w|_k
        \\
        &\leq
        (1+(m-1)c_k\epsilon) c_k t |w|_k
        \\
        &\leq
        (1+c_k\epsilon)^{m-1} c_k t|w|_k,
    \end{align*}
Hence we obtain that
    \[
        |\delta^t_{m+1}|_k
        \leq 
        (1+c_k\epsilon)^{2(m-1)}
        c_k t|w|_k
        \leq 
        (1+c_k\epsilon)^{2m} c_k t|w|_k,
    \]
which holds also for $m=0$. 
This gives
    \[
        |f^t_m|_k
        =
        |\sum\nolimits_{i=1}^m\delta^t_i|_k
        \leq 
        \frac{(1+c_k\epsilon)^{2m}-1}{\epsilon}c_kt|w|_k
        \leq 
        \frac{e^{2c_km\epsilon}-1}{\epsilon}c_kt|w|_k.
    \]
Substituting $\epsilon=ct|w|_1$ back we obtain
    \[
        |f^t_m|_k
        \leq 
        \frac{e^{c_kmt|w|_1}-1}{|w|_1}c_k|w|_k.
    \]
Using that the function $(e^x-1)/x$ is increasing for $x\geq 0$, we obtain inequality (\ref{eq:uniform bound all n}):
    \[
        |f^{t/n}_m|_k
        \leq 
        \frac{e^{c_k|w|_1}-1}{|w|_1}c_k|w|_k
        \leq 
        \frac{e^{c_k\theta}-1}{\theta}
        c_k^2|w|_k
        \leq 
        c_k|w|_k.
    \]

Next consider the telescopic sum
    \begin{align*}
        (\varphi_w^{t/n})^n-\varphi_{w^0}^{t}
        &=
        \sum\nolimits_{m=0}^{n-1}
        (\varphi_w^{t/n})^{m+1}
        \circ
        \varphi_{w^0}^{t(n-m-1)/n}
        - 
        (\varphi_w^{t/n})^{m}
        \circ
        \varphi_{w^0}^{t(n-m)/n}
        \\
        &=
        \sum\nolimits_{m=0}^{n-1}
        \Big( 
            (\varphi_w^{t/n})^{m}\circ \chi^{t/n}- (\varphi_w^{t/n})^{m}
        \Big)
        \circ
        \varphi_{w^0}^{t(n-m)/n},
\end{align*}
where $\chi^s\defeq\varphi_w^{s}\circ \varphi_{w^0}^{-s}$. To each term in the sum we first apply Lemma \ref{lemma:local comp}(a), then Lemma \ref{lemma:flow} (we can because $|w^0|_1<\theta$), and then Lemma \ref{lemma:local comp}(b) (we can because of (\ref{eq:uniform bound all n})), and finally inequality (\ref{eq:uniform bound all n}). We obtain
    \begin{align*}
        |(\varphi_w^{t/n})^n-\varphi_{w^0}^{t}|_k
        &\leq 
        \sum\nolimits_{m=0}^{n-1}
        \Big|
            \big(
                (\varphi_w^{t/n})^{m}\circ \chi^{t/n}- (\varphi_w^{t/n})^{m}
            \big)
            \circ\varphi_{w^0}^{t(n-m)/n}
        \Big|_k
        \\
        &\lesssim 
        \sum\nolimits_{m=0}^{n-1} 
        |(\varphi_w^{t/n})^{m}\circ \chi^{t/n}- (\varphi_w^{t/n})^{m}|_k(1+|w^0|_k)
        \\
        &\lesssim
        \sum\nolimits_{m=0}^{n-1} 
        |(\varphi_w^{t/n})^{m}\circ \chi^{t/n}- (\varphi_w^{t/n})^{m}|_k(1+|w|_k)
        \\
        &\lesssim
        \sum\nolimits_{m=0}^{n-1}
        |f^{t/n}_m|_{k+1}|\chi^{t/n}-\mathrm{id}|_k(1+|w|_k)
        \\
        &\lesssim 
        n\cdot |\chi^{t/n}-\mathrm{id}|_k|w|_{k+1}(1+|w|_k).
    \end{align*}
Note that the function $\chi^s-\mathrm{id}$ vanishes to second order at $s=0$, therefore $\chi^s-\mathrm{id}=s^2\kappa^s$ for some smooth function $\kappa^s:C\to \R^m$. We obtain
    \[
        |(\varphi_w^{t/n})^n-\varphi_{w^0}^{t}|_k
        \lesssim 
        \tfrac{ 1}{n}
        |\kappa|_{k}|w|_{k+1}
        (1+|w|_k),
    \]
which proves the Lemma.
\end{proof}

The following implies continuity of the flow.

\begin{lemma}
\label{lemma:cont_action}
Let $B$ and $C$ be two closed balls in $\R^m$ such that $B\subset \mathrm{int}(C)$.
For all vector fields $u,v\in \mathfrak{X}(C)$ satisfying 
    \[
        |u|_2<\theta,
        \quad 
        |v|_2<\theta, 
    \]
the flow 
    $
        \varphi^{t}_{u+sv}:B\to C
    $
is defined for all $-1\leq s,t\leq 1$ and satisfies
    \[
        |\varphi^t_{u+v}-\varphi^t_{u}|_k
        \lesssim 
        |v|_k(1+|u|_{k+1}+|v|_{k+1})^2.
    \]
\end{lemma}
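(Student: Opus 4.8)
The plan is to deduce the estimate from the variation-of-parameters principle for flows, combined with the local composition estimates (Lemma \ref{lemma:local comp}), the flow estimates (Lemma \ref{lemma:flow}), the interpolation inequalities (Lemma \ref{lemma:extended_interpolation}) and a Grönwall argument, run inductively on the order $k$.

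First I would fix balls $B\subset B'\subset \mathrm{int}(C)$ and, after shrinking $\theta$ once and for all so that $|u+sv|_1\le |u|_1+|v|_1$ falls below the smallness constants of Lemmas \ref{lemma:flow} and \ref{lemma:local comp} for every $|s|\le 1$, invoke Lemma \ref{lemma:flow} (applied to the time-independent vector field $\pm(u+sv)$, which takes care of both signs of $t$) to conclude that $\varphi^t_{u+sv}$ is defined on $B'$ as a map into $C$ for all $s,t\in[-1,1]$, with the uniform bounds
\[
    \bigl| \varphi^t_{u+sv} - \id \bigr|_{k,B'}
    \lesssim
    |u|_k + |v|_k,
    \qquad \forall\; s,t\in[-1,1].
\]
Writing $f^t_s \defeq \varphi^t_{u+sv} - \id : B'\to C$ and $\delta^t \defeq f^t_1 - f^t_0$, the integral formula for the flow gives
\[
    \delta^t
    =
    \int_0^t
    \Bigl[
        v\circ(\id+f^\tau_1)
        +
        \bigl( u\circ(\id+f^\tau_1) - u\circ(\id+f^\tau_0) \bigr)
    \Bigr]
    \,\dd\tau,
\]
and Hadamard's lemma rewrites the last difference as $A^\tau\cdot\delta^\tau$ with $A^\tau \defeq \int_0^1 Du\circ\bigl(\id+f^\tau_0+\sigma\delta^\tau\bigr)\,\dd\sigma$; here $\id+f^\tau_0+\sigma\delta^\tau$ still maps $B'$ into $C$ since $C$ is convex and $\id+f^\tau_0,\id+f^\tau_1$ both do.

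Then I would estimate $|\delta^t|_{k,B}$ by induction on $k$. For $k=0$ one has $|A^\tau|_0\lesssim |u|_1<\theta$, so $|\delta^t|_0 \le \int_0^t ( |v|_0 + |A^\tau|_0|\delta^\tau|_0 )\,\dd\tau$ and Grönwall yields $|\delta^t|_0 \lesssim |v|_0$. For the inductive step, apply $D^a$ with $|a|=k$ to the displayed integral equation. The term $v\circ(\id+f^\tau_1)$ is controlled by Lemma \ref{lemma:local comp}(d) together with the uniform bound on $f^\tau_1$, contributing $\lesssim |v|_k + |v|_1(|u|_k+|v|_k)$. In $D^a(A^\tau\cdot\delta^\tau)$ the top-order piece $A^\tau\cdot D^a\delta^\tau$ again has coefficient $|A^\tau|_0\lesssim |u|_1<\theta$ and is absorbed by Grönwall; every remaining piece is a product $D^{a'}(A^\tau)\cdot D^{a-a'}(\delta^\tau)$ with $a'\neq 0$, in which $D^{a-a'}(\delta^\tau)$ involves only strictly lower orders of $\delta^\tau$ (handled by the inductive hypothesis) and $|A^\tau|_{|a'|}\lesssim |u|_{|a'|+1}\bigl(1+|u|_{|a'|}+|v|_{|a'|}\bigr)$ by Lemma \ref{lemma:local comp}(d) applied to $Du\circ(\cdots)$. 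Feeding these into Lemma \ref{lemma:extended_interpolation} redistributes the derivatives so that exactly two of them sit on a single high-order norm, producing the factor $(1+|u|_{k+1}+|v|_{k+1})^2$ multiplying $|v|_k$; a final Grönwall estimate in $|\delta^t|_{k,B}$ closes the induction, and restricting to $B$ gives the claim.

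The main obstacle I expect is precisely this bookkeeping of the power $(1+|u|_{k+1}+|v|_{k+1})^2$: one must verify that the extra derivative always lands on a single norm — it originates from the $Du$ in the Hadamard coefficient, which explains the index $k+1$ — and that, after applying the interpolation inequalities to the lower-order convolution terms, each of which already carries the quadratic high-norm factor supplied by the inductive hypothesis, neither a third high-order factor nor a further derivative loss is created. Keeping the interpolation indices balanced so that everything telescopes to exactly a squared high-norm factor, uniformly in $s$ and $t\in[-1,1]$, is the delicate point; the remainder (existence of the flows, the passage between the nested balls $B\subset B'\subset \mathrm{int}(C)$, and the plain $k=0$ Grönwall argument) is routine given the local lemmas already established.
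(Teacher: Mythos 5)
Your approach is genuinely different from the paper's. The paper invokes the classical variational formula $\tfrac{\dd}{\dd s}\varphi^t_{u+sv}=\bigl[\int_0^t(\varphi^{-\tau}_{u+sv})^*(v)\,\dd\tau\bigr]\circ\varphi^t_{u+sv}$ (cited from \cite{DK}), integrates in $s$ to express $\varphi^t_{u+v}-\varphi^t_u$ as a double integral of $(\varphi^{-\tau}_{u+\sigma v})^*(v)\circ\varphi^t_{u+\sigma v}$, and then splits the estimate into a composition bound (Lemma~\ref{lemma:local comp}) and a cited pullback bound $|(\varphi^{-\tau})^*(v)|_k\lesssim|v|_k(1+|u|_{k+1}+|v|_{k+1})$ from \cite[Lemma 3.11]{Mar14}; the $(k+1)$-norm enters there through the Jacobian of the flow. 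You instead subtract the two flow integral equations, factor out the $u$-contribution by Hadamard, and run Gr\"onwall with an induction on $k$. Yours is more self-contained (no external pullback lemma) at the cost of heavier bookkeeping; the paper's is shorter because the variational formula hands you a closed form and the derivative loss is isolated in one cited estimate.

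The bookkeeping worry you flag does not materialize, but only if you interpolate the raw product $|A^\tau|_j\,|\delta^\tau|_{k-j}$ \emph{before} plugging in the inductive hypothesis. Applying Lemma~\ref{lemma:extended_interpolation} with $q_1=q_2=0$, $M=k$ gives
\[
|A^\tau|_j\,|\delta^\tau|_{k-j}\lesssim |\delta^\tau|_0\,|A^\tau|_k+|A^\tau|_0\,|\delta^\tau|_k .
\]
The second term has coefficient $|A^\tau|_0\lesssim|u|_1<\theta$, so Gr\"onwall absorbs it; for the first, $|\delta^\tau|_0\lesssim|v|_0$ by the base case and $|A^\tau|_k\lesssim|u|_{k+1}+|v|_{k+1}$ after feeding the a priori bound $|f^\tau_0+\sigma\delta^\tau|_k\lesssim|u|_k+|v|_k$ into Lemma~\ref{lemma:local comp}(d) and interpolating once more. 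This yields $\lesssim|v|_k(1+|u|_{k+1}+|v|_{k+1})$, and the $v\circ(\id+f^\tau_1)$ term obeys the same bound by Lemma~\ref{lemma:local comp}(d) and interpolation. So your route in fact gives the \emph{linear} bound $|v|_k(1+|u|_{k+1}+|v|_{k+1})$, comfortably inside the claimed quadratic one, and the induction on $k$ essentially collapses — only the $k=0$ Gr\"onwall step is needed. The one step your sketch does gloss over, and must be made explicit, is that $A^\tau$ depends on $\delta^\tau$ through its argument $\id+f^\tau_0+\sigma\delta^\tau$, so $|A^\tau|_k$ has to be controlled via the a priori flow estimates $|f^\tau_s|_k\lesssim|u|_k+|v|_k$ rather than recursively.
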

\begin{proof}
The claim about the flow being well-defined follows from Lemma \ref{lemma:flow}. 

Recall the variational formula for the derivative of the flow \cite[\S 1.5 (1.5.2)]{DK}:
    \[
        \tfrac{\dd}{\dd s}\varphi^t_{u+sv}
        =
        \int_0^t
            (\varphi_{u+sv}^{-\tau})^*(v)
        \,\dd\tau
        \circ \varphi^t_{u+sv}.
    \]
This gives
    \[
        \varphi^t_{u+v}
        -
        \varphi^t_{u}
        =
        \int_0^1 \int_0^t
            (\varphi_{u+\sigma v}^{-\tau})^*(v) \circ \varphi^t_{u+\sigma v}
        \,\dd\sigma \,\dd\tau.
    \]
We apply Lemma \ref{lemma:local comp}(d) and then Lemma \ref{lemma:flow} to the integrand:
    \[
        |(\varphi_{u+\sigma v}^{-\tau})^*(v) \circ \varphi^t_{u+\sigma v}|_{k}
        \lesssim
        |(\varphi_{u+\sigma v}^{-\tau})^*(v)|_k(1+|u|_k+v|_k).
    \]
Further, using \cite[Lemma 3.11]{Mar14} to estimate the pullback of vector fields, we obtain
    \[
        |(\varphi_{u+\sigma v}^{-\tau})^*(v)|_k
        \lesssim 
        |v|_k(1+|v|_{k+1}+|u|_{k+1}).
    \]
These inequalities imply the statement.
\end{proof}

The next result implies that the Lie algebra sheaf of a closed pseudogroup is closed.

\begin{lemma}
\label{lemma:closed_lie_algebra_sheaf}
Let $B$ and $C$ be two closed balls in $\R^m$ such that $B\subset \mathrm{int}(C)$. Let $v_{n}\in \mathfrak{X}(C)$, $n\geq 1$, be a sequence of vector fields converging uniformly in all $C^k$-norms to a vector field $v\in \mathfrak{X}(C)$. There is $\epsilon>0$ such that, for $|t|<\epsilon$ and for $n$ large enough, the flows $\varphi_{v_n}^t$ and $\varphi_v^t$ are defined on $B$, and the sequence $\varphi_{v_n}^t$ converges on $B$ uniformly in all $C^k$-norms to $\varphi_v^t$.
\end{lemma}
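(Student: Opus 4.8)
The statement is a stability result for flows under $C^k$-convergence of the generating vector field, localized to a pair of nested balls. The plan is to reduce it to the explicit estimates already developed in this section, in particular Lemma \ref{lemma:flow} (existence and $C^k$-bounds for the flow of a small vector field as a map $B\to C$) and Lemma \ref{lemma:cont_action} (the continuity-of-flow estimate with respect to perturbations of the generator). First I would fix an intermediate ball $B\subset\mathrm{int}(B')\subset B'\subset\mathrm{int}(C)$, and use that $v_n\to v$ in all $C^k$-norms on $C$ to find $N$ so that for $n\geq N$ one has $|v_n|_2 < \tfrac12\theta$ and $|v|_2<\tfrac12\theta$, where $\theta>0$ is the constant from Lemma \ref{lemma:flow} and Lemma \ref{lemma:cont_action}. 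This is possible precisely because $|v_n - v|_2\to 0$, so $|v_n|_2$ is eventually within $\tfrac12\theta$ of $|v|_2<\tfrac12\theta$ after possibly rescaling $v$ (or equivalently, after shrinking the time interval: replacing $v$ by $\epsilon v$ and running for unit time is the same as running $v$ for time $\epsilon$, which is the origin of the $\epsilon>0$ in the statement).

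Concretely, I would argue as follows. Since $|v|_2<\theta$ and $|v_n|_2<\theta$ for $n\geq N$ (after choosing $\epsilon$ small enough that $|\epsilon v|_2,|\epsilon v_n|_2<\theta$, i.e. $\epsilon < \theta/(\sup_{n\geq N}|v_n|_2 + |v|_2 + 1)$), Lemma \ref{lemma:flow} guarantees that $\varphi_{\epsilon v}^t=\varphi_v^{\epsilon t}$ and $\varphi_{\epsilon v_n}^t = \varphi_{v_n}^{\epsilon t}$ exist for $t\in[0,1]$ as smooth maps $B\to C$, with the uniform bounds $|\varphi_{v_n}^{\epsilon t} - \mathrm{id}|_k\lesssim \epsilon|v_n|_k$, and similarly for $v$. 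Then I would apply Lemma \ref{lemma:cont_action} with $u\defeq \epsilon v$ and the perturbation $\epsilon(v_n - v)$ in place of $v$ (note Lemma \ref{lemma:cont_action} is stated for a perturbation called $v$, so one substitutes $u\mapsto \epsilon v$, $v\mapsto \epsilon(v_n-v)$; the smallness hypotheses $|u|_2<\theta$, $|v|_2<\theta$ hold by the choice of $\epsilon$ and $N$). This yields
\[
    |\varphi_{v_n}^{\epsilon t} - \varphi_v^{\epsilon t}|_k
    =
    |\varphi_{\epsilon v + \epsilon(v_n-v)}^{t} - \varphi_{\epsilon v}^{t}|_k
    \lesssim
    \epsilon|v_n - v|_k\big(1 + \epsilon|v|_{k+1} + \epsilon|v_n - v|_{k+1}\big)^2.
\]
Since $v_n\to v$ in $C^{k+1}$, the sequences $|v_n - v|_{k+1}$ are bounded (indeed tend to zero), so the factor $(1+\cdots)^2$ stays bounded uniformly in $n$, and the right-hand side is $\lesssim |v_n-v|_k\to 0$. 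This proves uniform convergence of $\varphi_{v_n}^{\epsilon t}$ to $\varphi_v^{\epsilon t}$ on $B$ in every $C^k$-norm, which after renaming $\epsilon t$ as the time variable (and absorbing $\epsilon$ into the statement's `$|t|<\epsilon$') is exactly the claim. The case $t<0$ is handled by running the flows of $-v_n$ and $-v$, to which the same argument applies verbatim.

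The only subtlety — and the step I would be most careful about — is the bookkeeping of the smallness constant $\theta$ and the choice of $\epsilon$: one must ensure a single $\epsilon>0$ works simultaneously for $v$ and for all $v_n$ with $n\geq N$, which requires knowing $\sup_{n\geq N}|v_n|_2<\infty$, and this is immediate from $C^2$-convergence. Beyond that, the argument is a direct assembly of Lemmas \ref{lemma:flow} and \ref{lemma:cont_action}, with no new estimates needed. (One could alternatively invoke Lemma \ref{lemma:time_independent_flows}, but the cleaner route is the continuity estimate of Lemma \ref{lemma:cont_action} applied to the difference of generators.)
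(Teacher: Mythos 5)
Your proposal is correct and follows essentially the same route as the paper's proof: rescale the vector fields by a small $\epsilon$, invoke Lemma \ref{lemma:flow} for existence of the flows as maps $B\to C$, and apply Lemma \ref{lemma:cont_action} with $u=\epsilon v$ and perturbation $\epsilon(v_n-v)$ to obtain the convergence estimate. Your bookkeeping with the $C^2$-norm is in fact slightly more careful than the paper's (which rescales using $|v|_1$ while Lemma \ref{lemma:cont_action} asks for $|u|_2<\theta$), but under the paper's standing convention that $\theta$ is small enough for every lemma in the section this is an immaterial difference.
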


\begin{proof}
Let $n_0$ be such that, for $n>n_0$, $|v_n|_1\leq2|v|_1$. Denote $\epsilon\defeq\theta/(2|v|_1+1)$. The sequence
$w_n\defeq\epsilon v_n$ converges uniformly to $w\defeq\epsilon v$, and, for $n>n_0$, it satisfies $|w_n|_1<\theta$ and $|w|_1<\theta$. Thus, by Lemma \ref{lemma:flow}, for $n>n_0$ $\varphi_{w_n}^t=\varphi_{v_n}^{t\epsilon}$ and $\varphi_w^{t}=\varphi_v^{t\epsilon}$ are defined as maps $B\to C$, for all $t\in [-1,1]$. Thus, we need to show that $\varphi_{w_n}^t$ converges to $\varphi_w^t$ on $B$, for all $t\in [-1,1]$. This follows directly from Lemma \ref{lemma:cont_action}.
\end{proof}

\subsection{Global Versions for Compactly Supported Functions}
\label{subsect:comparison}
%\etocsettocdepth{3}
%\localtableofcontents

In this subsection, we prove global versions of Lemmas A, B and C.
We start with a discussion on different types of $C^k$-norms.

\subsubsection{Standard Norms}
Let $\pi_\textsc{F}:F\to X$ be a vector bundle, and
fix smooth norms $n_k$, for $k\geq 1$, on its $k$-th jet bundles $J^kF\to X$.
These can be used to define $C^k$-norms on spaces of sections:
    \[
        \|e\|_{k,K}
        \defeq 
        \sup\nolimits\nolimits_{x\in K}
        n_k\big( j^k_x e \big),
        \quad 
        e\in \Gamma_KF,
    \]
where $K\sub X$ is a compact subset with dense interior, i.e.\ 
    $
        K=\overline{\mathrm{int}K}.
    $
We consider these $C^k$-norms as being the standard norms on $\Gamma_KF$. They are also the norms we use in section \ref{sec:tame_estimates} prior to the statement of the \nameref{Main Theorem}.

Consider a second set of norms $n'_k$, with induced $C^k$-norms $\|\cdot \|_{k,K}'$.
Since $K$ is compact, the two families of norms $n_k$ and $n'_k$ are uniformly equivalent over $K$. Hence the resulting $C^k$-norms are equivalent for all $k$, i.e.\
    \begin{equation}
    \label{eq:standard}
        \|e\|_{k,K}
        \lesssim 
        \|e\|_{k,K}'
        \lesssim\|e\|_{k,K},
        \quad\forall\;
        e\in \Gamma_KF.
\end{equation}

\subsubsection{Standard Norms for Diffeomorphisms}
\label{subsub:norms_for_diffeo}

Given a fibre bundle, one way to construct $C^k$-norms for sections close to a fixed section is by using a vertical tubular neighborhood around the section, and thus reducing the construction to the setting of a vector bundle.

We explain how this works for local maps $X\longto X$ close to the identity.
Note first that local maps $X\longto X$ can be identified with local sections of the trivial bundle 
    \[
        \mathrm{pr}_1:X\times X\longto X.
    \]
Consider a Riemannian metric $m$ on $X$.
The corresponding exponential mapping $\mathcal{E}$ gives an open bundle embedding
    \[
        V(m)\longhookrightarrow  X\times X,
        \quad
        v\mapsto (\pi_{TX}(v),\mathcal{E}(v)),\]
where $V(m)\sub TX$ is a neighborhood of the zero section.
Let $K\sub X$ be a compact subset with dense interior, i.e.\ $K = \overline{\mathrm{int}(K)}$.
Then the push forward of vector fields along $\mathcal{E}$ gives a linear isomorphism
    \begin{equation}
    \label{exp_iso}
        \mathcal{E}_*:\mathcal{V}_K\diffto \mathcal{U}_K,
    \end{equation}
between $C^0$-neighborhoods $\mathcal{V}_K\sub \mathfrak{X}(K)$ of $0$ and
$\mathcal{U}_K\sub C^{\infty}(K,X)$ of $\id\!|_{K}$.
For example, if $\rho:X\to (0,\infty)$ denotes the injectivity radius of $m$ and $\delta:X\times X\to [0,\infty)$ the distance function induced by $m$, then one can take
    \begin{align*}
        \mathcal{U}_K
        &\defeq
        \big\{
            v\in \gerX(K)
            \sep 
            |v(x)|_m < \rho(x), 
            \;\forall\;
            x\in K
        \big\}, 
        \\
        \mathcal{V}_K
        &\defeq
        \big\{
            \phi\in C^{\infty}(K,X)
            \sep 
            \delta(x,\phi(x))<\rho(x), 
            \;\forall\;
            x\in K
        \big\}.
\end{align*}

Consider $C^k$-norms $\|\cdot \|_{k,K}$ on $\mathfrak{X}(K)$ coming from norms $n_k$ on the jet bundles of $TX$, as defined in the previous section.
We transport these norms via $\mathcal{E}_*$ to $C^k$-norms on $\mathcal{U}_K$:
    \[
        \|\phi\|_{k,K}
        \defeq 
        \|v\|_{k,K},
        \quad \textrm{where } \phi=\mathcal{E}_*(v)
        \textrm{ and }
        v\in \mathcal{V}_K.
    \]

\subsubsection{Norms from Local Trivializations}

Let $\pi_\textsc{F}:F\to X$ be a vector bundle. In this section we describe an alternative way to construct $C^k$-norms on the sections of $F$, using explicit local trivializations of $F$, and then we show that these norms are equivalent to the standard ones. Later we will see that the norms described here are more convenient for some computations. A similar discussion can be found in a remark of \cite{Ham82}.

\begin{definition}\label{defi:local_triv}
A \textbf{non-linear local trivialization}\index{non-linear frame} of $F$ consists of
    \[
        \zeta=(U,B,\chi ,V,\Xi),
    \]
where:
\begin{itemize}

\item 
$\chi :U\to \R^m$ is a chart on $X$.

\item 
$B\sub U$ is a compact set such that $\chi (B)$ is a closed ball in $\R^m$.

\item 
$V$ is an open set in $F$ such that
    \[
        z_\textsc{F}(U)\sub V\sub \pi_\textsc{F}^{-1}(U),
    \]
where $z_\textsc{F}:X\to F$ denotes the zero-section.

\item 
$\Xi:V\to \R^n$ is a \textbf{non-linear frame}, i.e.\ for each $x\in \pi_\textsc{F}(V)$, this map restricts to an open embedding of $V_x\defeq \pi_\textsc{F}^{-1}(x)\cap  V$ into $\R^n$. We define
    \[
        \Xi_x\defeq \Xi|_{V_x}:V_x\hookrightarrow \R^n.
    \]
\end{itemize}
Let such a non-linear local trivialization $\zeta$ of $F$ be given. 
If $e\in\Gamma_{B}F$ satisfies $e(B)\sub V$, then we say that $e$ is \textbf{admissible}\index{admissible} for $\zeta$. In this case, its \textbf{local representation} is defined as
    \[
        e^{\zeta}
        \defeq 
        \Xi \circ  e\circ  \chi ^{-1}-\Xi \circ  z_F\circ  \chi ^{-1}:
        \chi (B)\longto \R^n.
    \]
For admissible sections we define the corresponding $C^k$-norms by
    \[
        |e|_{k,B}^{\zeta}
        \defeq
        |e^{\zeta}|_{k,\chi (B)}
        \defeq
        \sup\nolimits_{x\in \chi (B)}
        \Big(
            \sum\nolimits_{|a|\leq k}
            |(D^ae^{\zeta})_x|^2
        \Big)^{1/2}.
    \]
Finally, if $V=F|_U$ and $\Xi_p$ is linear for all $p\in U$, we call $\zeta$ a \textbf{linear local trivialization}.
Note that any section is admissible for linear local trivializations.
\end{definition}

%We extend these norms to finite families of non-local trivializations.

\begin{definition}
Consider a finite family of local trivializations of $F$,
    \[
        \calZ
        =
        \big\{
            \zeta^i=(U^i,B^i,\chi ^i,V^i,\Xi^i)
        \big\}_{i\in I},
        \quad
        I \textrm{ finite,}
    \]
and define
    $
        K\defeq \cup _{i\in I}B^i.
    $
A section $e\in \Gamma_{K}F$ is said to be $\calZ$-admissible if for each $i\in I$ its restriction $\zeta^i|_{B^i}$ is $\zeta^i$-admissible.
For such sections we define the following $C^k$-norms:
    \[
        |e|_{k,K}^{\calZ}
        \defeq 
        \sum\nolimits_{i\in I}
        |e|_{k,B^i}^{\zeta^i}.
        \qedhere
    \]
\end{definition}

Next we show that these norms are equivalent to the standard ones.

\begin{lemma}
\label{lemma:comparison}
Any $e\in \Gamma_{K}F$ satisfying $\|e\|_{0,K}<\theta$ is $\calZ$-admissible, and \[\|e\|_{k,K}\lesssim |e|_{k,K}^{\calZ}\lesssim\|e\|_{k,K}.\]
\end{lemma}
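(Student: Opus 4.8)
The plan is to reduce the asserted comparison to the local estimates of Lemma \ref{lemma:local comp}, using the change of frame between a given non-linear trivialization and an auxiliary linear one. Fix the finite family $\calZ=\{\zeta^i=(U^i,B^i,\chi^i,V^i,\Xi^i)\}_{i\in I}$, and for each $i$ choose in addition a linear local trivialization $\bar\zeta^i$ of $F|_{U^i}$ over the same chart $\chi^i$; write $\phi^i\defeq e^{\bar\zeta^i}:\chi^i(B^i)\to\R^n$ for the resulting linear local representation of a section $e$ over $B^i$. Expressing $\Xi^i$ in the coordinates provided by $\chi^i$ and $\bar\zeta^i$ and subtracting the value on the zero section, one obtains a smooth map $g^i$, defined on a closed ball $A^i\subset\R^m\times\R^n$ with $\chi^i(B^i)\times\{0\}\subset\mathrm{int}(A^i)$, such that $g^i(\cdot,0)=0$, such that $y\mapsto g^i(x,y)$ is an open embedding for each $x$, and such that, for admissible $e$, $e^{\zeta^i}=g^i\circ(\mathrm{i}+(0,\phi^i))$ in the notation of Lemma \ref{lemma:local comp} (with $\mathrm{i}=\id\times 0$). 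Since over the compact set $B^i$ the standard $C^0$-norm and the $C^0$-norm coming from $\bar\zeta^i$ and $\chi^i$ are uniformly equivalent (cf. (\ref{eq:standard})), one has $|\phi^i|_{0,\chi^i(B^i)}\lesssim\|e\|_{0,B^i}\le\|e\|_{0,K}$. Hence, taking $\theta$ smaller than the finitely many distances between $\chi^i(B^i)\times\{0\}$ and the complement of the domain of $g^i$, the hypothesis $\|e\|_{0,K}<\theta$ forces the graph of each $\phi^i$ into that domain, i.e.\ $e(B^i)\subset V^i$; so $e$ is $\calZ$-admissible and the displayed identity for $e^{\zeta^i}$ holds.

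For the inequality $|e|^\calZ_{k,K}\lesssim\|e\|_{k,K}$: because $g^i$ vanishes on the zero section, $g^i\circ\mathrm{i}=0$, so Lemma \ref{lemma:local comp}(b), applied with $f=(0,\phi^i)$ (legitimate as $|f|_0=|\phi^i|_0<\theta$), gives
\[
    |e^{\zeta^i}|_{k,\chi^i(B^i)}
    =|g^i\circ(\mathrm{i}+(0,\phi^i))-g^i\circ\mathrm{i}|_k
    \lesssim|g^i|_{k+1}\,|\phi^i|_k\lesssim|\phi^i|_k ,
\]
the last constant absorbing $|g^i|_{k+1}$, a fixed finite number since $\calZ$ is fixed. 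By the uniform equivalence over $B^i$ between the norm from $\bar\zeta^i,\chi^i$ and the standard norm, $|\phi^i|_k\lesssim\|e\|_{k,B^i}\le\|e\|_{k,K}$; summing over the finite set $I$ gives the claim.

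For the reverse inequality $\|e\|_{k,K}\lesssim|e|^\calZ_{k,K}$: for each $x$ the map $y\mapsto g^i(x,y)$ is an open embedding fixing $0$, so the joint map $(x,y)\mapsto(x,g^i(x,y))$ is an open embedding; denote by $(x,z)\mapsto(x,\tilde g^i(x,z))$ its inverse, defined near $z=0$, jointly smooth, and with $\tilde g^i(\cdot,0)=0$, and restrict $\tilde g^i$ to a closed ball containing $\chi^i(B^i)\times\{0\}$ in its interior. Then $\phi^i=\tilde g^i\circ(\mathrm{i}+(0,e^{\zeta^i}))$. Shrinking $\theta$ once more so that also $\|e^{\zeta^i}\|_0<\theta$ (which holds for $\theta$ small by the previous step with $k=0$), Lemma \ref{lemma:local comp}(b) applied to $\tilde g^i$ gives $|\phi^i|_k\lesssim|e^{\zeta^i}|_k$. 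Since $K=\bigcup_{i\in I}B^i$ with $I$ finite, every $k$-jet of $e$ over $K$ is controlled by its restriction to some $B^i$, so $\|e\|_{k,K}\le\sum_{i\in I}\|e\|_{k,B^i}\lesssim\sum_{i\in I}|\phi^i|_k\lesssim\sum_{i\in I}|e^{\zeta^i}|_k=|e|^\calZ_{k,K}$.

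The step requiring the most care is not any estimate but the bookkeeping: realizing $g^i$ and its fibrewise inverse $\tilde g^i$ as genuine smooth maps on closed balls with the correct nesting of domains, and choosing a single $\theta$ valid for all $i\in I$ simultaneously. Once this is arranged, the whole statement is a direct consequence of Lemma \ref{lemma:local comp}(b) together with the standard uniform equivalence of jet-bundle norms over a compact set.
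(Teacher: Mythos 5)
Your proof is correct and follows essentially the same route as the paper: introduce an auxiliary linear trivialization over each chart (the paper uses $\Xi'(v)\defeq\frac{\dd}{\dd\epsilon}\big|_{\epsilon=0}\Xi(\epsilon v)$), observe that its $C^k$-norms agree up to equivalence with the standard jet-bundle norms, write the change of frame as a map $g$ vanishing on the zero section, and apply Lemma \ref{lemma:local comp}(b) to $e^{\zeta}=g\circ(\mathrm{i}+(0,\phi))$. The only stylistic difference is that you construct the fibrewise inverse $\tilde g^i$ explicitly for the reverse estimate, whereas the paper obtains it by observing that the change-of-frame relation is symmetric in $\zeta$ and $\zeta'$ and simply interchanging their roles.
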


\begin{proof}
It suffices to prove the result for a $\calZ$ with a single local trivialization ${\zeta=(U,B,\chi ,V,\Xi)}$.
The existence of a $\theta>0$ such that $\|e\|_{0,K}<\theta$ implies that $e$ is $\calZ$-admissable is obvious.
We associate to $\zeta$ a second trivialization $\zeta'$ which is obtained by linearizing the frame:
    \[
        \zeta' \defeq (U,B,\chi ,F|_U,\Xi'), 
        \quad 
        \Xi'(v)\defeq 
        \tfrac{\dd}{\dd \epsilon}\big|_{\epsilon=0}\Xi(\epsilon v).
    \]
Since $\Xi'$ is linear, we have $V'=F|_U$, and any section $e\in \Gamma_{B}F$ is admissible for $\zeta'$. Moreover, the induced norms 
    $
        |e|_{k,B}^{\zeta'}
    $ 
can be described also by norms on the jet bundles (namely, the pullback of the standard norms on the jet bundle of $\R^m\times\R^n\to \R^m$).
Therefore, we immediately obtain inequalities of the form
    \[
        \|e\|_{k,B}
        \lesssim
        |e|_{k,B}^{\zeta'}
        \lesssim
        \|e\|_{k,B}.
    \]
Hence it suffices to check equivalence of the norms $|\cdot |_{k,B}^{\zeta'}$ and $|\cdot |_{k,B}^{\zeta}$ for any two (non-linear) local trivializations $\zeta$ and $\zeta'$.
For simplicity, assume that $\Xi \circ  z_F=0$ (note that translating $\Xi$ by $\Xi \circ  z_F\circ  \pi_F$ does not change the norms).
If $e\in \Gamma_{B}F$ satisfies $\|e\|_{0,B}<\theta$, then its local representatives are related as follows:
    \begin{equation}
    \label{eqn:local_relation}
        e^{\zeta'}
        =
        g\circ  (\id\times e^{\zeta}),
    \end{equation}
where $g$ is the change-of-frame map
    \[
        g:O \longto \R^n, 
        \quad
        g(\chi (x),v)
        \defeq
        \Xi'_{x}\circ (\Xi_{x})^{-1}(v),
    \]
where $O\sub \chi (U)\times \R^n$ is an open neighborhood of $\chi (U)\times \{0\}$.
By our assumption, $g(x,0)=0$ for all $x\in \chi (U)$.
Since $\chi (B)$ is a smooth closed ball, we may apply Lemma \ref{lemma:local comp}(b) to formula (\ref{eqn:local_relation}) with $f=(0,e^{\zeta})$, and we obtain that
    \[
        |e|_{k,B}^{\zeta'} 
        \lesssim
        |e|_{k,B}^{\zeta}.
    \]
Interchanging the role of $\zeta$ and $\zeta'$, the conclusion follows.
\end{proof}

\subsubsection{Norms for Diffeomorphisms from Charts}

\begin{definition}
Consider a triple $\zeta=(U,B,\chi )$, where $\chi :U\to \R^m$ is a chart on $X$, and $B\sub U$ is a compact subset such that $\chi (B)$ is a closed ball in $\R^m$.
If a smooth map $\phi:B\longto X$ satisfies
    $
        \phi(B)\sub U,
    $
then we say that $\phi$ is \textbf{admissible} for $\zeta$.
In this case, its \textbf{local representation} is the smooth map
    \[
        \id+f^{\zeta}:\chi (B)\longto \R^m, 
        \quad
        \id+f^{\zeta}\defeq \chi \circ  \phi \circ  \chi ^{-1},
    \]
and the corresponding $C^k$-(semi)-norm is defined by
    \[
        |\phi|_{k,B}^{\zeta}
        \defeq 
        |f^{\zeta}|_{k,\chi (B)}.
        \qedhere
    \]
\end{definition}

\begin{definition}
Consider a finite family of such triples
$\calZ=\{\zeta^i=(U^i,B^i,\chi ^i)\}_{i\in I}$,
and define
    $K\defeq \cup _{i\in I}B^i.$
If $\phi\in C^{\infty}(K,X)$ is $\calZ$-\textbf{admissible}\index{admissible} if $\phi|_{B^i}$ is admissible for all $i\in I$; and, in this case, we define its $C^k$-norm with respect to $\calZ$ as follows
    \[  
        |\phi|_{k,K}^{\calZ}
        \defeq 
        \sum\nolimits_{i\in I}
        |\phi|_{k,B^i}^{\zeta^i}.
        \qedhere
    \]
\end{definition}

Clearly, all these notions are particular cases of the notions discussed in the previous section.
This can be made precise by using the Riemannian exponential map $\mathcal{E}$ from subsection \ref{subsub:norms_for_diffeo}, so that these norms can be transferred to norms on vector fields.
On this space, they correspond to norms coming from nonlinear trivializations of $TX$ as defined in Definition \ref{defi:local_triv}.
Thus the following version of Lemma \ref{lemma:comparison} holds.

\begin{lemma}
\label{lemma:comparison_diffeo}
Any $\phi\in \mathcal{U}_K\sub C^{\infty}(K,X)$ satisfying $\|\phi\|_{0,K}<\theta$ is $\calZ$-admissible. Moreover, for any
$\phi\in C^{\infty}(K,X)$ which is $\calZ$-admissible and satisfies $|\phi|^{\calZ}_{0,K}<\theta$ we have that $\phi\in \mathcal{U}_K$.
In either case, the resulting norms are equivalent, i.e.\
    \[
        \|\phi\|_{k,K}
        \lesssim 
        |\phi|_{k,K}^{\calZ}
        \lesssim\|\phi\|_{k,K}.
    \]
\end{lemma}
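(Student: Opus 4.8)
The plan is to reduce the statement to Lemma~\ref{lemma:comparison} by transporting everything from local maps close to the identity to vector fields, via the exponential isomorphism $\mathcal{E}_*:\mathcal{V}_K\diffto\mathcal{U}_K$ introduced in subsection~\ref{subsub:norms_for_diffeo}. Recall that the standard norms on $\mathcal{U}_K$ are \emph{by definition} the pullbacks along $\mathcal{E}_*$ of the standard norms on $\gerX(K)=\Gamma_K(TX)$, so $\|\mathcal{E}_*(v)\|_{k,K}=\|v\|_{k,K}$ for all $v\in\mathcal{V}_K$ and all $k$. Thus the only thing that really needs to be done is to recognise the chart-norms $|\cdot|^{\calZ}$ on admissible diffeomorphisms as norms of the type appearing in Lemma~\ref{lemma:comparison}, applied to the vector bundle $F=TX$.

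To do this, I would attach to each triple $\zeta^i=(U^i,B^i,\chi^i)$ of the family $\calZ$ a non-linear local trivialization $\widetilde\zeta^i=(U^i,B^i,\chi^i,V^i,\widetilde\Xi^i)$ of $TX$ in the sense of Definition~\ref{defi:local_triv}, by setting $V^i\defeq\{\,w\in TX|_{U^i}:\mathcal{E}(w)\in U^i\,\}$ and $\widetilde\Xi^i(w)\defeq\chi^i(\mathcal{E}(w))$. Checking that this is a legitimate non-linear frame is routine: $z_{TX}(U^i)\subset V^i$ because $\mathcal{E}(0_x)=x$, and on each fibre $V^i_x$ the map $\widetilde\Xi^i$ is the composite of (a restriction, inside the injectivity ball, of) the diffeomorphism $\mathcal{E}|_{T_xX}$ with the chart $\chi^i$, hence an open embedding into $\R^m$. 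Since $\widetilde\Xi^i\circ z_{TX}\circ(\chi^i)^{-1}=\id$, the local representative of a vector field $v\in\mathcal{V}_K$ relative to $\widetilde\zeta^i$ is $v^{\widetilde\zeta^i}=\chi^i\circ\mathcal{E}\circ v\circ(\chi^i)^{-1}-\id=f^{\zeta^i}$, which is exactly the local representative of $\phi=\mathcal{E}_*(v)$ relative to $\zeta^i$. Consequently $|v|^{\widetilde\zeta^i}_{k,B^i}=|\mathcal{E}_*(v)|^{\zeta^i}_{k,B^i}$ on the nose, and $v(B^i)\subset V^i$ if and only if $\mathcal{E}_*(v)(B^i)\subset U^i$, i.e.\ $\widetilde\zeta^i$-admissibility of $v$ coincides with $\zeta^i$-admissibility of $\phi$; summing over $i$ gives $|v|^{\widetilde\calZ}_{k,K}=|\mathcal{E}_*(v)|^{\calZ}_{k,K}$ and $\widetilde\calZ$-admissibility $\iff$ $\calZ$-admissibility.

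With these identifications in place, Lemma~\ref{lemma:comparison} for $F=TX$ and the family $\widetilde\calZ$ immediately yields the first assertion and the norm equivalence for $\phi=\mathcal{E}_*(v)\in\mathcal{U}_K$ with $\|\phi\|_{0,K}<\theta$. For the second assertion I would first show that a $\calZ$-admissible $\phi$ with $|\phi|^{\calZ}_{0,K}<\theta$ lies in $\mathcal{U}_K$: for $x\in B^i$ the bound $|f^{\zeta^i}|_0<\theta$ forces $\chi^i(\phi(x))$ to be within $\theta$ of $\chi^i(x)$ in $\R^m$, hence $\phi(x)$ within Riemannian distance $c_i\theta$ of $x$ on the compact set $B^i$, and taking $\theta$ smaller than $\rho/c_i$ over all $i$ puts $\phi$ in $\mathcal{U}_K$ by the concrete description of that set. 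Once $v=\mathcal{E}_*^{-1}(\phi)$ is available, the norm equivalence follows again from Lemma~\ref{lemma:comparison}, whose proof only uses smallness of the local representatives $e^{\zeta^i}$ (here $=f^{\zeta^i}$, equivalently $|v|^{\widetilde\calZ}_0=|\phi|^{\calZ}_0$) rather than of $\|v\|_0$. The only slightly non-formal point is this bookkeeping of the single constant $\theta$, which must be shrunk to accommodate simultaneously Lemma~\ref{lemma:comparison}, the injectivity radii, and the chart distortions $c_i$; by the convention of Section~\ref{sec:constant_theta} this is harmless, and I expect no genuine analytic obstacle — the statement is a routine transfer of Lemma~\ref{lemma:comparison} across the exponential chart.
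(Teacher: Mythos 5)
Your proposal is correct and takes precisely the approach the paper intends: the paper does not give a formal proof of this lemma, only the preceding paragraph sketch (``use $\mathcal{E}$ to transfer to vector fields, where the chart norms become norms from nonlinear trivializations of $TX$, then apply Lemma~\ref{lemma:comparison}''), and what you wrote is a faithful and careful unpacking of exactly that sketch. Your explicit construction of $\widetilde\zeta^i$ and the check that $v^{\widetilde\zeta^i}=f^{\zeta^i}$, together with the separate treatment of the second assertion (which, as you correctly notice, does \emph{not} follow from the statement of Lemma~\ref{lemma:comparison} alone since that lemma assumes smallness of $\|e\|_{0,K}$ rather than of the chart norm), fill in the details the paper leaves implicit.
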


\subsubsection{Global Version of Lemma A}

Consider a surjective submersion $\pi_\textsc{E}:E\to M$ and a partial differential operator
    \[
        Q:
        \Gamma_{M}E\longto C^{\infty}(N,\R^n), 
        \quad 
        Q(e)_x 
        \defeq 
        q_1((j^de)_{q_0(x)},x),
        \quad\forall\;
        x\in N,
    \]
constructed out of two maps (as per definition \ref{def:gen_pde})
    \[
        q_0:N\longto M, 
        \qquad
        q_1: J^dE \times_{M}N \longto \R^n.
    \]
We will assume that $q_1$ vanishes along the zero-section, i.e.\ $q_1(0_{q_0(x)},x)=0$ for all $x\in N$.

Let $K\sub N$ and $L\sub M$ be two compact subsets with dense interiors satisfying $q_0(K)\sub \mathrm{int}(L)$.
By applying $Q$ and then restricting to $K$, we have an induced map
    \[
        Q:\Gamma_{L}E\longrightarrow \Gamma_{K}(N\times \R^n),
    \]
which we also denote by $Q$.
The following version of \nameref{Lemma:A} holds.

\begin{lemma}
\label{lemma:A_global}
For all $e\in \Gamma_{L}E$ satisfying $\|e\|_{d,L}<\theta$ we have that
\[\|Q(e)-D_0Q(e)\|_{k,K}\lesssim \|e\|_{d,L}\|e\|_{k+d,L}.\]
\end{lemma}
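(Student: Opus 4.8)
The statement is the expected "quadratic remainder" estimate for the differential operator $Q$, and my plan is to reduce it to the local computation \nameref{lemma:local comp}(c) by covering $K$ with finitely many charts and a compatible family of non-linear local trivializations. First I would choose a finite family $\calZ = \{\zeta^i = (U^i, B^i, \chi^i, V^i, \Xi^i)\}_{i\in I}$ of non-linear local trivializations of $E$, together with charts $(\widetilde U^j, \widetilde\chi^j)$ on $N$ and a compatible finite family of triples trivializing a neighbourhood of $K$, arranged so that $K \sub \medcup_j \widetilde B^j$, $L \sub \medcup_i B^i$, and $q_0(\widetilde B^j)$ is contained in a single $B^{i(j)}$ for each $j$ (possible since $q_0(K)\sub \mathrm{int}(L)$ and $q_0$ is continuous, after refining the covers). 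By Lemma \ref{lemma:comparison} and its diffeomorphism counterpart, it suffices to prove the inequality with all norms replaced by the $\calZ$-norms $|\cdot|^{\calZ}$ built from these local representations, since those are equivalent to the standard ones, and for $e$ with $\|e\|_{d,L}<\theta$ small enough every section in sight is $\calZ$-admissible.

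Next I would write everything in coordinates on a single chart. Over $\widetilde B^j$, writing $f \defeq e^{\zeta^{i(j)}}:\chi^{i(j)}(B^{i(j)})\to \R^n$ for the local representation of $e$ and unravelling the definitions, the composite $Q(e)$ becomes a map of the form $x\mapsto \widetilde q\big(j^d_{\alpha(x)}(f), x\big)$ where $\widetilde q$ is the smooth local representation of $q_1$ (vanishing along $f\equiv 0$ by our hypothesis on $q_1$) and $\alpha$ is a fixed smooth change-of-coordinates map encoding $q_0$. Because $Q$ is a differential operator of order $d$, $j^d(f)$ is a polynomial (of degree one in each derivative) in the partial derivatives $D^a f$ with $|a|\le d$, so $Q(e)$ is literally a composition $\widetilde q \circ (\mathrm{i} + F)$ in the sense of \nameref{lemma:local comp}, where $F$ collects the entries $D^a f$, $1\le |a|\le d$, placed in the "fibre" slot, and where $\mathrm{i}$ picks out the base point and possibly the value $f$ itself. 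Crucially, $|F|_{k, \cdot} \lesssim |f|_{k+d, \cdot}$ and $|F|_{0,\cdot}\lesssim |f|_{d,\cdot}$, and the linearization $D_0 Q$ of $Q$ at the zero section corresponds exactly to the linear term $\dd\widetilde q(\mathrm{i})F$ in the Taylor expansion. Therefore \nameref{lemma:local comp}(c), which gives
\[
    \big| \widetilde q\circ(\mathrm{i}+F) - \widetilde q\circ\mathrm{i} - \dd\widetilde q(\mathrm{i})F \big|_{k}
    \lesssim
    |\widetilde q|_{k+2}\, |F|_0\, |F|_k,
\]
translates directly into $\big| Q(e) - D_0Q(e)\big|_{k,\widetilde B^j} \lesssim |e|_{d,L}\, |e|_{k+d, L}$, the constant absorbing $|\widetilde q|_{k+2}$ (which is fixed, since $\widetilde q$ does not depend on $e$) and the fixed change-of-coordinates factors coming from $\alpha$ and the chart overlaps. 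Summing over the finitely many $j\in J$ and translating back via Lemma \ref{lemma:comparison} yields the claimed global estimate.

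The main technical point — and the only place where a little care is genuinely needed — is bookkeeping the domains: one must make sure that the restriction $q_0|_{\widetilde B^j}$ really lands inside a single trivializing ball $B^{i(j)}\sub L$ with some room to spare (so that $(\mathrm{i}+F)$ maps into the domain of $\widetilde q$, exactly the hypothesis "$|f|_0<\theta$" of \nameref{lemma:local comp}), and that the $d$-jet of $f$ computed in the chart $\chi^{i(j)}$ differs from the abstract $d$-jet of $e$ only by bounded, $e$-independent factors, which is standard since the jet of a composition with a fixed diffeomorphism is controlled by the formulas of the local section (the operators $P^a_b$ from the earlier computations). None of this is conceptually hard; it is the routine "pass from a global vector-bundle statement to a finite cover of coordinate statements" argument, already used in the proofs of \nameref{Lemma:B} and \nameref{Lemma:C}, so I would keep it brief and refer back to Lemma \ref{lemma:comparison} for the norm-equivalence step rather than repeating it.
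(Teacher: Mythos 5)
Your approach is essentially sound and leans on exactly the same two lemmas the paper uses, namely the local quadratic estimate of Lemma \ref{lemma:local comp}(c) and the norm-equivalence Lemma \ref{lemma:comparison}, so the content is the same. What differs is the organization. The paper factors $Q$ as $Q_1\circ Q_0\circ j^d$ and then peels these off one at a time: Step 1 reduces to the degree-zero case by passing to $P(\alpha) = q_1(\alpha_{q_0(x)},x)$ acting on jets; Step 2 establishes the tame estimate for the pullback $Q_0$ along $q_0$ (this is precisely where the ``fixed change-of-coordinates factors coming from $\alpha$'' that you mention are isolated and controlled, using compactness of $K'$ and the jet-level pullback map $q_0^k$); Step 3 reduces to $N=M$, $q_0=\id$; and only then Step 4 is a single application of Lemma \ref{lemma:local comp}(c) in a linear local trivialization. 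You instead try to absorb all three reductions into one local computation: you pack the derivatives $D^a f$ into the fibre slot $F$, compose with $\alpha$, and apply Lemma \ref{lemma:local comp}(c) directly. That is legitimate, but the bookkeeping is heavier than you suggest, and the paper's stepwise reduction is cleaner precisely because each reduction isolates one issue (jet prolongation, base change, fibrewise quadratic remainder) and disposes of it with a one-line estimate.

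Two small points you should fix in your write-up. First, you write that $F$ collects $D^a f$ for $1\le|a|\le d$ while ``$\mathrm{i}$ picks out the base point and possibly the value $f$ itself.'' The embedding $\mathrm{i}$ in Lemma \ref{lemma:local comp} must be the fixed standard embedding, independent of $e$; so $f$ (the zero-jet, $a=0$) must go into $F$ along with the higher derivatives, and $\mathrm{i}$ may only carry the base coordinate. Second, $\alpha$ is the local representation of $q_0$, which is a proper smooth map but generically \emph{not} a diffeomorphism, so you cannot invoke ``the jet of a composition with a fixed diffeomorphism'' to control $|F|_k\lesssim|f|_{k+d}$. What you actually need is the elementary estimate that precomposition with a fixed smooth map on compacts is tame with a constant depending on the map — this is what the paper's Step 2 supplies at the jet-bundle level. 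Neither issue is a genuine obstruction, but both deserve an explicit line so the argument closes.
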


\begin{proof}
Note that $Q$ is a composition of simpler PDOs, namely $Q=Q_1\circ  Q_0\circ  j^d$, where $Q_1$ is of degree $0$ and covers the identity, $Q_0$ is the pullback along $q_0$, and $j^d$ is the canonical $d$-th jet map.
Accordingly, our proof also splits into several steps.

\textbf{Step 1.} 
We show that it suffices to prove the Lemma for $d=0$.
For this, note that the map $Q$ and its differential at $e=0$ factor as
    \[
        Q=P\circ  j^d,
        \quad
        D_0Q=D_0P\circ  j^d,
    \]
where $P$ is the partial differential operator of degree 0, i.e.
    \[
        P:\Gamma_{M}J^dE\longto C^{\infty}(N,\R^n), 
        \quad
        P(\alpha)_x
        \defeq 
        q_1(\alpha_{q_0(x)},x), 
        \qquad\forall\;
        x\in N.
    \]
Suppose that the Lemma has been proven for PDOs of degree $0$. Then by applying it to $P$ we obtain that there is a $\theta>0$ so that, if  
    $
        j^de\in \Gamma_{L}J^d E
    $
satisfies $\|\alpha\|_{0,L}<\theta$, then
    \[
        \|(Q-D_0Q)(e)\|_{k,K}
        =
        \|(P-D_0P)(j^de)\|_{k,K}
        \lesssim
        \|j^de\|_{0,L}
        \|j^de\|_{k,L}.
    \]
Since $\|j^de\|_{k,L}\lesssim \|e\|_{k+d,L}$, the conclusion follows.

We assume from now on that $d=0$, i.e.\ $Q(e)_x=q_1(e\circ  q_0(x),x)$.

\textbf{Step 2.} 
Since $K$ and $L$ are compact with dense interior, and $q_0(K)\sub \mathrm{int}(L)$, we can find another compact set with dense interior
$K'\sub N$ such that
    \[
        K\sub \mathrm{int}(K')
        \textrm{ and }
        q_0(K')\sub \mathrm{int}(L).
    \]
Let $Q_0$ denote the pullback along $q_0$, i.e.\
    \[
        Q_0:
        \Gamma_{M}E\longto \Gamma_{N}(E\times_{M} N),
        \quad  
        e\mapsto e\circ  q_0.
    \]
For $k\geq 0$ this map induces a vector bundle map covering the identity, i.e.\
    \[
        q_0^k: 
        (J^kE)\times_MN\longto J^k(E\times_MN),
        \quad 
        (j^k_{q_0(x)}e,x) \mapsto j^k_x(e\circ  q_0).
    \]
Let $n_k$ denote the norm on $J^kE$ and let $\tilde{n}_k$ denote the norm on $J^k(E\times_MN)$, inducing the $C^k$-norms on sections of $E$ and $E\times_MN$ respectively.
Since $K'$ is compact, there exists a constant $c_k>0$ such that
    \[
        \tilde{n}_k(q_0^k(\alpha,x))
        \leq 
        c_k n_k(\alpha),
        \quad\forall\;
        x\in K',
        \;\forall\;
        \alpha\in J^k_{q_0(x)}E.
    \]
This gives estimates of the form
    \[
        \|Q_0(e)\|_{k,K'}
        \lesssim 
        \|e\|_{k,L}, 
        \quad\forall\;
        e\in \Gamma_{L}E.
    \]

\textbf{Step 3.} 
We show that it suffices to prove the Lemma for $N=M$ and $q_0=\mathrm{id}$.
Note that the map $Q$ and its differential at $e=0$ factor as
    \[
        Q=Q_1\circ  Q_0, 
        \quad
        D_0Q=D_0Q_1\circ  Q_0,
    \]
where $Q_0$ is as in Step 2, and $Q_1$ is induced by a fibre bundle map, i.e.
    \[
        Q_1:\Gamma_{N}(E\times_MN)\longto C^{\infty}(N,\R^n),
        \quad
        Q_1(\beta)\defeq q_1\circ  \beta.
    \]
Suppose that the Lemma has been proven for PDOs with $q_0=\id$. If $e\in \Gamma_{L}E$ satifies $\|e\|_{0,L}<\theta$, then by Step 2 we have $\|Q_0(e)\|_{0,K'}<\theta$.
Therefore, by applying the Lemma to $Q_0(e)$, the operator $Q_1$, and the sets $K \sub K'$, we obtain that
    \begin{align*}
        \|(Q-D_0Q)(e)\|_{k,K}
        &=
        \|(Q_1-D_0Q_1)(Q_0(e))\|_{k,K}
        \\
        &\lesssim 
        \|Q_0(e)\|_{0,K'}\|Q_0(e)\|_{k,K'}
        \\
        &\lesssim
        \|e\|_{0, L} \|e\|_{k, L}.
    \end{align*}

\textbf{Step 4.}
We have reduced the Lemma to the case
    \[
        Q = Q_1:
        \Gamma_{K'}F\longto C^{\infty}(K,\R^n),
        \quad
        Q(e)=q_1\circ  e|_{K},
    \]
where $F=E\times_{M}N$, and $q_1:F\to \R^n$ is a smooth map that vanishes along the zero-section.
Let $\zeta=(U,B,\chi ,F|_U,\Xi)$ be a linear local trivialization of $F$ as per Definition \ref{defi:local_triv}.
This induces the (somewhat tautological) linear local trivialization 
    \[
        \zeta_1
        \defeq 
        (U, B, \chi, U\times \R^n, \mathrm{pr}_2)
    \]
of $N\times\R^n$.
Denote the local representation of $q_1$ in these trivializations by
    \[
        q_1^{\zeta}:\chi (U)\times \R^l\longto \R^n.
    \]
Then for $e\in \Gamma_{B}F$ the local representation of $Q(e)-D_0Q(e)$ is
    \[
        (Q(e)-D_0Q(e))^{\zeta_1}
        =
        q_1^{\zeta}(\id\times e^{\zeta})
        -
        \dd_0q_1^{\zeta}(\id\times e^{\zeta}),
    \]
where 
    \[
        \dd_0q_1^{\zeta}(x,v)
        \defeq
        \tfrac{\dd}{\dd \epsilon}\big|_{\epsilon=0}
        q_1^{\zeta}(x,\epsilon v).
    \]
Note also that $q_1^{\zeta}(\id\times 0)=0$.
We apply Lemma \ref{lemma:local comp}(c) to estimate the right-hand-side, and obtain that there is a $\theta>0$ such that, for all $e\in\Gamma_{B}F$ satisfying $\|e\|_{0,B}\leq \theta$,
    \[
        |Q(e)-D_0Q(e)|_{k,B}^{\zeta_1}
        \lesssim 
        |e|_{0,B}^{\zeta}|e|^{\zeta}_{k,{B}}.
    \]
Since $K$ is compact with dense interior, and $K\sub \mathrm{int}(K')$, there exists a finite family $\calZ=\{\zeta^i\}$ of linear local trivializations as above  satisfying
\[K\sub K_{\calZ}\defeq \cup _{i\in I}B^i\sub K'.\]
By applying Lemma \ref{lemma:comparison} twice and the above argument to each $\zeta^i$ we obtain
    \[
        \|Q(e)-D_0Q(e)\|_{k,K_{\calZ}}
        \lesssim
        |Q(e)-D_0Q(e)|^{\calZ}_{k,K_{\calZ}}
        \lesssim
        |e|^{\calZ}_{0,K_{\calZ}}
        |e|^{\calZ}_{k,K_{\calZ}}
        \lesssim 
        \|e\|_{0,K_{\calZ}}\|e\|_{k,K_{\calZ}},
    \]
for all $e\in \Gamma_{K_{\calZ}}F$ satisfying $\|e\|_{0,K_{\calZ}}<\theta$.
This trivially implies that
    \[
        \|Q(e)-D_0Q(e)\|_{k,K}
        \lesssim 
        \|e\|_{0,K'}\|e\|_{k,K'},
    \]
for all $e\in \Gamma_{K'}F$ satisfying $\|e\|_{0,K'}<\theta$, and concludes the proof.
\end{proof}

\subsubsection{Compact Supports}
%
%Let $O\sub X$ be an open set such that $\overline{O}\sub X$ is compact.
The support of a map $\phi:O\to X$ is the closure of the open set where $\phi\neq \id$.
We denote by $C^{\infty}_c(O,X)$ the compactly supported smooth maps $O\to X$, and by $C^\infty_K(X,X)$ the space of smooth maps whose support lies in the set $K\sub X$.
Such maps can be extended by the identity to smooth maps in $C^{\infty}(X,X)$, and thus we regard
    $
        C^{\infty}_c(O,X)
        \sub 
        C^{\infty}(X,X).
    $
    
Likewise, for a vector bundle $\pi_\textsc{F}: F\to X$, the support of a section $e\in \Gamma_OF$ is the closure of the open set where $e\neq 0$. We denote by $\Gamma_c(O, F)$ the compactly supported sections of $F|_O$, and by    
    \[
        \Gamma_K(O,F)
        \defeq
        \big\{
            e \in \Gamma_OF
            \sep
            \mathrm{supp}(e)\sub K
        \big\}
    \]
the sections of $F|_O$ with support in the set $K\sub X$.

\subsubsection{Recognizing Diffeomorphisms}

We show that maps which are $C^1$-close to the identity are diffeomorphisms.

We remind the reader of the conventions for \nameref{sec:constant_theta} and \nameref{sec:symbol_lesssim} on p.\pageref{sec:constant_theta}, and of the exponential mapping $\calE_*:\calV_K \to \calU_K$ defined on p.\pageref{exp_iso}.

\begin{lemma}
\label{lemma:diffeo}
Let $K\sub X$ be a compact set.
Any map $\phi\in C^{\infty}_{K}(X,X)\cap  \mathcal{U}_{K}$ satisfying $\|\phi\|_{1,K}<\theta$ is a diffeomorphism of $X$, and moreover, $\phi^{-1}\in C^{\infty}_{K}(X,X)$ satisfies
    \[
        \|\phi^{-1}\|_{k,K}
        \lesssim 
        \|\phi\|_{k,K}.
    \]
\end{lemma}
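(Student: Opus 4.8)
The plan is to reduce the statement to its local counterparts, namely Lemma \ref{lemma:inverse} (inversion in a ball) and Lemma \ref{lemma:comparison_diffeo} (equivalence of the chart-norms and the standard norms for maps close to the identity), using a partition of unity / finite-cover argument on the compact set $K$.

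First I would establish that $\phi$ is a diffeomorphism. Since $\phi\in C^\infty_K(X,X)$ is the identity outside $K$, and $K$ is compact, it suffices to show that $\phi$ is injective and everywhere a local diffeomorphism; properness is automatic because $\phi$ equals the identity outside a compact set. Injectivity and the local-diffeomorphism property are local statements that can be checked in charts: cover $K$ by finitely many triples $\zeta^i=(U^i,B^i,\chi^i)$ as in the definition of the chart-norms, arranged so that $K\subset\bigcup_i\operatorname{int}(B^i)$. By Lemma \ref{lemma:comparison_diffeo}, the hypothesis $\|\phi\|_{1,K}<\theta$ implies that $\phi$ is $\calZ$-admissible and that the local representatives $\id+f^{\zeta^i}=\chi^i\circ\phi\circ(\chi^i)^{-1}$ satisfy $|f^{\zeta^i}|_{1,\chi^i(B^i)}<\theta'$ for a suitably small $\theta'$. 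Then Lemma \ref{lemma:inverse} applies on each ball $\chi^i(B^i)$: each local representative is a diffeomorphism onto its image. Combined with the fact that $\phi$ is the identity near $\partial K$, a standard connectedness argument upgrades the local injectivity to global injectivity of $\phi$ on $X$ (any two preimages of a point must lie in $K$, and one traces them through the finitely many overlapping charts). Hence $\phi$ is a bijective local diffeomorphism, i.e.\ a diffeomorphism.

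Next I would bound $\|\phi^{-1}\|_{k,K}$. Clearly $\phi^{-1}$ is again the identity outside $K$, so $\phi^{-1}\in C^\infty_K(X,X)$, and in particular $\phi^{-1}\in\mathcal U_K$ once $\theta$ is small (by Lemma \ref{lemma:comparison_diffeo} applied in reverse, using that $\phi^{-1}$ is $C^0$-close to the identity, which follows from the $C^0$-estimate $|f^{\zeta^i}|_{0}\lesssim|g^{\zeta^i}|_{0}$ already contained in Lemma \ref{lemma:inverse}). For the $C^k$-bound, work again in the finite cover: write $\phi^{-1}$ in the chart $\zeta^i$ as $\id+g^{\zeta^i}$, where $\id+g^{\zeta^i}=(\id+f^{\zeta^i})^{-1}$ on the relevant ball. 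By Lemma \ref{lemma:inverse},
\[
    |g^{\zeta^i}|_{k,\chi^i(B^i)}
    \lesssim
    |f^{\zeta^i}|_{k,\chi^i(B^i)}.
\]
Summing over $i\in I$ gives $|\phi^{-1}|^{\calZ}_{k,K}\lesssim|\phi|^{\calZ}_{k,K}$, and then two applications of Lemma \ref{lemma:comparison_diffeo} convert this into the desired inequality $\|\phi^{-1}\|_{k,K}\lesssim\|\phi\|_{k,K}$ for the standard norms.

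The main obstacle I anticipate is not the estimates, which are immediate from Lemma \ref{lemma:inverse}, but the bookkeeping needed to pass from the local (ball-by-ball) statements to the genuinely global claim that $\phi$ is a diffeomorphism of all of $X$ — in particular handling the overlaps of the charts $\zeta^i$ consistently and making sure the balls $\chi^i(B^i)$ can be chosen nested (a smaller ball inside a larger one) so that Lemma \ref{lemma:inverse} applies with room to spare, and that the images $\phi(B^i)$ stay inside $U^i$. This is routine but slightly delicate; once the cover is set up correctly, everything else follows formally from the lemmas already proved.
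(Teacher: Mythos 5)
Your overall strategy — localize via a finite family of charts, apply Lemma \ref{lemma:inverse} in each ball, and translate between chart-norms and standard norms with Lemma \ref{lemma:comparison_diffeo} — is exactly what the paper does, and your treatment of the $C^k$-estimate for $\phi^{-1}$ is fine. The gap is precisely at the step you flag as ``routine but slightly delicate'': you assert that ``injectivity and the local-diffeomorphism property are local statements that can be checked in charts,'' but injectivity is \emph{not} a local property, and your ``standard connectedness argument'' (tracing preimages through overlapping charts) is never actually given. Nothing you have written rules out $\phi$ being, say, a nontrivial finite covering over part of $K$. This is the real content of the lemma, and it cannot be waved off.

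What the paper does, and what you are missing, is a specific design of the cover. It chooses a finite family of charts $(U^i,\chi^i)$ together with \emph{three} nested families of compact sets $B^i_2\subset \mathrm{int}(B^i_1)$, $B^i_1\subset\mathrm{int}(B^i_0)$, $B^i_0\subset U^i$ (each $\chi^i(B^i_a)$ a closed ball), arranged so that $K\subset\bigcup_i B^i_2$ \emph{and} so that whenever $B^i_1\cap B^j_1\neq\emptyset$ one has $B^j_2\subset\mathrm{int}(B^i_0)$. (This is achieved using metric balls of carefully chosen radii.) Shrinking $\theta$ ensures $\phi(B^i_2)\subset B^i_1$. Then if $\phi(x)=\phi(y)$ with $x\in B^i_2$, $y\in B^j_2$, the common image lies in $B^i_1\cap B^j_1$, so the overlap condition forces $B^j_2\subset\mathrm{int}(B^i_0)$, hence $x,y\in B^i_0$, and injectivity of $\phi|_{B^i_0}$ (from Lemma \ref{lemma:inverse}) gives $x=y$; a similar argument handles $y\notin\bigcup_i B^i_2$. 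Surjectivity is also argued, not assumed: Lemma \ref{lemma:inverse} gives $\chi^i(B^i_1)\subset(\id+f^i)(\chi^i(B^i_0))$, so $\bigcup_i B^i_1\subset\phi(X)$, and combined with $\phi=\id$ outside $K\subset\bigcup_i B^i_1$ this gives $\phi(X)=X$. You should supply both of these steps rather than appeal to a connectedness argument you do not write down; in particular note that $X$ is not assumed connected, so the covering-space shortcut one might have in mind needs care you have not provided.
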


\begin{proof}
Fix a finite family of charts $\{(U^i,\chi ^i)\}_{i\in I}$ and three families of compact sets
    \[
        B_2^i\sub \mathrm{int}(B_1^i),
        \quad
        B_1^i\sub \mathrm{int}(B_0^i),
        \quad
        B_0^i\sub U^i,
        \quad\forall\; i\in I,
    \]
such that:
\begin{itemize}

\item 
$\chi ^i(B_a^i)\sub \R^m$ is a closed ball for all $a=0,1,2$ and $i\in I$.

\item 
For all $i,j\in I$ we have that
    \[
        B_1^i \cap  B_1^j \neq \emptyset
        \;\;\Longrightarrow\;\;
        B_2^j\sub \mathrm{int}(B_0^i).
    \]
    
\item 
$K\sub \cup _{i\in I}B_2^i$.

\end{itemize}
Such a cover can be easily constructed.
In particular, the second condition can be ensured by requiring that there are points $x_i\in X$ and a small enough number $\epsilon>0$ such that
    \[
        B_2^i\sub B_{\epsilon}(x_i),
        \quad 
        B_1^i\sub B_{2\epsilon}(x_i), 
        \quad
        B_{5\epsilon}(x_i)\sub \mathrm{int}(B_0^i),
    \]
where $B_{r}(x)$ denotes the closed ball of radius $r$ centered at $x$ for a fixed metric $m$ on $X$.

We write $K_a\defeq\cup _{i\in I}B_a^i$ for $a=0,1,2.$ 
Consider also the families of charts
    \[
        \calZ_a=\{\zeta_a^i\defeq (U^i,B_a^i,\chi ^i)\}_{i\in I},
        \quad
        a=0,1,2,
    \]
with the resulting norms $|\cdot |^{\calZ_a}_{k,K_a}$.

Since $\phi$ is supported inside $K\sub K_0$, we also have that $\phi\in \mathcal{U}_{K_0}$, and that 
    $
        \|\phi\|_{k,K_0}
        =
        \|\phi\|_{k,K}.
    $
Thus by taking $\theta$ small enough we may assume that $\phi$ is $\calZ_0$-admissible
Denote the corresponding local expressions by
    \[
        \id+f^i
        =
        \chi ^i \circ \phi \circ (\chi ^i)^{-1}:
        \chi ^i(B_0^i)\longto \chi ^i(U^i).
    \]
Since we have
    \[
        |f^i|_{1,\chi ^i(B_0^i)}
        \lesssim 
        |\phi|^{\calZ_0}_{1,K_0}
        \lesssim 
        \|\phi\|_{1,K_0}<\theta,
    \] 
we may apply Lemma \ref{lemma:inverse} to conclude that $\id+f^i$ is a diffeomorphism onto its image and 
    \[
        \chi ^i(B^i_1)\sub (\id+f_i)(\chi ^i(B^i_0)).
    \]
The former implies that $\phi$ is a local diffeomorphism whose restriction to $B^i_0$ is injective for all $i\in I$.
The latter implies that 
    \[
        K_1=\cup _{i\in I}B^i_1\sub \phi(X),
    \]
and since $\phi$ is the identity outside of $K\sub K_1$, we deduce that $\phi$ is onto.

We will show that $\phi$ is injective.
By taking $\theta$ even smaller we may assume that, for all $i\in I$,
    \[
        \phi(B_2^i)\sub B_1^i.
    \]
Assume now that $\phi(x)=\phi(y)$ for some $x,y\in K_2$, and let $i,j\in I$ be such that $x\in B_2^i$ and $y\in B_2^j$.
By the above we have $\phi(x)=\phi(y)\in B_1^i\cap  B_1^j$.
Our assumptions on the sets $B^i_a$ then imply that $B_2^j\sub B_0^i$.
Hence $x,y\in B_0^i$, and therefore $x=y$.
Next assume $\phi(x)=\phi(y)$ for some $x\in K_2$ and $y\notin K_2$, and assume that $x\in B_2^i$.
Then $y=\phi(y)=\phi(x)\in B^i_1$, hence $x,y\in B_1^i$, and we conclude again that $x=y$.

The estimates for the local expressions of $\phi^{-1}$
    \[
        (\id+f_i)^{-1}:
        \chi ^i(B_1^i)\longto \chi ^i(U^i)
    \]
have been proven in Lemma \ref{lemma:inverse}, and since $\phi$ is supported inside $K_2$, the desired estimates follow from that Lemma.
\end{proof}

\subsubsection{Global Version of Lemma B(1)}

In this section we extend Lemma \ref{lemma:flow} to a global version of \nameref{Lemma:B}(1) for vector fields with compact support.

We denote by $\gerX_{K}(X) = \Gamma_K(X, TX)$ the space of vector fields on $X$ with support in $K$.

\begin{lemma}
\label{lemma:B_1_global}
Let $K\sub X$ be a compact set.
For any vector field $v\in \gerX_{K}(X)$ satisfying 
    $
        \|v\|_{1,K} < \theta
    $
we have that its time-one flow $\varphi_v\in C^{\infty}_{K}(X,X)$ satisfies
    \[
        \|\varphi_v\|_{k,K}
        \lesssim
        \|v\|_{k,K}.
    \]
\end{lemma}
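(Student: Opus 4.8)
\textbf{Proof plan for Lemma \ref{lemma:B_1_global}.}

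The plan is to reduce this global statement to the local version, Lemma \ref{lemma:flow}, via a finite atlas argument, in exactly the same spirit as the proof of Lemma \ref{lemma:diffeo}. First I would fix a finite family of charts $\{(U^i,\chi^i)\}_{i\in I}$ together with nested compact sets $B_1^i\sub \mathrm{int}(B_0^i)$, $B_0^i\sub U^i$, such that each $\chi^i(B_a^i)\sub \R^m$ is a closed ball, such that $B_1^i\cap B_1^j\neq\emptyset$ implies $B_0^j\sub\mathrm{int}(B_0^i)$ (constructed using small metric balls $B_\epsilon(x_i)\sub B_1^i$, $B_{3\epsilon}(x_i)\sub\mathrm{int}(B_0^i)$ as in Lemma \ref{lemma:diffeo}), and such that $K\sub\bigcup_{i\in I}B_1^i$. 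Set $K_a\defeq\bigcup_i B_a^i$ and let $\calZ_a=\{(U^i,B_a^i,\chi^i)\}_{i\in I}$ be the corresponding families of charts, with associated norms $|\cdot|^{\calZ_a}_{k,K_a}$. Since $v$ is supported in $K\sub K_0$, we have $\|v\|_{k,K_0}=\|v\|_{k,K}$, and by Lemma \ref{lemma:comparison_diffeo} (applied to $TX$ via the exponential map, or directly to the jet-bundle norms on vector fields) we may pass freely between $\|v\|_{k,K_0}$ and $|v|^{\calZ_0}_{k,K_0}$, picking up only fixed constants; so it suffices to bound $|\varphi_v|^{\calZ_1}_{k,K_1}$ by $|v|^{\calZ_0}_{k,K_0}$.

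Next I would look at the local expressions. In the chart $\chi^i$ the vector field $v$ becomes a compactly supported $v^i\in\gerX(\chi^i(B_0^i))$ with $|v^i|_{1,\chi^i(B_0^i)}\lesssim |v|^{\calZ_0}_{1,K_0}\lesssim\|v\|_{1,K}<\theta$. By Lemma \ref{lemma:flow}, applied with $B=\chi^i(B_1^i)\sub\mathrm{int}(\chi^i(B_0^i))=\mathrm{int}(C)$, the flow $\varphi_{v^i}^t$ is defined on $\chi^i(B_1^i)$ for all $t\in[0,1]$, and writing $\varphi_{v^i}^1=\id+f^i$ on $\chi^i(B_1^i)$ we get $|f^i|_{k,\chi^i(B_1^i)}\lesssim |v^i|_{k,\chi^i(B_0^i)}$. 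The flow of $v$ restricted to $B_1^i$ agrees with $\varphi_{v^i}^1$ read through the chart (up to shrinking $\theta$ so that the flowline of each point of $B_1^i$ stays inside $B_0^i$ for $t\in[0,1]$, which is guaranteed by the estimate $|f^i|_{0}\lesssim\|v\|_{0,K}<\theta$ as in Lemma \ref{lemma:flow}); hence $\varphi_v$ is $\calZ_1$-admissible with local representation $\id+f^i$, and therefore
\[
    |\varphi_v|^{\calZ_1}_{k,K_1}
    =
    \sum_{i\in I} |f^i|_{k,\chi^i(B_1^i)}
    \lesssim
    \sum_{i\in I} |v^i|_{k,\chi^i(B_0^i)}
    \lesssim
    |v|^{\calZ_0}_{k,K_0}
    \lesssim
    \|v\|_{k,K}.
\]
Finally, $\varphi_v$ is supported inside $K$ (the flow of a compactly supported vector field moves only points in the support), so $\varphi_v\in C^\infty_K(X,X)$, and combining with $K\sub K_1$ and Lemma \ref{lemma:comparison_diffeo} once more gives $\|\varphi_v\|_{k,K}=\|\varphi_v\|_{k,K_1}\lesssim |\varphi_v|^{\calZ_1}_{k,K_1}\lesssim\|v\|_{k,K}$, which is the claim.

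\textbf{Main obstacle.} The genuinely substantive content — the differential-inequality-plus-Gr\"onwall estimate on the partial derivatives of the flow — has already been done in Lemma \ref{lemma:flow}, so no hard analysis remains here. The only point requiring care is the bookkeeping of the chart argument: one must choose $\theta$ small enough (depending on the finite atlas and the metric) that (a) flowlines starting in $B_1^i$ do not leave $B_0^i$ before time one, (b) the norm comparisons of Lemma \ref{lemma:comparison_diffeo} apply, and (c) the support of $\varphi_v$ is controlled. Since only finitely many charts are involved, taking the minimum of finitely many such $\theta$'s is harmless, consistent with the convention of Section \ref{sec:constant_theta}. So I expect the proof to be short and essentially a transcription of the structure of Lemma \ref{lemma:diffeo}'s proof, with Lemma \ref{lemma:inverse} replaced by Lemma \ref{lemma:flow}.
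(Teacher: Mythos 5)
Your proof is correct and follows essentially the same route as the paper's: pick a finite atlas with nested balls covering $K$, pass to local coordinates, apply Lemma~\ref{lemma:flow} (the local flow estimate) to each chart, and use the norm-comparison lemmas (\ref{lemma:comparison} for vector fields, \ref{lemma:comparison_diffeo} for the flow) to transfer the estimates back. The overlap condition $B_1^i\cap B_1^j\neq\emptyset\Rightarrow B_0^j\subset\mathrm{int}(B_0^i)$ that you borrowed from Lemma~\ref{lemma:diffeo} is not actually needed here (it was used there to prove global injectivity, which for a flow is automatic), but carrying it along is harmless.
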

\begin{proof}
Fix a finite family of charts $\{(U^i,\chi ^i)\}_{i\in I}$ and two families of compact sets,
    \[
        B_1^i \sub \mathrm{int}(B_0^i),
        \quad
        B_0^i \sub U^i,
        \quad i\in I,
    \]
such that $\chi ^i(B_0^i)$ and $\chi ^i(B_1^i)$ are closed balls in $\R^m$ and such that
    $
        K\sub \cup _{i\in I}B_1^i.
    $
Write
    \[
        K_a
        \defeq
        \cup _{i\in I}B_a^i, 
        \quad
        \calZ_a
        \defeq
        \big\{
            \zeta_a^i=(U^i,B_a^i,\chi ^i)
        \big\}_{i\in I},
        \quad
        a=0,1.
    \]
Denote the local representatives of $v$ in the charts $(U^i,\chi ^i)$ by
    \[
        v^i
        \defeq
        \dd \chi ^i\circ  v\circ  (\chi ^i)^{-1}
        \in \mathfrak{X}(\chi ^i(U^i)).
    \]
These are the local representations of $v$ with corresponding to the linear trivializations of $TX$ given by the the family $\{(U^i,B^i_0,TU^i, \chi ^i,\dd \chi ^i)\}_{i\in I}$. Therefore, by Lemma \ref{lemma:comparison} we obtain
    \begin{equation}
    \label{ineq:vf_local_1}
        |v^i|_{k,\chi ^i(B^i_0)}
        \lesssim 
        \|v\|_{k,K_0}
        =
        \|v\|_{k,K}
    \end{equation}
Thus by choosing $\theta>0$ small enough we may apply Lemma \ref{lemma:flow} to the $v^i$ and the balls $\chi ^i(B_1^i)\sub \chi ^i(B_0^i)$.
We conclude that the flow of $v^i$ is defined for $t\in [0,1]$ as a map
    \[
        \id+f^t_{v^i}:
        \chi ^i(B_1^i)\longto \chi ^i(B_0^i),
    \]
Moreover, using also inequality (\ref{ineq:vf_local_1}), it satisfies
    \begin{equation}
    \label{eqn:flow2}
        |f^t_{v^i}|_{k,\chi ^i(B_1^i)}
        \lesssim 
        |v^i|_{k,\chi ^i(B_0^i)}
        \lesssim 
        \|v\|_{k,K}.
    \end{equation}
This shows that, for $t\in [0,1]$, the flow $\varphi^t_{v}\in C^{\infty}_{K}(X,X)$ is admissible for the family $\calZ_1$ with local representatives $\id+f^t_{v^i}$.
By inequality (\ref{eqn:flow2}) and Lemma \ref{lemma:comparison_diffeo} we obtain also that
    \[
        \|\varphi_{v}\|_{k,K}
        =
        \|\varphi_{v}\|_{k,K_1}
        \lesssim 
        \|v\|_{k,K}.
        \qedhere
    \]
\end{proof}

\subsubsection{Global Version of Lemmas B(2,3)}

In this section we extend Lemma \ref{lemma:local_flow_action} to a global version of \nameref{Lemma:B}(2, 3) for diffeomorphisms and sections with compact support.

Let $\pi_\textsc{X}:X\to M$ be a surjective submersion, and fix a global section $b\in\Gamma_M X$. We regard $M$ as a submanifold of $X$ via the embedding $b:M\hookrightarrow X$. Let $E\sub X$ be an open subbundle around $M$ that is endowed with a vector bundle structure for which $M$ is the zero-section. In other words, $E$ is a vertical tubular neighborhood of $M$ in $X$. We will write $\pi_\textsc{E} \defeq \pi_\textsc{X}|_E$.

\begin{lemma}
\label{lemma:global_action}
Let $L\sub X$ be a compact set and denote $K\defeq L\cap  M$.
For all diffeomorphisms ${\phi\in C^{\infty}_{L}(X,X)\cap \mathcal{U}_L}$ and sections $e\in \Gamma_K(M, E)$ satisfying
    \[
        \|\phi\|_{L,1}<\theta
        \quad\textrm{and}\quad
        \|e\|_{K,1}<\theta
    \]
we have that $\phi$ on $e$ are compatible on $M$ (in the sense of definition \ref{def:compatibility}):
    \[
        e\cdot\phi\in \Gamma_K(M, E).
    \]
Moreover, they satisfy
    \[
        \|e\cdot \phi\|_{k,K}
        \lesssim
        \|\phi\|_{k,L}
        +
        \|e\|_{k,K}.
    \]
\end{lemma}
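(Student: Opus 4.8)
This is the global analogue of Lemma~\ref{lemma:local_flow_action}, and the strategy is to cover the support $L$ by finitely many trivialising charts and reduce everything to the already-proven local statement. First I would set up the combinatorial cover: since $L$ is compact, pick a finite family of non-linear local trivialisations $\zeta^i=(U^i,B^i,\chi^i,V^i,\Xi^i)$ of $E$, and simultaneously a family of charts $(U^i,\chi^i)$ adapted to diffeomorphisms of $X$, together with nested compact balls $B_2^i\sub\mathrm{int}(B_1^i)$, $B_1^i\sub\mathrm{int}(B_0^i)$, $B_0^i\sub U^i$, arranged exactly as in the proof of Lemma~\ref{lemma:diffeo}: the $\chi^i(B_a^i)$ are closed balls in $\R^m$, one has $K\sub\bigcup_i B_2^i$ (for the base $M$) and $L\sub\bigcup_i B_2^i$ (for the total space $X$, shrinking charts as needed so they also trivialise $E$), and $B_1^i\cap B_1^j\ne\emptyset$ forces $B_2^j\sub\mathrm{int}(B_0^i)$. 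Write $K_a=\bigcup_i B_a^i$ and let $\calZ_a$ be the corresponding families, with norms $|\cdot|^{\calZ_a}_{k,K_a}$ equivalent to the standard $\|\cdot\|_{k,K_a}$ by Lemmas~\ref{lemma:comparison} and~\ref{lemma:comparison_diffeo}.

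Next I would show the action is well-defined. Since $\phi$ is supported in $L\sub K_2$, the hypothesis $\|\phi\|_{1,L}<\theta$ together with Lemma~\ref{lemma:comparison_diffeo} makes $\phi$ admissible for $\calZ_0$ with local representatives $\id+g^i$ on $\chi^i(B_0^i)$ satisfying $|g^i|_{1}\lesssim\|\phi\|_{1,L}<\theta$; similarly $e$ is $\calZ_0$-admissible with local representatives $e^{\zeta^i}$ satisfying $|e^{\zeta^i}|_1\lesssim\|e\|_{1,K}<\theta$. Now Lemma~\ref{lemma:action_local}, applied chart by chart to the balls $\chi^i(B_2^i)\sub\chi^i(B_1^i)\sub\chi^i(B_0^i)$ (after a mild shrinking of $\theta$ so the hypotheses of that lemma are met uniformly in $i$), produces local sections $(e\cdot\phi)^{\zeta^i}$ on $\chi^i(B_2^i)$. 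The key point to check is that these local pieces glue to a genuine global section supported in $K$: outside $L$ the map $\phi$ is the identity, so $e\cdot\phi=e$ there, hence the candidate section equals $e$ on $M\setminus K$ and in particular is supported in $K$; on overlaps $B_2^i\cap B_2^j$, the relative base map $\phi_e$ (defined locally) is intrinsic — it does not depend on which chart we use, because it is defined by the geometric compatibility condition of definition~\ref{def:compatibility} — so the two local expressions agree. This is really the same uniqueness-of-gluing argument used for $\phi^{-1}$ in Lemma~\ref{lemma:diffeo}, relying on the nesting condition on the $B_a^i$ to avoid ambiguities where two charts overlap. I would also note that $\phi$ is a diffeomorphism of $X$ by Lemma~\ref{lemma:diffeo}, which is what makes $e\cdot\phi$ (defined via $\pi_E|_B$ with $B=\phi^{-1}(e(M))\cap\pi_E^{-1}(M)$) make sense globally rather than just locally.

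Finally the estimate: on each chart, Lemma~\ref{lemma:action_local} gives $|(e\cdot\phi)^{\zeta^i}|_{k,\chi^i(B_2^i)}\lesssim|e^{\zeta^i}|_{k,\chi^i(B_0^i)}+|g^i|_{k,\chi^i(B_0^i)}$, and summing over the finite index set and re-applying the norm-equivalence Lemmas~\ref{lemma:comparison} and~\ref{lemma:comparison_diffeo} yields
\[
    \|e\cdot\phi\|_{k,K}
    \;\lesssim\;
    |e\cdot\phi|^{\calZ_2}_{k,K_2}
    \;\lesssim\;
    |e|^{\calZ_0}_{k,K_0}+|\phi|^{\calZ_0}_{k,K_0}
    \;\lesssim\;
    \|e\|_{k,K}+\|\phi\|_{k,L},
\]
which is the claimed inequality. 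The main obstacle I anticipate is not the estimate — that is a routine bookkeeping of the local bounds — but making the gluing rigorous: one must argue carefully that the locally-defined compatibility loci $B=\phi^{-1}(e(M))\cap\pi_E^{-1}(\text{chart})$ patch together, i.e.\ that the relative base map is chart-independent and that the finite cover together with the nesting condition really does force the local actions to coincide on overlaps and to extend by $e$ (equivalently by the identity on $\phi$) outside $L$. Once that topological point is settled, the rest is a direct transcription of Lemma~\ref{lemma:local_flow_action}'s conclusions through the cover. Note that the statement here only asserts the boundedness part (2) of \nameref{Lemma:B}; the quadratic refinement (3) would be obtained by the same covering argument applied to the corresponding inequality in Lemma~\ref{lemma:local_flow_action}, but is not needed for the present lemma.
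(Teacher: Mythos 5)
Your proposal is correct in outline and uses the same core technical lemmas (\ref{lemma:diffeo}, \ref{lemma:action_local}, \ref{lemma:comparison}, \ref{lemma:comparison_diffeo}), but it takes a genuinely different route for establishing global compatibility, and the route you chose is both more delicate and more work than necessary. The paper's proof establishes compatibility \emph{once and globally} by applying Lemma~\ref{lemma:diffeo} twice: first to $\phi$ (as you do), and then again to the base map $\phi_e \defeq \pi\circ\phi^{-1}\circ e$. The observation that $\phi_e$ is supported in $K$ (since $e(x)=x$ and $\phi^{-1}(x)=x$ for $x\in M\setminus K$) together with a chart-wise estimate $\|\phi_e\|_{k,K}\lesssim\|\phi\|_{k,L}+\|e\|_{k,K}$ obtained from Lemma~\ref{lemma:local comp}(d) lets one invoke Lemma~\ref{lemma:diffeo} a second time and conclude that $\phi_e$ is a diffeomorphism of $M$. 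Global compatibility follows at once, and $e\cdot\phi=\phi^{-1}\circ e\circ\phi_e^{-1}$ is defined in a single stroke. Lemma~\ref{lemma:action_local} is then used only to read off the estimate from the local expressions of the already-constructed global action. By contrast, you construct the local actions chart-by-chart via Lemma~\ref{lemma:action_local} and then glue; this works, but it requires an additional observation you do not state, namely that compatibility is closed under unions (i.e.\ if $\phi,b$ are compatible on $W_1$ and on $W_2$, they are compatible on $W_1\cup W_2$ --- true, but worth a sentence, since it is exactly what makes the locally defined sections patch). You correctly flag the gluing as the delicate point; the paper's two-fold application of Lemma~\ref{lemma:diffeo} is precisely what sidesteps it.

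A secondary remark on the cover: you import the nested configuration $B_2^i\sub B_1^i\sub B_0^i$ with the overlap implication ($B_1^i\cap B_1^j\neq\emptyset\Rightarrow B_2^j\sub\mathrm{int}(B_0^i)$) from the proof of Lemma~\ref{lemma:diffeo}. That intricacy was needed \emph{inside} that lemma to prove global injectivity of $\phi$, but here you can take Lemma~\ref{lemma:diffeo} as a black box and use the much simpler cover the paper uses: charts $\chi^i$ on $M$, pairs $B^i\sub\mathrm{int}(C^i)$ of closed balls covering $K$, linear frames $\Xi^i$ giving trivializations of $E$, and balls $A^i$ in $E$ with $B^i\sub\mathrm{int}(A^i)\sub\mathrm{int}(\pi^{-1}(C^i))$, i.e.\ exactly the configuration Lemma~\ref{lemma:action_local} asks for. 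Reproducing the more intricate cover signals that you were (unnecessarily) re-deriving injectivity rather than quoting the lemma.
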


\begin{proof}
Fix the following objects:
\begin{itemize}
\item A finite family of charts $\{\chi ^i:U^i\to \R^m\}_{i\in I}$ on $M$.
                                                                                                                                                                                                                                                                                                                                                                                                                                                                                                                                                                                                                                                                                                                                                                                                                                                                                                                                                                                                                                                                                                                                                                                                                                                                                                                                                                                                                                                                                                                                                                                                                                                                                                                                                                                                                                                                                                                                                                                                                                                                                                                                                                                                                                                                                                                                                                                                                                                                                                                                                                                                                                                                                                                                                                                                                                                                                                                                                                                                                                                                                                                                                                                                                                                                                                                                                                                                                                                                                                                                                                                                                                                                                                                                                              \item Compact sets $B^i, C^i\sub U^i$, for $i\in I$, that satisfy
 \begin{itemize}
    \item  $B^i\sub \mathrm{int}(C^i)$;
    \item $\chi ^i(B^i)$ and $\chi ^i(C^i)$ are closed balls in $\R^m$;
    \item $K\sub \cup _{i\in I}B^i$.
 \end{itemize}
\item Linear frames on $E$ above the $U^i$, for $i\in I$, i.e.\ maps $\Xi^i:E|_{U^i}\to \R^n$ with induced bundle trivializations denote by
        \[
            \widehat{\chi }^i
            \defeq 
            (\chi ^i\circ  \pi)\times \Xi^i:
            E|_{U^i}\longto \R^m\times \R^n.
        \]
\item Compact sets $A^i\sub E|_{U^i}$, for $i\in I$, such that $\widehat{\chi }^i(A^i)$ is a closed ball  in $\R^{m}\times \R^n$ and
        \[
            B^i \sub \mathrm{int}(A^i),
            \quad
            A^i\sub \mathrm{int}(\pi^{-1}(C^i)).
        \]
\end{itemize}

For the $\phi$ and $e$ to be compatible we need to show that $\phi$ is a diffeomorphism of $X$ and that $\pi \circ  \phi^{-1}\circ  e$ is a diffeomorphism of $M$.
By Lemma \ref{lemma:diffeo}, we may assume that $\phi$ is a diffeomorphism of $X$, and moreover, that
    \begin{equation}
    \label{eq:inverse}
        \phi^{-1}\in C^{\infty}_{L}(X,X)\cap  \mathcal{U}_L,
        \quad
        \|\phi^{-1}\|_{L,k}\lesssim \|\phi\|_{L,k}.
    \end{equation}
Since the inverse is supported inside $L$, this implies that by shrinking $\theta$ we may assume that 
    \[
        \phi^{-1}(A^i)\sub E|_{U^i}.
    \]
Denote the local expression of $\phi^{-1}$ by
    \[
        \id+f^i_1\times f^i_2
        \defeq
        \widehat{\chi }^i\circ  \phi^{-1}\circ  (\widehat{\chi }^i)^{-1}: 
        \widehat{\chi }^i(A^i)\longto \R^m\times \R^n.
    \]
Similarly, since $\|e\|_{0,K}<\theta$ and $B^i\sub \mathrm{int}(A^i)$, we may assume that $e(B^i)\sub A^i$.
Denote the local expression of $e$ by
    \[
        e^i
        \defeq
        \Xi^i\circ  e\circ  (\chi ^i)^{-1}:
        \chi ^i(C^i)\longto \R^n.
    \]
We obtain that the composition
    \[
        \phi_e
        \defeq 
        \pi \circ  \phi^{-1}\circ e:
        M\longrightarrow M
    \]
maps $B^i$ into $C^i$ and has the local expression
    \[
        \id+f^i_1\circ  (\id\times e^i)
        =
        \chi ^i\circ  \phi_e \circ  (\chi ^i)^{-1}:
        \chi ^i(B^i)\longrightarrow \R^m.
    \]
Applying Lemmas \ref{lemma:local comp}(d), then \ref{lemma:comparison} and \ref{lemma:comparison_diffeo}, and finally using (\ref{eq:inverse}) we obtain
    \begin{align}
    \label{ineq:domain_action}
        |f^i_1\circ  (\id\times e^i)|_{k,\chi ^i(B^i)}
        &\lesssim
        |f^i_1|_{k, \widehat\chi^i(A^i)} 
        + 
        |f^i_1|_{1, \widehat\chi^i(A^i)} 
        |e^i|_{k, \chi^i(B^i)}
        \notag
        \\
        &\lesssim
        \|\phi^{-1}\|_{k, L} 
        +
        \theta \| e\|_{k, K}
        \notag
        \\
        &\lesssim 
        \|\phi\|_{k,L}+\|e\|_{k,K}.
    \end{align}
If $x\in M\backslash K$, then $e(x)=x$ and $\phi^{-1}(x)=x$, hence $\phi_e(x)=x$.
Thus
    \[
        \phi_e\in C^{\infty}_{K}(M,M).
    \]
On the other hand, since $K\sub \cup _{i\in I}B^i$ we can use the family of charts 
    \[
    \calZ\defeq \{(U^i, B^i,\chi ^i)\}_{i\in I}
    \]
(for which $\phi_e$ is admissible) to calculate the norms of $\phi_e$. Then by inequality (\ref{ineq:domain_action}) and again by Lemma \ref{lemma:comparison_diffeo} we obtain that
    \[
        \|\phi_e\|_{k,K}
        \lesssim
        \|\phi\|_{k,L}
        +
        \|e\|_{k,K}.
    \]
In particular, after shrinking $\theta$, by Lemma \ref{lemma:diffeo} we may assume that $\phi_e$ is a diffeomorphism, and clearly that $\phi_e^{-1}\in C^{\infty}_{K}(M,M)$.
This implies that the action is globally defined, i.e.\ $\phi$ and $e$ are compatible on $M$, and one checks as above that the result has support in $K$:
    \[
        e\cdot \phi
        =
        \phi^{-1}\circ  e\circ  \phi_e^{-1}
        \in 
        \Gamma_K(M,E).
    \]
Finally, we will apply Lemma \ref{lemma:action_local} to obtain the estimate of the Lemma.
Consider the local expression of $\phi$:
    \[
        \id+g^i
        \defeq
        \widehat{\chi }^i\circ  \phi \circ  (\widehat{\chi }^i)^{-1}: 
        \widehat{\chi }^i(A^i)\longto \R^m\times \R^n.
    \]
Note that $|g_i|_{1,\widehat{\chi }^i(A^i)}\lesssim \|\phi\|_{1,L}$.
Thus we may apply Lemma \ref{lemma:action_local} to the maps $\id+g_i$ and $e_i$ and the balls $\chi ^i(B^i)$, $\chi ^i(C^i)$ and $\widehat{\chi }^i(A^i)$ to conclude that the local action $e^i\cdot  (\id\times g^i)$ between the local expressions is defined as a map
    \[  
        e^i\cdot  (\id+g^i): 
        \chi ^i(B^i)\longrightarrow \R^n.
    \]
Moreover, it satisfies
    \[
        |e^i\cdot  (\id+g^i)|_{k,\chi ^i(B^i)}
        \lesssim 
        |e^i|_{k,\chi ^i(C^i)}
        +
        |g^i|_{k,\chi ^i(A^i)}.
    \]
But since the action of $e\cdot  \phi$ is globally defined, we see that the maps $e^i\cdot  (\id\times g^i)$ are the local expressions of $e\cdot  \phi$, i.e.
    \[
        \Xi^i\circ  (e\cdot  \phi)\circ  (\chi ^i)^{-1}
        =
        e^i\cdot  (\id+ g^i).\]
By applying Lemma \ref{lemma:comparison} once more, we obtain the estimates from the statement.
\end{proof}

Finally, we give the following global version of \nameref{Lemma:B}(2,3).

\begin{lemma}
\label{lemma:B_23_global}
Let $L\sub X$ be a compact set, and denote $K\defeq L\cap  M$.
For all $v\in \gerX_{L}(X)$ and $e\in \Gamma_K(M, E)$ satisfying
    \[
        \|v\|_{L,1} < \theta
        \quad\textrm{and}\quad
        \|e\|_{K,1} < \theta,
    \]
we have that the time-one flow $\varphi_v$ on $e$ are compatible on $M$ (in the sense of definition \ref{def:compatibility}):
    \[
        e\cdot  \varphi_v\in \Gamma_{M}^{K}E.
    \]
Moreover, they satisfy
    \begin{align*}
        \|e\cdot \varphi_v\|_{k,K}
        &\lesssim
        \|v\|_{k,L}+\|e\|_{k,K}
        \\
        \|e\cdot \varphi_v-e-\delta v\|_{k,K}
        &\lesssim
        (\|v\|_{0,L} + \|e\|_{0,K})
        \|v\|_{k+1,L}
        +
        \|v\|_{0,L} \|e\|_{k+1,K}.
    \end{align*}
\end{lemma}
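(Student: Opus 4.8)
\textbf{Proof strategy for Lemma \ref{lemma:B_23_global}.}
The plan is to deduce this global statement by localizing to a finite cover, reducing each piece to the local result Lemma \ref{lemma:local_flow_action}, and then gluing the estimates via the norm-comparison Lemmas \ref{lemma:comparison} and \ref{lemma:comparison_diffeo}. This is precisely the pattern already used to prove Lemma \ref{lemma:global_action} and Lemma \ref{lemma:B_1_global}, so the work is mostly bookkeeping.

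First I would fix the auxiliary data exactly as in the proof of Lemma \ref{lemma:global_action}: a finite family of charts $\{\chi^i:U^i\to \R^m\}_{i\in I}$ on $M$, nested compact sets $B^i\sub \mathrm{int}(C^i)$ in $U^i$ whose images under $\chi^i$ are closed balls and with $K\sub \cup_i B^i$, linear frames $\Xi^i$ on $E|_{U^i}$ with the induced bundle charts $\widehat\chi^i$, and compact sets $A^i\sub E|_{U^i}$ with $B^i\sub \mathrm{int}(A^i)$ and $A^i\sub \mathrm{int}(\pi_\textsc{E}^{-1}(C^i))$ whose images under $\widehat\chi^i$ are closed balls. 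Writing $v^i$ for the local representative of $v$ in the chart $(U^i,\chi^i)$ (as in Lemma \ref{lemma:B_1_global}) and $e^i\defeq \Xi^i\circ e\circ(\chi^i)^{-1}$ for the local representative of $e$, Lemma \ref{lemma:comparison} (applied to the linear frames) gives $|v^i|_{k,\chi^i(A^i)}\lesssim \|v\|_{k,L}$ and $|e^i|_{k,\chi^i(C^i)}\lesssim \|e\|_{k,K}$, so after shrinking $\theta$ the hypotheses $|v^i|_1<\theta$, $|e^i|_1<\theta$ of Lemma \ref{lemma:local_flow_action} hold on the balls $\chi^i(B^i)\sub \chi^i(C^i)$ and $\widehat\chi^i(A^i)$.

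Next I would apply Lemma \ref{lemma:local_flow_action} in each chart. It tells us that $e^i\cdot \varphi_{v^i}$ is defined on $\chi^i(B^i)$, hence — since $\varphi_v$ and $e$ are both supported in $L$ resp. $K$, so that $\varphi_v(x)=x$, $e(x)=x$, $\varphi_v\cdot e$ fixes $x$ for $x\notin K$ — these local actions patch to a globally defined section $e\cdot\varphi_v$ with support in $K$; this is the same gluing argument as in Lemma \ref{lemma:global_action}, using that $\varphi_v$ is a genuine diffeomorphism by Lemma \ref{lemma:B_1_global} and Lemma \ref{lemma:diffeo}. The two local inequalities of Lemma \ref{lemma:local_flow_action},
\begin{align*}
    |e^i\cdot\varphi_{v^i}|_{k}
    &\lesssim |e^i|_{k}+|v^i|_{k},
    \\
    |e^i\cdot\varphi_{v^i}-e^i-\delta v^i|_{k}
    &\lesssim (|v^i|_0+|e^i|_0)|v^i|_{k+1}+|v^i|_0|e^i|_{k+1},
\end{align*}
then translate back to the global norms by Lemma \ref{lemma:comparison}: summing over the finite index set $I$ and using $K\sub\cup_i B^i$ together with the comparison estimates above yields
\begin{align*}
    \|e\cdot\varphi_v\|_{k,K}
    &\lesssim \|v\|_{k,L}+\|e\|_{k,K},
    \\
    \|e\cdot\varphi_v-e-\delta v\|_{k,K}
    &\lesssim (\|v\|_{0,L}+\|e\|_{0,K})\|v\|_{k+1,L}+\|v\|_{0,L}\|e\|_{k+1,K},
\end{align*}
which is the claim.

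\textbf{The main obstacle.} The only genuinely delicate point is checking that the locally defined actions $e^i\cdot\varphi_{v^i}$ really are the chart expressions of one and the same global section $e\cdot\varphi_v$, rather than a priori only agreeing up to the ambiguity in the tubular-neighborhood identification; this is handled exactly as in Lemma \ref{lemma:global_action} by observing that compatibility of $\varphi_v$ with $e$ on $M$ is a global condition (the map $\pi_\textsc{E}\circ\varphi_v^{-1}\circ e$ must be a diffeomorphism of $M$), which one verifies first, and once it holds the global $e\cdot\varphi_v$ restricts in each chart to $e^i\cdot\varphi_{v^i}$ by uniqueness of the pullback construction. The compactness of $K$ and $L$ is what makes the finite cover and the uniform norm comparisons available; everything else is routine.
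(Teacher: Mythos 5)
Your proposal is correct and follows essentially the same localization strategy as the paper's proof: same auxiliary data as in Lemma \ref{lemma:global_action}, application of Lemma \ref{lemma:local_flow_action} in charts, translation back via the comparison lemmas, with compatibility and the first inequality handled by Lemmas \ref{lemma:B_1_global} and \ref{lemma:global_action}. The one point the paper flags explicitly that you leave implicit is that the linearity of the frames $\Xi^i$ is what guarantees $(\delta v)^i = \delta(v^i)$, so that the chart expression of the quantity $e\cdot\varphi_v - e - \delta v$ really is $e^i\cdot\varphi_{v^i} - e^i - \delta v^i$ and the second local inequality transports correctly.
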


\begin{proof}
Consider charts, linear frames, and trivializations as in the proof of Lemma \ref{lemma:global_action}.
The fact that the action $e\cdot  \varphi_v$ is defined, i.e. $\varphi_v$ and $e$ are compatible on $M$, and that it satisfies the first inequality follows directly from Lemmas \ref{lemma:B_1_global} and \ref{lemma:global_action}.
Exactly as in the proof of Lemma \ref{lemma:global_action}, we can localize the second inequality.
For this, note that it is important that the frames $\Xi^i$ are linear, so that the local representatives of the infinitesimal action $\delta v$ is the infinitesimal action of the local representatives $v^i$ of $v$, i.e. so that
    \[
        (\delta v)^i = \delta (v^i).
    \]
Finally, applying Lemma \ref{lemma:local_flow_action} to the local expressions, one obtains the second inequality.
\end{proof}

\subsubsection{Global Version of Lemma C}

In this section we extend Lemma \ref{lemma:infinite_comp} to a global version of \nameref{Lemma:C} for diffeomorphisms with compact support.

\begin{lemma}
\label{lemma:C_global}
Let $K\sub X$ be a compact set.
For all sequences of smooth maps ${\phi_{\nu}\in C^{\infty}_{K}(X,X)}$ satisfying
    \[
        \sum\nolimits_{\nu\geq 1}
        \|\phi_{\nu}\|_{1,K} < \theta
        \quad\textrm{and}\quad
        \sum\nolimits_{\nu\geq 1}
        \|\phi_{\nu}\|_{k,K}<\infty,
        \quad\forall\; k\geq 0,
    \]
we have that the sequence of compositions
    \[
        \psi_{\nu}
        \defeq 
        \phi_1 \circ \ldots \circ  \phi_{\nu}\in C^{\infty}_{K}(X,X)
    \]
converges in all $C^k$-norms on $K$ to a smooth map
    \[
        \psi
        \defeq 
        \lim_{\nu\to \infty}\psi_{\nu}\in C^{\infty}_{K}(X,X).
    \]
Moreover, it satisfies
    \[
        \|\psi\|_{k,K}
        \lesssim 
        \sum\nolimits_{\nu\geq 1}
        \|{\phi}_{\nu}\|_{k,K}.
    \]
\end{lemma}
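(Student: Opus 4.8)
The statement (Lemma~\ref{lemma:C_global}) is the global, compactly-supported version of \nameref{Lemma:C}, and the plan is to reduce it to the local version, Lemma~\ref{lemma:infinite_comp}, exactly as the earlier global lemmas (\ref{lemma:B_1_global}, \ref{lemma:B_23_global}) were deduced from their local counterparts. First I would fix a finite family of charts $\{(U^i,\chi^i)\}_{i\in I}$ together with two nested families of compact sets $B^i \sub \mathrm{int}(C^i)$, $C^i \sub U^i$, such that $\chi^i(B^i)$ and $\chi^i(C^i)$ are closed balls in $\R^m$ and $K \sub \bigcup_{i\in I} B^i$. Since every $\phi_\nu$ is supported in $K$, by shrinking $\theta$ and using Lemma~\ref{lemma:comparison_diffeo} I may assume each $\phi_\nu$ is $\calZ$-admissible for $\calZ = \{(U^i, C^i, \chi^i)\}$ and that the local representatives $\id + f^i_\nu \defeq \chi^i \circ \phi_\nu \circ (\chi^i)^{-1}$ on $\chi^i(C^i)$ satisfy $|f^i_\nu|_{k,\chi^i(C^i)} \lesssim \|\phi_\nu\|_{k,K}$.

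The key point is that the hypotheses $\sum_\nu \|\phi_\nu\|_{1,K} < \theta$ and $\sum_\nu \|\phi_\nu\|_{k,K} < \infty$ transfer to $\sum_\nu |f^i_\nu|_{1,\chi^i(C^i)} < \theta'$ (after absorbing constants into the choice of $\theta$) and $\sum_\nu |f^i_\nu|_{k,\chi^i(C^i)} < \infty$ for each $i$ and each $k$. Then, for each fixed $i$, Lemma~\ref{lemma:infinite_comp} applies to the balls $\chi^i(B^i) \sub \mathrm{int}(\chi^i(C^i))$ and the sequence $\{\id + f^i_\nu\}$: the partial compositions $(\id + f^i_1)\circ\cdots\circ(\id + f^i_\nu)$ are defined on $\chi^i(B^i)$, converge in all $C^k$-norms to a smooth map $\id + g^i_\infty : \chi^i(B^i) \to \chi^i(C^i)$, and satisfy $|g^i_\infty|_{k} \lesssim \sum_\nu |f^i_\nu|_{k}$. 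One subtlety, which is the main (though mild) obstacle: the local representative of the composition $\psi_\nu = \phi_1 \circ \cdots \circ \phi_\nu$ on $\chi^i(B^i)$ is \emph{not} literally the composition of the local representatives $\id + f^i_\mu$, because the intermediate maps $\phi_\mu$ need not preserve the chart domain $U^i$. This is handled exactly as in the proof of Lemma~\ref{lemma:diffeo} and Lemma~\ref{lemma:C_global}'s neighbours: since each $\phi_\mu$ is supported in $K \sub \bigcup B^i$ and is $C^0$-small, one arranges (by choosing the $B^i$ inside slightly larger balls with overlap control, and shrinking $\theta$) that the orbit of $\chi^i(B^i)$ under the finite composition stays inside $\chi^i(C^i)$ and inside a single chart at each stage, so that the local computation of Lemma~\ref{lemma:infinite_comp} is valid where it is needed, and outside $K$ the composition is the identity.

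With the local convergence in hand, the remaining steps are routine. The local limits $\id + g^i_\infty$ agree on overlaps (they agree with the identity off $K$, and on $K$ they are determined by the germs of the $\phi_\nu$), so they glue to a smooth map $\psi$ on $X$ which equals the identity outside $K$; hence $\psi \in C^\infty_K(X,X)$, and $\psi_\nu \to \psi$ in all $C^k$-norms on $K$ because convergence holds in each chart. The estimate
\[
    \|\psi\|_{k,K}
    \lesssim
    |\psi|^{\calZ}_{k,K}
    =
    \sum\nolimits_{i\in I} |g^i_\infty|_{k,\chi^i(B^i)}
    \lesssim
    \sum\nolimits_{i\in I}\sum\nolimits_{\nu\geq 1} |f^i_\nu|_{k,\chi^i(C^i)}
    \lesssim
    \sum\nolimits_{\nu\geq 1} \|\phi_\nu\|_{k,K}
\]
follows by applying Lemma~\ref{lemma:comparison_diffeo} once more, together with the per-chart estimate from Lemma~\ref{lemma:infinite_comp} and the comparison of norms. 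I expect the only place requiring genuine care is the bookkeeping in the previous paragraph — ensuring that the finite compositions stay within the chart domains so that the local lemma is applicable — but this is entirely analogous to arguments already carried out in the excerpt (Lemmas~\ref{lemma:diffeo}, \ref{lemma:B_1_global}) and introduces no new ideas.
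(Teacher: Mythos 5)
Your proposal is correct and follows essentially the same route as the paper: fix a finite family of charts with two nested families of balls, pass to local representatives via the comparison lemma, apply Lemma~\ref{lemma:infinite_comp} in each chart, and glue back using Lemma~\ref{lemma:comparison_diffeo}. The chart-compatibility subtlety you flag is real, but the paper resolves it more directly than the overlap-control mechanism you borrow from Lemma~\ref{lemma:diffeo}: it simply reads off from the proof of Lemma~\ref{lemma:infinite_comp} that every tail composite $(\id+f^i_{\nu-\mu})\circ\cdots\circ(\id+f^i_\nu)$ already maps $\chi^i(B^i_1)$ into $\chi^i(B^i_0)\sub\chi^i(U^i)$, whence by induction each $\phi_{\nu-\mu}\circ\cdots\circ\phi_\nu$ sends $B^i_1$ into $B^i_0$ and its local representative on $\chi^i(B^i_1)$ is exactly the composite of the $\id+f^i_\mu$'s; no third family of balls or overlap condition is needed.
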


\begin{proof}
Fix a finite family of charts $\{(U^i,\chi ^i)\}_{i\in I}$ and two families of compact sets,
    \[
        B_1^i \sub \mathrm{int}(B_0^i),
        \quad
        B_0^i \sub U^i,
        \quad i\in I,
    \]
such that $\chi ^i(B_0^i)$ and $\chi ^i(B_1^i)$ are closed balls in $\R^m$ and such that
    $
        K\sub \cup _{i\in I}B_1^i.
    $
Write
    \[
        K_a
        \defeq
        \cup _{i\in I}B_a^i, 
        \quad
        \calZ_a
        \defeq
        \big\{
            \zeta_a^i=(U^i,B_a^i,\chi ^i)
        \big\}_{i\in I},
        \quad
        a=0,1.
    \]
By taking $\theta$ small enough we may assume that $\phi_{\nu}$ is $\calZ_0$-admissible.
Denote the corresponding local representation by
    \[
        \id+f_{\nu}^i
        \defeq 
        \chi ^i \circ \phi_{\nu} \circ (\chi ^i)^{-1}:
        \chi ^i(B^i_0)\longto \chi ^i(U^i).
    \]
Using Lemma \ref{lemma:comparison_diffeo} and that $\phi_{\nu}$ is supported in $K_1$ we obtain
    \begin{equation}
    \label{eq:proof_lemma_c}
        \sum\nolimits_{\nu\geq 1}
        |f^i_{\nu}|_{k,\chi ^i(B_0^i)}
        =
        \sum\nolimits_{\nu\geq 1}
        |\phi_{\nu}|^{\zeta_0^i}_{k,B_0^i}
        \lesssim 
        \sum\nolimits_{\nu\geq 1}
        |\phi_{\nu}|^{\calZ_0}_{k,K_0}
        \lesssim 
        \sum\nolimits_{\nu\geq 1}
        \|\phi_{\nu}\|_{k,K}.
    \end{equation}
Thus we may apply Lemma \ref{lemma:infinite_comp} to the sequence $\{\id+f_{\nu}^i\}_{\nu\geq 1}$ and the balls $\chi ^i(B^i_1)\sub \chi ^i(B^i_0)$.
We conclude that, for $\theta$ small enough, the following composition is defined between the indicated balls,
    \[
        \id+g_{\nu}^i
        \defeq 
        (\id+f^i_{1})
        \circ \ldots \circ 
        (\id+f^i_{\nu})|_{\chi ^i(B^i_1)}:
        \chi ^i(B^i_1)\longto \chi ^i(B^i_0),
    \]
and that it converges in all $C^k$-norms to a smooth map
    \[
        \id+g_{\infty}^i
        \defeq 
        \lim_{\nu\to\infty}(\id+g^i_{\nu}):
        \chi ^i(B^i_1)\longto \chi ^i(B^i_0).
    \]
Moreover, using also (\ref{eq:proof_lemma_c}), this map satisfies
    \begin{equation}
    \label{eq:lemma_C}
        |g_{\infty}^i|_{k,\chi ^i(B^i_1)}
        \lesssim 
        \sum\nolimits_{\nu\geq 1}
        \|\phi_{\nu}\|_{k,K}.
    \end{equation}
Moreover, in the proof of Lemma \ref{lemma:infinite_comp} we have shown that, for all $0\leq \mu<\nu$, the composition is defined as a map between the balls $\chi^i(B^i_1)$ and $\chi^i(B^i_0)$, i.e.\
    \[
        (\id+f^i_{\nu-\mu})
        \circ \ldots \circ 
        (\id+f^i_{\nu}):
        \chi ^i(B^i_1)\longto \chi ^i(B^i_0).
    \]
This implies that
    $
        \phi_{\nu-\mu}
        \circ \ldots \circ
        \phi_{\nu}(B^i_1)
        \sub 
        B^i_0
    $
for all $i$.
In particular, the composition
    \[
        \psi_{\nu}
        \defeq 
        \phi_{1}
        \circ \ldots \circ
        \phi_{\nu}
        \in C^{\infty}_{K}(X,X)
    \]
is admissible for the family $\calZ_1$ with local representatives $\id+g_{\nu}^i$.
We conclude that the sequence $\psi_{\nu}$ converges in all $C^k$-norms to a map $\psi\in C^{\infty}_{K}(X,X)$.
Since $\psi_{\nu}(B_1^i)\sub B_0^i$, we also have that
$\psi(B_1^i)\sub B_0^i$. Hence $\psi$ is also $\calZ_1$-admissible, with local representatives $\id+g^i_{\infty}$.
Therefore, by Lemma \ref{lemma:comparison_diffeo} and inequality (\ref{eq:lemma_C}), we obtain
    \[
        \|\psi\|_{k,K}
        =
        \|\psi\|_{k,K_1}
        \lesssim 
        |\psi|^{\calZ_1}_{k,K_1}
        =
        \sum\nolimits_{i}
        |g^i_{\infty}|_{k,\chi ^i(B_i^1)}
        \lesssim 
        \sum\nolimits_{\nu\geq 1}
        \|\phi_{\nu}\|_{k,K}.
        \qedhere
    \]
\end{proof}

\subsection{Global Versions on Shrinking Domains with Corners}
\label{sec:lemmas_corners}
%\etocsettocdepth{3}
%\localtableofcontents

In this section we discuss shrinking domains with corners, and prove \nameref{Lemma:A}, \nameref{Lemma:B} and \nameref{Lemma:C} in this setting.
The proofs are based on the key lemma \ref{lem:extension}, which allows us to tamely extend sections over a shrinking domain to global, compactly supported sections; reducing these lemmas to their global analogs discussed in the previous subsection.

\begin{lemma}
Let $D$ be a smooth manifold, and $\{ D_r \}$ a nested domain with corners in $D$.
Then for any Riemannian distance $\delta$ on $D$ there exists $\theta>0$ such that the following implication holds for all $r<s$:
    \[
        \big( 
            x\in D_r \textrm{ and } \delta(x,y)<\theta(s-r)
        \big)
        \quad\Longrightarrow\quad 
        y\in D_s.
    \]
\end{lemma}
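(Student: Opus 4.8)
The statement is a uniform version of the obvious fact that a nested domain is ``strictly decreasing at a definite rate''; by Lemma \ref{lem:stictily_decreasing} we already know $D_r \subset \mathrm{int}(D_s)$ for $r<s$, and the present lemma quantifies how deep inside $D_s$ the subdomain $D_r$ sits, measuring in the metric $\delta$. The plan is to exploit the generating family of diffeomorphisms $\phi^u$ from Definition \ref{def:nested_domain}, whose infinitesimal generator $\tilde v^u = \frac{\mathrm{d}}{\mathrm{d}\epsilon}\big|_{\epsilon=0}(\phi^u)^{-1}\circ\phi^{u+\epsilon}$ maps into the interior cone $\calC D_1$ along $D_1$ for every $u\in[0,1]$. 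First I would transport the problem onto $D_1$: since $\phi^{1-r}(D_1)=D_r$ and $\phi^{1-s}(D_1)=D_s$, a point $x\in D_r$ is of the form $\phi^{1-r}(\zeta)$ with $\zeta\in D_1$, and the condition $y\in D_s$ is equivalent to $(\phi^{1-s})^{-1}(y)\in D_1$. So it suffices to control how far $(\phi^{1-s})^{-1}(y)$ can stray from $(\phi^{1-s})^{-1}(x)=(\phi^{1-s})^{-1}\circ\phi^{1-r}(\zeta)$, and to show that the latter point lies a definite distance $c(s-r)$ inside $D_1$.

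The two quantitative inputs are: (i) the curve $u\mapsto (\phi^{1-s})^{-1}\circ\phi^{1-s+u}(\zeta)$ enters $\mathrm{int}(D_1)$ at a speed controlled from below, since its velocity at $u=0$ is $\tilde v^{1-s}(\zeta)\in\calC_\zeta D_1$ and, by the chart description of the interior cone in Lemma \ref{lemma:interior_cone} together with the compactness of $D_1$ and the smoothness of $(u,\zeta)\mapsto\tilde v^u(\zeta)$, this ``inward speed'' is bounded below uniformly; hence there is $\epsilon_0>0$ with $(\phi^{1-s})^{-1}\circ\phi^{1-s+u}(D_1)$ contained in the $\epsilon_0 u$-interior of $D_1$ (in any fixed metric) for all $u\in(0,\epsilon_0)$ and all $s$ — this is exactly the mechanism already used in the proof of Lemma \ref{lem:stictily_decreasing}, only now I would keep track of the metric margin rather than merely the inclusion. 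And (ii): the family $\{\phi^u\}$ is smooth on the compact interval $[0,1]$, so all the maps $(\phi^{1-s})^{-1}$, and their inverses, are uniformly Lipschitz in $\delta$ with a common constant $\Lambda$. Combining (i) with $u=s-r$ gives that $(\phi^{1-s})^{-1}(x)$ lies in the $\epsilon_0(s-r)$-interior of $D_1$ (after possibly shrinking so $s-r<\epsilon_0$; the case $s-r\geq\epsilon_0$ is handled by choosing $\theta$ small enough that $\theta(s-r)<\theta\cdot\mathrm{diam}_\delta(D)$ forces nothing, i.e.\ one uses the already-known strict inclusion plus a crude diameter bound), and then the Lipschitz bound on $(\phi^{1-s})^{-1}$ shows $\delta(x,y)<\theta(s-r)$ implies $\delta\big((\phi^{1-s})^{-1}x,(\phi^{1-s})^{-1}y\big)<\Lambda\theta(s-r)$, which is $<\epsilon_0(s-r)$ for $\theta$ small enough, so $(\phi^{1-s})^{-1}(y)\in\mathrm{int}(D_1)$, i.e.\ $y\in D_s$.

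I expect the main obstacle to be making the ``uniform inward speed'' (point (i)) genuinely uniform in $s\in[0,1]$ and in $\zeta\in D_1$, including at the corner strata where $\calC_\zeta D_1$ is a proper cone rather than all of $T_\zeta D$. The clean way to do this is to cover $D_1$ by finitely many charts $(U,\chi)$ adapted to $D_1$ as in Definition \ref{def:domain_with_corners}, use Lemma \ref{lemma:interior_cone} to write $\calC_\zeta D_1 = (\mathrm{d}_\zeta\chi)^{-1}\big((0,\infty)^k\times\R^{m-k}\big)$, note that on each chart the first $k$ coordinates of $\mathrm{d}\chi(\tilde v^u)$ are continuous and strictly positive on the compact piece of $D_1$ inside that chart (uniformly in $u$ by compactness of $[0,1]$), hence bounded below by a positive constant, and then patch with a partition of unity to get a single global lower bound $\epsilon_0$ governing how fast the flow of $\tilde v^u$ pushes points into $D_1$ in the supremum of the chosen chart coordinates; transferring this back to the metric $\delta$ costs only another uniform comparison constant. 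The rest is elementary Gr\"onwall-type integration of the flow over the interval $[0,s-r]$, entirely parallel to the estimate already carried out in Lemma \ref{lem:stictily_decreasing}. Once $\theta$ is taken below the minimum of the finitely many constants produced along the way (inward speed $\epsilon_0$, Lipschitz constant $\Lambda$, chart comparison constants, and a diameter bound for the regime $s-r\geq\epsilon_0$), the implication holds for all $r<s$.
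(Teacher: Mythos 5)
Your proof is correct in outline but takes a genuinely different route from the paper's. The paper argues by contradiction and sequential compactness: assuming the statement fails, it extracts sequences $r_n,s_n,x_n,y_n,\theta_n$ with $\theta_n\to 0$, passes to convergent subsequences with $r:=\lim r_n\leq s:=\lim s_n$, and handles two cases. For $r<s$, Lemma \ref{lem:stictily_decreasing} gives the contradiction directly. For $r=s$, it pulls $q_n=(\phi^{1-r_n})^{-1}(x_n)$, $q_n'=(\phi^{1-s_n})^{-1}(x_n)$, $p_n=(\phi^{1-s_n})^{-1}(y_n)$ back to $D_1$, notes they all converge to one boundary point $p\in\partial D_1$, works in a single adapted chart centered at $p$ (so Lemma \ref{lemma:interior_cone} applies literally), Taylor-expands to get $(p_n-q_n)/(s_n-r_n)\to\tilde v^{1-r}(p)\in\calC_pD_1$, and uses openness of the cone plus $q_n\in D_1$ to conclude $p_n\in\mathrm{int}(D_1)$, contradicting $p_n\notin D_1$.

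Your approach is a direct, constructive version of the same geometry: you transport to $D_1$, establish a uniform Lipschitz constant $\Lambda$ for the $(\phi^{1-s})^{-1}$, establish a uniform quantitative inward-displacement rate $\epsilon_0$, and close by taking $\theta<\epsilon_0/\Lambda$ (with a separate, easy argument for $s-r$ bounded away from zero). What you gain is an explicit mechanism producing $\theta$; what you pay is having to establish the uniformity yourself rather than extracting it for free from the contradiction set-up. Two points need more care than your sketch gives. First, your phrase ``the first $k$ coordinates of $d\chi(\tilde v^u)$ are strictly positive on the compact piece of $D_1$ inside that chart'' is not what the interior-cone condition says: the constraint on the $i$-th coordinate of $d\chi(\tilde v^u)$ is only active on the face $\{\chi_i=0\}$, not throughout the chart. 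The correct uniform bound takes, for each $i$, a neighborhood of that face on which the $i$-th component of $d\chi(\tilde v^u)$ stays positive (by continuity and compactness of the face and of $[0,1]$), and notes that away from this neighborhood the $i$-th $\chi$-coordinate is itself bounded below, so no inward motion in that direction is needed. Second, the Gr\"onwall step is glossed over: the velocity of $\epsilon\mapsto(\phi^{1-s})^{-1}\circ\phi^{1-s+\epsilon}(\zeta)$ equals $\tilde v^{1-s}(\zeta)$ only at $\epsilon=0$, so you need the interior-cone margin to persist with a fixed lower bound along the whole short flow, which again uses the compact-parameter continuity of $(u,\zeta)\mapsto\tilde v^u(\zeta)$. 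Neither issue is fatal; they are exactly the uniformity bookkeeping that the paper's sequential-compactness argument sidesteps by reducing everything to one boundary point and one adapted chart centered there. Both proofs hinge on the same inputs (smoothness of the generating isotopy, compactness of $D_1$, and openness of the interior cone), so the choice is one of taste; the paper's route is shorter and cleaner, yours is more explicit.
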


\begin{proof}
Assume the second statement to be false.
Then there exist sequences 
    \[
        0\leq r_n<s_n\leq 1,
        \quad 
        x_n\in D_{r_n},
        \quad
        y_n\in D,
        \quad
        \theta_n>0
    \]
such that $\lim_{n\to \infty }\theta_n=0$ and $\delta(x_n,y_n)<\theta_n(s_n-r_n)$ but $y_n\notin D_{s_n}$.
By passing to convergent subsequences we obtain
    \[
        r \defeq \lim_{n\to \infty }r_n,
        \quad
        s \defeq \lim_{n\to \infty }s_n,
        \quad
        z \defeq \lim_{n\to \infty} x_n,
    \]
with $r\leq s$. Since $\lim_{n\to \infty}\delta(x_n,y_n)=0$, we also have $\lim_{n\to \infty} y_n=z$.

First assume that $r<s$.
For large enough $n$ we have $(r+s)/2 < s_n$, and thus by the previous, 
    \[
        D_r \sub \mathrm{int}(D_{(r+s)/2}),
        \quad
        D_{(r+s)/2}\sub \mathrm{int}(D_{s_n}).
    \]
Note that $z \in D_r$, since it is the limit of $x_n\in D_{r_n}$. We conclude that $y_n\in D_{(r+s)/2}$ for large enough $n$, but this contradicts with $y_n \notin D_{s_n}$.

Next assume that $s=r$, and define 
    \begin{align*}
        q_n 
        &\defeq 
        (\phi^{1-r_n})^{-1}(x_n) \;\in\; D_1,
        \qquad
        q_n'  \defeq (\phi^{1-s_n})^{-1}(x_n) \;\in\; D_1,
        \\
        p_n
        &\defeq 
        (\phi^{1-s_n})^{-1}(y_n) \;\notin\; D_1.
    \end{align*}
These three sequences converge to the same boundary point 
    \[
        p\defeq (\phi^{1-r})^{-1}(z) \;\in\; \partial D_1.
    \]
Note that the metrics $\delta$ and $(\phi^{\lambda})_*\delta$ are equivalent in a compact neighborhood of $D_1$, uniformly in $\lambda\in [0,1]$. Therefore we obtain
    \begin{align*}
        \delta
        (
            p_n, q_n'
        )
        &=
        \delta 
        \big(
            p_n,
            (\phi^{1-s_n})^{-1} \circ \phi^{1-r_n}(q_n)
        \big)
        \\
        &=
        ((\phi^{1-s_n})_* \delta) 
        \big(
            \phi^{1-s_n}(p_n),
            \phi^{1-r_n}(q_n)
        \big)
        \\
        &\leq
        c\delta(x_n,y_n)
        \leq 
        c\theta_n(s_n-r_n).
\end{align*}
Consider a chart $(U,\chi )$ adapted to $D_1$ and centered at $p$:
    \[
        \chi (U\cap  D_1)
        =
        \chi (U)\cap  
        \big(
            [0,\infty)^k\times \R^{m-k}
        \big),
        \quad
        \chi(p) = 0.
    \]
By lemma \ref{lemma:interior_cone} we may identify
    $
        \calC_p D_1
        =
        (0,\infty)^k\times \R^{m-k}.
    $
Moreover, $q_n$ and $p_n$ belong to $U$ for $n$ large enough.
To simplify notation we denote by $\epsilon_n\in \R^m$ any sequence converging to $0$.
In the chart $(U,\chi )$ the above inequality can be expressed as
    \[
        p_n
        -
        q_n'
        =
        \epsilon_n (s_n-r_n).
    \]
The Taylor expansion of $q_n'$ with respect to $r_n$ at $s_n$ gives
    \begin{align*}
        q_n' - q_n
        &=
        (\phi^{1-s_n})^{-1}
        \circ  
        \phi^{1-r_n}(q_n)
        -
        q_n
        \\
        &=
        (s_n-r_n)
        ((\phi^{1-s_n})^* v^{1-s_n})_{q_n}
        +
        \epsilon_n(s_n-r_n)
        \\
        &=
        (s_n-r_n)
        ((\phi^{1-r})^*v^{1-r})_p
        +
        \epsilon_n(s_n-r_n).
    \end{align*}
Here we used that $\epsilon (s_n-r_n)^2 = \epsilon_n(s_n-r_n)$ for any $\epsilon \in \R^m$.
We conclude
    \[
        \lim_{n\to\infty}
        \frac{p_n-q_n}{s_n-r_n}
        =
        ((\phi^{1-r})^*v^{1-r})_p
        \;\in\; \calC_p D_1.
    \]
Since $\calC_p D_1$ is an open cone, this implies that $p_n-q_n\in \calC_p D_1$ for $n$ large enough.
On the other hand, in our adapted chart, $D_1$ is given by $\calC_p D_1$, and since $q_n\in D_1$, it follows that 
    \[
        p_n
        =
        (p_n-q_n)
        +
        q_n
        \in \calC_p D_1.
        \qedhere
    \]

\end{proof}

\subsubsection{Norms and the Symbol $\lesssim$}

Let $\{D_r\}_{r\in [0,1]}$ be a shrinking domain with corners in $D$.
We will adopt the simpler notation for $C^k$-norms of sections $e\in \Gamma_F(D_r)$:
\[\|e\|_{k,r}\defeq \|e\|_{k,D_r}.\]

In all results discussed below, the constants subsumed by the symbol $\lesssim$ are independent on the index $r\in [0,1]$ (or on the indexes $r,s\in [0,1]$).
This dependence is lost already in the extension lemma \ref{lem:extension}, on which all other results are based upon.

\subsubsection{Extension Operators}

Next, we show that sections on a shrinking domain with corners can be extended to compactly supported functions in a uniformly ``tame'' way.
The first part of the proof is based on that of
\cite[Corollary 1.3.7, pg.138]{Ham82}.

\begin{lemma}
\label{lem:extension}
Let $F$ be a vector bundle over $D$, let $\{D_r\}_{r\in [0,1]}$ be a shrinking domain with corners in $D$, and consider a compact set $K\sub D$ such that $D_1\sub \mathrm{int}(K)$.
There exist linear maps:
\[\sigma_r:\Gamma_F(D_r)\longrightarrow \Gamma_K(D, F),\quad r\in [0,1],\]
satisfying
\[\|\sigma_r(e)\|_{k,K}
	\|e\|_{k,r}, \ \ \ \ \sigma_r(e)\big|_{D_r}=e,\]
for all $e\in \Gamma_F(D_r)$.
\end{lemma}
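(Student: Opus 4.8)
The statement asserts the existence of uniformly tame extension operators $\sigma_r \colon \Gamma_F(D_r) \to \Gamma_K(D,F)$. The plan is to build them from two ingredients: a fixed ``global'' extension operator on the biggest domain $D_1$ (whose existence is the non-trivial analytic input, extracted from \cite[Corollary 1.3.7]{Ham82}), and the reparametrization diffeomorphisms $\phi^r$ that come packaged with the nested domain structure. The point is that the $\phi^r$ let us transfer the problem of extending from $D_r$ to the problem of extending from $D_1$, at a cost in the $C^k$-norms that is bounded uniformly in $r$, precisely because $\{\phi^r\}_{r\in[0,1]}$ is a \emph{smooth} (hence $C^k$-bounded on compacta) family.

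\textbf{Step 1: the base case $r=1$.} First I would invoke the Hamilton-type extension result: for the vector bundle $F$ over the manifold with corners $D_1$ and the compact set $K$ with $D_1 \subset \mathrm{int}(K)$, there is a linear operator $\sigma_1 \colon \Gamma_F(D_1) \to \Gamma_K(D,F)$ with $\sigma_1(e)|_{D_1} = e$ and $\|\sigma_1(e)\|_{k,K} \lesssim \|e\|_{k,1}$ for all $k$. This is where the ``corners'' hypothesis is actually used (extension across a corner requires the reflection-type construction of \cite{Ham82}); everything afterward is soft. I would also fix, once and for all, a bump function $\chi \colon D \to [0,1]$ supported in $\mathrm{int}(K)$ and equal to $1$ on a neighborhood of $D_1$, and replace $\sigma_1$ by $\chi \cdot \sigma_1$ if necessary to guarantee the support condition without affecting the restriction property or the estimates (multiplication by a fixed smooth compactly supported function is tame).

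\textbf{Step 2: transfer via the reparametrization.} Recall from Definition \ref{def:nested_domain} that there is a smooth family of diffeomorphisms $\phi^r \colon D \xrightarrow{\sim} D$ with $\phi^0 = \id_D$ and $\phi^{1-r}(D_1) = D_r$. Given $e \in \Gamma_F(D_r)$, I would set
\[
    \sigma_r(e) \defeq \big((\phi^{1-r})^{-1}\big)^* \, \sigma_1\big((\phi^{1-r})^* e\big),
\]
where $(\phi^{1-r})^* e \in \Gamma_F(D_1)$ is the pullback of $e$ along the diffeomorphism $\phi^{1-r}$ restricted to $D_1$ (which maps $D_1$ into $D_r$), and the outer pullback pushes the global section on $D$ forward again. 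Linearity in $e$ is immediate. For the restriction property: on $D_r$ we have $(\phi^{1-r})^{-1}(D_r) = D_1$, and since $\sigma_1$ restricts to the identity on $D_1$, a direct computation gives $\sigma_r(e)|_{D_r} = e$. For the support: $\sigma_1(\cdot)$ is supported in $K$; pushing forward by $(\phi^{1-r})^{-1}$ moves the support into $(\phi^{1-r})^{-1}(K)$, which is contained in a fixed compact set $K'$ independent of $r$ since $\{(\phi^{1-r})^{-1}(K)\}_{r\in[0,1]}$ is a continuous family of compacta; enlarging $K$ to $K'$ from the outset (this does not disturb the hypothesis $D_1 \subset \mathrm{int}(K)$ as long as $D_1 \subset \mathrm{int}(K')$, which holds because $\phi^{1-r}(D_1) = D_r \subset D_1$, so $D_1 \subset (\phi^{1-r})^{-1}(D_1) \subset (\phi^{1-r})^{-1}(K)$) takes care of this.

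\textbf{Step 3: the uniform tame estimate.} This is the step requiring the most care and is the main obstacle: I must show $\|\sigma_r(e)\|_{k,K} \lesssim \|e\|_{k,r}$ with a constant independent of $r$. The chain of reasoning is: (i) $\|(\phi^{1-r})^* e\|_{k,1} \lesssim \|e\|_{k,D_r}$, using the composition estimate Lemma \ref{lemma:local comp}(a) (in local charts) together with the fact that $\{\phi^{1-r}|_{D_1}\}_{r\in[0,1]}$ is a smooth family of embeddings $D_1 \hookrightarrow D$, hence all its $C^k$-norms are bounded by a constant $C_k$ depending only on $k$ (compactness of $[0,1]$ and of $D_1$); (ii) $\|\sigma_1(\cdot)\|_{k,K} \lesssim \|\cdot\|_{k,1}$ by Step 1; (iii) $\|((\phi^{1-r})^{-1})^* g\|_{k,K} \lesssim \|g\|_{k,(\phi^{1-r})^{-1}(K)} \le \|g\|_{k,K}$ (after enlarging $K$ as in Step 2), again via Lemma \ref{lemma:local comp}(a) and the uniform $C^k$-boundedness of $\{(\phi^{1-r})^{-1}\}_{r\in[0,1]}$. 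Composing (i)--(iii) gives the claim. I would organize this as a short lemma: \emph{if $\{\psi^r\}_{r\in[0,1]}$ is a smooth family of open embeddings of one manifold into another, then $\|(\psi^r)^* e\|_{k,A} \lesssim \|e\|_{k,\psi^r(A)}$ uniformly in $r$, for any compact $A$ with dense interior}, proved by covering $A$ by finitely many charts and applying the local composition estimates already established in Section \ref{sect:lemmas}. The only subtlety worth double-checking is that $C^k$-norms are computed with respect to fixed jet-bundle metrics, so one needs the standard-vs.-local-trivialization equivalence (Lemma \ref{lemma:comparison}) to pass between the intrinsic norms in the statement and the chart-wise norms where Lemma \ref{lemma:local comp} applies; this is routine given the machinery of the previous subsection.
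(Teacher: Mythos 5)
Your overall strategy matches the paper's: first build $\sigma_1$ on $D_1$ from the reflection-type extension operator of Hamilton, then conjugate by the reparametrization diffeomorphisms $\phi^{1-r}$ that come with the nested domain. However, there are two genuine gaps in the transfer step.

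First, the expression $(\phi^{1-r})^* e$ for a section $e$ of a general vector bundle $F$ is not defined: pulling back $e$ along the base diffeomorphism $\phi^{1-r}$ produces a section of $(\phi^{1-r})^* F$, not of $F|_{D_1}$. To make your formula meaningful you need a lift of $\phi^{1-r}$ to a vector bundle isomorphism $F \to F$ covering it, chosen smoothly in $r$. The paper constructs such a family $\widehat{\phi}^r$ explicitly, for instance as parallel transport along a fixed linear connection over the paths $\phi^t(\pi(y))$; the smoothness of this family is exactly what makes the pullback and pushforward of sections well-defined and, via a smoothly varying family of equivalent norms $n_k^s$ on $J^kF$, gives the uniform $C^k$-bounds you invoke in Step 3.

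Second, your support argument is not salvageable by ``enlarging $K$ to $K'$.'' The lemma asks for operators landing in $\Gamma_K(D,F)$ for the \emph{given} $K$, so producing an operator into $\Gamma_{K'}(D,F)$ for some larger $K'$ is not the assertion. And indeed the pushforward can genuinely move support outside $K$: the diffeomorphisms $\phi^s$ are only constrained to push inward on $D_1$, so on $K\setminus D_1$ they may move points arbitrarily. The correct fix runs in the opposite direction. Apply the tube lemma to the map $\phi \colon [0,1]\times D \to D$, using that $\phi^{1-r}(D_1) = D_r \sub D_1 \sub \mathrm{int}(K)$ for all $r$, to find a compact set $L$ with $D_1 \sub \mathrm{int}(L)$ and $\phi^{1-r}(L) \sub K$ for all $r\in[0,1]$. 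Then build $\sigma_1$ to land in $\Gamma_L(D,F)$ rather than $\Gamma_K(D,F)$; the pushforward $(\widehat{\phi}^{1-r})_*$ then moves the support from $L$ into $\phi^{1-r}(L) \sub K$, as required, uniformly in $r$.
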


\begin{proof}
We construct first $\sigma_1$, by adapting the argument in \cite[Corollary 1.3.7, pg.138]{Ham82}.
Let $\zeta=(U,B,C,\chi ,\Xi,\rho)$ consist of:
\begin{itemize}
\item a chart $(U,\chi )$ on $D$, which is adapted to $D_1$:
\[\chi (U\cap  D_1)=\chi (U)\cap  \R^m_l, \]
where we have denoted
\[\R^m_l\defeq [0,\infty)^{l}\times\R^{m-l},\]
\item compact sets $B$, $C$ such that $B\sub \mathrm{int}(C)$,
\item a linear frame $\Xi:F|_U\to \R^n$ with associated trivialization:
\[\widehat{\chi }\defeq (\chi \circ \pi)\times\Xi:F|_U\to \R^m\times \R^n,\]
\item a map $\rho\in C^{\infty}(\R^m)$ such that $\rho|_{\chi (B)}=1$ and $\rho$ is supported inside $C$.
\end{itemize}

We first construct an extension map
    \[
        \sigma_{\zeta}:
        \Gamma_{B\cap  D_1}({D_1}, F) \longrightarrow \Gamma_{C}({D}, F).
    \]
Let $e\in \Gamma_{B\cap  D_1}({D_1}, F)$, and denote its local representation by
\[e^{\zeta}\defeq \Xi \circ  e\circ \chi ^{-1}:\chi (U)\cap  \R^m_l\longrightarrow \R^n.\]
Note that $e^{\zeta}$ is supported inside the compact set $\chi (B)\cap \R^m_l$.
By setting $e^{\zeta}$ to be zero on $\R^m_l\backslash \chi (B)$, it can be regarded as a map
\[e^{\zeta}: \R^m_l\longrightarrow \R^n,\]
which has support in $\chi (B)$.
Next, we extend $e^{\zeta}$ to a map
\[e^{\zeta}_1:\R^m_{l-1}\to \R^n,\]
defined for
\[(a,-x,b)\in [0,\infty)^{k-1}\times (-\infty,0)\times \R^{m-l}=\R^m_{l-1}\backslash\R^m_{l}\]
as follows:
\[e^{\zeta}_1(a,-x,b)\defeq \int_0^{\infty}\phi(t)e^{\zeta}(a,tx,b)\dd t,\]
where $\phi$ is function used and defined in \cite[Corollary 1.3.7, pg.138]{Ham82}.
Repeating this procedure, one obtains maps $e^{\zeta}_i$ defined on 
    \[
        [0,\infty)^{l-i}\times (-\infty,0)\times \R^{m-l+i},
        \quad 
        0\leq i\leq l,
    \]
with $e^{\zeta}=e^{\zeta}_0$, and with $e_{i+1}^{\zeta}$ extending $e_i^{\zeta}$.

Next, note that $\rho \cdot  e_{l}^{\zeta}$ extends $e^{\zeta}$ and is supported in $\chi (C)$.
Define
\[\sigma_{\zeta}(e)\in \Gamma_{C}({D}, F)\]
to be the section with local representative $\rho \cdot  e_{l}^{\zeta}$.
Using
  the equivalence of norms coming from jet spaces (\ref{eq:standard}), the multiplicativity rule (\ref{eqn:D-product}), that the map $\phi$ has globally bounded derivatives, as explained in
\cite[Corollary 1.3.7, pg.138]{Ham82}, and then again (\ref{eq:standard}), we obtain that the map $\sigma_{\zeta}$ satisfies:
\[\|\sigma_{\zeta}(e)\|_{k,C} \lesssim |\rho \cdot  e^{\zeta}_l|_{k,\chi (C)}\lesssim |e^{\zeta}_l|_{k,\R^m}\lesssim |e^{\zeta}|_{k,\R^m_k\cap  \chi (B)}
	\|e\|_{k,D_1\cap  B}
.\]

Consider now a finite family of local trivializations 
    \[
        \{\zeta^i=(U^i,B^i,C^i,\chi ^i,\Xi^i,\rho^i)\}_{i\in I}
    \]
with the properties from above,
so that we construct the extension maps
    \[
        \sigma_{\zeta^i}:
        \Gamma_{D_1\cap  B^i}({D^1},F) 
        \longto 
        \Gamma_{C^i}({D}, F).
    \]
Moreover, we choose this family such that
\[D_1\sub \cup _{i\in I}\mathrm{int}(B^i), \ \ \cup _{i\in I}C^i\sub K.\]

Consider a partition of unity $\{\eta^i\}_{i\in I}$ on $D_1$ subordinate to the cover $\{\mathrm{int}(B^i)\}_{i\in I}$.
We define the extension map
\[\sigma_1:\Gamma_F(D_1)\longrightarrow \Gamma_{K}(D, F),\]
\[\sigma_1(e)\defeq \sum\nolimits_{i\in I}\sigma_{\zeta^i}(\eta^i\cdot  e).\]
It is straightforward that $\sigma_1$ satisfies the required estimates.

Let $\{\phi^r\}_{r\in [0,1]}$ be a family of diffeomorphisms of $D$ which generates the shrinking domain, hence $D_{r}=\phi^{1-r}(D_1)$.
By applying the tube lemma to the map $\phi:[0,1]\times X\to X$, one can easily prove the existence of a compact set $L$ such that
\[D_1\sub \mathrm{int}(L) \ \ \ \textrm{and}\ \ \ \phi^{1-r}(L)\sub K,\ \ \textrm{for all}\ r\in [0,1].\]
 %To see this, note that the map
%\[[0,1]\times X\longrightarrow X, \ \ (r,x)\mapsto \phi^{1-r}(x)\]
%sends $[0,1]\times D_1$ into $D_1\sub \mathrm{int}(K)$.
Applying the Tube lemma there is a neighborhood $O$ of $D_1$ such that
%\[\phi^{1-r}(O)\sub \mathrm{int}(K).\]
%Let $L\defeq \overline{O}\sub K$.
Clearly, we may assume that the map $\sigma_1$ constructed above maps to $\Gamma_{L}(D, F)$.

Consider a smooth family of vector bundle isomorphisms
\[\widehat{\phi}^r:F\longrightarrow F, \ \ \ r\in [0,1],\]
such that $\widehat{\phi}^0=\id_F$ and such that $\widehat{\phi}^r$ covers $\phi^r$.
For example, one can set $\widehat{\phi}^r(y)$ to be the parallel transport with respect to a linear connection of $y$ above the path $\phi^t(\pi(y))$, $t\in [0,r]$.

Define the extension operator by conjugating with the pullback by $\widehat{\phi}^{1-r}$,
\[\sigma_r\defeq (\widehat{\phi}^{1-r})_*\circ  \sigma_1\circ  (\widehat{\phi}^{1-r})^*,\]
where these maps have the following domains:
\begin{align*}
    (\widehat{\phi}^{1-r})^*
    &:
    \Gamma_F(D_r) \longrightarrow \Gamma_F(D_1),
    \\
    \sigma_1
    &:
    \Gamma_F(D_1) \longrightarrow \Gamma_L(D,F),
    \\
    (\widehat{\phi}^{1-r})_*
    &:
    \Gamma_{L}(D, F) \longrightarrow \Gamma^{\phi_{1-r}(L)}(D, F) \sub \Gamma_{K}(D, F).
\end{align*}
Clearly, this is an extension operator:
\[\sigma_r(e)|_{D_r}=e \ \ \textrm{for} \ e\in \Gamma_F(D_r).\]
It remains to verify the tameness inequalities.
First note that there is an induced isomorphisms between the $k$-th jet bundles, $j^k\widehat{\phi}^s:J^kF\to J^kF$ covering $\phi^s$, defined by the relation:
\[(j^k_x\widehat{\phi}^s)(j^k_xe)=j^k_{\phi_s(x)}(\widehat{\phi}^s_*(e)),\ \ \ e\in \Gamma_F(D).\] Let $n_k^s$ be the pullback via this map of the norm $n_k$ on $J^kF$ used to define the $C^k$-norms.
Then $n_k^s$ is a smooth family of norms on $J^kF$. Therefore, for any compact subset $C\sub D$ we can find constants $c(k,C)>0$ such that
\[n_k(\alpha)/c(k,C)\leq  n_k^s(\alpha)\leq  c(k,C) n_k(\alpha), \]
for all $\alpha\in J^kF|_{C}$ and $s\in [0,1]$.
If $C$ has dense interior, $C=\overline{\mathrm{int}(C)}$, this implies that the induced linear isomorphisms
\[(\widehat{\phi}^{s})_*:\Gamma_F(C)\longrightarrow \Gamma_F(\phi^s(C))\]
satisfy the following estimates:
\[\|e\|_{k,C}
	\|(\widehat{\phi}^{s})_*(e)\|_{k,{\phi}^{s}(C)}
	\|e\|_{k,C}, \ \ \ e\in \Gamma_F(C).\]
Applying these inequalities for $C=D_1$ and for $C=L$ in the composition
\[\sigma_r=(\widehat{\phi}^{1-r})_*\circ  \sigma_1\circ  (\widehat{\phi}^{1-r}_*)^{-1}\]
we obtain that the inequality from the statement follows from the respective inequality for $\sigma_1$.
\end{proof}

\subsubsection{Extension Operators for Maps}
\label{subsub:extension_maps}

Let $\{X_r\}_{r\in [0,1]}$ be a shrinking domain with corners in $X$.
Let $K\sub X$ be a compact subset such that $X_1\sub \mathrm{int}(K)$.
We build extension operators for maps $X_r\to X$ as follows.
Denote the extension operators for $TX$ from lemma \ref{lem:extension} by
\[\sigma'_r:\mathfrak{X}(X_r)\longrightarrow \mathfrak{X}_K(X).\]
Conjugating by the isomorphism (\ref{exp_iso}) we find extension operators
\begin{align*}
\sigma_r:\mathcal{U}_{X_r,K}\longrightarrow C^{\infty}_K(X,X), \\
\phi\mapsto \mathcal{E}_*\circ \sigma_r'\circ  \mathcal{E}^{-1}_*(\phi),
\end{align*}
which are defined on the $C^0$-neighborhood of the inclusion $X_r\hookrightarrow X$:
\[\mathcal{U}_{X_r,K}\defeq \mathcal{U}_{X_r}\cap  \mathcal{E}_*((\sigma_r')^{-1}(\mathcal{V}_K)).\]
These maps also satisfy:
\begin{equation}\label{eq:ext}
\|\sigma_r(\phi)\|_{k,K}
	\|\phi\|_{k,r}, \ \ \ \ \sigma_r(\phi)|_{X_r}=\phi.
\end{equation}

\subsubsection{Smoothing Operators}

We use the lemma \ref{lem:extension} and some results from \cite{Nash} to obtain \textbf{smoothing operators}\index{smoothing operators}.
\begin{lemma}
Let $\{D_r\}_{r\in [0,1]}$ be a shrinking domain with corners.
There exists a family of linear maps:
\[S_t^r:\Gamma_F(D_r)\longrightarrow \Gamma_F(D_r), \ \ r\in [0,1], \ t> 1,\]
which satisfy the smoothing inequalities:
\[\|S_t^r e\|_{k,r}\lesssim t^{l}\|e\|_{k-l,r}  \ \  \ \|e-S_t^re\|_{k-l,r}\lesssim t^{-l}\|e\|_{k,r},\]
for all $e\in \Gamma_F(D_r)$ and all $0\leq l\leq k$.
\end{lemma}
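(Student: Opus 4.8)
The plan is to obtain the smoothing operators on the shrinking domain with corners by conjugating a single family of smoothing operators over a fixed compact manifold with corners by the extension operators of Lemma~\ref{lem:extension}. First I would fix a compact manifold with corners $K$ of codimension zero inside $D$ with $D_1 \subset \mathrm{int}(K)$ --- this is possible precisely because $D_1$ is itself a compact domain with corners --- so that, since $\{D_r\}$ is shrinking, every $D_r$ lies in $\mathrm{int}(K)$. On $\Gamma_F(K)$ one has the standard linear smoothing operators $S_t^K$, $t>1$, with $\|S_t^K(e)\|_{k,K}\lesssim t^l\|e\|_{k-l,K}$ and $\|e-S_t^K(e)\|_{k-l,K}\lesssim t^{-l}\|e\|_{k,K}$ for $0\le l\le k$ (this is the compact-base case, as in \cite{Ham82}, which is what \cite{Nash} also provides). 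Then I would define, for $e\in\Gamma_F(D_r)$ and $t>1$,
\[ S_t^r(e)\;\defeq\;\bigl(S_t^K(\sigma_r(e))\bigr)\big|_{D_r} \]
where $\sigma_r:\Gamma_F(D_r)\to\Gamma_K(D,F)$ is the extension operator of Lemma~\ref{lem:extension}, whose image we view inside $\Gamma_F(K)$ by restriction. Linearity of $S_t^r$ is immediate, since it is a composition of linear maps.

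The verification of the two smoothing inequalities is then routine bookkeeping, the key point being uniformity of all constants in $r$: Lemma~\ref{lem:extension} supplies $r$-independent constants, the estimates for $S_t^K$ do not involve $r$, and restriction from $K$ to the subset $D_r$ does not increase the $C^k$-norms once one fixes a common choice of metrics on the jet bundles (using the equivalence (\ref{eq:standard})). Concretely, for the first inequality I would chain $\|S_t^r(e)\|_{k,r}\le\|S_t^K(\sigma_r(e))\|_{k,K}\lesssim t^l\|\sigma_r(e)\|_{k-l,K}\lesssim t^l\|e\|_{k-l,r}$; for the second, I would use $\sigma_r(e)|_{D_r}=e$ to write $e-S_t^r(e)=\bigl(\sigma_r(e)-S_t^K(\sigma_r(e))\bigr)|_{D_r}$ and then chain $\|e-S_t^r(e)\|_{k-l,r}\le\|\sigma_r(e)-S_t^K(\sigma_r(e))\|_{k-l,K}\lesssim t^{-l}\|\sigma_r(e)\|_{k,K}\lesssim t^{-l}\|e\|_{k,r}$.

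Since Lemma~\ref{lem:extension} has already been established, I do not expect a genuine obstacle here; the only places demanding a little care are the choice of $K$ as a compact manifold with corners containing $D_1$ in its interior (so that the compact-base smoothing operators genuinely apply to $\Gamma_F(K)$), and the uniform comparison of the $C^k$-norms on $D_r$, $D_1$ and $K$, both of which are straightforward consequences of compactness together with (\ref{eq:standard}). Once these are in place, the three estimate chains above combine to give the stated smoothing inequalities, and, as remarked in the text, the interpolation inequalities on each $D_r$ follow from them in the usual way.
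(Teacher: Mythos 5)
Your proposal is correct, and it uses the same key ingredient as the paper's proof: the extension operators $\sigma_r$ of Lemma~\ref{lem:extension}, whose $r$-independent constants are what make the final estimates uniform. The route you take after extending is slightly different from the paper's, though. You invoke, as a black box, the existence of smoothing operators on $\Gamma_F(K)$ for a fixed compact manifold with corners $K$ (citing Hamilton), and then chain the three estimates through $\sigma_r$ and restriction to $D_r$. The paper instead does not smooth on $K$ directly: after applying $\sigma_r$ it uses a proper embedding $D\hookrightarrow \R^N$, a tubular neighborhood, and a cutoff $\rho$ supported near $K$ to land in $C^\infty_c(\R^N)$, and then convolves with Nash's explicit kernel $K_t$ before restricting back to $D_r$; for general $F$ it embeds $F$ as a summand of a trivial bundle. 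In effect the paper re-derives, in a self-contained way, the compact-base smoothing statement you cite, and your version is the more modular packaging of the same argument. One small point to tidy in yours: the assertion that one can choose $K$ to be a compact manifold with corners with $D_1\subset\mathrm{int}(K)$ should be justified (e.g.\ by pushing $D_1$ slightly outward along the inward-pointing vector field that generates the nested domain, or by taking a collar of $\partial D_1$), since $D_1\not\subset\mathrm{int}(D_1)$ itself; this is standard but worth a word. Your three estimate chains are otherwise exactly right, the implicit uniform comparison of $C^k$-norms on $D_r\subset K$ follows from the fixed choice of jet-bundle metrics and (\ref{eq:standard}), and linearity is immediate.
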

\begin{proof}
Assume first that $F=\R\times D$.
Consider the following objects:
the extension map $\sigma_r:C^{\infty}(D_r)\to C^{\infty}_K(D)$ from the previous lemma with $K$ compact and $D_1\sub \mathrm{int}(K)$; a proper embedding $i:D\hookrightarrow \R^{N}$, where we identify $D\cong  i(D)$; a tubular neighborhood $T(D)\sub \R^N$ with projection $p:T(D)\to D$; an open set $V$ such that $K\sub V\sub T(D)$ with $\overline{V}$ compact; a smooth function $\rho:\R^N\to [0,1]$ such that $\rho|_{K}= 1$ and with $\mathrm{supp}(\rho)\sub V$.
We consider the following map:
\[\lambda_r:C^{\infty}(D_r)\longrightarrow C^{\infty}_{\overline{V}}(\R^m), \ \ \lambda_r(f)\defeq \rho \cdot  (\sigma_r(f)\circ  p).\]
It is easy to see that $\lambda_r$ satisfies
\[\|\lambda_r(f)\|_{k,\overline{V}}
	\|f\|_{k,r}, \ \ \ \lambda_r(f)|_{D_r}=f.\]
Using the family of smooth functions $K_t:\R^N\to \R$ constructed in \cite{Nash}, define the smoothing operators as follows:
\[S_t^r:C^{\infty}(D_r)\to C^{\infty}(D_r), \ \ \ S_t^r(f)=\left(K_t \ast \lambda_r(f)\right)|_{D_r},\]
where $\ast$ denotes the convolution product.
In \cite{Nash} it is shown that these operators satisfy indeed the smoothing inequalities.

For $F\cong \R^n\times D$ the above construction can be performed component-wise.
For general $F$ there exists a second vector bundle $F'$ such that 
    \[
        F\oplus F'\cong \R^n\times D.
    \]
Let $i_F:F\to \R^n\times D$ be the corresponding inclusion map, and 
    \[
        p_F:\R^n\times D\to F
    \]
the corresponding projection map.
If $S^r_t$ are smoothing operators for $\R^n\times D$, then 
    $
        (p_F)_*\circ  S^r_t\circ  (i_F)_*
    $
are smoothing operators for $F$.
\end{proof}

The norms $\|\cdot \|_{k,r}$ satisfy the following \textbf{interpolation inequalities}\index{interpolation inequalities}:
\[\|e\|_{k,r}^{n-l}
	\|e\|_{l,r}^{n-k}\|e\|_{n,r}^{k-l}, \ \ \ \  l \leq k\leq n.\]
This can be proven directly, as in \cite{Conn85}, but it is also a consequence of the existence of smoothing operators, as in \cite[Corollary 1.4.2, pg.176]{Ham82}.

\subsubsection{Proof of lemma A}
\label{sec:proof_of_lemma_A}

Let $L\sub M$ be a compact set such that $M_1\sub \mathrm{int}(L)$.
Then $q_0^{-1}(\mathrm{int}(L))$ is an open neighborhood of the compact set $N_1$. Therefore, there exists a compact set $K$ such that $N_1\sub \mathrm{int}(K)$ and $K\sub q_0^{-1}(\mathrm{int}(L))$, hence $q_0(K)\sub \mathrm{int}(L)$.
We consider the extension operators associated to $L$:
    \[
        \sigma_r:
        \Gamma_{M_r}E
        \longrightarrow \Gamma_{M}^LE.
    \]
Let $e\in \Gamma_{M_r}E$ be such that $\|e\|_{d,r}<\theta$.
Since $\|\sigma_r(e)\|_{d,L}
	\|e\|_{d,r}$, after shrinking $\theta$, we may apply lemma \ref{lemma:A_global} to obtain
\begin{align*}
\|Q(e)-D_0Q(e)\|_{k,r}&\leq \|Q(\sigma_r(e))-D_0Q(\sigma_r(e))\|_{k,K} \\
&\lesssim\|\sigma_{r}(e)\|_{d,L}\|\sigma_{r}(e)\|_{k+d,L}\lesssim\|e\|_{d,r}\|e\|_{k+d,r}.
\end{align*}
This concludes the proof of \nameref{Lemma:A}.

\subsubsection{Proof of lemma B}
\label{sec:proof_of_lemma_B}

Let $v\in \gerX(X_s)$ be a vector field as in the statement:
\[\|v\|_{0,s}<(s-r)\theta, \ \ \ \|v\|_{1,s}<\theta.\]
Fix a compact subset $L\sub X$ such that $X_1\sub \mathrm{int}(L)$.
Using the corresponding extension operator, we define
\[\widetilde{v}\defeq \sigma_s(v)\in \mathfrak{X}_L(X).\]
By the properties of the extension operators, $\|\widetilde{v}\|_{k,L}
	\|v\|_{k,s}$.
For $k=0$, this allows us to apply lemma \ref{lemma:B_1_global} to $\widetilde{v}$. 
Hence its flow satisfies 
    \[
        \varphi_{\widetilde{v}}^t\in C^{\infty}_{L}(X,X)\cap  \mathcal{U}_L
    \]
and
\begin{equation}\label{ineq:tame_flow}
\|\varphi^t_{\widetilde{v}}\|_{k,L}\lesssim\|\widetilde{v}\|_{k,L}
	\|v\|_{k,s}, \ \ \textrm{for }\ t\in [0,1].
\end{equation}
In particular, $\|\varphi^t_{\widetilde{v}}\|_{0,s}\leq (s-r)c\theta$, for some constant $c>0$.
This property implies that, for any Riemannian metric on $X$, there is a constant $C>0$ such that $\varphi^t_{\widetilde{v}}$ cannot displace points by a distance of more than $(s-r)C\theta$.
This and lemma \ref{lem:stictily_decreasing} imply that, after shrinking $\theta$, we may assume that
\[\varphi^t_{\widetilde{v}}(X_r)\sub X_s.\]
Since $v|_{X_s}=\widetilde{v}|_{X_s}$, the above property implies that, for $x\in X_r$, $\varphi^t_{\widetilde{v}}(x)$ is also a flow line of $v$.
So the flow of $v$ is defined for $t\in [0,1]$ on $X_r$ and it coincides with that of $\widetilde{v}$:
\[\varphi^t_{\widetilde{v}}|_{X_r}=\varphi^t_{v}|_{X_r}:X_r\longrightarrow X_s.\]
The estimates (\ref{ineq:tame_flow}) imply the estimates (1) from \nameref{Lemma:B}.

Next, let $e\in \Gamma_{M_s}E$ be a section as in the statement:
\[\|e\|_{0,s}<(s-r)\theta, \ \ \ \|e\|_{1,s}<\theta.\]
Consider the compact set $K\defeq L\cap  M$.
Then
\[M_1\sub X_1\cap  M\sub \left(\mathrm{int}(L)\right)\cap  M\sub \mathrm{int}(L\cap  M)=\mathrm{int}(K).\]
We use the extension operator corresponding $K$, we define:
\[\widetilde{e}\defeq \sigma_s(e)\in \Gamma_K(M, E),\]
which satisfies $\|\widetilde{e}\|_{k,K}
	\|e\|_{k,s}$.
By shrinking $\theta$ if necessary, we may apply lemma \ref{lemma:B_23_global} to $K$, $L$, $\widetilde{v}$ and $\widetilde{e}$ and conclude that the action of $\varphi_{\widetilde{v}}$ on $\widetilde{e}$ is defined:
    \[
        \widetilde{e} \cdot  \varphi_{\widetilde{v}} \in \Gamma_K(M, E),
    \]
and, using also the estimates for the extension operators, that it satisfies:
\[\|\widetilde{e}\cdot \varphi_{\widetilde{v}}\|_{k,K}\lesssim\|v\|_{k,s}+\|e\|_{k,s}.\]
\[\|\widetilde{e}\cdot \varphi_{\widetilde{v}}-\widetilde{e}-\delta \widetilde{v}\|_{k,K}\lesssim(\|v\|_{0,s}+\|e\|_{0,s})\|v\|_{k+1,s}+\|v\|_{0,s}\|e\|_{k+1,s}.\]

Finally, we show that the right action of $\varphi_v$ is defined on $e$ over $X_r$, and
\begin{equation}\label{eq:domain3}
(e\cdot  \varphi_v)|_{X_r}=(\widetilde{e}\cdot \varphi_{\widetilde{v}})|_{X_r}.
\end{equation}
Then the conclusion will follow immediately, because the following holds:
\[(\delta\widetilde{v})|_{X_r}=(\delta v)|_{X_r}.\]

First note that, by shrinking $\theta$, we may assume that 
\begin{equation}\label{eq:domain1}
\varphi_{\widetilde{v}}|_{X_{(r+s)/2}}=\varphi_{v}|_{X_{(r+s)/2}}:X_{(r+s)/2}\longrightarrow X_s.
\end{equation}
By the estimates for $\widetilde{e}\cdot \varphi_{\widetilde{v}}$, there exists a constant $c>0$ such that 
\[\|\widetilde{e}\cdot \varphi_{\widetilde{v}}\|_{0,K}\lesssim(s-r)c\theta.\]
In particular, this shows that $\widetilde{e}\cdot \varphi_{\widetilde{v}}$ sends a point in $M_r$ to a point at distance at most $C(s-r)\theta$, for some $C>0$ and some fixed Riemannian distance on $X$.
Thus, by shrinking $\theta$, we may apply lemma \ref{lem:stictily_decreasing} to obtain that
\begin{equation}\label{eq:domain}
\widetilde{e}\cdot \varphi_{\widetilde{v}}(M_r)\sub X_{(s+r)/2},
\end{equation}
where we have used that 
\[M_r\sub X_r.\]
Next, recall from the proof of lemma \ref{lemma:global_action} that the map 
\[\omega=\pi \circ  \varphi^{-1}_{\widetilde{v}}\circ  \widetilde{e}\in C^{\infty}_K(M,M)\]
satisfies
\[\|\omega\|_{k,K}
	\|\varphi_{\widetilde{v}}\|_{k,L}+\|\widetilde{e}\|_{k,K}.\]
Applying also lemma \ref{lemma:diffeo}, we obtain that there exists a constant $c>0$ such that 
$\|\omega^{-1}\|_{0,K}\leq (s-r)c\theta$.
Applying again lemma \ref{lem:stictily_decreasing} as before, we obtain that
\begin{equation}\label{eq:domain2}
\omega^{-1}(M_r)\sub M_{s}.
\end{equation}
Finally, we will apply lemma \ref{lemma:restricting_action} for the sets:
\[O'=M_{r}\sub O=M,\ \ U'=X_{(r+s)/2}\sub U=X, \  \ V'=M_{s}\sub V=M.\]
Note that, in the notation of lemma \ref{lemma:restricting_action}, \[f=\widetilde{e}\cdot \varphi_{\widetilde{v}} \  \ \textrm{and} \ \ \sigma=\omega^{-1},\] 
thus (\ref{eq:domain}) and (\ref{eq:domain2}) give the assumptions of the lemma:
\[f(O')\sub U'\ \ \ \textrm{and}\ \ \ \sigma(O')\sub V'.\]
We have that $\widetilde{e}|_{V'}=e$ and, by (\ref{eq:domain1}), that $\varphi_{\widetilde{v}}|_{U'}=\varphi_{v}|_{U'}$. Therefore, the lemma \ref{lemma:restricting_action} implies (\ref{eq:domain3}) which concludes the proof of \nameref{Lemma:B}.

\subsubsection{Proof of lemma C}
\label{sec:proof_of_lemma_C}

Let $\phi_{\nu}:X_{r_{\nu}}\to X_{r_{\nu-1}}$ be a sequence of maps satisfying the conditions of \nameref{Lemma:C}.
Let $K\sub X$ be a compact subset such that $X_1\sub \mathrm{int}(K)$, and consider the corresponding extension operators from subsection \ref{subsub:extension_maps}:
\[\sigma_{r}:\mathcal{U}_{X_r,K}\longrightarrow C^{\infty}_{K}(X,X), \ \ \  r\in[0,1].\]
By shrinking $\theta$, we may assume that $\phi_{\nu}\in \mathcal{U}_{X_{r_{\nu}},K}$.
Let
\[\widetilde{\phi}_{\nu}\defeq \sigma_{r_{\nu}}(\phi_{\nu})\in C^{\infty}_K(X,X).\]
By the properties of the extension operators (\ref{eq:ext}), we may assume that the sequence $\widetilde{\phi}_{\nu}$ satisfies the hypothesis of lemma \ref{lemma:C_global}.
Therefore the sequence
\[\widetilde{\psi}_{\nu}\defeq \widetilde{\phi}_{1}\circ \ldots \circ  \widetilde{\phi}_{\nu}\in C^{\infty}_K(X,X)\]
converges in all $C^k$-norms to a map $\widetilde{\psi}\in C^{\infty}_K(X,X)$, and moreover, using also (\ref{eq:ext}), estimates of the following form hold:
\begin{equation}\label{eq:proof_lemma_c_1}
\|\widetilde{\psi}\|_{k,K}
	\sum\nolimits_{\nu\geq 1}\|\widetilde{\phi}_{\nu}\|_{k,K}\lesssim
\sum\nolimits_{\nu\geq 1}\|\phi_{\nu}\|_{k,r_{\nu}}.
\end{equation}
In particular, $\|\widetilde{\psi}\|_{1,K}\leq c \theta$, for some constant $c$; thus, after shrinking $\theta$, lemma \ref{lemma:diffeo} implies that $\widetilde{\phi}$ is a diffeomorphism of $X$.

Finally, since $\widetilde{\phi}_{\nu}|_{X_{\nu}}=\phi_{\nu}$, and $\phi_{\nu}(X_{r_\nu})\sub X_{r_{\nu-1}}$, we have that:
\[\widetilde{\psi}_{\nu}|_{X_{r_{\nu}}}=\phi_1\circ \ldots \circ \phi_{\nu}.\]
We conclude that the sequence
\[\psi_{\nu}\defeq \phi_1\circ \ldots \circ \phi_{\nu}|_{X_{r_{\infty}}}:X_{r_{\infty}}\longrightarrow X_{r_{0}}\] converges in all $C^k$-norms to
\[\psi\defeq \widetilde{\psi}|_{X_{r_{\infty}}}:X_{r_{\infty}}\longrightarrow X_{r_{0}}.\]
Since $\widetilde{\psi}$ is a diffeomorphism, $\psi$ is a diffeomorphism onto its image. Hence equation (\ref{eq:proof_lemma_c_1}) implies the estimates of the statement.
This concludes the proof of \nameref{Lemma:C}.

\makeatletter
\@openrightfalse
\makeatother

\chapter*{Acknowledgments}
\addcontentsline{toc}{chapter}{Acknowledgments}

First I thank Marius for taking me in as a PhD student.
Marius, thank you for your continued effort, even though our communication has not always been as smooth as the things we study.
I wish we would have understood each other better.
I truly appreciate your love for maths, exceptional talent for exposition, what you do for the community, and the great atmosphere you create in your research group.

Next I thank Ionu\cb{t} for being my mathematics big brother and colleague.
Your clarity of thought and sheer tenacity for maths have always amazed me.
When we talked about maths, you could always explain to me what I meant.

To the members of the reading committee, Erik van den Ban, Marco Zambon, San Vũ Ng\d oc, Nguyen Tien Zung, and Eva Miranda:
thank you for the time and effort invested in reading this thesis, and for your comments and suggestions. It is not an easy read.

Next I thank mom, dad, Kirsten, Yasha, Maggie, Piotr, Ralph, Tera, João, Aline, Davide and Federica for their friendship and moral support.
\\
\emph{`The world breaks everyone and afterward many are strong at the broken places.'}
\\
I'm also thankful for the times with my other friends: Amparo, Arjen, Boris, Dana, Daniele, David, Dima, Florian, Francesco, Ionu\cb{t}, Ivan, Joey, Jules, KaYin, Lai, Lauran \& Laurent, Maria, Matias, Michelle, Mike, Ori, Pedro, Peer, Richard, Stefan, and Valentijn.
Soon I'll have time for you.

\makeatletter
\@openrightfalse
\makeatother

\chapter*{Summary}
\addcontentsline{toc}{chapter}{Summary}

% ------------------------------
% Shapes and spaces
% ------------------------------

\paragraph{Shapes and spaces}
% On the recursive nature of geometry.
This is a thesis about shapes and their appearance under gradual transformation.
Any person has a basic understanding of what shapes are: lines, triangles, circles, spheres, etc.
But in geometry we care about making truthful statements about shapes, and this requires precise assumptions (known as axioms).
Roughly, these assumptions should balance two things:
\begin{enumerate*}
    \item capture some human intuition of what shapes are,
    \item allow for interesting consequences through logical deduction.
\end{enumerate*}

A popular, and perhaps the most fundamental, description of a shape is a \emph{`(topological) space'}.
The basic building block of a space is the smallest and simplest of shapes: a point.
For a mathematician a point is not the tip of a pen or the dot it makes on a sheet of paper.
Instead, a \emph{`point'} is the abstract idea of an arbitrarily small dot, without surface or dimension (its dimension is zero).
From there, a space is a set (or collection, or class) of points with \emph{extra structure}.
This structure must then satisfy specific axioms that, roughly, capture a concept of distance and cohesion between points.

The concept of a space is the product of many years of research by great minds.
This has lead to a short list of axioms that are fairly easy to comprehend but have surprisingly strong consequences.
Perhaps its main strength is its ubiquity: all shapes we can think of are examples of spaces, but so are many abstract mathematical concepts.
This means that any statement proven about spaces will find application in many fields, also where we might not expect the importance of geometry.
But in this lies also its weakness: a statement about spaces cannot be very \emph{strong}, because it must be general.

A big part of maths is balancing the amount of generality (or abstraction):
too abstract, and we say nothing about everything; too specific, and we cannot see the trees for the forest.
One way to be more specific is restricting \emph{which} spaces we study.
For example, a (for me) essential restriction is the \emph{`Hausdorff'} condition:
it demands that any two points in the space can be sufficiently separated from each other.
Any shape you draw on a sheet of paper is Hausdorff, but many spaces are not, and this already shows just how general spaces are.

Another way to be more specific is to require the spaces have \emph{extra structure}.
A popular example of this is the concept of a \emph{`manifold'}:
for considerable time, and for most people, the earth is known to be approximatley spherical.
Yet for practical reasons we often describe its topography with charts, which are projections of the sphere on a \emph{flat} sheet.
No single chart can completely cover the earth without presenting locations on earth twice.
Moreover, cartography is a notoriously difficult subject:
the mapmaker must choose whether to preserve distance, or angle, or area, etc., based on the type of projection
(and transitions between types of charts is even more startling).
For these reasons it can be convenient to collect many \emph{`charts'} in an \emph{`atlas'}.
This is exactly the extra structure that comes with a manifold:
a collection of all projections (without double information) of the space onto something flat.

Many shapes we can think of are manifolds, but not all.
Actually, I mean that we all agree on \emph{what} the extra structure is --
we are not just restricting to a smaller class of shapes.
The purpose of adding extra structure was to be able to make stronger statements about the class of shapes.
And perhaps the single most important emergent property of manifolds is the concept of \emph{`dimension'}.
Points are zero dimensional, lines and circles one, and spheres (like the \emph{surface} of the earth) are two-dimensional.
And here the true advantage of this abstract approach to shape shows itself:
a circle on a sheet of paper and a circle in space both are one dimensional,
because it is just the same abstract circle embedded in different in different environments.
The trick here is that  \emph{`dimension'} is the number of coordinates that you need to make a chart.
For example, charts of the earth only have the directions up/down and left/right.
A coordinate is just a number, and dimension is just the number of independent coordinates we keep track of.
So we can now speak of shapes of any possible dimension, or even of infinite dimension.

From here geometers usually specialize by restricting \emph{which} charts are allowed in an atlas.
Some prefer charts that don't change distances, or volumes, or angles, etc.
These differences are large enough that they lead to entirely different fields of mathematics, where entire communitees and careers are based upon.
Perhaps the most important distinction lies between \emph{`algebraic geometry'}, where charts are polynomials, 
and \emph{`differential geometry'}, where charts are \emph{`smooth'}.
This thesis focusses on smooth manifolds.
Roughly, this means that the shapes do not have corners (circles and spheres are okay, but triangles and squares are not).
Slightly more technically, it means that charts have derivatives: we can talk about the speed and direction of a point moving around on the shape.
Just try to imagine what your speed in a corner of a triangular race track would be.

% --------------------
% Classes of shapes
% --------------------
\paragraph{First theme: what are classes of shapes?}

We are mostly interested in \emph{classes} of shapes.
For example, the class of all triangles in a two-dimensional plane (an infinite sheet of paper),
or of only the equilateral triangles.
An interesting phenomenom is that such classes are again spaces themselves, by zooming out and thinking of each shape in the class as a point.
The extra structure of \emph{this} space describes the distance and cohesion between shapes.

The following is a classical example.
Think of the class of all straight lines in the plane passing through a common point.
These lines can be described by picking one line and rotating it with the special point as its axis.
After rotating 180 degrees the line comes back onto itself (it is now flipped, but still the same line).
This means that each line can be described by the \emph{angle} we should rotate, or in other words,
by the \emph{points} of a circle.
I mean to say that this class of lines is really just a circle.
You might say we only rotate 180 degrees while a circle is 360 degrees (should it not be half of a circle?).
The answer is: the point on the circle moves twice as fast as the line.

For the main theorems, which span the second half of this thesis, we have spent much thought on how to describe extra structure of classes of shapes.
The above example is particularly nice, because it is just a circle: a smooth finite-dimensional manifold. 
But often these spaces do not have finite dimension and their atlas is hard to describe.
So we prefer to find a different way of dealing with them.
We have opted to focus on what I would call \emph{`geometric structures'}:
shapes with extra structure that satisfies a \emph{smooth} set of rules. 
\\

\noindent
In technical terms, they are the sections of a submersion that satisfy a smooth partial differential equation.
But it cannot be any kind of equation because its interpretation should relate to geometry.
So in short the answer to the question of this paragraph is:
a class of geometric structures has the structure of a submersion with a differential equation of a geometric nature. 
I do not pretend this is a new idea, but only that it is important.

% --------------------
% Extra structure
% --------------------
\paragraph{Second theme: how do we look at shapes?}

As a simple and somewhat silly example of a shape with extra structure, think of a triangle on a plane and now imagine it is colored (we may pick a color for each point of the triangle).
The extra structure is the painting of the triangle, and for each triangle there are many ways of painting it.

A more interesting example is a sphere (the surface of the earth), where at each point we put an arrow, describing the speed and direction of the wind.
Such extra structure is called a \emph{`vector field'}.

The first half of this thesis specializes on one particular class of structures, called \emph{`completely integrable systems'}.
Like how vector fields describe the motion of the wind, integrable systems describe the motion of a point of mass according to the laws of classical dynamics.
\emph{Completely} integrable systems are a particularly simple class.

In the previous paragraph we saw a bit of the recursive nature of geometry:
classes of shapes are themselves shapes.
Now I would like to point out another one:
shapes with extra structure are themselves just different shapes (without extra structure).
For example, think of a colored triangle on a plane, 
and suppose that, like me, you are somewhat color blind: 
the triangle is only colored in gray scale.
Another way to describe this triangle is the following: 
imagine that the shade of gray represents \emph{height}, and the triangle is  a piece of mountainous landscape sticking out of the plane.
Now we may forget about its color and only think of it as a misshapen triangle.
We are still studying the same shape, but are looking at it from a different perspective.
\\

\noindent
A more serious example would be to describe vector fields as embedded submanifolds of the tangent bundle (with some restrictions, but perhaps you may want to drop these).
And coming back to the first half of the thesis, perhaps you would prefer to think of integrable systems as Poisson maps, or even coistropic submanifolds of the product Poisson manifolds.

We may look at geometric structures as sections of a submersion, satisfying a differential equation, or as embedded submanifolds of the total space.
For the main theorems, we discovered it is useful to not pick one or the other perspective, but freely switch between the two.
Namely, the first perspective is often the one we natural start with,
while the second is useful for describing \emph{'symmetries'} of geometric structures.

% --------------------
% Constraints
% --------------------

\paragraph{Third theme: deformations \& symmetry}

%Returning to the class of traingles on a plane, 
%we could constrain the class be saying only the equilateral ones matter.
%Of course, we could have chosen the class of equilateral triangles to begin with, so that there is no constraint, but it turns out that often it is more convenient to consider the bigger class, and make the equation \emph{`side A = side B = side C'} explicit.
%%The reason for this is that the class of all triangles is itself again a shape (with structure), and it could be easier to picture this than the class of all equilateral triangles.

% --------------------
% Deformations
% --------------------

Moving on, what does it mean to change a shape gradually?
For a triangle on a two-dimensional plane this is straightforward: you can move it around, rotate it, make it smaller or bigger, or change the lengths of any of its sides (and change its shape accordingly). 
And all of this should be done in a gradual manner, meaning no jumps and no jagged motions.
In a way, what it means to change something is always clear (it means going from one shape in the class to another), but the meaning of \emph{gradual} is extra information on the class of shapes: geometric structure.
For this thesis the meaning of gradual is somewhat fixed: it has to be completely smooth, meaning that the trajectory, its speed, acceleration, etc, are all continuous (i.e. without jumps).

% --------------------
% Symmetry
% --------------------

%Now that we have chosen our class of shapes, its constraints, and have decided on the nature of change, we arrive at the nature of symmetry. 

As we go about pushing around the triangle, we can ask ourselves, are we really achieving something here? Are we \emph{truly} changing the triangle?
This is where the notion of symmetry enters the picture, and the answer is (as always) \emph{it depends on your perspective}.
At one extreme, we could say that any triangle is completely unique. 
Then any change really creates a new triangle, and not much can be said.
At the other extreme, we could say every triangle is essentially the same.
Then all change is inessential, and not much can be said.

A more sensible person might say that the location of the triangle doesn't matter, nor does its orientation. 
In other words, \emph{moving} and \emph{rotating} it does not matter, but changing its shape and size still does.
%Interestingly, with the constraint in mind, it is not possible to change the shape of an equilateral triangle, so changing its size is the only essential change.

This makes us aware of two different viewpoints on symmetry.
We can describe which shapes are essentially the same, thus dividing them into different classes. 
In mathematics this is called \emph{`equivalence'}.
We can also describe precisely which forms of change are essential (and which are not).
In a way this is putting a constraint on the change, and means we are choosing a space of essential changes.
In the mathematical literature we associate with this the terms \emph{`(Lie) group'} and \emph{`action'}, or more generally \emph{`pseudogroup'}.
Our approach fits the latter: we study pseudogroups that describe the (to us) essential symmetries.
\\

\noindent
Putting this all together, we are studying the following:
\begin{enumerate}
    \item A space of shapes, or \emph{`submersion'} with \emph{`differential equation'}.
    \item A space of changes, or \emph{`pseudogroup'}.
\end{enumerate}

\makeatletter
\@openrightfalse
\makeatother
\chapter*{Samenvatting}
\addcontentsline{toc}{chapter}{Samenvatting}

\paragraph{Vormen en ruimtes}
Dit proefschrift gaat over ruimtelijke vormen en geleidelijke verandering. Iedereen heeft er wel een beeld bij wat vormen zijn: lijnen, driehoeken, cirkels, bollen, etc. Maar in de meetkunde willen we zorgvuldige uitspraken maken over vormen, en dit behoeft exacte aannames (zogenaamde axioma’s). Dergelijke aannames moeten grofweg in het volgende voorzien:
\begin{enumerate*}
\item een menselijke intuïtie omvatten van wat vormen zijn,
\item interessante gevolgen toelaten door middel van logische deductie.
\end{enumerate*}

Een populaire, en misschien de meest fundamentele, beschrijving van een vorm is een \emph{'(topologische) ruimte'}. De bouwsteen van een ruimte is de kleinste en meeste eenvoudige vorm: een punt. Voor een wiskundige is een punt niet de punt van een potlood, of de stip die het op een vel papier maakt. Namelijk, een \emph{`punt'} is het abstracte idee van een willekeurig kleine stip, die geen oppervlakte of dimensie heeft (of eigenlijk, dimensie nul). Nu is een \emph{ruimte} een verzameling (of collectie) van punten met \emph{extra structuur}. Deze structuur moet aan specifieke axioma's voldoen die grofweg de begrippen afstand en samenhang behelzen.

Het concept van een ruimte vloeit voort uit vele jaren onderzoek door grote geesten. Het resultaat is een klein lijstje aan axioma's die redelijk eenvoudig te begrijpen zijn, maar verrassend sterke gevolgen hebben. De omvang is wellicht de grootste kracht: alle vormen die wij kunnen bedenken, en nog veel meer abstracte wiskundige begrippen, zijn ruimtes. Dit betekent dat een uitspraak over ruimtes toepassing zal vinden in veel vakgebieden, ook sommigen waar je misschien meetkunde zou verwachten. Maar daar ligt ook de zwakte: een uitspraak over ruimtes kan nooit echt \emph{sterk} zijn, want het zal zo algemeen zijn.

Voor een groot deel wil je in de wiskunde juist dit balanceren: te abstract, en we zeggen niets over alles; te specifiek, en we zien door de bomen het bos niet meer. Eén manier om specifieker te zijn is door te specificeren \emph{welke} ruimtes we bestuderen. Bijvoorbeeld, voor mij is \emph{'Hausdorff'} een noodzakelijke conditie: het betekent dat twee punten altijd een stukje van elkaar afliggen. Elke vorm die je op een vel papier tekent is Hausdorff, maar niet elke ruimte, en dit laat al zien hoe belachelijk algemeen ruimtes zijn.

Een andere manier om specifieker te zijn is door \emph{extra structuur} te eisen. Een populair voorbeeld hiervan is het begrip \emph{`variëteit'}: voor enige tijd, en voor de meeste mensen, is het bekent dat de aarde ongeveer een bol is. Maar toch is het vaak praktisch om z'n topografie met kaarten te beschrijven, welke projecties van de bol op een \emph{plat} vlak zijn. Geen enkele kaart kan de aarde volledig bedekken zonder sommige locaties dubbel aan te geven. Verder is cartografie een berucht moeilijk onderwerp: de kaartenmaker moet kiezen of ze óf afstanden, óf hoeken, óf inhoud wil behouden, afhankelijk van het soort kaart (en dan nog maar niet te spreken over de overgangen tussen verschillende soorten kaarten). Om deze redenen is het praktisch om \emph{`kaarten'} te verzamelen in een \emph{`atlas'}. Dit is exact de extra structuur van een variëteit: een collectie van projecties (zonder dubbele informatie) van de ruimte naar iets plats.

Veel vormen die we kunnen bedenken zijn variëteiten, maar niet alle. Wat ik hiermee bedoel is dat het voor iedereen duidelijk is \emph{wat} de extra structuur is -- we kijken niet alleen naar minder vormen. 
\\

\noindent
Het doel van extra structuur toevoegen was om we sterkere uitspraken te kunnen maken. Wellicht de belangrijkste extra eigenschap van een variëteit is het begrip \emph{`dimensie'}. Voor een wiskundige zijn punten nul-, lijnen en cirkels één-, en bollen tweedimensionaal. Hier toont het voordeel van een abstracte beschrijving van vormen zich: een cirkel op een vel papier en een cirkel in de ruimte zijn beiden eendimensionaal, want het is dezelfde abstracte cirkel die zich simpelweg in andere omstandigheden bevindt. 

De truc hier is dat \emph{`dimensie'} óók een abstract begrip is: het is aantal coördinaten dat je minimaal nodig hebt om een kaart te kunnen maken. Bijvoorbeeld, op een kaart van de aarde heb je de richten opwaarts/neerwaarts en linksom/rechtsom, dus een bol is tweedimensionaal. Een coördinaat is gewoon een getal, en dimensie is gewoon het aantal van zulke getallen. We kunnen nu dus praten over vormen van elke willekeurige dimensie, zelfs oneindig.

Vanaf hier specialiseren meetkundigen zich vaak verder door aan te geven \emph{welke} kaarten toegestaan zijn. Sommigen hebben voorkeur voor kaarten die afstanden, of hoeken, of volumes, enz., behouden. Deze verschillen zijn soms groot genoeg dat gehele vakgebieden op het een of het ander gespecialiseerd zijn. Het grootste verschil bestaat tussen de \emph{`algebraïsche meetkunde'}, waar alle kaarten polynomen zijn, en de \emph{`differentieerbare meetkunde'} waar alle kaarten \emph{`glad'} zijn. In dit proefschrift kijken we naar gladde variëteiten. Dit betekent grofweg dat de vormen geen hoeken hebben (cirkels zijn oké, maar driehoeken dan weer niet). Iets meer technisch betekent dit dat kaarten afgeleiden hebben: we kunnen praten over de snelheid en richting van een punt dat door een vorm beweegt. Probeer je maar eens voor te stellen wat je snelheid zou moeten zijn om door de hoeken van een driehoekig parcour te racen.

\paragraph{Eerste thema: wat zijn klassen van vormen?}
We zijn voornamelijk geïnteresseerd in \emph{klassen} van vormen. We kijken bijvoorbeeld naar alle driehoeken op een vlak, of alleen naar de gelijkzijdige driehoeken. Wat dit interessant maakt is dat zulke klassen zelf ook weer ruimtes zijn. Dit zie je door uit te zoomen en te doen alsof elke vorm in de klasse een punt is. De extra structuur op \emph{deze} ruimte beschrijft de afstand en samenhang tussen vormen.

Hier volgt een klassiek voorbeeld. Stel je alle rechte lijnen in een vlak voor die door een gemeenschappelijk punt gaan. Deze klasse van lijnen kan ook beschreven worden door één van de lijnen uit te kiezen en deze rond dat punt te draaien. Na 180 graden te draaien komt de lijn weer op zichzelf terecht (hij is nu omgekeerd, maar dat maakt niet uit). Dit betekent dat elke lijn wordt beschreven door de \emph{hoek} die we moeten draaien, of in andere woorden, door een \emph{punt} van een cirkel. Ofwel, deze klasse van lijnen is gewoon een cirkel. Je kunt hier bezwaar op hebben omdat we maar 180 graden draaien, in plaats van 360 (is het niet een halve cirkel?). Het antwoord hierop luidt: de punt op de cirkel draait twee keer zo snel als de lijn. 

Voor onze hoofdstellingen, die de tweede helft van dit proefschrift beslaan, hebben we lang nagedacht over de extra structuur die aanwezig is op een klasse van vormen. Het bovenstaande voorbeeld is bijzonder mooi omdat het slechtst een cirkel is: een eindigdimensionale gladde variëteit. Maar vaak zijn deze ruimten van klassen niet eindig dimensionaal en zijn de kaarten erg ingewikkeld. Daarom hebben we ervoor gekozen om ze op een andere manier te beschrijven. We werken met zogenaamde \emph{`meetkundige structuren'}: vormen met extra structuur die aan een \emph{gladde} lijst regels voldoet. 
\\

\noindent
In technische termen zijn ze de sneden van een submersie die aan een gladde partiële differentiaalvergelijking voldoen. Maar dit kan niet gewoon elke vergelijking zijn: het moet een meetkundige interpretatie hebben.
Dus kortweg is het antwoord op de vraag van deze paragraaf: een klasse van \emph{meetkundige structuren} heeft de extra structuur van een submersie met een differentiaalvergelijking van meetkundige aard. Ik beweer niet dat dit nieuwe ideeën zijn, enkel dat ze belangrijk zijn voor dit proefschrift.

\paragraph{Tweede thema: hoe kijken we naar vormen?} 
Stel je, als een simpel en misschien wat sullig voorbeeld van een vorm met extra structuur, een driehoek voor waarbij we elk punt een kleur geven. 
Een vergelijkbaar maar interessanter voorbeeld is een bol (het oppervlak van de aarde) waarbij we aan alle punten pijlen hangen die de snelheid en richting van de wind beschrijven. Een dergelijke extra structuur heet een \emph{`vectorveld'}.

In de vorige paragraaf zagen we de recursieve aard van meetkunde opduiken:
klassen van vormen zijn zelf weer vormen.
Nu wil ik graag op een ander soort recursie wijzen:
vormen met extra structuur zijn zelf weer andere vormen (zonder extra structuur).
Bijvoorbeeld, stel je weer de gekleurde driehoek voor en dat je, net als ik, wat kleurenblind bent: de driehoek heeft alleen grijstinten.
De driehoek kan nu een berglandschap voorstellen, waarbij elke grijstint een bepaalde hoogte aangeeft. Nu kunnen we de kleur van de driehoek vergeten en simpelweg deze vervormde driehoek onthouden. We bestuderen nog dezelfde vorm, maar vanuit een ander perspectief.
\\

\noindent
Een serieuzer voorbeeld is om vectorvelden als deelvariëteiten van de raakbundel te interpreteren (en mogelijk de extra eigenschappen te vergeten). Nog een voorbeeld is, om even op de eerste helft van het proefschrift terug te komen, om intgreerbare systemen als Poissonafbeeldingen, of zelfs als coïsotrope deelvariëteiten van de product Poissonvariëteit, te beschouwen.

Kortom, we kunnen meetkundige structuren als sneden van een submersie beschouwen, of als deelvariëteiten van de totale ruimte. 
Voor onze hoofdstellingen bleek het handig te zijn om ons vrijelijk te bewegen tussen deze twee perspectieven. 
Een vraagstuk doet zich vaak vanuit het eerste perspectief voor, en het tweede is geschikter om \emph{`symmetriën'} mee te beschrijven.

\paragraph{Derde thema: deformaties \& symmetrie}
Wat betekent het om een vorm geleidelijk te veranderen?
Voor een driehoek in een vlak is dit wel duidelijk:
je kan het verplaatsen, draaien, kleiner of groter maken, of de lengtes van één van de zijdes aanpassen. En dit alles moet op een geleidelijke manier gedaan worden, dus zonder sprongen of schokkende bewegingen.
In een zekere zin is het altijd duidelijk wat het betekent om een vorm te veranderen (je gaat van één vorm naar een andere in de klasse), maar de betekenis van \emph{geleidelijk} is extra structuur op de klasse van vormen zelf.
In dit proefschrift ligt deze structuur min-of-meer vast: het moet volledig glad zijn, wat betekent dat de baan, de snelheid, de versnelling, enz., allemaal geen sprongen maken.

Als we zo bezig zijn de driehoek te veranderen kunnen we onszelf afvragen: 
wat bereiken we hier nu mee? Verandert er wel \emph{echt} wat aan de driehoek?
Hier duikt het begrip symmetrie op, en het antwoord is (zoals altijd): het hangt van je perspectief af. Aan de ene kant kun je zeggen dat elke driehoek uniek is -- dan geeft elke verandering een totaal nieuwe driehoek. Aan de andere kant kun je zeggen dat elke driehoek ongeveer hetzelfde is -- dan doet geen enkele verandering er toe. In beide gevallen kun je verder weinig zeggen.

Een redelijk mens zou zeggen dat alleen de locatie of stand van een driehoek er niet toe doen.
Of anders gezegd, dat het \emph{verplaatsen} en \emph{draaien} van een driehoek niet uitmaakt, maar het aanpassen van de zijden en grootte nog wel.

Dit maakt ons bewust van twee verschillende perspectieven op symmetrie.
Enerzijds kunnen we beschrijven welke vormen in essentie hetzelfde zijn, door vormen nog verder in klassen onder te verdelen.
In de wiskunde heet dit een \emph{`equivalentie'}.
Anderzijds kunnen we aangeven welke soorten veranderingen essentieel zijn (en welke niet). 
In de wiskunde associëren we dit met de begrippen \emph{`(Lie) groep'} en \emph{`actie'} of, algmener, het begrip \emph{`pseudogroup'}.
Onze aanpak is die laatste: we bestuderen pseudogroepen die (voor ons) de essentiële symmetriën vastleggen.
\\

\noindent
Al met al bestuderen we het volgende:
\begin{enumerate}

\item
Een ruimte van vormen: een \emph{`submersie'} met \emph{`differentiaalvergelijking'}.

\item
Een ruimte van veranderingen: een \emph{`pseudogroep'} op de submersie.

\end{enumerate}

\cleardoublepage
\phantomsection
\addcontentsline{toc}{chapter}{Bibliography}

\cleardoublepage
\phantomsection
\addcontentsline{toc}{chapter}{Index}
\printindex


\begin{thebibliography}{9}

\bibitem{Arnie}
    V.I.~Arnol'd, 
    \emph{Mathematical methods of classical mechanics},
    Grad. Texts in Mathematics,
    \textbf{60}.

\bibitem{An-Sk}
    I.~Androulidakis and G.~Skandalis,
    The holonomy groupoid of a singular foliation,
    \emph{J. Reine Angew. Math.}
    \textbf{626} (2009), 
    1--37.
    
\bibitem{}
    C.~Arias Abad and F.~Schaetz,
    Deformations of Lie brackets and representations up to homotopy. 


\bibitem{Bail13}
    M.~Bailey, 
    Local classification of generalized complex structures,
    \emph{J. Differential Geom.}
    \textbf{95} (2013), 
    no.\ 1, 
    1--37.
    
\bibitem{Balan}
    Balan, R.(R-PUB-A)
    A note about integrability of distributions with singularities.
    \emph{Boll. Un. Mat. Ital. A (7)}
    \textbf{8} (1994), no. 3, 
    335--344. 

\bibitem{Boc45}
    S.~Bochner, 
    \emph{Compact groups of differentiable transformations}, 
    Ann. of Math. 
    \textbf{46} (1945), 372--381.


\bibitem{Bryant}
R.~L.~Bryant, S.~S.~Chern, R.~B.~Gardner, H.~L.~Goldschmidt and P.~A.~ Griffiths, 
Exterior differential systems. 
Mathematical Sciences Research Institute Publications, 18. Springer-Verlag, New York, 1991


    

\bibitem{Cha13}
    M.~Chaperon, 
    Normalisation of the Smooth Focus–Focus: A Simple Proof, 
    \emph{Acta Math. Vietnam.}
    \textbf{38} (2013), 
    no.\ 1, 
    3--9.


\bibitem{Conn85} 
    J.F.~Conn, 
    Normal forms for smooth Poisson structures, 
    \emph{Ann.\ of Math.\ (2)}
    \textbf{121} (1985), 
    no.\ 3, 
    565--593.

\bibitem{Conn:correction}
    J.F.~Conn,
    Correction to: `Normal forms for analytic Poisson structures',
    \emph{Ann. of Math. (2)} 
    \textbf{119} (1984), 
    no.\ 3, 
    577--601.
    
    
\bibitem{}
    M.~Crainic, F.~Schaetz and I.~Struchiner, 
    A survey on stability and rigidity results for Lie algebras, 
    \emph{Indag. Math. (N.S.)}
    \textbf{25} (2014), 
    no.\ 5,
    957--976.

\bibitem{CM12}
    M.~Crainic and I.~M\u{a}rcu\cb{t}, 
    A normal form theorem around symplectic leaves,
    \emph{J. Differential Geom.}
    \textbf{92} (2012), 
    no.\ 3, 
    417--461.

\bibitem{CrFe}
    M.~Crainic and R.L.~Fernandes,
    A geometric approach to Conn's linearization theorem,
    \emph{Ann. of Math. (2)}
    \textbf{173} (2011), 
    no.\ 2, 
    1121--1139.

\bibitem{PMCT2}
M.~Crainic, R.L.~Fernandes and D.~Martinez Torres, 
\emph{Regular Poisson manifolds of compact types},
preprint 

\bibitem{Dui80}
    J.J.~Duistermaat,
    \emph{On global action-angle coordinates}
    Comm. Pure Appl. Math. 33 (1980), 
    \textbf{6}, 687--706. 

\bibitem{DK} 
    J.J.~Duistermaat, J.~Kolk, 
    \emph{Lie groups}, 
    Universitext, Springer-Verlag Berlin, 2000.



\bibitem{Da-De}
P.~Dazord and T.~Delzant, 
Le probleme general des variables actions-angle
\emph{J. Differential Geom.}
\textbf{26} (1987), no. 2, 223--251. 


\bibitem{Eli84}
    L.H.~Eliasson, 
    \emph{Normal forms for Hamiltonian systems with Poisson commuting integrals}, 
    PhD Thesis, Stockholm University, 1984.


\bibitem{Eli90}
    L.H.~Eliasson, 
    Normal forms for Hamiltonian systems with Poisson commuting integrals – elliptic case, 
    \emph{Comment. Math. Helv.}
    \textbf{65} (1990),
    no.\ 1,
    4--35.


\bibitem{EpsThu79}
    D.B.A.~Epstein and W.P.~Thurston, 
    Transformation groups and natural bundles, 
    \emph{Proc.~London~Math.~Soc.}
    \textbf{38} (1979), 
    219--236.
    
\bibitem{}
    Y.~Fragier and M.~Zambon, 
    Simultaneous deformations and Poisson geometry, 
    \emph{Compos. Math.}
    \textbf{151} (2015), 
    no.\ 9, 
    1763–1790.

\bibitem{}
    Y.~Fragier and M.~Zambon, 
    Simultaneous deformations of algebras and morphisms via derived brackets,
    \emph{J. Pure Appl. Algebra}
    \textbf{219} (2015), 
    no.\ 12, 
    5344--5362.

\bibitem{marco}
    I.~Androulidakis and M.~Zambon,
    Stefan-Sussmann singular foliations, singular subalgebroids and their associated sheaves,
    \emph{J. of Geometric Methods in Modern Physics}
    \textbf{13} (2016).    



\bibitem{}
    M.~Golubitsky and V.~Guillemin, 
    Stable mappings and their singularities,
    \emph{Graduate Texts in Mathematics, Vol. 14.}
    Springer-Verlag, New York-Heidelberg (1973).
    
    
\bibitem{}
    P.A.~Griffiths, 
    Deformations of G-structures. Part A: General theory of deformations,
    \emph{Math. Ann.}
    \textbf{155} (1964),
    292--315.
    
    
\bibitem{Gro86}
    M.~Gromov, 
    Partial differential relations,
    \emph{Ergebnisse der Mathematik und ihrer Grenzgebiete (3)}
    \textbf{9} Springer-Verlag, Berlin (1986). 




\bibitem{}
    V.~Guillemin and S.~Sternberg, 
    Deformation theory of pseudogroup structures,
    \emph{Mem. Amer. Math. Soc.} 
    \textbf{64} (1966).

\bibitem{}
    V.~Guillemin, 
    The integrability problem for G-structures,
    \emph{Trans. Amer. Math. Soc.}
    \textbf{116} (1965),
    544--560.


\bibitem{}
    V.~Guillemin and S.~Sternberg,
    The Lewy counterexample and the local equivalence problem for G-structures,
    \emph{J. Differential Geometry}
    \textbf{1} (1967),
    127--131.


\bibitem{Ham82} 
    R.S.~Hamilton, 
    The inverse function theorem of Nash and Moser,
    \emph{Bull.\ Amer.\ Math.\ Soc.\ (N.S.)}
    \textbf{7} (1982), 
    no.\ 1, 
    65--222.
    
    \bibitem{}
    R.S.~Hamilton, 
    Deformation theory of foliations, 
    \emph{White Hall Cornell Uni.} (approx.~1982).

\bibitem{}
    R.S.~Hamilton, 
    The inverse function theorem of Nash and Moser, 
    \emph{Bull. Amer. Math. Soc. (N.S.)}
    \textbf{7} (1982), 
    no.\ 1, 
    65--222.

\bibitem{}
    R.S.~Hamilton, 
    Deformation of complex structures on manifolds with boundary. I. The stable case,
    \emph{J. Differential Geometry}
    \textbf{12} (1977), 
    no.\ 1, 
    1--45.

\bibitem{}
    R.S.~Hamilton, 
    Deformation of complex structures on manifolds with boundary. II. Families of noncoercive boundary value problems, 
    \emph{J. Differential Geom.}
    \textbf{14} (1979), 
    no.\ 3, 
    409--473. 

\bibitem{}
    H.~Jacobowitz, 
    Implicit function theorems and isometric embeddings, 
    \emph{Ann. of Math.} 
    \textbf{95} (1972),
    no.\ 2,
    191--225. 

\bibitem{Joyce} 
    D.~Joyce, 
    On manifolds with corners.
    
    
\bibitem{Joyce2} 
D.~Joyce, 
An introduction to $\ucalC^{\infty}$-schemes and $\ucalC^{\infty}$-algebraic geometry, 
Surveys in differential geometry. Vol. XVII, 299-325, Surv. Differ. Geom., 17, Int. Press, Boston, MA, 2012
 
 
 
\bibitem{kodaira}
    K.~Kodaira, 
    Complex manifolds and deformation of complex structures,
    Reprint of the 1986 English edition.

\bibitem{FM04}
    R.L.~Fernandes and P.~Monnier,
    \emph{Linearization of Poisson brackets},
    Lett. Math. Phys.
    \textbf{69} (2004), 89--114. 
    
\bibitem{}
    H.~Lewy, 
    An example of a smooth linear partial differential equation without solution, 
    \emph{Annals of Mathematics (1)}
    \textbf{66} (1957),
    155--158.

\bibitem{Mar14} 
    I.~M\u{a}rcu\cb{t}, 
    Rigidity around Poisson submanifolds, 
    \emph{Acta Math.} 
    \textbf{213} (2014), 
    no.\ 1, 
    137--198.
    
\bibitem{}
    J.N.~Mather, 
    Stability of $C^\infty$ mappings. I. The division theorem,
    \emph{Ann. of Math. (2)} 
    \textbf{87} (1968),
    89--104.

\bibitem{mather69}
    J.N.~Mather, 
    Stability of $C^\infty$ mappings. II. Infinitesimal stability implies stability,
    \emph{Ann. of Math. (2)}
    \textbf{89} (1969),
    254--291.

\bibitem{Joao-thesis} 
    J.~N.~Mestre,
    PhD thesis, Utrecht University, 2016.


\bibitem{min}
    H.~Mineur, 
    Reduction des systèmes mecaniques à n degrées de liberte admettant n intégrales premieres uniformes en involution aux système à variable separees, 
    \emph{J. Math Pure Appl.},
    \textbf{15} (1936), 
    385--389.

\bibitem{Mir03}
    E.~Miranda,
    \emph{On symplectic linearization of singular Lagrangian foliations},
    Ph.D thesis (2003), Universitat de Barcelona, ISBN: 9788469412374.

\bibitem{Mir14} 
    E.~Miranda,
    \emph{Integrable systems and group actions},
    Cent. Eur. J. Math. 
    \textbf{12} (2014), 
    240--270.

\bibitem{MMZ12} 
    E.~Miranda, P.~Monnier, N.T.~Zung, 
    Rigidity of Hamiltonian actions on Poisson manifolds, 
    \emph{Adv.\ Math.}
    \textbf{229} (2012), 
    1136--1179.

\bibitem{CaMiVa11}
    L.-G.~Camille, E.~Miranda, P.~Vanhaecke
    \emph{Action-angle coordinates for integrable systems on Poisson manifolds},
    Int. Math. Res. Not. IMRN, 
    \textbf{8} (2011), 
    1839--1869. 

\bibitem{MN05}
    E.~Miranda, S.~Vũ Ng\d oc,
    \emph{A singular Poincaré lemma}, 
    Int. Math. Res. Not., 
    \textbf{1} (2005), 
    27--45.


\bibitem{MZ06}
    P.~Monnier and N.T.~Zung,
    Normal forms of vector fields on Poisson manifolds,
    \emph{Ann. Math. Blaise Pascal} 
    \textbf{13} (2006), 
    no.\ 2, 
    349--380.

\bibitem{MZ04}
    P.~Monnier and N.T.~Zung,
    Levi decomposition for smooth Poisson structures, 
    \emph{J. Differential Geom.}
    \textbf{68} (2004), 
    no.\ 2, 
    347--395 
    
\bibitem{}
    J.~Moser, 
    A rapidly convergent iteration method and non-linear differential equations. II, 
    \emph{Ann. Scuola Norm. Sup. Pisa} 
    \textbf{20} (1966),
    no.\ 3,
    499--535.


 \bibitem{MySte39}
    S.B.~Myers and N.E.~Steenrod, 
    The group of isometries of a Riemannian manifold, 
    \emph{Ann.~of Math. (2)}
    \textbf{40} (1939), 
    no. 2, 
    400--416.



\bibitem{Nash} 
    J.~Nash, 
    The Imbedding Problem for Riemannian Manifolds, 
    Annals of Mathematics, 
    Second Series, 
    Vol. 63, (Jan., 1956), 
    no.\ 1,
    pp. 20--63.

\bibitem{Joao}
J.~A.~Navarro Gonzalez and J.~B.~Sancho de Salas,
$\ucalC^{\infty}$-differentiable spaces, 
volume 1824 of
Lecture Notes in Mathematics. Springer-Verlag, Berlin, 2003.


\bibitem{newlander_nirenberg}
    A.~Newlander and L.~Nirenberg, 
    Complex analytic coordinates in almost complex manifolds, 
    \emph{Ann. Math.} 
    \textbf{65} (1957), 
    no.\ 3,
    391--404.



\bibitem{Nij72}
    A.~Nijenhuis, 
    Natural bundles and their general properties. Geometric objects revisited, 
    Differential geometry (in honor of Kentaro Yano), 
    Kinokuniya, Tokyo, (1972), 
    317--334.
    
    \bibitem{}
    A.~Nijenhuis and W.~Woolf, 
    Some integration problems in almostcomplex and complex manifolds, 
    \emph{Ann. of Math.}
    \textbf{77} (1963),
    no.\ 2,
    429--484.



\bibitem{PalTer77}
    R.S.~Palais and C.L.~Terng, 
    Natural bundles have finite order, 
    \emph{Topology}
    \textbf{19} (1977), 
    271--277.
    
\bibitem{Pal57}
    R.S.~Palais, 
    On the differentiability of isometries, 
    \emph{Proc. Amer. Math. Soc.}
    \textbf{8} (1957), 
    805--807.


\bibitem{PelaNgoc11}
A. Pelayo and S. Vũ Ng\d oc
  %  A\81.~Pelayo and S.~Vu Ng\d oc, 
    Symplectic theory of completely integrable Hamiltonian systems, 
    \emph{Bull. Amer. Math. Soc. (N.S.)}
    \textbf{48} (2011), 
    no. 3, 
    409--455.

\bibitem{}
    A.S.~Pollack,
    The integrability problem for pseudogroup structures,
    \emph{J. Differential Geometry}
    \textbf{9} (1974), 
    355--390.



\bibitem{}
    F.~Schaetz and M.~Zambon, 
    Deformations of coisotropic submanifolds for fibrewise entire Poisson structures.


\bibitem{SingStern65}
    I.M.~Singer and S.~Sternberg, 
    The infinite groups of Lie and Cartan. I. The transitive groups.,
    \emph{J. Analyse Math.} 
    \textbf{15}  (1965),
    1--114.
    
\bibitem{}
    D.C.~Spencer, 
    Deformation of structures on manifolds defined by transitive, continuous pseudogroups. III. Structures defined by elliptic pseudogroups,
    \emph{Ann. of Math. (2)}
    \textbf{81} (1965),
    389--450. 

\bibitem{}
    D.C.~Spencer, 
    Deformation of structures on manifolds defined by transitive, continuous pseudogroups. I. Infinitesimal deformations of structure,
    \emph{Ann. of Math. (2)} 
    \textbf{76} (1962), 
    306--398.

\bibitem{}
    D.C.~Spencer, 
    Deformation of structures on manifolds defined by transitive, continuous pseudogroups. II. Deformations of structure,
    \emph{Ann. of Math. (2)}
    \textbf{76} (1962),
    399--445.

\bibitem{}
    D.C.~Spencer, 
    Deformation of pseudogroup structures,
    \emph{Proc. Nat. Acad. Sci. U.S.A.}
    \textbf{47} (1961),
    1205--1208.


\bibitem{}
    M.~Spivak, 
    A comprehensive introduction to differential geometry, 
    vol. 5, 
    Publish or Perish, Berkeley (1979).

\bibitem{Stef}
P.~Stefan,
Integrability of systems of vector fields. 
\emph{J. London Math. Soc. (2)}
\textbf{21} (1980), no. 3, 
544--556. 

% Accessible sets, orbits, and foliations with singularities. 
% \emph{Proc. London Math. Soc. (3)}
% \textbf{29} (1974), 699--713.


\bibitem{Sus}
H.~J.~Sussmann,
Orbits of families of vector fields and integrability of distributions,
 \emph{Trans. Amer. Math. Soc.}
 \textbf{180} (1973), 171--188. 

\bibitem{Zun96}
    N.T.~Zung,
    Symplectic topology of integrable Hamiltonian systems. I: Arnold–Liouville with singularities, 
    \emph{Compositio Math.}
    \textbf{101} (1996), 
    179--215.

\bibitem{Zun97}
    N.T.~Zung,
    A note on focus-focus singularities,
    \emph{Differential Geom. Appl.}
    \textbf{7} (1997),
    123--130.
    
 

\bibitem{}
    N.T.~Zung,
    Another Note on Focus–Focus Singularities,
    \emph{Lett. Math. Phys.}
    \textbf{60} (2002), 
    no.\ 1, 
    87--99. 

\bibitem{Vey78}
    J.~Vey, 
    \emph{Sur certains systèmes dynamiques séparables}, 
    Amer. J. Math.
    \textbf{100} (1978),
    591--614.

\bibitem{Vey79}
    J.~Vey, 
    \emph{Algebres commutatives de champs de vecteurs isochores}, 
    Bull. Soc. Math. France
    \textbf{107} (1979),
    423--432.

\bibitem{Vey79}
    C.~de Verdière, J.~Vey,
    \emph{Le lemme de Morse isochore},
    Topology
    \textbf{18} (1979),
    283--293.

    
\bibitem{NgocWa13}
    S.~Vũ Ng\d oc and C.~Wacheux,
    Smooth normal forms for integrable Hamiltonian systems near a focus-focus singularity,
    \emph{Acta Math. Vietnam} 
    \textbf{38} (2013), 
    no.\ 1, 
    107--122.

\bibitem{Ngoc06}
    S.~Vũ Ng\d oc, 
    Systèmes intégrables semi-classiques: du local au global,
    \emph{Panoramas et Syhthèses} 
    \textbf{22} (2006)  


\bibitem{Wein83} 
    A.~Weinstein, 
    The local structure of Poisson manifolds, 
    \emph{J.~Diff.~Geom.}
    \textbf{18} (1983), 
    523--557.

\bibitem{Wein71}
    A.~Weinstein, 
    Symplectic manifolds and their Lagrangian submanifolds, 
    \emph{Adv. Math.} 
    \textbf{6} (1971), 
    329--346.
    
    
\bibitem{williamson}
J.~Williamson, 
\emph{On the Algebraic Problem Concerning the Normal Forms of Linear Dynamical Systems},
Amer. J. Math.
\textbf{58} (1936), no. 1, 
141--163. 

\bibitem{williamson40}
    J.~Williamson
    \emph{An algebraic problem involving the involutory integrals of linear dynamical systems}, 
    Amer. J. Math.
    \textbf{62} (1940), 
    881--911.


\end{thebibliography}
\end{document}